\newtheorem{theoremalphabetical}{Theorem}[section]
\newtheorem{propositionalphabetical}[theoremalphabetical]{Proposition}
\newtheorem{theorem}{Theorem}[subsection]
\newtheorem{proposition}[theorem]{Proposition}
\newtheorem{lemma}[theorem]{Lemma}
\newtheorem{corollary}[theorem]{Corollary}
\theoremstyle{definition}
\newtheorem{definition}[theorem]{Definition}
\newtheorem{definitionproposition}[theorem]{Definition and Proposition}
\newtheorem{example}[theorem]{Example}
\newtheorem{remark}[theorem]{Remark}
\newtheorem{remarks}[theorem]{Remarks}
\newenvironment{altenumerate}
	{\begin{list}
		{\textup{(\theenumi)} }
		{\usecounter{enumi}
			\setlength{\labelwidth}{0pt}
			\setlength{\labelsep}{0pt}
			\setlength{\leftmargin}{0pt}
			\setlength{\itemsep}{0pt}
			\setlength{\topsep}{0pt}
		\renewcommand{\theenumi}{\roman{enumi}}
		}}
	{\end{list}}
\newenvironment{altenumeratelevel2}
{\begin{list}
		{\textup{(\theenumi)} }
		{\usecounter{enumii}
			\setlength{\labelwidth}{2em}
			\setlength{\labelsep}{0pt}
			\setlength{\leftmargin}{2em}
			\setlength{\itemsep}{2pt}
			\setlength{\topsep}{2pt}
			\setlength{\itemindent}{0pt}
			\renewcommand{\theenumi}{\arabic{enumii}}
	}}
	{\end{list}}
\newcommand{\eqmathbox}[2][N]{\eqmakebox[#1]{$\displaystyle#2$}}
\numberwithin{equation}{section}
\def\@seccntformat#1{%
	\protect\textup{\protect\@secnumfont
		\ifnum\pdfstrcmp{subsection}{#1}=0 \bfseries\fi
		\csname the#1\endcsname
		\protect\@secnumpunct
	}%
}  
\def\@tocline#1#2#3#4#5#6#7{\relax
	\ifnum #1>\c@tocdepth 
	\else
	\par \addpenalty\@secpenalty\addvspace{#2}%
	\begingroup \hyphenpenalty\@M
	\@ifempty{#4}{%
		\@tempdima\csname r@tocindent\number#1\endcsname\relax
	}{%
	\@tempdima#4\relax
}%
\parindent\z@ \leftskip#3\relax \advance\leftskip\@tempdima\relax
\rightskip\@pnumwidth plus4em \parfillskip-\@pnumwidth
#5\leavevmode\hskip-\@tempdima
\ifcase #1
\or\or \hskip 1em \or \hskip 2em \else \hskip 3em \fi%
#6\nobreak\relax
\hfill\hbox to\@pnumwidth{\@tocpagenum{#7}}\par
\nobreak
\endgroup
\fi}
\newcommand{\Ad}{{\mathrm{Ad}}}
\newcommand{\abs}[1]{{\lvert{#1}\rvert}}
\newcommand{\ad}{{\mathrm{ad}}}
\newcommand{\blank}{{\,\_\,}}
\newcommand{\botimes}[1]{{\,\otimes_{#1}\,}}
\newcommand{\Coker}{{\mathrm{Coker}}}
\newcommand{\cotimes}[1]{{\,\widehat{\otimes}_{#1}\,}}
\newcommand{\cont}{{\mathrm{cts}}}
\newcommand{\cpltn}{{\,\widehat{}}}
\renewcommand{\dim}{{\mathrm{dim}}}
\newcommand{\defeq}{\vcentcolon=}
\newcommand{\End}{{\mathrm{End}}}
\newcommand{\eqdef}{=\vcentcolon}
\newcommand{\ev}{{\mathrm{ev}}}
\newcommand{\Ext}{{\mathrm{Ext}}}
\newcommand{\GL}{{\mathrm{GL}}}
\newcommand{\Hom}{{\mathrm{Hom}}}
\newcommand{\hy}{{\mathrm{hy}}}
\renewcommand{\Im}{{\mathrm{Im}}}
\newcommand{\id}{{\mathrm{id}}}
\newcommand{\Ind}{{\mathrm{Ind}}}
\newcommand{\inv}{{\mathrm{inv}}}
\newcommand{\indotimes}{{\,\otimes_{K,\iota}\,}}
\newcommand{\indcotimes}{{\,\widehat{\otimes}_{K,\iota}\,}}
\newcommand{\Ker}{{\mathrm{Ker}}}
\newcommand{\la}{{\mathrm{la}}}
\newcommand{\lra}{\longrightarrow}
\newcommand{\lto}{\longmapsto}
\renewcommand{\mod}{{\,\,\mathrm{mod}\,\,}}
\newcommand{\norm}[1]{\lVert{#1}\rVert}
\newcommand{\mto}{\mapsto}
\newcommand{\pr}{{\mathrm{pr}}}
\newcommand{\Proj}{{\mathrm{Proj}}}
\newcommand{\projotimes}{{\,\otimes_{K,\pi}\,}}
\newcommand{\projcotimes}{{\,\widehat{\otimes}_{K,\pi}\,}}
\newcommand{\ra}{\rightarrow}
\newcommand{\res}[1]{{\!\,\mid_{#1}}}
\newcommand{\rig}{{\mathrm{rig}}}
\newcommand{\Sp}{{\mathrm{Sp}}}
\newcommand{\Spec}{{\mathrm{Spec}}}
\newcommand{\nsubset}{{\not\subset}}
\newcommand{\sm}{{\mathrm{sm}}}
\newcommand{\unts}{^{\times}}
\newcommand{\ul}[1]{{\underline{#1}}}
\let\originalmiddle\middle
\renewcommand{\middle}[1]{\,\originalmiddle#1\,}
\newcommand{\llrrbracket}[1]{\mkern-3mu\left[\mkern-2.5mu\left[#1\right]\mkern-2.5mu\right]}
\newcommand{\llrrparen}[1]{\mkern-3mu\left(\mkern-3mu\left(#1\right)\mkern-3mu\right)}
\let\save@mathaccent\mathaccent
\newcommand*\if@single[3]{%
	\setbox0\hbox{${\mathaccent"0362{#1}}^H$}%
	\setbox2\hbox{${\mathaccent"0362{\kern0pt#1}}^H$}%
	\ifdim\ht0=\ht2 #3\else #2\fi
}
\newcommand*\rel@kern[1]{\kern#1\dimexpr\macc@kerna}
\newcommand*\widebar[1]{\@ifnextchar^{{\wide@bar{#1}{0}}}{\wide@bar{#1}{1}}}
\newcommand*\wide@bar[2]{\if@single{#1}{\wide@bar@{#1}{#2}{1}}{\wide@bar@{#1}{#2}{2}}}
\newcommand*\wide@bar@[3]{%
	\begingroup
	\def\mathaccent##1##2{%
		\let\mathaccent\save@mathaccent
		\if#32 \let\macc@nucleus\first@char \fi
		\setbox\z@\hbox{$\macc@style{\macc@nucleus}_{}$}%
		\setbox\tw@\hbox{$\macc@style{\macc@nucleus}{}_{}$}%
		\dimen@\wd\tw@
		\advance\dimen@-\wd\z@
		\divide\dimen@ 3
		\@tempdima\wd\tw@
		\advance\@tempdima-\scriptspace
		\divide\@tempdima 10
		\advance\dimen@-\@tempdima
		\ifdim\dimen@>\z@ \dimen@0pt\fi
		\rel@kern{0.6}\kern-\dimen@
		\if#31
		\overline{\rel@kern{-0.6}\kern\dimen@\macc@nucleus\rel@kern{0.4}\kern\dimen@}%
		\advance\dimen@0.4\dimexpr\macc@kerna
		\let\final@kern#2%
		\ifdim\dimen@<\z@ \let\final@kern1\fi
		\if\final@kern1 \kern-\dimen@\fi
		\else
		\overline{\rel@kern{-0.6}\kern\dimen@#1}%
		\fi
	}%
	\macc@depth\@ne
	\let\math@bgroup\@empty \let\math@egroup\macc@set@skewchar
	\mathsurround\z@ \frozen@everymath{\mathgroup\macc@group\relax}%
	\macc@set@skewchar\relax
	\let\mathaccentV\macc@nested@a
	\if#31
	\macc@nested@a\relax111{#1}%
	\else
	\def\gobble@till@marker##1\endmarker{}%
	\futurelet\first@char\gobble@till@marker#1\endmarker
	\ifcat\noexpand\first@char A\else
	\def\first@char{}%
	\fi
	\macc@nested@a\relax111{\first@char}%
	\fi
	\endgroup
}
\newcommand{\BB}{{\mathbb {B}}}
\newcommand{\BC}{{\mathbb {C}}}
\newcommand{\BF}{{\mathbb {F}}}
\newcommand{\BG}{{\mathbb {G}}}
\newcommand{\BN}{{\mathbb {N}}}
\newcommand{\BP}{{\mathbb {P}}}
\newcommand{\BQ}{{\mathbb {Q}}}
\newcommand{\BR}{{\mathbb {R}}}
\newcommand{\BZ}{{\mathbb {Z}}}
\newcommand{\CA}{{\mathcal {A}}}
\newcommand{\CB}{{\mathcal {B}}}
\newcommand{\CC}{{\mathcal {C}}}
\newcommand{\CE}{{\mathcal {E}}}
\newcommand{\CF}{{\mathcal {F}}}
\newcommand{\CH}{{\mathcal {H}}}
\newcommand{\CI}{{\mathcal {I}}}
\newcommand{\CJ}{{\mathcal {J}}}
\newcommand{\CK}{{\mathcal {K}}}
\newcommand{\CL}{{\mathcal {L}}}
\newcommand{\CM}{{\mathcal {M}}}
\newcommand{\CO}{{\mathcal {O}}}
\newcommand{\CU}{{\mathcal {U}}}
\newcommand{\CX}{{\mathcal {X}}}
\newcommand{\CY}{{\mathcal {Y}}}
\newcommand{\bB}{{\mathrm{\bf B}}}
\newcommand{\bG}{{\mathrm{\bf G}}}
\newcommand{\bL}{{\mathrm{\bf L}}}
\newcommand{\bP}{{\mathrm{\bf P}}}
\newcommand{\bQ}{{\mathrm{\bf Q}}}
\newcommand{\bT}{{\mathrm{\bf T}}}
\newcommand{\bU}{{\mathrm{\bf U}}}
\newcommand{\Fa}{{\mathfrak {a}}}
\newcommand{\Fb}{{\mathfrak {b}}}
\newcommand{\Fd}{{\mathfrak {d}}}
\newcommand{\Fg}{{\mathfrak {g}}}
\newcommand{\Fl}{{\mathfrak {l}}}
\newcommand{\Fm}{{\mathfrak {m}}}
\newcommand{\Fp}{{\mathfrak {p}}}
\newcommand{\Ft}{{\mathfrak {t}}}
\newcommand{\Fu}{{\mathfrak {u}}}
\newcommand{\Fx}{{\mathfrak {x}}}
\newcommand{\Fz}{{\mathfrak {z}}}
\newcommand{\FM}{{\mathfrak {M}}}
\newcommand{\FX}{{\mathfrak {X}}}
\newcommand{\FY}{{\mathfrak {Y}}}
\title[Equivariant Vector Bundles on the Drinfeld Upper Half Space]{Equivariant Vector Bundles on the Drinfeld Upper Half Space over a Local Field of Positive Characteristic}
\author{Georg Linden}
\date{\today}
\DeclareRobustCommand\longtwoheadrightarrow
\DeclareRobustCommand\longhookrightarrow
\newcommand{\unif}{{\pi}}
\newcommand{\aan}{{\mathrm{rig}}}
\newcommand{\alg}{{\mathrm{alg}}}
\begin{document}

\begin{abstract}
	We describe the locally analytic $\GL_d(K)$-representations which arise as the global sections of homogeneous vector bundles on the projective space restricted to the Drinfeld upper half space over a non-archimedean local field $K$. 
	We thereby generalize work of Orlik \cite{Orlik08EquivVBDrinfeldUpHalfSp} for $p$-adic fields to the effect that it becomes applicable to local fields of positive characteristic.	
	Our description of this space of global sections is in terms of a filtration by subrepresentations, and a characterization of the resulting subquotients via  adaptations of the functors $\CF^G_P$ considered by Orlik--Strauch \cite{OrlikStrauch15JordanHoelderSerLocAnRep} and Agrawal--Strauch \cite{AgrawalStrauch22FromCatOLocAnRep}.

	For a local field $K$ of positive characteristic, we also determine the locally analytic (resp.\ continuous) characters of $K^\times$ with values in $K$-Banach algebras which are integral domains (resp.\ with values in finite extensions of $K$) in an appendix.
\end{abstract}

\maketitle

\tableofcontents

\section*{Introduction}

Let $K$ be a non-archimedean local field of residue characteristic $p$.
The Drinfeld upper half space of dimension $d\in \BN$ over $K$ is defined as the complement
\begin{equation*}
	\CX \defeq \BP_K^d \Big\backslash \bigcup_{H \subsetneq K^{d+1}} \BP(H) 
\end{equation*}
of all $K$-rational hyperplanes in $d$-dimensional projective space, and naturally carries a rigid-analytic structure.
It is of interest in arithmetic geometry for a number of reasons, one of which is the study of its cohomology.
This aspect originates from Drinfeld conjecture \cite{Drinfeld76CovpAdicSymReg} (specified by Carayol \cite{Carayol90NonAbLubinTateTh}) that the (compactly supported) $\ell$-adic cohomology of the \'etale coverings of $\CX$ realises the supercuspidal part of the local Langlands and Jacquet--Langlands correspondences which by now has been proven \cite{Faltings94TraceFormulaDrinfeld, Harris97SupercuspRepCohomDrinfeld, HarrisTaylor01GeomCohomSimpleShimuraVar}.
Results in this direction include the computation of the \'etale cohomology with torsion coefficients prime to $p$, and, for $p$-adic $K$, the de Rham cohomology of $\CX$ by Schneider and Stuhler \cite{SchneiderStuhler91CohompAdicSymSp}.
The compactly supported $\ell$-adic cohomology, with $\ell \neq p$, has been determined by Dat \cite{Dat06EspSymDrinfeldCorrespLanglandsLoc}.
More recently Colmez, Dospinescu and Nizio{\l} computed the $p$-adic \'etale and pro-\'etale cohomology of $\CX$, for $p$-adic $K$ \cite{ColmezDospinescuNiziol20CohompAdicSteinSp}.
They also show that, for $d=1$ and $K=\BQ_p$, the $p$-adic \'etale cohomology of the \'etale coverings of $\CX$ encodes the $p$-adic local Langlands correspondence for $2$-dimensional de Rham representations (of weight $0$ and $1$) \cite{ColmezDospinescuNiziol20CohompAdicTourDrinfeldCasDim1}.

On a slightly different note in \cite{Schneider92CohomLocSystemspAdicUnifVar}, for $p$-adic $K$, Schneider introduced the notion of ($p$-adic) holomorphic discrete series representations of $\GL_{d+1}(K)$, when studying the cohomology of local systems on certain projective varieties uniformized by $\CX$.
These representations occur as the space of global rigid analytic sections $H^0 (\CX,\CE)$ of $\GL_{d+1,K}$-equivariant vector bundles $\CE$ on $\BP_K^d$ restricted to $\CX$.
Their strong dual spaces are locally analytic representations as introduced by F\'eaux de Lacroix and Schneider--Teitelbaum.
Extending previous work by Y.\ Morita for the ${\rm SL}_2$-case, descriptions of these resulting locally analytic $\GL_{d+1}(K)$-representations were given by Schneider and Teitelbaum \cite{SchneiderTeitelbaum02pAdicBoundVal} for the canonical bundle $\Omega_{\BP_K^d}^d$, by Pohlkamp \cite{Pohlkamp02RandwerteHolomFunkt} for the structure sheaf $\CO_{\BP_K^d}$, and by Orlik \cite{Orlik08EquivVBDrinfeldUpHalfSp} for general $\CE$.
\\

In this work our goal is to describe the global rigid analytic sections of homogeneous vector bundles on $\BP_K^d$ restricted to the Drinfeld upper half space $\CX$ over a general non-archimedean local field $K$.
We thereby adapt Orlik's methods from \cite{Orlik08EquivVBDrinfeldUpHalfSp} in a way that they are applicable in the case when $K$ has positive characteristic as well.
We note that the coherent cohomology of such vector bundles is solely concentrated in the global sections because $\CX$ is a Stein space.

The basic definitions and results for locally analytic representations transfer from the $p$-adic case to the setting over a non-archimedean field of positive characteristic.
This was already remarked by Gräf in \cite{Graef21BoundDistr}.
Even the anti-equivalence between locally analytic representations of a locally analytic Lie group $G$ and modules over algebras $D(G)$ of locally analytic distributions realised by passing to the strong dual spaces is still valid, and we make frequent use of it.

Thus, for a homogeneous vector bundle $\CE$ on $\BP_K^d$, the strong dual $H^0(\CX,\CE)'_b$ of the global sections on $\CX$ continues to be a locally analytic $\GL_{d+1}(K)$-representation.
Also Orlik's technique from \cite{Orlik08EquivVBDrinfeldUpHalfSp} which takes advantage of the geometric structure of the divisor at infinity $\BP_K^d \setminus \CX$ via a certain spectral sequence is still applicable.
The result is a filtration of $H^0 (\CX,\CE)$ by closed $\GL_{d+1}(K)$-invariant subspaces.
Moreover, the strong duals of the subquotients of this filtration can be described as extensions of certain locally analytic $\GL_{d+1}(K)$-representations.
In analysing the locally analytic representations which arise here however, we have to take an approach different from the one for a $p$-adic field in \cite{Orlik08EquivVBDrinfeldUpHalfSp}.
Our main result is the following description.

\begin{theoremalphabetical}[\Cref{Thm 3 - Orliks theorem on global sections of an equivariant vector bundle on the DHS}, \Cref{Thm 3 - Main theorem}]\label{Thm 0 - Main Theorem}
	Let $K$ be a non-archimedean local field and $\CE$ a $\GL_{d+1,K}$-equivariant vector bundle on $\BP_K^d$.
	Then there exists a filtration by closed $\GL_{d+1}(K)$-invariant subspaces
	\begin{equation*}
		H^0 (\CX,\CE)  = V^{d} \supset V^{d-1} \supset \ldots \supset V^1 \supset V^0 = H^0(\BP_K^d, \CE),
	\end{equation*}
	and, for $j = 1,\ldots, d$, there are extensions of locally analytic $\GL_{d+1}(K)$-representations
	\begin{equation*}
		\begin{aligned}
			0 \lra &{\hspace{2pt}} H^j (\BP_K^d, \CE)' \botimes{K} v^{\GL_{d+1}(K)}_{P_{(d-j+1,1,\ldots,1)}}  \lra \big( V^j/V^{j-1} \big)'_b\\
			&\lra  \bigg( D\big(\GL_{d+1}(K) \big) \cotimes{D(\mathfrak{gl}_{d+1},P_{(d-j+1,j)}), \iota} \Big( \widetilde{H}^{j}_{(\BP_K^{d-j})^\rig} (\BP_K^d, \CE) \cotimes{K} \big( v^{\GL_{j}(K)}_{B_{j}} \big)'_b \Big) \bigg)'_b \lra 0 .
		\end{aligned}
	\end{equation*}
\end{theoremalphabetical}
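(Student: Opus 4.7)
The plan is to adapt Orlik's strategy from \cite{Orlik08EquivVBDrinfeldUpHalfSp} by exploiting the stratification of the divisor at infinity $\BP_K^d\setminus \CX=\bigcup_{H}\BP(H)$ into linear subspaces, while replacing every step that relies on $p$-adic specific input by a characteristic-free argument. The geometric input is that the closed complement carries a natural stratification by intersections of $K$-rational hyperplanes, and the associated Čech/local-cohomology spectral sequence takes the form
\begin{equation*}
 E_1^{p,q} \;=\; \bigoplus_{\substack{Z\subset\BP_K^d\\ \mathrm{codim}\,Z\,=\,q+1}} \wt{H}^{q}_{Z^\rig}(\BP_K^d,\CE) \;\Longrightarrow\; H^{p+q}(\CX,\CE),
\end{equation*}
indexed over $K$-rational linear subspaces $Z$. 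Since $\CX$ is Stein, coherent cohomology vanishes in positive degree, so the spectral sequence collapses into a filtration of $H^0(\CX,\CE)$ by closed $\GL_{d+1}(K)$-invariant subspaces $V^0\subset V^1\subset\ldots\subset V^d$. The bottom term $V^0$ is precisely the image of $H^0(\BP_K^d,\CE)$ coming from the empty boundary stratum, while each $V^j/V^{j-1}$ is assembled from the codimension-$j$ pieces; this produces the filtration asserted in the theorem.

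Next, I would identify the locally analytic structure of each graded piece after passing to strong duals. The set of $K$-rational linear subspaces $Z$ of codimension $j$ forms a single $\GL_{d+1}(K)$-orbit; fixing the standard $\BP_K^{d-j}\subset\BP_K^d$ as base point, whose stabiliser is the maximal parabolic $P_{(d-j+1,j)}$, realises the dual of the direct-sum term as a locally analytic induction from this parabolic. On the distribution-algebra side this materialises as the tensor product $D(\GL_{d+1}(K))\cotimes{D(\mathfrak{gl}_{d+1},P_{(d-j+1,j)}),\iota}(-)$ appearing in the statement. The Čech combinatorics of the simplicial complex of proper flags inside the base point contributes the smooth generalised Steinberg $v^{\GL_{j}(K)}_{B_{j}}$, whereas the standard long exact sequence comparing the local cohomology $\wt{H}^{j}_{Z^\rig}(\BP_K^d,\CE)$ with the ambient $H^j(\BP_K^d,\CE)$ is responsible for the outer extension by $H^j(\BP_K^d,\CE)'\botimes{K}v^{\GL_{d+1}(K)}_{P_{(d-j+1,1,\ldots,1)}}$: the finer Steinberg records the simplicial boundary of a full flag of hyperplanes through the base point, so that the global ambient cohomology appears only twisted by the ``generic'' Steinberg rather than by an induction.

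Throughout, I would rely on the anti-equivalence between locally analytic $\GL_{d+1}(K)$-representations and separately continuous modules over $D(\GL_{d+1}(K))$, which, as noted by Gräf \cite{Graef21BoundDistr}, remains valid over an arbitrary non-archimedean local field, together with the Stein property of $\CX$ to guarantee acyclicity of all coherent cohomology in positive degree. The principal obstacle is the representation-theoretic identification of the induced piece: Orlik's original argument uses the functors $\CF^G_P$ of \cite{OrlikStrauch15JordanHoelderSerLocAnRep}, whose construction and key exactness and comparison properties were developed in the $p$-adic setting using Lie-algebra methods that do not transfer verbatim to positive characteristic. The heart of the proof is therefore to carry out an \cite{AgrawalStrauch22FromCatOLocAnRep}-style adaptation of these functors to our field, to check that the $\mathfrak{gl}_{d+1}$-action induced on the local cohomology groups matches the algebraic input of the functor, and to verify the requisite topological properties (closedness of the filtration steps, nuclearity and separate continuity of the distribution tensor product, and coincidence of strong dual with the inductive-limit topology where needed) so that the displayed sequence is an honest short exact sequence of locally analytic representations. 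Once this dictionary is in place, the theorem follows by reassembling the individual graded pieces.
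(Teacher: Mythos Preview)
Your high-level plan (spectral sequence $\Rightarrow$ filtration $\Rightarrow$ identify graded pieces via induction from parabolics) matches the paper, but two essential ingredients are missing or garbled.

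\textbf{The spectral sequence.} The displayed $E_1$-page is not right: you give a direct sum over linear subspaces $Z$ of a fixed codimension abutting to $H^{p+q}(\CX,\CE)$, but $\CX$ is Stein so only $H^0$ survives, and your $E_1$ has no $p$-dependence. What the paper actually uses is Orlik's acyclic \emph{fundamental complex} of \'etale sheaves on the pseudo-adic complement $\CY^\ad=(\BP_K^d)^\ad\setminus\CX^\ad$, built from pushforwards of constant sheaves along closed strata $Z_I^W$. Applying $\Hom(\iota_*(-),\CI^\bullet)$ produces a double complex whose vertical spectral sequence has
\[
{}^{\rm v}\!E_1^{p,q}\;=\;\bigoplus_{|\Delta\setminus I|=-p+1}\;\varprojlim_{n}\,\Ind^{G_0}_{P_I^n}\Big(H^q_{Y_I(\varepsilon_n)}(\BP_K^d,\CE)\Big)\;\Longrightarrow\;H^{p+q}_\CY(\BP_K^d,\CE),
\]
so the local cohomology enters via \emph{projective limits over $\varepsilon_n$-tubes} $Y_I(\varepsilon_n)$, not as a bare direct sum of $\wt{H}^q_{Z^\rig}$. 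This matters: the tube structure is what gives each graded piece its compact-type topology and what makes the subsequent dualisation work.

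\textbf{The identification step.} You correctly flag that Orlik's original argument breaks because $\wt{H}^q_{\BP_K^{d-q}}(\BP_K^d,\CE)$ need not be $U(\Fg)$-finite in positive characteristic, and you propose an ``Agrawal--Strauch-style adaptation''. But this is the heart of the paper, and no mechanism is given. The paper's replacement runs as follows. One introduces the \emph{hyperalgebra} $\hy(G)\subset D_e(G)\subset D(G)$ (distributions at $e$ killing some power of $\Fm_e$), proves a non-degenerate pairing $\hy(G)\times C^\la_e(G,V)\to V$ (this is the substitute for the Lie-algebra action on Taylor coefficients and works in any characteristic), and uses it to embed
\[
\iota\colon\varinjlim_n\Ind^{G_0}_{P^n}(V_n)\;\lhook\joinrel\longrightarrow\;C^\la(G_0,V)
\]
as the closed subspace cut out by the conditions $\mu(f(g\,\blank\,p))=p^{-1}.\dot\mu*f(g)$ for all $\mu\in\hy(G)$, $p\in P_0$. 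Dualising this embedding and tracking the image of $\psi^t$ yields $D(G_0)\cotimes{D(\Fg,P_0)}V'_b$; a separate Iwasawa-decomposition argument then upgrades this to $D(G)\cotimes{D(\Fg,P),\iota}V'_b$. None of this apparatus (hyperalgebra, germ pairing, embedding $\iota$, characterisation of its image) appears in your proposal, and without it the passage from ``single orbit with stabiliser $P_{(d-j+1,j)}$'' to the displayed distribution-algebra tensor product is an assertion, not an argument.
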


We explain the objects which occur in this theorem.
We let $P_{(d-j+1,j)}$ and $P_{(d-j+1,1,\ldots,1)}$ denote the standard parabolic subgroups of $\GL_{d+1}(K)$ corresponding to the decompositions in their respective index.
Moreover, $\GL_j(K)$ is considered as a subgroup of the standard Levi factor $\GL_{d-j+1}(K) \times \GL_j(K)$ of $P_{(d-j+1,j)}$, and $B_j$ denotes the standard Borel subgroup of $\GL_j(K)$.
The representations $v^{\GL_{d+1}(K)}_{P_{(d-j+1,1,\ldots,1)}}$ and $v^{\GL_{j}(K)}_{B_{j}}$ are smooth (generalized) Steinberg representations with coefficients in $K$, and $P_{(d-j+1,j)}$ acts on $v^{\GL_{j}(K)}_{B_{j}}$ via inflation.

Furthermore, $D\big(\Fg\Fl_{d+1},P_{(d-j+1,j)}\big)$ is a certain subalgebra of the locally analytic distribution algebra $D\big(\GL_{d+1}(K)\big)$:
For any non-archimedean Lie group $G$, we define the hyperalgebra $\hy(G)$ of $G$ which embeds into $D(G)$ as a subalgebra.
For a locally analytic subgroup $H \subset G$, the subalgebra $D(\Fg, H)$ is then defined to be generated by $\hy(G)$ and $D(H)$. 
The definition of this hyperalgebra $\hy(G)$ is inspired by the distribution algebra of an algebraic group $\bG$ as treated for example in \cite{Jantzen03RepAlgGrp}.
It can be canonically identified with the latter when $G$ arises as the $K$-valued points of such $\bG$ which is smooth.
In particular if ${\rm char}(K)=0$, $\hy(G)$ agrees with the universal enveloping algebra of the Lie algebra $\Fg$ of $\bG$.
Therefore $D(\Fg, H)$ generalizes a construction of Orlik and Strauch \cite[\S 3.4]{OrlikStrauch15JordanHoelderSerLocAnRep} for $p$-adic $K$.
The value of this hyperalgebra to us lies in the fact that there is a non-degenerate pairing between $\hy(G)$ and the space of germs of locally analytic functions on $G$ at the identity element even when ${\rm char}(K)>0$ (\Cref{Prop 1 - Pairing for germs of locally analytic functions}).
Hence, one might informally say that the algebra $D(\Fg, H)$ incorporates an infinitesimal neighbourhood around $H$.

Finally, there is the subspace
\begin{equation*}
	\widetilde{H}^{j}_{(\BP_K^{d-j})^\rig} (\BP_K^d, \CE) \defeq \Ker \Big( H^{j}_{(\BP_K^{d-j})^\rig} (\BP_K^d, \CE) \ra H^j (\BP_K^d, \CE) \Big)
\end{equation*}
of the local cohomology with respect to the Schubert variety $\big(\BP_K^{d-j}\big)^\rig $ viewed as a rigid-analytic subvariety of $\BP_K^d$.
We show that this subspace is canonically equipped with the structure of a $D\big(\Fg\Fl_{d+1}, P_{(d-j+1,j)}\big)$-module.
Taking the completed inductive tensor product of it with $D\big(\GL_{d+1}(K) \big)$ yields the $D\big(\GL_{d+1}(K) \big)$-module
\begin{equation}\label{Eq 0 - Tensored up representation}
	D\big(\GL_{d+1}(K) \big) \cotimes{D(\Fg\Fl_{d+1},P_{(d-j+1,j)}), \iota} \Big( \widetilde{H}^{j}_{(\BP_K^{d-j})^\rig} (\BP_K^d, \CE) \cotimes{K} \big( v^{\GL_{j}(K)}_{B_{j}} \big)'_b \Big) . \tag{$*$}
\end{equation}
Here $v^{\GL_{j}(K)}_{B_{j}}$ carries the finest locally convex topology.
The strong dual of the $D\big(\GL_{d+1}(K)\big)$-module \eqref{Eq 0 - Tensored up representation} is a locally analytic $\GL_{d+1}(K)$-representation by the aforementioned anti-equi\-va\-lence for locally analytic representations.

For a $p$-adic field, this relates to the description of \cite[Thm.\ 1]{Orlik08EquivVBDrinfeldUpHalfSp} for $H^0(\CX,\CE)'_b$ as follows:
With the filtration of $H^0(\CX,\CE)$ being the same, Orlik there obtains a certain subspace of a locally analytic induced representation in place of the strong dual space of \eqref{Eq 0 - Tensored up representation}; the two other terms of the short strictly exact sequence in \Cref{Thm 0 - Main Theorem} remain unchanged.
However, besides the isomorphism induced a posteriori in this way, there is a more intrinsic connection between \eqref{Eq 0 - Tensored up representation} and the representation Orlik arrives at.
Indeed, the subspace he obtains can be characterized as $\CF^{\GL_{d+1}(K)}_{P_{(d-j+1,j)}} \big( \widetilde{H}^{j}_{\BP_K^{d-j}} (\BP_K^d, \CE) , v^{\GL_{j}(K)}_{B_{j}} \big)$.
The functors $\CF^G_P$ used here were introduced by Orlik and Strauch \cite{OrlikStrauch15JordanHoelderSerLocAnRep} for the more general setup of a split connected reductive group $\bG$ over $K$ and a standard parabolic subgroup $\bP \subset \bG$.
Let $G = \bG(K)$, $P = \bP(K)$, and let $\Fg$, $\Fp$ be the respective Lie algebras.
For a $U(\Fg)$-module $M \in \CO_\alg^\Fp$ (where $\CO_\alg^\Fp$ is a certain subcategory of the BGG category $\CO$ for $\Fg$) and an admissible smooth representation $V$ of the standard Levi subgroup $L_\bP \subset P$, this functor yields an admissible locally analytic $G$-representation $\CF^G_P(M,V)$.
It is expected that their duals can be described as 
\[\CF^G_P(M,V)' \cong D(G) \botimes{D(\Fg,P)} \big( M \botimes{K} V' \big) \]
(for trivial $V=K$ this is \cite[Prop.\ 3.7]{OrlikStrauch15JordanHoelderSerLocAnRep}). 
Furthermore, in \cite{AgrawalStrauch22FromCatOLocAnRep} Agrawal and Strauch constructed functors which expand the functors $\CF^G_P$ and are defined via taking a tensor product with $D(G)$ over $D(\Fg,P)$ in a similar way.

A $U(\Fg)$-module $M$ in the category $\CO_\alg^\Fp$ necessarily is finitely generated.
Thus it can be endowed with a locally convex topology via some epimorphism $U(\Fg)^{\oplus n} \twoheadrightarrow M$ using the subspace topology $U(\Fg) \subset D(G)$.
The admissible smooth representation $V$ in turn can be considered with the finest locally convex topology.
To compare Orlik's description to \eqref{Eq 0 - Tensored up representation} we show:

\begin{propositionalphabetical}[{\Cref{Lemma 3 - Comparision between Orlik-Strauch functors and topological tensor product}}]
	There is a topological isomorphism of $D(G)$-modules
	\begin{equation*}
		D(G) \cotimes{D(\Fg,P),\iota} \big( M \projcotimes V'_b \big) \cong D(G) \botimes{D(\Fg,P)} \big( M \botimes{K} V' \big) 
	\end{equation*}
	in the sense that $D(G) \botimes{D(\Fg,P),\iota} \big( M \projotimes V'_b \big)$ topologized via the above already is complete, and this topology agrees with the canonical Fr\'echet topology induced from it being a coadmissible (abstract) $D(G)$-module, cf.\ \cite{AgrawalStrauch22FromCatOLocAnRep}.
\end{propositionalphabetical}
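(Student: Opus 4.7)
The plan is to exhibit both sides as a common Fréchet quotient obtained from a finite presentation of $M$. Fix generators $m_1, \ldots, m_n$ of $M$ as a $U(\Fg) = \hy(G)$-module, inducing a natural continuous $D(G)$-linear map
\begin{equation*}
\Phi : \bigl( D(G) \projcotimes V'_b \bigr)^{\oplus n} \lra D(G) \cotimes{D(\Fg,P), \iota} \bigl( M \projcotimes V'_b \bigr),
\qquad (\delta_i \otimes v_i)_i \lto \sum_i \delta_i \otimes (m_i \otimes v_i).
\end{equation*}
The source is Fréchet, since $D(G)$ is Fréchet and $V'_b$ is the strong dual of a space carrying the finest locally convex topology, hence a projective limit of finite-dimensional spaces. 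Moreover $\Phi$ has dense image: in the algebraic tensor product every $\delta \otimes (m \otimes v)$ can be rewritten, via $m = \sum_i u_i \cdot m_i$ with $u_i \in U(\Fg) \subset D(\Fg, P)$, as $\sum_i \delta u_i \otimes (m_i \otimes v)$, which manifestly lies in the image of $\Phi$.

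The heart of the argument is to show that $\ker(\Phi)$ is closed; granting this, $\Phi$ becomes a topological quotient of Fréchet spaces by the open mapping theorem and exhibits the target as Fréchet, in particular already complete. The kernel is generated by two families of relations: first, those stemming from $\ker(\phi : U(\Fg)^{\oplus n} \twoheadrightarrow M)$, which are cut out by the continuous $U(\Fg)$-action together with the fact that $U(\Fg) \subset D(G)$ carries the subspace topology; and second, the $D(P)$-sliding relations $(\delta \cdot u) \otimes x - \delta \otimes (u \cdot x)$ for $u \in D(P)$ and $x \in M \projcotimes V'_b$, which arise as the image of a continuous difference map $D(G) \projcotimes D(P) \projcotimes (M \projcotimes V'_b) \ra D(G) \projcotimes (M \projcotimes V'_b)$. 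Closedness of these relations together yields the set-theoretic identification of the left-hand side with the algebraic tensor product on the right.

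Finally, to match the resulting Fréchet topology on $D(G) \botimes{D(\Fg,P)} (M \botimes{K} V')$ with the canonical coadmissible Fréchet topology from \cite{AgrawalStrauch22FromCatOLocAnRep}, observe that both topologies are Fréchet and both render $\Phi$ a continuous surjection from the same Fréchet source; a second application of the open mapping theorem forces them to coincide. The main obstacle I expect lies in verifying the closedness of the $D(P)$-sliding relations in the presence of the infinite-dimensional factor $V'_b$; here one must exploit the coadmissibility of $D(G)$ over $D(P)$ together with the $D(\Fg, P)$-module structure on $M \projcotimes V'_b$ to prevent relations from accumulating only after completion.
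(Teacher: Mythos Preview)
Your proposal has two genuine gaps.

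First, $D(G)$ is \emph{not} Fr\'echet when $G$ is non-compact: by \Cref{Prop 1 - Properties of the distribution algebra} (iii) it is a countable direct sum of Fr\'echet spaces, which is an LF-space but not metrizable. So the claim that the source $(D(G)\projcotimes V'_b)^{\oplus n}$ is Fr\'echet, and hence the entire open-mapping strategy, breaks down as stated. The paper handles this by working throughout with the compact open subgroup $G_0=\bG(\CO_K)$, where $D(G_0)$ \emph{is} a Fr\'echet algebra, and only at the very end passes from $G_0$ to $G$ via the Iwasawa decomposition (\Cref{Lemma 3 - Tensor products over distribution algebras}).

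Second, even after replacing $G$ by $G_0$, your argument for closedness of $\ker(\Phi)$ is only a sketch --- you yourself flag the $D(P)$-sliding relations as the obstacle and do not resolve it. Writing the kernel as the image of a continuous map does not make it closed; and the interplay between the $U(\Fg)$-relations and the $D(P)$-relations inside the completed tensor product is exactly the subtle point. The paper does \emph{not} attempt to prove closedness directly. Instead it proceeds as follows: using that $M\in\CO^\Fp_\alg$ is a quotient of $U(\Fg)\otimes_{U(\Fp)}W$ for a finite-dimensional $\Fp$-module $W$, together with the structural isomorphism $D(\Fg,P_0)\cong U(\Fg)\otimes_{U(\Fp),\pi}D(P_0)$ (\Cref{Lemma 3 - Description of tensor product of Lie algebra and distribution algebra of subgroup}) and the finite generation of $W\otimes_K V'_b$ as a $D(P_0)$-module (from \cite{AgrawalStrauch22FromCatOLocAnRep}), one obtains a \emph{strict} epimorphism $D(\Fg,P_0)^{\oplus n}\twoheadrightarrow M\otimes_{K,\pi}V'_b$. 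Tensoring up gives a strict epimorphism $\bar\psi\colon D(G_0)^{\oplus n}\twoheadrightarrow D(G_0)\otimes_{D(\Fg,P_0),\pi}(M\otimes_\pi V'_b)$. Now the key external input is that the abstract target is coadmissible \cite[Thm.\ 4.2.3]{AgrawalStrauch22FromCatOLocAnRep}, hence carries a canonical Fr\'echet topology for which $\bar\psi$ is automatically strict. Uniqueness of the quotient topology then forces the two topologies to coincide, and in particular the uncompleted tensor product is already Fr\'echet. In short: the closedness you are trying to prove by hand is purchased by invoking coadmissibility, not established independently.
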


Moreover, (the kernel of) the algebraic local cohomology group $\widetilde{H}^{j}_{\BP_K^{d-j}} (\BP_K^d, \CE)$ with respect to the Schubert variety is an element of $\CO_\alg^{\Fp_{(d-j+1,j)}}$.
On the other hand, one can consider it as a subspace of $\widetilde{H}^{j}_{(\BP_K^{d-j})^\rig} (\BP_K^d, \CE)$ (see \Cref{Cor 2 - Projective limit description of rigid local cohomology of Schubert varieties}), and we prove in \Cref{Cor 3 - The two topologies on the local cohomology groups agree} that the subspace topology agrees with the locally convex topology induced via some epimorphism $U(\Fg)^{\oplus n} \twoheadrightarrow \widetilde{H}^{j}_{\BP_K^{d-j}} (\BP_K^d, \CE)$.
This yields a canonical topological isomorphism of $D(G)$-modules between \eqref{Eq 0 - Tensored up representation} and 
\begin{equation*}
	D\big(\GL_{d+1}(K) \big) \botimes{D(\Fg\Fl_{d+1},P_{(d-j+1,j)})} \Big( \widetilde{H}^{j}_{\BP_K^{d-j}} (\BP_K^d, \CE) \botimes{K} \big( v^{\GL_{j}(K)}_{B_{j}} \big)' \Big)
\end{equation*}
endowed with its canonical Fr\'echet topology.

Orlik's proof in \cite{Orlik08EquivVBDrinfeldUpHalfSp} uses that the algebraic local cohomology groups $H^j_{\BP_K^{d-j}}(\BP_K^d,\CE)$ are finitely generated over the universal enveloping algebra $U(\Fg\Fl_{d+1})$.
Since for a field of positive characteristic this has an analogue only in exceptional cases, our strategy is to employ the non-degenerate pairing between $\hy\big(\GL_{d+1}(K)\big)$ and the germs of locally analytic functions on $\GL_{d+1}(K)$ in a more direct manner instead.
This comes at the cost that the necessary arguments from functional analysis are more involved.
\\

The multiplicative group $K\unts$ is among the most basic examples of a locally $K$-analytic Lie group.
We include an \Cref{Sect - Continuous and Locally Analytic Characters} where, for a local field $K$ of positive characteristic $p$, we investigate the one-dimensional continuous and locally analytic representations of $K\unts$ (i.e.\ characters) which take values in a non-archimedean field of the same characteristic $p$.

Compared with the $p$-adic situation, we find that there are significantly less locally analytic characters in relation to continuous ones (\Cref{Cor A2 - Ring of locally analytic characters in equal characteristic}).
Moreover, the locally analytic characters of $K\unts$ behave rigidly in a sense.
It suffices here to focus on the subgroup of principal units $1 +\Fm_K \subset K\unts$ where $\Fm_K$ is the maximal ideal of the ring of integers of $K$.
This subgroup constitutes the non-discrete part under the usual decomposition of $K\unts$, and for its characters we obtain:

\begin{theoremalphabetical}[\Cref{Thm A2 - Locally analytic characters in equal characteristic}, \Cref{Cor A2 - Ring of locally analytic characters in equal characteristic}]
	Let $K$ be local field of characteristic $p>0$, and let $C$ be a complete extension of $K$.
	Then every locally analytic character
	\begin{equation*}
		\chi \colon 1 + \Fm_K \lra C\unts 
	\end{equation*}
	factors over $1+ \Fm_K \subset C\unts$, and there exists $c\in \BZ_p$ such that $\chi = \chi_c$ where
	\begin{equation*}
		\chi_c (z) = z^c \defeq \sum_{n=0}^\infty \binom{c}{n} (z-1)^n \quad\text{, for all $z\in 1 +\Fm_K $.}
	\end{equation*}
	Moreover, the values of all $p^i$-th hyperderivatives $D^{(p^i)}\chi$ at $1$ are contained in $\BF_p \subset K$, and $c$ is uniquely determined by $c_i \equiv D^{(p^i)} \chi(1) \mod (p)$, for the $p$-adic expansion $c=\sum_{i= 0}^\infty c_i p^i$.

	This yields an isomorphism $\End_{\la}(1+\Fm_K) \cong \BZ_p$ of topological rings where the former is the ring of locally analytic endomorphisms with multiplication given by composition and carrying the compact-open topology.
\end{theoremalphabetical}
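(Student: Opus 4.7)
The plan is to change coordinates via $x = z-1$, transforming $1+\Fm_K$ into $(\Fm_K,\,(x,y) \mapsto x+y+xy)$, and $\chi$ into a locally analytic $f \colon \Fm_K \to C$ with $f(0) = 1$ and $f(x+y+xy) = f(x)f(y)$. Near $0$ one writes $f(x) = \sum_{n\geq 0} a_n x^n$, convergent on some disk. The crux is that in characteristic $p$ the Frobenius and Lucas' theorem jointly force the coefficients to lie in $\BF_p$ and to be determined by the subsequence $(a_{p^i})_i \in \BF_p^\BN$, which in turn is matched by the $p$-adic digits of a unique $c \in \BZ_p$. This yields $\chi = \chi_c$; since $\chi_c$ has all Taylor coefficients in $\BF_p \subset K$, it automatically takes values in $1+\Fm_K \subset C\unts$, giving the claimed factoring.

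Multiplicativity first gives $\chi(z^p) = \chi(z)^p$, i.e.\ $f(x^p) = f(x)^p$; since $C$ has characteristic $p$, the freshman's dream produces $f(x)^p = \sum a_n^p x^{pn}$, yielding $a_n^p = a_n$ and hence $a_n \in \BF_p$. Expanding $f\bigl(x+y(1+x)\bigr) = f(x)f(y)$ and collecting the coefficient of $y^k$ gives
\[
	(1+x)^k \sum_{n \geq k} a_n \binom{n}{k} x^{n-k} \,=\, a_k f(x) \qquad\text{in } K\llbracket x \rrbracket.
\]
At $k = p^i$, using $(1+x)^{p^i} = 1+x^{p^i}$ and Lucas' congruence $\binom{n}{p^i} \equiv n_i \pmod p$ (where $n = \sum_j n_j p^j$), and noting that $1+x^{p^i}$ is a unit, one sees that if every $a_{p^i}$ vanishes then $a_n n_i = 0$ in $\BF_p$ for all $n, i$; since each $n \geq 1$ has some nonzero $p$-adic digit, this forces $a_n = 0$. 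So $\chi$ is entirely determined by $\bigl(D^{(p^i)}\chi(1)\bigr)_i = (a_{p^i})_i$.

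For existence, given $c = \sum c_i p^i \in \BZ_p$, define $\chi_c(z) := \sum_n \bigl(\binom{c}{n} \bmod p\bigr)(z-1)^n$. The series converges on all of $1+\Fm_K$ with values in $1+\Fm_K$, since its coefficients lie in $\BF_p \subset K$ and $|z-1| < 1$. It is a character because $(1+X)^c(1+Y)^c = (1+X+Y+XY)^c$ in $\BZ_p\llbracket X, Y\rrbracket$ holds for $c \in \BZ$, extends to all $c \in \BZ_p$ by $\BZ$-density, and descends modulo $p$. By Lucas, $D^{(p^i)}\chi_c(1) = \binom{c}{p^i} \bmod p = c_i$. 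Thus for any given $\chi$, setting $c_i := D^{(p^i)}\chi(1) \in \{0,\dots,p-1\}$ and $c := \sum c_i p^i$, the character $\chi \cdot \chi_c^{-1}$ has all $D^{(p^i)}(\cdot)(1) = 0$, so by the rigidity step it is trivial on a neighbourhood of $1$; it is trivial on all of $1+\Fm_K$ because every continuous character of the finite $p$-group $(1+\Fm_K)/(1+\unif^n\CO_K)$ into $C\unts$ is trivial in characteristic $p$ (no nontrivial $p$-power roots of unity).

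For the ring isomorphism $\BZ_p \cong \End_{\la}(1+\Fm_K)$, pointwise multiplication translates to $\chi_c \cdot \chi_{c'} = \chi_{c+c'}$ via $(1+x)^c(1+x)^{c'} = (1+x)^{c+c'}$, and composition to $\chi_c \circ \chi_{c'} = \chi_{cc'}$ via $((1+x)^{c'})^c = (1+x)^{c'c}$, both obtained by the same formal density argument. The compact-open topology matches the $p$-adic topology by a single estimate: if $c \equiv c' \pmod{p^N}$, then by Lucas $\binom{c-c'}{k} \equiv 0 \pmod p$ unless $p^N \mid k$, so $|\chi_c(z) - \chi_{c'}(z)| \leq |z-1|^{p^N} \leq |\unif|^{p^N}$ uniformly on $1+\Fm_K$; conversely, such uniform closeness forces the first $N$ hyperderivatives at powers of $p$, hence the first $N$ $p$-adic digits, to agree. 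The principal difficulty throughout is the rigidity step: the essential combinatorial point is that the $k = p^i$ slice of the functional equation, combined with $a_n \in \BF_p$, precisely isolates the digit $n_i$ of each index $n$, so the entire sequence $(a_{p^i})_i$ is required to recover $\chi$.
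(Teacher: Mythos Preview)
Your overall strategy is sound and genuinely different from the paper's. Both arguments begin with the Frobenius trick $f(x^p)=f(x)^p$ to force $a_n\in\BF_p$. From there the paper uses only the $k=1$ slice of your functional equation (equivalently the chain rule for $D^{(1)}$) to obtain the recursion $a_1 a_n=(n+1)a_{n+1}+na_n$, solves it to $a_{lp+j}=\binom{a_1}{j}a_{lp}$, and then peels off Frobenius layers via $\chi(z)=z^{a_1}\chi^{(1)}(z^p)$ to reach the closed formula $a_n=\prod_i\binom{a_{p^i}}{n_i}$ directly. You instead use the $k=p^i$ slices to prove a clean rigidity statement and then match against the explicit candidate $\chi_c$; your extension from a neighbourhood to all of $1+\Fm_K$ via the absence of $p$-power roots of unity in $C^\times$ is also different from the paper's Frobenius-based extension and works fine.

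There is, however, a real gap. You assert that $\chi\chi_c^{-1}$ has all $D^{(p^i)}(\cdot)(1)=0$, but this does not follow from $D^{(p^i)}\chi(1)=c_i=D^{(p^i)}\chi_c(1)$: the Leibniz rule $D^{(p^i)}(fg)=\sum_j D^{(j)}f\cdot D^{(p^i-j)}g$ has cross terms, and the map $\chi\mapsto(a_{p^i})_i$ is \emph{not} a group homomorphism from characters to $\BF_p^{\BN}$ (the digits of $c+c'$ involve carries; e.g.\ for $p=2$, $\chi=\psi=\chi_1$ gives $(a_{p^i})_\chi+(a_{p^i})_\psi=0$ but $(a_{p^i})_{\chi\psi}=(0,1,0,\dots)$). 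For the same reason your sentence ``so $\chi$ is entirely determined by $(a_{p^i})_i$'' does not follow from the rigidity step alone. The fix is an induction interleaving rigidity with the product computation: writing $\eta=\chi\chi_c^{-1}=\sum e_n(z-1)^n$ and using $\chi=\eta\,\chi_c$, one gets $e_1=a_1-c_0=0$; the $k=1$ equation for $\eta$ then forces $e_n=0$ for $0<n<p$; hence $a_p=e_p+\binom{c}{p}=e_p+c_1$ gives $e_p=0$; the $k=p$ equation forces $e_n=0$ for $0<n<p^2$; and so on. Equivalently, your $k=p^i$ identity already yields (for $n_i\neq 0$) the recursion $a_n n_i=(a_{p^i}-n_i+1)\,a_{n-p^i}$, which by strong induction on $n$ determines $a_n$ from $(a_{p^j})_j$ directly and recovers the paper's formula without passing through $\chi_c^{-1}$.
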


We want to add more details about the content of some of the individual sections.
The first chapter covers the theory of locally analytic representations necessary for our goal.
For the convenience of the reader, we decide to recapitulate the foundational theory in detail and for the most part with proofs in the first five sections there.

The sixth section treats the space $C^\la_x(X,V)$ of germs of locally analytic functions at $x \in X$ with values in a locally convex vector space $V$, for a locally analytic manifold $X$.
For a locally analytic Lie group $G$, the strong dual $D_e(G)$ of this space $C^\la_e(G,K)$ at the identity element $e$ embeds into the algebra of locally analytic distributions $D(G)$.
The hyperalgebra $\hy(G)$ is then defined to consists of those elements of $D_e(G)$ which vanish on some power of the unique maximal ideal of $C^\la_e(G,K)$.
Moreover, following Orlik--Strauch \cite{OrlikStrauch15JordanHoelderSerLocAnRep} we consider subalgebras $D(\Fg,H) \subset D(G)$ generated by $\hy(G)$ and $D(H)$, for locally analytic subgroups $H\subset G$.
Analogously to Agrawal--Strauch \cite{AgrawalStrauch22FromCatOLocAnRep}, modules over these subalgebras correspond to so called locally analytic $(\hy(G), H)$-modules.

The final section concerns endowing the $K$-rational points $X(K)$ of a smooth, separated rigid analytic $K$-space $X$ of countable type with the structure of a locally $K$-analytic manifold.
There we also show that $\hy\big(\bG(K)\big)$ and ${\rm Dist}(\bG)$ agree, for a smooth algebraic group $\bG$ over $K$.
\\

The second chapter is devoted to showing that the strong dual spaces of $H^0(\CX,\CE)$ and $\widetilde{H}^{d-j}_{(\BP_K^{j})^\rig}(\BP_K^d, \CE)$ are locally analytic representations of compact type.

While for $H^0(\CX,\CE)$ this is done like in the $p$-adic case for $\CE = \Omega_{\BP_K^d}$ considered by Schneider--Teitelbaum \cite{SchneiderTeitelbaum02pAdicBoundVal}, the local cohomology groups require some preparation.
The main step there is to give a description 
\begin{equation*}
	\widetilde{H}^k_{(\BP_K^{j})^\rig}(\BP_K^d, \CE) \cong \varprojlim_{n\in \BN} \widetilde{H}^k_{\BP_K^{j}(\varepsilon_n)^-}(\BP_K^d, \CE)
\end{equation*}
where $\varepsilon_n \defeq \abs{\unif}^n $, for a uniformizer $\unif$ of $K$.
In the limit on the right hand side, the local cohomology groups with respect to $\varepsilon_n$-neigh\-bour\-hoods $\BP_K^{j}(\varepsilon_n)^-$ around the Schubert variety are Banach spaces.
To take this limit in a controlled way we show that the differentials of a certain \v{C}ech complex which computes the cohomology of $\CE$ on the complement $\BP_K^d \setminus \BP_K^j$ are strict homomorphisms.
The topology on this \v{C}ech complex comes from certain affinoid subdomains of the principal open subsets $D_+(X_i) \subset \BP_K^d$.
Thereby we correct a flaw in the proof of \cite[Lemma 1.3.1]{Orlik08EquivVBDrinfeldUpHalfSp}.

Then $\widetilde{H}^k_{\BP_K^{d-j}(\varepsilon_n)^-}(\BP_K^d, \CE)'_b$ is a locally analytic $P^{n+1}_{(d-j+1,j)}$-representation where $P^{n+1}_{(d-j+1,j)}$ is a certain open subgroup of $\GL_{d+1}(\CO_K)$ which stabilizes $\BP_K^{d-j}(\varepsilon_n)^-$.
Ultimately we can conclude that $\widetilde{H}^{j}_{(\BP_K^{d-j})^\rig}(\BP_K^d, \CE)'_b$ is a locally analytic $\big(\hy\big( \GL_{d+1}(K) \big), P_{(d-j+1,j)} \big)$-module.
\\

The last chapter includes the proof of \Cref{Thm 0 - Main Theorem}.
In the first section we recall Orlik's method of using a certain acyclic ``fundamental complex'' of \'etale sheaves on the complement $\BP_K^d \setminus \CX$ considered as a closed pseudo-adic subspace.
This complex captures the combinatorial geometry of the complement and is available for period domains more generally, cf.\ \cite{Orlik05CohomPerDomRedGrpLocF}.
As mentioned, a spectral sequence associated with it yields the filtration in \Cref{Thm 0 - Main Theorem} and extensions
\begin{equation}\label{Eq 0 - Extension for first description}
	\begin{aligned}
		0 \lra &{\hspace{2pt}} H^j (\BP_K^d, \CE)' \botimes{K} v^{\GL_{d+1}(K)}_{P_{(d-j+1,1,\ldots,1)}}  \lra \big( V^j/V^{j-1} \big)'_b\\
		&\lra  \varinjlim_{n\in \BN} \Ind^{\GL_{d+1}(\CO_K)}_{P^n_{(d-j+1,j)}} \Big( \widetilde{H}^j_{\BP_K^{d-j} (\varepsilon_n)} (\BP_K^d, \CE)'_b \botimes{K} v^{P^n_{(d-j+1,j)}}_{P^n_{(d-j+1,1,\ldots,1)}} \Big) \lra 0
	\end{aligned} \tag{$\ast \ast$}
\end{equation}
of locally analytic $\GL_{d+1}(K)$-representations.

The next two sections then contain further analysis of the last term occurring in \eqref{Eq 0 - Extension for first description}.
Roughly outlined our approach is to embed this term into $C^\la \big(\GL_{d+1}(\CO_K), W_j \big)$ where 
\begin{equation*}
	W_j \defeq \varinjlim_{n\in \BN} \Big( \widetilde{H}^j_{\BP_K^{d-j} (\varepsilon_n)^-} (\BP_K^d, \CE)'_b \botimes{K} v^{P^n_{(d-j+1,j)}}_{P^n_{(d-j+1,1,\ldots,1)}} \Big).
\end{equation*}
For elements of $C^\la \big(\GL_{d+1}(\CO_K), W_j \big)$ the property of being invariant, for some $n\in \BN$, under the subgroup $P^n_{(d-j+1,j)}$ transfers to being invariant under the action of $\bP_{(d-j+1,j)}(\CO_K)$ and the ``infinitesimal'' action of $\hy\big( \GL_{d+1}(K)\big)$.
Dualizing then eventually results in an isomorphism between the last term of \eqref{Eq 0 - Extension for first description} and the strong dual of \eqref{Eq 0 - Tensored up representation}.

In the last section we compare our description in the case of a $p$-adic field $K$ to the one given by Orlik \cite{Orlik08EquivVBDrinfeldUpHalfSp} and the functors $\CF^G_P$ due to Orlik and Strauch \cite{OrlikStrauch15JordanHoelderSerLocAnRep}.
This comparison and the generalization of the $\CF^G_P$ due to Agrawal and Strauch \cite{AgrawalStrauch22FromCatOLocAnRep} then motivates the definition of an analogue of the functors $\CF^G_P$ for a general non-archimedean local field.


\subsection*{Acknowledgements}

This paper comes from the author's doctoral thesis with slight corrections and additions.
I want to thank my advisor Sascha Orlik for introducing me to this diverse and interesting topic.
I am grateful to him for his valuable advise and strong support.
Moreover I want to thank Oliver Fürst, Eugen Hellmann, Roland Huber, Christoph Spenke, Matthias Strauch, and Yingying Wang for helpful comments and discussions.

A substantial part of this project was done while the author was a member of the research training group \textit{GRK 2240: Algebro-Geometric Methods in Algebra, Arithmetic and Topology} which is funded by the Deutsche Forschungsgemeinschaft.

\subsection*{Notation and Conventions}

We write $\BN = \{1,2,\ldots \}$ and $\BN_0 = \{0,1,\ldots\}$.
For multiindices $\ul{i}= (i_1,\ldots,i_n) \in \BN^n_0$, with $n\in \BN$, we set $\abs{\ul{i}}\defeq i_1+\ldots +i_n$.
For $\ul{r} = (r_1,\ldots,r_n) \in \BR^n$, we write $\ul{r}^\ul{i} \defeq r_1^{i_1} \cdots r_n^{i_n}$.

Let $K$ be a non-archimedean field with non-trivial absolute value $\abs{\blank}\colon K \ra \BR_{\geq 0}$.
We let $\CO_K \defeq \{ x \in K \mid \abs{x}\leq 1 \}$ denote its ring of integers.
For $n\in \BN$, $\ul{r}\in \BR_{>0}^n$, and $a \in K^n$, we denote by
\begin{align*}
	B^{n}_{\ul{r}}(a) &\defeq \big\{ x\in K^n \,\big\vert\, \forall i=1,\ldots,n: \abs{x_i - a_i} \leq r_i  \big\} 
\end{align*}
the ``closed'' ball of multiradius $\ul{r}$ around $a$; it is open and closed.

In this work, locally convex $K$-vector spaces play a central role.
These are topological $K$-vector spaces which have a neighbourhood basis of the origin consisting of $\CO_K$-submodules.
We will frequently refer to \cite{Emerton17LocAnVect}, \cite{PerezGarciaSchikhof10LocConvSpNonArch} and \cite{Schneider02NonArchFunctAna} for the theory of this non-archimedean functional analysis.

For locally convex $K$-vector spaces $V$ and $W$, we denote by $\CL(V,W)$ the $K$-vector space of continuous homomorphisms from $V$ to $W$.
With the strong topology of bounded convergence (respectively, the weak topology of pointwise convergence) this space becomes a locally convex $K$-vector space itself denoted by $\CL_b(V,W)$ (respectively, $\CL_s(V,W)$), see \cite[Examples p.\ 35]{Schneider02NonArchFunctAna}.
We note that, for continuous homomorphisms of locally convex $K$-vector spaces $f\colon V'\ra V$ and $h\colon W\ra W'$, the homomorphisms
\begin{alignat*}{3}
	\CL_b(V,W) &\lra \CL_b(V',W) \,,\quad g &&\lto g \circ f ,\quad \text{and}\\
	\CL_b(V,W) &\lra \CL_b(V,W') \,,\quad g &&\lto h \circ g ,
\end{alignat*}
are continuous \cite[\S 18, p.\ 113]{Schneider02NonArchFunctAna}.

Moreover, we denote the dual space of a $K$-vector space $V$ by $V^\ast \defeq \Hom_K (V,K)$.
When $V$ is a locally convex, we write $V'\defeq \CL(V,K) \subset V^\ast$ for the subspace of continuous linear forms, as well as $V'_b$ and $V'_s$ for the strong and weak dual spaces accordingly.
However, when $E$ is a $K$-Banach space, we occasionally simplify the notation by letting $E'$ denote its strong dual space.
For locally convex $K$-vector spaces $V$ and $W$, taking the transpose yields homomorphisms (see \cite[\S 0.3.8]{Emerton17LocAnVect})
\begin{equation*}
	\CL (V,W) \lra \CL(W'_b, V'_b) \qquad \text{ and } \qquad \CL(V,W) \lra \CL(W'_s, V'_s) .
\end{equation*}

On the tensor product of locally convex $K$-vector spaces $V$ and $W$, we denote the projective (respectively, inductive) tensor product topology by $V \projotimes W$ (respectively, $V \indotimes W$), cf.\ \cite[\S 17]{Schneider02NonArchFunctAna}.
We write $V \projcotimes W$ and $V \indcotimes W$ for the Hausdorff completions of the respective locally convex $K$-vector spaces.
If $V$ and $W$ both are $K$-Fr\'echet spaces or if both are semi-complete LB-spaces, the projective and inductive tensor product topology agree, see \cite[Prop.\ 17.6]{Schneider02NonArchFunctAna} and \cite[Prop.\ 1.1.31]{Emerton17LocAnVect}.
In these cases, we unambiguously write $V \botimes{K} W$ and $V\cotimes{K} W$.

The category of locally convex $K$-vector spaces with continuous homomorphisms is an example of a quasi-abelian category in the sense of \cite{Schneiders98QuasiAbCat}.
The strict morphisms are precisely the homomorphisms $f\colon V \ra W$ which are strict in the conventional sense, i.e.\ for which the induced $V/\Ker(f) \ra \Im(f)$ is a topological isomorphism.

For subgroups $H$ and $H'$ of a group $G$, we use the notation $H \cdot H'$ to denote the subset of $G$ of all elements of the form $h h'$, for $h\in H$, $h' \in H'$.

Finally, for a scheme or a rigid analytic space, we denote its structure sheaf by $\CO$ when the considered scheme or rigid analytic space is apparent from the context.

\setcounter{section}{0}

\section{Locally Analytic Representation Theory}




\subsection{Non-Archimedean Manifolds}

For the basics on manifolds over non-archimedean fields we follow \cite[\S 4,5]{Bourbaki07VarDiffAnFasciDeResult} and \cite[Ch.\ II]{Schneider11pAdicLieGrps}.
Let $L$ be a complete non-archimedean field with non-trivial absolute value $\abs{\blank}$.

Let $E=(E,\lVert \blank\rVert_E)$ be an $L$-Banach space, and denote by $E \llrrbracket{X_1,\ldots, X_n}$ the space of formal power series in $n$ variables with values in $E$, for $n\in \BN$.
For $\ul{r}\in \BR_{>0}^n$, we define the subspace of all \textit{power series strictly convergent on $B_{\ul{r}}^{n}(0)$ with values in $E$}
\begin{equation*}
	\CA_\ul{r} (L^n,E) \defeq \bigg\{ \sum_{\ul{i}\in \BN^n_0} v_{\ul{i}} \, X_1^{i_1}\cdots X_n^{i_n} \,\bigg|\, \ul{r}^{\ul{i}} \, \lVert v_{\ul{i}} \rVert_E  \ra 0 \text{ as } \abs{\ul{i}} \ra \infty \bigg\}
	\subset E \llrrbracket{X_1,\ldots, X_n} .
\end{equation*}
The $L$-vector space $\CA_\ul{r}(L^n,E)$ is an $L$-Banach space with respect to the norm
\begin{equation*}
	\bigg\lVert \sum_{\ul{i}\in \BN^n_0} v_{\ul{i}} \, X_1^{i_1}\cdots X_n^{i_n} \bigg\rVert_r \defeq \sup_{\ul{i}\in\BN^n_0} \ul{r}^{\ul{i}} \, \lVert v_{\ul{i}} \rVert_E .
\end{equation*}
Note that, for $\ul{r}\geq \ul{r}'$ (i.e.\ $r_j \geq r'_j$, for all $j=1,\ldots,n$), the inclusion $\CA_\ul{r}(L^n,E) \subset \CA_{\ul{r}'}(L^n,E)$ is a continuous homomorphism. 
Hence we define the space of \textit{power series convergent at $0$ with values in $E$}
\begin{equation*}
	\CA(L^n, E) \defeq \bigcup_{\ul{r} \in \BR_{>0}^n} \CA_\ul{r}(L^n,E) ,
\end{equation*}
and endow it with the inductive limit topology, i.e.\ with the finest locally convex topology such that all inclusions $\CA_\ul{r}(L^n,E) \hookrightarrow \CA(L^n,E)$ are continuous.

Moreover, every $f = \sum_{\ul{i}\in \BN^n_0} v_{\ul{i}} \,X_1^{i_1}\cdots X_n^{i_n}\in \CA_\ul{r}(L^n,E)$ defines a continuous function
\begin{equation}\label{Eq 1 - Function associated to a power series}
	B_\ul{r}^n(0) \lra E \,,\quad (x_1,\ldots,x_n) \lto f(x_1,\ldots,x_n) \defeq  \sum_{\ul{i}\in \BN^n_0} v_{\ul{i}} \, x_1^{i_1}\cdots x_n^{i_n} .
\end{equation}

\begin{proposition}[{Identity theorem for power series \cite[4.1.4]{Bourbaki07VarDiffAnFasciDeResult}}]\label{Prop 1 - Identity theorem for power series}
	Let $n\in \BN$, $\ul{r} \in \BR_{>0}^n$, and let $E$ be an $L$-Banach space.
	The homomorphism from $\CA_\ul{r}(L^n,E)$ to the $L$-vector space of continuous functions on $B_\ul{r}^{n}(0)$ given by associating to $f \in \CA_\ul{r}(L^n, E)$ the function \eqref{Eq 1 - Function associated to a power series} is injective.
\end{proposition}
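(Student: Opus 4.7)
The plan is to proceed by induction on the number of variables $n$, with the one-variable case handled by an elementary leading-term argument exploiting the continuity of the evaluation map and the non-triviality of the absolute value on $L$.

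For the base case $n=1$, suppose $f = \sum_{i \geq 0} v_i X^i \in \CA_r(L,E)$ is nonzero but the associated function on $B_r^1(0)$ vanishes identically. Let $k = \min\{i \in \BN_0 \mid v_i \neq 0\}$, and define $g(X) \defeq \sum_{j \geq 0} v_{k+j} X^j$. Since $r^{k+j} \lVert v_{k+j}\rVert_E \to 0$ and $r > 0$, we have $r^j \lVert v_{k+j}\rVert_E \to 0$, so $g \in \CA_r(L,E)$; thus $g$ defines a continuous function on $B_r^1(0)$ with $g(0) = v_k \neq 0$. By continuity, there exists $s \in (0, r]$ such that $\lVert g(x)\rVert_E > 0$ for all $x \in B_s^1(0)$. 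The non-triviality of the absolute value on $L$ produces some $0 \neq \lambda \in L$ with $|\lambda| < 1$; choosing $m \gg 0$ yields $0 \neq \lambda^m \in B_s^1(0)$. But then $f(\lambda^m) = \lambda^{mk} g(\lambda^m) \neq 0$, contradicting the hypothesis.

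For the inductive step, assume the result holds for $n-1$ variables and let $f \in \CA_\ul{r}(L^n,E)$ define the zero function on $B_\ul{r}^n(0)$. Write $f = \sum_{j\geq 0} f_j(X_1,\ldots,X_{n-1})\,X_n^j$ with $f_j \in \CA_{\ul{r}'}(L^{n-1},E)$ for $\ul{r}' = (r_1,\ldots,r_{n-1})$, which is possible because the coefficient condition $\ul{r}^\ul{i} \lVert v_\ul{i}\rVert_E \to 0$ implies $(\ul{r}')^{\ul{i}'} \lVert v_{\ul{i}',j}\rVert_E \to 0$ for each fixed $j$. Fix $(x_1,\ldots,x_{n-1}) \in B_{\ul{r}'}^{n-1}(0)$ and consider the one-variable series $F_{x_1,\ldots,x_{n-1}}(X_n) \defeq \sum_{j\geq 0} f_j(x_1,\ldots,x_{n-1})\, X_n^j \in \CA_{r_n}(L,E)$; by hypothesis it vanishes on $B_{r_n}^1(0)$, so the base case yields $f_j(x_1,\ldots,x_{n-1}) = 0$ for every $j$. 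Since $(x_1,\ldots,x_{n-1}) \in B_{\ul{r}'}^{n-1}(0)$ was arbitrary, the inductive hypothesis applied to each $f_j$ gives that all of its coefficients vanish, whence $f = 0$.

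The only delicate step is the one-variable base case: it is essential both that the absolute value on $L$ is non-trivial (to guarantee the existence of nonzero points in arbitrarily small balls, without which the argument would collapse in the trivial case) and that power series in $\CA_r(L,E)$ genuinely define continuous functions on the closed ball, as asserted by the construction preceding the statement. Given these two ingredients, the factorization $f = X^k g$ together with continuity of $g$ at $0$ provides the contradiction, and the induction then transfers the result to arbitrary $n$.
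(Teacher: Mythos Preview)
Your proof is correct. The paper does not supply its own argument for this proposition; it simply cites Bourbaki \cite[4.1.4]{Bourbaki07VarDiffAnFasciDeResult} as the source, so there is no proof in the paper to compare against. Your induction on $n$, with the one-variable leading-term argument using non-triviality of the absolute value to produce a nonzero point where $f$ cannot vanish, is the standard elementary route and is carried out cleanly. The only places worth a second glance are both handled correctly: that $g \in \CA_r(L,E)$ (division by the constant $r^k$), and that in the inductive step the specialization $F_{x_1,\ldots,x_{n-1}} \in \CA_{r_n}(L,E)$ actually lies in the space and evaluates to $f(x_1,\ldots,x_n)$ (which follows from the uniform tail estimate $r_n^j \lVert f_j(x')\rVert_E \leq \sup_{|\ul{i}|\geq j} \ul{r}^{\ul{i}} \lVert v_{\ul{i}}\rVert_E \to 0$ and absolute convergence permitting rearrangement).
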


Therefore we will denote both the power series as well as the induced function by $f$.

\begin{proposition}[{\cite[4.1.5]{Bourbaki07VarDiffAnFasciDeResult} or \cite[Prop.\ 5.4]{Schneider11pAdicLieGrps}}]\label{Prop 1 - Composition of power series}
	For $m,n \in \BN$, $\ul{r} \in \BR_{>0}^m$, $\ul{s}\in \BR_{>0}^n$, let $f\in \CA_\ul{r}(L^m,L^n)$ be written as $f = (f_j)_{j=1,\ldots,n}$, for $f_j \in \CA_{\ul{r}}(L^m, L)$.
	Moreover, assume that $\norm{f_j}_\ul{r} \leq s_j$, for all $j=1,\ldots,n$, and let $E$ be an $L$-Banach space.
	Then the map
	\begin{equation*}
		\begin{tikzcd}[row sep = 0ex,	/tikz/column 1/.append style={anchor=base east},	/tikz/column 2/.append style={anchor=base west}]
			\CA_\ul{s}(L^n, E) \ar[r] &\CA_\ul{r}(L^m,E) \\
			g(\ul{Y}) = \sum_{\ul{i} \in \BN^n_0} v_{\ul{i}} \,\ul{Y}^{\ul{i}} \ar[r, mapsto] &(g\circ f)(\ul{X}) \defeq  \sum_{\ul{i}\in \BN^n_0} v_{\ul{i}}\, f_1(\ul{X})^{i_1} \cdots f_n(\ul{X})^{i_n}
		\end{tikzcd}
	\end{equation*}
	is a well-defined continuous homomorphism of operator norm $\leq 1$, and the associated functions satisfy $(g\circ f)(x) =g(f(x))$, for all $x\in B_\ul{r}^{m}(0)$. 
\end{proposition}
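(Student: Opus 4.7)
The plan is to first establish the formal power series statement and then transfer it to the level of functions. The key observation is that each space $\CA_\ul{r}(L^m,L)$ with its norm $\norm{\blank}_\ul{r}$ is a Banach $L$-algebra on which the norm is even multiplicative (this is classical for Gauss-type norms and in any case submultiplicativity suffices). Granting submultiplicativity, for any multi-index $\ul{i}\in \BN_0^n$ I obtain the estimate
\begin{equation*}
\bigl\lVert f_1^{i_1}\cdots f_n^{i_n} \bigr\rVert_\ul{r} \,\leq\, \norm{f_1}_\ul{r}^{i_1}\cdots \norm{f_n}_\ul{r}^{i_n} \,\leq\, s_1^{i_1}\cdots s_n^{i_n} \,=\, \ul{s}^\ul{i}.
\end{equation*}

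First I would show that for $g(\ul{Y})=\sum_{\ul{i}} v_\ul{i}\, \ul{Y}^\ul{i} \in \CA_\ul{s}(L^n,E)$, the series defining $g\circ f$ converges in $\CA_\ul{r}(L^m,E)$. Indeed, the general term $v_\ul{i}\, f_1^{i_1}\cdots f_n^{i_n}$ lies in $\CA_\ul{r}(L^m,E)$ (scaling a scalar power series by a vector in $E$), and using the above estimate together with the fact that the norm on $\CA_\ul{r}(L^m,E)$ is compatible with the $E$-action, I get
\begin{equation*}
\bigl\lVert v_\ul{i}\, f_1^{i_1}\cdots f_n^{i_n} \bigr\rVert_\ul{r} \leq \norm{v_\ul{i}}_E \cdot \ul{s}^\ul{i},
\end{equation*}
which tends to $0$ as $\abs{\ul{i}}\to \infty$ because $g\in \CA_\ul{s}(L^n,E)$. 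By the non-archimedean Cauchy criterion the series converges in the Banach space $\CA_\ul{r}(L^m,E)$, and its limit $g\circ f$ satisfies $\norm{g\circ f}_\ul{r}\leq \sup_\ul{i} \norm{v_\ul{i}}_E \cdot \ul{s}^\ul{i} = \norm{g}_\ul{s}$. This simultaneously verifies that the map is well-defined, $L$-linear (linearity and continuity of the scalar action make this immediate from a passage to limits), continuous, and of operator norm $\leq 1$.

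It remains to identify $(g\circ f)(x)=g(f(x))$ for every $x\in B_\ul{r}^m(0)$. Here $\abs{f_j(x)}\leq \norm{f_j}_\ul{r}\leq s_j$, so $f(x)\in B_\ul{s}^n(0)$ and the right-hand side makes sense by \eqref{Eq 1 - Function associated to a power series}. Since evaluation at $x$ is a continuous $L$-linear map $\CA_\ul{r}(L^m,E)\to E$ (of operator norm $\leq 1$ by definition of $\norm{\blank}_\ul{r}$), applying it term by term to the convergent series defining $g\circ f$ in $\CA_\ul{r}(L^m,E)$ gives
\begin{equation*}
(g\circ f)(x) \,=\, \sum_{\ul{i}\in \BN_0^n} v_\ul{i}\, f_1(x)^{i_1}\cdots f_n(x)^{i_n} \,=\, g\bigl(f(x)\bigr),
\end{equation*}
where the last equality is the definition of $g$ as a function on $B_\ul{s}^n(0)$.

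The only real obstacle is the submultiplicativity $\norm{f\, h}_\ul{r}\leq \norm{f}_\ul{r}\norm{h}_\ul{r}$ for the Gauss norm on $\CA_\ul{r}(L^m,L)$, which I would justify directly from the ultrametric inequality applied to the Cauchy product; everything else reduces to the ultrametric convergence of the relevant series and continuity of evaluation. I expect this to be a smooth argument overall, with the non-archimedean absolute convergence removing any need to worry about rearrangements.
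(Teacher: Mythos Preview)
Your argument is correct and is precisely the standard proof: submultiplicativity of the Gauss norm gives the term-by-term estimate $\lVert v_\ul{i}\, f_1^{i_1}\cdots f_n^{i_n}\rVert_\ul{r}\leq \lVert v_\ul{i}\rVert_E\,\ul{s}^\ul{i}\to 0$, whence non-archimedean convergence and the operator-norm bound, and continuity of evaluation transfers everything to the level of functions. The paper does not give its own proof of this proposition but simply cites \cite[4.1.5]{Bourbaki07VarDiffAnFasciDeResult} and \cite[Prop.\ 5.4]{Schneider11pAdicLieGrps}, where exactly this argument appears.
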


\begin{corollary}[{\cite[Cor.\ 5.5]{Schneider11pAdicLieGrps}}]\label{Cor 1 - Redevelopment of power series}
	Let $f\in \CA_\ul{r}(L^m,E)$, and $y\in B_\ul{r}^{m}(0)$.
	Then there exists $f_y \in \CA_\ul{r}(L^m, E)$ such that $\norm{f_y}_\ul{r} = \norm{f}_\ul{r}$ and the associated functions satisfy
	\begin{equation*}
		f(x) = f_y(x-y) \quad \text{, for all $x\in B_\ul{r}^{m}(0) = B_\ul{r}^{m}(y)$.}
	\end{equation*}
\end{corollary}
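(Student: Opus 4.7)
The plan is to obtain $f_y$ by substituting $X + y$ into $f$, appealing to the composition result of \Cref{Prop 1 - Composition of power series}. Concretely, I would define $h \in \CA_\ul{r}(L^m, L^m)$ componentwise by $h_j(X) \defeq X_j + y_j$, for $j = 1,\ldots,m$. Since $y \in B^m_\ul{r}(0)$ means $\abs{y_j} \leq r_j$, and since $\norm{X_j}_\ul{r} = r_j$, the non-archimedean nature of the norm gives $\norm{h_j}_\ul{r} = \max(r_j, \abs{y_j}) = r_j$. Thus the hypothesis of \Cref{Prop 1 - Composition of power series} (with $\ul{s} = \ul{r}$, $n=m$) is satisfied, and we may set
\begin{equation*}
    f_y \defeq f \circ h \in \CA_\ul{r}(L^m, E) .
\end{equation*}

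Next I would verify the two assertions. For the norm, the proposition states that the composition homomorphism $\CA_\ul{r}(L^m, E) \to \CA_\ul{r}(L^m, E)$, $g \mapsto g \circ h$, has operator norm at most $1$, so $\norm{f_y}_\ul{r} = \norm{f \circ h}_\ul{r} \leq \norm{f}_\ul{r}$. Applying the same reasoning with $-y$ in place of $y$ produces $h'(X) \defeq X - y$ satisfying $\norm{h'_j}_\ul{r} \leq r_j$, and the equality $f = f_y \circ h'$ of power series (to be justified below from the associated-function identity together with \Cref{Prop 1 - Identity theorem for power series}) gives the reverse estimate $\norm{f}_\ul{r} \leq \norm{f_y}_\ul{r}$.

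For the identity of associated functions, the last clause of \Cref{Prop 1 - Composition of power series} says $(f \circ h)(x) = f(h(x)) = f(x+y)$ for all $x \in B^m_\ul{r}(0)$. Observe that $B^m_\ul{r}(0) = B^m_\ul{r}(y)$ by the ultrametric inequality, since $\abs{x_j - y_j} \leq \max(\abs{x_j}, \abs{y_j}) \leq r_j$ (and symmetrically). Substituting $x - y$ for $x$, which stays in the common ball, yields $f_y(x - y) = f(x)$ for every $x \in B^m_\ul{r}(0)$, as desired. To justify the reverse composition $f = f_y \circ h'$ at the level of power series, I would apply \Cref{Prop 1 - Composition of power series} again to form $f_y \circ h'$ and observe that its associated function equals $f$ on $B^m_\ul{r}(0)$; the identity theorem \Cref{Prop 1 - Identity theorem for power series} then upgrades this to equality of power series, closing the norm argument above.

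There is no genuine obstacle here: the entire argument is an application of the composition and identity theorems, with the only point requiring slight care being the bookkeeping that $\norm{h_j}_\ul{r} \leq r_j$ in order to land in the correct strictly convergent space, which is exactly guaranteed by the hypothesis $y \in B^m_\ul{r}(0)$.
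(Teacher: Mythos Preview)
Your proof is correct and is exactly the intended derivation of the corollary from \Cref{Prop 1 - Composition of power series}: compose with the translation $h(X)=X+y$, use the operator-norm bound for one inequality, and reverse via $h'(X)=X-y$ together with the identity theorem. The paper itself does not supply a proof for this statement; it simply cites \cite[Cor.\ 5.5]{Schneider11pAdicLieGrps}, where the argument is the same as yours.
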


\begin{definition}\label{Def 1 - Locally analytic functions with values in Banach space}
	Let $U\subset L^m$ be an open subset, for $m\in \BN$, and $E$ an $L$-Banach space.
	We call a function $f \colon U \ra E$ \textit{locally $L$-analytic} if, for every $a \in U$, there exists a power series $f_{a}\in \CA_\ul{r}(L^m,E)$, for some $\ul{r}\in \BR_{>0}^m$, such that $f(x) = f_{a}(x-a)$, for all $x\in B_\ul{r}^{m}(a)$.
	We denote the $L$-vector space of locally $L$-analytic functions on $U$ with values in $E$ by $C^\la(U, E)$.
\end{definition}

\begin{remark}
	In particular, such a locally $L$-analytic function is continuous.
\end{remark}

\begin{lemma}[{\cite[Lemma 6.3]{Schneider11pAdicLieGrps}}]\label{Lemma 1 - Composition of locally analytic functions}
	Let $U\subset L^m$ and $U'\subset L^n$ be open subsets.
	Moreover, let $f\in C^\la(U,L^n)$ such that $f(U)\subset U'$, and let $E$ be an $L$-Banach space.
	Then the map
	\begin{equation*}
		C^\la(U',E) \lra C^\la(U,E) \,,\quad g \lto g\circ f ,
	\end{equation*}
	is well-defined and $L$-linear.
\end{lemma}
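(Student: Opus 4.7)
The plan is to reduce the claim to Propositions~1.1 (composition of power series) and~1.1 (redevelopment). The $L$-linearity of the map $g \mapsto g \circ f$ is immediate from the pointwise definition, so the essential task is to verify that $g \circ f$ is locally $L$-analytic at every $a \in U$.

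Fix such an $a$, set $b \defeq f(a) \in U'$, and pick local power-series representations: by hypothesis there exist $f_a \in \CA_\ul{r}(L^m, L^n)$ for some $\ul{r} \in \BR_{>0}^m$ with $f(x) = f_a(x-a)$ on $B^m_\ul{r}(a)$, and $g_b \in \CA_\ul{s}(L^n, E)$ for some $\ul{s} \in \BR_{>0}^n$ with $g(y) = g_b(y-b)$ on $B^n_\ul{s}(b)$. Define $h \defeq f_a - b \in \CA_\ul{r}(L^m, L^n)$, so that $h(0) = 0$; writing $h = (h_1,\ldots,h_n)$, each component $h_j$ has vanishing constant term.

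The key technical step is to shrink $\ul{r}$ to some $\ul{r}' \leq \ul{r}$ with $\ul{r}' \in \BR_{>0}^m$ such that simultaneously (i) $B^m_{\ul{r}'}(a) \subset U$, (ii) $f\big(B^m_{\ul{r}'}(a)\big) \subset B^n_\ul{s}(b) \cap U'$, and (iii) $\norm{h_j}_{\ul{r}'} \leq s_j$ for each $j = 1,\ldots,n$. For (iii), observe that if one chooses $\ul{r}' = \lambda \ul{r}$ with $0 < \lambda < 1$, then for every multiindex $\ul{i}$ with $\abs{\ul{i}} \geq 1$ one has $(\ul{r}')^\ul{i} = \lambda^{\abs{\ul{i}}}\,\ul{r}^\ul{i} \leq \lambda\,\ul{r}^\ul{i}$, and since $h_j$ has no constant term this yields $\norm{h_j}_{\ul{r}'} \leq \lambda \norm{h_j}_\ul{r}$; hence (iii) holds once $\lambda$ is small enough. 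Conditions (i) and (ii) can then be arranged by a further shrinking, using openness of $U$ and continuity of $f$ (equivalently, of the power series $f_a$).

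With $\ul{r}'$ chosen, Proposition~1.1 on composition of power series applies to $g_b$ and $h$ and produces a power series $g_b \circ h \in \CA_{\ul{r}'}(L^m, E)$ whose associated function satisfies $(g_b \circ h)(y) = g_b(h(y))$ for $y \in B^m_{\ul{r}'}(0)$. For any $x \in B^m_{\ul{r}'}(a)$, setting $y = x-a \in B^m_{\ul{r}'}(0)$ and using $f(x) = f_a(x-a) \in B^n_\ul{s}(b)$ by (ii), we compute
\begin{equation*}
    (g \circ f)(x) = g\big(f_a(x-a)\big) = g_b\big(f_a(x-a) - b\big) = g_b\big(h(x-a)\big) = (g_b \circ h)(x-a),
\end{equation*}
which exhibits $g \circ f$ as locally analytic at $a$ via the power series $g_b \circ h$ (note the use of Proposition~1.1, the identity theorem, to identify the power series with its associated function). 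Since $a \in U$ was arbitrary, $g \circ f \in C^\la(U, E)$. The one subtle point, and the only place where the argument is not purely formal, is the shrinking to achieve $\norm{h_j}_{\ul{r}'} \leq s_j$; this is made possible precisely because $h$ has zero constant term, i.e.\ because we translated via $b = f(a)$ before composing.
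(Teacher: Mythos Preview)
Your proof is correct and follows the standard argument; it is essentially the proof given in the cited reference \cite[Lemma~6.3]{Schneider11pAdicLieGrps}. The paper itself does not supply a proof of this lemma but only records the citation, so there is nothing further to compare. One minor remark: your condition~(ii) is in fact automatic once (i) and (iii) hold, since $\norm{h_j}_{\ul{r}'} \leq s_j$ forces $h\big(B^m_{\ul{r}'}(0)\big) \subset B^n_\ul{s}(0)$ and the hypothesis $f(U) \subset U'$ takes care of the rest; but stating it separately does no harm.
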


\begin{definition}
	Let $X$ be a topological space.
	\begin{altenumerate}
		\item
		A \textit{chart} of $X$ consists of an open subset $U\subset X$ and a map $\varphi \colon U \ra L^m$, for some $m\in\BN$, which is a homeomorphism onto an open subset of $L^m$.
		We will occasionally refer to a chart simply by $\varphi$ or $U$ if the context allows it. 
		For $x\in X$, we say that $\varphi$ is a chart \textit{around $x$} if $x\in U$.
		We call $\varphi$ \textit{centred at $x$} if $\varphi(x) =0$.
		\item 
		Two charts $\varphi\colon U \ra L^{m}$ and $\psi\colon W\ra L^{n}$ of $X$ are \textit{compatible} if the functions
		\begin{equation*}
			\psi \circ \varphi^{-1} \colon \varphi(U \cap W) \lra \psi(U\cap W) \qquad\text{and}\qquad \varphi \circ \psi^{-1} \colon \psi(U\cap W) \lra \varphi(U\cap W)
		\end{equation*}
		are locally $L$-analytic.
		\item
		An \textit{atlas} $\CA$ of $X$ is a set of pairwise compatible charts whose domains cover $X$.
		Two atlases $\CA$ and $\CB$ of $X$ are \textit{equivalent} if $\CA \cup \CB$ is an atlas as well.
		An atlas $\CA$ is \textit{maximal} if any equivalent atlas $\CB$ satisfies $\CB \subset \CA$.
	\end{altenumerate}
\end{definition}

\begin{remarks}
	\begin{altenumerate}
		\item
		Equivalence of atlases indeed is an equivalence relation, and every equivalence class contains a unique maximal atlas, see \cite[Rmk.\ 7.2]{Schneider11pAdicLieGrps}.
		\item
		Given $x\in X$ and a maximal atlas $\CA$ of $X$, there is a chart in $\CA$ that is centred at $x$:
		Let $\varphi\colon U \ra L^m$ be any chart with $x\in U$.
		Then $\varphi'\colon U \ra L^m, y \mto \varphi(y)-\varphi(x)$ is compatible with the charts of $\CA$ by \Cref{Lemma 1 - Composition of locally analytic functions}.		
	\end{altenumerate}
\end{remarks}

We want to consider manifolds with the following good properties:

\begin{definition}
	A \textit{(finite-dimensional) locally $L$-analytic manifold} is a Hausdorff, paracompact, second-countable topological space $X$ together with a maximal atlas $\CA$.
	In the following, when we speak of a chart of a locally $L$-analytic manifold, we mean a chart of its maximal atlas.

	For a point $x\in X$ with a chart $\varphi \colon U \ra L^m$ around $x$, we call $m$ the \textit{dimension of $X$ at $x$}.
	By \cite[Lemma 7.1]{Schneider11pAdicLieGrps}, this dimension is independent of the chart around $x$.
\end{definition}

\begin{remarks}\label{Rmk 1 - Locally analytic manifold has disjoint countable covering by compact open subsets}
	\begin{altenumerate}
		\item
		Any locally $L$-analytic manifold $X$ is strictly paracompact, i.e.\ any open covering of $X$ admits a refinement by pairwise disjoint open subsets (\cite[5.3.7]{Bourbaki07VarDiffAnFasciDeResult} or \cite[Prop.\ 8.7]{Schneider11pAdicLieGrps}).
		\item
		Let $X$ be a locally $L$-analytic manifold. Then $X$ is locally compact if and only if $L$ is locally compact (i.e.\ a local field) or $X$ is a discrete topological space.
		\item
		Any disjoint open covering of $X$ is countable.
		Moreover, if $L$ is locally compact, then, for any locally $L$-analytic manifold $X$, there exists a disjoint countable covering of $X$ by compact open subsets.
	\end{altenumerate}
\end{remarks}
\begin{proof}[Proof of (ii) and (iii)]
	These statements are probably well-known, but we still want to include proofs here.

	For (ii), first assume that $X$ is locally compact.
	If, for all $x\in X$, we can find a charts $\varphi\colon U \ra L^0=\{0\}$ with $x\in U$, it follows that $X$ is discrete.
	On the other hand, consider the situation that there exists a chart $\varphi\colon U \ra L^n$ with $n>0$.
	We then find a compact subset $C \subset U$, and after shrinking we may assume that $\varphi(C) = B^{n}_{\ul{r}}(a)$, for $\ul{r} \in \BR^n_{>0}$, $a=(a_1,\ldots,a_n)\in \varphi(C)$.
	This implies that $B^{1}_{r_1} (a_1) \subset L$ is compact, too.
	But this is equivalent to $L$ being locally compact.
	For the reverse implication see \cite[5.1.9]{Bourbaki07VarDiffAnFasciDeResult}.

	In (iii), because $X$ is second countable, for every open covering $X= \bigcup_{i\in I} U_i$, there exists a countable subset $J\subset I$ such that $X = \bigcup_{i\in J} U_i$ is a covering, see \cite[Ch.\ IX.\ \S 2.8 Prop.\ 13]{Bourbaki66GenTop2}.
	If the covering $X= \bigcup_{i\in I} U_i$ is disjoint, we necessarily have $J = I$.

	Furthermore, the topology of $X$ can be defined by a metric which satisfies the strict triangle inequality because $X$ is paracompact, see \cite[Prop.\ 8.7]{Schneider11pAdicLieGrps}.
	Hence there exists a base $\CB$ for the topology of $X$ that consists of subsets which are open and closed \cite[Ch.\ IX.\ Ex.\ for \S 6, Ex.\ 2a)]{Bourbaki66GenTop2}.
	As we have seen in (ii), the assumption that $L$ is locally compact implies that $X$ is locally compact, i.e.\ for any $x\in X$, there exists a compact neighbourhood $C_x$ of $x$.
	Then we find an open and closed subset $B_x \in \CB$ such that $B_x\subset C_x$ and which therefore is compact itself.
	In conclusion, we see that the set of compact open subsets constitutes a covering of $X$.
	Hence there exists a countable collection $\{C_n\}_{n\in \BN}$ of compact open subsets which already covers $X$.	
	Setting $W_n \defeq C_n \setminus (C_0 \cup \ldots \cup C_{n-1})$ now yields the sought disjoint countable covering $X=\bigcup_{n\in\BN} W_n$ by compact open subsets.
\end{proof}

\begin{definition}[{\cite[5.8.3]{Bourbaki07VarDiffAnFasciDeResult}}]\label{Def 1 - Submanifolds}
	A subset $Y \subset X$ of a locally $L$-analytic manifold $X$ is called a \textit{locally $L$-analytic submanifold} if, for every $y\in Y$, there exist a chart $\varphi \colon U \ra L^m$ around $x$ and a linear subspace $F \subset L^m$ such that $\varphi$ induces a homeomorphism 
	\begin{equation*}
		\varphi\res{U\cap Y} \colon U \cap Y \lra \varphi(U) \cap F.
	\end{equation*}
	Taking isomorphisms $F \cong L^k$, for some $k\leq m$, the charts $\varphi\res{U\cap Y} \colon U \cap Y \ra L^k$ equip $Y$ with the structure of a locally $L$-analytic manifold, see \cite[5.8.1]{Bourbaki07VarDiffAnFasciDeResult}.
	Indeed, $Y$ also is paracompact because $X$ is metrizable by \cite[Prop.\ 8.7]{Schneider11pAdicLieGrps}.
	When $Y \subset X$ is open, a maximal atlas of $Y$ is given by the charts $U$ of $X$ such that $U \subset Y$, see \cite[p.\ 48]{Schneider11pAdicLieGrps}.
\end{definition}

\begin{remark}
	The product of two locally $L$-analytic manifolds $X$ and $Y$ becomes a locally $L$-analytic manifold when endowed with the product topology and the atlas given by $\varphi \times \psi \colon U \times V \ra L^{m+n}$, for charts $\varphi\colon U \ra L^m$ and $\psi \colon V \ra L^n$ of $X$ and $Y$ respectively.
\end{remark}

\begin{definition}\label{Def 1 - Locally analytic maps between locally analytic manifolds}
	\begin{altenumerate}
		\item
		Let $X$ be a locally $L$-analytic manifold and $E$ an $L$-Banach space.
		A function $f\colon X \ra E$ is \textit{locally $L$-analytic} if $f\circ \varphi^{-1}\colon \varphi(U)\ra E$ is locally $L$-analytic, for every chart $\varphi\colon U \ra L^m$ of $X$.
		We denote the $L$-vector space of these functions by $C^\la(X,E)$.
		\item
		A map $f\colon X \ra Y$ between two locally $L$-analytic manifolds is \textit{locally $L$-analytic} if $f$ is continuous and, for all charts $\psi\colon V \ra L^n$ of $Y$, the function $\psi\circ f$ from the open locally $L$-analytic submanifold $f^{-1}(V)$ to the $L$-Banach space $L^n$ is locally $L$-analytic.
		
		Equivalently, such $f\colon X \ra Y$ is locally $L$-analytic if, for every point $x\in X$, there exist a chart $\varphi \colon U \ra L^m$ around $x$ and a chart $\psi \colon V \ra L^n$ around $f(x)$ such that $f(U) \subset V$ and $\psi \circ f \circ \varphi^{-1} \in C^\la(\varphi(U),L^n)$, see \cite[Lemma 8.3]{Schneider11pAdicLieGrps}.
	\end{altenumerate}
\end{definition}

\begin{remark}
	In the case that $Y \subset L^n$ is an open subsets with the canonical structure of locally $L$-analytic manifolds, (i) and (ii) in the above definition are compatible.
	If in turn $X \subset L^m$ is an open subset, (i) is compatible with \Cref{Def 1 - Locally analytic functions with values in Banach space}, see \cite[5.3.1,2]{Bourbaki07VarDiffAnFasciDeResult}.
\end{remark}

\subsection{Locally Analytic Functions}

Let $K$ be a complete non-archimedean field with non-trivial absolute value $\abs{\blank}$, and $L\subset K$ a complete subfield.

Let $X$ be a locally $L$-analytic manifold and $V$ a Hausdorff locally convex $K$-vector space.
For the case of ${\rm char}(K)=0$, F\'eaux de Lacroix \cite{FeauxdeLacroix99TopDarstpAdischLieGrp} defined locally analytic functions on $X$ which take values in $V$, and endowed the space $C^\la(X,V)$ of such functions with the structure of a locally convex $K$-vector space.
As remarked by Gräf, this carries over to the case of a general complete non-archimedean field $K$ verbatim \cite[Part I, App.\ A]{Graef21BoundDistr}.
Nevertheless we want to recapitulate the reasoning for the construction of $C^\la(X,V)$ as well as some properties of it.

Recall that, for a locally convex $K$-vector space $V$, a \textit{BH-subspace} of $V$ is an (algebraic) subspace $E\subset V$ which admits the structure of a $K$-Banach space (with underlying $K$-vector space structure coming from $V$) such that the associated topology is finer than its subspace topology.
We denote $E$ carrying its Banach space structure by $\widebar{E}$ so that we have a continuous injection $\widebar{E} \hookrightarrow V$.
Note that the topologies from any two Banach space structures of a BH-subspace $E\subset V$ are the same by the open mapping theorem \cite[Prop.\ 8.6]{Schneider02NonArchFunctAna}.

If $E$ is a $K$-Banach space, it also carries the structure of an $L$-Banach space by restriction of scalars.
We will use this identification freely, for example to consider power series $\CA_\ul{r}(L^m, E)$ on $B^{m}_\ul{r}(0)$ with values in $E$.

\begin{definition}\label{Def 1 - Definition locally analytic function}
	A function $f \colon X \ra V$ is called \textit{locally analytic} if, for every $a\in X$, there exists a BH-subspace $E\subset V$, a chart $\varphi\colon U \ra B_{\ul{r}}^{n}(0)$ of $X$, for some $\ul{r}\in \BR_{>0}^n$, with $a\in U$, and a power series $f_a \in \CA_\ul{r}(L^n, \widebar{E})$ such that $f(x) = f_a(\varphi(x)- \varphi(a))$, for all $x$ in some neighbourhood of $a$.
	Here we consider $f_a(\varphi(\blank)- \varphi(a))$ as a function taking values in $V$ via $\widebar{E} \hookrightarrow V$.
	We denote the $K$-vector space of locally analytic functions on $X$ with values in $V$ by $C^\la (X,V)$.
\end{definition}

\begin{remark}
	In particular a locally analytic function $f \colon X \ra V$ is continuous.
\end{remark}

To topologize $C^\la(X,V)$ one expresses this space as the inductive limit of spaces of functions which are locally analytic with respect to certain indices.

\begin{definition}
	\begin{altenumerate}
		\item
		A \textit{$V$-index} $\CI$ of $X$ is a family $\big(\varphi_i\colon U_i \ra L^{m_i}, \ul{r}_i, E_i\big)_{i\in I}$ where the $\varphi_i$ are charts of $X$, $\ul{r}_i\in \BR_{>0}^{m_i}$, and the $E_i \subset V$ are BH-subspaces such that
		\begin{altenumeratelevel2}
			\item
			$X = \bigcup_{i\in I} U_i$ is a disjoint open covering,
			\item
			$\varphi_i (U_i) = B_{\ul{r}_i}^{m_i}(a_i)$, for some (or any) $a_i\in \varphi_i(U_i)$.
		\end{altenumeratelevel2}
		\item
		Given two $V$-indices
		\begin{equation*}
			\CI = \big(\varphi_i\colon U_i \ra L^{m_i},\ul{r}_i, E_i \big)_{i\in I} \qquad \text{and} \qquad \CJ = \big( \psi_j \colon W_j \ra L^{n_j}, \ul{s}_j, F_j\big)_{j\in J}
		\end{equation*}
		of $X$, we call $\CI$ \textit{finer} than $\CJ$, if, for every $i \in I$, there exists $j\in J$ such that
		\begin{altenumeratelevel2}
			\item
			$U_i \subset W_j$ (i.e.\ the covering of $\CI$ is a refinement of the one of $\CJ$), 
			\item
			there exist $a \in \varphi_i(U_i)$ and $g_{i,j}=(g_{i,j,k})_{k=1,\ldots,n_j} \in \CA_{\ul{r}_i}(L^{m_i},L^{n_j})$ such that
			\begin{equation*}
				\lVert g_{i,j,k} - g_{i,j,k}(0) \rVert_{\ul{r}_i} \leq s_{j,k} \quad \text{ , for all $k=1,\ldots,n_j$},
			\end{equation*}
			and $\psi_j \circ \varphi_i^{-1}(x) = g_{i,j}(x-a)$, for all $x \in \varphi_i(U_i)$,
			\item
			$F_j \subset E_i$ (which implies that $\widebar{F_j} \hookrightarrow \widebar{E_i}$ is continuous).
		\end{altenumeratelevel2}
	\end{altenumerate}
\end{definition}

\begin{remark}
	Using \Cref{Cor 1 - Redevelopment of power series} one sees that condition (2) in (ii) is independent of the choice of $a\in \varphi_i(U_i)$, cf.\ \cite[p.\ 76]{Schneider11pAdicLieGrps}.
\end{remark}

\begin{lemma}[{\cite[Bem.\ 2.1.9]{FeauxdeLacroix99TopDarstpAdischLieGrp}, cf.\ \cite[Lemma 10.2]{Schneider11pAdicLieGrps}}]
	The set of $V$-indices of $X$ is a directed set with respect to the relation of being finer.
\end{lemma}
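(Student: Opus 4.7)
The plan is to verify that ``finer than'' is a preorder on the set of $V$-indices, and that any two $V$-indices admit a common refinement.

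Reflexivity is witnessed by the identity transition $g_{i,i}(x) = x$, whose components satisfy $\lVert g_{i,i,k} - g_{i,i,k}(0)\rVert_{\ul{r}_i} = r_{i,k}$, meeting the required bound with equality. Transitivity follows from the composition of power series (\Cref{Prop 1 - Composition of power series}): given witnesses that $\CI$ is finer than $\CJ$ and that $\CJ$ is finer than $\CK$, an appropriate translation and composition of the respective transition power series yields a witness that $\CI$ is finer than $\CK$ (the hypotheses of the proposition are guaranteed by the norm bounds on the inner transition), and the BH-subspace inclusions $G_k \subset F_j \subset E_i$ compose directly.

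For the main step, given $V$-indices $\CI = (\varphi_i, \ul r_i, E_i)_{i \in I}$ and $\CJ = (\psi_j, \ul s_j, F_j)_{j \in J}$ of $X$, I would construct a common refinement as follows. At any point $x \in U_i \cap W_j$, the transition $\psi_j \circ \varphi_i^{-1}$ is locally analytic at $\varphi_i(x)$, so by \Cref{Cor 1 - Redevelopment of power series} it is represented by a power series $g$ around $\varphi_i(x)$ convergent on some ball $B^{m_i}_{\ul\rho}(\varphi_i(x)) \subset \varphi_i(U_i)$. Replacing $\ul\rho$ by a smaller polyradius scales the Gauss norm of the non-constant part $g - g(0)$ linearly, so for sufficiently small $\ul\rho$ the bound $\lVert g_k - g_k(0)\rVert_{\ul\rho} \leq s_{j,k}$ holds component-wise. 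This yields an open covering of $X$ by ball-chart domains on which the transition bounds to $\CJ$ are met. I then invoke strict paracompactness (\Cref{Rmk 1 - Locally analytic manifold has disjoint countable covering by compact open subsets}) to refine this to a pairwise disjoint open covering, and further decompose each piece into a disjoint union of ball-chart domains by applying strict paracompactness to its image in $L^{m_i}$ under the ambient chart $\varphi_i$. To each resulting piece $Z \subset U_i \cap W_j$ I attach the BH-subspace $E_i \cap F_j$, equipped with the Banach topology induced via the pullback $\{(u,v) \in \widebar{E_i} \oplus \widebar{F_j} : u = v \text{ in } V\}$, which is closed since $V$ is Hausdorff, hence a Banach space that projects bijectively onto $E_i \cap F_j$ with continuous inclusion into $V$. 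The resulting family is a $V$-index finer than $\CI$ (via identity transitions) and finer than $\CJ$ (by construction of the polyradii).

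The main obstacle is arranging the norm bounds on the transition power series uniformly across a disjoint refinement; this is resolved by the polyradius-shrinking step, which relies on the elementary observation that the Gauss norm of the non-constant tail of a convergent power series shrinks linearly with the radius.
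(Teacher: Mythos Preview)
The paper does not include its own proof of this lemma; it simply cites \cite[Bem.\ 2.1.9]{FeauxdeLacroix99TopDarstpAdischLieGrp} and \cite[Lemma 10.2]{Schneider11pAdicLieGrps}. So there is no in-paper argument to compare against, and I will assess your proposal on its own terms.

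Your overall strategy is the standard one, and the treatment of reflexivity, transitivity, and the chart-shrinking step (controlling the Gauss norm of the non-constant part of the transition power series by reducing the polyradius) is fine. However, there is a genuine error in the choice of BH-subspace for the common refinement.

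You attach $E_i \cap F_j$ to each piece $Z \subset U_i \cap W_j$. But look again at condition (3) in the definition of ``$\CI$ finer than $\CJ$'': it requires $F_j \subset E_i$, i.e.\ the \emph{finer} index carries the \emph{larger} BH-subspaces. Hence if your refinement $\CK$ is to be finer than both $\CI$ and $\CJ$, each of its BH-subspaces $G_Z$ must satisfy $E_i \subset G_Z$ and $F_j \subset G_Z$, so at least $E_i + F_j \subset G_Z$. Your choice $G_Z = E_i \cap F_j$ goes in the wrong direction: it would force $E_i \subset E_i \cap F_j$, which fails in general. The correct choice is $G_Z \defeq E_i + F_j$; this is again a BH-subspace of $V$ (the sum of finitely many BH-subspaces is a BH-subspace, as the paper itself uses later, citing \cite[Prop.\ 1.1.5]{Emerton17LocAnVect}). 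With that single fix, your argument goes through.
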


Let $\varphi \colon U \ra L^m$ be a chart of $X$.
If there exist $\ul{r}\in \BR_{>0}^m$ and $a\in L^m$ such that \mbox{$\varphi(U) = B_\ul{r}^{m}(a)$}, we call $\varphi$ an \textit{analytic chart}.
For such a chart and a $K$-Banach space $E$, we set
\begin{equation*}
	C^\aan (\varphi,E) \defeq \left\{ f\colon U \ra E \middle{|} \exists g \in \CA_\ul{r}(L^m,E) ,  \forall x \in U : f(x) = g(\varphi(x)-a)\right\}.
\end{equation*}
Using the identity theorem for power series (\Cref{Prop 1 - Identity theorem for power series}), we immediately see that there is an isomorphism
\begin{equation*}
	\CA_\ul{r}(L^m, E) \overset{\cong}{\lra} C^\aan(\varphi,E) \,,\quad g \lto g(\varphi(\blank)-a) .
\end{equation*}
In this way, we consider $C^\aan(\varphi,E)$ as a $K$-Banach space with norm given by $\norm{f}\defeq \norm{g}_\ul{r}$ when $f = g(\varphi(\blank)-a)$.
If the analytic chart $\varphi\colon U \ra B^{m}_{\ul{r}}(a)$ is understood, we also write
\begin{equation*}
	C^\aan (U,E) \defeq C^\aan (\varphi, E).
\end{equation*}

\begin{remark}
	If there exists some $a\in L^m$ such that $\varphi(U)= B_\ul{r}^{m} (a)$, then $\varphi(U)= B_\ul{r}^{m}(b)$, for all $b\in \varphi(U)$.
	However, the existence of $g\in \CA_\ul{r}(L^m,E)$ in the definition and $\norm{f}$ do not depend on the choice of $a\in \varphi(U)$ by \Cref{Cor 1 - Redevelopment of power series}.
\end{remark}

\begin{definition}
	Let $\CI = \big(\varphi_i\colon U_i \ra L^{m_i}, \ul{r}_i, E_i\big)_{i\in I}$ be a $V$-index of $X$.
	\begin{altenumerate}
		\item
		A function $f\colon X \ra V$ is \textit{subordinate} to $\CI$ if $f\res{U_i} \in C^\aan (\varphi_i, \widebar{E_i})$, for all $i\in I$.
		Spelled out, this means that, for all $i\in I$, there exist $g_i \in \CA_{\ul{r}_i} (L^{m_i},\widebar{E_i})$ and some $a_i \in \varphi_i(U_i)$ such that $(f\circ \varphi_i^{-1})(x)= g_i(x-a_i)$, for all $x\in \varphi_i(U_i)$.
		\item
		We denote the $K$-vector space of all functions $f \colon X\ra V$ which are subordinate to $\CI$ by $C^\la_\CI(X,V)$.
		As $(U_i)_{i\in I}$ is a disjoint covering of $X$, the map
		\begin{equation*}
			C^\la_\CI (X,V) \lra \prod_{i\in I} C^\aan (\varphi_i, \widebar{E_i}) \,,\quad f \lto (f\res{U_i})_{i\in I},
		\end{equation*}
		is an isomorphism of $K$-vector spaces.
		Via this isomorphism, we endow $C^\la_\CI(X,V)$ with a locally convex topology coming from the product topology of the right hand side.
	\end{altenumerate}
\end{definition}

\begin{remark}
	If the index set $I$ is finite, then $C^\la_\CI(X,V)$ itself is a $K$-Banach space.
	In any case, $C^\la_\CI(X,V)$ is a $K$-Fr\'echet space since $I$ is necessarily countable, see \Cref{Rmk 1 - Locally analytic manifold has disjoint countable covering by compact open subsets} (iii).
\end{remark}

\begin{lemma}[{\cite[Bem.\ 2.1.9]{FeauxdeLacroix99TopDarstpAdischLieGrp}, cf.\ \cite[Lemma 10.3]{Schneider11pAdicLieGrps}}]
	If the $V$-index $\CI$ is finer than the $V$-index $\CJ$, then $C^\la_\CJ(X,V) \subset C^\la_\CI(X,V)$ and this inclusion map is continuous.
\end{lemma}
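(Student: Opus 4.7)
The plan is to verify subordination pointwise on each $U_i$, using the composition of power series to translate a local analytic development on $W_{j(i)}$ into one on $U_i$, and then read off continuity from the operator-norm bounds in the composition lemma.

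Fix $f \in C^\la_\CJ(X,V)$. Given $i \in I$, choose $j = j(i) \in J$ witnessing that $\CI$ is finer than $\CJ$; so $U_i \subset W_j$, there exist $a\in \varphi_i(U_i)$ and $g_{i,j} = (g_{i,j,k})_k \in \CA_{\ul{r}_i}(L^{m_i}, L^{n_j})$ with $\lVert g_{i,j,k} - g_{i,j,k}(0)\rVert_{\ul{r}_i} \le s_{j,k}$ for all $k$ such that $\psi_j\circ \varphi_i^{-1}(x) = g_{i,j}(x-a)$ on $\varphi_i(U_i)$, and $F_j \subset E_i$ with continuous embedding $\widebar{F_j} \hookrightarrow \widebar{E_i}$. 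Set $b \defeq \psi_j(\varphi_i^{-1}(a)) = g_{i,j}(0) \in \psi_j(W_j)$. Since $f$ is subordinate to $\CJ$, by \Cref{Cor 1 - Redevelopment of power series} we may redevelop the given power series at the point $b$ and obtain $h_j \in \CA_{\ul{s}_j}(L^{n_j}, \widebar{F_j})$ with $(f\circ \psi_j^{-1})(y) = h_j(y-b)$ for $y \in \psi_j(W_j)$, and $\lVert h_j\rVert_{\ul{s}_j}$ equal to the norm of the original series.

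Now define $\hat{g}_{i,j,k}(X) \defeq g_{i,j,k}(X) - g_{i,j,k}(0) \in \CA_{\ul{r}_i}(L^{m_i}, L)$, so that $\hat{g}_{i,j,k}(0) = 0$ and $\lVert \hat{g}_{i,j,k}\rVert_{\ul{r}_i} \le s_{j,k}$ by hypothesis. Applying \Cref{Prop 1 - Composition of power series} gives a power series $h_j \circ \hat{g}_{i,j} \in \CA_{\ul{r}_i}(L^{m_i}, \widebar{F_j})$ with $\lVert h_j \circ \hat{g}_{i,j}\rVert_{\ul{r}_i} \le \lVert h_j\rVert_{\ul{s}_j}$. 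Composing the coefficient space with the continuous embedding $\widebar{F_j} \hookrightarrow \widebar{E_i}$ (which has bounded norm, say $C_{i,j}$) yields an element of $\CA_{\ul{r}_i}(L^{m_i}, \widebar{E_i})$. The associated function agrees with $f\circ \varphi_i^{-1}$ on $\varphi_i(U_i)$, because for $u\in B_{\ul{r}_i}^{m_i}(0)$, $\psi_j(\varphi_i^{-1}(u+a)) - b = g_{i,j}(u) - g_{i,j}(0) = \hat{g}_{i,j}(u)$, so $f(\varphi_i^{-1}(u+a)) = h_j(\hat{g}_{i,j}(u))$. Hence $f\res{U_i} \in C^\aan(\varphi_i, \widebar{E_i})$, proving $f \in C^\la_\CI(X,V)$.

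For continuity, recall that the topology on $C^\la_\CI(X,V)$ is the product topology under the isomorphism $f \mapsto (f\res{U_i})_i$, so by the universal property of products it suffices to show that, for every $i\in I$, the $K$-linear map $C^\la_\CJ(X,V) \to C^\aan(\varphi_i, \widebar{E_i})$, $f \mapsto f\res{U_i}$, is continuous. The construction above factors it as: the projection $(f \mapsto h_{j(i)})\colon C^\la_\CJ(X,V) \to C^\aan(\psi_{j(i)}, \widebar{F_{j(i)}})$, which is continuous by the definition of the product topology on $C^\la_\CJ(X,V)$ (modulo the identification with the Banach space $\CA_{\ul{s}_{j(i)}}(L^{n_{j(i)}}, \widebar{F_{j(i)}})$ and the observation that redevelopment at $b$ is an isometry, \Cref{Cor 1 - Redevelopment of power series}); followed by the composition map $h \mapsto h\circ \hat{g}_{i,j(i)}$ from $\CA_{\ul{s}_{j(i)}}(L^{n_{j(i)}}, \widebar{F_{j(i)}})$ to $\CA_{\ul{r}_i}(L^{m_i}, \widebar{F_{j(i)}})$, which is continuous of operator norm $\le 1$ by \Cref{Prop 1 - Composition of power series}; followed by postcomposition with the continuous embedding $\widebar{F_{j(i)}} \hookrightarrow \widebar{E_i}$, which is plainly continuous on the corresponding power series spaces. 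The main (minor) subtlety here is to arrange the centre $b$ so that the composition lemma is directly applicable, which is why the redevelopment step above is essential; once this is set up the rest is a routine chain of continuous maps.
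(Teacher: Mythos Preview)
The paper does not give its own proof here; it only cites \cite[Bem.\ 2.1.9]{FeauxdeLacroix99TopDarstpAdischLieGrp} and \cite[Lemma 10.3]{Schneider11pAdicLieGrps}. Your argument is correct and is essentially the standard one found in those references: redevelop the $\CJ$-local expansion at the centre $b=g_{i,j}(0)$ via \Cref{Cor 1 - Redevelopment of power series}, apply \Cref{Prop 1 - Composition of power series} to the shifted transition map $\hat g_{i,j}=g_{i,j}-g_{i,j}(0)$, push coefficients along the continuous inclusion $\widebar{F_{j}}\hookrightarrow\widebar{E_i}$, and read off continuity coordinatewise from the product topology on $C^\la_\CI(X,V)$.
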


\begin{proposition}[{\cite[Bem.\ 2.1.9]{FeauxdeLacroix99TopDarstpAdischLieGrp}, cf.\ \cite[p.\ 75]{Schneider11pAdicLieGrps}}]
	For any locally analytic function $f \colon X \ra V$, there exists a $V$-index $\CI$ of $X$ such that $f$ is subordinate to $\CI$.
	In other words 
	\begin{equation*}
		C^\la(X,V) = \bigcup_{\CI} C^\la_{\CI}(X,V)
	\end{equation*}
	where the union is taken over all $V$-indices $\CI$ of $X$.
\end{proposition}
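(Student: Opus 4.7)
The plan is to construct a $V$-index $\CI$ to which $f$ is subordinate by patching the local power series expansions supplied by the local analyticity of $f$ across a disjoint refinement of $X$ by polydisc charts.

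First I would invoke \Cref{Def 1 - Definition locally analytic function} at each point $a \in X$ to obtain a BH-subspace $E_a \subset V$, a chart $\varphi'_a \colon U'_a \to B^{m_a}_{\ul{r}'_a}(0)$ with $a \in U'_a$, and a power series $f_a \in \CA_{\ul{r}'_a}(L^{m_a}, \widebar{E_a})$ such that $f(x) = f_a(\varphi'_a(x) - \varphi'_a(a))$ on some open neighbourhood of $a$. By replacing $\varphi'_a$ with $\varphi'_a - \varphi'_a(a)$ and restricting to the preimage of a sufficiently small polydisc around $0$ contained in that neighbourhood, I can arrange an analytic chart $\varphi_a \colon U_a \to B^{m_a}_{\ul{r}_a}(0)$ centred at $a$ whose restricted power series $f_a \in \CA_{\ul{r}_a}(L^{m_a}, \widebar{E_a})$ represents $f\res{U_a}$ via $f \circ \varphi_a^{-1} = f_a$ on all of $B^{m_a}_{\ul{r}_a}(0)$.

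Next I refine the open covering $\{U_a\}_{a \in X}$ of $X$ by polydisc charts to a disjoint open covering $\{V_i\}_{i \in I}$ in which each $V_i$ is contained in some $U_{a(i)}$ and is mapped by $\varphi_{a(i)}$ onto a polydisc $B^{m_{a(i)}}_{\ul{s}_i}(b_i) \subset B^{m_{a(i)}}_{\ul{r}_{a(i)}}(0)$. Writing $\psi_i \defeq \varphi_{a(i)}\res{V_i}$, the family $\CI \defeq \big(\psi_i\colon V_i \to L^{m_{a(i)}},\, \ul{s}_i,\, E_{a(i)}\big)_{i \in I}$ then satisfies the axioms of a $V$-index. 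To verify that $f$ is subordinate to $\CI$, I would fix $i \in I$ and apply \Cref{Cor 1 - Redevelopment of power series} to the restriction of $f_{a(i)}$ to $B^{m_{a(i)}}_{\ul{s}_i}(b_i)$ in order to reexpand it around $b_i$, obtaining a power series $g_i \in \CA_{\ul{s}_i}(L^{m_{a(i)}}, \widebar{E_{a(i)}})$ with $(f\circ \psi_i^{-1})(x) = g_i(x - b_i)$ on all of $\psi_i(V_i)$. This is exactly the condition $f\res{V_i} \in C^\aan(\psi_i, \widebar{E_{a(i)}})$.

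The main obstacle is the refinement step: strict paracompactness (\Cref{Rmk 1 - Locally analytic manifold has disjoint countable covering by compact open subsets}(i)) a priori yields only a disjoint open refinement, not one by polydisc charts. To upgrade it I would exploit the ultrametric structure of the chart targets $L^{m_a}$, in which any two closed balls are either disjoint or nested, so that every open subset of $L^{m_a}$ can be partitioned into polydiscs. Pulling back such partitions via the $\varphi_a$ then decomposes each piece of the disjoint open refinement further into polydisc charts of the required form, which simultaneously preserves the local power series representation of $f$ on each piece.
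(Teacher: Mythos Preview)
Your proof is correct and follows the standard approach from the cited references (the paper itself does not supply a proof for this proposition). The key ingredients---local data from the definition, strict paracompactness together with the ultrametric partition of open sets in $L^m$ into polydiscs (this is \cite[Lemma 1.4]{Schneider11pAdicLieGrps}, which the paper also invokes in the proof of \Cref{Prop 1 - Disjoint coverings and locally analytic functions}), and \Cref{Cor 1 - Redevelopment of power series} to recentre---are exactly right, and your identification and resolution of the refinement obstacle is accurate.
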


Hence we can and will endow $C^\la(X,V)$ with the locally convex inductive limit topology with respect to the $C^\la_\CI (X,V)$, i.e.\ the finest locally convex topology such that the inclusions $C^\la_\CI (X,V) \hookrightarrow C^\la(X,V)$ are continuous.
This finishes the construction of the locally convex $K$-vector space $C^\la(X,V)$.

\begin{proposition}[{\cite[Satz 2.1.10]{FeauxdeLacroix99TopDarstpAdischLieGrp}, cf.\ \cite[Prop.\ 12.1]{Schneider11pAdicLieGrps}}]\label{Prop 1 - Evaluation maps are continuous}
	Let $X$ be a locally $L$-analytic manifold, and $V$ a Hausdorff locally convex $K$-vector space.
	For any $x\in X$, the evaluation homomorphism
	\begin{equation*}
		\ev_x \colon C^\la(X,V) \lra V \,,\quad f \lto f(x),
	\end{equation*}
	is continuous.
\end{proposition}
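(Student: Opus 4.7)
The plan is to exploit the explicit construction of the topology on $C^\la(X,V)$ as the inductive limit of the $C^\la_\CI(X,V)$, which are themselves equipped with a product topology coming from Banach spaces of strictly convergent power series. By the universal property of the locally convex inductive limit topology, it suffices to verify that, for every $V$-index $\CI = (\varphi_i \colon U_i \to L^{m_i}, \ul{r}_i, E_i)_{i\in I}$ of $X$, the restriction $\ev_x\res{C^\la_\CI(X,V)} \colon C^\la_\CI(X,V) \to V$ is continuous.

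Fix such a $V$-index $\CI$. Because the family $(U_i)_{i\in I}$ is a disjoint covering of $X$, there is a unique $i_0\in I$ with $x\in U_{i_0}$. Under the topological isomorphism
\begin{equation*}
    C^\la_\CI (X,V) \overset{\cong}{\lra} \prod_{i\in I} C^\aan (\varphi_i, \widebar{E_i}) \,,\quad f\lto (f\res{U_i})_{i\in I} ,
\end{equation*}
the evaluation $\ev_x$ factors as the projection onto the $i_0$-component followed by evaluation at $x$ on the Banach space $C^\aan(\varphi_{i_0}, \widebar{E_{i_0}})$, and then composed with the continuous inclusion $\widebar{E_{i_0}} \hookrightarrow V$ which is part of the datum of $E_{i_0}$ being a BH-subspace.

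The projection $\prod_{i\in I} C^\aan(\varphi_i, \widebar{E_i}) \to C^\aan(\varphi_{i_0}, \widebar{E_{i_0}})$ is continuous by the definition of the product topology. It remains to observe that the Banach space evaluation
\begin{equation*}
    C^\aan(\varphi_{i_0}, \widebar{E_{i_0}}) \lra \widebar{E_{i_0}} \,,\quad f \lto f(x),
\end{equation*}
is continuous. Indeed, writing $f = g(\varphi_{i_0}(\blank) - a)$ with $g \in \CA_{\ul{r}_{i_0}}(L^{m_{i_0}}, \widebar{E_{i_0}})$ (for any chosen $a\in \varphi_{i_0}(U_{i_0})$, using \Cref{Cor 1 - Redevelopment of power series}), the point $\varphi_{i_0}(x)-a$ lies in $B^{m_{i_0}}_{\ul{r}_{i_0}}(0)$, so the usual estimate for a strictly convergent power series yields $\lVert f(x) \rVert_{\widebar{E_{i_0}}} \leq \lVert g \rVert_{\ul{r}_{i_0}} = \lVert f \rVert$. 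Composition of these continuous maps gives the desired continuity of $\ev_x$.

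There is no real obstacle here; the argument is essentially bookkeeping once the definition of $C^\la(X,V)$ is unwound. The only subtle point is to keep track of the two-step structure (inductive limit of Fr\'echet spaces, each of which is a product of Banach spaces) and to remember that continuity of the BH-inclusion $\widebar{E_{i_0}}\hookrightarrow V$ is part of the definition, so continuity into $V$ follows from continuity into $\widebar{E_{i_0}}$.
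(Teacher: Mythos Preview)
Your proof is correct and follows the standard argument: reduce to each $V$-index via the universal property of the inductive limit, project to the unique factor containing $x$, and use the norm estimate $\lVert g(y)\rVert_{\widebar{E}} \leq \lVert g\rVert_{\ul{r}}$ for strictly convergent power series together with the continuity of the BH-inclusion. The paper itself does not supply a proof for this proposition but only cites the references \cite{FeauxdeLacroix99TopDarstpAdischLieGrp} and \cite{Schneider11pAdicLieGrps}; your argument is precisely the one given in those sources.
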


\begin{corollary}[{\cite[Satz 2.1.10]{FeauxdeLacroix99TopDarstpAdischLieGrp}, cf.\ \cite[Cor.\ 12.2]{Schneider11pAdicLieGrps}}]\label{Cor 1 - Space of locally analytic functions is Hausdorff and barrelled}
	Let $X$ be a locally $L$-analytic manifold, and $V$ a Hausdorff locally convex $K$-vector space.
	Then $C^\la(X,V)$ is Hausdorff and barrelled.
\end{corollary}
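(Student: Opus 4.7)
The plan is to derive both properties directly from the inductive limit construction of $C^\la(X,V)$ together with the continuity of the evaluation maps established in \Cref{Prop 1 - Evaluation maps are continuous}.

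For the Hausdorff property, I would recall that in any topological vector space, being Hausdorff is equivalent to $\{0\}$ being closed, so it suffices to separate any nonzero $f \in C^\la(X,V)$ from $0$ by a neighborhood. Given such an $f$, by definition there exists a point $x\in X$ with $f(x)\neq 0$ in $V$. Since $V$ is Hausdorff, we can choose an open neighborhood $U$ of $0 \in V$ with $f(x)\notin U$. By \Cref{Prop 1 - Evaluation maps are continuous}, the preimage $\ev_x^{-1}(U)$ is an open neighborhood of $0$ in $C^\la(X,V)$, and it does not contain $f$. This shows $\{0\}$ is closed.

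For barrelledness, I would invoke the description $C^\la(X,V)=\bigcup_{\CI} C^\la_\CI(X,V)$ as a locally convex inductive limit. As noted after the definition of $C^\la_\CI(X,V)$, each such space is a $K$-Fr\'echet space; by the non-archimedean Baire category theorem every Fr\'echet space is barrelled (cf.\ \cite[Prop.\ 6.14, Cor.\ 8.10]{Schneider02NonArchFunctAna}). Since a locally convex inductive limit of barrelled spaces is again barrelled (the inverse image of a barrel under each canonical map $C^\la_\CI(X,V)\hookrightarrow C^\la(X,V)$ is a barrel, and hence a neighbourhood of $0$, so the barrel itself is a neighbourhood of $0$ by the universal property of the inductive limit topology), it follows that $C^\la(X,V)$ is barrelled.

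The argument is essentially formal, so there is no real obstacle; the only point meriting care is the non-archimedean version of the fact that inductive limits of barrelled spaces are barrelled, which however is a standard consequence of the definition of the locally convex inductive limit topology.
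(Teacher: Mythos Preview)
Your proof is correct and follows essentially the same approach as the paper's. The only cosmetic difference is that you deduce barrelledness of $C^\la_\CI(X,V)$ from its being a Fr\'echet space, whereas the paper argues via ``Banach spaces are barrelled'' and ``products of barrelled spaces are barrelled''; both are standard and equivalent here.
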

\begin{proof}
	Let $f, f' \in C^\la(X,V)$ with $f\neq f'$, and let $x\in X$ such that $f(x) \neq f'(x)$.
	Because $V$ is Hausdorff, there exist open neighbourhoods $U, U'\subset V$ of $f(x)$ resp.\ $f'(x)$ such that $U\cap U' = \emptyset$.
	As $\ev_x$ is continuous by \Cref{Prop 1 - Evaluation maps are continuous}, $\ev_x^{-1}(U)$ and $\ev_x^{-1}(U')$ are open subsets of $C^\la(X,V)$ that separate $f$ and $f'$. 
	Therefore $C^\la(X,V)$ is Hausdorff.

	Since $K$-Banach spaces are barrelled \cite[Expl.\ 2) after Cor.\ 6.16]{Schneider02NonArchFunctAna}, the direct product $C^\la_\CI (X,V)$ is barrelled, for every $V$-index $\CI$ of $X$, by \cite[Prop.\ 14.3]{Schneider02NonArchFunctAna}.
	Moreover the inductive limit of barrelled locally convex $K$-vector spaces is barrelled again \cite[Expl.\ 3) after Cor.\ 6.16]{Schneider02NonArchFunctAna}, and we conclude that $C^\la(X,V)$ is barrelled.	
\end{proof}

\begin{proposition}[{\cite[Kor.\ 2.2.4]{FeauxdeLacroix99TopDarstpAdischLieGrp}, cf.\ \cite[Prop.\ 12.5]{Schneider11pAdicLieGrps}}]\label{Prop 1 - Disjoint coverings and locally analytic functions}
	Let $X$ be a locally $L$-analytic manifold, and $V$ a Hausdorff locally convex $K$-vector space.
	Then, for any disjoint covering $X = \bigcup_{i\in I} X_i$ by open subsets $X_i$, there is a topological isomorphism
	\begin{equation*}
		C^\la(X,V) \overset{\cong}{\lra} \prod_{i\in I} C^\la(X_i , V) \,,\quad f \lto (f\res{X_i})_{i\in I} .
	\end{equation*}
\end{proposition}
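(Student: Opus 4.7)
The plan is to show that the map $\rho \colon f \mapsto (f\res{X_i})_{i\in I}$ is a topological isomorphism of locally convex $K$-vector spaces. Well-definedness and bijectivity are straightforward: local analyticity is a local property (cf.\ \Cref{Def 1 - Definition locally analytic function}), so restrictions of locally analytic functions to open submanifolds remain locally analytic; and since the $X_i$ form a partition of $X$ into open and closed subsets (open by assumption, closed because the covering is disjoint), any family $(f_i)_{i\in I}$ with $f_i\in C^\la(X_i,V)$ glues uniquely to a locally analytic function $f \colon X \to V$.

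To show $\rho$ is continuous, it suffices to check that each restriction $\pi_i \colon C^\la(X, V) \to C^\la(X_i, V)$ is continuous. Fix a $V$-index $\CJ = (\varphi_\alpha \colon U_\alpha \to L^{m_\alpha}, \ul{r}_\alpha, E_\alpha)_\alpha$ of $X$. Since each $X_i$ is open and closed in $X$, each $U_\alpha \cap X_i$ is open and closed in $U_\alpha$, and $\varphi_\alpha(U_\alpha \cap X_i)$ is an open and closed subset of the non-archimedean ball $B^{m_\alpha}_{\ul{r}_\alpha}(a_\alpha)$, hence a disjoint union of smaller balls. Collecting these sub-charts yields a $V$-index $\CJ_i$ of $X_i$, and the induced map $C^\la_\CJ(X,V) \to C^\la_{\CJ_i}(X_i,V)$ decomposes as projections onto factors of $\prod_\alpha C^\aan(\varphi_\alpha, \widebar{E_\alpha})$ followed by restrictions of power series to smaller polydisks (of operator norm $\le 1$, by \Cref{Prop 1 - Composition of power series}), hence is continuous. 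The universal property of the inductive limit then yields continuity of $\pi_i$, and hence of $\rho$.

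For continuity of $\rho^{-1}$, the key observation is that the $V$-indices of the form $\CJ(\CI) \defeq \bigsqcup_{i \in I} \CI_i$, for a family $(\CI_i)_i$ of $V$-indices of the individual $X_i$'s, are cofinal in the directed system of all $V$-indices of $X$ (apply the decomposition argument above to an arbitrary $\CJ$). For such $\CJ(\CI)$, the defining product of Banach spaces factors as a topological isomorphism
\begin{equation*}
	C^\la_{\CJ(\CI)}(X,V) \;\cong\; \prod_{i\in I} C^\la_{\CI_i}(X_i, V),
\end{equation*}
by regrouping the factors according to the ambient $X_i$. Thus $C^\la(X,V)$ is the locally convex inductive limit of the product spaces $\prod_i C^\la_{\CI_i}(X_i, V)$.

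The main obstacle is to show that this inductive limit topology coincides with the product topology on $\prod_i C^\la(X_i,V) = \prod_i \varinjlim_{\CI_i} C^\la_{\CI_i}(X_i, V)$. Equivalently, one must show that every continuous seminorm $q$ on $C^\la(X,V)$ is bounded above by $\max_{i \in S} q_i \circ \pi_i$ for some finite $S \subset I$ and continuous seminorms $q_i$ on $C^\la(X_i,V)$. By continuity, $q$ restricts to a continuous seminorm on each $\prod_i C^\la_{\CI_i}(X_i,V)$, and on a direct product of Fr\'echet spaces with the product topology such a seminorm must depend on only finitely many factors, yielding a finite ``support'' for every family $(\CI_i)_i$. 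The crux is then a compatibility argument: using the coherence of $q$ along refinements $(\CI_i) \le (\CI'_i)$ and the vanishing of $q$ on functions supported on $X_i$ for $i$ outside the support, one concludes that these supports stabilize to a single finite subset of $I$ independent of the cofinal family.
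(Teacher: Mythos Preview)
Your cofinality observation---that $V$-indices adapted to the partition $\{X_i\}$ are cofinal among all $V$-indices of $X$---is exactly the key step in the paper's proof as well. The paper then simply invokes \cite[Lemma 11.7]{Schneider11pAdicLieGrps} to conclude, whereas you attempt to prove directly that the inductive limit $\varinjlim_{(\CI_i)} \prod_i C^\la_{\CI_i}(X_i,V)$ carries the product topology.

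Your direct argument has a gap at the final step. The phrase ``these supports stabilize to a single finite subset'' suggests you are arguing via monotonicity under refinement, but this goes the wrong way: if $(\CI'_i)$ is finer than $(\CI_i)$, then $\prod_i C^\la_{\CI_i}(X_i,V) \hookrightarrow \prod_i C^\la_{\CI'_i}(X_i,V)$, and the minimal support of $q$ on the larger space can only be \emph{larger} than on the smaller one. So there is no a~priori stabilization. The argument can be rescued as follows: suppose $q\circ\iota_i \neq 0$ for infinitely many distinct $i_1,i_2,\ldots\in I$, and pick $f_n\in C^\la(X_{i_n},V)$ with $q(\iota_{i_n}f_n)=1$. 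Each $f_n$ lies in some $C^\la_{\CI_{i_n}}(X_{i_n},V)$; since the $i_n$ are distinct, these choices assemble (together with arbitrary $\CI_i$ for the remaining $i$) into a \emph{single} family $(\CI_i)_i$, hence a single $V$-index $\CJ(\CI)$ of $X$. All the $\iota_{i_n}f_n$ now live in $C^\la_{\CJ(\CI)}(X,V)=\prod_i C^\la_{\CI_i}(X_i,V)$, on which $q$ has finite support---contradiction. Once $q$ has finite support $S$, continuity of $q$ on $\prod_{i\in S} C^\la(X_i,V)$ follows since finite products commute with locally convex inductive limits. This completes your approach and yields what Schneider's lemma provides in the paper's version.
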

\begin{proof}
	By applying the statement of \cite[Lemma 11.7]{Schneider11pAdicLieGrps} it suffices to show that, for any given $V$-index \mbox{$\CJ = \big(\varphi_j\colon U_j \ra L^{m_j}, \ul{r}_j, E_j\big)_{j\in J}$} of $X$, there exists a $V$-index $\CI$ of $X$ which is finer than $\CJ$ and whose covering is a refinement of $X= \bigcup_{i\in I} X_i$.
	Using \cite[Lemma 1.4]{Schneider11pAdicLieGrps}, we find, for each open subset $\varphi_j(U_j \cap X_i) \subset L^{m_j}$ with $i\in I, j\in J$, a disjoint covering of the form
	\begin{equation*}
		\varphi_j(U_j \cap X_i) = \bigcup_{k\in J_{i,j}} B^{m_j}_{\ul{s}_{i,j,k}}(a_{i,j,k}) ,
	\end{equation*}
	for certain index sets $J_{i,j}$, and $\ul{s}_{i,j,k}\in \BR_{>0}^{m_j}$, $a_{i,j,k} \in L^{m_j}$.
	Now we define the index set $A \defeq \left\{ (i,j,k)\middle{|} i\in I, j\in J, k\in J_{i,j}\right\}$, and set $W_{i,j,k} \defeq \varphi_j^{-1} \big(B^{m_j}_{\ul{s}_{i,j,k}}(a_{i,j,k})\big)$, for $(i,j,k)\in A$.
	Then the $W_{i,j,k}$ constitute a disjoint open covering of $X$ by charts.
	Moreover, 
	\begin{equation*}
		\CI \defeq  \big( \varphi_j\res{W_{i,j,k}} \colon W_{i,j,k} \ra L^{m_j}, \ul{s}_{i,j,k}, E_j \big)_{(i,j,k)\in A} 
	\end{equation*}
	is a $V$-index which is finer than $\CJ$,
	and its covering is a refinement of $X= \bigcup_{i\in I} X_i$ by construction.
\end{proof}

\begin{proposition}[{\cite[Bem.\ 2.1.11]{FeauxdeLacroix99TopDarstpAdischLieGrp}, \cite[p.\ 40]{Emerton17LocAnVect}}]\label{Prop 1 - Functorialities for the space of locally analytic functions}
	Let $X$ be a locally $L$-analytic manifold, and $V$ a Hausdorff locally convex $K$-vector space.
	\begin{altenumerate}
		\item
		If $W$ is a Hausdorff locally convex $K$-vector space and $\lambda \colon V \ra W$ a continuous homomorphism, then $\lambda$ induces a continuous homomorphism
		\begin{equation*}
			\lambda_\ast \colon C^\la(X,V) \lra C^\la(X,W) \,,\quad f \lto \lambda \circ f .
		\end{equation*}
		\item
		If $Y$ is a locally $L$-analytic manifold and $h \colon X \ra Y$ a locally $L$-analytic map, then $h$ induces a continuous homomorphism
		\begin{equation*}
			h^\ast \colon C^\la(Y,V) \lra C^\la(X,V) \,,\quad f \lto f \circ h .
		\end{equation*}
	\end{altenumerate}
\end{proposition}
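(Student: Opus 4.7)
The plan is to exploit the inductive limit structure $C^\la(X,V) = \varinjlim_\CI C^\la_\CI(X,V)$ in both parts: by the universal property of this inductive limit, it suffices, for each $V$-index $\CI$ of the source, to exhibit a compatible index $\CI'$ of the target such that the induced map $C^\la_\CI \to C^\la_{\CI'}$ is continuous. Since each $C^\la_\CI$ is a product of Banach spaces $C^\aan(\varphi,\widebar{E}) \cong \CA_\ul{r}(L^m,\widebar{E})$, this ultimately reduces to continuity statements on power series spaces.

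For part (i), the first step is to transport BH-subspaces along $\lambda$: for a BH-subspace $E\subset V$, the kernel $N \defeq \Ker(\widebar{E}\to V\xrightarrow{\lambda} W)$ is closed in $\widebar{E}$, so $\widebar{E}/N$ is a $K$-Banach space equipped with a continuous injection into $W$ whose image is $\lambda(E)$, exhibiting $\lambda(E)$ as a BH-subspace of $W$. For $f\in C^\la(X,V)$ locally represented by $f_a \in \CA_\ul{r}(L^n,\widebar{E})$, coefficient-wise application of the bounded operator $\widebar{E} \to \widebar{E}/N$ yields $\lambda\circ f_a \in \CA_\ul{r}(L^n,\widebar{\lambda(E)})$ representing $\lambda\circ f$ near $a$; hence $\lambda_\ast$ is well-defined. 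For a $V$-index $\CI = (\varphi_i,\ul{r}_i,E_i)_{i\in I}$ of $X$, the tuple $\lambda_\ast\CI \defeq (\varphi_i,\ul{r}_i,\lambda(E_i))_{i\in I}$ is then a $W$-index, $\lambda_\ast$ restricts to a factor-wise map $C^\la_\CI(X,V) \to C^\la_{\lambda_\ast\CI}(X,W)$, and each factor is the continuous homomorphism $\CA_{\ul{r}_i}(L^{m_i},\widebar{E_i}) \to \CA_{\ul{r}_i}(L^{m_i},\widebar{\lambda(E_i)})$ induced by the bounded operator $\widebar{E_i}\to \widebar{E_i}/N_i$.

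For part (ii), starting from a $V$-index $\CJ=(\psi_j\colon W_j \to L^{n_j},\ul{s}_j,F_j)_{j\in J}$ of $Y$, I would construct a $V$-index $\CI$ of $X$ as follows. For each $x\in X$ let $j(x)\in J$ be the unique index with $h(x)\in W_{j(x)}$. By the equivalent characterization of local analyticity in \Cref{Def 1 - Locally analytic maps between locally analytic manifolds}(ii), pick a chart $\varphi_x\colon U_x \to L^{m_x}$ centred at $x$ with $h(U_x)\subset W_{j(x)}$ together with a power series $g_x \in \CA_{\ul{t}_x}(L^{m_x},L^{n_{j(x)}})$ representing $\psi_{j(x)}\circ h\circ \varphi_x^{-1}$ on some ball $B^{m_x}_{\ul{t}_x}(0)$. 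Shrinking to $\varphi_x(U_x) = B^{m_x}_{\ul{r}_x}(0)$ for sufficiently small $\ul{r}_x\leq \ul{t}_x$, the bound $\norm{g_{x,k}-g_{x,k}(0)}_{\ul{r}_x}\leq s_{j(x),k}$ holds for every $k$, by continuity of the non-constant part of $g_x$ at $0$. Strict paracompactness (\Cref{Rmk 1 - Locally analytic manifold has disjoint countable covering by compact open subsets}(i)) refines $\{U_x\}_{x\in X}$ to a disjoint open cover indexed by some $I$, yielding a $V$-index $\CI=(\varphi_i,\ul{r}_i,F_{j(i)})_{i\in I}$. For $f \in C^\la_\CJ(Y,V)$, expanding $f\res{W_{j(i)}}$ at $h(x_i)$ (where $x_i$ is the centre of $\varphi_i$) via \Cref{Cor 1 - Redevelopment of power series} gives $f_{h(x_i)} \in \CA_{\ul{s}_{j(i)}}(L^{n_{j(i)}},\widebar{F_{j(i)}})$, and \Cref{Prop 1 - Composition of power series} then yields $f_{h(x_i)}\circ(g_i-g_i(0))\in \CA_{\ul{r}_i}(L^{m_i},\widebar{F_{j(i)}})$ representing $f\circ h$ on $U_i$. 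Hence $h^\ast$ maps $C^\la_\CJ(Y,V)$ into $C^\la_\CI(X,V)$, and the induced map is continuous because each factor-wise map factors as an isometric re-expansion of centre (\Cref{Cor 1 - Redevelopment of power series}) followed by composition with $g_i-g_i(0)$, which has operator norm $\leq 1$ by \Cref{Prop 1 - Composition of power series}.

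The principal obstacle will be the bookkeeping in (ii): choosing the radii $\ul{r}_x$ so as to simultaneously ensure $h(U_x)\subset W_{j(x)}$, the ball-form condition $\varphi_x(U_x)=B^{m_x}_{\ul{r}_x}(0)$ of a $V$-index, and the norm constraint required by \Cref{Prop 1 - Composition of power series}, and then refining to a disjoint cover without losing any of these properties. Once these combinatorial details are settled, the rest follows formally from the factor-wise description of the $C^\la_\CI$ topology.
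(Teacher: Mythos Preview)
Your proposal is correct and follows essentially the same approach as the paper. For (i) the paper simply cites \cite[Prop.\ 1.1.7]{Emerton17LocAnVect} (the image of a BH-subspace under a continuous map is a BH-subspace), which is exactly the fact you prove explicitly via $\widebar{E}/N$; for (ii) both you and the paper construct, from a $V$-index $\CJ$ of $Y$, a $V$-index $\CI$ of $X$ satisfying the refinement condition (2) and then reduce to the operator-norm bound of \Cref{Prop 1 - Composition of power series}, the only cosmetic difference being that the paper phrases the centring via the identification $g\mapsto g(\psi_j(\blank)-g_{i,j}(0))$ rather than invoking \Cref{Cor 1 - Redevelopment of power series} explicitly.
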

\begin{proof}
	The statement of (i) follows from \cite[Prop.\ 1.1.7]{Emerton17LocAnVect}, see ibid.\ p.\ 40.

	For (ii), we adapt the argument outlined in the proof of \cite[Prop.\ 12.4 (ii)]{Schneider11pAdicLieGrps}.
	First we construct, for each fine enough $V$-index $\CJ=\big(\psi_j\colon W_j \ra L^{n_j},\ul{s}_j,F_j \big)_{j\in J}$ of $Y$, a $V$-index $\CI=\big(\varphi_i \colon U_i \ra L^{m_i},\ul{r}_i,E_i\big)_{i\in I}$ of $X$ which satisfies:
	For all $i \in I$, there exists $j\in J$ such that
	\begin{altenumeratelevel2}
		\item
		$U_i \subset h^{-1}(W_j)$, i.e.\ the covering of $\CI$ is a refinement of the covering $X= \bigcup_{j \in J} h^{-1}(W_j)$.
		\item
		there exist $a_i \in \varphi_i(U_i)$ and $g_{i,j}=(g_{i,j,k})_{k=1,\ldots,n_j} \in \CA_{\ul{r}_i} (L^{m_i}, L^{n_j})$ such that
		\begin{equation*}
			\norm{ g_{i,j,k}- g_{i,j,k}(0)}_{\ul{r}_i} \leq s_{j,k} \quad \text{, for all $k=1,\ldots,n_j$,}
		\end{equation*}
		and $\psi_j \circ h \circ \varphi_i^{-1} (x) = g_{i,j} (x-a_i)$, for all $x\in \varphi_i(U_i)= B_{\ul{r}_i}^{m_i}(a_i)$,
		\item
		$ F_j \subset E_i$.
	\end{altenumeratelevel2}
	Indeed, for a covering $Y = \bigcup_{j \in J} W_j$ of a given $V$-index $\CJ$, we may take $X = \bigcup_{i \in I} U_i$ to be a disjoint refinement by analytic charts of $X= \bigcup_{j \in J} h^{-1}(W_j)$.
	By the \Cref{Def 1 - Locally analytic maps between locally analytic manifolds} (ii) of $h$ being a locally analytic map, we may assume that $\psi_j \circ h \circ \varphi_i^{-1} \in C^\la(\varphi_i(U_i), L^{n_j})$, for all $i\in I$, $j\in J$, with $U_i \subset h^{-1}(W_j)$, after passing to fine enough $\CJ$ and $X = \bigcup_{i \in I} U_i$.
	Therefore the property (2) is satisfied for $X = \bigcup_{i \in I} U_i$ after further refining.
	For $i\in I$ with $j\in J$ such that $U_i \subset h^{-1}(W_j)$, we then set $E_i \defeq F_j$, and obtain the sought $V$-index $\CI$ of $X$.

	For such $\CI$ and $j \in J$, $i\in I$ with $U_i \subset h^{-1}(W_j)$, we can use the identifications
	\begin{alignat*}{3}
		\CA_{\ul{s}_j} (L^{n_j},\widebar{F_j}) &\lra C^\rig(\psi_j, \widebar{F_j}) \,,\quad g &&\lto g(\psi_j (\blank) - g_{i,j}(0)) ,\\
		\CA_{\ul{r}_i} (L^{m_i},\widebar{E_i}) &\lra C^\rig(\varphi_i, \widebar{E_i}) \,,\quad g &&\lto g(\varphi_i (\blank) - a_i),
	\end{alignat*}
	to obtain the commutative diagram
	\begin{equation*}
		\begin{tikzcd}
			C^\rig(\psi_j, \widebar{F_j}) \ar[r] &C^\rig(\varphi_i, \widebar{E_i})  \\
			\CA_{\ul{s}_j} (L^{n_j},\widebar{F_j}) \ar[u, "\cong"]\ar[r] & \CA_{\ul{r}_i} (L^{m_i},\widebar{E_i}) \ar[u, "\cong"'] 
		\end{tikzcd}
	\end{equation*}
	where the upper map is given by $f \mto f \circ h $ and the lower one by $g \mto g \circ (g_{i,j} - g_{i,j}(0))$.
	As this latter map is continuous by \Cref{Prop 1 - Composition of power series}, the homomorphism $C^\la_\CJ (Y,V) \ra C^\la_\CI(X,V)$, $f \mto f\circ h$, induced by the upper homomorphisms, for all such $i$ and $j$, is continuous.
	Because the sufficiently fine $V$-index $\CJ$ of $Y$ was arbitrary, this shows that $h^\ast \colon C^\la(Y,V) \ra C^\la(X,V)$ is continuous.
\end{proof}

\begin{proposition}\label{Prop 1 - Direct limit description of locally analytic functions for compact manifold}
	Let $X$ be a compact locally $L$-analytic manifold, and $V$ a Hausdorff locally convex $K$-vector space.
	\begin{altenumerate}
		\item
		\textnormal{(cf.\ \cite[p.\ 40]{Emerton17LocAnVect})}
		Taking the inductive limit of the homomorphisms $C^\la(X,\widebar{E}) \ra C^\la(X,V)$, for all BH-subspaces $E \subset X$, yields a topological isomorphism
		\begin{equation*}
			\varinjlim_{E\subset V} C^\la(X,\widebar{E}) \overset{\cong}{\lra} C^\la(X,V).
		\end{equation*}
		\item
		\textnormal{(cf.\ \cite[Prop.\ 2.1.30]{Emerton17LocAnVect})}
		If $V$ is of LF-type, i.e.\ can be written as the increasing union $V = \bigcup_{n\in \BN} \iota_n(V_n)$, for $K$-Fr\'echet spaces $V_n$ with continuous injections $\iota_n \colon V_n \hookrightarrow V$, then $C^\la(X,V)$ is an LF-space, i.e.\ topologically isomorphic to the inductive limit of a sequence of $K$-Fr\'echet spaces.
		
		If $V$ even is of LB-type, i.e.\ the increasing union $V = \bigcup_{n\in \BN} V_n$ of BH-subspaces $V_n$, then $C^\la(X,V)$ is an LB-space, i.e.\ topologically isomorphic to the inductive limit of a sequence of $K$-Banach spaces.
		\item
		\textnormal{(\cite[Satz 2.3.2]{FeauxdeLacroix99TopDarstpAdischLieGrp})}
		If $V$ is of compact type, then $C^\la(X,V)$ is of compact type and 
		\begin{equation}\label{Eq 1 - Tensor product decomposition for locally analytic functions with values in space of compact type}
			C^\la(X,K) \cotimes{K} V \overset{\cong}{\lra } C^\la(X,V) \,,\quad f\otimes v \lto f(\blank) v ,
		\end{equation}
		is a topological isomorphism.
		In particular, $C^\la(X,K)$ is of compact type in this case.
	\end{altenumerate}
\end{proposition}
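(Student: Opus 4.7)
The plan is to exploit the compactness of $X$ to reduce each statement to the case of a single $V$-index, and then to invoke the structure of the associated $K$-Banach spaces $\CA_\ul{r}(L^m, \widebar{E})$ of strictly convergent power series.

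For part (i), the key point is that, since $X$ is compact and the open cover $X = \bigcup_{i \in I} U_i$ associated with any $V$-index $\CI = (\varphi_i,\ul{r}_i,E_i)_{i \in I}$ of $X$ is disjoint, the index set $I$ is necessarily finite. Hence the sum $E \defeq \sum_{i \in I} E_i \subset V$ is a finite sum of BH-subspaces, which itself carries a natural Banach structure (via the quotient $\bigoplus_{i \in I} \widebar{E_i} \twoheadrightarrow \widebar{E}$), so $E$ is again a BH-subspace of $V$ and every function subordinate to $\CI$ takes its values in $\widebar{E}$. This yields a continuous inclusion $C^\la_\CI(X, V) \hookrightarrow C^\la(X, \widebar{E})$, so that the BH-subspaces of $V$ index a cofinal system of $V$-indices. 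Combined with the continuous maps $C^\la(X, \widebar{E}) \to C^\la(X, V)$ from \Cref{Prop 1 - Functorialities for the space of locally analytic functions}(i) and the universal property of inductive limits, this yields the desired topological isomorphism.

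For part (ii), I would use (i) together with the hypothesis on $V$. In the LB case $V = \bigcup_n V_n$, any BH-subspace of $V$ has a bounded unit ball, hence is absorbed by some $V_n$ and continuously embeds into it, so (i) reduces $C^\la(X, V)$ to $\varinjlim_n C^\la(X, V_n)$. Each $C^\la(X, V_n)$ is itself an LB-space because the $V_n$-indices $\CI$ with all $E_i = V_n$ form a cofinal countable system and, for such $\CI$, $C^\la_\CI(X, V_n) = \prod_{i \in I} C^\rig(\varphi_i, V_n)$ is a finite product of $K$-Banach spaces. A countable inductive limit of LB-spaces remains LB, which handles the LB assertion. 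The LF case is analogous: each locally analytic $f \colon X \to V$ factors through a BH-subspace by (i), and in an LF-type space such a BH-subspace is contained in some $\iota_n(V_n)$, yielding $C^\la(X, V) \cong \varinjlim_n C^\la(X, V_n)$ with each $C^\la(X, V_n)$ an LF-space (indeed LB) by the previous argument.

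For part (iii), I would apply (ii) to $V = \varinjlim_n V_n$ with Banach $V_n$ and compact transitions, obtaining $C^\la(X, V) \cong \varinjlim_n C^\la(X, V_n)$, and then check that the induced transitions $C^\la(X, V_n) \to C^\la(X, V_{n+1})$ are compact on each $V$-index level by reducing to the corresponding statement for $\CA_\ul{r}(L^m, V_n) \to \CA_\ul{r}(L^m, V_{n+1})$; taking $V = K$ then gives in particular that $C^\la(X, K)$ is of compact type. For the tensor product identity \eqref{Eq 1 - Tensor product decomposition for locally analytic functions with values in space of compact type}, I would first verify the Banach-level isomorphism $C^\la_\CI(X, K) \cotimes{K} V_n \cong C^\la_\CI(X, V_n)$ coming from the orthonormal basis of monomials in $\CA_\ul{r}(L^m, K)$, and then pass to the inductive limit using that completed tensor products commute with countable inductive limits in the category of compact-type spaces. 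The main obstacle lies here: the Banach-level identifications are classical, but verifying that the resulting continuous bijection $C^\la(X, K) \cotimes{K} V \to C^\la(X, V)$ is a topological isomorphism, and that the transitions in $\varinjlim_n C^\la(X, V_n)$ are genuinely compact, requires careful bookkeeping through the double inductive limit (over $V$-indices and over $n$) and the stability properties of compact-type spaces under completed tensor products.
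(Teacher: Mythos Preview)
Your treatment of (i) matches the paper's, and your LB argument in (ii) is fine. There is, however, a slip in your LF argument and a genuine gap in (iii).

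For (ii) in the LF case: you assert that each $C^\la(X, V_n)$ is ``indeed LB by the previous argument,'' but that argument needed $V_n$ to be Banach. When $V_n$ is merely Fr\'echet, it is not a BH-subspace of itself, and the BH-subspaces of $V_n$ need not admit a countable cofinal family, so the reduction to a countable inductive limit of Banach spaces breaks down. The paper handles this by invoking the identification $\varinjlim_{E \subset V_n} C^\aan(U, \widebar{E}) \cong C^\aan(U, K) \cotimes{K} V_n$ from \cite[Prop.\ 2.1.13 (ii)]{Emerton17LocAnVect}, which is a $K$-Fr\'echet space; this exhibits $C^\la(X, V_n)$, and hence $C^\la(X,V)$, as a countable inductive limit of Fr\'echet (not Banach) spaces.

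The more serious problem is in (iii). Your reduction to the compactness of $\CA_\ul{r}(L^m, V_n) \to \CA_\ul{r}(L^m, V_{n+1})$ with the \emph{same} multiradius $\ul{r}$ cannot work: under the identification $\CA_\ul{r}(L^m, E) \cong \CA_\ul{r}(L^m, K) \cotimes{K} E$ this map is $\id \otimes T$ with $T\colon V_n \to V_{n+1}$ compact, and tensoring a compact map against the identity on an infinite-dimensional Banach space is never compact. Concretely, take $V_n = V_{n+1} = K$: then $T = \id_K$ is compact, yet your map is the identity on the infinite-dimensional $\CA_\ul{r}(L^m, K)$. In particular, your strategy yields nothing for $V = K$, so it cannot establish that $C^\la(X, K)$ is of compact type. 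The missing idea is that one must refine the covering at the same time as one climbs through the $V_n$: the paper chooses a cofinal sequence of $V$-indices $\CI_n$ built from balls of shrinking radii $\varepsilon^n r$ (which uses that $L$ is locally compact, the discrete case being trivial), so that already the restriction $C^\aan(B^m_{\varepsilon^n r}(b), K) \to C^\aan(B^m_{\varepsilon^{n+1} r}(a), K)$ is compact; then the transition $C^\la_{\CI_n}(X,V) \to C^\la_{\CI_{n+1}}(X,V)$ is a finite sum of tensor products of two compact maps, hence compact by \cite[Lemma 18.12]{Schneider02NonArchFunctAna}.
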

\begin{proof}
	First note that every disjoint open covering of $X$ necessarily is finite by compactness. 
	Because the finite sum of BH-subspaces is again a BH-subspace \cite[Prop.\ 1.1.5]{Emerton17LocAnVect}, the set of $V$-indices of $X$ which have the same BH-subspace for all charts is cofinal in the set of all $V$-indices of $X$.
	This shows the topological isomorphism in (i).

	For (ii), if $V$ is of LF-type, then \cite[I.\ \S 3.3 Prop.\ 1]{Bourbaki87TopVectSp1to5} implies that, for every BH-subspace $E$ of $V$, the injection $\widebar{E} \hookrightarrow V$ factors over some $\iota_n$ via a continuous injection into $V_n$.
	The inductive limit over these yields a continuous injection $\varinjlim_{E\subset V} C^\la(X,\widebar{E}) \hookrightarrow \varinjlim_{n\in \BN} C^\la(X,V_n)$.
	Moreover, the $\iota_n$ give rise to a continuous injection $ \iota \colon \varinjlim_{n\in \BN} C^\la(X,V_n) \hookrightarrow C^\la(X,V)$.
	The composition
	\begin{equation*}
		\varinjlim_{E\subset V} C^\la(X,\widebar{E}) \longhookrightarrow \varinjlim_{n\in \BN} C^\la(X,V_n) \longhookrightarrow C^\la(X,V)
	\end{equation*}
	then agrees with the topological isomorphism from (i).
	Therefore $\iota$ itself is a topological isomorphism.

	Let $(\CU_{n})_{n\in \BN}$ be a cofinal sequence of disjoint open coverings of $X$, say $\CU_{n} = \{ U_{n,i} \}_{i\in I_n}$ with finite index sets $I_n$.
	Taking the inductive limit with respect to the BH-subspaces of $V_n$ first, we obtain a topological isomorphism
	\begin{equation}\label{Eq 1 - Direct limit description of locally analytic functions}
		C^\la(X,V_n) \cong \varinjlim_{n \in \BN} \,\, \prod_{i\in I_n} \varinjlim_{E \subset V_n}  C^\aan (U_{n,i},\widebar{E}) .
	\end{equation}
	Since $V_n$ is a $K$-Fr\'echet space, there is a topological isomorphism 
	\begin{equation}\label{Eq 1 - Tensor product for some Frechet spaces}
		\varinjlim_{E \subset V_n} C^\aan (U_{n,i},\widebar{E}) \cong C^\aan (U_{n,i}, K) \cotimes{K} V_n ,
	\end{equation}
	cf.\ \cite[Prop.\ 2.1.13 (ii)]{Emerton17LocAnVect}.
	Because the latter is a $K$-Fr\'echet space (see the discussion after \cite[Prop.\ 17.6]{Schneider02NonArchFunctAna}), we have exhibited $C^\la(X,V)$ as an inductive limit of $K$-Fr\'echet spaces.
	Furthermore, if $V$ is of LB-type, we may assume that the $V_n$ are $K$-Banach spaces.
	Since \eqref{Eq 1 - Tensor product for some Frechet spaces} is a $K$-Banach space then and the products in \eqref{Eq 1 - Direct limit description of locally analytic functions} are finite, the above also shows that $C^\la(X,V)$ is an LB-space in this case.

	For (iii), in regard of Remark \ref{Rmk 1 - Locally analytic manifold has disjoint countable covering by compact open subsets} (ii), we may distinguish the cases that $X$ is discrete or that $L$ is locally compact.
	In the first case, we have $C^\la(X,V) \cong V^n$, for some $n\in \BN$.
	Let us now assume that $L$ is locally compact.
	Here we want to find a sequence $(\CI_n)_{n\in \BN}$ of $V$-indices of $X$, with $\CI_{n+1}$ finer than $\CI_n$, which is cofinal and such that the transition maps $C^\la_{\CI_n}(X,V) \hookrightarrow C^\la_{\CI_{n+1}}(X,V)$ are compact.
	Applying \Cref{Prop 1 - Disjoint coverings and locally analytic functions} to some finite disjoint covering of $X$ by charts, it suffices to consider $X=B_{r}^{m}(0)\subset L^m$, for $r \defeq (r,\ldots,r) \in \BR_{>0}^m$.
	We fix $\varepsilon \in \abs{L}$ with $0<\varepsilon <1$.
	Since $L$ is locally compact, for each $n\in \BN$, we find a finite family of closed balls $(B^{m}_{\varepsilon^n r}(a_{n,i}))_{i=1,\ldots,d_n}$ of radius $\varepsilon^n r$ that constitute a disjoint covering of $X$.
	Then, for $B_{\varepsilon^{n+1} r}^{m}(a)  \subset B_{\varepsilon^{n} r}^{m}(b)$, the induced homomorphism
	\begin{equation*}
		C^\aan \big(B_{\varepsilon^{n} r}^{m}(b), K \big) \lra C^\aan \big(B_{\varepsilon^{n+1} r}^{m}(a), K \big)
	\end{equation*}
	of $K$-Banach spaces is compact, cf.\ \cite[\S 16 Claim, p.\ 98]{Schneider02NonArchFunctAna}.

	If $V$ is of compact type, let $(V_n)_{n\in \BN}$ be an inductive sequence of $K$-Banach spaces with injective and compact transition maps such that $V=  \varinjlim_{n\in \BN} V_n$.
	We define the $V$-indices $\CI_n \defeq \big(B^{m}_{\varepsilon^n r}(a_{n,i}), \varepsilon^n r, V_n \big)_{i=1,\ldots,d_n}$ of $X$ which form a cofinal sequence.
	Moreover the homomorphism
	\begin{equation*}
		\begin{tikzcd}
			C^\aan \big(B_{\varepsilon^{n} r}^{m}(b), V_n \big) \ar[r] \ar[d, "\cong"] &C^\aan \big(B_{\varepsilon^{n+1} r}^{m}(a), V_{n+1} \big) \ar[d, "\cong"] \\
			C^\aan \big(B_{\varepsilon^{n} r}^{m}(b) , K \big) \cotimes{K} V_n \ar[r] &C^\aan \big(B_{\varepsilon^{n+1} r}^{m}(a), K \big) \cotimes{K} V_{n+1}
		\end{tikzcd}
	\end{equation*}
	is compact by \cite[Lemma 18.12]{Schneider02NonArchFunctAna}.
	It follows from \Cref{Lemma A1 - Generalities on compact maps} (iii) that the homomorphism $C^\la_{\CI_n} (X,V) \hookrightarrow C^\la_{\CI_{n+1}}(X,V)$ given by the sum of these is compact, for all $n\in \BN$, so that $C^\la(X,V)$ is of compact type.

	Finally, \cite[Prop.\ 1.1.32 (i)]{Emerton17LocAnVect} shows that \eqref{Eq 1 - Tensor product decomposition for locally analytic functions with values in space of compact type} is a topological isomorphism.
\end{proof}

\begin{corollary}[{cf.\ \cite[p.\ 40]{Emerton17LocAnVect}}]\label{Cor 1 - Space of K-valued locally analytic functions is reflexive and barrelled}
	Let $X$ be a locally compact locally $L$-analytic manifold.
	Then $C^\la(X,K)$ is reflexive and complete.
\end{corollary}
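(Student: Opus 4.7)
The plan is to reduce to the compact case where we already know $C^\la(X,K)$ is of compact type, and then invoke standard stability properties under countable products.

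First I would invoke Remark 1 (iii) to write $X = \bigsqcup_{n \in \BN} W_n$ as a disjoint countable covering by compact open subsets; this uses precisely the hypothesis that $X$ is locally compact (which, by Remark 1 (ii), is the relevant feature of the setup here). By \Cref{Prop 1 - Disjoint coverings and locally analytic functions}, this yields a topological isomorphism
\begin{equation*}
    C^\la(X,K) \cong \prod_{n \in \BN} C^\la(W_n, K).
\end{equation*}

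Next, since each $W_n$ is a compact locally $L$-analytic manifold, \Cref{Prop 1 - Direct limit description of locally analytic functions for compact manifold} (iii) applied to $V = K$ tells us that $C^\la(W_n, K)$ is of compact type. It is a standard fact (see e.g.\ \cite[Prop.\ 16.10, Thm.\ 1.3]{Schneider02NonArchFunctAna}) that a locally convex $K$-vector space of compact type is both complete and reflexive.

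Finally, I would conclude using that these properties pass to (countable) products: a product of complete locally convex $K$-vector spaces is complete, and a product of reflexive locally convex $K$-vector spaces is reflexive (cf.\ \cite[Prop.\ 9.10, Prop.\ 15.3]{Schneider02NonArchFunctAna}). Combining with the topological isomorphism above gives that $C^\la(X,K)$ is reflexive and complete. The only step requiring any care is to ensure the reference on reflexivity of products applies in this non-archimedean setting, but since only a countable product of compact-type (in particular reflexive and barrelled) spaces is involved, this is immediate.
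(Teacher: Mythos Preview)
Your proof is correct and follows essentially the same route as the paper's: decompose $X$ into a disjoint countable union of compact open pieces via Remark~\ref{Rmk 1 - Locally analytic manifold has disjoint countable covering by compact open subsets}~(iii), use Proposition~\ref{Prop 1 - Disjoint coverings and locally analytic functions} to write $C^\la(X,K)$ as a product, invoke Proposition~\ref{Prop 1 - Direct limit description of locally analytic functions for compact manifold}~(iii) to get compact type on each factor, and then pass reflexivity and completeness through the product. The only minor remark is that your caveat about countability is unnecessary---reflexivity and completeness are stable under arbitrary products in this setting (the paper cites \cite[Prop.\ 9.10 and 9.11]{Schneider02NonArchFunctAna} and the comment before Lemma~7.8 there).
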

\begin{proof}
	By Remark \ref{Rmk 1 - Locally analytic manifold has disjoint countable covering by compact open subsets} (iii), we find a covering $X = \bigcup_{i\in I} X_i$ by compact open subsets.
	Then \Cref{Prop 1 - Direct limit description of locally analytic functions for compact manifold} (iii) implies that $C^\la(X_i,K)$ is reflexive and complete \cite[Prop.\ 16.10]{Schneider02NonArchFunctAna}, for all $i\in I$.
	As both properties are preserved under taking products (\cite[Prop.\ 9.10 and 9.11]{Schneider02NonArchFunctAna} resp.\ \cite[Comment before Lemma 7.8]{Schneider02NonArchFunctAna}), the claim follows from \Cref{Prop 1 - Disjoint coverings and locally analytic functions}.
\end{proof}

\begin{proposition}[{cf.\ \cite[Lemma A.1]{SchneiderTeitelbaum05DualAdmLocAnRep} and the discussion after \cite[Thm.\ 12.2]{SchneiderTeitelbaum04ContLocAnRepThLectHangzhou}}]\label{Prop 1 - Locally analytic functions on product of manifolds}
	Let $X$ and $Y$ be compact locally $L$-analytic manifolds.
	Then the map
	\begin{equation}\label{Eq 1 - Locally analytic functions on product of manifolds}
		C^\la \big(X , C^\la(Y,K) \big) \overset{\cong}{\lra} C^\la(X\times Y,K) \,, \quad f \lto \left[(x,y) \mto f(x)(y)\right]
	\end{equation}
	is a well-defined topological isomorphism.
\end{proposition}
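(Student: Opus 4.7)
The plan is to split the assertion into two topological isomorphisms mediated by a completed tensor product. First, since $Y$ is compact, \Cref{Prop 1 - Direct limit description of locally analytic functions for compact manifold} (iii) shows that $C^\la(Y,K)$ is of compact type. Applying the same proposition once more, now with $V = C^\la(Y,K)$, produces a topological isomorphism
\begin{equation*}
C^\la(X, K) \cotimes{K} C^\la(Y, K) \overset{\cong}{\lra} C^\la\big(X, C^\la(Y,K)\big), \qquad f \otimes g \lto f(\blank)\,g.
\end{equation*}
It therefore suffices to produce a topological isomorphism $\mu \colon C^\la(X,K) \cotimes{K} C^\la(Y,K) \overset{\cong}{\lra} C^\la(X \times Y, K)$ sending an elementary tensor $f \otimes g$ to $(x,y) \mto f(x)\,g(y)$; composing with the inverse of the displayed isomorphism then yields the map \eqref{Eq 1 - Locally analytic functions on product of manifolds}.

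To construct $\mu$, I would exploit compactness to choose finite disjoint coverings $X = \bigsqcup_{i \in I} U_i$ and $Y = \bigsqcup_{j \in J} W_j$ by analytic charts $\varphi_i\colon U_i \overset{\cong}{\to} B^{m_i}_{\ul{r}_i}(0)$ and $\psi_j\colon W_j \overset{\cong}{\to} B^{n_j}_{\ul{s}_j}(0)$. The products $U_i \times W_j$ give a finite disjoint covering of $X \times Y$ by analytic charts, and \Cref{Prop 1 - Disjoint coverings and locally analytic functions} identifies each of the three spaces $C^\la(X,K)$, $C^\la(Y,K)$, $C^\la(X \times Y, K)$ with the finite direct product over $I$, $J$, and $I \times J$ respectively. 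Since the completed tensor product commutes with finite direct sums, one reduces to the case where $X$ and $Y$ are themselves polydiscs. In that local setting both sides are inductive limits (over sub-polydisc refinements) of spaces of strictly convergent power series, and the algebraic identity
\begin{equation*}
\CA_{\ul{r}'}(L^m, K) \cotimes{K} \CA_{\ul{s}'}(L^n, K) \overset{\cong}{\lra} \CA_{(\ul{r}', \ul{s}')}(L^{m+n}, K),
\end{equation*}
given by multiplication of series, is a topological isomorphism because of the product form of the defining sup norms on coefficients. Passing to the inductive limits then yields $\mu$, once one observes that refinements of $B^m_{\ul{r}}(0) \times B^n_{\ul{s}}(0)$ by sub-polydiscs of product form are cofinal among all sub-polydisc refinements.

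The main obstacle I anticipate is this final cofinality and topology-matching step, since one must verify that the inductive tensor product topology is preserved under the limit and that the refinements along product-form covers really recapture the full locally analytic structure of $X \times Y$. Well-definedness of the map \eqref{Eq 1 - Locally analytic functions on product of manifolds} is verified separately and directly: for $f \in C^\la\big(X, C^\la(Y,K)\big)$ and $(x_0, y_0) \in X \times Y$, a local power series expansion of $f$ at $x_0$ has coefficients in a BH-subspace $E \subset C^\la(Y,K)$; by part (ii) of the same proposition $C^\la(Y,K)$ is of LB-type, so $E$ embeds continuously into some $C^\la_\CJ(Y,K)$, and restricting to an analytic chart of $\CJ$ around $y_0$ composes with the expansion in $x$ to produce a joint power series representation of $(x,y) \mto f(x)(y)$ near $(x_0, y_0)$.
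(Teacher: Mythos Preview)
Your argument is correct, but it inverts the logical order the paper uses. The paper proves \eqref{Eq 1 - Locally analytic functions on product of manifolds} directly and then obtains the tensor product isomorphism $C^\la(X,K)\cotimes{K}C^\la(Y,K)\cong C^\la(X\times Y,K)$ as a corollary by combining with \Cref{Prop 1 - Direct limit description of locally analytic functions for compact manifold} (iii); you instead establish the tensor product statement first and pull the proposition back through the isomorphism $C^\la(X,K)\cotimes{K}C^\la(Y,K)\cong C^\la(X,C^\la(Y,K))$. At the core both arguments rest on the same power series identity, in the nested form $\CA_{\ul{r}}(L^m,\CA_{\ul{s}}(L^n,K))\cong\CA_{(\ul{r},\ul{s})}(L^{m+n},K)$ for the paper and the equivalent tensor form $\CA_{\ul{r}}(L^m,K)\cotimes{K}\CA_{\ul{s}}(L^n,K)\cong\CA_{(\ul{r},\ul{s})}(L^{m+n},K)$ for you.

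The practical difference lies exactly where you flag a difficulty. The paper does not attempt to verify cofinality of product-form indices or compatibility of the inductive limit with the completed tensor product. Instead, once the index-level isomorphisms $C^\la_\CK(X,C^\la(Y,K))\cong C^\la_{\CI\times\CJ}(X\times Y,K)$ are in hand, the paper simply notes that \eqref{Eq 1 - Locally analytic functions on product of manifolds} is continuous, writes down the obvious set-theoretic inverse $f\mapsto[x\mapsto[y\mapsto f(x,y)]]$ (checked to be well-defined via the same power series identity), and invokes the open mapping theorem for maps between spaces of compact type to conclude that the bijection is topological. This sidesteps both of your anticipated obstacles at once. Your route also goes through---the tensor/limit compatibility is \cite[Prop.\ 1.1.32 (i)]{Emerton17LocAnVect} and the cofinality of product covers is a straightforward refinement argument---but the open mapping shortcut is cleaner and worth knowing.
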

\begin{proof}
	It suffices to define \eqref{Eq 1 - Locally analytic functions on product of manifolds} on all $C^\la(Y,K)$-indices of $X$.
	To this end, consider a $K$-index $\CJ= \big(\psi_j \colon W_j \ra L^{n_j}, \ul{s}_j, K \big)_{j=1,\ldots,e}$ of $Y$ with necessarily finite index set as $Y$ is compact.
	Then $C^\la_\CJ (Y,K) \hookrightarrow C^\la(Y,K)$ is a BH-subspace, and BH-subspaces of this form exhaust $C^\la(Y,K)$.
	Hence it suffices to consider the $C^\la(Y,K)$-indices of $X$ of the form $\CK = \big(\varphi_i \colon U_i \ra L^{m_i},\ul{r}_i, C^\la_\CJ(Y,K) \big)_{i=1,\ldots,d}$, for $\CI = \big(\varphi_i\colon U_i \ra L^{n_i},\ul{r}_i, K \big)_{i=1,\ldots,d}$ a $K$-index of $X$ and $\CJ$ a $K$-index of $Y$.
	But using 
	\begin{equation}\label{Eq 1 - Power series on product of domains}
		\CA_{\ul{r}_i} \big(L^{m_i}, \CA_{\ul{s}_j}(L^{n_j},K) \big) \cong \CA_{(\ul{r}_i,\ul{s}_j)}(L^{m_i+n_j},K)
	\end{equation}
	we find that
	\begin{align*}
		C^\la_\CK \big(X, C^\la(Y,K) \big) &= \prod_{i=1}^d C^\aan \bigg(\varphi_i, \prod_{j=1}^e C^\aan (\psi_j , K) \bigg) \\
		&\cong \prod_{i,j=1}^{d,e} C^\aan (\varphi_i \times \psi_j ,K)  = C^\la_{\CI \times \CJ}(X \times Y, K)
	\end{align*}
	where $\CI \times \CJ$ is the obvious $K$-index of $X\times Y$.
	This way, we obtain the continuous homomorphism \eqref{Eq 1 - Locally analytic functions on product of manifolds}.
	Furthermore, by applying \eqref{Eq 1 - Power series on product of domains} one sees that
	\begin{equation*}
		C^\la(X\times Y, K) \lra C^\la \big(X,C^\la(Y,K) \big) \,,\quad f \lto \big[ x \mto \left[y \mto f(x,y) \right] \big] ,
	\end{equation*}
	defines a $K$-linear map which is inverse to \eqref{Eq 1 - Locally analytic functions on product of manifolds}.
	It follows from the open mapping theorem \cite[Thm.\ 1.1.17]{Emerton17LocAnVect} that \eqref{Eq 1 - Locally analytic functions on product of manifolds} is even a topological isomorphism.
\end{proof}

\begin{corollary}\label{Cor 1 - Locally analytic functions on product of manifolds}
	Let $X$ and $Y$ be compact locally $L$-analytic manifolds.
	Then there is a topological isomorphism
	\begin{equation*}
		C^\la(X,K) \cotimes{K} C^\la(Y,K)\overset{\cong}{\lra} C^\la(X\times Y, K)  \,,\quad f \otimes g \lto \left[ (x,y) \mto f(x) g(y) \right] .
	\end{equation*}
\end{corollary}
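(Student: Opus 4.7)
The plan is to obtain this as a direct composition of two topological isomorphisms already established: the tensor product description of $C^\la(X,V)$ for $V$ of compact type from \Cref{Prop 1 - Direct limit description of locally analytic functions for compact manifold} (iii), applied to $V=C^\la(Y,K)$, and the ``exponential law'' from \Cref{Prop 1 - Locally analytic functions on product of manifolds}.

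First I would verify the hypothesis needed to invoke the tensor product isomorphism, namely that $C^\la(Y,K)$ is of compact type. Since $Y$ is a compact locally $L$-analytic manifold, this is exactly the content of the last sentence of \Cref{Prop 1 - Direct limit description of locally analytic functions for compact manifold} (iii). Next, since $X$ is also compact, that same proposition (applied with $V\defeq C^\la(Y,K)$) yields a topological isomorphism
\begin{equation*}
    C^\la(X,K) \cotimes{K} C^\la(Y,K) \overset{\cong}{\lra} C^\la\big(X, C^\la(Y,K)\big) \,,\quad f \otimes g \lto \big[x \mto f(x) \, g\big].
\end{equation*}
Composing with the topological isomorphism \eqref{Eq 1 - Locally analytic functions on product of manifolds} from \Cref{Prop 1 - Locally analytic functions on product of manifolds} gives a topological isomorphism to $C^\la(X\times Y, K)$, and tracking an elementary tensor $f\otimes g$ through both maps produces the function $(x,y) \mto f(x) g(y)$, which is the formula claimed.

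There is really no obstacle here beyond checking that the two isomorphisms compose as written; both are already proved. The only minor point to mention is that the continuous bilinear map
\begin{equation*}
    C^\la(X,K) \times C^\la(Y,K) \lra C^\la(X\times Y, K)\,,\quad (f,g) \lto \big[(x,y) \mto f(x)g(y)\big],
\end{equation*}
factors through $C^\la(X,K) \botimes{K} C^\la(Y,K)$ by the universal property of the projective tensor product, and, since both factors are of compact type and in particular complete, this factorization extends to the completion $C^\la(X,K)\cotimes{K} C^\la(Y,K)$; this is precisely the map constructed above, so the explicit description of the isomorphism in the statement is justified.
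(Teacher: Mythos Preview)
Your proof is correct and follows exactly the same route as the paper's proof, which also simply combines \Cref{Prop 1 - Locally analytic functions on product of manifolds} with \Cref{Prop 1 - Direct limit description of locally analytic functions for compact manifold} (iii). Your additional paragraph about the universal property of the tensor product is a nice elaboration but not needed, since the explicit formula is already tracked through the two constituent isomorphisms.
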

\begin{proof}
	This follows from combining \Cref{Prop 1 - Locally analytic functions on product of manifolds} with \Cref{Prop 1 - Direct limit description of locally analytic functions for compact manifold} (iii).
\end{proof}

\subsection{Locally Analytic Representations}

In this section $L \subset K$ continues to be a complete subfield of a non-archimedean field $K$ with non-trivial absolute value $\abs{\blank}$.
We will recall the notion of locally analytic representations of locally $L$-analytic Lie groups from \cite{FeauxdeLacroix99TopDarstpAdischLieGrp} which also readily generalizes to our case (cf.\ \cite[Part I, App.\ A]{Graef21BoundDistr}).

\begin{definition}\label{Def 1 - Definition Lie group}
	A \textit{locally $L$-analytic Lie group} (or \textit{non-archimedean Lie group}) is a locally $L$-analytic manifold $G$ which carries the structure of a group such that the multiplication and inversion maps
	\begin{equation*}
		m \colon G \times G \lra G \,,\qquad \inv\colon G \lra G
	\end{equation*}
	are locally $L$-analytic.
\end{definition}

\begin{remarks}
	\begin{altenumerate}
		\item
		In the above definition it suffices to assume that the multiplication map is locally $L$-analytic because this already implies that the inversion map is locally $L$-analytic as well, see \cite[5.12.1]{Bourbaki07VarDiffAnFasciDeResult}, \cite[Prop.\ 13.6]{Schneider11pAdicLieGrps}.
		\item
		If $L$ is locally compact, then in particular every non-archimedean Lie group $G$ is a topological group which is Hausdorff, totally disconnected, and locally compact.
		Therefore, each neighbourhood of the identity element $e$ in $G$ contains an open subgroup of $G$ \cite[Ch.\ III.\ \S 4.6, Cor.\ 1]{Bourbaki66GenTop1}.
		This implies that each neighbourhood of $e$ in $G$ also contains a compact open subgroup.
	\end{altenumerate}	
\end{remarks}

\begin{definition}
	A \textit{locally $L$-analytic subgroup} $H$ of a locally $L$-analytic Lie group $G$ is a subgroup $H \subset G$ which is a locally $L$-analytic submanifold.
	Such a subgroup naturally acquires the structure of a locally $L$-analytic Lie group itself, and is closed in $G$ necessarily, see \cite[5.12.3]{Bourbaki07VarDiffAnFasciDeResult}.
\end{definition}

\begin{example}
	Assume that $L$ is locally compact with uniformizer $\unif$, and let $d\in \BN$.
	The group $\GL_d(L)$ is an example of a locally $L$-analytic Lie group.
	A family of charts centred at the identity is given by 
	\begin{equation*}
		1 + \unif^n M_d(\CO_L) \lra B^{d^2}_{\abs{\unif}^n} (0) \,,\quad 1 +(a_{ij}) \lto (a_{ij}).
	\end{equation*}
\end{example}

\begin{definition}
	A \textit{(left) locally analytic $G$-representation} of a locally $L$-analytic Lie group $G$ is a barrelled, Hausdorff locally convex $K$-vector space $V$ with a $G$-action by continuous endomorphisms such that the orbit maps $G \ra V$, $g \mto g.v$, are locally analytic in the sense of \Cref{Def 1 - Definition locally analytic function}, for all $v\in V$.
	A \textit{homomorphism of locally analytic $G$-representations} between $V$ and $W$ is a $G$-equivariant continuous homomorphism $V \ra W$.
\end{definition}

\begin{remark}
	By definition the map $G\times V \ra V$, $(g,v)\mto g.v$, of a locally analytic $G$-representation is separately continuous.
	But if $G$ is locally compact (e.g.\ if $L$ is locally compact) this already is equivalent to being jointly continuous by \Cref{Lemma A1 - Separately continuous is jointly continuous under maps by locally compact space on barrelled vector space}.
\end{remark}

\begin{example}\label{Expl 1 - Examples of locally analytic representations}
	\begin{altenumerate}
		\item
		We refer to \Cref{Sect - Continuous and Locally Analytic Characters} for a discussion of locally analytic characters $\psi\colon L \ra K\unts$ and $\chi \colon L\unts \ra K\unts$ when $L$ is a local non-archimedean field.
		In particular in \Cref{Thm A2 - Locally analytic characters in equal characteristic} we show that, for $L$ a local field of ${\rm char}(L)=p>0$, every locally $L$-analytic character $\chi\colon 1+\Fm_L \ra K\unts$ is of the form $\chi(z)= z^c$, for some $c\in \BZ_p$.
		Here $1+\Fm_L \subset L\unts$ denotes the subgroup of principal units satisfying $\abs{z-1}< 1$.
		\item
		Let $G$ be a compact locally $L$-analytic Lie group, and $V$ a Hausdorff locally convex $K$-vector space.
		Then
		\begin{equation*}
			G \times C^\la(G,V) \lra C^\la(G,V) \,,\quad (g,f) \lto f(g^{-1}  \blank) \defeq \big[h \mto f(g^{-1}h) \big],
		\end{equation*}
		defines a locally analytic $G$-representation called the \textit{left regular $G$-representation with coefficients in $C^\la(G,V)$}.
		Indeed, $C^\la(G,V)$ is barrelled and Hausdorff by \Cref{Cor 1 - Space of locally analytic functions is Hausdorff and barrelled}, and the $G$-action is via continuous endomorphisms by \Cref{Prop 1 - Functorialities for the space of locally analytic functions} (ii).
		To see that the orbit maps are locally analytic, consider the locally analytic map of locally $L$-analytic manifolds
		\begin{equation*}
			G \times G \lra G \,,\quad (g,h) \lto g^{-1} h.
		\end{equation*}
		Using \Cref{Prop 1 - Locally analytic functions on product of manifolds} and functoriality, this map induces the homomorphism
		\begin{equation*}
			C^\la(G,V) \lra C^\la(G\times G, V) \cong C^\la( G, C^\la(G,V)) \,,\quad f \lto [g \mto [h \mto f(g^{-1}h)]],
		\end{equation*}
		whose image precisely consists of the orbit maps.

		Similarly, one shows that the \textit{right regular $G$-representation with coefficients in $C^\la(G,V)$}
		\begin{equation*}
			G \times C^\la(G,V) \lra C^\la(G,V) \,,\quad (g,f) \lto f(\blank g) \defeq [h \mto f(hg)],
		\end{equation*}
		and the \textit{$G$-representation by conjugation with coefficients in $C^\la(G,V)$}
		\begin{equation*}
			G \times C^\la(G,V) \lra C^\la(G,V) \,,\quad (g,f) \lto f(g^{-1} \blank g),
		\end{equation*}
		are locally analytic $G$-representations.
	\end{altenumerate}
\end{example}

\begin{proposition}[{cf.\ \cite[Satz 3.1.7]{FeauxdeLacroix99TopDarstpAdischLieGrp}, \cite[Prop.\ 3.6.14]{Emerton17LocAnVect}}]\label{Prop 1 - Subrepresentations and quotientrepresentations of locally analytic representations}
	Let $G$ be a locally $L$-analytic Lie group, and $V$ a locally analytic $G$-representation.
	Let $W\subset V$ be a $G$-invariant closed subspace.
	\begin{altenumerate}
		\item
		Then $V/W$ is a locally analytic $G$-representation with respect to the induced $G$-action.
		\item
		If $W$ is barrelled, then $W$ is a locally analytic $G$-representation with respect to the induced $G$-action.	
	\end{altenumerate}
\end{proposition}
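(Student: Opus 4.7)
The plan is to verify the three defining properties of a locally analytic representation (Hausdorff and barrelled, continuous endomorphism action, locally analytic orbit maps) in each case, with the main work being the stability of local analyticity under passage to the subspace.

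For part (i), the quotient $V/W$ is Hausdorff since $W$ is closed, locally convex by general principles, and barrelled because any quotient of a barrelled locally convex space by a closed subspace is barrelled (see e.g.\ \cite[\S 6]{Schneider02NonArchFunctAna}). The $G$-action on $V/W$ is well-defined by $G$-invariance of $W$, and each $g \in G$ acts by a continuous endomorphism by the universal property of the quotient topology applied to the composition $V \xrightarrow{g} V \twoheadrightarrow V/W$. Finally, if $\bar{v} \in V/W$ is the image of $v \in V$, the orbit map $g \mapsto g.\bar{v}$ is the composition of $g \mapsto g.v$ with the continuous projection $\pi \colon V \twoheadrightarrow V/W$. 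Since $g \mapsto g.v$ is locally analytic by hypothesis, \Cref{Prop 1 - Functorialities for the space of locally analytic functions}(i) applied to $\pi_\ast$ yields local analyticity of the orbit map in $V/W$.

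For part (ii), Hausdorffness of $W$ is automatic, barrelledness is assumed, and continuity of each $g \in G$ acting on $W$ follows by restriction from the continuous endomorphism on $V$. The crucial point is that for $w \in W$, the orbit map $o_w \colon G \to V$, $g \mapsto g.w$, which we know to be locally analytic with values in $W \subset V$, is also locally analytic as a map $G \to W$ with the subspace topology. Unwinding \Cref{Def 1 - Definition locally analytic function}, around every $a \in G$ we have a chart $\varphi \colon U \to B^n_{\underline{r}}(0)$, a BH-subspace $E \subset V$, and a power series $f_a = \sum_{\underline{i}} v_{\underline{i}}\, X^{\underline{i}} \in \CA_{\underline{r}}(L^n, \widebar{E})$ such that $o_w(x) = f_a(\varphi(x) - \varphi(a))$ on a neighbourhood of $a$; after shrinking the neighbourhood we may assume $o_w$ takes values in $W$ on the entire ball.

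The key claim is then that all coefficients $v_{\underline{i}}$ lie in $\widebar{E} \cap W$. Granting this, $F \defeq \widebar{E} \cap W$ is closed in the Banach space $\widebar{E}$, hence a Banach space whose inclusion into $V$ factors as $\widebar{F} \incl \widebar{E} \incl V$ and lands inside $W$; by the open mapping theorem the induced injection $\widebar{F} \incl W$ is continuous, exhibiting $F$ as a BH-subspace of $W$ with $f_a \in \CA_{\underline{r}}(L^n, \widebar{F})$, which gives the desired local analyticity in $W$. The claim itself I would prove by induction on $|\underline{i}|$ using the closedness of $W$ in $V$: the constant term $v_{\underline{0}} = f_a(0) = w \in W$ is clear, and for higher multi-indices one isolates coefficients by suitable scalar-limit arguments, e.g.\ fixing all but one coordinate and using that $\bigl(f_a(te_k) - \sum_{j<n} v_{j e_k} t^j\bigr) / t^n \to v_{n e_k}$ as $t \to 0$ along nonzero scalars in $L$, with each quotient lying in $W$ by $L$-linearity of $W$ and the induction hypothesis; closedness of $W$ in $V$, together with continuity of $\widebar{E} \to V$, then forces $v_{n e_k} \in W$, and the full multi-index case follows by iterating this one variable at a time.

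The main obstacle is precisely this coefficient-extraction step: it is the only place where closedness of $W$ enters substantively, and it relies on the identity theorem (\Cref{Prop 1 - Identity theorem for power series}) to legitimately separate the coefficients of a power series from its associated function, together with care that the argument works uniformly in the non-archimedean setting without invoking characteristic-zero derivative formulas.
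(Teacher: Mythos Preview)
Your proposal is correct and matches the paper's approach: for (i) both use functoriality under the projection $V\to V/W$ via \Cref{Prop 1 - Functorialities for the space of locally analytic functions}(i); for (ii) both rest on the observation that $E\cap W$ is a BH-subspace of $W$ whenever $E\subset V$ is one (closedness of $W$ in $V$ makes $E\cap W$ closed in $\widebar{E}$).

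The paper's proof leaves the coefficient-extraction step you flag as the ``main obstacle'' entirely implicit, simply writing ``Therefore the orbit maps of $W$ are locally analytic.'' Your inductive limit argument for this step is valid but more laborious than needed, and the sketch for general multi-indices (``iterating one variable at a time'') would require some care to make precise. A cleaner route that handles all multi-indices at once: compose $f_a$ with the Banach-space quotient $\widebar{E}\to \widebar{E}/(E\cap W)$ and apply the identity theorem (\Cref{Prop 1 - Identity theorem for power series}) directly---the resulting power series vanishes identically on the ball since all values of $f_a$ lie in $E\cap W$, hence every coefficient $v_{\ul{i}}$ already lies in $E\cap W$.
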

\begin{proof}
	The quotient space $V/W$ is barrelled \cite[Expl.\ 4) after Cor.\ 6.16]{Schneider02NonArchFunctAna}, and Hausdorff because $W\subset V$ is closed.
	Moreover, the $G$-invariance of $W$ ensures that $G$ acts by continuous endomorphisms on $W$ and $V/W$.
	To show that the orbit maps are locally analytic, consider a BH-subspace $E\subset V$.
	As $W\subset V$ is closed, $E\cap W \subset W$ is a BH-subspace.
	Therefore the orbit maps of $W$ are locally analytic.
	Furthermore, by the functoriality of \Cref{Prop 1 - Functorialities for the space of locally analytic functions} (i), we have a continuous homomorphism $C^\la(G,V) \ra C^\la(G,V/W)$.
	For $v\in V$, the image of its orbit map under this homomorphism is the orbit map of the residue class $v+W$.	
\end{proof}

\begin{proposition}[{cf.\ \cite[Lemma 3.2.4]{FeauxdeLacroix99TopDarstpAdischLieGrp}, \cite[Prop. 3.6.11]{Emerton17LocAnVect}}]\label{Prop 1 - Locally analytic representations and open subgroups}
	Let $H$ be an open subgroup of a locally $L$-analytic Lie group $G$, and $V$ a locally convex $K$-vector space on which $G$ acts by continuous endomorphisms.
	Then $V$ is a locally analytic $G$-representation if and only if $V$ is a locally analytic $H$-representation with respect to the induced $H$-action.
\end{proposition}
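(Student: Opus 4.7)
The plan is straightforward and leans on the local nature of the definition of locally analytic functions, together with the transport properties already recorded in \Cref{Prop 1 - Disjoint coverings and locally analytic functions} and \Cref{Prop 1 - Functorialities for the space of locally analytic functions}. The hypothesis on $V$ (barrelled, Hausdorff, $G$-action by continuous endomorphisms) transfers between the $G$- and $H$-settings without effort, since $H \subset G$ acts by the same endomorphisms. So the content is entirely in comparing local analyticity of orbit maps on $G$ versus on $H$.

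For the forward implication, suppose $V$ is a locally analytic $G$-representation. Then $H$ being open in $G$ is in particular a locally $L$-analytic submanifold, and the inclusion $H \hookrightarrow G$ is a locally $L$-analytic map. By \Cref{Prop 1 - Functorialities for the space of locally analytic functions} (ii), pullback along $H \hookrightarrow G$ sends the orbit map $G \to V$, $g \mapsto g.v$, to its restriction $H \to V$, $h \mapsto h.v$, which is therefore locally analytic for every $v \in V$.

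For the reverse implication, assume that $V$ is a locally analytic $H$-representation. Fix $v \in V$ and choose a set of coset representatives $R \subset G$, so that $G = \bigsqcup_{g \in R} gH$ is a disjoint open covering. By \Cref{Prop 1 - Disjoint coverings and locally analytic functions}, the orbit map $\rho_v \colon G \to V$, $x \mapsto x.v$, is locally analytic if and only if each restriction $\rho_v\!\mid_{gH} \colon gH \to V$ is locally analytic. Now for $g \in R$, left translation $\ell_g \colon H \to gH$, $h \mapsto gh$, is a locally $L$-analytic isomorphism (its inverse being $\ell_{g^{-1}}$), and we have the factorisation
\begin{equation*}
  \rho_v\!\mid_{gH} \circ\, \ell_g \;=\; g.\bigl(\rho_v^H\bigr), \qquad \text{where } \rho_v^H \colon H \to V,\ h \mapsto h.v,
\end{equation*}
and $g. \colon V \to V$ denotes the continuous endomorphism given by the action of $g$. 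The function $\rho_v^H$ is locally analytic by assumption, and composing with the continuous endomorphism $g.$ preserves local analyticity by \Cref{Prop 1 - Functorialities for the space of locally analytic functions} (i). Pulling back along the locally analytic map $\ell_{g^{-1}} \colon gH \to H$ via \Cref{Prop 1 - Functorialities for the space of locally analytic functions} (ii) yields that $\rho_v\!\mid_{gH}$ is locally analytic. Since this holds for every $g \in R$, we conclude that $\rho_v$ is locally analytic on $G$.

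The only point that requires a moment of care is that $H$ being open in $G$ is precisely what makes $\{gH\}_{g \in R}$ a \emph{disjoint open} covering, which is what is needed to apply \Cref{Prop 1 - Disjoint coverings and locally analytic functions}; if $H$ were merely a closed locally analytic subgroup this argument would break down. No serious obstacle arises otherwise.
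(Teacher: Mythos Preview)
Your proof is correct. The paper's argument is a minor variant: instead of decomposing $G$ into left cosets $gH$ and writing $\rho_v(gh) = g.(h.v)$ (which requires the functoriality in $V$ from \Cref{Prop 1 - Functorialities for the space of locally analytic functions} (i)), it works pointwise near each $g\in G$ using the identity $\rho_v(xg) = \rho_{g.v}(x)$, so that $\rho_v$ on the right-translated chart $Ug$ is given by the same power series as the orbit map $\rho_{g.v}$ of the vector $g.v$ on an analytic chart $U\subset H$ around $e$. Your left-coset approach leans on the cited propositions as black boxes, while the paper's right-translation trick goes directly through the definition via power series and avoids invoking continuity of the $g$-action; both are equally short and amount to the same translation idea.
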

\begin{proof}
	If $V$ is a locally analytic $H$-representation, consider $v\in V$ and $g\in G$.
	Then there exists an analytic chart $U\subset H$ around the identity element $e$ such that the orbit map $\rho_{g.v}$ is given by a convergent power series there.
	Hence the orbit map $\rho_v$ is given by a convergent power series on the analytic chart $Ug$ around $g$.
	This shows that $V$ is a locally analytic $G$-representation.
	The converse implication is clear.
\end{proof}

\begin{proposition}[{For ${\rm char}(L)=0$, cf.\ \cite[Kor.\ 3.1.9]{FeauxdeLacroix99TopDarstpAdischLieGrp}}]\label{Prop 1 - Equivalent characterization for locally analytic representations on Banach spaces}
	Let $G$ be a locally $L$-analytic Lie group, and $E$ a $K$-Banach space with an abstract $G$-action.
	Then $E$ is a locally analytic representation with respect to this $G$-action if and only if the $G$-action on $E$ is given by an analytic linear representation in the sense of Bourbaki \cite[III.\ \S 1.2 Expl.\ (3)]{Bourbaki89LieGrpLieAlg1to3}, i.e.\ a locally $L$-analytic homomorphism $\rho\colon G \ra \GL(E)\subset \CL(E,E)$ of Lie groups\footnote{For the definition of locally $L$-analytic manifolds with charts taking values in $L$-Banach spaces and locally $L$-analytic maps thereof, see \cite[\S 5.1]{Bourbaki07VarDiffAnFasciDeResult}}.
	Here we view $E$ as an $L$-Banach space via restriction of scalars.

	In particular, a locally analytic $G$-representation on a $K$-Banach space $E$ is \textit{uniformly locally analytic}: For every $g\in G$, there exists a neighbourhood $U\subset G$ of $g$ such that on $U$ all orbit maps $\rho_v\res{U}$, $v\in E$, are given by convergent power series.
\end{proposition}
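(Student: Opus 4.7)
The forward implication is routine: given a locally $L$-analytic homomorphism $\rho\colon G \to \GL(E) \subset \CL(E,E)$, each orbit map $\rho_v$ factors as $\ev_v \circ \rho$, where $\ev_v\colon \CL(E,E) \to E$, $T \mapsto T(v)$, is continuous and $K$-linear. Applying $\ev_v$ coefficient-wise to the local $\CL(E,E)$-valued power series representing $\rho$ yields an $E$-valued power series locally representing $\rho_v$, so each $\rho_v$ is locally analytic. Since $E$ is a Banach space (hence barrelled and Hausdorff) and $\rho$ takes values in $\CL(E,E)$, $E$ becomes a locally analytic $G$-representation.

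The content lies in the reverse implication. Fix $g_0 \in G$; by left translation on $G$ (locally analytic) we may assume $g_0 = e$ and fix an analytic chart $\varphi\colon U \to B^{n}_{\ul{r}}(0)$ centred at $e$. For each $v \in E$, local analyticity of $\rho_v$ produces a power series $f_v \in \CA_{\ul{r}(v)}(L^{n}, E)$ representing $\rho_v \circ \varphi^{-1}$ near $0$ with some $v$-dependent radius $\ul{r}(v) \leq \ul{r}$. By the identity theorem for power series (\Cref{Prop 1 - Identity theorem for power series}) the coefficients $c_{\ul{i}}(v) \in E$ are uniquely determined, and since the $G$-action is $K$-linear the assignments $v \mapsto c_{\ul{i}}(v)$ are $K$-linear where defined.

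The crucial step is a Baire category argument to obtain uniform radii. For $\ul{s} \in \abs{L}^{n}$ with $\ul{s} \leq \ul{r}$ and $M > 0$, consider the $\CO_L$-submodule
\[
E_{\ul{s}, M} \defeq \Big\{ v \in E \,\Big|\, \rho_v \circ \varphi^{-1} \text{ is given on } B^{n}_{\ul{s}}(0) \text{ by some } f_v \in \CA_{\ul{s}}(L^{n}, E),\ \norm{f_v}_{\ul{s}} \leq M \Big\}.
\]
Closedness of $E_{\ul{s}, M}$ comes from pointwise convergence $f_{v_k}(x) = \rho(\varphi^{-1}(x))(v_k) \to \rho(\varphi^{-1}(x))(v)$, combined with the uniform coefficient bounds $\norm{c_{\ul{i}}(v_k)} \leq M/\ul{s}^{\ul{i}}$, which produce a limit $f_v \in \CA_{\ul{s}}(L^{n}, E)$ representing $\rho_v \circ \varphi^{-1}$ on $B^{n}_{\ul{s}}(0)$ with $\norm{f_v}_{\ul{s}} \leq M$. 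Ranging $(\ul{s}, M)$ over a countable cofinal set, these submodules cover $E$; since $E$ is Baire, some $E_{\ul{s}_0, M_0}$ has non-empty interior, and the ultrametric together with $K$-linearity then forces a uniform bound $\norm{f_v}_{\ul{s}_0} \leq C \norm{v}$ valid for all $v \in E$ and some $C > 0$.

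This uniform bound means each $A_{\ul{i}}\colon v \mapsto c_{\ul{i}}(v)$ lies in $\CL(E,E)$ with operator norm $\leq C/\ul{s}_0^{\ul{i}}$. Picking $\ul{s}_1 \in \abs{L}^{n}$ strictly smaller than $\ul{s}_0$ coordinate-wise, one has $\norm{A_{\ul{i}}}_{\mathrm{op}}\, \ul{s}_1^{\ul{i}} \leq C (\ul{s}_1/\ul{s}_0)^{\ul{i}} \to 0$, so $F \defeq \sum_{\ul{i}} A_{\ul{i}} X^{\ul{i}}$ defines an element of $\CA_{\ul{s}_1}(L^{n}, \CL(E,E))$. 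Evaluating on any $v \in E$ recovers $f_v$ on $B^{n}_{\ul{s}_1}(0)$, whence $\rho(g) = F(\varphi(g))$ on $\varphi^{-1}(B^{n}_{\ul{s}_1}(0))$. This simultaneously proves local $L$-analyticity of $\rho$ at $e$ (in the Bourbaki sense) and the uniform local analyticity assertion, since the single power series $F$ represents every orbit map $\rho_v = \ev_v \circ F \circ \varphi$ on that neighbourhood. The principal technical hurdle is the closedness of $E_{\ul{s}, M}$; the remainder is a standard application of Baire and elementary Banach space geometry.
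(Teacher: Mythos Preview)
Your overall strategy---a Baire category argument to upgrade $v$-dependent radii of analyticity to a uniform one---is correct and is exactly the approach the paper takes, packaged there as \Cref{Prop A1 - Strong and weak analytic functions on a Banach space} (an adaptation of an archimedean result of Browder). The forward direction and the endgame after the uniform bound are fine.

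The gap is precisely where you flag it: closedness of $E_{\ul s,M}$. Pointwise convergence $f_{v_k}(x)\to\rho(\varphi^{-1}(x))(v)$ together with the uniform bounds $\lVert c_{\ul i}(v_k)\rVert\le M/\ul s^{\,\ul i}$ does not by itself force coefficient-wise convergence $c_{\ul i}(v_k)\to c_{\ul i}(v)$, nor does it show that $\rho_v\circ\varphi^{-1}$ is represented by a power series on all of $B^n_{\ul s}(0)$ rather than only on the a priori smaller ball $B^n_{\ul r(v)}(0)$. A bounded sequence in the Banach space $\CA_{\ul s}(L^n,E)$ has no reason to converge, and pointwise limits of norm-bounded analytic functions need not remain analytic on the same ball. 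What is actually needed is continuity of each coefficient operator $c_{\ul i}\colon E\to E$: once that is known, $E_{\ul s,M}=\bigcap_{\ul i}\,c_{\ul i}^{-1}\bigl(\{w:\lVert w\rVert\le M/\ul s^{\,\ul i}\}\bigr)$ is an intersection of closed sets and hence closed.

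The paper supplies this missing step inside the proof of \Cref{Prop A1 - Strong and weak analytic functions on a Banach space} via a Banach--Steinhaus argument \emph{preceding} Baire: ordering $\BN_0^n$ lexicographically and proceeding by induction on $\ul j$, one writes $c_{\ul j}(v)$ as an iterated limit $\lim_{h_n\to 0}\cdots\lim_{h_1\to 0}T_h(v)$ of the continuous operators
\[
T_h(v)\;=\;\frac{1}{h^{\ul j}}\Bigl(\rho\bigl(\varphi^{-1}(h)\bigr)(v)-\sum_{\ul i<\ul j}c_{\ul i}(v)\,h^{\ul i}\Bigr),
\]
continuity of $T_h$ using that $\rho(\varphi^{-1}(h))\in\CL(E,E)$ together with the inductive hypothesis on the lower $c_{\ul i}$. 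Banach--Steinhaus then yields continuity of $c_{\ul j}$, after which the Baire step goes through as you describe. In short: the Baire argument is right, but it must be preceded by this Banach--Steinhaus step, which you have omitted.
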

\begin{proof}
	Our proof differs from the one in \cite{FeauxdeLacroix99TopDarstpAdischLieGrp} where differentiation with respect to elements of the Lie algebra of $G$ is used.

	First assume that $E$ is a locally analytic $G$-representation.
	This implies that $G$ acts by continuous endomorphisms, i.e.\ that there is a homomorphism $\rho \colon G \ra \GL(E)$ of (abstract) groups.
	To show that $\rho$ is a locally $L$-analytic map, it suffices to consider a fixed $g\in G$ and a chart $\varphi\colon U \ra \varphi(U)$ centred at $g$.
	We may now apply \Cref{Prop A1 - Strong and weak analytic functions on a Banach space} to the function $\rho \circ \varphi^{-1} \colon \varphi(U)\ra \CL_b(E,E)$ and the continuous $L$-bilinear pairing
	\begin{equation}\label{Eq 1 - Concrete pairing of Banach spaces}
		\CL_b(E,E) \times E \lra E \,,\quad (\lambda, v)  \lto \lambda(v),
	\end{equation}
	of $L$-Banach spaces.
	The continuity of \eqref{Eq 1 - Concrete pairing of Banach spaces} follows from the fact that the topology of $\CL_b(E,E)$ is induced by the operator norm \cite[Rmk.\ 6.7]{Schneider02NonArchFunctAna}.
	This proposition then says that $\rho \circ \varphi^{-1}$ is analytic in some open neighbourhood of $0$ since the orbit maps of $\rho$ are locally analytic.
	
	Conversely, if $\rho\colon G \ra \GL(E)$ is a locally $L$-analytic homomorphism of Lie groups, the opposite implication of \Cref{Prop A1 - Strong and weak analytic functions on a Banach space} implies that the orbit maps $G \ra E, g\mto g.v$, for $v\in E$, are locally analytic.
	Moreover, $G$ clearly acts by continuous endomorphisms because $\rho(G) \subset \CL(E,E)$.
\end{proof}

\subsection{Modules over Locally Analytic Distribution Algebras}

Let $K$ be a complete non-ar\-chi\-me\-de\-an field which is spherically complete, and let $L\subset K$ a locally compact complete subfield.
Note that with these assumptions every locally $L$-analytic manifold admits a disjoint countable covering by compact open subsets (see  Remark \ref{Rmk 1 - Locally analytic manifold has disjoint countable covering by compact open subsets} (iii)), and the Hahn--Banach theorem for locally convex $K$-vector spaces applies \cite[Prop.\ 9.2]{Schneider02NonArchFunctAna}.

In this section, we want to review locally analytic distributions and their interplay with locally analytic representations.
We follow \cite{SchneiderTeitelbaum02LocAnDistApplToGL2} as the characteristic of $L$ again makes no difference.
However occasionally we will need and prove slightly stronger statements.

\begin{definition}
	\begin{altenumerate}
		\item
		Let $X$ be a locally $L$-analytic manifold.
		The space of \textit{locally analytic distributions on $X$} is defined as the strong dual space
		\[D(X,K) \defeq C^\la (X,K)'_b .\]
		
		\item
		For $x\in X$, the homomorphism
		\[\delta_x \colon C^\la (X,K) \lra K\,,\quad f \lto f(x),\]
		is continuous by \Cref{Prop 1 - Evaluation maps are continuous}.
		This element $\delta_x \in D(X,K)$ is called the \textit{Dirac distribution supported at $x$}.
	\end{altenumerate}
\end{definition}

\begin{proposition}[{cf.\ \cite{SchneiderTeitelbaum02LocAnDistApplToGL2}}]\label{Prop 1 - Properties of the distribution algebra}
	Let $X$ be a locally $L$-analytic manifold.
	\begin{altenumerate}
		\item
		The locally convex $K$-vector space $D(X,K)$ is reflexive.
		\item
		If $X$ is compact, $D(X,K)$ is a nuclear $K$-Fr\'echet space.
		\item
		Given a disjoint covering $X= \bigcup_{i\in I} X_i$ by open subsets $X_i$, there is a natural topological isomorphism
		\[D(X,K) \cong \bigoplus_{i\in I} D(X_i,K) .\]
		In particular, for every $\mu \in D(X,K)$, there exists some compact open subset $Y \subset X$ on which it is supported, i.e.\ for which $\mu \in D(Y,K)\subset D(X,K)$.
		\item
		The subspace of $D(X,K)$ generated by all Dirac distributions $\delta_x$, $x\in X$, is dense.			
	\end{altenumerate}
\end{proposition}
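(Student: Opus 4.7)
The plan is to handle part (ii) using the compact-type structure established for $C^\la(X,K)$, then deduce (i) by reflexivity, obtain (iii) by dualising the disjoint-covering decomposition, and finally derive (iv) via Hahn--Banach.

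For part (ii), assume $X$ is compact. \Cref{Prop 1 - Direct limit description of locally analytic functions for compact manifold}~(iii) exhibits $C^\la(X,K)$ as a locally convex space of compact type. I would then invoke the standard result in non-archimedean functional analysis that the strong dual of a space of compact type is a nuclear $K$-Fr\'echet space. For part (i), note that since $L$ is locally compact, every locally $L$-analytic manifold is locally compact, so \Cref{Cor 1 - Space of K-valued locally analytic functions is reflexive and barrelled} shows that $C^\la(X,K)$ is reflexive. The strong dual of a reflexive locally convex $K$-vector space is again reflexive (reflexivity being symmetric under strong duality, as the canonical biduality map identifies $V$ with $(V'_b)'_b$ as topological vector spaces, hence also $V'_b$ with $((V'_b)'_b)'_b$), which yields (i).

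For part (iii), \Cref{Prop 1 - Disjoint coverings and locally analytic functions} provides a topological isomorphism $C^\la(X,K) \cong \prod_{i\in I} C^\la(X_i,K)$, and I would dualise this identification. Every continuous linear form on a locally convex product vanishes on all but finitely many factor subspaces (since any neighbourhood of zero in the product contains a set of the form $\prod_{i \in F} U_i \times \prod_{i \notin F} C^\la(X_i,K)$ for some finite $F \subset I$), so algebraically $D(X,K) = \bigoplus_{i\in I} D(X_i,K)$. A comparison of the strong topology on the left with the locally convex direct sum topology on the right (using that bounded subsets of a product lie inside products of bounded sets) upgrades this to a topological isomorphism. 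For the last assertion, \Cref{Rmk 1 - Locally analytic manifold has disjoint countable covering by compact open subsets}~(iii) supplies a disjoint countable covering $X = \bigcup_{n\in \BN} Y_n$ by compact open subsets; any $\mu \in D(X,K) \cong \bigoplus_{n\in\BN} D(Y_n,K)$ has only finitely many non-zero components and is hence supported on the compact open subset $Y = Y_{n_1}\cup \ldots \cup Y_{n_k}$.

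For part (iv), since $K$ is spherically complete the Hahn--Banach theorem for locally convex $K$-vector spaces is at our disposal. To show that the subspace $U \subset D(X,K)$ spanned by the Dirac distributions is dense, it suffices to verify that every continuous linear form on $D(X,K)$ vanishing on $U$ is zero. By the reflexivity from (i), such a form corresponds to an element $f \in C^\la(X,K)$, and the vanishing condition $\delta_x(f) = f(x) = 0$ for all $x\in X$ forces $f = 0$. The main technical point to verify carefully is the strong-dual identification in (iii), where matching the strong topology on the dual of a product with the locally convex direct sum topology requires a somewhat finicky bookkeeping of bounded sets; the remaining steps follow routinely from the functional-analytic material established earlier.
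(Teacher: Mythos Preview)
Your proposal is correct and follows essentially the same route as the paper: compact type of $C^\la(X,K)$ gives (ii), dualising the product decomposition gives (iii), and Hahn--Banach plus reflexivity gives (iv). The only minor difference is in (i): the paper deduces reflexivity of $D(X,K)$ from (ii) and (iii) (nuclear Fr\'echet spaces are reflexive, and direct sums of reflexive spaces are reflexive), whereas you appeal directly to the already-established reflexivity of $C^\la(X,K)$ and the general fact that strong duals of reflexive spaces are reflexive---a slight but harmless reordering.
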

\begin{proof}
	If $X$ is compact, then $C^\la (X,K)$ is of compact type by \Cref{Prop 1 - Direct limit description of locally analytic functions for compact manifold} (iii).
	Therefore its strong dual $D(X,K)$ is a nuclear $K$-Fr\'echet space (see \cite[Prop.\ 16.10]{Schneider02NonArchFunctAna} and \cite[Prop.\ 19.9]{Schneider02NonArchFunctAna}) showing (ii).

	The statement of (iii) follows from \Cref{Prop 1 - Disjoint coverings and locally analytic functions} and \cite[Prop.\ 9.11]{Schneider02NonArchFunctAna}.

	Note that a nuclear Fr\'echet space is reflexive \cite[Cor.\ 19.3 (ii)]{Schneider02NonArchFunctAna} which settles (i) if $X$ is compact.
	In the general case, we find a countable disjoint covering $X=\bigcup_{i\in I} X_i$ by open compact subsets using Remark \ref{Rmk 1 - Locally analytic manifold has disjoint countable covering by compact open subsets} (iii).
	It follows from \cite[Prop.\ 9.10]{Schneider02NonArchFunctAna} and \cite[Prop.\ 9.11]{Schneider02NonArchFunctAna} that the direct sum of reflexive locally convex $K$-vector spaces is reflexive again.	
	Applying this to the direct sum of (iii), for $X=\bigcup_{i\in I} X_i$, we see that $D(X,K)$ is reflexive.

	Finally, let $\Delta \subset D(X,K)$ denote the closure of the subspace generated by the Dirac distributions of $X$ and assume that $\Delta \subsetneq D(X,K)$.
	By the Hahn-Banach theorem \cite[Cor.\ 9.3]{Schneider02NonArchFunctAna}, there is a continuous linear functional $\widebar{\ell}\colon D(X,K)/\Delta \ra K$ which is non-zero.
	This induces a non-zero continuous functional $\ell \colon D(X,K)\ra K$ that vanishes on $\Delta$.
	Because $D(X,K)$ is reflexive, $\ell$ corresponds to a locally analytic function $f $ on $X$.
	But the vanishing of $\ell$ on $\Delta$ implies that $f=0$ which is a contradiction.
\end{proof}

\begin{proposition}[{cf.\ \cite[Prop.\ 2.3]{SchneiderTeitelbaum02LocAnDistApplToGL2}\footnote{For the proof of this proposition, Schneider and Teitelbaum refer to the diploma thesis \cite{FeauxdeLacroix92pAdischDist} here which was not available to me.} and \cite[App.]{SchneiderTeitelbaum05DualAdmLocAnRep}}]\label{Prop 1 - Convolution product}
	Let $G$ be a locally $L$-analytic Lie group.
	There exists a separately continuous $K$-bilinear map
	\begin{equation}\label{Eq 1 - Definition of convolution product}
		D(G,K) \times D(G,K) \lra D(G,K) \,,\quad (\mu, \nu) \lto \mu \ast \nu ,
	\end{equation}
	such that $\delta_g \ast \delta_{g'} = \delta_{gg'}$, for $g, g'\in G$.
	Concretely, for $\mu, \nu \in D(G,K)$ supported on compact open subsets $H$ respectively $H'$ of $G$, $\mu \ast \nu$ factors over $C^\la(H \cdot H', K)$ and is given by\footnote{Recall that $H\cdot H'$ denotes the set $\left\{hh'\middle{|}h\in H, h'\in H'\right\}\subset G$.}
	\begin{equation}\label{Eq 1 - Explicit definition of convolution product}
		\begin{tikzcd}[row sep = 0ex ,	/tikz/column 1/.append style={anchor=base east}	,	/tikz/column 2/.append style={anchor=base west}]
			C^\la(H \cdot H',K) \ar[r, "m^\ast"] & C^\la(H\times H',K) \cong C^\la(H,K) \cotimes{K} C^\la (H',K) \ar[r, "\mu \otimes \nu"] & K . \\
			f \ar[r, mapsto] & \left[ (h,h') \mto f(h h') \right] & 
		\end{tikzcd}
	\end{equation}
	If $G$ is compact then \eqref{Eq 1 - Definition of convolution product} is even jointly continuous.
\end{proposition}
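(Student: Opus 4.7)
My plan is to construct the convolution first on pairs of distributions supported on compact open subsets via the formula displayed in the statement, and then to assemble the global convolution using the direct sum decomposition of \Cref{Prop 1 - Properties of the distribution algebra} (iii).

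For a first step, fix compact open subsets $H, H' \subset G$; their set-theoretic product $H \cdot H'$ is compact open as well, being the image under the continuous multiplication of the compact $H \times H'$ and a union of translates of the open $H$. The multiplication $m\colon H \times H' \to H \cdot H'$ is locally $L$-analytic by \Cref{Def 1 - Definition Lie group}, so \Cref{Prop 1 - Functorialities for the space of locally analytic functions} (ii) yields a continuous homomorphism $m^\ast\colon C^\la(H\cdot H', K) \to C^\la(H \times H', K)$. Combining with the topological isomorphism $C^\la(H \times H', K) \cong C^\la(H,K) \cotimes{K} C^\la(H', K)$ of \Cref{Cor 1 - Locally analytic functions on product of manifolds}, the universal property of the completed tensor product shows that every pair $(\mu,\nu) \in D(H,K) \times D(H', K)$ induces a unique continuous linear form $\mu \otimes \nu$ on the right hand side; composing with $m^\ast$ gives the desired element $\mu \ast \nu \in D(H \cdot H', K)$. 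Viewing $D(H,K) \subset D(G,K)$ as a direct summand via (iii) of \Cref{Prop 1 - Properties of the distribution algebra}, we obtain elements of $D(G,K)$.

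For the second step, any $\mu, \nu \in D(G,K)$ are supported on some compact open $H, H'$ by \Cref{Prop 1 - Properties of the distribution algebra} (iii), and we define $\mu \ast \nu$ by the above construction. A quick check verifies that this is independent of the chosen compact open supports (larger subsets just amount to extending by zero on the additional disjoint components), so the assignment is well defined. Separate continuity follows because, for fixed $\nu$, the map $\mu \mapsto \mu \ast \nu$ is continuous on each $D(H,K)$ as a composition of continuous maps; by the universal property of the direct sum decomposition this extends to a continuous homomorphism $D(G,K) \to D(G,K)$, and symmetrically for fixed $\mu$. The formula $\delta_g \ast \delta_{g'} = \delta_{g g'}$ is then an immediate direct computation: for $f \in C^\la(G,K)$, tracing through the definition gives $(\delta_g \otimes \delta_{g'})(m^\ast f) = f(g g') = \delta_{g g'}(f)$.

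Finally, when $G$ is compact, $D(G,K)$ is a nuclear $K$-Fr\'echet space by \Cref{Prop 1 - Properties of the distribution algebra} (ii). For such spaces, any separately continuous $K$-bilinear map into a Hausdorff locally convex $K$-vector space is automatically jointly continuous, by the non-archimedean Banach--Steinhaus theorem applied to the family of continuous linear maps $\nu \mapsto \mu \ast \nu$ parametrized by $\mu$ in a bounded set; this upgrades separate continuity to joint continuity. The main technical subtlety will be verifying that the convolution of the second step is intrinsic (independent of the choice of compact open subsets carrying the supports) and that separate continuity genuinely extends from the direct summands to the full direct sum; once these are settled the joint continuity in the compact Fr\'echet case is standard.
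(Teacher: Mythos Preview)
Your proof is correct and follows essentially the same route as the paper: construct the convolution on distributions supported on compact open pieces via the transpose of $m^\ast$ combined with the isomorphism of \Cref{Cor 1 - Locally analytic functions on product of manifolds}, then glue via the direct-sum decomposition of \Cref{Prop 1 - Properties of the distribution algebra} (iii). The only minor packaging difference is that for joint continuity in the compact case the paper reads it off directly from the construction (the transpose map $D(H,K)\projcotimes D(H',K)\to D(H\cdot H',K)$ is continuous, using \cite[Prop.\ 20.13]{Schneider02NonArchFunctAna}), whereas you deduce it a posteriori from the general fact that separately continuous bilinear maps on Fr\'echet spaces are jointly continuous.
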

\begin{proof}
	Let $G = \bigcup_{i \in I} H_i$ be a countable disjoint covering by compact open subsets. 
	Note that there is a topological isomorphism \cite[Cor.\ 1.2.14]{Kohlhaase05InvDistpAdicAnGrp}
	\begin{equation*}
		D(G,K) \indcotimes D(G,K) \cong \bigoplus_{i,j \in I} \left( D(H_i,K) \indcotimes D(H_j,K) \right) .
	\end{equation*}	
	Hence it suffices to define continuous homomorphisms 
	\begin{equation*}
		D(H_i,K) \indcotimes D(H_j,K) \lra D(H_i \cdot H_j, K) \subset D(G,K)
	\end{equation*}
	to define \eqref{Eq 1 - Definition of convolution product}.
	The multiplication map of $G$ induces a continuous homomorphism 
	\begin{equation*}
		C^\la(H_i \cdot  H_j,K) \overset{m^\ast}{\lra} C^\la(H_i\times H_j, K) \cong  C^\la(H_i,K) \projcotimes C^\la(H_j,K)
	\end{equation*}
	using \Cref{Prop 1 - Functorialities for the space of locally analytic functions} (ii) and \Cref{Cor 1 - Locally analytic functions on product of manifolds}.
	Taking the transpose of this homomorphism yields the continuous homomorphism
	\begin{equation*}
		D(H_i,K) \indcotimes D(H_j,K) \cong D(H_i,K) \projcotimes D(H_j,K)  \lra D(H_i \cdot H_j, K)
	\end{equation*}
	by \cite[Prop.\ 17.6]{Schneider02NonArchFunctAna} and \cite[Prop.\ 20.13]{Schneider02NonArchFunctAna}.
	This also shows that the convolution product is given by \eqref{Eq 1 - Explicit definition of convolution product}, and that it is jointly continuous if $G$ is compact.
	Moreover, for $g \in H_i$, $g'\in H_j$, the linear form $\delta_{(g,g')}$ agrees with $\delta_g \otimes \delta_{g'}$ on $C^\la(H_i,K) \botimes{K} C^\la(H_j,K) \subset C^\la(H_i \times H_j,K)$. 
	Using that $C^\la(H_i,K)\botimes{K} C^\la(H_j,K)$ is a dense subspace, this implies that $\delta_g \ast \delta_{g'} = \delta_{gg'}$.
\end{proof}

\begin{definitionproposition}\label{Def 1 - Definition of distribution algebra}
	For a locally $L$-analytic Lie group $G$, the convolution product \eqref{Eq 1 - Definition of convolution product} endows $D(G,K)$ with the structure of an associative, unital $K$-algebra called the \textit{(locally analytic) distribution algebra} of $G$.
	Its unit element is $\delta_e$ where $e$ is the identity element of $G$.
	We also write $D(G) \defeq D(G,K)$ when the coefficient field $K$ is clear from the context.
\end{definitionproposition}
\begin{proof}	
	By the separate continuity of \eqref{Eq 1 - Definition of convolution product} it suffices to check the necessary properties of $D(G)$ only for Dirac distributions.
	But for those they directly follow from the respective properties of $G$ as a group due to $\delta_g \ast \delta_{g'} = \delta_{gg'}$, for $g, g'\in G$.
\end{proof}

\begin{corollary}[{cf.\ \cite[Proof of Prop.\ 3.5]{OrlikStrauch15JordanHoelderSerLocAnRep}}]\label{Cor 1 - Fubini theorem}
	Let $G$ be a locally $L$-analytic Lie group. 
	For $f \in C^\la(G,K)$ and distributions $\mu , \nu \in D(G)$, the functions $G \ra K$ given by
	\begin{align*}
		g \lto \nu \big[ g' \mto f(gg') \big] \quad\text{and}\quad g' \lto \mu \big[ g \mto f(gg') \big]
	\end{align*}
	are locally analytic, and we have the following identities reminiscent of Fubini's theorem:
	\begin{equation}\label{Eq 1 - Compute the convolution product}
		\begin{aligned}
			(\mu \ast \nu )(f) = \mu \big[g \mto \nu\big[g'\mto f(gg') \big]\big]
			= \nu\big[ g'\mto \mu\big[g \mto f(gg') \big]\big].
		\end{aligned}
	\end{equation}
\end{corollary}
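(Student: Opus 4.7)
The plan is to reduce to the case of compactly supported distributions and then use the tensor-product description of locally analytic functions on a product to interpret the convolution formula \eqref{Eq 1 - Explicit definition of convolution product} as an instance of Fubini.

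First, by \Cref{Prop 1 - Properties of the distribution algebra}~(iii), every distribution on $G$ is supported on some compact open subset. By the bilinearity and separate continuity of the convolution product and of the linear forms involved, it suffices to prove both assertions for $\mu \in D(H,K)$ and $\nu \in D(H',K)$ with $H, H' \subset G$ compact open. Note that, since $G$ has a disjoint countable covering by compact open subsets that refines $\{H', G \setminus H'\}$ (using \Cref{Rmk 1 - Locally analytic manifold has disjoint countable covering by compact open subsets}~(iii)), the direct sum decomposition from \Cref{Prop 1 - Properties of the distribution algebra}~(iii) yields a continuous projection $C^\la(G,K) \twoheadrightarrow C^\la(H',K)$ and shows that $\nu \in D(H',K) \subset D(G,K)$ defines a continuous linear form on $C^\la(H',K)$; analogously for $\mu$.

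Second, to verify local analyticity of $g \mapsto \nu[g' \mapsto f(gg')]$, fix an arbitrary $g_0 \in G$ and choose a compact open neighbourhood $H_0 \subset G$ of $g_0$. The multiplication $m \colon G \times G \to G$ is locally $L$-analytic, so $m^\ast f \in C^\la(H_0 \times H', K)$ by \Cref{Prop 1 - Functorialities for the space of locally analytic functions}~(ii). Under the topological isomorphism
\[
	C^\la(H_0 \times H', K) \overset{\cong}{\lra} C^\la\bigl(H_0, C^\la(H', K)\bigr)
\]
of \Cref{Prop 1 - Locally analytic functions on product of manifolds}, the element $m^\ast f$ corresponds to the locally analytic function $\widetilde F \colon g \mapsto [g' \mapsto f(gg')]$. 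Post-composing with the continuous linear map $\nu \colon C^\la(H', K) \to K$ via \Cref{Prop 1 - Functorialities for the space of locally analytic functions}~(i) yields the function $g \mapsto \nu[g' \mapsto f(gg')]$ as an element of $C^\la(H_0, K)$. Since $g_0 \in G$ was arbitrary, this function is locally analytic on all of $G$. The symmetric argument handles $g' \mapsto \mu[g \mapsto f(gg')]$.

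Third, for the Fubini identities, both
\[
	f \lto \mu\bigl[g \mto \nu[g' \mto f(gg')]\bigr] \qquad \text{and} \qquad f \lto \nu\bigl[g' \mto \mu[g \mto f(gg')]\bigr]
\]
define continuous linear forms on $C^\la(H \cdot H', K)$; so does $f \mapsto (\mu \ast \nu)(f)$ by construction. All three factor through $m^\ast \colon C^\la(H \cdot H', K) \to C^\la(H \times H', K) \cong C^\la(H, K) \cotimes{K} C^\la(H', K)$, the last isomorphism being \Cref{Cor 1 - Locally analytic functions on product of manifolds}. By \Cref{Prop 1 - Properties of the distribution algebra}~(iv) the Dirac distributions are dense in $D(H, K)$ and in $D(H', K)$, so the pure tensors $\delta_g \otimes \delta_{g'}$, for $g \in H$ and $g' \in H'$, span a dense subspace of $D(H, K) \indcotimes D(H', K) \cong \bigl( C^\la(H, K) \cotimes{K} C^\la(H', K) \bigr)'_b$. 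Therefore it suffices to verify both identities on functions of the form $\phi(g)\psi(g')$ with $\phi \in C^\la(H, K)$, $\psi \in C^\la(H', K)$, where all three expressions evaluate to $\mu(\phi) \cdot \nu(\psi)$ by direct inspection.

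The most delicate point is the combination of the topological tensor product identification of \Cref{Cor 1 - Locally analytic functions on product of manifolds} with the continuity of the maps $\mu, \nu$ and the density of pure tensors; once these are in place, the Fubini identity is a straightforward check on $\phi \otimes \psi$.
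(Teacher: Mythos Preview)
Your approach matches the paper's: reduce to $\mu \in D(H,K)$, $\nu \in D(H',K)$ with $H,H'$ compact open, then use the isomorphism $C^\la(H_0 \times H',K) \cong C^\la\bigl(H_0, C^\la(H',K)\bigr)$ from \Cref{Prop 1 - Locally analytic functions on product of manifolds} together with functoriality under $\nu$ to establish local analyticity, and finally check the Fubini identities on pure tensors.

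There is one logical slip in your third step. The density of Dirac distributions in $D(H,K)$ and $D(H',K)$ is irrelevant here. You are comparing three \emph{specific} continuous linear forms on $C^\la(H,K) \cotimes{K} C^\la(H',K)$; knowing that the dual space has a dense subspace spanned by $\delta_g \otimes \delta_{g'}$ does not help you conclude anything about whether these three particular elements coincide. What you actually need is that the algebraic tensor product $C^\la(H,K) \botimes{K} C^\la(H',K)$ is dense in the completed tensor product $C^\la(H,K) \cotimes{K} C^\la(H',K)$ --- an immediate consequence of the definition of completion --- so that continuous linear forms are determined by their values on pure tensors $\phi \otimes \psi$. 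Replace your sentence about Dirac distributions with this observation and the proof is complete; on $\phi \otimes \psi$ all three forms visibly evaluate to $\mu(\phi)\,\nu(\psi)$. The paper packages this same verification as the commutativity of a diagram whose two paths are $\mu \otimes \nu$ and $\mu \circ \nu_\ast$ (respectively $\nu \circ \mu_\ast$) composed with $m^\ast$.
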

\begin{proof}
	Assuming $\nu$ is supported on the compact open subsets $H'\subset G$, the first function restricted to some compact open subset $H\subset G$ is the image of $f$ under
	\begin{equation*}
		\begin{tikzcd}
			C^\la(H \cdot H',K) \ar[r, "m^\ast"] &C^\la(H \times H', K) \cong C^\la(H, C^\la(H',K)) \ar[r, "\nu_\ast"] &C^\la(H,K) .
		\end{tikzcd}
	\end{equation*}
	Analogously, one shows that the second function is locally analytic.
	Now let $\mu$ be supported on the compact open subset $H \subset G$.
	Then the statement of \eqref{Eq 1 - Compute the convolution product} follows from the commutativity of 
	\begin{equation*}
		\begin{tikzcd}[row sep = 0ex]
			&&C^\la(H,K) \cotimes{K} C^\la(H',K) \ar[rd, start anchor=east, end anchor=145, "\mu \otimes \nu"] & \\
			C^\la(H \cdot H',K) \ar[r, "m^\ast"] &C^\la(H \times H', K) \ar[ru, end anchor=west, "\cong"] \ar[rd, start anchor=south east, end anchor = 178, "\cong"] &&K \\
			&&C^\la(H, C^\la(H',K)) \overset{\nu_\ast}{\lra}  C^\la(H,K) \ar[ru, start anchor=east, "\mu"'] &
		\end{tikzcd}
	\end{equation*}
	and the analogous diagram for $\nu \circ \mu_\ast$.
\end{proof}

The locally $L$-analytic anti-automorphism $\inv \colon G \ra G, \, g \mto g^{-1},$ induces by functoriality an automorphism of locally convex $K$-vector spaces
\begin{equation*}
	\inv^\ast \colon C^\la (G,K) \lra C^\la (G,K) \,,\quad f \lto f\circ \inv.
\end{equation*}
Hence we obtain an automorphism of locally convex $K$-vector spaces
\begin{equation}\label{Eq 1 - Definition of involution of distribution algebra}
	D(G) \lra D(G) \,,\quad \mu \lto \dot{\mu} \defeq \mu \circ \inv^\ast = \big[f \mto \mu(f \circ \inv)\big] .
\end{equation}

\begin{lemma}\label{Prop 1 - Involution of distribution algebra and convolution}
	For $\mu, \nu \in D(G)$, we have that
	\begin{equation}
		\left( \mu \ast \nu \right)\dot{} = \dot{\nu} \ast \dot{\mu} .
	\end{equation}
\end{lemma}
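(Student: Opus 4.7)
The plan is to reduce the identity to the case of Dirac distributions, where it becomes a direct consequence of $\inv$ being an anti-automorphism of $G$, and then extend by continuity and density.

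First I would record the basic observation that, for $g \in G$, one has
\begin{equation*}
\dot{\delta}_g(f) = \delta_g(f \circ \inv) = f(g^{-1}) = \delta_{g^{-1}}(f)
\quad \text{, for all $f \in C^\la(G,K)$,}
\end{equation*}
so that $\dot{\delta}_g = \delta_{g^{-1}}$. Using $\delta_h \ast \delta_{h'} = \delta_{hh'}$ from \Cref{Prop 1 - Convolution product}, this immediately gives
\begin{equation*}
(\delta_g \ast \delta_{g'})\dot{} = \dot{\delta}_{gg'} = \delta_{(gg')^{-1}} = \delta_{(g')^{-1}g^{-1}} = \delta_{(g')^{-1}} \ast \delta_{g^{-1}} = \dot{\delta}_{g'} \ast \dot{\delta}_g,
\end{equation*}
so the desired identity holds on the $K$-linear span of Dirac distributions. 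By $K$-bilinearity it then holds on the span of pairs of Dirac distributions.

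Next I would promote this to all of $D(G) \times D(G)$ via a two-step density argument. The involution $\mu \mapsto \dot{\mu}$ is continuous on $D(G)$, being the transpose of the continuous endomorphism $\inv^\ast$ of $C^\la(G,K)$ (cf.\ \Cref{Prop 1 - Functorialities for the space of locally analytic functions} (ii)). Fixing $\mu \in D(G)$, both maps
\begin{equation*}
\nu \lto (\mu \ast \nu)\dot{} \qquad \text{and} \qquad \nu \lto \dot{\nu} \ast \dot{\mu}
\end{equation*}
are continuous from $D(G)$ to $D(G)$, by separate continuity of the convolution product from \Cref{Prop 1 - Convolution product} combined with the continuity of the involution. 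They coincide on the $K$-linear span of Dirac distributions by the computation above, and this span is dense in $D(G)$ by \Cref{Prop 1 - Properties of the distribution algebra} (iv). Hence they agree for every $\nu \in D(G)$. The same argument applied in the $\mu$-variable, fixing such an arbitrary $\nu$, concludes the proof.

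The only mild subtlety is that the convolution is in general only separately continuous, so a single joint density argument on $D(G) \projcotimes D(G)$ is unavailable; this is why the reduction must be carried out one variable at a time. Apart from this, the proof is entirely formal once $\dot{\delta}_g = \delta_{g^{-1}}$ is noted.
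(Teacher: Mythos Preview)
Your proof is correct. It differs from the paper's approach: the paper establishes the identity directly by reducing to $\mu$ and $\nu$ supported on compact open subsets $H, H' \subset G$ and then verifying that a diagram built from the explicit description \eqref{Eq 1 - Explicit definition of convolution product} of the convolution commutes. Concretely, the paper uses that $\inv \circ m = m \circ \mathrm{swap} \circ (\inv \times \inv)$ on $G \times G$, and transports this to an identity of maps on $C^\la$-spaces, so that $(\mu \ast \nu)\dot{}$ and $\dot{\nu} \ast \dot{\mu}$ are visibly the same composite.

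Your route---checking the identity on Dirac distributions and extending by separate continuity and density---is the same pattern the paper itself uses for associativity in \Cref{Def 1 - Definition of distribution algebra}, so it is entirely in keeping with the surrounding arguments. The paper's diagrammatic proof avoids invoking \Cref{Prop 1 - Properties of the distribution algebra} (iv) and the two-step limit, at the cost of unpacking the convolution formula; your proof is arguably more transparent once one has internalised the density principle. Both are short and neither offers a decisive advantage.
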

\begin{proof}
	We may assume that $\mu$ and $\nu$ are supported on compact open subsets $H$ and $H'$ of $G$ respectively.
	Then the claim follows from the commutativity of the following diagram:
	\begin{equation*}
		\begin{tikzcd}
			C^\la(H'^{-1},K) \cotimes{K} C^\la(H^{-1},K) \ar[r, "\inv^\ast \botimes{} \inv^\ast"] &[+30pt] C^\la(H',K) \cotimes{K} C^\la(H,K) \ar[rd, start anchor=355, "\nu \otimes \mu"]& \\
			C^\la\big((H\cdot H')^{-1},K \big) \ar[u] \ar[d, "\inv^\ast"'] && K \\
			C^\la(H\cdot H',K) \ar[r] &C^\la(H,K) \cotimes{K} C^\la(H',K) \ar[uu, "\mathrm{swap}"] \ar[ru, start anchor= 5, "\mu \otimes \nu"'] & 
		\end{tikzcd}
	\end{equation*}
\end{proof}

\begin{proposition}[{cf.\ \cite[Thm.\ 2.2]{SchneiderTeitelbaum02LocAnDistApplToGL2}}]\label{Prop 1 - Integration map}
	Let $X$ be a locally $L$-analytic manifold and $V$ a Hausdorff locally convex $K$-vector space.
	\begin{altenumerate}
		\item
		There exists a unique continuous $K$-linear \textit{integration map}
		\begin{equation}\label{Eq 1 - Integration map}
			I \colon C^\la (X,V) \lra \CL_b \big(D(X,K),V \big)
		\end{equation}
		such that $I(f)(\delta_x) = f(x)$, for all $f\in C^\la (X,V)$ and $x \in X$.
		Moreover, this map is natural in $X$ and $V$, and injective.
		\item
		If $V$ is of LB-type, i.e.\ $V = \bigcup_{n\in \BN} V_n$, for a sequence $V_0 \subset V_1 \subset \ldots \subset V$ of BH-subspaces, then \eqref{Eq 1 - Integration map} is an isomorphism of $K$-vector spaces with inverse 
		\begin{equation*}
			I^{-1} \colon \CL \big(D(X,K),V \big) \overset{\cong}{\lra} C^\la (X,V) \,,\quad T \lto \big[x \mto T(\delta_x) \big] .
		\end{equation*}
	\end{altenumerate}
\end{proposition}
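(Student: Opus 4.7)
The plan is to construct $I$ in (i) from the local power series expansions provided by $V$-indices of $X$, and then to identify the inverse in (ii) by expanding the Dirac distribution $\delta_y$ as a convergent series in $D(X,K)$ on small analytic charts.

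Uniqueness in (i) is immediate from \Cref{Prop 1 - Properties of the distribution algebra} (iv): the $K$-span of the Dirac distributions is dense in $D(X,K)$, so any continuous homomorphism $D(X,K) \to V$ is determined by its values on them. For existence, after reducing to $X$ compact via \Cref{Prop 1 - Properties of the distribution algebra} (iii) combined with \Cref{Rmk 1 - Locally analytic manifold has disjoint countable covering by compact open subsets} (iii), I would fix a $V$-index $\CI = (\varphi_i \colon U_i \to B^{m_i}_{\ul{r}_i}(a_i), \ul{r}_i, E_i)_{i\in I}$ and an $f \in C^\la_\CI(X,V)$ represented on each $U_i$ by a power series $\sum_{\ul{k}} v_{i,\ul{k}} X^{\ul{k}} \in \CA_{\ul{r}_i}(L^{m_i}, \widebar{E_i})$. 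For $\mu \in D(X,K)$ with components $\mu_i \in D(U_i,K)$, I would define
\[
	I(f)(\mu) \defeq \sum_i \sum_{\ul{k}} \mu_i\bigl((\varphi_i - a_i)^{\ul{k}}\bigr) \cdot v_{i,\ul{k}} .
\]
Each inner series converges in $\widebar{E_i}$ because $\norm{v_{i,\ul{k}}} \ul{r}_i^{\ul{k}} \to 0$ and because the operator norm of $\mu_i$ on $C^\aan(U_i,K) \cong \CA_{\ul{r}_i}(L^{m_i},K)$ yields $\abs{\mu_i((\varphi_i - a_i)^{\ul{k}})} \leq \norm{\mu_i} \cdot \ul{r}_i^{\ul{k}}$. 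The same estimates show $I(f) \in \CL_b(D(X,K),V)$ and that $I$ is continuous on every $C^\la_\CI(X,V)$, hence on all of $C^\la(X,V)$. Independence from the chosen $V$-index, the identity $I(f)(\delta_x) = f(x)$, naturality in $X$ and $V$, and injectivity then reduce via the uniqueness just established to straightforward checks on Dirac distributions.

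For (ii), let $V = \bigcup_n V_n$ be of LB-type and $T \in \CL(D(X,K),V)$, and set $\tilde f(x) \defeq T(\delta_x)$. Once $\tilde f$ is shown to lie in $C^\la(X,V)$, the identity $I(\tilde f) = T$ follows by the uniqueness of (i) applied to both continuous maps agreeing on Dirac distributions. To verify local analyticity at $x_0 \in X$, I would work inside a compact neighbourhood with an analytic chart $\varphi \colon U \to B^m_{\ul{r}}(0)$ centred at $x_0$. Via the identification $C^\aan(U,K) \cong \CA_{\ul{r}}(L^m,K)$, every Dirac distribution $\delta_y$ admits the formal expansion $\delta_y = \sum_{\ul{k}} \varphi(y)^{\ul{k}} \, e^\ast_{\ul{k}}$, where $e^\ast_{\ul{k}}$ is the functional dual to the monomial $X^{\ul{k}}$; this series converges in the strong dual of $\CA_{\ul{s}}(L^m,K)$ whenever $\varphi(y)$ lies strictly inside the polydisc of radius $\ul{s} \leq \ul{r}$, and therefore in $D(U,K)$. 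The sequence $\{\ul{s}^{\ul{k}} e^\ast_{\ul{k}}\}_{\ul{k}}$ is bounded in $D(U,K)$, so continuity of $T$ forces its image to be bounded in $V$; appealing to the regularity of the LB-type filtration, this image lies in some $\widebar{V_n}$ and is bounded there. Setting $v_{\ul{k}} \defeq T(e^\ast_{\ul{k}})$ then gives $\norm{v_{\ul{k}}}_{\widebar{V_n}} \ul{s}^{\ul{k}} \leq C$, so for any $\ul{s}'' < \ul{s}$ the power series $\sum v_{\ul{k}} X^{\ul{k}}$ belongs to $\CA_{\ul{s}''}(L^m,\widebar{V_n})$ and, by term-by-term application of $T$, represents $\tilde f$ on a neighbourhood of $x_0$.

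The main obstacle is the regularity claim just used: for an LB-type space $V = \bigcup_n V_n$ to satisfy the property that every bounded subset is contained in a single $\widebar{V_n}$ and is bounded there is genuine content, not a formality. In the present setting it can be secured by a factorization theorem for continuous linear maps from a Fréchet space into a Hausdorff countable inductive limit of Banach subspaces, applied to $T$ restricted to the Fréchet space $D(U,K)$; alternatively one can establish the requisite boundedness directly via a Banach--Steinhaus-type argument on the image of a fundamental $0$-neighbourhood of $D(U,K)$. Granting this functional-analytic input, the remainder of the argument reduces to bookkeeping with power series and the identity theorem (\Cref{Prop 1 - Identity theorem for power series}).
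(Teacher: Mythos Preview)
Your construction of $I$ in (i) via explicit power-series coefficients is correct and more hands-on than the paper's route, which instead passes through the identification $C^\la(X,E) \to C^\la(X,K)\cotimes{K} E \cong \CL_b(D(X,K),E)$ for Banach $E$ (using \Cref{Prop A1 - Continuous bijection for tensor product of space of compact type and Banach space} and \cite[Cor.~18.8]{Schneider02NonArchFunctAna}) and then takes an inductive limit over BH-subspaces.

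Your argument for (ii), however, contains a genuine gap. The claim that $\{\ul{s}^{\ul{k}} e^\ast_{\ul{k}}\}_{\ul{k}}$ is bounded in $D(U,K)$, and that $\delta_y = \sum_{\ul{k}} \varphi(y)^{\ul{k}} e^\ast_{\ul{k}}$ converges there, is false. The strong dual $D(U,K)$ of the compact-type space $C^\la(U,K)=\varinjlim_\CI C^\la_\CI(U,K)$ carries the projective-limit topology over \emph{all} $K$-indices $\CI$, including those with arbitrarily fine coverings. On an index $\CI_n$ with balls of radius $\ul{s}_n$, the Taylor-coefficient functional $e^\ast_{\ul{k}}$ has norm $\ul{s}_n^{-\ul{k}}$ on the ball containing $0$, so $\lVert \ul{s}^{\ul{k}} e^\ast_{\ul{k}}\rVert_{\CI_n} = (\ul{s}/\ul{s}_n)^{\ul{k}}$, which is unbounded once $\ul{s}_n<\ul{s}$. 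Your series does not even converge weakly in $D(U,K)$: tested against an $f\in C^\la(U,K)$ whose germ at $0$ has radius of convergence smaller than $\lvert\varphi(y)\rvert$, the partial sums diverge.

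The repair requires a second factorization, on the \emph{source} side. After Grothendieck's theorem yields $T\colon D(U,K)\to \widebar{V_n}$, invoke \Cref{Lemma A1 - Homomorphisms from projective limit into normed space} to factor $T$ further through some Banach quotient $C^\la_{\CI_m}(U,K)'$ of $D(U,K)$. In that single Banach dual your boundedness and convergence claims become valid for $\ul{s}$ equal to the covering radius of $\CI_m$, and the rest of your computation then goes through on a strictly smaller ball around $x_0$. The paper sidesteps all of this: it proves bijectivity in the Banach-target case directly via the tensor isomorphism above (no coefficient manipulation needed), and the LB-type case follows by taking $\varinjlim_n$ on both sides, with Grothendieck's factorization used only to identify $\varinjlim_n \CL_b(D(X,K),\widebar{V_n}) \cong \CL_b(D(X,K),V)$.
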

\begin{proof}
	First note that, for $f\in C^\la(X,V)$, the condition $I(f)(\delta_x) = f(x)$, for all $x\in X$, determines $I(f)$ uniquely by the density of the subspace generated by the Dirac distributions of $X$.
	
	For the existence of $I$ take a countable disjoint covering $X = \bigcup_{i\in I} X_i$ by open compact subsets.
	Then \Cref{Prop 1 - Disjoint coverings and locally analytic functions} and \Cref{Prop 1 - Properties of the distribution algebra} (iii) give topological isomorphisms $C^\la(X,V) \cong \prod_{i\in I} C^\la (X_i,V)$ and $D(X,K) \cong \bigoplus_{i\in I} D(X_i,K)$ respectively, with the $D(X_i,K)$ being $K$-Fr\'echet spaces.
	By \Cref{Lemma A1 - Isomorphism for space of linear maps on direct sum with strong topology} we have the topological isomorphism
	\begin{equation*}
		\CL_b \big(D(X,K),V \big) \overset{\cong}{\lra} \prod_{i\in I} \CL_b \big(D(X_i,K),V \big) \,,\quad F \lto \big(F\res{D(X_i,K)}\big)_{i\in I} .
	\end{equation*}
	This way, we may reduce to the case that $X$ is compact.

	Here we first assume that $V$ is a $K$-Banach space.
	As $C^\la(X,K)$ is of compact type, we have a continuous linear bijection $C^\la (X,V) \ra C^\la(X,K) \cotimes{K} V $ by \Cref{Prop A1 - Continuous bijection for tensor product of space of compact type and Banach space}.
	Together with \cite[Cor.\ 18.8]{Schneider02NonArchFunctAna} this gives the continuous linear bijection
	\begin{equation}\label{Eq 1 - Integration map for Banach space}
		\begin{tikzcd}[row sep = 0ex	,/tikz/column 1/.append style={anchor=base east}	,/tikz/column 3/.append style={anchor=base west}]
			I \colon C^\la(X,V) \ar[r] & \eqmathbox[A]{C^\la(X,K) \cotimes{K} V} \ar[r, "\cong"] & \CL_b \big(D(X,K),V \big)  \\
			f(\blank) \, v  \ar[r, mapsfrom]& \eqmathbox[A]{f\otimes v} \ar[r, mapsto]& \big[\mu \mto \mu(f) \,v\big]
		\end{tikzcd}
	\end{equation}
	which satisfies $I(f)(\delta_x) = f(x)$.

	If $V=\bigcup_{n\in \BN} V_n$ is of LB-type then every BH-subspace of $V$ factors over some $\widebar{V_n}$ by \cite[I.\ \S 3.3 Prop.\ 1]{Bourbaki87TopVectSp1to5}.
	Hence the compactness of $X$ implies by \Cref{Prop 1 - Direct limit description of locally analytic functions for compact manifold} (i) that
	\[C^\la (X,V) \cong \varinjlim_{n \in \BN} C^\la(X,\widebar{V_n}) .\]
	Furthermore, the continuous injections $\widebar{V_n} \hookrightarrow V$ induce continuous homomorphisms 
	\[\CL_b \big(D(X,K),\widebar{V_n} \big) \lra \CL_b \big(D(X,K),V \big) . \]
	These in turn give rise to a continuous homomorphism
	\[\varinjlim_{n \in \BN} \CL_b \big(D(X,K), \widebar{V_n} \big) \lra \CL_b \big(D(X,K),V \big) \]
	which is bijective by \cite[I.\ \S 3.3 Prop.\ 1]{Bourbaki87TopVectSp1to5}.
	Taking the direct limit over the continuous linear bijections \eqref{Eq 1 - Integration map for Banach space}, for all $\widebar{V_n}$, we now arrive at the continuous linear bijection
	\[I \colon C^\la (X,V) \cong \varinjlim_{n \in \BN} C^\la(X,\widebar{V_n}) \lra \varinjlim_{n\in\BN} \CL_b \big(D(X,K),\widebar{V_n} \big) \lra \CL_b \big(D(X,K),V \big).\]

	For the case of a general Hausdorff locally convex $K$-vector space $V$, we observe that by the compactness of $X$, every locally analytic $V$-valued function on $X$ factors over some BH-subspace of $V$.
	Therefore even in this case, we obtain the injective $K$-linear map $I$.
\end{proof}

\begin{corollary}\label{Prop 1 - Pairing of D(X,K) and Cla(X,V)}
	Let $X$ be a locally $L$-analytic manifold and $V \neq \{0\}$ a Hausdorff locally convex $K$-vector space.
	Then there is a natural, separately continuous, non-degenerate $K$-bilinear pairing
	\begin{equation}\label{Eq 1 - Pairing for distribution algebra}
		D(X,K) \times C^\la(X,V) \lra V \,,\quad (\mu, f) \lto \mu(f) \defeq I(f)(\mu).
	\end{equation}
	This pairing is induced by the duality between $D(X,K)$ and $C^\la(X,K)$ in the sense that, for compact open $U\subset X$ and a BH-subspace $E \subset V$, the restriction of the pairing \eqref{Eq 1 - Pairing for distribution algebra} to $D(U,K) \times C^\la(U,\widebar{E})$ is given by tensoring the duality pairing $D(U,K) \times C^\la(U,K)\ra K$ with $\widebar{E}$:
	\begin{equation}\label{Eq 1 - Pairing for distribution algebra on compact subset and for BH-subspace}
		D(U,K) \times C^\la(U,K) \cotimes{K} \widebar{E} \lra \widebar{E} \,,\quad (\mu, f\otimes v) \lto \mu(f) \, v ,
	\end{equation}
	using \Cref{Prop A1 - Continuous bijection for tensor product of space of compact type and Banach space}.
\end{corollary}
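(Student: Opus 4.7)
The pairing and its naturality follow at once from the integration map $I$ of the preceding proposition by setting $\mu(f) \defeq I(f)(\mu)$; naturality in $X$ and $V$ is inherited from that of $I$. The remaining content is separate continuity, non-degeneracy, and the compatibility with the tensor-product description on compact pieces, and I will verify these in turn.

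For separate continuity I would argue as follows. For fixed $f \in C^\la(X,V)$, the map $\mu \mto I(f)(\mu)$ is nothing but $I(f) \in \CL_b\big(D(X,K),V\big)$ itself, hence continuous. For fixed $\mu \in D(X,K)$, the map $f \mto I(f)(\mu)$ is the composition $\ev_\mu \circ I$, which is continuous because $I$ is continuous by the previous proposition and $\ev_\mu \colon \CL_b\big(D(X,K),V\big) \to V$ is continuous.

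For non-degeneracy I would use the density of Dirac distributions on one side and a product construction on the other. If $f \in C^\la(X,V)$ is non-zero, pick $x \in X$ with $f(x) \neq 0$; then $\delta_x(f) = I(f)(\delta_x) = f(x) \neq 0$. If conversely $\mu \in D(X,K)$ is non-zero, pick $f_0 \in C^\la(X,K)$ with $\mu(f_0) \neq 0$ and any $v \in V \setminus \{0\}$ (which exists since $V \neq \{0\}$); then the function $f_0(\blank)\,v$, obtained by composing $f_0$ with the continuous homomorphism $K \to V,\,\lambda \mto \lambda v$, lies in $C^\la(X,V)$ by \Cref{Prop 1 - Functorialities for the space of locally analytic functions}~(i), and $I(f_0(\blank)\,v)(\mu) = \mu(f_0)\,v \neq 0$, since on such functions $I$ restricts to the map $f_0\otimes v \mto [\mu \mto \mu(f_0)\,v]$ built in the proof of the previous proposition.

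The last assertion, namely that on $D(U,K) \times C^\la(U,\widebar{E})$ with $U \subset X$ compact open and $E \subset V$ a BH-subspace the pairing is induced by \eqref{Eq 1 - Pairing for distribution algebra on compact subset and for BH-subspace}, is already implicitly contained in the construction of $I$: for compact $U$ and Banach $\widebar{E}$ the integration map $I\colon C^\la(U,\widebar{E}) \to \CL_b\big(D(U,K),\widebar{E}\big)$ is defined exactly as the continuous bijection factoring through $C^\la(U,K) \cotimes{K} \widebar{E}$, which on elementary tensors sends $f\otimes v$ to $[\mu \mto \mu(f)\,v]$. Naturality of $I$ in $X$ (with respect to the open immersion $U \hookrightarrow X$) and in $V$ (with respect to $\widebar{E} \hookrightarrow V$) then identifies this with the restriction of the pairing \eqref{Eq 1 - Pairing for distribution algebra} to $D(U,K) \times C^\la(U,\widebar{E})$. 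No serious obstacle is expected; the only care-point is handling a general Hausdorff locally convex $V$ (not necessarily LB-type) in the non-degeneracy argument, which is why one exhibits an explicit $f_0(\blank)\,v$ taking values in the one-dimensional, hence BH-, subspace $K\cdot v \subset V$.
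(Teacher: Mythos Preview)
Your proposal is correct and follows essentially the same approach as the paper: both derive separate continuity and the tensor-product compatibility directly from the construction of the integration map $I$, and both prove non-degeneracy by testing against Dirac distributions in one variable and against functions of the form $f_0(\blank)\,v$ in the other. The only cosmetic differences are that the paper phrases the $\mu$-side non-degeneracy via a compact open support set $U$ (so as to invoke the tensor description \eqref{Eq 1 - Pairing for distribution algebra on compact subset and for BH-subspace} directly), whereas you work globally via functoriality; and the paper establishes the compatibility statement before using it in the non-degeneracy argument, while you reverse the order but correctly appeal to the explicit formula from the construction of $I$.
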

\begin{proof}
	It is clear that the pairing \eqref{Eq 1 - Pairing for distribution algebra} defined by $I$ is natural, $K$-bilinear, and separately continuous.
	The claimed compatibility with the duality pairing between $D(X,K)$ and $C^\la(X,K)$ follows from the construction of $I$ in \eqref{Eq 1 - Integration map for Banach space}.

	To show the non-degeneracy, fix a distribution $\mu \in D(X,K)$ and assume that $\mu(f)=0$, for all $f\in C^\la(X,V)$.
	Let $\mu$ be supported on a compact open subset $U\subset X$.
	Moreover, we find $v \in V$, $v\neq 0$, contained in some BH-subspace $E\subset V$.
	For all $h \in C^\la(U,K)$, we then have $\mu(h)\, v = \mu(h \otimes v)=0$.
	Therefore, the non-degeneracy of the duality pairing between $D(U,K)$ and $C^\la(U,K)$ implies that $\mu = 0$.
	On the other hand, consider $f \in C^\la(X,V)$ such that $\mu(f)=0$, for all $\mu \in D(X,K)$.
	Then we have $f(x)= I(f)(\delta_x) =0$, for all Dirac distributions $\delta_x$, and hence $f=0$.
\end{proof}

\begin{proposition}[{\cite[Prop.\ 3.2]{SchneiderTeitelbaum02LocAnDistApplToGL2}}]\label{Prop 1 - Module structures over the distribution algebra}
	Let $G$ be a locally $L$-analytic Lie group, and $V$ a locally analytic $G$-re\-pre\-sen\-tation with orbit maps $\rho_v \colon G \ra V$, for $v\in V$.
	\begin{altenumerate}
		\item
		The $K$-bilinear map
		\begin{equation}\label{Eq 1 - Map for D(G,K)-module structure}
			D(G) \times V \lra V \,,\quad (\mu,v) \lto \mu \ast v \defeq I(\rho_v)(\mu),
		\end{equation}
		is separately continuous, and $V$ becomes a $D(G)$-module this way.
		If $G$ is compact and $V$ a $K$-Fr\'echet space, \eqref{Eq 1 - Map for D(G,K)-module structure} even is jointly continuous.
		\item
		The $K$-bilinear map
		\begin{equation}\label{Eq 1 - Map for contragredient D(G,K)-module structure}
			D(G) \times V'_b \lra V'_b \,,\quad (\mu,\ell) \lto \mu \ast \ell \defeq \big[v\mto \ell(\dot{\mu} \ast v) \big],
		\end{equation}
		given by the $D(G)$-action contragredient to \eqref{Eq 1 - Map for D(G,K)-module structure} is separately continuous, and $V'_b$ becomes a $D(G)$-module this way.
		If $G$ is compact and $V'_b$ a $K$-Fr\'echet space, e.g.\ if $V$ is of compact type, then \eqref{Eq 1 - Map for contragredient D(G,K)-module structure} even is jointly continuous.
	\end{altenumerate}
\end{proposition}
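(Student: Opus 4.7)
The plan is to establish (i) first and derive (ii) from it by duality, making systematic use of the separately continuous pairing $D(X,K) \times C^\la(X,V) \to V$ from the preceding corollary and of the separate continuity of the convolution product.

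For (i), the element $\mu \ast v$ lies in $V$ because the integration map produces $I(\rho_v) \in \CL(D(G), V)$. The module axioms are verified first on Dirac distributions, where $\delta_e \ast v = I(\rho_v)(\delta_e) = v$ is immediate and
\[ (\delta_g \ast \delta_h) \ast v = \delta_{gh} \ast v = (gh).v = g.(h.v) = \delta_g \ast (\delta_h \ast v) \]
reduces to the group axiom. One then extends $(\mu \ast \nu) \ast v = \mu \ast (\nu \ast v)$ to all pairs either by density of the span of Dirac distributions combined with separate continuity of $\ast$, or---more cleanly---by invoking naturality of $I$ in $V$ applied to the continuous operator $g.(\blank) \colon V \to V$, which gives $\nu[g' \mapsto (gg').v] = g.(\nu \ast v)$, and then applying the Fubini-type identity $(\mu \ast \nu)(f) = \mu[g \mapsto \nu[g' \mapsto f(gg')]]$ to $f = \rho_v$.

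For separate continuity of \eqref{Eq 1 - Map for D(G,K)-module structure}, fixing $v$ makes $\mu \mapsto I(\rho_v)(\mu)$ continuous by construction. With $\mu$ fixed and supported on a compact open $H \subset G$, writing $\mu \ast v = \mu(\rho_v \res{H})$ via the aforementioned pairing reduces the claim to the continuity of the orbit-map assignment $V \to C^\la(H, V)$, $v \mapsto \rho_v \res{H}$; this is the main obstacle. I would argue it locally at each $v_0 \in V$: choose a BH-subspace $E \subset V$ with $v_0 \in E$, and, using that the action $H \times V \to V$ is jointly continuous (by separate continuity together with $H$ locally compact and $V$ barrelled), shrink $H$ to a small open subgroup $H' \ni e$ on which $H'.E$ is contained in a fixed BH-subspace $E' \subset V$. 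The induced Banach-space representation $H' \to \GL(\widebar{E'})$ is uniformly locally analytic by the proposition on representations on Banach spaces, so on an analytic chart $U \subset H'$ around $e$ one has $\rho(g) = \sum_{\ul{i}} \lambda_{\ul{i}} X^{\ul{i}}$ with $\lambda_{\ul{i}} \in \CL(\widebar{E}, \widebar{E'})$ whose norms decay suitably. Hence $\rho_v \res{U} = \sum_{\ul{i}} \lambda_{\ul{i}}(v) X^{\ul{i}}$ depends continuously on $v \in \widebar{E}$ as an element of $C^\aan(U, \widebar{E'})$, and covering $H$ by finitely many translates of $U$ via left multiplication (whose pullback is continuous on function spaces) extends this to continuity into $C^\la(H, V)$ on a neighborhood of $v_0$.

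The joint continuity when $G$ is compact and $V$ is Fr\'echet follows because $D(G)$ is Fr\'echet by the earlier proposition on properties of the distribution algebra, so that a separately continuous bilinear map between Fr\'echet spaces is automatically jointly continuous by Banach--Steinhaus. For (ii), well-definedness is clear: $v \mapsto \dot{\mu} \ast v$ is continuous by (i) and $\ell$ is continuous, so $\mu \ast \ell \in V'$. The module axioms transfer from (i) via the identity $(\mu \ast \nu)\dot{} = \dot{\nu} \ast \dot{\mu}$. For separate continuity, with $\mu$ fixed the map $\ell \mapsto \mu \ast \ell$ is the transpose of the continuous operator $v \mapsto \dot{\mu} \ast v$, hence continuous in the strong dual topology; with $\ell$ fixed, naturality of $I$ yields $(\mu \ast \ell)(v) = \dot{\mu}(\ell \circ \rho_v)$, so that $\mu \mapsto \mu \ast \ell$ factors as the continuous involution $\mu \mapsto \dot{\mu}$ followed by pairing against the family $\{\ell \circ \rho_v\}_{v \in V} \subset C^\la(G, K)$, whose restriction to any bounded $B \subset V$ is bounded (again because $v \mapsto \rho_v \res{H}$ is continuous, hence takes bounded sets to bounded sets). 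This gives the required continuity into $V'_b$, and the joint continuity under the Fr\'echet hypothesis follows by Banach--Steinhaus once more.
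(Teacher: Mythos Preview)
There is a genuine gap in your argument for part (i), specifically in the continuity of $v \mapsto \mu \ast v$ for fixed $\mu$. Your approach reduces this to the continuity of the orbit map assignment $V \to C^\la(H,V)$, $v \mapsto \rho_v\res{H}$, and you argue locally around each $v_0$ by passing to a BH-subspace $E \ni v_0$. The problem is that a BH-subspace $\widebar{E} \hookrightarrow V$ is almost never a neighbourhood of $v_0$ in $V$; you only obtain continuity of the restricted map $\widebar{E} \to C^\la(H,V)$, which says nothing about continuity on $V$. (Continuity of a linear map is equivalent to continuity at $0$, so one cannot patch together ``local'' continuity statements of this kind.) Indeed, the continuity of $v \mapsto \rho_v$ into $C^\la(G,V)$ is established later in the paper only under the additional hypothesis that $V$ is an LF-space, via the closed graph theorem; the present proposition is stated for arbitrary barrelled $V$.

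The paper's proof avoids this entirely. For fixed $\mu$ supported on a compact open $H$, it uses that $D(H,K)$ is metrizable, so $\mu$ is the limit of a sequence $(\mu_n)$ of finite linear combinations of Dirac distributions. Each $\mu_n$ acts on $V$ by a continuous endomorphism (since each $\delta_g$ does), and these endomorphisms converge pointwise to the endomorphism given by $\mu$. Since $V$ is barrelled, the Banach--Steinhaus theorem (\cite[III.\ \S 4.2, Cor.\ 2]{Bourbaki87TopVectSp1to5}) then forces the limit endomorphism to be continuous. This uses the barrelledness hypothesis on $V$ in an essential way, which your argument does not. The same gap propagates into your treatment of (ii): your continuity of $\mu \mapsto \mu \ast \ell$ into $V'_b$ relies on $v \mapsto \rho_v$ sending bounded sets to bounded sets. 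The paper instead shows directly that $D(G) \to \CL_b(V,V)$ is continuous (using that $D(G)$ is reflexive, hence barrelled, via \cite[III.\ \S 5.3, Prop.\ 6]{Bourbaki87TopVectSp1to5}), and then composes with the transpose embedding $\CL_b(V,V) \hookrightarrow \CL_b(V'_b,V'_b)$.
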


In particular, we have $\delta_g \ast v = g.v$, and $\delta_g \ast \ell = g.\ell$, for all $g\in G$, $v\in V$, $\ell \in V'_b$, where $g.\ell = \ell( g^{-1}.\blank)$ denotes the contragredient $G$-action.

\begin{proof}
	The $K$-bilinearity of \eqref{Eq 1 - Map for D(G,K)-module structure} and its continuity in $D(G)$ are clear.
	Now fix a distribution $\mu \in D(G)$. 
	We may assume that $\mu \in D(H,K)$, for some compact open subset $H \subset G$.
	By \Cref{Prop 1 - Properties of the distribution algebra} (iv), as $D(H,K)$ is metrizable, $\mu$ is the limit of a sequence $(\mu_n)_{n\in \BN}$ in $D(H,K)$ where the $\mu_n$ are linear combinations of Dirac distributions.
	But a Dirac distribution $\delta_g$, $g\in H$, acts by the continuous endomorphism $v\mto g.v$ on $V$.
	Hence the $\mu_n$ act by continuous endomorphisms as well.
	As these continuous endomorphisms of $V$ converge to the endomorphism induced by $\mu$ pointwise, it follows from a version of the Banach-Steinhaus theorem \cite[III.\ \S 4.2, Cor.\ 2]{Bourbaki87TopVectSp1to5} that the endomorphism induced by $\mu$ is continuous.
	
	To show that \eqref{Eq 1 - Map for D(G,K)-module structure} endows $V$ with the structure of a $D(G)$-module, we have to see that $\delta_e \ast v = v$, for the identity element $e$ of $G$, and $\mu \ast (\nu \ast v) = (\mu \ast \nu) \ast v$, for all $v\in V$, $\mu, \nu \in D(G)$.
	But this holds for Dirac distributions, and hence for general elements of $D(G)$ by continuity.
	
	For (ii), note that the homomorphism $D(G) \ra \CL_b(V,V)$ induced from \eqref{Eq 1 - Map for D(G,K)-module structure} is continuous by \cite[III.\ \S 5.3, Prop.\ 6]{Bourbaki87TopVectSp1to5} as $D(G)$ is reflexive and therefore barrelled \cite[Lemma 15.4]{Schneider02NonArchFunctAna}.
	Moreover, taking the transpose gives a topological embedding $\CL_b (V,V) \hookrightarrow \CL_b(V'_b,V'_b) $ by \cite[Prop.\ 1.1.36]{Emerton17LocAnVect}.
	Combining this with \eqref{Eq 1 - Definition of involution of distribution algebra} yields
	\begin{equation*}
		\begin{tikzcd}[
			,row sep = 0ex
			,/tikz/column 1/.append style={anchor=base east}
			,/tikz/column 2/.append style={anchor=base west}
			]
			D(G) \ar[r] &D(G) \ar[r] &\CL_b(V,V) \ar[r, hookrightarrow] & \CL_b(V'_b,V'_b) \\
			\mu \ar[r, mapsto] & \dot{\mu} &&
		\end{tikzcd}
	\end{equation*}
	which gives the separately continuous $K$-bilinear pairing \eqref{Eq 1 - Map for contragredient D(G,K)-module structure}.
	To see that this defines a $D(G)$-module structure on $V'_b$, one again considers Dirac distributions first and then extends to general elements of $D(G)$ by continuity.
	
	If $G$ is compact and $V$ or $V'_b$ is a Fr\'echet space, the joint continuity of \eqref{Eq 1 - Map for D(G,K)-module structure} and \eqref{Eq 1 - Map for contragredient D(G,K)-module structure} follows from \cite[III.\ \S 5.2 Cor.\ 1]{Bourbaki87TopVectSp1to5} because $D(G)$ is a $K$-Fr\'echet space in this case.
\end{proof}

\begin{proposition}[{cf.\ \cite[\S 3]{SchneiderTeitelbaum02LocAnDistApplToGL2}}]\label{Prop 1 - Equivalences for categories of locally analytic representations}
	Let $G$ be a locally $L$-analytic Lie group.
	\begin{altenumerate}
		\item
		Associating a $D(G)$-module structure via \eqref{Eq 1 - Map for D(G,K)-module structure} gives an equivalence of categories
		\begin{equation}\label{Eq 1 - Equivalence of categories between representations and modules over the distribution algebra}
			\left(\substack{\text{\small locally analytic $G$-representations} \\ \text{\small on locally convex $K$-vector spaces} \\ \text{\small of LB-type with continuous} \\ \text{\small  $G$-equivariant homomorphisms}}\right) \lra 
			\left(\substack{\text{\small separately continuous $D(G)$-modules} \\ \text{\small on locally convex $K$-vector spaces} \\ \text{\small of LB-type with continuous} \\ \text{\small $D(G)$-module maps}}\right).
		\end{equation}
		\item
		Passing to the strong dual and associating the $D(G)$-module structure of \eqref{Eq 1 - Map for contragredient D(G,K)-module structure} gives an anti-equivalence of categories
		\begin{align}\label{Eq 1 - Antiequivalence between locally analytic representations and modules over the distribution algebra}
			\begin{split}
				\left(\substack{\text{\small locally analytic $G$-representations} \\ \text{\small on locally convex $K$-vector spaces}\\ \text{\small of compact type with continuous} \\ \text{\small $G$-equivariant homomorphisms}}\right) &\lra \left(\substack{\text{\small separately continuous $D(G)$-modules} \\ \text{\small on nuclear $K$-Fr\'echet spaces} \\ \text{\small with continuous $D(G)$-module maps}}\right) \\
				\left[f\colon V \ra W \right] &\lto \left[f^t \colon W'_b \ra V'_b \right] . 
			\end{split}
		\end{align}
		If $G$ is compact, the latter category already is equal to the category of continuous $D(G)$-modules on nuclear $K$-Fr\'echet spaces with continuous $D(G)$-module homomorphisms.
	\end{altenumerate}
\end{proposition}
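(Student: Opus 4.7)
The functor in (i) in one direction is already constructed in \Cref{Prop 1 - Module structures over the distribution algebra} (i), which assigns a separately continuous $D(G)$-module structure to any locally analytic $G$-representation. The plan is to construct the quasi-inverse. Given a separately continuous $D(G)$-module $V$ of LB-type, define $g.v \defeq \delta_g \ast v$. Since $\delta_e$ is the unit of $D(G)$ and $\delta_g \ast \delta_h = \delta_{gh}$ by \Cref{Prop 1 - Convolution product}, this is an abstract $G$-action, and separate continuity of the $D(G)$-action shows each $g$ acts by continuous endomorphisms. For the orbit maps, for $v \in V$ the map $D(G) \to V,\ \mu \mto \mu \ast v$ lies in $\CL_b(D(G),V)$; because $V$ is of LB-type, \Cref{Prop 1 - Integration map} (ii) shows that this continuous linear map is the image under $I$ of a unique locally analytic function $\rho_v \in C^\la(G,V)$ satisfying $\rho_v(g) = \delta_g \ast v = g.v$. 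Hence $V$ is a locally analytic $G$-representation. The two functors are mutually quasi-inverse: going back and forth, the identities $\delta_g \ast v = \rho_v(g) = g.v$ agree on Dirac distributions, and density of the span of Dirac distributions (\Cref{Prop 1 - Properties of the distribution algebra} (iv)) combined with separate continuity of the $D(G)$-action extends the agreement to all of $D(G)$.

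For the anti-equivalence in (ii), the plan is to combine (i) with the standard topological anti-equivalence $V \leftrightarrow V'_b$ between locally convex $K$-vector spaces of compact type and nuclear $K$-Fr\'echet spaces (a consequence of reflexivity together with \cite[Prop.\ 16.10]{Schneider02NonArchFunctAna} and \cite[Prop.\ 19.9]{Schneider02NonArchFunctAna}). In one direction, \Cref{Prop 1 - Module structures over the distribution algebra} (ii) already shows that $V'_b$ carries a separately continuous contragredient $D(G)$-module structure. For the quasi-inverse, start with a separately continuous $D(G)$-module $W$ on a nuclear Fr\'echet space. Then $W'_b$ is of compact type (hence of LB-type), and the formula $\mu \ast \ell \defeq \ell(\dot \mu \ast \blank)$ defines a separately continuous $D(G)$-module structure on $W'_b$; that this is indeed a module action uses \Cref{Prop 1 - Involution of distribution algebra and convolution} ($(\mu \ast \nu)\dot{} = \dot{\nu} \ast \dot{\mu}$) and $(\dot \mu)\dot{} = \mu$. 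Applying (i) then yields a locally analytic $G$-representation on $W'_b$. Mutual inverseness of the two constructions up to double duality follows from reflexivity of both categories, the matching on Dirac distributions (where $\delta_g \ast \ell = \ell(g^{-1}.\blank)$ is exactly the contragredient $G$-action), and the density argument above.

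For the additional statement when $G$ is compact, $D(G)$ is a nuclear $K$-Fr\'echet space by \Cref{Prop 1 - Properties of the distribution algebra} (ii), and any separately continuous $K$-bilinear map from a product of two Fr\'echet spaces into a Fr\'echet space is automatically jointly continuous by the Banach--Steinhaus theorem \cite[III.\ \S 5.2 Cor.\ 1]{Bourbaki87TopVectSp1to5}; thus any separately continuous $D(G)$-action on a nuclear Fr\'echet space is continuous.

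I expect the main obstacle to be the clean verification that the constructions of (i) are mutually quasi-inverse as functors, the crucial technical ingredient being the identity $\mu \ast v = I(\rho_v)(\mu)$ reconciling the integration map with the convolution action. The LB-type hypothesis is used exactly here in order to invoke the surjectivity of $I$ from \Cref{Prop 1 - Integration map} (ii); without it one only knows injectivity, and the passage from a $D(G)$-module structure back to locally analytic orbit maps would fail.
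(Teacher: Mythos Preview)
Your proposal is correct and follows essentially the same approach as the paper: for (i) you invert the functor using the bijectivity of the integration map on LB-type targets (\Cref{Prop 1 - Integration map} (ii)) together with density of Dirac distributions, and for (ii) you combine (i) with the compact type/nuclear Fr\'echet duality and the contragredient module structure from \Cref{Prop 1 - Module structures over the distribution algebra} (ii). The paper organizes (ii) slightly differently---first applying (i) to replace locally analytic representations of compact type by $D(G)$-modules of compact type, and then checking full faithfulness by showing that transposition $\CL_b(V,W)\to\CL_b(W'_b,V'_b)$ is a bijection via double transpose and reflexivity---but the ingredients are the same.
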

\begin{proof}
	For a continuous $G$-equivariant homomorphism $f \colon V \ra W$, we immediately have $\delta_g \ast f(v) = f(\delta_g \ast v)$, for all $g \in G$, $v\in V$.
	Hence it follows by continuity and density of the space of Dirac distributions that $f$ is a $D(G)$-module homomorphism.
	On the other hand, if $V$ is a separately continuous $D(G)$-module and of LB-type, we can define a $G$-action with locally analytic orbit maps $\rho_v \defeq I^{-1}(\mu \mto \mu \ast v)$, for $v \in V$, using \Cref{Prop 1 - Integration map} (ii).
	Then $g \in G$ acts by the endomorphism $V \ra V$, $v \mto \delta_g \ast v$, which therefore is continuous.
	One readily checks that the functor defined this way is a quasi-inverse to \eqref{Eq 1 - Equivalence of categories between representations and modules over the distribution algebra}.
	
	For the statement of (ii), note that by (i) the first category is equivalent to the category of separately continuous $D(G)$-modules on locally convex $K$-vector spaces of compact type with continuous $D(G)$-module maps.
	Now \Cref{Prop A1 - Duality of spaces of compact type and nuclear Frechet spaces} and \Cref{Prop 1 - Module structures over the distribution algebra} (ii) show that the functor \eqref{Eq 1 - Antiequivalence between locally analytic representations and modules over the distribution algebra} is well defined and essentially surjective. 
	Moreover, for locally convex $K$-vector spaces $V$ and $W$ of compact type, one readily checks that the homomorphism
	\begin{equation*}
		\begin{tikzcd}[row sep = 0ex, /tikz/column 1/.append style={anchor=base east},	/tikz/column 3/.append style={anchor=base west}]
			\CL_b(V,W) \ar[r]&\eqmathbox[B]{\CL_b(W'_b,V'_b)} \ar[r]&\CL_b\big( (V'_b)'_b, (W'_b)'_b \big) \cong \CL_b(V,W) ,\\
			f \ar[r, mapsto] &\eqmathbox[B]{f^t} \ar[r, mapsto] &(f^t)^t ,
		\end{tikzcd}
	\end{equation*}
	using the reflexivity of $V$ and $W$ \cite[Prop.\ 16.10 (i)]{Schneider02NonArchFunctAna}, is in fact the identity.
	Combined with a similar argument for $\CL_b(W'_b, V'_b)$ we conclude that the natural map induced by taking the transpose is a topological isomorphism
	\begin{equation*}
		\CL_b(V,W) \lra \CL_b(W'_b, V'_b) \,,\quad f \lto f^t.
	\end{equation*}
	Furthermore, one computes that $f$ is a homomorphism of $D(G)$-modules if and only if $f^t$ is.
	Therefore, \eqref{Eq 1 - Antiequivalence between locally analytic representations and modules over the distribution algebra} is an anti-equivalence of categories.
	Finally, if $G$ is compact, a separately continuous $D(G)$-module structure on a $K$-Fr\'echet space is jointly continuous by \cite[III.\ \S 5.2 Cor.\ 1]{Bourbaki87TopVectSp1to5}, like before.
\end{proof}

\subsection{Locally Analytic Induction}

We keep the setting that $K$ is a spherically complete non-archimedean field and $L\subset K$ a locally compact complete subfield.
We will now recall the notion of locally analytic induction from \cite[Kap.\ 4]{FeauxdeLacroix99TopDarstpAdischLieGrp} and make some easy comparisons to the ``finite'' induction of locally analytic representation.

\begin{definition}[{\cite[\S 4.1]{FeauxdeLacroix99TopDarstpAdischLieGrp}}]\label{Def 1 - Definition of locally analytic induction}
	Let $G$ be a locally $L$-analytic Lie group, $H\subset G$ a locally $L$-analytic subgroup, and $V$ a locally analytic $H$-representation.
	We define the subspace
	\begin{equation*}
		\Ind^{\la,G}_H (V) \defeq \left\{ f \in C^\la(G,V) \middle{|} \forall g\in G, h \in H: f(gh)= h^{-1}.f(g) \right\} \subset C^\la(G,V)
	\end{equation*}
	and consider it with the left regular $G$-action.
	We note that the continuity of the evaluation homomorphisms and the action of $H$ on $V$ imply that this subspace is closed.
\end{definition}

\begin{proposition}[{cf.\ \cite[Satz 4.1.5]{FeauxdeLacroix99TopDarstpAdischLieGrp}}]\label{Prop 1 - When locally analytic induction is locally analytic representation}
	Let $G$ be a locally $L$-analytic Lie group and $H\subset G$ a locally $L$-analytic subgroup.
	Moreover, let $V$ be a locally analytic $H$-representation.
	\begin{altenumerate}
		\item
		\textnormal{(cf.\ \cite[\S 2.1]{Emerton07JacquetModLocAnRep2})}
		If $V$ is of compact type and there exists a compact open subgroup $G_0 \subset G$ such that $G = G_0 \cdot H$, then $\Ind^{\la,G}_H(V)$ is a locally analytic $G$-representation of compact type.
		\item
		\textnormal{(cf.\ \cite[Satz 4.3.1]{FeauxdeLacroix99TopDarstpAdischLieGrp})}
		If $V$ is a $K$-Banach space and $G/H$ is compact, then $\Ind^{\la,G}_H(V)$ is a locally analytic $G$-representation and any section of the projection map $G \ra G/H$ induces a topological isomorphism $\Ind^{\la,G}_H(V) \cong C^\la(G/H,V)$.
	\end{altenumerate}
\end{proposition}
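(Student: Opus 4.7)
The argument splits naturally into a reduction of (i) to a compact open subgroup, and a section-based trivialization for (ii).

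For (i), the hypothesis $G = G_0 \cdot H$ means that every $f \in \Ind^{\la,G}_H(V)$ is determined by its restriction to $G_0$ via $f(g_0 h) = h^{-1}.f(g_0)$, and conversely that a $\varphi \in C^\la(G_0, V)$ extends to such an $f$ precisely when $\varphi(g_0 k) = k^{-1}.\varphi(g_0)$ holds for all $k \in G_0 \cap H$ with $g_0 k \in G_0$. Thus restriction yields a $K$-linear bijection $\Ind^{\la,G}_H(V) \xrightarrow{\sim} \Ind^{\la,G_0}_{G_0 \cap H}(V) \subset C^\la(G_0, V)$ intertwining the left regular $G_0$-actions. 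The right hand side is a closed subspace of $C^\la(G_0, V)$ by continuity of evaluations (\Cref{Prop 1 - Evaluation maps are continuous}) and of the $H$-action on $V$. Since $G_0$ is compact and $V$ is of compact type, \Cref{Prop 1 - Direct limit description of locally analytic functions for compact manifold}~(iii) renders $C^\la(G_0, V)$ of compact type, whence so is $\Ind^{\la,G_0}_{G_0 \cap H}(V)$, and in particular it is barrelled. Combining \Cref{Expl 1 - Examples of locally analytic representations}~(ii) with \Cref{Prop 1 - Subrepresentations and quotientrepresentations of locally analytic representations} identifies it as a locally analytic $G_0$-representation of compact type, and \Cref{Prop 1 - Locally analytic representations and open subgroups} upgrades this to a locally analytic $G$-representation structure on $\Ind^{\la,G}_H(V)$.

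For (ii), I use that the closed locally $L$-analytic subgroup $H \subset G$ makes $\pi \colon G \to G/H$ into a locally trivial principal $H$-bundle in the locally analytic category. Since $G/H$ is compact and strictly paracompact (\Cref{Rmk 1 - Locally analytic manifold has disjoint countable covering by compact open subsets}), there is a finite disjoint covering $G/H = \bigsqcup_{i=1}^n U_i$ by compact open subsets admitting locally analytic sections $s_i \colon U_i \to G$; combining them yields $s \colon G/H \to G$. The trivializations $U_i \times H \to \pi^{-1}(U_i)$, $(x, h) \mapsto s_i(x) h$, are locally analytic isomorphisms, so \Cref{Prop 1 - Disjoint coverings and locally analytic functions} and \Cref{Prop 1 - Locally analytic functions on product of manifolds} give
\[
C^\la(G, V) \cong \prod_{i=1}^n C^\la\bigl(U_i, C^\la(H, V)\bigr),
\]
and the equivariance of $f \in \Ind^{\la,G}_H(V)$ selects exactly the tuples of the form $F_i(x)(h) = h^{-1}.\phi_i(x)$ with $\phi_i \in C^\la(U_i, V)$. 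This produces the topological isomorphism $\Ind^{\la,G}_H(V) \cong \prod_i C^\la(U_i, V) \cong C^\la(G/H, V)$, implemented by $f \mapsto f \circ s$. The locally analytic $G$-representation structure follows from the same reduction-to-$G_0$ argument as in (i).

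The hard part is verifying that the inverse map $\phi \mapsto [g \mapsto h^{-1}.\phi(\pi(g))]$, where $g = s(\pi(g))h$, genuinely defines an element of $C^\la(G, V)$; equivalently, that $(x, h) \mapsto h^{-1}.\phi(x)$ is \emph{jointly} locally analytic on $U_i \times H$. This is exactly where the Banach-space hypothesis in (ii) enters: by \Cref{Prop 1 - Equivalent characterization for locally analytic representations on Banach spaces}, the $H$-action on $V$ comes from an analytic linear representation $\rho \colon H \to \GL(V) \subset \CL(V, V)$, and on analytic charts the map factors as the locally analytic $(x, h) \mapsto (\phi(x), \rho(h^{-1}))$ followed by the continuous bilinear (hence jointly analytic) evaluation $V \times \CL(V, V) \to V$. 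The analogous joint analyticity statement for $V$ of compact type is more delicate, which is precisely why (i) is handled via reduction to a compact $G_0$ rather than by a direct section construction.
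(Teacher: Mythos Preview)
Your treatment of (i) is correct and follows the paper's outline: reduce to $G_0$ via the identification $\Ind^{\la,G}_H(V)\cong\Ind^{\la,G_0}_{G_0\cap H}(V)$, realise the latter as a closed subspace of $C^\la(G_0,V)$ of compact type, and upgrade via \Cref{Prop 1 - Locally analytic representations and open subgroups}. One pleasant difference: where the paper cites the external \cite[Satz 4.1.5]{FeauxdeLacroix99TopDarstpAdischLieGrp} for local analyticity of the $G_0$-orbit maps, you deduce this from \Cref{Expl 1 - Examples of locally analytic representations}~(ii) together with \Cref{Prop 1 - Subrepresentations and quotientrepresentations of locally analytic representations}, which is self-contained within the paper's framework.

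For (ii) the paper simply cites \cite[Satz 4.3.1]{FeauxdeLacroix99TopDarstpAdischLieGrp} for the section isomorphism and \cite[Satz 4.1.5]{FeauxdeLacroix99TopDarstpAdischLieGrp} for local analyticity of orbit maps; you attempt a direct construction, which is commendable but has two soft spots. First, your appeal to \Cref{Prop 1 - Locally analytic functions on product of manifolds} to obtain $C^\la(\pi^{-1}(U_i),V)\cong C^\la(U_i,C^\la(H,V))$ is not justified as stated: that proposition is for $K$-valued functions and requires \emph{both} factors compact, whereas $H$ need not be compact here. Fortunately your ``hard part'' paragraph bypasses this by directly verifying, via \Cref{Prop 1 - Equivalent characterization for locally analytic representations on Banach spaces} and the continuity of the bilinear evaluation $V\times\CL(V,V)\to V$, that $(x,h)\mapsto\rho(h^{-1})\phi(x)$ is jointly locally analytic; that argument is sound and is in essence what \cite[Satz 4.3.1]{FeauxdeLacroix99TopDarstpAdischLieGrp} does. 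Second, your closing claim that the locally analytic $G$-structure follows from ``the same reduction-to-$G_0$ argument as in (i)'' is imprecise: in (i) the hypothesis $G=G_0\cdot H$ is \emph{assumed}, whereas in (ii) only compactness of $G/H$ is given, and this does not force $G=G_0\cdot H$ for any compact open $G_0$ (take $G=\BZ$, $H=2\BZ$). What one does get is a \emph{finite} decomposition $G=\bigsqcup_{i=1}^n G_0 g_i H$ into $G_0$-stable pieces, each of which can be handled as in (i); alternatively one falls back on \cite[Satz 4.1.5]{FeauxdeLacroix99TopDarstpAdischLieGrp} as the paper does.
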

\begin{proof}
	In both cases it follows from the functoriality of \Cref{Prop 1 - Functorialities for the space of locally analytic functions} (ii) that $G$ acts on $\Ind_H^{\la,G}(V)$ by continuous endomorphisms.
	In the first case, we set $H_0 \defeq G_0 \cap H$, for such $G_0\subset G$.
	Arguing analogously to \cite[\S 2.1]{Emerton07JacquetModLocAnRep2} we may view $V$ as a locally analytic $H_0$-representation and have an identification
	\begin{align*}
		\Ind^{\la,G}_H(V) &\overset{\cong}{\lra} \Ind^{\la,G_0}_{H_0}(V) \subset C^\la(G_0,V) , \\
		f &\lto f\res{G_0}
	\end{align*}	
	of (abstract) $G_0$-representations on locally convex $K$-vector spaces.
	Because $C^\la(G_0,V)$ is of compact type (\Cref{Prop 1 - Direct limit description of locally analytic functions for compact manifold} (iii)), the closed subspace $\Ind^{\la,G}_H(V)$ is so as well by \Cref{Lemma A1 - Closed subspaces and quotients of spaces of compact type}.	
	Moreover, the orbit maps of the $G_0$-action on $\Ind^{\la,G_0}_{H_0}(V)$ are locally analytic by \cite[Satz 4.1.5]{FeauxdeLacroix99TopDarstpAdischLieGrp} since $G_0/H_0$ is compact.
	It follows from \Cref{Prop 1 - Locally analytic representations and open subgroups} that $\Ind^{\la,G}_H(V)$ is a locally analytic $G$-representation.

	In the second case, the orbit maps of the $G$-action on $\Ind^{\la,G}_H(V)$ again are locally analytic by \cite[Satz 4.1.5]{FeauxdeLacroix99TopDarstpAdischLieGrp} as $G/H$ is assumed to be compact.
	The topological isomorphism $\Ind^{\la,G}_H(V) \cong C^\la(G/H,V)$, for any section of $G \ra G/H$, is the content of \cite[Satz 4.3.1]{FeauxdeLacroix99TopDarstpAdischLieGrp}.
	It follows that $\Ind^{\la,G}_H(V)$ is barrelled (see \Cref{Cor 1 - Space of locally analytic functions is Hausdorff and barrelled}) and therefore a locally analytic $G$-representation.	
\end{proof}

\begin{lemma}[{see discussion after \cite[Thm.\ 3.6.12]{Emerton17LocAnVect}}]\label{Lemma 1 - Continuity of the orbit homomorphism}
	Let $V$ be a locally analytic representation of a locally $L$-analytic Lie group $G$ with orbit maps $\rho_v \colon G \ra V$, for $v\in V$.
	If $V$ is an LF-space (i.e.\ $V$ is topologically isomorphic to the inductive limit of a sequence of $K$-Fr\'echet spaces) then the orbit homomorphism
	\begin{equation*}
		o\colon V \lra C^\la(G,V) \,,\quad v \lto \rho_v , 
	\end{equation*}
	is continuous.
\end{lemma}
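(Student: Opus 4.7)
The plan is to verify continuity of $o$ via the closed graph theorem, after reducing to the case where $G$ is compact.

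First I would fix a compact open subgroup $G_0 \subset G$, which exists since $L$ is locally compact, together with coset representatives $(g_i)_{i \in I}$ so that $G = \bigsqcup_{i \in I} G_0 g_i$. By \Cref{Prop 1 - Disjoint coverings and locally analytic functions} this yields a topological isomorphism $C^\la(G, V) \cong \prod_{i \in I} C^\la(G_0 g_i, V)$, and right-translation by $g_i^{-1}$ identifies each $C^\la(G_0 g_i, V)$ with $C^\la(G_0, V)$, sending $\rho_v\res{G_0 g_i}$ to $\rho_{g_i . v}\res{G_0}$. Since each $v \mapsto g_i.v$ is a continuous endomorphism of $V$ (by the definition of a locally analytic representation) and a map into a product is continuous precisely when each component is, continuity of $o$ reduces to that of the orbit map $V \to C^\la(G_0, V)$; so we may assume $G$ is compact.

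Next I would verify that $o$ has closed graph. Given a net with $v_\alpha \to v$ in $V$ and $\rho_{v_\alpha} \to f$ in $C^\la(G, V)$, the continuity of the evaluation homomorphism $\ev_g \colon C^\la(G, V) \to V$ from \Cref{Prop 1 - Evaluation maps are continuous}, combined with the continuity of the endomorphism $w \mapsto g.w$ of $V$, yields
\[
f(g) = \lim_\alpha \ev_g(\rho_{v_\alpha}) = \lim_\alpha g.v_\alpha = g.v = \rho_v(g)
\]
for every $g \in G$. Since $C^\la(G, V)$ is Hausdorff by \Cref{Cor 1 - Space of locally analytic functions is Hausdorff and barrelled}, this forces $f = \rho_v$, and hence the graph of $o$ is closed in $V \times C^\la(G, V)$.

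To conclude, both $V$ and, by \Cref{Prop 1 - Direct limit description of locally analytic functions for compact manifold} (ii), $C^\la(G, V)$ are LF-spaces over the spherically complete field $K$. Writing $V = \bigcup_{n \in \BN} \iota_n(V_n)$ with $K$-Fr\'echet spaces $V_n$, the closed-graph property above restricts to each $o \circ \iota_n \colon V_n \to C^\la(G, V)$, so by the universal property of the inductive limit topology it suffices to prove continuity of each such restriction. This is then a linear map with closed graph from a Fr\'echet space to an LF-space, and continuity follows from the non-archimedean closed graph theorem in this setting over the spherically complete $K$ (which can be deduced in the usual way from the open mapping theorem \cite[Thm.\ 1.1.17]{Emerton17LocAnVect}). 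The main delicate point I anticipate is pinning down precisely this Fr\'echet-to-LF closed graph theorem; the rest of the argument is a routine reduction.
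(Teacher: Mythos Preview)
Your proposal is correct and follows essentially the same approach as the paper: reduce to a compact open subgroup via the product decomposition of $C^\la(G,V)$, verify that the orbit map has closed graph, and conclude via a closed graph theorem using that $C^\la(G_0,V)$ is an LF-space by \Cref{Prop 1 - Direct limit description of locally analytic functions for compact manifold}~(ii). The only cosmetic differences are that the paper exhibits the graph as the kernel of the continuous map $(v,f)\mapsto \big((hg_i).v - f(hg_i)\big)_h$ rather than using a net argument, and it invokes the closed graph theorem \cite[II.\ \S 4.6 Prop.\ 10]{Bourbaki87TopVectSp1to5} directly for the LF source rather than first restricting to each Fr\'echet piece $V_n$.
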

\begin{proof}
	Let $H\subset G$ be a compact open subgroup so that $G = \bigcup_{i\in I} H g_i$ is a disjoint covering.
	Under the topological isomorphism from \Cref{Prop 1 - Disjoint coverings and locally analytic functions}, the homomorphism $o$ coincides with the map
	\begin{align*}
		V \lra \prod_{i\in I} C^\la(H g_i, V) \cong C^\la (G,V) \,,\quad
		v \lto \big(\rho_{v} \res{H g_i} \big)_{i\in I} .
	\end{align*}
	Hence it suffices to show that $o_i \colon V \ra C^\la(H g_i, V)$, $v \mto \rho_v\res{H g_i}$, is continuous, for all $i\in I$.

	We fix $i\in I$ and consider the graph $\Gamma_{o'} \subset V \times C^\la(H',V)$, for $H' \defeq H g_i$.
	One readily computes that $\Gamma_{o'}$ is precisely the kernel of the continuous homomorphism
	\begin{equation*}
		V \times C^\la(H',V) \lra \prod_{h\in H} V \,,\quad (v,f) \lto \big((hg_i).v - f(hg_i)\big)_{h\in H} .
	\end{equation*}
	Therefore $\Gamma_{o'} \subset V \times C^\la(H',V)$ is closed, and we conclude by a version of the closed graph theorem \cite[II.\ \S 4.6 Prop.\ 10]{Bourbaki87TopVectSp1to5} that $o'$ is continuous.
	For this we remark that in particular $V$ is of LF-type (see \cite[p.\ 15]{Emerton17LocAnVect}) so that $C^\la(H',V)$ is of LF-type by \Cref{Prop 1 - Direct limit description of locally analytic functions for compact manifold} (ii).
\end{proof}

When $H$ is a subgroup of finite index in $G$, it suggest itself to consider the ``finite'' induction of a $H$-representation $V$
\begin{equation*}
	\Ind^G_H (V) \defeq \bigoplus_{i=1}^n g_i \bullet V
\end{equation*}
where $g_1,\ldots,g_n$ are coset representatives of $G/H$.
Here the $G$-action on $\Ind^G_H(V)$ is defined via
\begin{equation*}
	g. \bigg(\sum_{i=1}^{n} g_i \bullet v_i \bigg) = \sum_{i=1}^{n} g_{j(i)}\bullet h_i.v_i ,
\end{equation*}
for $g\in G$ with $g g_i = g_{j(i)}h_i$, $j(i)\in \{1,\ldots,n\}$, $h_i \in H$.
We can compare this to the locally analytic induction.

\begin{proposition}\label{Prop 1 - Isomorphism between locally analytic and finite induction}
	Let $G$ be a locally $L$-analytic Lie group and $H\subset G$ a locally $L$-analytic subgroup of finite index.
	Let $V$ be a locally analytic $H$-representation which is an LF-space.
	Then there is a $G$-equivariant, topological isomorphism
	\begin{equation}\label{Eq 1 - Homomorphism for comparison between locally analytic and finite induction}
		\Ind_{H}^{\la,G} (V) \overset{\cong}{\lra} \Ind_H^{G} (V) \,,\quad f \lto \sum_{i=1}^{n} g_i \bullet f(g_i) ,	
	\end{equation}
	where $g_1,\ldots,g_n$ are coset representatives of $G/H$.
\end{proposition}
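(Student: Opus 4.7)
The plan is to use the finite index assumption to reduce both sides to $V^n$. Since $H$ is a locally $L$-analytic (hence closed) subgroup of finite index, each coset $g_iH$ is closed in $G$, and $G = \bigsqcup_{i=1}^n g_iH$ is a decomposition into open, locally $L$-analytic submanifolds; in particular $H$ itself is open in $G$. By \Cref{Prop 1 - Disjoint coverings and locally analytic functions}, restriction yields a topological isomorphism $C^\la(G,V) \cong \prod_{i=1}^n C^\la(g_iH,V)$. Under this isomorphism the closed subspace $\Ind_H^{\la,G}(V)$ corresponds exactly to those tuples $(f_i)_i$ with $f_i(g_ih) = h^{-1}.f_i(g_i)$ for all $h \in H$, and so each $f_i$ is determined by the single value $f_i(g_i) \in V$. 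This gives a $K$-linear bijection $\Phi\colon \Ind_H^{\la,G}(V) \to V^n$, $f \mapsto (f(g_1),\ldots,f(g_n))$, which composed with the obvious identification $V^n \cong \Ind_H^G(V)$, $(v_1,\ldots,v_n) \mapsto \sum_i g_i \bullet v_i$, recovers the map \eqref{Eq 1 - Homomorphism for comparison between locally analytic and finite induction}.

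Continuity of $\Phi$ is immediate from \Cref{Prop 1 - Evaluation maps are continuous}. For the inverse $\Psi\colon V^n \to \Ind_H^{\la,G}(V)$, given $(v_1,\ldots,v_n)$, I set $\Psi(v_1,\ldots,v_n)$ to be the unique function whose restriction to $g_iH$ sends $g_ih$ to $h^{-1}.v_i$. Local analyticity on $g_iH$ follows by writing this restriction as the composition
\[
g_iH \lra H \lra H \lra V,\quad x \lto g_i^{-1}x \lto x^{-1}g_i \lto (x^{-1}g_i).v_i,
\]
of the locally analytic left translation, the inversion, and the orbit map $\rho_{v_i}$ of the $H$-representation $V$. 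To see that $\Psi$ is continuous, it suffices to show continuity of each component $V \to C^\la(g_iH,V)$, $v \mapsto [g_ih \mapsto h^{-1}.v]$. This factors as the orbit homomorphism $V \to C^\la(H,V)$, $v \mapsto \rho_v$, which is continuous by \Cref{Lemma 1 - Continuity of the orbit homomorphism} since $V$ is of LF-type, followed by the continuous homomorphism $C^\la(H,V) \to C^\la(g_iH,V)$ induced via \Cref{Prop 1 - Functorialities for the space of locally analytic functions}~(ii) by the locally analytic map $g_iH \to H$, $g_ih \mapsto h^{-1}$.

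Finally, I verify $G$-equivariance. For $g\in G$ write $gg_i = g_{j(i)}h_i$ with $h_i \in H$; then $g^{-1}g_{j(i)} = g_ih_i^{-1}$, so
\[
(g.f)(g_{j(i)}) = f(g^{-1}g_{j(i)}) = f(g_ih_i^{-1}) = h_i.f(g_i).
\]
Since $j$ is a permutation of $\{1,\ldots,n\}$, reindexing yields
\[
\Phi(g.f) = \sum_i g_{j(i)} \bullet (g.f)(g_{j(i)}) = \sum_i g_{j(i)} \bullet h_i.f(g_i) = g.\Phi(f),
\]
where the final equality is the definition of the $G$-action on $\Ind_H^G(V)$. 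The only real subtlety in this proof is the continuity of $\Psi$, which is precisely what \Cref{Lemma 1 - Continuity of the orbit homomorphism} supplies under the LF-assumption on $V$; everything else is a routine unpacking of the definitions combined with the disjoint-covering description of $C^\la(G,V)$.
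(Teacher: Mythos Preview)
Your proof is correct and follows essentially the same approach as the paper: both decompose via the disjoint covering $G = \bigsqcup_i g_iH$, establish continuity of the forward map from \Cref{Prop 1 - Evaluation maps are continuous}, and obtain continuity of the inverse component-wise by identifying each piece with $\inv^\ast \circ o$ and invoking \Cref{Lemma 1 - Continuity of the orbit homomorphism}. One minor wording slip: you justify that lemma by saying ``since $V$ is of LF-type'', but the lemma actually requires $V$ to be an LF-space (which is the hypothesis of the proposition); in the paper's terminology LF-type is the weaker notion.
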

\begin{proof}
	We first note that the homomorphism \eqref{Eq 1 - Homomorphism for comparison between locally analytic and finite induction} is $G$-equivariant; it is continuous because the evaluation homomorphisms $\ev_{g_i} \colon C^\la(G,V) \ra V$ are.
	Then we consider	
	\begin{equation*}
		\Ind_{H}^{G} (V) \lra \Ind_H^{\la,G} (V) \,,\quad \sum_{i=1}^{n} g_i \bullet v_i \lto \big[ g \mto h^{-1}.v_i \quad\text{, for $g=g_i h$ with $h\in H$} \big] ,
	\end{equation*}
	which is an inverse to \eqref{Eq 1 - Homomorphism for comparison between locally analytic and finite induction}. 
	Using $C^\la(G,V) = \bigoplus_{i=1}^n C^\la(g_i H,V)$ this homomorphism is the direct sum of the homomorphisms
	\begin{equation*}
		g_i \bullet V \lra C^\la (g_i H, V) \cap \Ind^{\la, G}_H(V) \,,\quad v \lto \big[ g_i h \mto h^{-1}.v \quad \text{, for $h \in H$} \big] .
	\end{equation*}
	These in turn each can be identified with $\inv^\ast \circ o \colon V \ra C^\la(H,V)$ which is continuous by \Cref{Lemma 1 - Continuity of the orbit homomorphism}.
\end{proof}

\begin{remarks}\label{Rmk 1 - Dual of finite induction}
	Let $G$ be a group and $H\subset G$ a subgroup of finite index.
	\begin{altenumerate}
		\item
		Let $V$ be a locally convex $K$-vector space which also is an (abstract) $H$-representation.
		Then we have a $G$-equivariant, topological isomorphism $\Ind_H^G(V)'_b \cong \Ind_H^G(V'_b)$ via
		\begin{equation*}
			\bigg( \bigoplus_{i=1}^n g_i \bullet V \bigg)'_b \overset{\cong}{\lra} \bigoplus_{i=1}^n g_i \bullet V'_b \,,\quad \ell \lto \sum_{i=1}^n g_i \bullet \big[ v \mto \ell(g_i \bullet v) \big]  ,
		\end{equation*}
		where $g_1,\ldots,g_n$ are coset representatives of $G/H$.
		\item
		We also have the following version of a push-pull formula (projection formula):
		Let $V$ and $W$ be locally convex $K$-vector space such that $V$ is an (abstract) $G$-representation and $W$ an (abstract) $H$-representation.
		Then there exists a $G$-equivariant, topological isomorphism
		\begin{equation*}
			V \botimes{K} \Ind^G_H (W) \overset{\cong}{\lra} \Ind^G_H \big( V\res{H} \botimes{K} W \big) \,,\quad \sum_{i=1}^n v_i \otimes  g_i \bullet w_i \lto \sum_{i=1}^n g_i \bullet (g_i^{-1}. v_i \otimes w_i)
		\end{equation*}
		when the tensor products either both carry the projective or inductive tensor product topology.
		Again $g_1,\ldots,g_n$ denote coset representatives of $G/H$.
	\end{altenumerate}
\end{remarks}

In view of the anti-equivalence from \Cref{Prop 1 - Equivalences for categories of locally analytic representations} (ii), the locally analytic induction can also be expressed in terms of taking tensor products with locally analytic distribution algebras.
To this end, we need the following from \cite[Rmk.\ 1.2.11]{Kohlhaase05InvDistpAdicAnGrp} and \cite[\S 1.2]{Emerton17LocAnVect}.

\begin{definition}\label{Def 1 - Topological algebras and modules}
	\begin{altenumerate}
		\item
		By a \textit{separately continuous locally convex $K$-algebra} we mean a locally convex $K$-vector space $A$ which carries the structure of a $K$-algebra such that the multiplication map $A\times A \ra A$ is separately continuous.
		If the multiplication map even is jointly continuous, we simply call $A$ a \textit{locally convex $K$-algebra}.	
		
		If in addition $A$ is a $K$-Fr\'echet space, we call $A$ a \textit{$K$-Fr\'echet algebra}.
		We remark that the multiplication map of such an algebra is jointly continuous automatically \cite[III.\ \S 5.2 Cor.\ 1]{Bourbaki87TopVectSp1to5}.
		\item
		Let $A$ be a separately continuous locally convex $K$-algebra and $M$ a locally convex $K$-vector space.
		We call $M$ a \textit{separately continuous locally convex (left) $A$-module} if $M$ is a left $A$-module and the scalar multiplication map is separately continuous.
		If $A$ is a locally convex $K$-algebra and the scalar multiplication map is jointly continuous, we call $M$ a \textit{locally convex (left) $A$-module}.
		
		If $M$ is a separately continuous locally convex $A$-module, for a $K$-Fr\'echet algebra $A$, and a $K$-Fr\'echet space itself, we call $M$ an \textit{$A$-Fr\'echet module}.
		Again the scalar multiplication of such $M$ is jointly continuous automatically.
	\end{altenumerate}
\end{definition}

\begin{lemma}[{\cite[Lemma 1.2.3]{Emerton17LocAnVect}}]\label{DefProp 1 - Projective tensor product over algebras}
	Let $A \ra B$ be a continuous homomorphism of locally convex $K$-algebras, and $M$ a locally convex $A$-module.
	Then there is an isomorphism of  $B$-modules
	\begin{equation*}
		B \botimes{A} M \cong \big( B \projotimes M \big) \big/ M' ,
	\end{equation*}
	where $M'$ is the $B$-submodule generated by $ba \otimes m - b \otimes a m$, for $b\in B$, $a\in A$, $m\in M$.
	With the induced quotient topology $B \botimes{A} M$ becomes a locally convex $B$-module.
\end{lemma}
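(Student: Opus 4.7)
My plan splits into an algebraic and a topological part. For the algebraic identification, I first observe that the $B$-submodule $M' \subseteq B \projotimes M$ generated by the relations $ba \otimes m - b \otimes am$ already coincides with the $K$-subspace generated by them, since
\[ b'(ba \otimes m - b \otimes am) = (b'b)a \otimes m - (b'b) \otimes am \]
is of the same form. Thus $(B \projotimes M)/M'$, endowed with the quotient topology, is a locally convex $K$-vector space and agrees algebraically with the usual tensor product $B \otimes_A M$ as a $B$-module.

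To equip this quotient with a jointly continuous $B$-action, I exploit that the jointly continuous multiplication $B \times B \ra B$ factors as a continuous linear map $m_B\colon B \projotimes B \ra B$. By the associativity and functoriality of the projective tensor product this induces a continuous linear map
\[ \mu\colon B \projotimes (B \projotimes M) \overset{\cong}{\lra} (B \projotimes B) \projotimes M \overset{m_B \,\projotimes\, \id_M}{\lra} B \projotimes M, \]
which is the linearization of the jointly continuous bilinear left action of $B$ on $B \projotimes M$ via multiplication on the first factor. Composing with the quotient projection $q\colon B \projotimes M \twoheadrightarrow B \otimes_A M$, a direct check on generators $b' \otimes (ba \otimes m - b \otimes am)$ shows that $q \circ \mu$ vanishes on these, because $q((b'b)a \otimes m - (b'b) \otimes am) = 0$. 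Hence the jointly continuous bilinear action $B \times (B \projotimes M) \ra B \otimes_A M$ descends algebraically to a bilinear map $B \times (B \otimes_A M) \ra B \otimes_A M$, giving the desired $B$-module structure.

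The main obstacle is verifying that this descended bilinear map remains jointly continuous; here I use that the quotient map $q$ is open, as is every quotient of a topological vector space by a subspace. Given a zero-neighbourhood $W \subseteq B \otimes_A M$, joint continuity of the action on $B \projotimes M$ yields zero-neighbourhoods $U \subseteq B$ and $\widetilde V \subseteq B \projotimes M$ such that $\mu(U \projotimes \widetilde V) \subseteq q^{-1}(W)$. Openness of $q$ makes $V \defeq q(\widetilde V) \subseteq B \otimes_A M$ a zero-neighbourhood, and by construction the descended action maps $U \times V$ into $W$. This establishes joint continuity and completes the proof.
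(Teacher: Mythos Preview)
The paper does not give its own proof of this lemma; it is stated with a citation to \cite[Lemma 1.2.3]{Emerton17LocAnVect} and no argument is supplied. So there is nothing to compare against, and your proposal should be judged on its own merits.

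Your argument is essentially correct. The algebraic identification is fine, and the key idea for the topological part --- using openness of the quotient map $q$ to push zero-neighbourhoods down --- is exactly the right mechanism. Two small points: first, the notation $\mu(U \projotimes \widetilde V)$ is imprecise, since $U$ and $\widetilde V$ are subsets rather than spaces; you mean the image of $U \times \widetilde V$ under the bilinear action. Second, and slightly more substantively, your final paragraph literally only verifies continuity of the descended bilinear map at $(0,0)$, which for bilinear maps is not by itself equivalent to joint continuity. However, the very same openness argument works verbatim at an arbitrary point $(b_0, v_0)$: lift $v_0$ to some $\widetilde v_0$, use joint continuity upstairs to find neighbourhoods $b_0 + U$ and $\widetilde v_0 + \widetilde V$, and then $q(\widetilde v_0 + \widetilde V) = v_0 + q(\widetilde V)$ is a neighbourhood of $v_0$ with the required property. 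Adding one sentence to this effect (or invoking separate continuity, which is immediate) closes the gap.
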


\begin{definition}\label{Def 1 - Completed projective tensor product over algebra}
	In the situation of the above lemma, we let $B \botimes{A,\pi} M$ denote the $B$-module $B \botimes{A} M$ with this quotient topology.
	Moreover, we write $B \cotimes{A,\pi} M$ for its Hausdorff completion.
\end{definition}

\begin{remark}\label{Rmk 1 - Completion of projective tensor product over algebras}
	Note that $B \cotimes{A,\pi} M$ again is a locally convex $B$-module.
	If $B \cotimes{K,\pi} M$ is hereditarily complete, i.e.\ all its Hausdorff quotients are complete \cite[Def.\ 1.1.39]{Emerton17LocAnVect}, then by \cite[Cor.\ 2.2]{BreuilHerzig18TowardsFinSlopePartGLn} completing preserves the strict exactness of 
	\begin{equation*}
		0 \lra M' \lra B \projotimes M \lra B \botimes{A,\pi} M \lra 0 .
	\end{equation*}
	We thus have $B \cotimes{A,\pi} M \cong  \big( B \projcotimes M \big) \big/ \widebar{M'}$ where $\widebar{M'}$ denotes the closure of $M'$ in $ B \projcotimes M$.
	The above condition on $B \cotimes{K,\pi} M$ is fulfilled for example when $B$ and $M$ both are $K$-Fr\'echet spaces (see discussion after \cite[Prop.\ 17.6]{Schneider02NonArchFunctAna}) or both are of compact type (see \cite[Prop.\ 1.1.32 (i)]{Emerton17LocAnVect}) by the comment after \cite[Def.\ 1.1.39]{Emerton17LocAnVect}.
\end{remark}

Similarly if $A \ra B$ is a continuous homomorphism of separately continuous locally convex $K$-algebras and $M$ is a separately continuous locally convex $A$-module, then $B\botimes{A}M$ becomes a separately continuous locally convex $B$-module when given the quotient topology of $B \indotimes M$ (cf.\ \cite[Rmk.\ 1.2.11]{Kohlhaase05InvDistpAdicAnGrp}).
We write $B \botimes{A,\iota} M$ in this case.
Furthermore, we let $B \cotimes{A,\iota} M$ denote its Hausdorff completion.

\begin{remark}
	In the case that $A\ra B$ is a continuous homomorphism of $K$-Fr\'echet algebras and $M$ is an $A$-Fr\'echet module, the projective and inductive tensor product topology on $B \botimes{K} M$ agree \cite[Prop.\ 17.6]{Schneider02NonArchFunctAna}.
	Consequently we then simply write $B \cotimes{A} M$ (and $B \botimes{A} M$) to denote the $B$-Fr\'echet module (respectively, the locally convex $B$-module).
\end{remark}

\begin{lemma}\label{Lemma 1 - Tensor identities for modules over locally convex algebras}
	\begin{altenumerate}
		\item
		For a locally convex (respectively, separately continuous locally convex) unital $K$-algebra $A$ and a locally convex (respectively, separately continuous locally convex) $A$-module $M$, there is a topological isomorphism of locally convex $A$-modules
		\begin{equation*}
			A \botimes{A,\pi} M \overset{\cong}{\lra} M \,,\quad a \otimes m \lto am ,
		\end{equation*}
		(respectively, of separately continuous locally convex $A$-modules $A \botimes{A,\iota} M \cong M$).
		\item
		Let $A$ and $B$ be locally convex $K$-algebras, and $L$, $M$ and $N$ a locally convex right $A$-module, a locally convex $A$-$B$-bi-module and a locally convex left $B$-module respectively.
		Then there is a canonical topological isomorphism
		\begin{equation*}
			\big( L \botimes{A,\pi} M \big) \botimes{B,\pi} N \cong L \botimes{A,\pi} \big( M \botimes{B,\pi} N \big) .
		\end{equation*}
		For separately continuous locally convex $K$-algebras and separately continuous locally convex modules, the analogous assertion holds with respect to the inductive tensor product topologies instead.
	\end{altenumerate}
\end{lemma}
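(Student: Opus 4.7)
For part (i), the plan is to exhibit mutually inverse continuous $A$-linear maps $\mu \colon A \botimes{A,\pi} M \to M$, $a\otimes m \mto am$, and $\eta \colon M \to A \botimes{A,\pi} M$, $m \mto 1 \otimes m$. The scalar multiplication $A \times M \to M$ is jointly continuous and $A$-balanced (since $(ba)m = b(am)$), so it factors through a continuous linear map $A \projotimes M \to M$, which then descends to the continuous map $\mu$ on the quotient by the defining relations $ba \otimes m - b \otimes am$. Conversely, $m \mto 1 \otimes m$ is continuous $M \to A \projotimes M$ (from the joint continuity of the canonical bilinear map), and composing with the quotient map gives a continuous $A$-linear $\eta$. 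One then verifies $\mu\circ\eta = \id_M$ and $\eta \circ \mu(a \otimes m) = 1 \otimes am = a \otimes m$ in $A \botimes{A,\pi} M$ by applying the defining relation with $b=1$. The separately continuous case proceeds analogously, with $\projotimes$ replaced by $\indotimes$ and joint continuity replaced by separate continuity throughout, using the universal property of the inductive tensor product.

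For part (ii), the plan is to deduce the asserted associativity from the topological associativity of the projective tensor product of locally convex $K$-vector spaces, namely the canonical isomorphism
\begin{equation*}
    \alpha \colon (L \projotimes M) \projotimes N \overset{\cong}{\lra} L \projotimes (M \projotimes N) ,\quad (l\otimes m)\otimes n \lto l\otimes (m\otimes n),
\end{equation*}
which follows from the seminorm description of $\projotimes$ (cf.\ \cite[Prop.\ 17.4]{Schneider02NonArchFunctAna}) or equivalently from the universal property for jointly continuous trilinear maps. Under $\alpha$, the $B$-submodule of $(L \projotimes M) \projotimes N$ giving rise to the quotient $(L \botimes{A,\pi} M) \botimes{B,\pi} N$, namely the submodule generated by
\begin{equation*}
    (la \otimes m - l \otimes am) \otimes n \quad \text{and} \quad (l \otimes mb) \otimes n - (l \otimes m) \otimes bn
\end{equation*}
(where the induced right $B$-action on $L \botimes{A,\pi} M$ is $(l \otimes m)b = l \otimes mb$), is mapped bijectively onto the submodule of $L \projotimes (M \projotimes N)$ giving rise to $L \botimes{A,\pi} (M \botimes{B,\pi} N)$, namely the submodule generated by
\begin{equation*}
la \otimes (m \otimes n) - l \otimes (am \otimes n) \quad \text{and} \quad l \otimes (mb \otimes n - m \otimes bn)
\end{equation*}
(using $a(m\otimes n) = am \otimes n$ on $M \botimes{B,\pi} N$). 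Hence $\alpha$ descends to the claimed topological isomorphism between the quotients. The separately continuous (inductive) case is handled analogously.

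The main potential obstacle is ensuring the topological associativity holds in both tensor product settings; for $\projotimes$ it is standard, while for $\indotimes$ one invokes the universal property for separately continuous trilinear maps (cf.\ \cite[Prop.\ 1.1.31]{Emerton17LocAnVect} for the relevant framework). Beyond this, the argument reduces to the direct identification of generating sets of quotient submodules above and routine continuity checks.
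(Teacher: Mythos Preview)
Your proposal is correct and follows essentially the same approach as the paper: exhibit the inverse $m \mapsto 1 \otimes m$ for (i), and for (ii) establish topological associativity of the tensor product over $K$ and then pass to quotients. One minor note: for the inductive case in (ii), the paper verifies separate continuity of the associativity map directly from the description of $\indotimes$ as a final locally convex topology rather than invoking a trilinear universal property, and your reference to \cite[Prop.\ 1.1.31]{Emerton17LocAnVect} (which concerns when $\indotimes$ and $\projotimes$ agree on semi-complete LB-spaces) is not the pertinent statement here.
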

\begin{proof}
	In (i), the homomorphisms
	\begin{equation*}
		\begin{tikzcd}
			M \ar[r] & A \indotimes M \ar[d, two heads] \ar[r] & A \projotimes M \ar[d, two heads] \\
			& A \botimes{A,\iota} M  & A \botimes{A,\pi} M 
		\end{tikzcd}
	\end{equation*}
	given by $m \mto 1 \otimes m$ are continuous by the definition of the inductive tensor product topology (see \cite[\S 17 A.]{Schneider02NonArchFunctAna}), and constitute inverses to the respective claimed isomorphisms.

	For (ii), it is a classical result that the tensor product over a (commutative) ring is associative.
	In particular this holds for lattices in $L$, $M$ and $N$ over $\CO_K$ so that 
	\begin{equation*}
		\varphi \colon \big( L \botimes{K} M \big) \botimes{K} N \lra L \botimes{K} \big( M \botimes{K} N \big) 
	\end{equation*}
	even is a topological isomorphism with respect to the projective tensor product topologies.
	(These are defined by the tensor product of such lattices over $\CO_K$, see \cite[\S 17 B.]{Schneider02NonArchFunctAna}).
	We can now pass to the quotients
	\begin{align*}\label{Eq 1 - Quotient map of tensor products 1}
		\pi_\ell \colon &\big(L \botimes{K,\pi} M \big) \botimes{K,\pi} N \lra \big(L \botimes{A,\pi} M \big)\botimes{K,\pi} N  \lra \big(L \botimes{A,\pi}  M \big)\botimes{B,\pi} N  , \\
		\pi_r \colon &L \botimes{K,\pi} \big(M \botimes{K,\pi} N \big) \lra L \botimes{K,\pi} \big(M \botimes{B,\pi} N \big) \lra L \botimes{A,\pi} \big( M \botimes{B,\pi} N \big) .
	\end{align*}
	Then $\pi_r \circ \varphi$ factors over $\pi_\ell$, and $\pi_\ell \circ \varphi^{-1} $ factors over $\pi_r$ yielding the sought topological isomorphism.

	In the case of only separately continuous locally convex $K$-algebras and modules, we recall that the inductive tensor product topology on the tensor product $V \botimes{K} W$ of two locally convex $K$-vector spaces $V$ and $W$ is the final locally convex topology with respect to the homomorphisms
	\begin{alignat*}{3}
		\blank \otimes w  &\colon V &&\lra V \botimes{K} W \,,\quad  v' &&\lto v' \otimes w , \\
		v \otimes \blank &\colon W &&\lra V \botimes{K} W \,,\quad w' &&\lto v \otimes w' ,
	\end{alignat*}
	for all $v \in V$, $w\in W$.
	For fixed $v = \sum_{i=1}^r \ell_i \otimes m_i \in L \indotimes M$, it then follows from the commutative diagrams
	\begin{equation*}
		\begin{tikzcd}
			N \ar[r, "m_i \otimes \blank"] \ar[d, "(\ell_i \otimes m_i ) \otimes \blank"'] & M \indotimes N \ar[d, "\ell_i \otimes \blank"] \\
			\big( L \indotimes M \big) \indotimes N \ar[r, "\varphi"] & L \indotimes \big( M \indotimes N \big)
		\end{tikzcd}
	\end{equation*}
	that $\varphi \circ  ( v \otimes \blank ) $ is continuous.
	In turn for fixed $n\in N$, the commutativity of 
	\begin{equation*}
		\begin{tikzcd}
			L \indotimes M \ar[rd, "L \indotimes (\blank \otimes n)", start anchor = 355, end anchor = 140] \ar[d, "\blank \otimes n"'] & \\
			\big( L \indotimes M \big) \indotimes N \ar[r, "\varphi"] & L \indotimes \big( M \indotimes N \big)
		\end{tikzcd}
	\end{equation*}
	shows that $\varphi \circ ( \blank \otimes n )$ is continuous as well.
	Therefore \cite[Lemma 5.1 (i)]{Schneider02NonArchFunctAna} implies that $\varphi$ is continuous.
	Similarly one deduces that $\varphi^{-1}$ is continuous so that $\varphi$ is a topological isomorphism with respect to the inductive tensor product topologies.
	One then argues like in the preceding case.
\end{proof}

\begin{proposition}[{cf.\ \cite[\S 5]{Kohlhaase11CohomLocAnRep}}]
	Let $G$ be a compact locally $L$-analytic Lie group with a locally $L$-analytic subgroup $H\subset G$, and let $V$ be a locally analytic $H$-representation of compact type.
	Then there is a canonical topological isomorphism of $D(G)$-modules
	\begin{equation*}
		\big( \Ind_H^{\la,G} (V) \big)'_b\cong D(G) \cotimes{D(H)} V'_b  .
	\end{equation*}
\end{proposition}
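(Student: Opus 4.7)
Since $G$ is compact and $V$ is of compact type, $C^\la(G,V)$ is of compact type by \Cref{Prop 1 - Direct limit description of locally analytic functions for compact manifold}~(iii); hence its closed subspace $\Ind_H^{\la,G}(V)$ is also of compact type and its strong dual is a nuclear $K$-Fr\'echet space. On the other side, $D(G)$ and $V'_b$ are both nuclear $K$-Fr\'echet spaces (\Cref{Prop 1 - Properties of the distribution algebra}~(ii)), so the projective and inductive topologies on $D(G) \botimes{D(H)} V'_b$ coincide, and $D(G) \cotimes{D(H)} V'_b$ is a Fr\'echet $D(G)$-module; by \Cref{Rmk 1 - Completion of projective tensor product over algebras} it is the quotient of $D(G) \projcotimes V'_b$ by the closure of the subspace generated by the relations $(\mu \ast \eta) \otimes \ell - \mu \otimes (\eta \ast \ell)$, for $\mu \in D(G)$, $\eta \in D(H)$, $\ell \in V'_b$.

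I would first construct the map. Consider the bilinear pairing
\[D(G) \times V'_b \lra \big( \Ind_H^{\la,G}(V) \big)'_b \,,\quad (\mu, \ell) \lto \big[ f \lto \ell(\mu(f)) \big],\]
where $\mu(f) \in V$ denotes the integration pairing of \Cref{Prop 1 - Pairing of D(X,K) and Cla(X,V)} applied to $f \in \Ind_H^{\la,G}(V) \subset C^\la(G,V)$. Separate continuity in either variable is clear from that pairing and the continuity of $\ell$, and since both factors are Fr\'echet this upgrades to joint continuity. The key verification is the $D(H)$-balance identity
\[\ell\big( (\mu \ast \eta)(f) \big) = (\eta \ast \ell)\big( \mu(f) \big) .\]
For Dirac distributions $\mu = \delta_g$, $\eta = \delta_h$ both sides reduce to $\ell(h^{-1}.f(g))$, using on the left the defining property $f(gh) = h^{-1}.f(g)$ of $\Ind_H^{\la,G}(V)$ and on the right the definition of the contragredient $D(H)$-action (\Cref{Prop 1 - Module structures over the distribution algebra}~(ii)). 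By continuity and density of the span of Dirac distributions (\Cref{Prop 1 - Properties of the distribution algebra}~(iv)) the identity extends to arbitrary $\mu$ and $\eta$, so the pairing factors through a continuous homomorphism $\Phi \colon D(G) \cotimes{D(H)} V'_b \to (\Ind_H^{\la,G}(V))'_b$; $D(G)$-equivariance is a formal consequence of the associativity of convolution.

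To show $\Phi$ is a topological isomorphism I would argue by duality. From $C^\la(G,V) \cong C^\la(G,K) \cotimes{K} V$ (\Cref{Prop 1 - Direct limit description of locally analytic functions for compact manifold}~(iii)) and the compatibility of strong duals with completed tensor products of spaces of compact type, one obtains a topological isomorphism $\big(C^\la(G,V)\big)'_b \cong D(G) \cotimes{K} V'_b$ and, by reflexivity, $\big(D(G) \cotimes{K} V'_b\big)'_b \cong C^\la(G,V)$, the pairing being $\langle f, \mu \otimes \ell \rangle = \ell(\mu(f))$. Under this identification the annihilator of the closure of the $D(H)$-relations in $D(G) \cotimes{K} V'_b$ consists exactly of those $f \in C^\la(G,V)$ with $\ell\big((\mu \ast \eta)(f)\big) = \ell\big(\dot\eta . \mu(f)\big)$ for all $\mu, \eta, \ell$, i.e., $(\mu \ast \eta)(f) = \dot\eta . \mu(f)$; specialising to Dirac distributions, this is precisely the invariance condition $f(gh) = h^{-1}.f(g)$ defining $\Ind_H^{\la,G}(V)$. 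Hence $\big(D(G) \cotimes{D(H)} V'_b\big)'_b \cong \Ind_H^{\la,G}(V)$ topologically, and dualising once more (using the reflexivity of both a nuclear Fr\'echet space and a compact-type space) yields the inverse of $\Phi$.

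The main obstacle is this last step: identifying the annihilator of the closure of the $D(H)$-relations in $D(G) \cotimes{K} V'_b$ with $\Ind_H^{\la,G}(V)$ inside $C^\la(G,V)$, and verifying that the subspace/quotient duality is implemented topologically by the integration pairing. This demands careful use of the interplay between strong duals and completed tensor products in the nuclear Fr\'echet / compact-type regime, together with precise bookkeeping of the involution $\dot\eta$ that mediates between the left $D(H)$-action on $V$ and the contragredient action on $V'_b$.
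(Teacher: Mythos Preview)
Your proposal is correct and takes essentially the same approach as the paper: both arguments hinge on the duality $\big(C^\la(G,V)\big)'_b \cong D(G)\cotimes{K} V'_b$, the identification of the $D(H)$-relations with the annihilator of $\Ind^{\la,G}_H(V)$ via density of Dirac distributions, and Hahn--Banach/reflexivity to control the topologies. The only organizational difference is that the paper writes $\Ind^{\la,G}_H(V)$ as the kernel of an explicit map $\psi$ into a product $\prod_{g,h} V$ and dualizes directly to obtain $(\Ind)'_b$ as a quotient, whereas you start from the quotient $D(G)\cotimes{D(H)} V'_b$, dualize to land on the subspace $\Ind^{\la,G}_H(V)$, and then re-dualize via reflexivity---one extra step, but the same content.
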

\begin{proof}
	By the definition of $\Ind^{\la, G}_H (V)$ and using $C^\la(G,V) \cong C^\la(G,K) \cotimes{K} V$, we have the exact sequence
	\begin{alignat}{3}\label{Eq 1 - Strictly short exact sequence for locally analytic induction}
		0 \lra \Ind^{\la,G}_H(V) \overset{\iota}{\lra} C^\la(G,K) &\,\widehat{\otimes}_K\, V &&\overset{\psi}{\lra} \prod_{g \in G, h\in H} V  \\
		f &\otimes v &&\lto \big( f(gh) \, v - f(g)\, h^{-1}.v \big)_{g, h}  \nonumber
	\end{alignat}
	where $\iota$ is strict.
	We want to consider the complex obtained by taking the strong dual of \eqref{Eq 1 - Strictly short exact sequence for locally analytic induction}.
	The homomorphisms of this dual complex are continuous by \cite[Rmk.\ 16.1]{Schneider02NonArchFunctAna}.

	By \cite[Prop.\ 9.11]{Schneider02NonArchFunctAna} we have a topological isomorphism
	\begin{equation*}
		\bigg( \prod_{g\in G, h\in H} V \bigg)'_b \overset{\cong}{\lra} \bigoplus_{g\in G, h\in H} V'_b .
	\end{equation*}
	Under this isomorphism the transpose of $\psi$ is given by
	\begin{align*}
		\psi^t \colon \bigoplus_{g\in G, h\in H} V'_b &\lra \big( C^\la(G,K) \cotimes{K} V \big)'_b , \\
		\sum \ell_{g, h} &\lto \bigg[ f\otimes v \mto \sum f(gh)\, \ell_{g,h}(v) - f(g) \, \ell_{g,h} (h^{-1}.v) \bigg] .
	\end{align*}
	There also is the topological isomorphism
	\begin{equation*}
		D(G) \cotimes{K} V'_b \overset{\cong}{\lra} \big( C^\la(G,K) \cotimes{K} V \big)'_b \,,\quad \delta \otimes \ell \lto \big[ f\otimes v \mto \delta(f) \, \ell(v) \big] ,
	\end{equation*}
	by \cite[Prop.\ 20.13]{Schneider02NonArchFunctAna} and \cite[Cor.\ 20.14]{Schneider02NonArchFunctAna}.
	Because we have $f(g) \, \ell(h^{-1}.v) = \delta_g(f) \, (h. \ell)(v)$ and $f(gh) = (\delta_g \ast \delta_h)(f) $, the complex of the strong duals of \eqref{Eq 1 - Strictly short exact sequence for locally analytic induction} is
	\begin{align*}
		\bigoplus_{g\in G, h\in H} V'_b &\overset{\psi^t}{\lra} D(G) \cotimes{K} V'_b \overset{\iota^t}{\lra} \big( \Ind_H^{\la,G} (V) \big)'_b \lra 0 \\
		\sum \ell_{g,h} &\lto \sum \delta_g \ast \delta_h \otimes \ell_{g,h} - \delta_g \otimes h. \ell_{g,h}.
	\end{align*}
	Since $\iota$ is a closed embedding it follows from the Hahn-Banach theorem \cite[Cor.\ 9.4]{Schneider02NonArchFunctAna} that $\iota^t$ is surjective, and from the open mapping theorem \cite[Prop.\ 8.6]{Schneider02NonArchFunctAna} that $\iota^t$ is strict. 
	Furthermore, we have $\Ker(\iota^t) = \Im(\iota)^\perp$ by \cite[IV. \S 4.1 Prop.\ 2]{Bourbaki87TopVectSp1to5} where
	\begin{equation*}
		\Im(\iota)^\perp \defeq \left\{ \ell \in D(G) \cotimes{K} V'_b \middle{|} \forall v \in \Im(\iota): \ell(v) = 0 \right\} .
	\end{equation*}
	Then it follows from the algebraic exactness of \eqref{Eq 1 - Strictly short exact sequence for locally analytic induction} that $ \Im(\iota)^\perp = \Ker(\psi)^\perp$ which implies that $\Ker(\iota^t) = \Ker(\psi)^\perp \subset \widebar{\Im(\psi^t)}$ by \Cref{Lemma A1 - Annihilator of kernel is weak closure of image of the transpose}.
	Because $\Im(\psi^t) \subset \Ker(\iota^t)$ and $\Ker(\iota^t)$ is closed, we have $\Ker(\iota^t) = \widebar{\Im(\psi^t)}$.
	But $\Im(\psi^t)$ is generated by the elements
	\begin{equation*}
		\delta_g \ast \delta_h \otimes \ell - \delta_g \ast h.\ell \quad\text{, for $g \in G$, $h\in H$, $\ell \in V'_b$.}
	\end{equation*}
	Therefore \Cref{Rmk 1 - Completion of projective tensor product over algebras} together with the density of the Dirac distributions yields the topological isomorphism
	\begin{equation*}
		\big( \Ind_H^{\la, G} (V) \big)'_b \cong \big( D(G) \cotimes{K} V'_b \big)/ \widebar{\Im(\psi^t)} \cong D(G) \cotimes{D(H)} V'_b .
	\end{equation*}
	As $\iota^t$ is $D(G)$-linear with respect to the $D(G)$-action on the first component of $D(G) \cotimes{K} V'_b$ via left multiplication, we see that the above isomorphism is an isomorphism of $D(G)$-modules.	
\end{proof}

\subsection{The Hyperalgebra}

In this section $K$ continues to be a spherically complete non-archimedean field with a locally compact complete subfield $L \subset K$.
We recapitulate the concept of germs of locally analytic functions and investigate certain subalgebras of the dual space of these following \cite[\S 2.3]{FeauxdeLacroix99TopDarstpAdischLieGrp} and \cite[\S 1.2]{Kohlhaase05InvDistpAdicAnGrp}.

\begin{definition}
	Let $X$ be a locally $L$-analytic manifold, and $V$ a Hausdorff locally convex $K$-vector space.
	For $x\in X$, we define the \textit{space of germs of locally analytic functions on $X$ with values in $V$ at $x$} as the inductive limit over all open neighbourhoods $U\subset X$ of $x$
	\begin{equation*}
		C^\la_x (X,V) \defeq \varinjlim_{x\in U \subset X} C^\la (U,V)
	\end{equation*}
	with respect to the canonical restriction homomorphisms and endowed with the inductive limit topology.
\end{definition}

\begin{lemma}[{cf.\ \cite[\S 2.3.1]{FeauxdeLacroix99TopDarstpAdischLieGrp}}]\label{Lemma 1 - Easier description of the stalk of locally analytic functions}
	Let $X$ be a locally $L$-analytic manifold, and $V$ a Hausdorff locally convex $K$-vector space.
	\begin{altenumerate}
		\item
		For every open subset $U\subset X$ with $x\in U$, the canonical map
		\begin{equation*}
			C^\la(U,V) \lra C^\la_x(X,V)
		\end{equation*}
		is a strict epimorphism.
		\item
		We have a canonical topological isomorphism
		\begin{equation*}
			C^\la_x(X,V) \cong \varinjlim_{(U,E)} C^\aan(U,\widebar{E})
		\end{equation*}
		where the latter inductive limit is taken over all pairs of analytic charts $\varphi\colon U\ra L^m$ and BH-subspaces $E\subset V$ such that $x\in U$.
		These are partially ordered via
		\begin{equation*}
			(U,E) \geq (W,F)\,\vcentcolon\Longleftrightarrow\, \text{$U\subset W$ and $F \subset E$.} 
		\end{equation*}
		In particular, the $C^\aan(U,\widebar{E}) \subset C^\la_x(X,V)$ are BH-subspaces.
	\end{altenumerate}
\end{lemma}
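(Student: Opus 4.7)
My plan is to prove part (ii) first, giving the more explicit description of $C^\la_x(X, V)$, and then derive part (i) from it. For part (ii), the starting point is that each $C^\la(W, V)$ for an open neighborhood $W$ of $x$ is by construction the locally convex inductive limit of the Fr\'echet spaces $C^\la_\CI(W, V) \cong \prod_j C^\aan(\varphi_j, \widebar{E_j})$, indexed by $V$-indices $\CI = (\varphi_j \colon W_j \ra L^{m_j}, \ul{r}_j, E_j)_j$ of $W$. Passing to the germ at $x$, only the unique component $W_{j_0}$ containing $x$ matters, since after disjointly refining the covering, the contribution of the other components can be discarded by extension by zero. Hence the pairs $(U', E)$ consisting of an analytic chart $U' \ni x$ and a BH-subspace $E \subset V$ form a cofinal subsystem, yielding the claimed topological isomorphism. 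The injectivity of $C^\aan(U', \widebar E) \hookrightarrow C^\la_x(X, V)$, which is required for it to be a BH-subspace, follows from the identity theorem for power series (\Cref{Prop 1 - Identity theorem for power series}): if an element has germ zero at $x$, then its defining power series vanishes on a ball around $x$ and hence on all of $U'$.

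For the surjectivity in part (i), given a germ represented by some $f \in C^\la(W, V)$, I first shrink $W$ so that $W \subset U$ and then apply strict paracompactness (\Cref{Rmk 1 - Locally analytic manifold has disjoint countable covering by compact open subsets}(i)) to produce a disjoint open refinement $\{U_i\}_{i \in I}$ of the cover $\{W, U \setminus \{x\}\}$ of $U$. The unique $U_{i_0}$ containing $x$ necessarily lies in $W$, and extending $f|_{U_{i_0}}$ by zero on the other $U_i$ produces, via \Cref{Prop 1 - Disjoint coverings and locally analytic functions}, an element of $C^\la(U, V)$ with the same germ at $x$ as $f$.

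The more delicate point is strictness: the quotient topology $\tau_q$ on $C^\la_x(X, V)$ coming from $C^\la(U, V)$ must coincide with the inductive limit topology $\tau_i$. The inclusion $\tau_i \subset \tau_q$ is immediate since $C^\la(U, V) \to C^\la_x(X, V)$ is one of the maps defining $\tau_i$. For the reverse, using the description from (ii), it suffices to show that for each analytic chart $U'$ around $x$ and each BH-subspace $E \subset V$ (which without loss of generality satisfies $U' \subset U$), the canonical map $C^\aan(U', \widebar E) \to (C^\la_x(X, V), \tau_q)$ is continuous, or equivalently that it factors continuously through $C^\la(U, V)$. Repeating the disjoint refinement construction inside $U'$, I obtain the sought lift as the composition of the continuous restriction $C^\aan(U', \widebar E) \to C^\aan(U_{i_0}, \widebar E)$ for an analytic chart $U_{i_0} \subset U'$ around $x$, the embedding of this as a BH-subspace into $C^\la(U_{i_0}, V)$, and the extension by zero $C^\la(U_{i_0}, V) \hookrightarrow C^\la(U, V)$ afforded by the product decomposition of \Cref{Prop 1 - Disjoint coverings and locally analytic functions}. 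Since this lift agrees with the original function on the neighborhood $U_{i_0}$ of $x$, its image in $C^\la_x(X, V)$ is the canonical one.

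The main obstacle I foresee is establishing this strictness assertion: one must exhibit a continuous linear extension, not merely an extension of individual functions, and this forces one to interleave the disjoint-refinement argument with the explicit BH-subspace description from (ii).
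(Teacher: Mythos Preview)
Your proof is correct. For part (ii) your cofinality argument is essentially the paper's, which makes it explicit by introducing a directed set $\Phi$ of pairs $(W,\CI)$ (open neighbourhood plus $V$-index) with a carefully specified preorder, and then exhibiting the subset $\Psi \subset \Phi$ of single-chart indices as cofinal. You additionally spell out, via the identity theorem, why each $C^\aan(U',\widebar E)\hookrightarrow C^\la_x(X,V)$ is injective; the paper leaves this implicit.

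Where you genuinely diverge is part (i). The paper argues categorically and proves (i) first, independently of (ii): each restriction $C^\la(U,V)\to C^\la(U',V)$ for open $U'\subset U$ with $x\in U'$ is a strict epimorphism, being a projection in the product decomposition of \Cref{Prop 1 - Disjoint coverings and locally analytic functions}; since the canonical map $C^\la(U,V)\to C^\la_x(X,V)$ is the filtered colimit of these over the cofinal system $\{U'\subset U : x\in U'\}$, and a filtered colimit of strict quotients of a fixed space is again a strict quotient, both surjectivity and strictness follow at once. Your route instead constructs explicit continuous sections via extension by zero and then verifies $\tau_q=\tau_i$ by testing against the cofinal system from (ii). Both approaches are valid; the paper's is shorter and does not need (ii) as input, while yours makes the lifting procedure concrete and explains why the strictness is not automatic from surjectivity alone.
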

\begin{proof}
	For (i) note that the restriction map $C^\la(U,V) \ra C^\la(U',V)$, for open subsets $U'\subset U$ of $X$, is a strict epimorphism as it is given by the projection 
	\begin{equation*}
		C^\la(U,V) \cong \prod_{i\in I} C^\la(U_i,V) \lra C^\la(U',V),
	\end{equation*}
	for a suitable disjoint covering $U = \bigcup_{i\in I} U_i$ by open subsets such that $U'\in \left\{U_i\right\}_{i\in I}$.
	For a fixed open subset $U\subset X$ with $x\in U$, the canonical homomorphism $C^\la(U,V) \ra C^\la_x(X,V)$ is the inductive limit over these restriction maps, for $U'\subset U$ with $x\in U'$.
	Because it is the colimit over the respective cokernels, $C^\la(U,V) \ra C^\la_x(X,V)$ is a strict epimorphism itself.
	
	For (ii), we have by definition 
	\begin{equation*}
		C^\la_x(X,V) = \varinjlim_{x\in U \subset X} \Big( \varinjlim_{\CI} C^\la_\CI (U,V) \Big)
	\end{equation*}
	where the ``inner'' inductive limit is taken over all $V$-indices $\CI$ of $U$.
	The latter is topologically isomorphic to the inductive limit indexed by the directed set 
	\begin{equation*}
		\Phi \defeq \big\{(U,\CI) \,\big\vert\, \text{$U\subset X$ open with $x\in U$, $\CI$ a $V$-index of $U$}\big\}
	\end{equation*}
	endowed with the following preorder:
	Let $(U,\CI)$, $(W,\CJ)$ be elements of $\Phi$, with
	\begin{equation*}
		\CI= \big(\varphi_i\colon U_i \ra L^{m_i},\ul{r}_i,E_i \big)_{i\in I} \qquad \text{and} \qquad \CJ = \big(\psi_j\colon W_j \ra L^{n_j},\ul{s}_j,F_j \big)_{j\in J} .
	\end{equation*}
	Then we set $(U,\CI) \geq (W,\CJ)$ if $U \subset W$ and the relation from the proof of \Cref{Prop 1 - Functorialities for the space of locally analytic functions} (ii) between $\CI$ and $\CJ$ for the embedding $U \hookrightarrow W$ holds:
	For every $i\in I$, there exists $j \in J$ such that
	\begin{altenumeratelevel2}
		\item
		$U_i \subset W_j \cap U$, i.e.\ the covering of $\CI$ is a refinement of the covering $U= \bigcup_{j \in J} W_j \cap U$,
		\item
		there exist $a_i \in \varphi_i(U_i)$ and $g_{i,j}=(g_{i,j,k})_{k=1,\ldots,n_j} \in \CA_{\ul{r}_i} (L^{m_i}, L^{n_j})$ such that 
		\[\norm{ g_{i,j,k}- g_{i,j,k}(0)}_{\ul{r}_i} \leq s_{j,k} \quad\text{, for all $k=1,\ldots,n_j$,} \]
		and $\psi_j \circ \varphi_i^{-1} (x) = g_{i,j} (x-a_i)$, for all $x\in \varphi_i(U_i)= B_{\ul{r}_i}^{m_i}(a_i)$,
		\item
		$F_j \subset E_i$.
	\end{altenumeratelevel2}
	Now consider the subset $\Psi \subset \Phi$ of those $(U,\CI)$ for which the covering of $\CI$ only consists of $U$ itself.
	This subset is cofinal in $\Phi$:
	For $(U,\CI)\in \Phi$, let $i_0\in I$ such that $x\in U_{i_0}$.
	Then 
	\begin{equation*}
		(U_{i_0},\CI_0) \defeq \Big( U_{i_0}, \big(\varphi_{i_0}\colon U_{i_0} \ra L^{m_{i_0}}, \ul{r}_{i_0},E_{i_0} \big) \Big) \in \Psi ,
	\end{equation*}
	and $(U_{i_0},\CI_0) \geq (U,\CI)$.
	Hence we conclude that
	\begin{equation*}
		C^\la_x (X,V) \cong \varinjlim_{(U,\CI)\in \Psi} C^\la_{\CI}(U,V).
	\end{equation*}
	But the directed set $\Psi$ is precisely the directed set of pairs $(U,E)$ where $\varphi\colon U\ra L^m$ is an analytic chart around $x$ and $E\subset V$ a BH-subspaces.
	For such $\CI=\big(\varphi \colon U\ra L^m, \ul{r}, E \big) $ with $(U,\CI) \in \Psi$, we moreover have $C^\la_\CI(U,V)= C^\aan(U,\widebar{E})$.
\end{proof}

\begin{proposition}\label{Prop 1 - Properties of the germs of locally analytic functions}
	Let $X$ be a locally $L$-analytic manifold, $V$ be a Hausdorff locally convex $K$-vector space, and $x\in X$.
	\begin{altenumerate}
		\item
		\textnormal{(cf.\ \cite[Satz 2.3.1]{FeauxdeLacroix99TopDarstpAdischLieGrp})}
		The locally convex $K$-vector space $C^\la_x(X,V)$ is Hausdorff and barrelled.
		\item
		\textnormal{(cf.\ \cite[Satz 2.3.1]{FeauxdeLacroix99TopDarstpAdischLieGrp})}
		If $V$ is of LB-type, then $C^\la_x(X,V)$ is an LB-space.
		\item
		\textnormal{(cf.\ \cite[Satz 2.3.2]{FeauxdeLacroix99TopDarstpAdischLieGrp})}
		If $V$ is of compact type, then $C^\la_x(X,V)$ is of compact type and there is a natural topological isomorphism $C^\la_x(X,V) \cong C^\la_x(X,K) \cotimes{K} V$.
		\item
		If $E$ is a $K$-Banach space, then every analytic chart $\varphi$ centred at $x$ induces a topological isomorphism $C^\la_x (X,E) \cong \CA(L^m,E)$ where $m$ is the dimension of $X$ at $x$.
	\end{altenumerate}
\end{proposition}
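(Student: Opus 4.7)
The plan is to base everything on the description
\[
C^\la_x(X,V) \cong \varinjlim_{(U,E)} C^\aan(U,\widebar{E})
\]
from \Cref{Lemma 1 - Easier description of the stalk of locally analytic functions}, where $(U,E)$ ranges over pairs of an analytic chart $\varphi\colon U \ra L^m$ centred at $x$ and a BH-subspace $E \subset V$. I would tackle (iv) first: for a fixed analytic chart $\varphi\colon U \ra L^m$ centred at $x$ with $\varphi(U) = B^m_{\ul{r}}(0)$, the shrunken analytic charts $\varphi^{-1}(B^m_{\ul{s}}(0))$ for $\ul{s} \leq \ul{r}$ should form a cofinal subsystem inside the family of all analytic charts at $x$; given any other such chart, \Cref{Prop 1 - Composition of power series} supplies a power-series representation of the transition function on a small enough ball. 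The identifications $C^\aan(\varphi^{-1}(B^m_{\ul{s}}(0)), E) \cong \CA_{\ul{s}}(L^m, E)$ then assemble to $C^\la_x(X,E) \cong \varinjlim_{\ul{s} \leq \ul{r}} \CA_{\ul{s}}(L^m,E) = \CA(L^m,E)$.

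For (i), barrelledness is immediate since inductive limits of barrelled spaces are barrelled \cite[Expl.\ 3) after Cor.\ 6.16]{Schneider02NonArchFunctAna} and each $C^\aan(U,\widebar{E})$ is a $K$-Banach space. For the Hausdorff property I plan to fix an analytic chart $\varphi\colon U \ra L^m$ centred at $x$ and construct a continuous injection
\[
T\colon C^\la_x(X,V) \lra V\llrrbracket{X_1,\ldots,X_m}
\]
into the Hausdorff target (equipped with the product topology), sending a germ to the sequence of its Taylor coefficients at $x$ regarded inside $V$ via $\widebar{E} \hookrightarrow V$. Continuity on each $C^\aan(U',\widebar{E})$ should follow from the norm estimate $\lVert v_{\ul{i}}\rVert_E\,\ul{s}^{\ul{i}} \leq \lVert f \rVert_{\ul{s}}$ together with continuity of $\widebar{E} \hookrightarrow V$, while injectivity is forced by the identity theorem for power series (\Cref{Prop 1 - Identity theorem for power series}).

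For (ii), when $V = \bigcup_{n\in\BN} V_n$ is of LB-type, \cite[I.\ \S 3.3 Prop.\ 1]{Bourbaki87TopVectSp1to5} ensures every BH-subspace of $V$ factors through some $\widebar{V_n}$. Combining this with a countable cofinal family $U_1 \supset U_2 \supset \cdots$ of shrunken analytic charts at $x$ will exhibit $\{C^\aan(U_n,V_n)\}_{n\in\BN}$ as a countable cofinal subsystem of $K$-Banach spaces, so that $C^\la_x(X,V)$ becomes an LB-space.

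Part (iii) is the most delicate. Writing $V = \varinjlim V_n$ with each transition $V_n \hookrightarrow V_{n+1}$ compact, I would choose $U_n = \varphi^{-1}(B^m_{\ul{r}_n}(0))$ with strictly decreasing multiradii $\ul{r}_n$; since $L$ is locally compact, the restriction $C^\aan(U_n,K) \ra C^\aan(U_{n+1},K)$ is then compact \cite[\S 16 Claim, p.\ 98]{Schneider02NonArchFunctAna}. Using the topological identification $C^\aan(U_n,V_n) \cong C^\aan(U_n,K) \cotimes{K} V_n$ valid for Banach coefficients, the transition $C^\aan(U_n,V_n) \ra C^\aan(U_{n+1},V_{n+1})$ decomposes as the tensor product of two compact maps and is itself compact by \Cref{Lemma A1 - Generalities on compact maps}. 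This should show that $C^\la_x(X,V)$ is of compact type, and passing to the inductive limit of the identifications $C^\aan(U_n,V_n) \cong C^\aan(U_n,K) \cotimes{K} V_n$ while applying \cite[Prop.\ 1.1.32 (i)]{Emerton17LocAnVect} will yield the natural topological isomorphism $C^\la_x(X,V) \cong C^\la_x(X,K) \cotimes{K} V$. The main obstacle is the bookkeeping required to shrink $U$ and enlarge $E$ simultaneously so that the transitions are genuinely compact in the combined system; this is precisely why the tensor-product description of $C^\aan(U,E)$ for Banach $E$ is essential.
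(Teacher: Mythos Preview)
Your approach is essentially identical to the paper's: the same cofinal system of shrunken charts $U_n = \varphi^{-1}(B^m_{\varepsilon^n r}(0))$, the same Taylor-coefficient injection into $\prod_{\ul{i}} V$ for Hausdorffness, the same countable cofinal subsystem $\{(U_n,V_n)\}$ for (ii), and the same tensor-product decomposition $C^\aan(U_n,V_n) \cong C^\aan(U_n,K)\cotimes{K} V_n$ for (iii). One small slip: compactness of the tensor product of two compact maps between Banach spaces is not covered by \Cref{Lemma A1 - Generalities on compact maps} (which handles finite direct products, not tensor products); the paper invokes \cite[Lemma 18.12]{Schneider02NonArchFunctAna} for this step.
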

\begin{proof}
	Let $\varphi\colon U\ra B^{m}_r (0)$ be an analytic chart centred at $x$, for some $r\defeq(r,\ldots,r)\in \BR_{>0}^m$.
	Let $\varepsilon \in \abs{L}$ with $0<\varepsilon < 1$.
	Then the analytic charts $U_n \defeq \varphi^{-1}\big(B^{m}_{\varepsilon^n r}(0) \big) \ra B^{m}_{\varepsilon^n r}(0)$ form a neighbourhood basis of $x$, and in view of \Cref{Lemma 1 - Easier description of the stalk of locally analytic functions} (ii) we have
	\begin{equation}\label{Eq 1 - Direct limit description of stalk of locally analytic functions}
		C^\la_x(X,V) \cong \varinjlim_{(n,E)} C^\aan (U_n,\widebar{E})
	\end{equation}
	where inductive limit is taken over pairs of $n\in \BN$ and BH-subspaces $E$ of $V$.
	As the $C^\aan (U_n, \widebar{E})$ are $K$-Banach spaces, $C^\la_x(X,V)$ is barrelled by \cite[Expl.\ 3) after Cor.\ 6.16]{Schneider02NonArchFunctAna}.
	Moreover, we have continuous injections
	\begin{equation*}
		C^\aan(U_n, \widebar{E}) \cong \CA_{\varepsilon^n r} (L^m, \widebar{E}) \lra \prod_{\ul{i}\in \BN^m_0} \widebar{E} \lra \prod_{\ul{i}\in \BN^m_0} V
	\end{equation*}
	by mapping a power series to the tuple of its coefficients.
	Taking the inductive limit over these, we obtain a continuous injection $C^\la_x(X,V) \hookrightarrow \prod_{\ul{i}\in \BN^m_0} V$.
	It follows that $C^\la_x(X,V)$ is Hausdorff.
	
	If $V$ is of LB-type, write $V = \bigcup_{n\in\BN} V_n$ for an increasing sequence of BH-subspaces $(V_n)_{n\in \BN}$.	
	Then the set of pairs $\{(n,V_n)\mid n\in\BN \}$ is cofinal in the directed set of \eqref{Eq 1 - Direct limit description of stalk of locally analytic functions}.
	Hence $C^\la_x(X,V)$ even is an LB-space in this case.
	
	Now let $V$ be of compact type, say $V \cong \varinjlim_{n\in \BN} V_n$, for a sequence of $K$-Banach spaces $(V_n)_{n\in \BN}$ with injective compact transition homomorphisms.
	Analogous to the proof of \Cref{Prop 1 - Direct limit description of locally analytic functions for compact manifold} (iii), the transition maps
	\begin{equation*}
		C^\aan(U_n, \widebar{V_n}) \lra C^\aan (U_{n+1} , \widebar{V_{n+1}})
	\end{equation*}
	are compact and injective, showing that $C^\la_x(X,V)$ is of compact type.
	The topological isomorphism $C^\la_x(X,V) \cong C^\la_x(X,K) \cotimes{K} V$ now follows from \cite[Prop.\ 1.1.32 (i)]{Emerton17LocAnVect}.
	
	For a $K$-Banach space $E$, the claim of (iv) directly follows from the definition
	\begin{equation*}
		\CA(L^m, E) = \varinjlim_{n\in \BN} \CA_{\varepsilon^n r}(L^m, E).
	\end{equation*}
\end{proof}

\begin{proposition}\label{Prop 1 - Functorialities for germs of locally analytic functions}
	Let $\varphi\colon X \ra Y$ be a locally $L$-analytic map between locally $L$-analytic manifolds, and let $V$ be a Hausdorff locally convex $K$-vector space.
	For $x\in X$, the map $\varphi$ induces a continuous homomorphism
	\begin{equation*}
		\varphi^\ast \colon C^\la_{\varphi(x)}(Y,V) \lra C^\la_x(X,V) \,,\quad f \lto f \circ \varphi .
	\end{equation*}
\end{proposition}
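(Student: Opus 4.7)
The plan is to use the universal property of the inductive limit defining $C^\la_{\varphi(x)}(Y,V)$. For every open neighbourhood $W \subset Y$ of $\varphi(x)$, the preimage $\varphi^{-1}(W) \subset X$ is an open neighbourhood of $x$, and the restriction of $\varphi$ gives a locally $L$-analytic map $\varphi^{-1}(W) \to W$. By \Cref{Prop 1 - Functorialities for the space of locally analytic functions} (ii), this yields a continuous homomorphism
\begin{equation*}
    C^\la(W,V) \lra C^\la\big(\varphi^{-1}(W),V\big) \,,\quad f \lto f\circ \varphi.
\end{equation*}
Composing with the canonical continuous homomorphism $C^\la(\varphi^{-1}(W),V) \to C^\la_x(X,V)$ from the inductive limit, we obtain a continuous homomorphism $C^\la(W,V) \to C^\la_x(X,V)$.

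Next I would verify that these homomorphisms are compatible with the transition maps in the inductive system defining $C^\la_{\varphi(x)}(Y,V)$: for open neighbourhoods $W' \subset W \subset Y$ of $\varphi(x)$, the diagram
\begin{equation*}
    \begin{tikzcd}
        C^\la(W,V) \ar[r, "\varphi^\ast"] \ar[d] & C^\la\big(\varphi^{-1}(W),V\big) \ar[d] \ar[r] & C^\la_x(X,V) \ar[d, equal] \\
        C^\la(W',V) \ar[r, "\varphi^\ast"] & C^\la\big(\varphi^{-1}(W'),V\big) \ar[r] & C^\la_x(X,V)
    \end{tikzcd}
\end{equation*}
commutes, because all horizontal and vertical maps are given by pullback/restriction of functions, which commute tautologically. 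The universal property of the inductive limit then produces a unique continuous homomorphism $\varphi^\ast \colon C^\la_{\varphi(x)}(Y,V) \to C^\la_x(X,V)$, and by construction it sends the germ of $f$ at $\varphi(x)$ to the germ of $f\circ\varphi$ at $x$.

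There is no genuine obstacle here; the only thing that needs any care is that one first has to know pullback along a locally analytic map is continuous on the spaces of locally analytic functions on fixed opens, which is exactly \Cref{Prop 1 - Functorialities for the space of locally analytic functions} (ii). Everything else is formal manipulation with inductive limits.
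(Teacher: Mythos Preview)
Your proof is correct and follows essentially the same approach as the paper: use \Cref{Prop 1 - Functorialities for the space of locally analytic functions} (ii) on each open neighbourhood of $\varphi(x)$, compose with the canonical map into $C^\la_x(X,V)$, and invoke the universal property of the inductive limit. The paper's version is simply more terse, omitting the explicit compatibility diagram you spelled out.
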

\begin{proof}
	Let $U\subset Y$ be an open neighbourhood of $\varphi(x)$.
	Then $\varphi^{-1}(U)$ is an open neighbourhood of $x$, and the locally $L$-analytic map $\varphi^{-1}(U) \ra U$ induces a continuous homomorphism $C^\la(U,V) \ra C^\la(\varphi^{-1}(U),V)$ by \Cref{Prop 1 - Functorialities for the space of locally analytic functions} (ii).
	Via the universal property of the inductive limit, these induce the desired $\varphi^\ast$.
\end{proof}

\begin{proposition}\label{Prop 1 - Properties of the algebra of germs of locally analytic functions}
	Let $X$ be a locally $L$-analytic manifold and $x\in X$.
	\begin{altenumerate}
		\item
		With respect to pointwise multiplication $C^\la_x(X,K)$ is a local $K$-algebra with maximal ideal
		\begin{equation*}
			\Fm_x \defeq \Ker (\mathrm{ev}_x) \subset C^\la_x(X,K).
		\end{equation*}
		Here $\mathrm{ev}_x \colon C^\la_x(X,K) \ra K$ denotes the continuous evaluation homomorphism induced by $\mathrm{ev}_x\colon C^\la(U,K)\ra K$, for all open neighbourhoods $U\subset X$ of $x$.
		\item
		Let $m$ be the dimension of $X$ at $x$.
		The choice of an analytic chart centred at $x$ yields a topological isomorphism $C^\la_x(X,K) \cong \CA(L^m,K)$ of $K$-algebras with the ring of convergent power series.
		\item
		For all $n\in \BN$, $\Fm_x^n \subset C^\la_x(X,K)$ is a closed subspace of finite codimension.
	\end{altenumerate}
\end{proposition}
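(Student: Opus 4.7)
The plan is to establish the topological $K$-algebra isomorphism of (ii) first and to then deduce (i) and (iii) from it. Before anything, one verifies that pointwise multiplication is well-defined on germs: for representatives $f, g \in C^\la(U,K)$ on a common open neighbourhood $U$ of $x$, the product $fg$ is locally analytic because, at any point of $U$, the representing power series of $f$ and $g$ admit a Cauchy product converging on a common polydisc (with the submultiplicativity $\norm{PQ}_{\ul{r}} \leq \norm{P}_{\ul{r}}\norm{Q}_{\ul{r}}$). This makes $C^\la_x(X,K)$ a commutative unital $K$-algebra.

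For (ii), fix an analytic chart $\varphi \colon U \ra B_{\ul{r}}^m(0)$ centred at $x$. By \Cref{Prop 1 - Properties of the germs of locally analytic functions} (iv), there already is a topological isomorphism $\Phi \colon C^\la_x(X,K) \overset{\cong}{\lra} \CA(L^m,K)$ of locally convex $K$-vector spaces, sending a germ represented by $f$ to the unique power series $P_f$ with $f = P_f \circ \varphi$ on some neighbourhood of $x$ (uniqueness by \Cref{Prop 1 - Identity theorem for power series}). To see that $\Phi$ respects multiplication, note that the Cauchy product $P_f \cdot P_g$ represents the pointwise product $fg$ near $x$, so $\Phi([fg]) = P_f \cdot P_g = \Phi([f]) \cdot \Phi([g])$; moreover $\Phi$ sends the germ of the constant function $1$ to the constant power series $1$.

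For (i), the map $\mathrm{ev}_x$ is a continuous $K$-algebra surjection onto the field $K$, so $\Fm_x = \Ker(\mathrm{ev}_x)$ is a maximal ideal. Locality reduces to showing every germ $[f]$ with $f(x) \neq 0$ is a unit: a representative $f$ is, by continuity, non-vanishing on some smaller neighbourhood $U' \subset U$, and the reciprocal $1/f$ is locally analytic on $U'$---obtained by composing $f \colon U' \ra K\unts$ with the locally analytic inversion map $K\unts \ra K$, $y \mto 1/y$, via \Cref{Lemma 1 - Composition of locally analytic functions}---providing the inverse germ.

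For (iii), under $\Phi$ the ideal $\Fm_x$ corresponds to the ideal of power series in $\CA(L^m,K)$ vanishing at $0$, and I claim $\Phi(\Fm_x^n) = \big\{\sum_{|\ul{i}| \geq n} a_{\ul{i}} X^{\ul{i}} \in \CA(L^m,K) \big\}$. One inclusion is immediate from the Cauchy product. Conversely, for such $f$ lying in some $\CA_{\ul{r}}(L^m,K)$, one partitions the multi-indices $\ul{i}$ with $|\ul{i}| \geq n$ according to the smallest coordinate at which $i_j > 0$ to write $f = \sum_{j=1}^m X_j \, g_j$ with $g_j \in \CA_{\ul{r}}(L^m,K)$ of vanishing order at least $n-1$ (the bound $\norm{g_j}_{\ul{r}} \leq r_j^{-1} \norm{f}_{\ul{r}}$ ensures convergence); induction on $n$ yields $g_j \in \Phi(\Fm_x^{n-1})$, hence $f \in \Phi(\Fm_x^n)$. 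Finally, this subspace is the kernel of the continuous $K$-linear truncation map $\CA(L^m,K) \ra K[X_1,\ldots,X_m]/(X_1,\ldots,X_m)^n$ to a finite-dimensional space---continuous because each coefficient extraction $P \mto a_{\ul{i}}$ is bounded on every $\CA_{\ul{r}}(L^m,K)$---proving $\Fm_x^n$ is closed of finite codimension. The only mildly delicate point is the multi-index decomposition in (iii); beyond that, all steps are routine applications of the power series machinery from the preceding subsection.
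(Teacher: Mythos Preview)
Your proposal is correct and follows essentially the same route as the paper: reduce via the chart isomorphism of \Cref{Prop 1 - Properties of the germs of locally analytic functions}~(iv) to the convergent power series ring, invoke invertibility of series with nonzero constant term for locality, and identify $\Fm_x^n$ as the kernel of the continuous finite-rank truncation map. Your treatment of (iii) is in fact more thorough than the paper's, which simply asserts that this kernel equals $\Ker(\mathrm{ev}_0)^n$ without spelling out the decomposition $f = \sum_j X_j\, g_j$ that you supply.
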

\begin{proof}
	Clearly, $C^\la_x(X,K)$ is a $K$-algebra and $\Fm_x$ a maximal ideal.
	Via the usual inverse function theorem for power series, one shows that every $f\in C^\la_x(X,K)\setminus \Fm_x$ has a multiplicative inverse, i.e.\ that $C^\la_x(X,K)$ is local.

	
	For (ii), note that the topological isomorphism in \Cref{Prop 1 - Properties of the germs of locally analytic functions} (iv) is an isomorphism of $K$-algebras.
	
	To show that $\Fm_x^n$ is a closed subspace of finite codimension, we may use (ii) to work with $\Ker(\mathrm{ev}_0)^n \subset \CA(L^m,K)$ instead.
	There we have the strict epimorphism
	\begin{equation*}
		\CA_r(L^m,K) \lra K^{\binom{m+n-1}{n-1}} \,,\quad \sum_{\ul{i}\in \BN^m_0} a_{\ul{i}} \, X^{i_1}\cdots X^{i_m} \lto (a_{\ul{i}})_{\abs{\ul{i}} \leq n-1},
	\end{equation*}
	for every $r>0$.
	These induce a strict epimorphism $\CA (L^m,K) \ra K^{\binom{m+n-1}{n-1}}$ whose kernel precisely is $\Ker(\mathrm{ev}_0)^n$.
	This shows the claim of (iii).
\end{proof}

\begin{lemma}\label{Lemma 1 - Comultiplication map for germs of locally analytic functions}
	Let $G$ be a locally $L$-analytic Lie group with identity element $e$.
	The multiplication $m\colon G\times G \ra G$ induces a continuous homomorphism of $K$-algebras 
	\begin{equation}\label{Eq 1 - Comultiplication map for germs of locally analytic functions}
		\Delta \colon C^\la_e(G,K) \lra C^\la_e (G,K) \cotimes{K} C^\la_e(G,K)
	\end{equation}
	which is compatible with 
	\begin{equation*}
		C^\la(H,K) \overset{m^\ast}{\lra} C^\la(H\times H,K) \cong C^\la(H,K) \cotimes{K} C^\la(H,K) ,
	\end{equation*}
	for every compact open subgroup $H\subset G$.
	Moreover, for all $n\in \BN$, we have
	\begin{equation*}
		\Delta(\Fm_e^n) \subset \sum_{i=0}^n \Fm_e^i \cotimes{K} \Fm_e^{n-i}.
	\end{equation*}
\end{lemma}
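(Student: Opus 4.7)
The plan is to construct $\Delta$ as the pullback $m^\ast$ along the multiplication map, and then identify the target with the completed tensor product of $C^\la_e(G,K)$ with itself.

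First, since $m \colon G \times G \lra G$ is locally $L$-analytic with $m(e,e) = e$, \Cref{Prop 1 - Functorialities for germs of locally analytic functions} yields a continuous homomorphism
\begin{equation*}
    m^\ast \colon C^\la_e(G, K) \lra C^\la_{(e,e)}(G\times G, K) ,
\end{equation*}
which is automatically a $K$-algebra homomorphism because pullback of $K$-valued functions along a set map is multiplicative for pointwise multiplication.

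Next, I would establish a natural topological $K$-algebra isomorphism
\begin{equation*}
    C^\la_{(e,e)}(G \times G, K) \cong C^\la_e(G, K) \cotimes{K} C^\la_e(G, K) .
\end{equation*}
Since $L$ is locally compact, the compact open subgroups $H$ of $G$ form a neighbourhood basis at $e$, and the products $H \times H$ form a cofinal system of compact open neighbourhoods of $(e,e)$ in $G \times G$. By \Cref{Lemma 1 - Easier description of the stalk of locally analytic functions}(i) together with \Cref{Cor 1 - Locally analytic functions on product of manifolds},
\begin{equation*}
    C^\la_{(e,e)}(G \times G, K) \cong \varinjlim_H \big( C^\la(H,K) \cotimes{K} C^\la(H,K) \big) ,
\end{equation*}
where the limit ranges over compact open subgroups $H$ of $G$. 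Since the $C^\la(H,K)$ are of compact type by \Cref{Prop 1 - Direct limit description of locally analytic functions for compact manifold}(iii), the completed tensor product commutes with this inductive limit by \cite[Prop.\ 1.1.32]{Emerton17LocAnVect}, yielding the desired identification. Composing with $m^\ast$ defines $\Delta$ as a continuous homomorphism of $K$-algebras; the compatibility with $m^\ast \colon C^\la(H,K) \ra C^\la(H,K) \cotimes{K} C^\la(H,K)$ for each compact open subgroup $H$ is immediate from the construction, using $m(H \times H) \subset H$.

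Finally, for the filtration claim I would work in local coordinates. An analytic chart $\varphi \colon U \ra L^m$ centred at $e$ yields by \Cref{Prop 1 - Properties of the algebra of germs of locally analytic functions}(ii) a topological $K$-algebra isomorphism $C^\la_e(G, K) \cong \CA(L^m, K)$ under which $\Fm_e$ corresponds to the ideal generated by the coordinate functions $X_1, \ldots, X_m$. In the induced coordinates on $G \times G$, the multiplication is given by a tuple of convergent power series $\mu_1, \ldots, \mu_m$ in $2m$ variables, each without constant term since $m(e,e)=e$. Under the identification of the second step, $\Delta(X_k) = \mu_k$ admits the decomposition
\begin{equation*}
    \mu_k(X, Y) = \mu_k(X, 0) + \big(\mu_k(X, Y) - \mu_k(X, 0) \big) ,
\end{equation*}
whose first summand lies in $\Fm_e \cotimes{K} C^\la_e(G, K)$ and whose second summand, vanishing at $Y=0$, lies in $C^\la_e(G, K) \cotimes{K} \Fm_e$. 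Hence $\Delta(\Fm_e) \subset \CI \defeq \Fm_e \cotimes{K} C^\la_e(G, K) + C^\la_e(G, K) \cotimes{K} \Fm_e$. Since $\Delta$ is multiplicative and $\Fm_e^n$ is the $K$-linear span of $n$-fold products of elements of $\Fm_e$, I conclude
\begin{equation*}
    \Delta(\Fm_e^n) \subset \CI^n = \sum_{i=0}^n \Fm_e^i \cotimes{K} \Fm_e^{n-i} .
\end{equation*}
The main obstacle I anticipate is the careful justification of the topological isomorphism $C^\la_{(e,e)}(G \times G, K) \cong C^\la_e(G, K) \cotimes{K} C^\la_e(G, K)$, which hinges on commuting the completed inductive tensor product with the inductive limit of compact type spaces; the remaining steps are formal.
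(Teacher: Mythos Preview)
Your construction of $\Delta$ is essentially the paper's argument repackaged through $C^\la_{(e,e)}(G\times G,K)$; the paper works directly with a cofinal sequence $(H_n)$ of compact open subgroups and the Banach spaces $C^\aan(H_n,K)$, but the key step you correctly isolate --- commuting the completed tensor product with the inductive limit via \cite[Prop.\ 1.1.32]{Emerton17LocAnVect} --- is the same.

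For the filtration claim your route diverges from the paper's, and here there is a genuine gap. You assert that the summand $\mu_k(X,Y)-\mu_k(X,0)$, being a convergent power series that vanishes on $Y=0$, lies in $C^\la_e(G,K)\cotimes{K}\Fm_e$. But this is precisely the identification
\[
\Ker\big(\id\,\widehat{\otimes}\,\ev_e\big)=C^\la_e(G,K)\cotimes{K}\Fm_e
\]
inside the completed tensor product, and it is not automatic: one must know that the natural map $C^\la_e(G,K)\cotimes{K}\Fm_e\to C^\la_e(G,K)\cotimes{K}C^\la_e(G,K)$ is a closed embedding onto that kernel. The paper proves exactly this by tensoring the short strictly exact sequence $0\to\Fm_e\to C^\la_e(G,K)\to K\to 0$ with $C^\la_e(G,K)$ and invoking the exactness of the completed tensor product for compact-type spaces (\cite[Cor.\ 2.2]{BreuilHerzig18TowardsFinSlopePartGLn}). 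Without this, your coordinate argument does not close; once you add it, your decomposition and the paper's counit computation $(\ev_e\,\widehat{\otimes}\,\ev_e)\circ\Delta=\ev_e$ become two packagings of the same fact. (A minor point: you only need the containment $\CI^n\subset\sum_i\Fm_e^i\cotimes{K}\Fm_e^{n-i}$, not equality; the latter is not asserted in the paper and would require its own argument in the completed setting.)
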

\begin{proof}
	Let $(H_n)_{n\in \BN}$ be a family of compact open subgroups of $G$ such that the restriction homomorphisms $C^\aan (H_n,K)\ra C^\aan(H_{n+1},K)$ are compact.
	Then the inductive limit of the corresponding homomorphisms $m^\ast$ yields a continuous homomorphism
	\begin{equation*}
		C^\la_e(G,K) \cong \varinjlim_{n\in \BN} C^\aan (H_n,K) \overset{m^\ast}{\lra} \varinjlim_{n\in \BN} C^\aan(H_n \times H_n, K) 
	\end{equation*}
	via \Cref{Lemma 1 - Easier description of the stalk of locally analytic functions} (ii).
	Moreover, 
	\begin{equation*}
		\varinjlim_{n \in \BN} C^\aan(H_n\times H_n, K) \cong \varinjlim_{n\in \BN} \big( C^\aan(H_n,K) \cotimes{K} C^\aan(H_n,K) \big) \cong C^\la_e(G,K) \cotimes{K} C^\la_e(G,K)
	\end{equation*}
	by \cite[Expl.\ after Prop.\ 17.10]{Schneider02NonArchFunctAna} and \cite[Prop.\ 1.1.32 (i)]{Emerton17LocAnVect}.
	The resulting continuous map \eqref{Eq 1 - Comultiplication map for germs of locally analytic functions} is a homomorphism of $K$-algebras.
	Now consider the continuous homomorphism
	\begin{equation*}
		\mathrm{ev}_e \otimes \mathrm{ev}_e \colon C^\la_e(G,K) \botimes{K} C^\la_e(G,K) \lra K \,,\quad f \otimes f' \lto f(e) f'(e).
	\end{equation*}
	and the induced $\mathrm{ev}_e \cotimes{} \mathrm{ev}_e \colon C^\la_e(G,K) \cotimes{K} C^\la_e(G,K) \ra K$.
	We claim that 
	\begin{equation}\label{Eq 1 - Equation for comultiplication}
		\big(\mathrm{ev}_e \cotimes{} \mathrm{ev}_e \big) \circ \Delta = \mathrm{ev}_e .
	\end{equation}
	Indeed, it suffices to consider an compact open subgroup $H\subset G$ and show the statement for
	\begin{equation*}
		C^\aan(H,K) \overset{\Delta}{\lra} C^\aan(H,K) \cotimes{K} C^\aan(H,K) \xrightarrow{\mathrm{ev}_e \cotimes{} \mathrm{ev}_e} K.
	\end{equation*}
	Note that by density and metrizability of the completed tensor product, we can express every element of $C^\aan (H,K) \cotimes{K} C^\aan (H,K)$ as a convergent sum $\sum_{n\geq 0} f_n \otimes f'_n$, for some $f_n,f'_n \in C^\aan(H,K)$.
	Consequently, for $f\in C^\aan(H,K)$, we may write $\Delta(f) = \sum_{n\geq 0} f_n \otimes f'_n$, so that we have $f(gg')= \sum_{n\geq 0} f_n(g)f'_n(g')$, for all $g,g'\in H$.
	We compute that
	\begin{align*}
		\big((\mathrm{ev}_e \cotimes{} \mathrm{ev}_e)\circ \Delta\big)(f) = (\mathrm{ev}_e \cotimes{} \mathrm{ev}_e)\bigg(\sum_{n\geq 0} f_n \otimes f'_n\bigg) = \sum_{n\geq 0} f_n(e) f'_n(e) = f(e) = \ev_e(f).
	\end{align*}
	It follows from \eqref{Eq 1 - Equation for comultiplication} that $\Delta(\Fm_e) \subset \Ker \big(\mathrm{ev}_e \cotimes{K} \mathrm{ev}_e \big)$.
	We want to show that this latter subspace of $C^\la_e(G,K) \cotimes{K} C^\la_e(G,K)$ equals $C^\la_e(G,K) \cotimes{K} \Fm_e + \Fm_e \cotimes{K} C^\la_e(G,K)$.
	As the short strictly exact sequence
	\begin{equation*}
		0 \lra  \Fm_e  \lra  C^\la_e(G,K) \lra K  \lra 0
	\end{equation*}	
	consists of locally convex $K$-vector spaces of compact type (see \Cref{Lemma A1 - Closed subspaces and quotients of spaces of compact type}), its completed tensor product with $C^\la_e(G,K)$ remains strictly exact by \cite[Cor.\ 2.2]{BreuilHerzig18TowardsFinSlopePartGLn}:
	\begin{equation*}
		0 \lra C^\la_e(G,K) \cotimes{K} \Fm_e  \lra  C^\la_e(G,K) \cotimes{K} C^\la_e(G,K) \xrightarrow{\id \cotimes{} \ev_e} C^\la_e(G,K)  \lra 0 .
	\end{equation*}
	Similarly, the map $\id \cotimes{} \ev_e $ restricts to a strict epimorphism $ \Fm_e \cotimes{K} C^\la_e(G,K) \ra \Fm_e$.
	Since $\mathrm{ev}_e \cotimes{} \mathrm{ev}_e = \mathrm{ev}_e \circ (\id \cotimes{} \ev_e)$, we conclude that
	\begin{align*}
		\Delta(\Fm_e) \subset \Ker\big(\mathrm{ev}_e \cotimes{} \mathrm{ev}_e \big) = \big(\id \cotimes{} \ev_e \big)^{-1} (\Fm_e) 
		&= \Ker \big( \id \cotimes{} \ev_e \big) + \Fm_e \cotimes{K} C^\la_e(G,K) \\
		&= C^\la_e(G,K) \cotimes{K} \Fm_e + \Fm_e \cotimes{K} C^\la_e(G,K) .
	\end{align*}
	The claim for $\Delta(\Fm_e^n)$, with $n\in \BN$, now follows because $\Delta$ is a $K$-algebra homomorphism.
	
\end{proof}

For a locally $L$-analytic Lie group $G$, we now want to study the strong dual of these spaces of locally analytic germs supported at $e$.
We continue to let $e$ denote the identity element of $G$, and we write $D_e(G) = D_e(G,K) \defeq C^\la_e(G,K)'_b$.

\begin{remark}
	The $K$-algebra $C^\la_e(G,K)$ constitutes an example of a CT-Hopf $\widehat{\otimes}$-algebra as considered by Lyubinin in \cite[Ch.\ 3.1.2]{Lyubinin10ModComodNonArchHopfAlg} and \cite[Ch.\ 3.2]{Lyubinin14NonArchCoAlgCoadmMod}.

	Indeed, let $H_n \subset G$, for $n\in \BN$, be a family of compact open subgroups such that the restriction homomorphisms $r_n \colon C^\aan (H_n,K)\ra C^\aan(H_{n+1},K)$ are compact.
	Then each $C^\aan (H_n,K)$ is a Banach Hopf $\widehat{\otimes}$-algebra (with comultiplication $\Delta$, counit $\ev_e$, and antipode $\inv^\ast$), and the transition homomorphisms $r_n$ are homomorphisms of Banach Hopf $\widehat{\otimes}$-algebras.

	Moreover, the dual $D_e(G)$ is an NF-Hopf $\widehat{\otimes}$-algebra \cite[Prop. 3.13]{Lyubinin14NonArchCoAlgCoadmMod}.
	\qed
\end{remark}

Based on the hyperalgebra\footnote{Often this object is called the ``distribution algebra'', cf.\ \cite[I.\ Ch.7]{Jantzen03RepAlgGrp}. But to avoid confusion we prefer the name ``hyperalgebra'' here.} classically associated with algebraic groups, we make the following definition.

\begin{definitionproposition}\label{Def 1 - Hyperalgebra}
	Let $G$ be a locally $L$-analytic Lie group.
	We define
	\begin{align*}
		\hy(G,K)_n &\defeq \left\{ \mu \in D_e(G,K) \middle{|} \mu (\Fm_e^{n+1}) =0\right\} \quad \text{, for $n\in \BN_0$,} \\
		\hy(G,K) &\defeq \bigcup_{n\in \BN_0} \hy(G,K)_n ,
	\end{align*}
	and call $\hy(G,K)$ the \textit{hyperalgebra} of $G$.
	
	Then $\hy(G,K)_n$ is a finite-dimensional closed subspace of $D(G,K)$ via the strict epimorphism $C^\la(G,K) \ra C^\la_e(G,K)$.
	Moreover, $\hy(G,K)$ is a $K$-subalgebra of $D(G,K)$ with 
	\begin{equation*}
		\hy(G,K)_n \ast \hy(G,K)_m \subset \hy(G,K)_{n+m} \quad \text{, for all $n,m\in \BN_0$.}
	\end{equation*}
	When the coefficient field $K$ is understood implicitly, we write $\hy(G)\defeq \hy(G,K)$.
\end{definitionproposition}
\begin{proof}
	We may suppose that $G$ is compact, and consider the transpose $D_e(G) \ra D(G)$ of the epimorphism $C^\la(G,K) \ra C^\la_e(G,K)$.
	As $C^\la(G,K)$ and $C^\la_e(G,K)$ are reflexive locally convex $K$-vector spaces, we can apply \cite[IV.\ \S 4.2 Cor.\ 1]{Bourbaki87TopVectSp1to5} to conclude that $D_e(G) \ra D(G)$ is a strict injective homomorphism. We therefore may view $D_e(G) \subset D(G)$ as a closed subspace.
	We write $\iota \colon \Fm_e^{n+1} \hookrightarrow C^\la_e(G,K)$ for the closed embedding of \Cref{Prop 1 - Properties of the algebra of germs of locally analytic functions} (iii).
	By \cite[IV.\ \S 4.1 Prop.\ 2]{Bourbaki87TopVectSp1to5}, we have
	\begin{equation*}
		D_e(G) \supset \hy(G)_n = \Ker(\iota^t) \cong \Coker(\iota)'
	\end{equation*}
	which in particular is a finite-dimensional subspace.
	Thus we have realised $\hy(G)_n$ as a closed finite-dimensional subspace of $D(G)$.
	
	Now let $\mu \in \hy(G)_n$ and $\nu \in \hy(G)_m$.
	As $C^\la(G,K)\ra C^\la_e(G,K)$ factors over $C^\la(H,K)$, for any compact open subgroup $H\subset G$, we may assume that $G$ is compact.
	Then the distribution $\mu \ast \nu \colon C^\la(G,K) \ra K$ factors as
	\begin{equation*}
		\begin{tikzcd}[row sep = 0ex]
			C^\la(G,K) \ar[r, "\Delta"] \ar[dd] &C^\la(G,K) \projcotimes C^\la(G,K) \ar[rd, start anchor=358, end anchor = 155, "\mu \otimes \nu"] \ar[dd] & \\
			&&K \\
			C^\la_e(G,K) \ar[r, "\Delta"] &C^\la_e(G,K) \projcotimes C^\la_e(G,K) \ar[ru, start anchor = 2, end anchor=205, "\mu \otimes \nu"'] &.
		\end{tikzcd}
	\end{equation*}
	We know from \Cref{Lemma 1 - Comultiplication map for germs of locally analytic functions} that $\Delta(\Fm_e^{n+m+1}) \subset \sum_{i=0}^{n+m+1} \Fm_e^i \cotimes{K} \Fm_e^{n+m+1-i}$.
	Therefore we conclude that $(\mu \otimes \nu)\big(\Delta(\Fm_e^{n+m+1}) \big) = 0$ which shows $\mu \ast \nu \in \hy(G)_{n+m}$.
\end{proof}

\begin{example}
	Assume that $K$ is a finite field extension of $L$, and let $\bG$ be a smooth algebraic group over $L$.
	In Remark \ref{Rmks 1 - Associating a locally analytic manifold with a scheme} (ii) we will endow the group of $L$-valued points $\bG(L)$ with the structure of a locally $L$-analytic Lie group.
	Furthermore, we will see in \Cref{Cor 1 - Hyperalgebra agrees with algebraic distribution algebra} that $\hy\big(\bG(L) \big)$ canonically agrees with ${\rm Dist}(\bG) \botimes{L} K$.
	Here ${\rm Dist}(\bG)$ denotes the hyperalgebra (algebraic distribution algebra) as treated in \cite[I.\ Ch.7]{Jantzen03RepAlgGrp}.
	In particular if ${\rm char}(L)=0$, this is an isomorphism $\hy\big(\bG(L) \big) \cong U(\Fg)\otimes_L K$ where $\Fg$ is the Lie algebra of $\bG$ and $U(\Fg)$ its universal enveloping algebra \cite[I.\ \S 7.10]{Jantzen03RepAlgGrp}.
\end{example}

\begin{lemma}\label{Lemma 1 - Involution of hyperalgebra}
	Let $G$ be a locally $L$-analytic Lie group.
	Then the topological automorphism \eqref{Eq 1 - Definition of involution of distribution algebra} preserves $\hy(G)$, i.e.\ it induces an automorphism
	\begin{equation*}
		\hy(G) \lra \hy(G) \,,\quad \mu \lto \dot{\mu}.
	\end{equation*}
\end{lemma}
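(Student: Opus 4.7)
The plan is to show that the automorphism $\inv^\ast$ of $C^\la(G,K)$ descends to an automorphism of $C^\la_e(G,K)$ which preserves the maximal ideal $\Fm_e$ and hence all its powers; the claim then follows immediately by dualizing.

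First I would observe that the inversion map $\inv \colon G \to G$ is a locally $L$-analytic automorphism fixing the identity element $e$. Applying the functoriality result for germs (\Cref{Prop 1 - Functorialities for germs of locally analytic functions}) with $x=e$, this induces a continuous homomorphism $\inv^\ast \colon C^\la_e(G,K) \to C^\la_e(G,K)$, and since $\inv \circ \inv = \id_G$ functoriality forces $\inv^\ast \circ \inv^\ast = \id$, so $\inv^\ast$ is a topological automorphism. Moreover, $\inv^\ast$ is a $K$-algebra homomorphism, since pullback along a map respects pointwise multiplication. By construction this map is compatible with the epimorphism $C^\la(G,K) \twoheadrightarrow C^\la_e(G,K)$ and the automorphism $\inv^\ast$ of $C^\la(G,K)$, so the dual involution $\mu \mapsto \dot{\mu} = \mu \circ \inv^\ast$ on $D(G)$ restricts to an involution on the closed subspace $D_e(G) \subset D(G)$.

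Next I would check that $\inv^\ast$ preserves $\Fm_e$. Since $\inv(e)=e$, the evaluation map satisfies $\ev_e \circ \inv^\ast = \ev_e$, so $\inv^\ast(\Fm_e) \subset \Fm_e$, with equality because $\inv^\ast$ is an automorphism. As $\inv^\ast$ is an algebra homomorphism, it then preserves each power:
\begin{equation*}
	\inv^\ast(\Fm_e^{n+1}) = \Fm_e^{n+1} \quad\text{for all $n\in \BN_0$.}
\end{equation*}

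Now, for $\mu \in \hy(G)_n \subset D_e(G)$, meaning $\mu(\Fm_e^{n+1}) = 0$, we compute
\begin{equation*}
	\dot{\mu}(\Fm_e^{n+1}) = \mu\big(\inv^\ast(\Fm_e^{n+1})\big) = \mu(\Fm_e^{n+1}) = 0 ,
\end{equation*}
so $\dot{\mu} \in \hy(G)_n$. Taking unions over $n$ shows that the involution restricts to $\hy(G)$, and it is an automorphism because $\mu \mapsto \dot{\mu}$ is an involution on $D(G)$.

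I do not expect any serious obstacle here: the only points requiring care are that $\inv^\ast$ is genuinely defined on germs (which follows from $\inv$ fixing $e$) and that it is multiplicative, both of which are immediate. The rest is purely formal duality.
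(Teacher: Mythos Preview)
Your proof is correct and follows essentially the same approach as the paper's proof: both observe that $\inv$ induces a $K$-algebra automorphism of $C^\la_e(G,K)$ preserving $\Fm_e$ (hence all its powers), from which the claim follows by duality. Your version simply spells out in more detail the steps (functoriality, multiplicativity, compatibility with the quotient map, and the explicit dual computation) that the paper leaves implicit.
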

\begin{proof}
	This follows from the observations that, for every open neighbourhood $U\subset G$ of $e$, $\inv(U)$ is an open neighbourhood of $e$ as well, and that the induced topological automorphism
	\begin{equation*}
		\inv^\sharp \colon C^\la_e(G,K) \lra C^\la_e(G,K) \,,\quad f \lto f\circ \inv,
	\end{equation*}
	satisfies $\inv^\sharp(\Fm_e) = \Fm_e$.
\end{proof}

\begin{remark}
	In \cite[Ch.\ 1.2.1]{Lyubinin14NonArchCoAlgCoadmMod} the notion of a finite dual of a Banach Hopf $\widehat{\otimes}$-algebra is defined.
	With the obvious analogous definition for a CT-Hopf $\widehat{\otimes}$-algebra, $\hy(G)$ is the finite dual of $C^\la_e(G,K)$.
\end{remark}

\begin{proposition}\label{Prop 1 - Pairing for germs of locally analytic functions}
	Let $G$ be a locally $L$-analytic Lie group, and $V\neq \{0\}$ a Hausdorff locally convex $K$-vector space.
	The pairing \eqref{Eq 1 - Pairing for distribution algebra} induces a natural, separately continuous, non-degenerate $K$-bilinear pairing
	\begin{equation}\label{Eq 1 - Pairing for stalk of locally analytic functions}
		D_e(G) \times C^\la_e(G,V) \lra V \,,\quad (\mu,f) \lto \mu(f) \defeq \mu(\tilde{f}),
	\end{equation}
	where $\tilde{f} \in C^\la(G,V)$ denotes some lift of $f$.
	Restricted to $C^\aan(U,\widebar{E}) \subset C^\la_e(G,V)$, for a compact open chart $U$ around $e$ and a BH-subspace $E\subset V$, this pairing is given by
	\begin{equation}\label{Eq 1 - Pairing for stalk of locally analytic functions for BH-subspace}
		D_e(G) \times C^\aan(U,\widebar{E}) \lra D_e(G) \times C^\la_e(G,K) \cotimes{K} \widebar{E} \lra \widebar{E}.
	\end{equation}	
	Here the last map is the completed tensor product of the duality pairing between $C^\la_e(G,K)$ and $D_e(G)$ with $\widebar{E}$.
\end{proposition}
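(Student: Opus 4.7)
The plan is to define the pairing as the factorization of \eqref{Eq 1 - Pairing for distribution algebra} through the quotient $C^\la(G,V) \twoheadrightarrow C^\la_e(G,V)$, and then verify the properties by reducing to the Banach/BH-subspace level where \eqref{Eq 1 - Pairing for distribution algebra on compact subset and for BH-subspace} applies.

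First I would check that the pairing is well-defined. Given a germ $f \in C^\la_e(G,V)$, a lift $\tilde{f}\in C^\la(G,V)$ exists: represent $f$ by some $f_0 \in C^\la(U,V)$ on a compact open neighborhood $U\ni e$, then extend by zero to $G$ using the disjoint decomposition $G = U \sqcup (G\setminus U)$ and \Cref{Prop 1 - Disjoint coverings and locally analytic functions}. For two lifts, their difference vanishes on some open neighborhood $W$ of $e$, hence on some compact open $W'\subset W$ containing $e$. The key observation is that the closed embedding $D_e(G) \hookrightarrow D(G)$ from the proof of \Cref{Def 1 - Hyperalgebra} in fact lands inside $D(W',K)$: the transpose of the factorization $C^\la(G,K) \twoheadrightarrow C^\la(W',K) \twoheadrightarrow C^\la_e(G,K)$ shows $D_e(G) \subset D(W',K)$. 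Under the corresponding decompositions $D(G) \cong D(W',K)\oplus D(G\setminus W',K)$ (by \Cref{Prop 1 - Properties of the distribution algebra} (iii)) and $C^\la(G,V) \cong C^\la(W',V) \times C^\la(G\setminus W',V)$, the pairing decomposes as a sum, and only the $D(W',K)$-component of $\mu$ contributes; pairing it with $(\tilde{f}_1 - \tilde{f}_2)\res{W'}=0$ yields zero.

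Next I would verify the explicit formula \eqref{Eq 1 - Pairing for stalk of locally analytic functions for BH-subspace} on $C^\aan(U,\widebar{E})$. The description \eqref{Eq 1 - Pairing for distribution algebra on compact subset and for BH-subspace} restricted to $D_e(G) \subset D(U,K)$ and composed with the inclusion $\widebar{E}\hookrightarrow V$ gives the claim, using that $D_e(G)$ factors through $C^\la_e(G,K)$ by construction. From this description separate continuity follows: for fixed $\mu\in D_e(G)$, the universal property of the inductive limit (\Cref{Lemma 1 - Easier description of the stalk of locally analytic functions} (ii)) reduces continuity of $C^\la_e(G,V)\to V$ to continuity on each BH-subspace $C^\aan(U,\widebar{E})$, where it is the continuous composite of $\mu\cotimes{K} \id_{\widebar{E}}$ with $\widebar{E}\hookrightarrow V$. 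For fixed $f$, pick any $C^\aan(U,\widebar{E})$ containing it; then $\mu\mapsto \mu(f)$ factors through $\widebar{E}\hookrightarrow V$ and is continuous because the evaluation pairing on $C^\la_e(G,K)$ is continuous in the distribution.

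For non-degeneracy in the first slot, given $\mu\neq 0$ in $D_e(G)=C^\la_e(G,K)'_b$, choose $g\in C^\la_e(G,K)$ with $\mu(g)\neq 0$ and $v\neq 0 \in V$ (which exists since $V\neq\{0\}$); the element $g\otimes v \in C^\la_e(G,V)$ pairs to $\mu(g)\, v\neq 0$ by Step 2. For non-degeneracy in the second slot I would reduce to the case $V=K$ via duality: if $\mu(f)=0$ for all $\mu\in D_e(G)$, then for every $\ell\in V'$, naturality of the pairing gives $\mu(\ell\circ f)=\ell(\mu(f))=0$ for all $\mu$, so by the Hahn--Banach theorem applied to the Hausdorff space $C^\la_e(G,K)$ (using that $K$ is spherically complete) we conclude $\ell\circ f=0$, and then $f=0$ again by Hahn--Banach on $V$. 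Alternatively, one may pick an analytic chart centred at $e$, expand $f = \sum_\ul{i} v_\ul{i} X^\ul{i}$ via \Cref{Prop 1 - Properties of the germs of locally analytic functions} (iv), and use the coefficient-extraction distribution $\mu_{\ul{i}_0}\in D_e(G)$ corresponding to a non-zero $v_{\ul{i}_0}$; this is continuous on each $\CA_\ul{r}(L^m,K)$ with operator bound $\ul{r}^{-\ul{i}_0}$, hence on $\CA(L^m,K)\cong C^\la_e(G,K)$ by the universal property. The main obstacle will be the bookkeeping across the direct/completed tensor identifications when verifying Step 2 uniformly across the inductive system; once this formula is in hand, the remaining assertions follow cleanly from the corresponding statements for the duality between $D_e(G)$ and $C^\la_e(G,K)$.
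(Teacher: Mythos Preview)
Your proposal is correct. The well-definedness argument, the verification of \eqref{Eq 1 - Pairing for stalk of locally analytic functions for BH-subspace}, separate continuity, and non-degeneracy in the first slot proceed essentially as in the paper.

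Where you genuinely diverge is non-degeneracy in the second slot. The paper writes $f\in C^\aan(U,\widebar{E})\cong C^\aan(U,K)\cotimes{K}\widebar{E}$ as a convergent series $\sum f_n\otimes a_n$, deduces $\sum\lambda(f_n)a_n=0$ for all $\lambda\in C^\aan(U,K)'$, then identifies $C^\aan(U,K)\cong c_0(\BN)$ and uses the approximation property (the finite-rank projections $P_m$) to show $T(f)=0$ for every $T\in(c_0(\BN)\cotimes{K}\widebar{E})'$. Your approach (a) bypasses this entirely: post-composing with $\ell\in V'$ and invoking naturality of the integration map reduces to the scalar duality $D_e(G)=C^\la_e(G,K)'$, and then Hahn--Banach on the Hausdorff space $V$ finishes. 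Your approach (b) is equally direct, since the coefficient-extraction functionals are already in $D_e(G)$ and formula \eqref{Eq 1 - Pairing for stalk of locally analytic functions for BH-subspace} shows $\mu_{\ul{i}_0}(f)=v_{\ul{i}_0}$ by continuity across the completed tensor product. Both of your arguments are shorter and more conceptual; the paper's route is a self-contained approximation-property computation that does not invoke Hahn--Banach on $V$ or the explicit power-series chart. One small point worth making explicit in approach (a): the implication ``$\ell\circ f=0$ for all $\ell\in V'$ $\Rightarrow$ $f=0$'' uses that the canonical map $C^\aan(U,K)\hookrightarrow C^\la_e(G,K)$ is injective (identity theorem for power series), so that $\ell\circ\tilde{f}$ vanishes identically on $U$ rather than just as a germ, after which pointwise Hahn--Banach applies.
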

\begin{proof}
	We may assume that $G$ is compact.
	To see that \eqref{Eq 1 - Pairing for stalk of locally analytic functions} is well defined, consider $\tilde{f} \in \Ker\big( C^\la(G,V) \ra C^\la_e(G,V) \big)$.
	Let $E\subset V$ be a BH-subspace such that $\tilde{f} \in C^\la(G,\widebar{E})$.
	Moreover, denote by $\tau\colon C^\la(G,K) \ra C^\la_e(G,K)$ and $\tau_{\widebar{E}} \colon C^\la(G,\widebar{E}) \ra C^\la_e(G,\widebar{E})$ the strict epimorphisms from \Cref{Lemma 1 - Easier description of the stalk of locally analytic functions} (i).
	Then tensoring the short strictly exact sequence
	\begin{equation*}
		0 \lra \Ker(\tau) \lra C^\la(G,K) \lra C^\la_e(G,K) \lra 0
	\end{equation*}
	with $\widebar{E}$ yields by the left exactness of the completed tensor product on locally convex $K$-vector spaces \cite[Lemma 2.1 (ii)]{BreuilHerzig18TowardsFinSlopePartGLn} the following diagram with exact rows
	\begin{equation*}
		\begin{tikzcd}
			0 \ar[r] & \Ker(\tau) \cotimes{K} \widebar{E}  \ar[r] & C^\la(G,K) \cotimes{K} \widebar{E}  \ar[r] & C^\la_e(G,K) \cotimes{K} \widebar{E} \\
			0 \ar[r] & \Ker(\tau_{\widebar{E}}) \ar[r] \ar[u] & C^\la(G,\widebar{E}) \ar[r] \ar[u] & C^\la_e(G,\widebar{E}) \ar[u] 
		\end{tikzcd}.
	\end{equation*}
	Because the middle and right vertical homomorphisms are bijective by \Cref{Prop A1 - Continuous bijection for tensor product of space of compact type and Banach space}, we see that $ \Ker(\tau_{\widebar{E}}) \ra\Ker(\tau) \cotimes{K} \widebar{E}$ is bijective as well.
	If we consider $\tilde{f} $ as an element of $ \Ker(\tau) \cotimes{K} \widebar{E}$ and apply the pairing \eqref{Eq 1 - Pairing for distribution algebra on compact subset and for BH-subspace} to it, we find that $\mu(\tilde{f})=0$, for all $\mu \in D_e(G)$, as (cf.\ \cite[IV.\ \S 4.1 Prop.\ 2]{Bourbaki87TopVectSp1to5})
	\begin{equation*}
		D_e(G) = \left\{ \mu \in D(G) \middle{|} \mu(\Ker(\tau))= 0\right\} .
	\end{equation*}

	The separate continuity of \eqref{Eq 1 - Pairing for stalk of locally analytic functions} and the non-degeneracy in $D_e(G)$ follow from the respective statements for \eqref{Eq 1 - Pairing for distribution algebra}.
	Moreover, the description \eqref{Eq 1 - Pairing for stalk of locally analytic functions for BH-subspace} follows from \eqref{Eq 1 - Pairing for distribution algebra on compact subset and for BH-subspace}.
	
	It remains to show the non-degeneracy in $C^\la_e(G,V)$.
	To this end, let $f\in C^\la_e(G,V)$ such that $\mu(f) = 0$, for all $\mu \in D_e(G)$.
	We have to show that $f = 0$.
	Let $U$ be a compact open chart around $e$ and $E\subset V$ a BH-subspace such that $f \in C^\aan(U,\widebar{E})$.
	Then we have the topological isomorphism $C^\aan(U,\widebar{E}) \cong C^\aan(U,K) \cotimes{K} \widebar{E}$, and $C^\aan(U,K) \botimes{K} \widebar{E}$ is a dense subspace thereof.
	Hence we find sequences $(f_n)_{n\in \BN}\subset C^\aan(U,K)$, $(a_n)_{n\in \BN} \subset \widebar{E}$, both converging to $0$, such that $f = \sum_{n\geq 1} f_n \otimes a_n$.
	It follows from our assumption and the description \eqref{Eq 1 - Pairing for stalk of locally analytic functions for BH-subspace} that
	\begin{equation*}
		0 = \mu(f) = \sum_{n\geq 1} \mu(f_n) a_n \quad \text{, for all $\mu \in D_e(G)$.}
	\end{equation*}
	This implies that 
	\begin{equation}\label{Eq 1 - Functionals applied to tensor series}
		\sum_{n\geq 1} \lambda(f_n) a_n = 0 \quad \text{, for all $\lambda \in C^\aan(U,K)'$,}
	\end{equation}
	since the homomorphism $D_e(G) \ra C^\aan(U,K)'$ induced by the duality pairing between $C^\la_e(G,K)$ and $D_e(G)$ is surjective.
	
	From now on, we identify $C^\aan (U, K)$ as a locally convex $K$-vector space with the space of sequences tending to $0$
	\begin{equation*}
		c_0(\BN) \defeq \left\{ (a_n)_{n\in \BN} \subset K^\BN \middle{|} a_n \ra 0 \text{ as } n\ra \infty \right\}
	\end{equation*}
	endowed with the supremum norm.
	Our further reasoning uses that $c_0(\BN)$ is a $K$-Banach space which has the so called approximation property, and essentially is a special case of the statement \cite[Prop.\ 4.6]{Ryan02IntroTensProdBanachSp} in the archimedean setting.
	Nevertheless, we want to present a streamlined but detailed account.
	To prove that $f=0$ in $c_0(\BN) \cotimes{K} \widebar{E}$, we will show that
	\begin{equation*}
		T(f) = 0 \quad \text{, for all $T \in \big(c_0(\BN) \cotimes{K} \widebar{E} \big)'$ with $T \neq 0$.}
	\end{equation*}
	We fix such $T$, and note that we have an isomorphism of $K$-vector spaces
	\begin{equation}\label{Eq 1 - Isomorphism of completed tensor product of Banach space and sequence space}
		\big(c_0(\BN) \cotimes{K} \widebar{E} \big)' \overset{\cong}{\lra} \CL \big(c_0(\BN), \widebar{E}' \big) \,,\quad S \lto \big[ (a_n)_{n\in\BN} \mto [v \mto S((a_n)_{n\in \BN} \otimes v) ] \big] ,
	\end{equation}
	by \cite[Rmk.\ 20.12]{Schneider02NonArchFunctAna}.
	Set $C \defeq \lVert T \rVert_{\CL(c_0(\BN),\widebar{E}')}^{-1}$, and let $\varepsilon >0$.
	We now consider the projections
	\[P_m \colon c_0(\BN) \lra c_0(\BN) \,,\quad (x_n)_{n\in \BN} \lto (x_1,\ldots,x_m, 0,\ldots) \quad \text{, for $m\in \BN$,}\]
	which constitute continuous endomorphisms of finite rank.
	As $f_n \ra 0$ in $c_0(\BN)$, there exists some $N\in \BN$ such that $\lVert f_n \rVert_{c_0(\BN)} < C \, \varepsilon$, for all $n \geq N$.
	After enlarging $N$, we may assume that $ \lVert f_n - P_N(f_n) \rVert_{c_0(\BN)} < C \, \varepsilon$, even for all $n\in \BN$.

	We set $S \defeq T \circ P_N \in \CL \big(c_0(\BN),\widebar{E}' \big)$.
	Because $S$ is of finite rank, there exist $r\in \BN$, $\lambda_1,\ldots,\lambda_r \in c_0(\BN)'$, and $\ell_1,\ldots,\ell_r \in \widebar{E}'$ such that $ S = \big[ x \mto \sum_{i=1}^r \lambda_i(x) \, \ell_i \big] $ (cf.\ \cite[\S 18]{Schneider02NonArchFunctAna}).
	Hence, when we apply $S\in \big(c_0(\BN) \cotimes{K} \widebar{E} \big)'$ to $f$, using \eqref{Eq 1 - Isomorphism of completed tensor product of Banach space and sequence space} we compute 	 
	\begin{align*}
		S(f) = \sum_{n\geq 1} \sum_{i=1}^r \lambda_i(f_n) \, \ell_i (a_n)  = \sum_{i=1}^r \ell_i \bigg(  \sum_{n\geq 1} \lambda_i(f_n) \, a_n \bigg) = 0 .
	\end{align*}
	For the last equality, we have used \eqref{Eq 1 - Functionals applied to tensor series} here.
	Therefore
	\begin{align*}
		\abs{T(f)} &\leq \abs{T(f) - S(f)} + \abs{S(f)} = \abs{T(f) - S(f)} = \bigg\lvert \sum_{n\geq 1} T(f_n)(a_n) - S(f_n)(a_n) \bigg\rvert \\
		&\leq \max_{n\geq 1} \Big( \lVert T(f_n) - S(f_n) \rVert_{\widebar{E}'} \cdot  \lVert a_n \rVert_{\widebar{E}} \Big)  = \max_{n\geq 1} \Big( \big\lVert T\big( f_n - P_N(f_n) \big) \big\rVert_{\widebar{E}'} \cdot  \lVert a_n \rVert_{\widebar{E}} \Big)  \\
		&\leq \max_{n\geq 1} \Big( \lVert T \rVert_{\CL(c_0(\BN),\widebar{E}')} \cdot  \lVert f_n - P_N(f_n)\rVert_{c_0(\BN)} \cdot \lVert a_n \rVert_{\widebar{E}} \Big)  < \varepsilon \cdot \max_{n\geq 1} \, \lVert a_n \rVert_{\widebar{E}} .
	\end{align*}
	For $\varepsilon \ra 0$, this shows that $T(f)=0$.
	Since $T \in \big(c_0(\BN) \cotimes{K} \widebar{E} \big)'$ with $T \neq 0$ was arbitrary, we can conclude that $f=0$.
\end{proof}

\begin{corollary}\label{Cor 1 - Pairing for hyperalgebra and germs of locally analytic functions}
	Let $G$ be a locally $L$-analytic Lie group.
	Then $\hy(G) \subset D_e(G)$ is a dense $K$-subalgebra.
	In particular, for any Hausdorff locally convex $K$-vector space $V \neq (0)$, the $K$-bilinear pairing
	\begin{equation*}
		\hy(G) \times C^\la_e(G,V) \lra V
	\end{equation*}
	induced from \eqref{Eq 1 - Pairing for stalk of locally analytic functions} is non-degenerate\footnote{In \cite[Kor.\ 4.7.4]{FeauxdeLacroix99TopDarstpAdischLieGrp} F\'eaux de Lacroix shows that the pairing $U(\Fg) \otimes_L K \times C^\la_e(G,V) \ra V$ is non-degenerate for the case ${\rm char}(L)=0$. His proof uses differentiation with respect to elements of $\Fg$ which is why we pursue a different method here.}.
\end{corollary}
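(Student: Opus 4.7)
The plan is to prove the statement in three stages: (1) establish weak density of $\hy(G)$ in $D_e(G)$ with respect to $\sigma(D_e(G),C^\la_e(G,K))$, (2) upgrade this to density in the strong topology, and (3) deduce the non-degeneracy of the induced pairing from this density together with the non-degeneracy asserted in \Cref{Prop 1 - Pairing for germs of locally analytic functions}. That $\hy(G)$ is a $K$-subalgebra of $D_e(G)$ was already proved in \Cref{Def 1 - Hyperalgebra}, so only the density is genuinely new.

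For weak density, by the Hahn--Banach theorem (applicable because $K$ is spherically complete) it suffices to show that the annihilator
\[
	\hy(G)^\perp \defeq \big\{ f \in C^\la_e(G,K) \,\big\vert\, \mu(f) = 0 \text{ for all } \mu \in \hy(G) \big\}
\]
is zero. Given $f \in \hy(G)^\perp$ and $n \in \BN_0$, suppose that $f \notin \Fm_e^{n+1}$. By \Cref{Prop 1 - Properties of the algebra of germs of locally analytic functions} (iii) the subspace $\Fm_e^{n+1}$ is closed of finite codimension, so the quotient $C^\la_e(G,K)/\Fm_e^{n+1}$ is a finite-dimensional Hausdorff $K$-vector space, on which every linear functional is automatically continuous. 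Choosing one that does not vanish on the image of $f$ and pulling back yields an element of $\hy(G)_n$ with $\mu(f) \neq 0$, a contradiction. Hence $f \in \bigcap_{n \in \BN} \Fm_e^n$. Using the topological $K$-algebra isomorphism $C^\la_e(G,K) \cong \CA(L^m,K)$ of \Cref{Prop 1 - Properties of the algebra of germs of locally analytic functions} (ii), the ideal $\Fm_e^n$ corresponds to power series vanishing to order $\geq n$ at the origin, so $\bigcap_n \Fm_e^n = (0)$, as required.

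To upgrade weak density to strong density, I would argue that $C^\la_e(G,K)$ is of compact type. Indeed, taking a cofinal shrinking sequence of analytic charts around $e$ as in the proof of \Cref{Prop 1 - Properties of the germs of locally analytic functions} (iii) (using that $L$ is locally compact), $C^\la_e(G,K)$ is exhibited as the inductive limit of the $K$-Banach spaces $C^\aan(U_n,K)$ with injective compact transition maps; hence it is reflexive, and so is its strong dual $D_e(G)$. For a reflexive locally convex $K$-vector space $W$, the continuous dual of $W'_b$ and of $(W',\sigma(W',W))$ both coincide with $W$, so by the bipolar theorem (again via Hahn--Banach) weakly closed and strongly closed convex subsets of $W'_b$ agree. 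Applied to $W = C^\la_e(G,K)$, the weak density of $\hy(G)$ in $D_e(G)$ established above becomes strong density. Finally, non-degeneracy of $\hy(G) \times C^\la_e(G,V) \to V$ in the first variable is immediate since $\hy(G) \subset D_e(G)$ and the pairing on $D_e(G) \times C^\la_e(G,V)$ is non-degenerate; in the second variable, if $f \in C^\la_e(G,V)$ satisfies $\mu(f) = 0$ for all $\mu \in \hy(G)$, then the separately continuous map $D_e(G) \to V$, $\mu \mapsto \mu(f)$, vanishes on the dense subspace $\hy(G)$, hence on all of $D_e(G)$, and $f = 0$ follows.

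The main technical point I anticipate is the passage from weak to strong density, which rests on reflexivity of $C^\la_e(G,K)$ and the bipolar theorem. Should one wish to avoid this abstract detour, an explicit alternative is to combine the isomorphism $C^\la_e(G,K) \cong \CA(L^m,K)$ with the resulting identification $D_e(G) \cong \varprojlim_{r \to 0} \CA_r(L^m,K)'$, and to approximate a distribution $\mu = (c_{\ul{i}})_{\ul{i}}$ directly by its truncations $\mu_N = (c_{\ul{i}})_{|\ul{i}|\leq N} \in \hy(G)$, using that the decay $\lvert c_{\ul{i}}\rvert / \ul{r}^{\ul{i}} \to 0$ (which characterises elements of $D_e(G)$) forces convergence in each Banach norm $\lVert \blank \rVert_{\CA_r(L^m,K)'}$, hence in the projective limit topology.
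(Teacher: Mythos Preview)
Your proof is correct and follows essentially the same route as the paper: both establish that $\hy(G)^\perp = 0$ in $C^\la_e(G,K)$ by showing any such $f$ lies in $\bigcap_n \Fm_e^n = 0$, then deduce strong density via Hahn--Banach together with reflexivity of $C^\la_e(G,K)$ (the paper folds your weak-to-strong step into a single Hahn--Banach application, implicitly using that $(D_e(G))' = C^\la_e(G,K)$), and finally derive non-degeneracy for general $V$ from density and \Cref{Prop 1 - Pairing for germs of locally analytic functions}. The only cosmetic differences are that the paper cites Krull's intersection theorem rather than your explicit power-series argument for $\bigcap_n \Fm_e^n = 0$, and that it does not spell out the weak/strong distinction you make explicit.
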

\begin{proof}
	As a first step, we show that the above pairing	is non-degenerate when $V = K$.
	In $\hy(G)$ this is clear.
	On the other hand, let $f \in C^\la_e(G,K)$ such that $\mu(f) =0$, for all $\mu \in \hy(G)$.
	For any $n\in \BN_0$, it follows that $f\in \hy(G)_n^\perp$ where we consider the orthogonal under \eqref{Eq 1 - Pairing for stalk of locally analytic functions}
	\begin{equation*}
		\hy(G)_n^\perp = \left\{ f' \in C^\la_e(G,K) \middle{|} \forall \mu \in \hy(G)_n : \mu(f') = 0 \right\} .
	\end{equation*}
	But we have $\hy(G)_n = (\Fm_e^{n+1} )^\perp$ by definition, and $\big( (\Fm_e^{n+1})^\perp \big)^\perp = \Fm_e^{n+1}$ by \cite[II.\ \S 6.3 Cor.\ 3]{Bourbaki87TopVectSp1to5}.
	Hence $f\in \bigcap_{n\in \BN_0} \Fm_e^{n+1}$, and we apply Krull's intersection theorem to conclude that $f=0$.
	
	To show the density of $\hy(G) \subset D_e(G)$, assume, for the sake of contradiction, that there is $\delta \in D_e(G) \setminus \widebar{ \hy(G)}$.
	By the Hahn--Banach theorem \cite[Cor.\ 9.3]{Schneider02NonArchFunctAna} there exists $f\in C^\la_e(G,K)$ such that $\mu(f) =0$, for all $\mu \in \widebar{\hy(G)}$, and $\delta(f) =1$.
	But as we have just seen, this implies $f=0$ which is a contradiction.

	The non-degeneracy of the pairing between $\hy(G)$ and $C^\la_e(G,V)$ now follows from $\hy(G)$ being a dense subspace of $D_e(G)$ and \Cref{Prop 1 - Pairing for germs of locally analytic functions}.
\end{proof}

\begin{proposition}\label{Prop 1 - Compatibility of convolution and pairing for left regular representation}
	Let $G$ be a compact locally $L$-analytic Lie group, and $V$ a Hausdorff locally convex $K$-vector space.
	If we consider $C^\la(G,V)$ with the left regular $G$-representation, then we have in $V$ the equality
	\begin{equation*}
		(\dot{\mu} \ast f)(e) = \mu(f) \quad\text{, for all $\mu \in D(G)$, $f \in C^\la(G,V)$.}
	\end{equation*}
\end{proposition}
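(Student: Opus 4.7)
My plan is to prove the identity first for Dirac distributions and then extend by continuity, using that the $K$-linear span of Dirac distributions is dense in $D(G)$ (\Cref{Prop 1 - Properties of the distribution algebra} (iv)) and that $V$ is Hausdorff.

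For the Dirac case, fix $g \in G$ and compute: by \Cref{Prop 1 - Convolution product} the involution satisfies $\dot{\delta_g} = \delta_{g^{-1}}$, and by the definition of the left regular $G$-representation together with $\delta_{g^{-1}} \ast f = g^{-1}.f$ (\Cref{Prop 1 - Module structures over the distribution algebra}), we obtain
\begin{equation*}
	(\dot{\delta_g} \ast f)(e) = (g^{-1}.f)(e) = f(g) = \delta_g(f).
\end{equation*}
By $K$-linearity the identity then holds for all finite linear combinations of Dirac distributions.

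For the extension, I want both sides of the claimed equation, viewed as maps $D(G) \to V$ in $\mu$ with $f$ fixed, to be continuous. The right hand side $\mu \mapsto \mu(f) = I(f)(\mu)$ is continuous because $I(f) \in \CL_b(D(G),V)$ by \Cref{Prop 1 - Integration map} (and its interpretation via the pairing in \Cref{Prop 1 - Pairing of D(X,K) and Cla(X,V)}). For the left hand side, I decompose it as the composition
\begin{equation*}
	\begin{tikzcd}[row sep = 0ex]
		D(G) \ar[r, "\dot{\blank}"] & D(G) \ar[r, "\blank \ast f"] & C^\la(G,V) \ar[r, "\ev_e"] & V,
	\end{tikzcd}
\end{equation*}
where the first map is the topological automorphism from \eqref{Eq 1 - Definition of involution of distribution algebra}, the second is continuous by the separate continuity of the $D(G)$-module structure on $C^\la(G,V)$ (\Cref{Prop 1 - Module structures over the distribution algebra} (i)) applied to the fixed element $f$, and the third is continuous by \Cref{Prop 1 - Evaluation maps are continuous}.

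Since both sides agree on the dense subspace spanned by Dirac distributions and $V$ is Hausdorff, they agree on all of $D(G)$. There should be no essential obstacle here; the argument is a standard density-and-continuity extension from the defining identity $\dot{\delta_g} \ast f = f(g \, \blank)$, so the only thing to double-check is that \Cref{Prop 1 - Module structures over the distribution algebra} applies without a Fréchet hypothesis on $C^\la(G,V)$, which it does because we only need separate — not joint — continuity of the scalar multiplication.
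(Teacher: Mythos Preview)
Your proof is correct, but it follows a different route from the paper. The paper argues for fixed $\mu$ by a diagram chase: it composes the orbit homomorphism $o_{C^\la(G,V)}\colon f\mapsto \rho_f$ with the integration map $I_{C^\la(G,V)}$ and the evaluations $\ev_{\dot{\mu}}$, $\ev_e$, and then uses naturality of $I$ together with the identities $(\ev_e)_\ast \circ \inv^\sharp \circ o_{C^\la(G,V)} = \id$ and $\dot{(\blank)}^\ast \circ I = I \circ \inv^\sharp$ to match this with $I_V(f)(\mu)=\mu(f)$. Your argument instead fixes $f$, verifies the identity on Dirac distributions via $(\dot{\delta_g}\ast f)(e)=f(g)=\delta_g(f)$, and extends by continuity in $\mu$ using density of the span of Dirac distributions. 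Your approach is more elementary and avoids the bookkeeping of the diagram; the paper's approach has the virtue of not invoking density of Dirac distributions and working uniformly for all $\mu$ at once via naturality. Both rely on the same ingredients (the integration map, the module action, evaluation), just assembled differently.
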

\begin{proof}
	Let
	\begin{equation*}
		o_{C^\la(G,V)} \colon C^\la(G,V) \lra C^\la \big(G,C^\la(G,V) \big) \,,\quad f \lto \rho_f ,
	\end{equation*}
	denote the orbit homomorphism that sends a function $f$ to its orbit map under the left regular $G$-representation.
	Moreover, let $I_W$ denote the integration map \eqref{Eq 1 - Integration map} associated with a Hausdorff locally convex $K$-vector space $W$.
	Now fix $\mu \in D(G)$, and consider the following commutative diagram using that $I_W$ is natural:
	\begin{equation*}
		\begin{tikzcd}[]
			& C^\la \big(G,C^\la(G,V) \big) \ar[r,"I_{C^\la(G,V)}"] \ar[d, "\inv^\sharp"] &[+13pt] \CL_b \big(D(G),C^\la(G,K) \big) \ar[rd, start anchor=357, end anchor=150, "\mathrm{ev}_{\dot{\mu}}"] \ar[d, "\dot{(\blank)}^\ast"] & \\
			C^\la(G,V) \ar[ru, start anchor= 15, end anchor=183, "o_{C^\la(G,V)}"] \ar[rd, start anchor= 345, end anchor=170, "\id"'] & C^\la\big(G,C^\la(G,V)\big) \ar[r, "I_{C^\la(G,V)}"] \ar[d, "(\mathrm{ev}_e)_\ast"] & \CL_b \big(D(G), C^\la(G,V) \big) \ar[r, "\mathrm{ev}_\mu"] \ar[d, "(\mathrm{ev}_e)_\ast"] & C^\la(G,V) \ar[d, "\mathrm{ev}_e"] \\
			& C^\la(G,V) \ar[r, "I_V"] & \CL_b(D(G),V) \ar[r, "\mathrm{ev}_\mu"] & V
		\end{tikzcd}
	\end{equation*}
	Taking the image of a function $f\in C^\la(G,V)$ under the homomorphisms of the top path to $V$ then yields $ \big( I_{C^\la(G,V)}(\rho_f)(\dot{\mu})\big)(e) = (\dot{\mu}\ast f)(e)$.
	Taking the image via going the bottom path yields $I_V(f)(\mu) = \mu(f)$.
	Hence the claim follows from the commutativity of the above diagram.
\end{proof}

\begin{proposition}
	Let $G$ be a locally $L$-analytic Lie group.
	\begin{altenumerate}
		\item
		For every Hausdorff locally convex $K$-vector space $V$, the $G$-representation
		\begin{equation}\label{Eq 1 - Conjugation action on germs of locally analytic functions}
			G \times C^\la_e (G,V) \lra C^\la_e (G,V) \,,\quad (g,f) \lto f(g^{-1} \blank g),
		\end{equation}
		is locally analytic.
		\item
		The \textit{adjoint representation}, for every $n\in \BN_0$,
		\begin{equation*}
			\Ad_n \colon G \times \hy(G)_n \lra \hy(G)_n \,,\quad (g, \mu) \lto \Ad_n(g)(\mu) \defeq \left[f \mto \mu\big(f(g\blank g^{-1})\big)\right] , 
		\end{equation*}
		is locally analytic.
	\end{altenumerate}
\end{proposition}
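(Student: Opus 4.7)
The plan is to verify in each case the three defining properties of a locally analytic representation, with the orbit maps being the main ingredient.

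For (i), the conjugation $c_g \colon G \to G$, $h \mapsto g^{-1} h g$, is a locally $L$-analytic automorphism fixing the identity element $e$, so \Cref{Prop 1 - Functorialities for germs of locally analytic functions} provides a continuous endomorphism $c_g^\ast$ of $C^\la_e(G, V)$ which coincides with the prescribed action $f \mapsto f(g^{-1}\blank g)$. Moreover, $C^\la_e(G, V)$ is Hausdorff and barrelled by \Cref{Prop 1 - Properties of the germs of locally analytic functions} (i). For the orbit maps, by \Cref{Prop 1 - Locally analytic representations and open subgroups} I may pass to a compact open subgroup $H \subset G$. Given $f \in C^\la_e(G, V)$, fix a representative $\tilde f \in C^\aan(U, \widebar E)$ on a compact open chart $U \ni e$ inside $H$ and a BH-subspace $E \subset V$. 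The conjugation map $G \times G \to G$ sends $\{e\} \times U$ to $U$ continuously, so the tube lemma yields a compact open subgroup $H' \subset H$ such that $g^{-1} U g \subset U$ for all $g \in H'$. The resulting locally analytic map $H' \times U \to U$ pulls $\tilde f$ back to an element of $C^\la(H' \times U, \widebar E)$, which via the Banach-valued analog of \Cref{Prop 1 - Locally analytic functions on product of manifolds} corresponds to a locally analytic function $H' \to C^\la(U, \widebar E)$. Composing with the continuous restriction-to-germ homomorphism $C^\la(U, \widebar E) \to C^\la_e(G, V)$ from \Cref{Lemma 1 - Easier description of the stalk of locally analytic functions} exhibits the restriction of the orbit map $\rho_f$ to $H'$ as locally analytic. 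Transferring via the continuous $G$-action then yields local analyticity of $\rho_f$ on all of $G$, finishing (i).

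For (ii), $\hy(G)_n$ is a finite-dimensional $K$-Banach space by \Cref{Def 1 - Hyperalgebra}. The perfect pairing between $\hy(G)_n$ and the finite-dimensional quotient $C^\la_e(G, K) / \Fm_e^{n+1}$ (see \Cref{Prop 1 - Properties of the algebra of germs of locally analytic functions} (iii)) identifies $\Ad_n(g)$ with the transpose of the endomorphism $c_{g^{-1}}^\ast$ induced on this quotient. Since $c_g(e) = e$ implies $c_g^\ast(\Fm_e^{n+1}) \subset \Fm_e^{n+1}$, the quotient $C^\la_e(G, K) / \Fm_e^{n+1}$ inherits a locally analytic $G$-representation from part (i) applied with $V = K$, via \Cref{Prop 1 - Subrepresentations and quotientrepresentations of locally analytic representations} (i). By \Cref{Prop 1 - Equivalent characterization for locally analytic representations on Banach spaces} this is equivalent to a locally $L$-analytic homomorphism of Lie groups $G \to \GL\bigl(C^\la_e(G,K)/\Fm_e^{n+1}\bigr)$. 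Composing with the locally analytic inversion on $G$ and the polynomial (hence locally analytic) transpose map $\GL(W) \to \GL(W^\ast)$ at the level of finite-dimensional linear groups produces the locally analytic homomorphism $\Ad_n \colon G \to \GL(\hy(G)_n)$, and a final application of \Cref{Prop 1 - Equivalent characterization for locally analytic representations on Banach spaces} concludes.

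The main technical point is the Banach-valued analog of \Cref{Prop 1 - Locally analytic functions on product of manifolds}. This goes through by the same argument as in the $K$-valued case: the canonical identification $\CA_{(\ul r, \ul s)}(L^{m+n}, \widebar E) \cong \CA_{\ul r}\bigl(L^m, \CA_{\ul s}(L^n, \widebar E)\bigr)$ is an isometric isomorphism of $K$-Banach spaces, and the remainder of the argument there adapts verbatim using BH-subspaces on the target.
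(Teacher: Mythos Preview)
Your proof is correct, but both parts take a different route from the paper.

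For (i), the paper argues as follows: for a compact open subgroup $H\subset G$, the conjugation action on $C^\la(H,V)$ is already known to be locally analytic by \Cref{Expl 1 - Examples of locally analytic representations}~(ii). The restriction-to-germ map $C^\la(H,V)\to C^\la_e(G,V)$ is a strict epimorphism (\Cref{Lemma 1 - Easier description of the stalk of locally analytic functions}~(i)) and is $H$-equivariant for conjugation, so \Cref{Prop 1 - Subrepresentations and quotientrepresentations of locally analytic representations}~(i) immediately gives that $C^\la_e(G,V)$ is a locally analytic $H$-representation; one then extends to $G$ exactly as you do. This bypasses your tube-lemma construction and the Banach-valued product identification entirely. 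Your direct approach is correct (the $\widebar E$-valued version of \Cref{Prop 1 - Locally analytic functions on product of manifolds} does follow from the isometry $\CA_{(\ul r,\ul s)}(L^{m+n},\widebar E)\cong\CA_{\ul r}(L^m,\CA_{\ul s}(L^n,\widebar E))$ as you say), but it reproves by hand what the paper inherits from the already-established conjugation action on $C^\la(H,V)$.

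For (ii), the underlying idea is the same---dualize the action on $C^\la_e(G,K)/\Fm_e^{n+1}$---but the paper routes this through the $D(G)$-module formalism: part~(i) with $V=K$ makes $D_e(G)$ a separately continuous $D(G)$-module via \Cref{Prop 1 - Equivalences for categories of locally analytic representations}~(ii), one checks $\hy(G)_n\subset D_e(G)$ is a $D(G)$-submodule (because $\Fm_e^{n+1}$ is a $G$-subrepresentation), and then the equivalence of \Cref{Prop 1 - Equivalences for categories of locally analytic representations}~(i) converts this back into a locally analytic $G$-representation. Your argument via the transpose on the finite-dimensional dual is more elementary and avoids the $D(G)$-module machinery, which is a reasonable trade since that machinery is not strictly needed here.
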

\begin{proof}
	For (i), let $H\subset G$ be a compact open subgroup. 
	By \Cref{Expl 1 - Examples of locally analytic representations} (ii), $C^\la(H,V)$ with the $H$-action by conjugation is a locally analytic $H$-representation.
	Since the strict epimorphism $C^\la(H,V) \ra C^\la_e(G,V)$ is $H$-equivariant with respect to the action by conjugation, $C^\la_e(G,V)$ is a locally analytic $H$-representation by \Cref{Prop 1 - Subrepresentations and quotientrepresentations of locally analytic representations} (i).
	Moreover, the functoriality from \Cref{Prop 1 - Functorialities for germs of locally analytic functions} shows that $G$ acts on $C^\la_e(G,V)$ by topological endomorphisms.
	Hence \Cref{Prop 1 - Locally analytic representations and open subgroups} implies that $C^\la_e(G,V)$ is a locally analytic $G$-representation.

	For the second statement, let $V = K$, and consider the locally analytic $G$-representation $C^\la_e(G,K)$ with the $G$-action of (i).
	As this representation is of compact type, $D_e(G)$ is a separately continuous $D(G)$-module with respect to the structure induced from \eqref{Eq 1 - Conjugation action on germs of locally analytic functions} by \Cref{Prop 1 - Equivalences for categories of locally analytic representations} (ii).
	We claim that $\hy(G)_n \subset D_e(G)$, for $n\in \BN_0$, is a $D(G)$-submodule.
	Indeed, if $f\in \Fm_e\subset C^\la_e(G,K)$ then
	\begin{equation*}
		\mathrm{ev}_e \big(f(g^{-1} \blank g) \big) = f(g^{-1} e g) = f(e) = 0, 
	\end{equation*}
	which shows that $f(g^{-1}\blank g)\in \Fm_e$, for all $g\in G$.
	Moreover, in \eqref{Eq 1 - Conjugation action on germs of locally analytic functions} $G$ acts by $K$-algebra homomorphisms.
	Hence $\Fm_e^{n+1} \subset C^\la_e(G,K)$ is a locally analytic $G$-subrepresentation.
	It follows that $\hy(G)_n \subset D_e(G)$ is a $D(G)$-submodule.
	Since $\hy(G)_n$ is finite dimensional, in particular it is of LB-type.
	Therefore, the equivalence of \Cref{Prop 1 - Equivalences for categories of locally analytic representations} (i) shows that $\hy(G)_n$ carries the structure of a locally analytic $G$-representation and this is given by $\Ad_n$.
\end{proof}

\begin{definitionproposition}[{cf.\ \cite[Prop.\ 3.5]{OrlikStrauch15JordanHoelderSerLocAnRep}}]\label{Prop 1 - Description of product of hyperalgebra and distribution algebra of subgroup}
	Assume that $K$ is a finite extension of $L$.
	Let $G$ be a locally $L$-analytic Lie group, and $H\subset G$ a locally $L$-analytic subgroup.
	Then every element of the $K$-subalgebra of $D(G,K)$ generated by $\hy(G,K)$ and $D(H,K)$ is a finite sum of elements of the form $\mu \ast \delta$, for $\mu \in \hy(G,K)$, $\delta \in D(H,K)$.
	
	We denote this subalgebra by $D(\Fg,H,K) \subset D(G,K)$, or $D(\Fg,H)$ when the coefficient field is implied.
\end{definitionproposition}
\begin{proof}
	With the appropriate adjustments we proceed analogously to the proof of \cite[Prop.\ 3.5]{OrlikStrauch15JordanHoelderSerLocAnRep}.
	It suffices to show that, for all $\delta\in D(H)$, $\mu \in \hy(G)$, $\delta \ast \mu$ is a finite sum of elements $\mu'\ast \delta'$, for $\mu'\in \hy(G)$, $\delta' \in D(H)$.
	We fix such $\delta$ and $\mu$, and may assume that $\mu \in \hy(G)_n$, for some $n\in \BN_0$.
	By \Cref{Prop 1 - Equivalent characterization for locally analytic representations on Banach spaces} the adjoint representation $\Ad_n$ on $\hy(G)_n$ is given by a locally $L$-analytic map of locally $L$-analytic Lie groups $G \ra \GL\big(\hy(G)_n \big)$.
	Hence, for an $L$-Basis $\mu_1,\ldots,\mu_r$ of $\hy(G)_n$, there exist $c_1,\ldots,c_r \in C^\la(G,K)$ such that
	\begin{equation*}
		\Ad_n(g)(\mu) = \sum_{i=1}^r c_i(g) \, \mu_i \quad\text{, for all $g\in G$.}
	\end{equation*}
	We define $\delta_i \in D(H)$, for $i=1,\ldots,r$, by
	\[\delta_i(f) \defeq \delta\big[ h \mto c_i(h) \, f(h) \big] \quad \text{, for $f\in C^\la(H,K)$.} \]
	Then, for $f\in C^\la(G,K)$, we compute
	\begin{equation*}
		\begin{aligned}
			(\delta \ast \mu)(f) &= \delta\Big[ h \mto \mu\big[g \mto f(hg)\big] \Big]
			= \delta\Big[ h \mto \Ad_n(h)(\mu) \big[g \mto f(gh) \big] \Big] 	\\
			&= \delta\Bigg[ h \mto \bigg( \sum_{i=1}^r c_i(h) \, \mu_i \bigg)\big[g \mto f(gh)\big] \Bigg] 
			= \sum_{i=1}^r \delta\Big[ h \mto c_i(h) \, \mu_i \big[g \mto f(gh) \big] \Big] 	\\
			&= \sum_{i=1}^r \delta \Big[ h \mto \mu_i \big[ g \mto c_i(h)\, f(gh)\big] \Big] 
			= \sum_{i=1}^r \mu_i \Big[ g \mto \delta \big[ h \mto c_i(h) \, f(gh) \big] \Big]  \\
			&= \sum_{i=1}^r \mu_i \Big[ g \mto \delta_i \big[ h \mto f(gh) \big] \Big]  
			= \sum_{i=1}^r (\mu_i \ast \delta_i) (f) .
		\end{aligned}
	\end{equation*}
	using \Cref{Cor 1 - Fubini theorem} at several instances.
	Hence, we see that $\delta \ast \mu = \sum_{i=1}^r \mu_i \ast \delta_i$.
\end{proof}

\begin{corollary}\label{Cor 1 - Locally analytic representation satisfies compatibility condition for adjoint representation}
	Suppose that $K$ is a finite extension of $L$, and let $V$ be a locally analytic representation of a locally $L$-analytic Lie group $G$.
	Then we have
	\begin{equation*}
		g.(\mu \ast v) = \Ad_n(g)(\mu) \ast (g.v) \quad\text{, for all $g\in G$, $\mu \in \hy(G)$, $v \in V$,}
	\end{equation*}
	where $n\in \BN_0$ such that $\mu \in \hy(G)_n$.
\end{corollary}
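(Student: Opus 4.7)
The plan is to reduce the claim to the identity
\begin{equation*}
    \delta_g \ast \mu \,=\, \Ad_n(g)(\mu) \ast \delta_g \qquad \text{in } D(G),
\end{equation*}
for all $g \in G$ and $\mu \in \hy(G)_n$. Once this is established, I simply apply the $D(G)$-module structure of $V$ (\Cref{Prop 1 - Module structures over the distribution algebra}) and use associativity: $(\delta_g \ast \mu) \ast v = (\Ad_n(g)(\mu) \ast \delta_g) \ast v$ rewrites as $\delta_g \ast (\mu \ast v) = \Ad_n(g)(\mu) \ast (\delta_g \ast v)$, which is the desired formula since $\delta_g$ acts as $g$.

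To prove the identity in $D(G)$, I would specialise the computation carried out in the proof of \Cref{Prop 1 - Description of product of hyperalgebra and distribution algebra of subgroup} to the Dirac distribution $\delta = \delta_g$ (viewed as an element of $D(H)$ for $H$ any open subgroup of $G$ containing $g$; the trivial case $H = \langle g \rangle$ also works). With an $L$-basis $\mu_1, \ldots, \mu_r$ of $\hy(G)_n$ and functions $c_1, \ldots, c_r \in C^\la(G,K)$ expressing $\Ad_n$ via $\Ad_n(h)(\mu) = \sum_{i=1}^r c_i(h)\, \mu_i$, the distributions $\delta_i \in D(H)$ defined there satisfy
\begin{equation*}
    \delta_i(f) \,=\, \delta_g\bigl[h \mto c_i(h)\, f(h)\bigr] \,=\, c_i(g)\, f(g) \,=\, c_i(g)\, \delta_g(f),
\end{equation*}
so that $\delta_i = c_i(g)\, \delta_g$. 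The conclusion of the proposition then reads
\begin{equation*}
    \delta_g \ast \mu \,=\, \sum_{i=1}^r \mu_i \ast \delta_i \,=\, \biggl(\sum_{i=1}^r c_i(g)\, \mu_i\biggr) \ast \delta_g \,=\, \Ad_n(g)(\mu) \ast \delta_g,
\end{equation*}
using $K$-bilinearity of the convolution product.

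There is essentially no obstacle here: everything is already done in the preceding proposition, and all that remains is to observe that the coefficients $c_i(h)$ become the scalars $c_i(g)$ after evaluation against $\delta_g$, making the formula explicit rather than merely existential. The only subtlety is the passage from an equality in $D(G)$ to an equality in $V$, which is a direct consequence of $V$ being a $D(G)$-module in the sense of \Cref{Prop 1 - Module structures over the distribution algebra}(i).
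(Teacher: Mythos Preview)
Your proof is correct and follows essentially the same approach as the paper: both establish the identity $\delta_g \ast \mu = \Ad_n(g)(\mu) \ast \delta_g$ in $D(G)$ by specialising the computation from the proof of \Cref{Prop 1 - Description of product of hyperalgebra and distribution algebra of subgroup} to $\delta = \delta_g$, and then transfer this to $V$ via the $D(G)$-module structure. A minor quibble: $\langle g \rangle$ need not be a locally $L$-analytic subgroup, so simply take $H = G$ (or any compact open subgroup containing $g$) when invoking the proposition.
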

\begin{proof}
	For $g\in G$ and $\mu \in \hy(G)_n$, we find $\mu_1,\ldots,\mu_r \in \hy(G)_n$ and $c_1,\ldots,c_r \in C^\la(G,K)$ such that
	\begin{equation*}
		\Ad_n(g)(\mu) = \sum_{i=1}^r c_i(g) \, \mu_i
	\end{equation*}
	and consequently $\delta_g \ast \mu = \sum_{i=1}^r c_i(g) \, \mu_i \ast  \delta_g $ like in the proof of the above proposition.
	We then compute, for $v\in V$,
	\begin{align*}
		\Ad_n(g)(\mu) \ast (g.v) 
		&= \sum_{i=1}^r c_i(g) \, \mu_i \ast (g.v) 
		= \sum_{i=1}^r c_i(g) \, \mu_i \ast (\delta_g \ast v) \\
		&= \bigg( \sum_{i=1}^r c_i(g) \, \mu_i \ast \delta_g \bigg) \ast v 
		= ( \delta_g \ast \mu ) \ast v
		= g.(\mu \ast v) .
	\end{align*}
\end{proof}

We want to characterize modules over the $K$-algebras $D(\Fg,H)$ analogously to the $p$-adic situation considered by Agrawal and Strauch in \cite{AgrawalStrauch22FromCatOLocAnRep}.

\begin{definition}[{cf.\ \cite[Def.\ 7.4.1]{AgrawalStrauch22FromCatOLocAnRep}}]\label{Def 1 - Compatible hyperalgebra modules}
	Assume that $K$ is a finite extension of $L$, and let $G$ be a locally $L$-analytic Lie group with a locally $L$-analytic subgroup $H\subset G$.
	We call a locally analytic $H$-representation $V$ which simultaneously is a $\hy(G)$-module a \textit{locally analytic $(\hy(G),H)$-module} if the scalar multiplication map $\hy(G) \times V \ra V$ is separately continuous when $\hy(G)$ is endowed with the subspace topology of $\hy(G) \subset D(G)$ and the following two compatibly conditions hold:
	\begin{altenumeratelevel2}
		\item
		The action of $\hy(H)$ as a $K$-subalgebra of $\hy(G)$ agrees with the action induced from \Cref{Prop 1 - Module structures over the distribution algebra} (i) of $\hy(H)$ as a $K$-subalgebra of $D(H)$.
		\item
		For all $h \in H$, $\mu\in \hy(G)$, $v \in V$, and $n\in \BN_0$ with $\mu \in \hy(G)_n$, we have 
		\begin{equation*}
			h. (\mu \ast v) = \Ad_n(h)(\mu) \ast (h.v) .
		\end{equation*}
	\end{altenumeratelevel2}
\end{definition}

\begin{remark}\label{Rmk 1 - Locally analytic representation induces compatible hyperalgebra module structure}
	It follows from \Cref{Cor 1 - Locally analytic representation satisfies compatibility condition for adjoint representation} that a locally analytic $G$-representation canonically carries the structure of a locally analytic $(\hy(G),H)$-module, for any locally $L$-analytic subgroup $H\subset G$.
\end{remark}

\begin{corollary}[{cf.\ \cite[Lemma 7.4.2]{AgrawalStrauch22FromCatOLocAnRep}}]\label{Cor 1 - Equivalence between compatible hyperalgebra modules and modules over product of hyperalgebra and distribution algebra of subgroup}
	Giving a locally analytic $(\hy(G),H)$-module structure is naturally equivalent to giving a separately continuous $D(\Fg,H)$-module structure. 
	Moreover, passing to the strong dual space yields an anti-equivalence of categories
	\begin{equation*}
		\left(\substack{\text{\small locally analytic $(\hy(G),H)$-modules} \\ \text{\small on locally convex $K$-vector spaces}\\ \text{\small of compact type with continuous $H$- } \\ \text{\small and $\hy(G)$-equivariant homomorphisms}}\right) \lra \left(\substack{\text{\small separately continuous $D(\Fg,H)$-modules} \\ \text{\small on nuclear $K$-Fr\'echet spaces} \\ \text{\small with continuous $D(\Fg,H)$-module maps}}\right) .
	\end{equation*}
\end{corollary}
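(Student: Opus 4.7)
The plan is to construct quasi-inverse functors between the two categories, and then combine with the anti-equivalence of \Cref{Prop 1 - Equivalences for categories of locally analytic representations} (ii) to obtain the claimed anti-equivalence in the second assertion.

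For the forward direction, given a locally analytic $(\hy(G),H)$-module $V$, one has both the assumed $\hy(G)$-action and the $D(H)$-action associated to $V$ as a locally analytic $H$-representation via \Cref{Prop 1 - Module structures over the distribution algebra}. By \Cref{Prop 1 - Description of product of hyperalgebra and distribution algebra of subgroup} every $D \in D(\Fg,H)$ admits a finite (non-unique) expression $D = \sum_i \mu_i \ast \delta_i$ with $\mu_i \in \hy(G)$, $\delta_i \in D(H)$, and one sets
\[ D \ast v \defeq \sum_i \mu_i \ast (\delta_i \ast v). \]
The main obstacle will be showing this is well defined, i.e.\ that a vanishing $\sum_i \mu_i \ast \delta_i = 0$ in $D(G)$ forces $\sum_i \mu_i \ast (\delta_i \ast v) = 0$ in $V$ for every $v$.

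To overcome it I would revisit the proof of \Cref{Prop 1 - Description of product of hyperalgebra and distribution algebra of subgroup}, which gives the explicit commutation identity $\delta \ast \mu = \sum_j \mu'_j \ast \delta'_j$ in $D(G)$ for $\delta \in D(H)$ and $\mu \in \hy(G)_n$, with $\Ad_n(g)(\mu) = \sum_j c_j(g)\, \mu'_j$ and $\delta'_j(f) = \delta[h \mto c_j(h)\, f(h)]$. Specialized to $\delta = \delta_h$ this reads $\delta_h \ast \mu = \Ad_n(h)(\mu) \ast \delta_h$, and compatibility condition (2) of \Cref{Def 1 - Compatible hyperalgebra modules} states precisely that this identity holds on $V$. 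Using density of the span of the Dirac distributions in $D(H)$ (\Cref{Prop 1 - Properties of the distribution algebra} (iv)) together with separate continuity of the $\hy(G)$- and $D(H)$-actions on $V$, one extends this to $\delta \ast (\mu \ast v) = \sum_j \mu'_j \ast (\delta'_j \ast v)$ for all $\delta \in D(H)$; condition (1) handles the overlap on $\hy(H)$. These two relations, together with associativity of convolution in $D(G)$, reduce the vanishing assertion above and the verification of the remaining $D(\Fg,H)$-module axioms on $V$ to identities that hold in $D(G)$ itself; separate continuity of the resulting action is inherited from that of the building blocks.

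For the backward direction, restrict a separately continuous $D(\Fg,H)$-module $V$ along the inclusions $\hy(G), D(H) \hookrightarrow D(\Fg,H)$. The $D(H)$-action yields a locally analytic $H$-representation by \Cref{Prop 1 - Equivalences for categories of locally analytic representations} (i). Compatibility (1) is automatic since $\hy(H)$ has a single image in $D(\Fg,H)$, and (2) follows by applying the identity $\delta_h \ast \mu = \Ad_n(h)(\mu) \ast \delta_h$, valid in $D(\Fg,H) \subset D(G)$, to $v$. The two functors are visibly mutually quasi-inverse, as each merely re-expresses the same action on the same topological vector space. The anti-equivalence in the second assertion then follows by combining the equivalence just established with \Cref{Prop 1 - Equivalences for categories of locally analytic representations} (ii): strong duality sends a locally analytic $(\hy(G),H)$-module of compact type to a nuclear $K$-Fr\'echet space equipped with the contragredient $D(\Fg,H)$-action, which remains separately continuous as the transpose of a separately continuous bilinear map on reflexive spaces, and the compatibilities (1), (2) are clearly preserved under dualization.
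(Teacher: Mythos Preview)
Your proposal is correct and follows essentially the same approach as the paper: define the $D(\Fg,H)$-action via $(\mu \ast \delta) \ast v \defeq \mu \ast (\delta \ast v)$, use the commutation identity $\delta_h \ast \mu = \Ad_n(h)(\mu) \ast \delta_h$ together with condition (2) and density of Dirac distributions to handle the compatibility of the two actions, and invoke \Cref{Prop 1 - Equivalences for categories of locally analytic representations} (ii) for the anti-equivalence. The paper organizes the argument slightly differently---it asserts well-definedness briefly (citing condition (1)) and then isolates associativity as the nontrivial step where condition (2) enters---whereas you treat well-definedness as the main obstacle and handle it together with associativity; but the ingredients and logic are the same.
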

\begin{proof}
	A locally analytic $(\hy(G),H)$-module $V$ naturally comes with a separately continuous $D(H)$-module structure by \Cref{Prop 1 - Equivalences for categories of locally analytic representations} (i).
	Via setting
	\begin{equation*}
		(\mu \ast \delta) \ast v \defeq \mu \ast (\delta \ast v) \quad\text{, for $\mu \in \hy(G)$, $\delta \in D(H)$, $v\in V$,}
	\end{equation*}	
	and $K$-linear extension, we obtain a separately continuous homomorphism
	\begin{equation*}
		D(\Fg,H) \times V \lra V
	\end{equation*}
	which is well defined by condition (1).
	To see that this defines a $D(\Fg,H)$-module structure, the only non-trivial assertion to verify is the associativity.
	Utilizing the associativity of the $\hy(G)$- and $D(H)$-actions and the density of the Dirac distributions, it suffices to show that
	\begin{equation*}
		(\delta_h \ast \mu) \ast v = \delta_h \ast (\mu \ast v) \quad\text{, for all $h\in H$, $\mu \in \hy(G)$, and $v\in V$.}
	\end{equation*}
	But like in the proof of \Cref{Cor 1 - Locally analytic representation satisfies compatibility condition for adjoint representation} we see that
	\begin{equation*}
		(\delta_h \ast \mu) \ast v =  \Ad_n(h)(\mu) \ast (h.v) .
	\end{equation*}
	Therefore the associativity follows from condition (2).
	This also shows that conversely we obtain a locally analytic $(\hy(G),H)$-module structure on a separately continuous $D(\Fg,H)$-module.

	For the anti-equivalence of categories, one argues analogously to the proof of \Cref{Prop 1 - Equivalences for categories of locally analytic representations} (ii).
\end{proof}

\subsection{Non-Archimedean Manifolds Arising from Rigid Analytic Spaces}

Here we want to associate locally analytic manifolds to rigid analytic spaces and schemes satisfying some assumptions.
When applied to a smooth algebraic group $\bG$ this allows us to relate the hyperalgebra of the locally analytic Lie group associated with $\bG$ to the (algebraic) distribution algebra ${\rm Dist}(\bG)$ as defined in \cite[I.\ \S 7.7]{Jantzen03RepAlgGrp}.
For the moment, let $L$ be a complete non-archimedean field with non-trivial absolute value $\abs{\blank}$.

Let $X$ be a rigid analytic $L$-space and let $\Fm_x$ denote the maximal ideal of $\CO_{X,x}$, for $x\in X$.
We consider on the set of $L$-valued points of $X$
\begin{equation*}
	X(L) = \left\{ x \in X \middle{|} \CO_{X,x}/\Fm_x = L \right\}
\end{equation*}
the topology generated by $U(L)\subset X(L)$, for all affinoid subdomains $U \subset X$.

\begin{lemma}\label{Lemma 1 - Comparision of canonical topology of rigid unit ball}
	For the affinoid unit ball $\BB^n = \Sp \, K \langle T_1,\ldots,T_n \rangle$, for $n\in \BN_0$, the topology defined on $\BB^n(L) = B_1^{n}(0)$ as above agrees with the ``euclidean'' topology given via $B_1^{n}(0) \subset L^n$.
\end{lemma}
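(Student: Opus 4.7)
The plan is to verify the two topologies coincide by showing each contains the other. Write $T_a$ for the topology on $\BB^n(L) = B_1^n(0)$ generated by the subsets $U(L)$ for affinoid subdomains $U \subset \BB^n$, and $T_e$ for the euclidean topology inherited from $L^n$.

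The easier direction is $T_e \subseteq T_a$. Since the absolute value on $L$ is non-trivial, the value group $|L\unts|$ accumulates at $0$, and so the polydisks $B_{\ul{r}}^{n}(\ul{a})$ with $\ul{a} \in B_1^n(0)$ and $\ul{r} \in |L\unts|^n$ form a basis for $T_e$. Each such polydisk coincides with $U(L)$ for the Weierstrass subdomain $U = \{x \in \BB^n \mid |T_i - a_i| \leq r_i,\ i=1,\ldots,n\}$, which is affinoid by the standard Weierstrass construction since $a_i \in \CO_L$. Hence the basic opens of $T_e$ already lie in the generating family of $T_a$.

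For the reverse inclusion $T_a \subseteq T_e$, I would have to show that $U(L)$ is $T_e$-open for every affinoid subdomain $U \subset \BB^n$. Invoking the Gerritzen--Grauert theorem, any such $U$ is a finite union of rational subdomains, and the formation of $L$-points commutes with finite unions, so it suffices to treat a rational subdomain $U = \{x \in \BB^n \mid |f_i(x)| \leq |g(x)|,\ i=1,\ldots,m\}$ with $f_1,\ldots,f_m,g \in L\langle T_1,\ldots,T_n\rangle$ having no common zero on $\BB^n$. Given $a \in U(L)$, one first notes that $g(a) \neq 0$ -- otherwise $|f_i(a)| \leq |g(a)| = 0$ forces all $f_i(a) = 0$, contradicting the no-common-zero assumption. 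The functions $f_1,\ldots,f_m,g$ are $T_e$-continuous on $B_1^n(0)$ (they are strictly convergent power series), so the strict triangle inequality furnishes a $T_e$-neighbourhood of $a$ on which $|g(x)| = |g(a)|$ and, for each $i$, either $|f_i(x)| = |f_i(a)|$ (if $f_i(a) \neq 0$) or $|f_i(x)|$ can be made arbitrarily small (if $f_i(a)=0$). Intersecting these finitely many neighbourhoods yields a euclidean neighbourhood of $a$ inside $U(L)$, so $U(L)$ is $T_e$-open.

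The main obstacle is the reduction step via Gerritzen--Grauert: openness of $U(L)$ in $T_e$ is not directly accessible for an arbitrary affinoid subdomain, while for rational subdomains it reduces to a routine application of the ultrametric property of $L$. Together, the two inclusions give $T_a = T_e$.
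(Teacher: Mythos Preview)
Your proof is correct, but it takes a different route from the paper for the inclusion $T_a \subseteq T_e$. You invoke Gerritzen--Grauert to reduce an arbitrary affinoid subdomain to a finite union of rational subdomains, and then argue directly via continuity of the defining power series and the ultrametric inequality. The paper instead appeals to the fact (from \cite[7.2.1 Prop.\ 3(ii)]{BoschGuentzerRemmert84NonArchAna}) that for any point $x$ the Weierstra\ss{} domains $\BB^n(f_1,\ldots,f_r)$ with $f_i \in \Fm_x$ form a neighbourhood basis for the canonical topology; it then shows by an explicit estimate that for $L$-rational $x$ the standard polydiscs $\BB^n_{\varepsilon,x}$ are cofinal among these Weierstra\ss{} neighbourhoods, whence each $U(L)$ contains a euclidean ball around each of its points.

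Your approach is perhaps conceptually cleaner and more uniform --- once Gerritzen--Grauert is granted, the argument for rational domains is a routine continuity check --- but it invokes a substantially deeper structural theorem than is strictly needed. The paper's argument stays closer to first principles: the Weierstra\ss{}-neighbourhood-basis result in BGR is established well before Gerritzen--Grauert and is more elementary. For the easy direction $T_e \subseteq T_a$ both proofs are essentially identical.
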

\begin{proof}
	By \cite[7.2.5 Cor.\ 4]{BoschGuentzerRemmert84NonArchAna} the affinoid subdomains $U\subset \BB^n$ form a basis of the canonical topology of $\BB^n$.
	Moreover, for any $x=(x_1,\ldots,x_n)\in \BB^n$, the Weierstra{\ss} domains
	\begin{equation*} 
		\BB^n (f_1,\ldots,f_r) \defeq \big\{ y \in \BB^n \,\big\vert\, \abs{f_1(y)} \leq 1, \ldots, \abs{f_r(y)}\leq 1 \big\} = \BB^n(f_1)\cap\ldots\cap\BB^n(f_r) ,
	\end{equation*}
	for $f_1,\ldots,f_r \in \Fm_x$, form a basis of neighbourhoods of $x$ in the canonical topology \cite[7.2.1, Prop.\ 3 (ii)]{BoschGuentzerRemmert84NonArchAna}.

	For $L$-valued $x \in \BB^n(L)$ and $f\in \Fm_x = (T_1 - x_1,\ldots,T_n - x_n)$, let $f'_i \in L\langle T_1,\ldots,T_n \rangle$ such that $f= f'_1 \, (T_1-x_1) +\ldots + f'_n \, (T_n-x_n)$.
	We moreover find $c\in L\unts$ such that $\abs{c}=\max_{i=1}^n \abs{f'_i}_\mathrm{sup}\eqdef \varepsilon^{-1}$. 
	Then, for $y \in \BB^n_{{\varepsilon},x} \defeq \BB^n\big(c\,(T_1-x_1),\ldots,c\,(T_n-x_n) \big)$, i.e.\ $y\in \BB^n$ satisfying $\max_{i=0}^n \abs{y_i - x_i} \leq {\varepsilon}$, we have
	\begin{equation*}
		\abs{f(y)}\leq \max_{i=1}^n \abs{f'_i(y)}\abs{y_i - x_i} \leq \varepsilon^{-1} \max_{i=1}^n \abs{y_i - x_i} \leq 1 ,
	\end{equation*}
	and therefore $\BB^n_{{\varepsilon},x} \subset \BB^n (f)$.
	We conclude that the $\BB^n_{{\varepsilon},x}$, for $\varepsilon \in \abs{L\unts}$, constitute a neighbourhood basis of $x$ for the canonical topology.
	But we also have $\BB^n_{{\varepsilon},x}(L) = B^{n}_{\varepsilon}(x)$ which shows that the ``euclidean'' topology on $\BB^n(L)$ is finer than the topology defined via the affinoid subdomains of $\BB^n$.

	That the two topologies agree now easily follows from $B^{n}_\varepsilon (x) = \BB^n_{\varepsilon,x} (L)$ and the fact that the $\BB^n_{\varepsilon,x}$ are affinoid subdomains, for $x\in \BB^n(L)$ and $\varepsilon \in \abs{L\unts}$.
\end{proof}

For $X$ with good properties, we now want to endow $X(L)$ with the structure of a locally $L$-analytic manifold.
To define charts, we will use the following lemma. 
Its statement is probably well-known but we include a proof since we could not find it in the literature.

\begin{lemma}\label{Lemma 1 - Existence of polydisc neighbourhood for rigid analytic variety}
	Let $X$ be a rigid analytic $L$-variety, and $x \in X$ a regular $L$-rational point of local dimension $n$.
	Then there exists an open affinoid subdomain $U\subset X$ with $x \in U$ such that $U$ is isomorphic to the $n$-dimensional unit ball $\BB^n$.
	Moreover, for a system of regular parameters $(f_1,\ldots,f_n)$ of $\Fm_x$, the isomorphism $\varphi \colon U \xrightarrow{\cong} \BB^n$ can be chosen in such a way that $\varphi(x) = 0$ and $T_i$ is mapped to $f_i$ under the induced $\CA(L^n,L) \cong \CO_{\BB^n,0} \xrightarrow{\cong} \CO_{X,x}$.
\end{lemma}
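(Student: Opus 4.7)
The plan is to realize the regular parameters $f_1,\ldots,f_n$ as coordinate functions on a small polydisc neighbourhood of $x$, via a rigid-analytic inverse function theorem.

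First I would reduce to the affinoid setting. After shrinking $X$ to an affinoid open neighbourhood of $x$, we may assume $X = \Sp A$ is affinoid and that the regular parameters $f_1,\ldots,f_n \in \Fm_x \subset A$ are defined on all of $X$. After multiplying each $f_i$ by a suitable nonzero element of $L$ (which does not affect the property of being a regular system of parameters up to rescaling the $T_i$), I may further assume $\lVert f_i\rVert_{\sup} \leq 1$, so that the $f_i$ induce a morphism
\begin{equation*}
	\varphi \colon X \lra \BB^n \,,\quad T_i \lto f_i,
\end{equation*}
with $\varphi(x) = 0$.

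Next I would analyse $\varphi$ on completed local rings. Since $x$ is a regular $L$-rational point of local dimension $n$ and $(f_1,\ldots,f_n)$ is a regular system of parameters, the completed local ring $\widehat{\CO}_{X,x}$ is isomorphic to the formal power series ring $L\llrrbracket{f_1,\ldots,f_n}$. On the other hand, $\widehat{\CO}_{\BB^n,0} \cong L\llrrbracket{T_1,\ldots,T_n}$. The induced homomorphism $\widehat{\varphi}^\ast \colon \widehat{\CO}_{\BB^n,0} \ra \widehat{\CO}_{X,x}$ sends $T_i$ to $f_i$ and is thus a topological isomorphism. In particular $\varphi$ is étale at $x$, and since $x$ is $L$-rational with trivial residue field extension, this is the precise infinitesimal condition for $\varphi$ to be a local isomorphism at $x$.

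Now comes the main step, where I would invoke the rigid-analytic inverse function theorem: a morphism of smooth rigid-analytic $L$-spaces which induces an isomorphism on completed local rings at a pair of corresponding $L$-rational points is a local isomorphism in the $G$-topology, i.e.\ there exist affinoid subdomains $U_0 \subset X$ containing $x$ and $V_0 \subset \BB^n$ containing $0$ such that $\varphi$ restricts to an isomorphism $U_0 \xrightarrow{\cong} V_0$. Concretely, this can be established by a Weierstrass-style argument: writing each $T_i$ as a formal power series in $f_1,\ldots,f_n$ by inverting $\widehat{\varphi}^\ast$, the resulting series converge on some small polydisc, which produces an inverse morphism after suitable shrinking. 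This is the key obstacle and relies on standard rigid-analytic results (e.g.\ in \cite{BoschGuentzerRemmert84NonArchAna} around smooth/étale morphisms).

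Finally I would pass to a polydisc neighbourhood. By \Cref{Lemma 1 - Comparision of canonical topology of rigid unit ball} and its proof, the polydiscs $\BB^n_{\varepsilon,0}$, for $\varepsilon\in \abs{L\unts}$, form a neighbourhood basis of $0$ in $\BB^n$ by affinoid subdomains. Pick $\varepsilon\in \abs{L\unts}$ with $\BB^n_{\varepsilon,0} \subset V_0$, and set $U \defeq \varphi^{-1}\big(\BB^n_{\varepsilon,0}\big) \cap U_0$. Then $\varphi$ restricts to an isomorphism $U \xrightarrow{\cong} \BB^n_{\varepsilon,0}$, and after rescaling $T_i \mto c\,T_i$ for suitable $c\in L\unts$ with $\abs{c}=\varepsilon^{-1}$, we obtain an isomorphism $U \cong \BB^n$ sending $x$ to $0$ and such that (up to this rescaling) $T_i$ pulls back to $f_i$, as required. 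Absorbing the rescaling into the regular parameters at the outset gives the statement for the prescribed $(f_1,\ldots,f_n)$.
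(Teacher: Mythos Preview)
Your plan is essentially the same as the paper's proof: construct $\varphi\colon X\to\BB^n$ from the $f_i$, check it induces an isomorphism on completed local rings at $x$, then conclude it is a local isomorphism and shrink to a polydisc. The paper carries this out with one extra explicit reduction you gloss over. Before invoking the local-isomorphism criterion, the paper first localizes so that the maximal ideal $\Fm\subset A$ (not just $\Fm A_\Fm$) is generated by $f_1,\ldots,f_n$: concretely, it finds $s\in A\setminus\Fm$ with $s\,\Fm\subset(f_1,\ldots,f_n)$ and $|s(x)|\geq 1$, then replaces $A$ by $A\langle s^{-1}\rangle$. The point is that this guarantees $x$ is the \emph{unique} preimage of $0$, which is a hypothesis of the precise result invoked, namely \cite[7.3.3 Prop.\ 5]{BoschGuentzerRemmert84NonArchAna}: if $\varphi^{-1}(0)=\{x\}$ and $\widehat{\varphi}^\flat_x$ is bijective, then there is an affinoid $V\ni 0$ with $\varphi^{-1}(V)\to V$ an isomorphism.

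Your black-box ``inverse function theorem'' (iso on completed local rings $\Rightarrow$ local iso on affinoid subdomains $U_0\cong V_0$) is correct, but to justify it from the cited literature you would still need to isolate $x$ in its fiber first---either by the paper's localization, or by observing that an iso on completed local rings forces $x$ to be isolated in $\varphi^{-1}(0)$ and then shrinking. So your argument is right in outline; just be aware that the ``standard result'' you defer to is exactly BGR 7.3.3 Prop.\ 5 together with this fiber-isolation step.
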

\begin{proof}
	We may assume that $X$ is affinoid, say $X = \Sp \, A$, for some affinoid $L$-algebra $A$.
	Because $x\in X$ is regular, $A_{\Fm}$ is a regular local ring (\cite[7.3.2, Prop.\ 8(i)]{BoschGuentzerRemmert84NonArchAna}) where $\Fm \subset A$ denotes the maximal ideal corresponding to $x$.

	Let $(\tilde{f}_1,\ldots,\tilde{f}_n) = \Fm A_\Fm$ denote a system of regular parameters, and let $\widebar{f}_i \in \CO_{X,x}$ be the image of $\tilde{f}_i$ under $A_\Fm \ra \CO_{X,x}$, see \cite[7.3.2, Prop.\ 3]{BoschGuentzerRemmert84NonArchAna}.
	After shrinking $X$, we may assume that the $\widebar{f}_i$ lift to $f_i \in \Fm$, so that $f_i$ is mapped to $\tilde{f}_i$ under the localization map $A \ra A_\Fm$.
	Using that $\Fm \subset A$ is finitely generated, one verifies that there exists $s \in A \setminus \Fm$ such that $s \,\Fm \subset (f_1,\ldots,f_n)$ and $\abs{s(x)} \geq 1$.
	Via replacing $A$ by the completed localization $A \langle s^{-1} \rangle $, we may assume that $\Fm = (f_1,\ldots,f_n)$.
	Furthermore, by scaling we may assume that $\abs{f_i}_\mathrm{sup} \leq 1$ or equivalently that the $f_i$ are power-bounded, see \cite[6.2.3, Prop.\ 1]{BoschGuentzerRemmert84NonArchAna}.
	Therefore there exists a continuous homomorphism of $L$-algebras \cite[6.1.1, Prop.\ 4]{BoschGuentzerRemmert84NonArchAna} 
	\begin{equation*}
		\varphi^\flat \colon L \langle T_1,\ldots, T_n \rangle \lra A \,,\quad T_i \lto f_i ,
	\end{equation*}
	which induces a morphism $\varphi \colon X \ra \BB^n$ of affinoid $L$-varieties.
	It follows that $\varphi(x)=0 \in \BB^n$ and that $x$ is the only point of $X$ which is mapped to $0$ since $\Fm = (f_1,\ldots,f_n)$.
	Now $\varphi^\flat$ induces a homomorphism of the completion of the local rings
	\begin{equation*}
		\widehat{\varphi}^\flat_x \colon L \llrrbracket{T_1,\ldots,T_n} \cong \widehat{\CO}_{\BB^n,0} \lra \widehat{\CO}_{X,x} \,,\quad T_i \lto \widebar{f_i}.
	\end{equation*}
	As the $\widebar{f_1},\ldots,\widebar{f_n}$ generate $\Fm \widehat{\CO}_{X,x}$ and $\widehat{\CO}_{X,x}/ \Fm \widehat{\CO}_{X,x} \cong L$, we may apply \cite[Thm.\ 7.16 b.]{Eisenbud95CommAlg} to conclude that $\widehat{\varphi}^\flat_x$ is surjective.
	By considering the dimensions of these rings it follows that $\widehat{\varphi}^\flat_x$ is in fact an isomorphism.
	Then \cite[7.3.3, Prop.\ 5]{BoschGuentzerRemmert84NonArchAna} implies that there exists an affinoid subdomain $V\subset \BB^n$ containing $0$ such that $\varphi\colon \varphi^{-1}(V) \ra V$ is an isomorphism.
	But the Weierstra{\ss} domains $\BB^n ( c \, T_1,\ldots, c \,T_n ) $, for $c \in L\unts$ with $\abs{c}\geq 1$, form a basis of neighbourhoods of $0 \in \BB^n$, cf.\ the proof of \Cref{Lemma 1 - Comparision of canonical topology of rigid unit ball}.
	In this way we obtain the sought open affinoid subdomain 
	\begin{equation*}
		U \defeq \varphi^{-1} \big(\BB^n ( c\, T_1,\ldots, c\, T_n ) \big) \overset{\varphi}{\lra} \BB^n ( c \,T_1,\ldots, c \,T_n ) \cong \BB^n .
	\end{equation*}
\end{proof}

\begin{lemma}\label{Lemma 1 - Morphism between rigid analytic unit balls is given by convergent power series}
	Let $\BB^n \ra \BB^m$ be a morphism of rigid analytic $L$-spaces, for $n,m\in \BN_0$.
	Then the induced map $B_1^{n}(0) \ra B_1^{m}(0)$ is given by convergent power series.
\end{lemma}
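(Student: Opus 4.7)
The plan is to reduce everything to the explicit description of $L$-algebra homomorphisms between Tate algebras. By the anti-equivalence between affinoid $L$-spaces and affinoid $L$-algebras, the morphism $\BB^n \to \BB^m$ corresponds to a continuous (equivalently, just an) $L$-algebra homomorphism
\begin{equation*}
	\varphi^\flat \colon L\langle T_1,\ldots,T_m\rangle \lra L\langle S_1,\ldots,S_n\rangle.
\end{equation*}
Such a homomorphism is determined by the images $f_i \defeq \varphi^\flat(T_i) \in L\langle S_1,\ldots,S_n\rangle$, for $i=1,\ldots,m$, which necessarily satisfy $\abs{f_i}_{\mathrm{sup}} \leq 1$ because $T_i$ is power-bounded in $L\langle T_1,\ldots,T_m\rangle$ and $\varphi^\flat$ preserves power-boundedness.

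The next step is to identify $L\langle S_1,\ldots,S_n\rangle = \CA_{\ul{1}}(L^n,L)$, where $\ul{1} = (1,\ldots,1) \in \BR_{>0}^n$, and to observe that each $f_i$ therefore defines a function $B_1^n(0) \to L$ via \eqref{Eq 1 - Function associated to a power series}, with $\abs{f_i(x)} \leq \norm{f_i}_{\ul{1}} = \abs{f_i}_{\mathrm{sup}} \leq 1$ for all $x \in B_1^n(0)$. On $L$-valued points, the induced map $\BB^n(L) \to \BB^m(L)$ sends $x$ to the maximal ideal $\varphi^{\flat,-1}(\Fm_x)$, which under the identification $\BB^m(L) = B_1^m(0)$ is precisely the tuple $(f_1(x),\ldots,f_m(x))$.

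Thus the induced map $B_1^n(0) \to B_1^m(0)$ is given componentwise by the power series $f_1,\ldots,f_m \in \CA_{\ul{1}}(L^n,L)$, which is exactly the claim. There is no real obstacle here; the only small point to verify carefully is that evaluation at an $L$-rational point commutes with the algebra homomorphism in the expected way, but this is immediate from the construction of the bijection $\BB^n(L) \cong B_1^n(0)$ sending a maximal ideal to the tuple of values of the coordinate functions.
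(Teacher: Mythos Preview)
Your proposal is correct and follows essentially the same approach as the paper: both use the anti-equivalence with affinoid $L$-algebras and the description of $L$-algebra homomorphisms out of a Tate algebra (the paper cites \cite[6.1.1 Prop.\ 4]{BoschGuentzerRemmert84NonArchAna} for this, while you spell out its content). The only cosmetic difference is that the paper packages the conclusion as the identification $\Hom(\BB^n,\BB^m) \cong \big(\CO_L\langle X_1,\ldots,X_n\rangle\big)^m$, whereas you work directly with the $f_i = \varphi^\flat(T_i)$.
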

\begin{proof}
	By \cite[6.1.1 Prop.\ 4]{BoschGuentzerRemmert84NonArchAna} we have 
	\begin{align*}
		\Hom (\BB^{n}, \BB^{m} ) \cong \Hom_{L-alg} \big(L \langle Y_1,\ldots, Y_{m} \rangle , L \langle X_1,\ldots, X_{n} \rangle \big) 
		\cong \big(\CO_L \langle X_1,\ldots, X_{n} \rangle \big)^{m} .
	\end{align*}
	where to $(f_1,\ldots, f_{m}) \in \big( \CO_L \langle X_1,\ldots, X_{n} \rangle \big)^m$ one associates the homomorphism which on $L$-valued points is given by 
	\begin{equation*}
		\BB^{n} (L) \lra \BB^{m} (L) \,,\quad z \lto \big(f_1(z),\ldots, f_{m} (z) \big) .
	\end{equation*}
\end{proof}

\begin{definition}[{\cite[Def.\ 2.5.6]{deJongvanderPut96EtaleCohomRigidAnSp}}]\label{Def 1 - Definition of rigid analytic spaces of countable type}
	A rigid analytic $L$-space is defined to be \textit{of countable type} if there exists an admissible covering $X=\bigcup_{i\in I} U_i $ by affinoid open subdomains $U_i \subset X$ such that $I$ is at most countable.
\end{definition}

\begin{remarks}\label{Rmks 1 - About rigid analytic spaces of countable type}
	\begin{altenumerate}
		\item
		Examples of such spaces include the rigid analytic $L$-space associated to a scheme of finite type over $L$ \cite[Rmk.\ 2.5.11]{deJongvanderPut96EtaleCohomRigidAnSp}.
		\item
		When the field $L$ contains a dense countable subfield (e.g.\ when $L$ is a non-archimedean local field), any admissible open subset $U$ of a rigid analytic $L$-space $X$ of countable type is of countable type itself\footnote{We learned about this from \url{https://mathoverflow.net/q/155500} (version: 2014-01-23), and we follow the reasoning suggested there by the user ``ACL'' (\url{https://mathoverflow.net/users/10696/acl}) for a proof.}.
	\end{altenumerate}
\end{remarks}
\begin{proof}[{Proof of (ii)}]
	Taking an at most countable admissible covering $X = \bigcup_{i\in I} U_i$ by open affinoid subdomains $U_i\subset X$, and considering the admissible open $U\cap U_i \subset U_i$, we may assume that $X$ is affinoid.

	Hence, let $X = \Sp\,A$, for some affinoid $L$-algebra $A$, and let $U=\bigcup_{i\in I} U_i$ be some admissible covering by open affinoid subdomains $U_i \subset X$.
	This admissible covering is an open covering with respect to the canonical topology on $X$.
	By \cite[Prop.\ 2.1.15]{Berkovich90SpecThAnGeom}, we have a topological embedding $X \hookrightarrow \CM(A)$ where $\CM(A)$ denotes the Berkovich spectrum associated with $A$.
	Moreover, using that $L$ contains a dense countable subfield, one can find a topological embedding $\CM(A) \hookrightarrow [0,1]^\BN$, see \cite[p.\ 4]{Chambert-Loir07MesEtEquiDistEspBerkovich}.
	Therefore $U$ with its induced subspace topology has a countable basis \cite[Ch.\ IX.\ \S 2.8 Prop.\ 12]{Bourbaki66GenTop2}.
	But this implies that there already exists an at most countable subset $J\subset I$ such that $U = \bigcup_{i\in J} U_i$ is a covering \cite[Ch.\ IX.\ \S 2.8 Prop.\ 13]{Bourbaki66GenTop2}.
\end{proof}

\begin{definitionproposition}
	Let $X$ be a smooth, separated rigid analytic $L$-space of countable type.
	Then $X(L)$ with the topology generated by $U(L)\subset X(L)$, for all affinoid subdomains $U\subset X$, and the atlas with charts induced by the isomorphisms of \Cref{Lemma 1 - Existence of polydisc neighbourhood for rigid analytic variety} is a locally $L$-analytic manifold.
	We will denote it by $X^\la$.
\end{definitionproposition}
\begin{proof}
	For each $x\in X(L)$, by \Cref{Lemma 1 - Existence of polydisc neighbourhood for rigid analytic variety} there exist an affinoid subdomain $U_x \subset X$ containing $x$ and an isomorphism $\varphi_x \colon U_x \xrightarrow{\cong} \BB^{n_x}$, $x \mto 0$, where $n_x$ is the local dimension at $x$.
	These isomorphisms yield charts
	\begin{equation*}
		\varphi_x \colon U_x(L) \overset{\cong}{\lra} B_1^{n_x} (0) \subset L^{n_x}
	\end{equation*}
	which we want to show to be compatible.
	For $x,y\in X(L)$ we obtain an isomorphism of rigid analytic $L$-spaces
	\begin{equation*}
		\varphi_y \circ \varphi_x^{-1} \colon \varphi_x (U_x \cap U_y) \overset{\cong}{\lra} U_x \cap U_y \overset{\cong}{\lra } \varphi_y (U_x \cap U_y) .
	\end{equation*}
	For $z \in \varphi_x (U_x \cap U_y)(L)$, we find $c\in L\unts$ such that
	\[Y'\defeq Y\big(c\,(T_1 -z_1),\ldots,c\,(T_{n_x}-z_{n_x}) \big) \subset Y \defeq \varphi_x (U_x \cap U_y) \]
	is an open affinoid subdomain with 
	\begin{equation*}
		\psi_z \colon Y' \overset{\cong}{\lra} \BB^{n_x} \,,\quad w \lto c\,(w-z),
	\end{equation*}
	and $Y'(L) = B_{\abs{c}^{-1}}^{n_x} (z)$.
	Applying \Cref{Lemma 1 - Morphism between rigid analytic unit balls is given by convergent power series} to
	\begin{equation*}
		\varphi_y \circ \varphi^{-1}_x \circ \psi^{-1}_z \colon \BB^{n_x} \lra Y' \lra \varphi_x^{-1}(Y') \lra \varphi_y ( \varphi_x^{-1}(Y')) \longhookrightarrow \BB^{n_y}
	\end{equation*}
	we find that $f(w) \defeq (\varphi_y \circ \varphi^{-1}_x \circ \psi^{-1}_z)(w)$ is given by convergent power series, for $w\in B_{\abs{c}^{-1}}^{n_x}(z)$.
	Therefore $(\varphi_y \circ \varphi_x^{-1})(w) = f(c\,(w-z))$, for all $z \in Y(L)$ and such $w$, shows that $\varphi_y \circ \varphi_x^{-1}$ is locally $L$-analytic on $\varphi_x (U_x \cap U_y)(L)$.
	This shows that the charts $\varphi_x$, for $x\in X(L)$, are compatible, and we obtain a maximal atlas induced by them.

	To see that $X(L)$ is second countable let $X = \bigcup_{i\in I} X_i$ be an admissible covering by open affinoid subdomains.
	We may assume $I$ to be at most countable by the assumption on $X$ to be of countable type.
	But each affinoid $X_i$ is the a subspace of some $\BB^{n_i}$ with respect to the canonical topology.
	Hence $X_i(L) \subset \BB^{n_i}(L) \subset L^{n_i}$ is second countable by \Cref{Lemma 1 - Comparision of canonical topology of rigid unit ball}.

	By the assumption that $X$ is separated, the diagonal morphism $\Delta \colon X \ra X \times_L X$ is a closed immersion.
	It follows from \cite[9.3.5 Lemma 3]{BoschGuentzerRemmert84NonArchAna} that $U \times_L U \subset X \times_L X$ is an open affinoid subdomain, for every open affinoid subdomain $U \subset X$, and from \cite[9.5.3 Prop.\ 2]{BoschGuentzerRemmert84NonArchAna} that the morphism $\Delta \colon U \ra U\times_L U$ is a closed immersion. 
	Hence $\Delta(U) \subset U \times_L U$ is closed in the canonical topology.
	Therefore we can deduce that on the level of $L$-valued points
	\[ \Delta\big(U(L)\big) \subset U(L) \times U(L) = (U \times_L U)(L) \]
	is closed when $U(L)$ is endowed with the topology generated by all open affinoid subdomains of $U$.
	This shows that $X(L)$ is Hausdorff.
	Finally, since $X(L)$ clearly is locally compact in addition to being second countable and Hausdorff, we can conclude that it is paracompact.
\end{proof}

\begin{corollary}\label{Cor 1 - Local analytification functor}
	Via assigning
	\begin{align*}
		X &\lto X^\la,  \\
		[f\colon X \ra Y] &\lto f\res{X^\la}
	\end{align*}
	we obtain a functor from the full subcategory of smooth, separated rigid analytic $L$-spaces of countable type to the category of locally $L$-analytic manifolds.
\end{corollary}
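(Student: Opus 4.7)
The assignments on objects and morphisms respect the identity and composition on the level of underlying maps of sets automatically, so the only substantive point is to verify that, for any morphism $f \colon X \ra Y$ of smooth, separated rigid analytic $L$-spaces of countable type, the induced map $f_L \defeq f|_{X(L)} \colon X^\la \ra Y^\la$ is a locally $L$-analytic map of manifolds in the sense of \Cref{Def 1 - Locally analytic maps between locally analytic manifolds}.

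First I would check continuity of $f_L$ with respect to the topologies defined on $X(L)$ and $Y(L)$: for every affinoid subdomain $V \subset Y$, the preimage $f^{-1}(V)\subset X$ is an admissible open subset, and hence for each $x\in f_L^{-1}(V(L))$ there exists an affinoid subdomain $W\subset f^{-1}(V)$ with $x\in W$, giving the neighbourhood $W(L) \subset f_L^{-1}(V(L))$.

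Next, to verify local analyticity at a given $x\in X(L)$ with $y\defeq f(x) \in Y(L)$, I would pick the distinguished charts from \Cref{Lemma 1 - Existence of polydisc neighbourhood for rigid analytic variety}: affinoid subdomains $U_x\subset X$ and $V_y \subset Y$ with isomorphisms $\varphi_x \colon U_x \xrightarrow{\cong} \BB^{n_x}$ and $\psi_y \colon V_y \xrightarrow{\cong} \BB^{m_y}$ sending $x$ to $0$ and $y$ to $0$ respectively. Since $f^{-1}(V_y)$ is admissible open and contains $U_x \cap f^{-1}(V_y) \ni x$, after shrinking $U_x$ to a suitable Weierstra{\ss} subdomain around $x$ (of the form $\BB^{n_x}(c\,T_1,\ldots,c\,T_{n_x})$ inside $\BB^{n_x}$, cf.\ the proof of \Cref{Lemma 1 - Comparision of canonical topology of rigid unit ball}) and rescaling via the resulting isomorphism with $\BB^{n_x}$, I can arrange $f(U_x) \subset V_y$ while keeping $\varphi_x$ in the required form.

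Then $\psi_y \circ f \circ \varphi_x^{-1} \colon \BB^{n_x} \ra \BB^{m_y}$ is a morphism of rigid analytic $L$-spaces between unit balls, and \Cref{Lemma 1 - Morphism between rigid analytic unit balls is given by convergent power series} implies that on $L$-points this map is given by convergent power series in the coordinates $T_1,\ldots,T_{n_x}$. In particular, on $\varphi_x(U_x(L)) = B_1^{n_x}(0)$ the composition $\psi_y \circ f_L \circ \varphi_x^{-1}$ lies in $C^\la(B_1^{n_x}(0), L^{m_y})$ in the sense of \Cref{Def 1 - Locally analytic functions with values in Banach space}, which is precisely the condition for $f_L$ to be locally $L$-analytic at $x$. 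Since $x\in X(L)$ was arbitrary, $f_L$ is a locally $L$-analytic map $X^\la \ra Y^\la$. Functoriality now follows immediately from the corresponding statements for morphisms of rigid analytic $L$-spaces. The main technical point to handle with care is the shrinking step, which must produce a chart isomorphic to a standard unit ball so that the power-series lemma applies verbatim; everything else is essentially bookkeeping.
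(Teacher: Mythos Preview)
Your proposal is correct and follows essentially the same approach as the paper. The paper's own proof is a single sentence referring back to the argument in the preceding proposition (where compatibility of charts is established via \Cref{Lemma 1 - Existence of polydisc neighbourhood for rigid analytic variety}, a Weierstra{\ss} shrinking step, and \Cref{Lemma 1 - Morphism between rigid analytic unit balls is given by convergent power series}); you have simply spelled out those same steps explicitly for a morphism $f\colon X\to Y$ rather than for a transition map between charts.
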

\begin{proof}
	Like in the proof of the previous proposition one shows that a morphism between rigid analytic $L$-spaces induces a locally $L$-analytic map between the associated manifolds.
\end{proof}

\begin{remarks}\label{Rmks 1 - Associating a locally analytic manifold with a scheme}
	\begin{altenumerate}
		\item
		For a smooth, separated $L$-scheme $X$ of finite type, we may pass to the rigid analytification $X^\rig$ which is of countable type by Remark \ref{Rmks 1 - About rigid analytic spaces of countable type} (i).
		Since being separated and smooth carries over to $X^\rig$, we may associate with $X$ the locally $L$-manifold $(X^\rig)^\la$.
		We denote the resulting functor by $(\blank)^\la$.
		\item
		In particular, if $\bG$ is a smooth algebraic group over $L$, it is necessarily separated.
		It follows from functoriality that the algebraic group structure of $\bG$ endows $\bG^\la$ with the structure of a locally $L$-analytic Lie group.
	\end{altenumerate}
\end{remarks}

We now assume that $L$ is a locally compact complete non-archimedean field and let $K$ be a finite extension of $L$.

\begin{proposition}\label{Prop 1 - Isomorphism between rigid analytic and locally analytic stalk}
	Let $X$ be a smooth, separated rigid analytic $L$-space of countable type, $x \in X(L)$.
	Then there is a isomorphism of local $K$-algebras
	\begin{equation}\label{Eq 1 - Isomorphism of rigid analytic stalk and locally analytic stalk}
		\CO_{X,x} \botimes{L} K \overset{\cong}{\lra} C^\la_x(X^\la, K) \,,\quad f \otimes \lambda \lto \lambda f\res{X^\la} .
	\end{equation}
\end{proposition}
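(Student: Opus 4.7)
The plan is to reduce to the case $X=\BB^n$, $x=0$ via a suitably chosen chart, and then to identify both sides with the space $\CA(L^n,K)$ of power series convergent at $0$.

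First, I would apply \Cref{Lemma 1 - Existence of polydisc neighbourhood for rigid analytic variety} to find an open affinoid subdomain $U\subset X$ containing $x$, together with an isomorphism $\varphi\colon U \xrightarrow{\cong} \BB^n$ sending $x$ to $0$, where $n$ is the local dimension of $X$ at $x$. This induces an isomorphism $\CO_{X,x}\cong \CO_{\BB^n,0}$ of local $L$-algebras. On the locally analytic side, $\varphi$ restricts (via \Cref{Lemma 1 - Comparision of canonical topology of rigid unit ball} and \Cref{Cor 1 - Local analytification functor}) to an analytic chart $\varphi^\la\colon U^\la \xrightarrow{\cong} B^n_1(0)$ centred at $x$, and \Cref{Prop 1 - Properties of the germs of locally analytic functions}(iv) then yields an isomorphism of local $K$-algebras $C^\la_x(X^\la,K) \cong \CA(L^n,K)$. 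It therefore suffices to produce an isomorphism of $K$-algebras $\CO_{\BB^n,0} \otimes_L K \cong \CA(L^n,K)$ compatible with these chart identifications.

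Next, I would describe the rigid stalk $\CO_{\BB^n,0}$ as the direct limit over the cofinal family of Weierstrass subdomains $\BB^n(cT_1,\ldots,cT_n)$, for $c\in L\unts$ with $\abs{c}\ge 1$ (as in the proofs of \Cref{Lemma 1 - Comparision of canonical topology of rigid unit ball} and \Cref{Lemma 1 - Existence of polydisc neighbourhood for rigid analytic variety}); identifying a section with its power series expansion and using the identity theorem \Cref{Prop 1 - Identity theorem for power series} gives
\begin{equation*}
\CO_{\BB^n,0} \;=\; \varinjlim_{\abs{c}\to\infty} L\langle cT_1,\ldots, cT_n\rangle \;=\; \varinjlim_{\abs{c}\to\infty} \CA_{(\abs{c}^{-1},\ldots,\abs{c}^{-1})}(L^n,L) \;=\; \CA(L^n,L).
\end{equation*}
Since $K$ is finite over $L$, choosing an $L$-basis of $K$ identifies $\CA_r(L^n,L)\botimes{L} K$ with $\CA_r(L^n,K)$ as $K$-modules for every multiradius $r$, and one checks these identifications are compatible with the restriction maps and with multiplication of power series. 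Passing to the direct limit (which commutes with tensoring by the finite-dimensional $L$-vector space $K$) therefore yields the desired isomorphism of $K$-algebras $\CA(L^n,L)\botimes{L} K \cong \CA(L^n,K)$.

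Finally, I would verify that composing these identifications recovers the map $f\otimes \lambda\mto \lambda f\res{X^\la}$: under the chart, a rigid analytic germ corresponds to its convergent power series, and multiplying the coefficients by $\lambda\in K$ produces precisely the power series representing the locally analytic germ $\lambda f\res{X^\la}$ on a polydisc around $x$. The main obstacle here is the coherence of this bookkeeping, ensuring that the power series on the rigid side and on the locally analytic side are written in the \emph{same} variables; this is precisely what the last assertion of \Cref{Lemma 1 - Existence of polydisc neighbourhood for rigid analytic variety} guarantees by letting us choose $\varphi$ so that a regular system of parameters at $x$ maps to the coordinates $T_1,\ldots,T_n$. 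That the resulting isomorphism is one of local $K$-algebras is automatic since both sides are local with residue field $K$ and the map is a $K$-algebra homomorphism surjective onto a local ring.
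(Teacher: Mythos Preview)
Your proof is correct and follows essentially the same approach as the paper: reduce via the chart of \Cref{Lemma 1 - Existence of polydisc neighbourhood for rigid analytic variety} to $\BB^n$ at $0$, then identify both stalks with $\CA(L^n,K)$ by taking the direct limit over the cofinal family of Weierstra{\ss} polydiscs and using that $K$ is finite over $L$. The only organisational difference is that the paper works on the $X$ side throughout (with the subdomains $U_\varepsilon=\varphi^{-1}(\BB^n_\varepsilon)$) and matches $\CO(U_\varepsilon)\otimes_L K \cong C^\rig(U_\varepsilon(L),K)$ level by level before passing to the limit, whereas you first transfer to $\BB^n$ and then compare; this is immaterial.
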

\begin{proof}
	Employing \Cref{Lemma 1 - Existence of polydisc neighbourhood for rigid analytic variety}, we find an affinoid subdomain $U \subset X$ containing $x$ and an isomorphism $\varphi \colon U \xrightarrow{\cong} \BB^n$, for some $n \in \BN_0$, with $\varphi(x) = 0$.
	For $\varepsilon \in \abs{\CO_L \setminus \{0\}}$ we consider the Weierstra{\ss} domain $\BB^n_\varepsilon \defeq \BB^n (c\, T_1,\ldots, c\, T_n) \subset \BB^n$ where $c \in L$ such that $\abs{c} = \varepsilon^{-1}$.
	Then the set of affinoid subdomains $U_\varepsilon \defeq \varphi^{-1}(\BB^n_\varepsilon)$ is cofinal in the family of affinoid subdomains of $X$ containing $x$.
	Therefore we find that
	\begin{equation*}
		\CO_{X,x} \botimes{L} K \cong \Big( \varinjlim_{\varepsilon \in \abs{\CO_L \setminus \{0\}}} \CO(U_\varepsilon) \Big) \botimes{L} K \cong \varinjlim_{\varepsilon \in \abs{\CO_L \setminus \{0\}}} \CO(U_\varepsilon) \botimes{L} K .
	\end{equation*}
	On the other hand the induced charts $U_\varepsilon(L) \ra B_\varepsilon^{n}(0)$ form a cofinal subset in the family of analytic charts of $X^\la$ centred at $x$.
	Hence we have by \Cref{Lemma 1 - Easier description of the stalk of locally analytic functions} (ii)
	\begin{equation*}
		C^\la_x (X^\la, K) \cong \varinjlim_{\varepsilon \in \abs{\CO_L \setminus \{0\}}} C^\rig (U_\varepsilon(L), K) .
	\end{equation*}
	But there are compatible isomorphisms of $L$-algebras
	\begin{equation}\label{Eq 1 - Isomorphism between rigid analytic function and rigid analytic function on manifold}
		\CO (U_\varepsilon) \lra C^\rig (U_\varepsilon(L),L) \,,\quad f \lto f\res{U_\varepsilon(L)} \,, \quad\text{ for all $\varepsilon \in \abs{\CO_L \setminus \{0\}}$.}
	\end{equation}
	Indeed, an inverse is given as follows:
	For $f \in C^\rig(U_\varepsilon(L),L)$, let $g \in \CA_\varepsilon (L^n, L)$ be a convergent power series such that $f(z) = g(\varphi(z))$, for all $z\in U_\varepsilon(L)$.
	As $\CA_\varepsilon (L^n, L) = \CO(\BB^n_\varepsilon)$, we map $f$ to $\varphi^\flat (g)$ where $\varphi^\flat \colon \CO(\BB^n_\varepsilon ) \xrightarrow{\cong} \CO(U_\varepsilon)$ is the isomorphism of affinoid $L$-algebras corresponding to $\varphi\res{U_\varepsilon}$.
	Passing to the tensor product of \eqref{Eq 1 - Isomorphism between rigid analytic function and rigid analytic function on manifold} with $K$ then yields (cf.\ \cite[\S 2.3]{Emerton17LocAnVect}) 
	\begin{equation*}
		\CO (U_\varepsilon) \botimes{L} K \cong C^\rig (U_\varepsilon(L),L) \botimes{L} K \cong C^\rig (U_\varepsilon(L),K)
	\end{equation*}
	Taking the direct limit over these isomorphisms gives \eqref{Eq 1 - Isomorphism of rigid analytic stalk and locally analytic stalk}.

	Furthermore we note that the isomorphisms \eqref{Eq 1 - Isomorphism between rigid analytic function and rigid analytic function on manifold} preserve the maximal ideals of functions vanishing at $x$ so that \eqref{Eq 1 - Isomorphism of rigid analytic stalk and locally analytic stalk} is an isomorphism of local $K$-algebras.	
\end{proof}

\begin{corollary}\label{Cor 1 - Hyperalgebra agrees with algebraic distribution algebra}
	Let $\bG$ be a smooth algebraic group over $L$.
	Then the isomorphism \eqref{Eq 1 - Isomorphism of rigid analytic stalk and locally analytic stalk} for $\bG^\rig$ induces a canonical isomorphism of $K$-Hopf algebras
	\begin{equation*}
		\hy(\bG^\la,K) \overset{\cong}{\lra} {\rm Dist}(\bG) \botimes{L} K 
	\end{equation*}
	where ${\rm Dist}(\bG)$ denotes the distribution algebra of $\bG$, cf.\ \cite[I.\ \S 7.7]{Jantzen03RepAlgGrp}.
	In particular, if ${\rm char}(L) =0$, then $\hy(\bG^\la,K) \cong U({\rm Lie}\,\bG) \botimes{L} K$.
\end{corollary}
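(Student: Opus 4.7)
The plan is to compare the two filtered algebras level by level through the identification of \Cref{Prop 1 - Isomorphism between rigid analytic and locally analytic stalk}. Recall that by definition
\[
	{\rm Dist}(\bG) = \bigcup_{n \in \BN_0} {\rm Dist}_n(\bG) \quad\text{with}\quad {\rm Dist}_n(\bG) = \{\mu \in \CO_{\bG,e}^\ast \mid \mu(\Fm^{n+1}_{\bG,e})= 0\},
\]
whereas $\hy(\bG^\la,K) = \bigcup_{n \in \BN_0} \hy(\bG^\la,K)_n$ with $\hy(\bG^\la,K)_n$ the annihilator of $\Fm_e^{n+1} \subset C^\la_e(\bG^\la,K)$. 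Hence the statement amounts to a compatible family of $K$-linear isomorphisms
\[
	{\rm Dist}_n(\bG) \botimes{L} K \overset{\cong}{\lra} \hy(\bG^\la,K)_n \quad\text{, for every $n \in \BN_0$.}
\]

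First I would pass through the analytification. The canonical morphism of locally ringed spaces $\bG^\rig \to \bG$ induces a local homomorphism $\CO_{\bG,e} \to \CO_{\bG^\rig,e}$ which becomes an isomorphism after $\Fm_e$-adic completion (both local rings have the same completion $L\llrrbracket{T_1,\ldots,T_d}\unlhd K$ at a smooth $L$-rational point by \Cref{Lemma 1 - Existence of polydisc neighbourhood for rigid analytic variety} and the analogous fact on the algebraic side). Since $\Fm_e^{n+1}$ has finite codimension in either ring, we obtain compatible $L$-algebra isomorphisms
\[
	\CO_{\bG,e}/\Fm_{\bG,e}^{n+1} \overset{\cong}{\lra} \CO_{\bG^\rig,e}/\Fm_{\bG^\rig,e}^{n+1} \quad\text{, for all $n \in \BN_0$.}
\]
Tensoring with $K$ and applying \Cref{Prop 1 - Isomorphism between rigid analytic and locally analytic stalk} yields a compatible family of $K$-algebra isomorphisms
\[
	\big(\CO_{\bG,e}/\Fm_{\bG,e}^{n+1}\big) \botimes{L} K \overset{\cong}{\lra} C^\la_e(\bG^\la,K)/\Fm^{n+1}_e .
\]
Dualizing each side (which is permitted since all spaces in sight are finite-dimensional over the respective base field) and taking the union over $n$ gives the claimed $K$-linear isomorphism $\Phi \colon {\rm Dist}(\bG) \botimes{L} K \overset{\cong}{\to} \hy(\bG^\la,K)$.

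It remains to verify that $\Phi$ respects the Hopf algebra structure. Both the multiplication, comultiplication, counit, and antipode on the two sides are induced by functoriality from the same data on $\bG$: multiplication comes from $m \colon \bG \times \bG \to \bG$, the counit from the neutral element $e$, and the antipode from the inversion $\inv \colon \bG \to \bG$. On the locally analytic side these structure maps factor through the analytification and then the local analytification $(\blank)^\la$ by \Cref{Cor 1 - Local analytification functor}, producing exactly the Hopf structure on $\hy(\bG^\la,K)$ via \Cref{Lemma 1 - Comultiplication map for germs of locally analytic functions} and \Cref{Lemma 1 - Involution of hyperalgebra}; the commutative diagrams connecting $\CO_{\bG,e}$, $\CO_{\bG^\rig,e}$ and $C^\la_e(\bG^\la,K)$ obtained above then imply that $\Phi$ intertwines all of them. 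The main point requiring care is the compatibility of the comultiplication, for which one needs to check that the two topological tensor products $\CO_{\bG^\rig,e} \cotimes{L} \CO_{\bG^\rig,e}$ (on the level of $\Fm_e^{n+1}$-quotients) and $C^\la_e(\bG^\la,K) \cotimes{K} C^\la_e(\bG^\la,K)$ match up; this follows from the analogous isomorphism for the product manifold $\bG^\la \times \bG^\la \cong (\bG \times_L \bG)^\la$ combined with the description of $\Delta$ given in \Cref{Lemma 1 - Comultiplication map for germs of locally analytic functions}. The final clause in characteristic zero is immediate from the classical identification ${\rm Dist}(\bG) \cong U({\rm Lie}\,\bG)$ of \cite[I.\ \S 7.10]{Jantzen03RepAlgGrp}.
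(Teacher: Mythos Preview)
Your proof is correct and follows essentially the same approach as the paper: identify $\hy(\bG^\la,K)_n$ with ${\rm Dist}_n(\bG)\botimes{L}K$ level by level via the chain $\CO_{\bG,e}\to\CO_{\bG^\rig,e}\to C^\la_e(\bG^\la,K)$, using that the first map becomes an isomorphism after $\Fm_e$-adic completion (hence on each $\Fm_e^{n+1}$-quotient), and then verify the Hopf compatibility by tracing the comultiplication through the analytification. The paper organises the argument by precomposing functionals with the isomorphism $\alpha$ rather than by dualizing the finite-dimensional quotients directly, but this is only a difference in presentation.
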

\begin{proof}
	As noted in Remark \ref{Rmks 1 - Associating a locally analytic manifold with a scheme} (ii), we may apply \Cref{Prop 1 - Isomorphism between rigid analytic and locally analytic stalk} to obtain an isomorphism of local $K$-algebras
	\begin{equation*}
		\alpha \colon \CO_{\bG^\rig, e} \botimes{L} K \overset{\cong}{\lra} C^\la_e (\bG^\la, K) .
	\end{equation*}
	For every $n\in \BN_0$, we thus have a homomorphism
	\begin{equation}\label{Eq 1 - Homomorphism between hyperalgebra and dual of rigid analytic stalk}
		\hy(\bG^\la,K)_n \lra \left\{ \ell \in (\CO_{\bG^\rig, e} \botimes{L}K )^\ast \middle{|} \ell( \Fm_e^{n+1} \botimes{L} K) = 0 \right\} \,,\quad \mu \lto \mu \circ \alpha .
	\end{equation}
	Let $\FM_e = \Ker(\ev_e)\subset C^\la_e(\bG^\la,K)$ denote the maximal ideal.
	By \Cref{Prop 1 - Properties of the algebra of germs of locally analytic functions} (iii) every $\mu \in C^\la_e(\bG^\la,K)^\ast$ with $\mu (\FM_e^{n+1}) =0$ factors over a finite-dimensional quotient of $C^\la_e(\bG^\la,K)$. 
	Hence every such $\mu$ already is continuous, and it follows that \eqref{Eq 1 - Homomorphism between hyperalgebra and dual of rigid analytic stalk} an isomorphism of $K$-vector spaces.

	Moreover, let $\widehat{\CO}_{\bG,e}$ and $\widehat{\CO}_{\bG^\rig,e}$ denote the respective $\Fm_e$-adic completions.
	As stated in \cite[p.\ 113]{Bosch14LectFormRigidGeom} these completions are canonically isomorphic\footnote{In \cite{Bosch14LectFormRigidGeom} Bosch refers to \cite[Satz 2.1]{Koepf74EigentlFamAlgVar}, but this paper was not available to me.}.
	The homomorphisms
	\begin{equation*}
		\big(\CO_{\bG^\rig, e} \botimes{L}K \big)^\ast \longleftarrow \big(\widehat{\CO}_{\bG^\rig, e} \botimes{L}K \big)^\ast \cong \big(\widehat{\CO}_{\bG, e} \botimes{L}K \big)^\ast \lra \big(\CO_{\bG, e} \botimes{L}K \big)^\ast
	\end{equation*}
	thus restrict to yield an isomorphism
	\begin{align*}
		\left\{ \ell \in (\CO_{\bG^\rig, e} \botimes{L}K )^\ast \middle{|} \ell( \Fm_e^{n+1} \botimes{L} K) = 0 \right\} 
		&\cong \left\{ \ell \in (\CO_{\bG, e} \botimes{L}K )^\ast \middle{|} \ell( \Fm_e^{n+1} \botimes{L} K) = 0 \right\} \\
		&\cong {\rm Dist}_n(\bG) \botimes{L} K .
	\end{align*}
	That the resulting $\beta \colon  \hy(\bG^\la,K) \xrightarrow{\cong} {\rm Dist}(\bG) \botimes{L} K$ is an isomorphism of $K$-Hopf algebras follows from the commutativity of the following diagram, for $\mu, \nu \in \hy(\bG^\la,K)$:	
	\begin{equation*}
		\begin{tikzcd}
			\CO_{\bG, e} \botimes{L} K \ar[r, "\Delta"]\ar[d] &\big(\CO_{\bG, e} \botimes{L} K \big) \botimes{K} \big(\CO_{\bG, e} \botimes{L} K \big)  \ar[rd, "\beta(\mu) \otimes \beta(\nu)", start anchor = 353, end anchor = 130] \ar[d] & \\
			\CO_{\bG^\rig, e} \botimes{L} K \ar[r, "\Delta"]\ar[d, "\alpha"]&\big( \CO_{\bG^\rig, e} \botimes{L} K \big) \cotimes{K} \big(\CO_{\bG^\rig, e} \botimes{L} K \big) \ar[r]\ar[d, "\alpha \otimes \alpha"']&K \\
			C^\la_e(\bG^\la,K) \ar[r, "\Delta"]  &C^\la_e(\bG^\la,K) \cotimes{K} C^\la_e(\bG^\la,K) \ar[ru,  "\mu \otimes \nu"', start anchor = 7, end anchor = 230]  &.
		\end{tikzcd}
	\end{equation*}
\end{proof}

\vspace{3ex}

\newcommand{\sd}{{r}}
\newcommand{\rigbun}{{}}
\newcommand{\algbun}{{\mathrm{alg}}}
\newcommand{\boundedregion}{{\Delta}}

\section{$H^0(\CX,\CE)'_b$ and Local Cohomology Groups as Locally Analytic Representations}

Let $K$ be a complete non-archimedean field with non-trivial absolute value $\abs{\blank}$.

\subsection{Topologies on the Coherent and Local Cohomology of Rigid Analytic Spaces}\label{Sect 2 - Topologies on the Coherent Cohomology of Rigid Analytic Spaces}

In this section we want to consider a more general situation than the one we later need for the Drinfeld upper half space.
Let $X$ be a rigid analytic $K$-space and $\CE$ a coherent $\CO_X$-module.
Following \cite[1.6]{vanderPut92SerreDualRigidAnSp} we want to recall how the sections and the coherent cohomology groups of $\CE$ on admissible open subsets can canonically be endowed with locally convex topologies.

Let $U = \Sp\, A \subset X$ be an affinoid subdomain, for some affinoid $K$-algebra $A$.
By Kiehl's theorem \cite[9.4.3 Thm.\ 3]{BoschGuentzerRemmert84NonArchAna}, $\CE(U)$ is a finite $A$-module, and hence carries the structure of a complete normed $A$-module, unique up to equivalence of norms \cite[3.7.3 Prop.\ 3]{BoschGuentzerRemmert84NonArchAna}.
By fixing such a norm $\CE(U)$ becomes a $K$-Banach space.
For another affinoid subdomain $U'\subset U \subset X$, the induced restriction homomorphism $\CE(U)\ra \CE(U')$ is continuous.

Next, we want to look at an admissible open subset $U \subset X$.
For an admissible covering $U = \bigcup_{i\in I} U_i$ by affinoid open subdomains $U_i\subset X$, the intersections $U_i \cap U_j$, for $i,j\in I$, are admissible open again, and we can find admissible coverings $U_i \cap U_j = \bigcup_{k\in I_{ij}} V_{ijk}$ by affinoid open subdomains $V_{ijk}\subset X$.
Because the $\CE(U_i \cap U_j) \ra \prod_{k\in I_{ij}} \CE(V_{ijk})$ are monomorphisms of $K$-vector spaces, we have an exact sequence
\begin{equation}\label{Eq 2 - Exact sequence for sheaf}
	0 \lra \CE(U) \lra \prod_{i\in I} \CE(U_i) \lra \prod_{i,j\in I} \prod_{k\in I_{ij}} \CE(V_{ijk}) 
\end{equation}
of $K$-vector spaces.
We would like to use this exact sequence to endow $\CE(U)$ with a locally convex topology.
However in order to do this independently of the admissible coverings, we restrict ourselves to the situation that the involved coverings are at most countable (e.g.\ when $U$ is of countable type, see \Cref{Def 1 - Definition of rigid analytic spaces of countable type}).
In this case the products in \eqref{Eq 2 - Exact sequence for sheaf} are at most countable, and therefore are $K$-Fr\'echet spaces.
It follows that $\CE(U)$ with the subspace topology is a $K$-Fr\'echet space as well.
To see that this topology is independent of the (at most countable) admissible coverings, it suffices to consider the situation of a refinement of such coverings.
It is a classical result that the induced homomorphism of complexes is a quasi-isomorphism algebraically \cite[6.2 Thm.\ 5]{Bosch14LectFormRigidGeom}.
Then one argues analogously to the case of complexes of complex Fr\'echet spaces in \cite[VII.\ Lemma 1.32]{BanicaStanasila76AlgMethGlobThComplSp} to deduce that the induced isomorphisms between the cohomology groups are topological isomorphisms even.
Note that the restriction homomorphisms $\CE(U) \ra \CE(U')$, for admissible open $U'\subset U$ which allow an at most countable admissible covering, are continuous.

Now consider an admissible open subset $U\subset X$ with an at most countable admissible covering $U = \bigcup_{i \in I} U_i$ by admissible open subsets that each allow an at most countable admissible covering by affinoid subdomains.
Moreover, assume that the covering of $U$ is $\CE$-acyclic, i.e.\ all higher cohomology groups of $\CE$ on the intersections of the $U_i$ vanish.
For instance, when $X$ is separated, this is fulfilled if the $U_i$ are affinoid or quasi-Stein \cite[1.6]{vanderPut92SerreDualRigidAnSp}.
Then the associated \v{C}ech complex
\begin{align}\label{Eq 2 - Cech complex}
	\prod_{i\in I} \CE(U_i) \lra \prod_{i_0,i_1 \in I} \CE(U_{i_0}\cap U_{i_1}) \lra \ldots
\end{align}
computes $H^k(U,\CE)$ on the level of $K$-vector spaces \cite[6.2 Thm.\ 5]{Bosch14LectFormRigidGeom}, and its differentials are continuous.
Note that \eqref{Eq 2 - Cech complex} consists of $K$-Fr\'echet spaces because all of the above products are countable.
We endow $H^k(U,\CE)$ with the locally convex topology that is induced from being a subquotient of a term of \eqref{Eq 2 - Cech complex}.

By the open mapping theorem \cite[Prop.\ 8.6]{Schneider02NonArchFunctAna} this topology is Hausdorff if and only if the differentials of \eqref{Eq 2 - Cech complex} are strict.
In general however, this is not the case.
Like before one shows that the topology on $H^k(U,\CE)$ does not depend on the choice of the at most countable covering.

Furthermore, we want to consider the local cohomology groups of $\CE$ with respect to the set theoretical complement $Z \defeq X \setminus U$.
We now suppose that $X$ possesses an $\CE$-acyclic, at most countable, admissible covering $X = \bigcup_{j \in J} V_j$ by admissible open subsets which each have an at most countable, admissible covering by affinoid open subdomains.
We also suppose that the intersections $U_i \cap V_j$ admit an at most countable, admissible covering by affinoid open subdomains.
For example, this latter assumption is fulfilled if $X$ is quasi-separated.
In this setting, we may assume that the covering $U=\bigcup_{i \in I} U_i$ is a refinement of $U=\bigcup_{j \in J} V_j \cap U$.
Hence, we have a continuous homomorphism of the \v{C}ech complexes
\begin{equation*}
	\begin{tikzcd}
		\prod\limits_{j\in J} \CE(V_j) \ar[r]\ar[d]& \prod\limits_{j_0,j_1 \in J} \CE(V_{j_0}\cap V_{j_1}) \ar[r]\ar[d] & \ldots \\
		\prod\limits_{i\in I} \CE(U_i) \ar[r]& \prod\limits_{i_0,i_1 \in I} \CE(U_{i_0}\cap U_{i_1}) \ar[r] & \ldots
	\end{tikzcd}
\end{equation*}
which in turn induces continuous homomorphisms $H^k(X,\CE) \ra H^k(U,\CE)$, $k\geq 0$, by the usual diagram chase.

As $K$-vector spaces the local cohomology groups $H^k_Z(X,\CE)$ are defined via the right derived functors $H^k_Z(X,\blank)$ of the functor $\Ker\big(\Gamma(X,\blank)\ra \Gamma(U,\blank) \big)$ \cite[Exp.\ 2, Def.\ 2.1]{Grothendieck68SGA2}.
They fit into the long exact cohomology sequence of $K$-vector spaces \cite[Exp.\ 2, Cor.\ 2.9]{Grothendieck68SGA2}:
\begin{align}\label{Eq 2 - Long exact sequence of cohomology}
	\ldots \lra H^{k-1}_Z (X,\CE) \lra H^{k-1}(X,\CE) \lra H^{k-1} (U,\CE) \overset{\partial^{k-1}}{\lra} H^{k}_Z (X,\CE) \lra \ldots . 
\end{align}
We endow $H^k_Z(X,\CE)$ with the locally convex final topology with respect to $\partial^{k-1}$, i.e.\ the finest locally convex topology such that $\partial^{k-1}$ is continuous.

\begin{remark}\label{Rmk 2 - Strictness of edge homomorphism in long exact sequence of local cohomology}
	It follows from \cite[Lemma 5.1 (i)]{Schneider02NonArchFunctAna} that with this choice of topology on $H^k_Z(X,\CE)$ the homomorphism $H^k_Z(X,\CE) \ra H^k(X,\CE)$ is continuous as well.
	Moreover, then $\partial^{k-1}$ even is a strict homomorphism by \Cref{Lemma A1 - Final topology implies strict homomorphism}.
\end{remark}

\subsection{Coherent (Local) Cohomology of Equivariant Vector Bundles}\label{Sect 2 - Coherent Cohomology of Equivariant Vector Bundles}

Now consider a rigid analytic group variety $G$ over $K$ with multiplication morphism $m\colon G \times_K G \ra G$, and a rigid analytic $K$-variety $X$ with an action $\sigma \colon G \times_K X \ra X$ by this rigid analytic group variety.
Moreover, let $\CE$ be a $G$-equivariant coherent $\CO_X$-module, i.e.\ a coherent $\CO_X$-module with an isomorphism
\begin{equation*}
	\Phi \colon \sigma^\ast \CE \overset{\cong}{\lra} \pr_2^\ast \CE
\end{equation*}
of $\CO_{G\times_K X}$-modules which satisfies the cocycle condition on $G \times_K G \times_K X$:
\begin{equation*}
	\pr_{23}^{\ast} \Phi \circ (\mathrm{id}_G \times \sigma)^{\ast} \Phi = (m \times \mathrm{id}_{X})^{\ast} \Phi .
\end{equation*}
For any $g\in G(K)$, $g \colon \Sp\,K \ra G$, by a slight abuse of notation we also let $g$ denote the induced automorphism $X \xrightarrow{g \times \id_X} G \times_K X \xrightarrow{\sigma}  X$.
We thus obtain an isomorphism
\begin{equation*}
	\Phi_g \colon g^\ast \CE = (g\times \mathrm{id}_X)^\ast \sigma^\ast \CE \xrightarrow{(g\times \mathrm{id}_X)^\ast \Phi} (g\times \mathrm{id}_X)^{\ast} \pr_2^\ast \CE = \CE 
\end{equation*}
of $\CO_{X}$-modules, for each $g\in G(K)$.

Fix $g\in G(K)$, and let $U \subset X$ be an admissible open subset.
In the following, we keep the assumptions from the previous section, i.e.\ that $U$ admits an at most countable admissible, $\CE$-acyclic covering by open subsets which in turn admit at most countable admissible coverings by affinoid open subdomains.
We get an induced isomorphism of $K$-vector spaces on the cohomology groups
\begin{align}\label{Eq 2 - Group action isomorphism on cohomology}
	\varphi_g \colon H^k(U,\CE) \lra H^k(U,g_\ast g^\ast \CE) \cong H^k(g^{-1}(U), g^\ast \CE) \xrightarrow{\Phi_g (g^{-1}(U))} H^k(g^{-1}(U),\CE) .
\end{align}
In particular, we obtain an automorphism of $H^k(U,\CE)$ if $g$ stabilizes $U$.

For an affinoid subdomain $V\subset U$, we denote the continuous homomorphism of affinoid $K$-algebras corresponding to $g \colon g^{-1}(V) \ra V$  by $g^\flat_V \colon \CO(V) \ra \CO(g^{-1}(V))$ .
Then
\begin{align*}
	\varphi_{g,V} \colon \CE(V) \lra g_\ast g^\ast \CE(V) = g^\ast \CE\big(g^{-1}(V)  \big) \xrightarrow{\Phi_g(g^{-1}(V))} \CE \big(g^{-1}(V) \big)
\end{align*}
is $g^\flat_V$-semilinear, i.e.\ $\varphi_{g,V}(a e)= g^\flat_{V} (a) \varphi_{g,V}(e)$, for all $a \in \CO(V), e\in \CE(V)$.
Using that $\CE(V)$ and $\CE(g^{-1}(V))$ are finite $\CO(V)$- respectively $\CO(g^{-1}(V))$-modules, this implies that $\varphi_{g,V}$ is continuous (cf.\ \cite[3.7.3 Prop.\ 2]{BoschGuentzerRemmert84NonArchAna} where this is shown for linear homomorphisms).
We apply this to all $V = U_{i_0}\cap\ldots\cap U_{i_r}$, for $i_0,\ldots,i_r\in I$, to obtain a continuous homomorphism of the \v{C}ech complexes
\begin{equation}\label{Eq 2 - Group action homomorphism of Cech complexes}
	\begin{tikzcd}
		\prod\limits_{i\in I} \CE(U_i) \arrow[r] \arrow[d] & \prod\limits_{i_0,i_1\in I} \CE(U_{i_0} \cap U_{i_1}) \arrow[d]\ar[r] & \ldots\\
		\prod\limits_{i\in I} \CE \big(g^{-1}(U_i) \big) \arrow[r]  & \prod\limits_{i_0,i_1\in I} \CE \big(g^{-1}(U_{i_0} \cap U_{i_1}) \big) \ar[r] & \ldots .
	\end{tikzcd} 
\end{equation}
In fact \eqref{Eq 2 - Group action homomorphism of Cech complexes} induces the isomorphisms \eqref{Eq 2 - Group action isomorphism on cohomology} which therefore are continuous.

Note that more generally, for an admissible open subset $V \subset g^{-1}(U)$ satisfying the previous assumptions, $g$ induces continuous homomorphisms
\begin{equation*}
	H^k(U,\CE) \overset{\varphi_g}{\lra} H^k(g^{-1}(U),\CE) \lra H^k( V,\CE) .
\end{equation*}

Turning to the local cohomology groups, we now consider $Z\defeq X \setminus U$, and a subset $W \subset X$ such that $g^{-1} (Z) \subset W$ and $X\setminus W$ is an admissible open subset satisfying the above assumptions.
The isomorphism $\Phi_g\colon g^\ast \CE \xrightarrow{\cong} \CE$ then induces homomorphisms
\begin{equation}\label{Eq 2 - Group action homomorphism for local cohomology}
	\varphi_g\colon H_{Z}^k (X, \CE) \lra H_{W}^k (X,\CE)
\end{equation}
of $K$-vector spaces.
These fit into an isomorphism of the long exact sequences of local cohomology
\begin{equation*}
	\begin{tikzcd}
		\ldots \ar[r] & H^k_{Z}(X,\CE) \ar[r]\ar[d, "\varphi_g"] & H^k(X,\CE) \ar[r] \ar[d, "\varphi_g"] & H^k(U,\CE) \ar[r, "\partial^k"] \ar[d, "\varphi_g"] & H^{k+1}_{Z} (X,\CE) \ar[r]\ar[d, "\varphi_g"] & \ldots \\
		\ldots \ar[r] & H^k_{ W}(X,\CE) \ar[r] & H^k(X,\CE) \ar[r]  & H^k(X \setminus W,\CE) \ar[r, "\partial^k"]  & H^{k+1}_{W} (X,\CE) \ar[r] & \ldots .
	\end{tikzcd}
\end{equation*}
Under the suitable assumptions on coverings of $X$, we conclude by \cite[Lemma 5.1 (i)]{Schneider02NonArchFunctAna} that the homomorphism $\varphi_g\colon H_{Z}^k (X, \CE) \ra H_{W}^k (X,\CE)$ is continuous, too.
Note that this $\varphi_g$ is a topological isomorphism if $W=g^{-1}(Z)$.

\subsection{Coherent Cohomology of the Drinfeld Upper Half Space}\label{Sect 2 - Coherent Cohomology of the Drinfeld Upper Half Space}

Let $K$ be non-ar\-chi\-me\-dean local field with ring of integers $\CO_K$ and residue characteristic $p>0$.
We denote the completion of the algebraic closure of $K$ by $C$, its ring of integers by $\CO_C$, and write $\abs{\blank}$ for the absolute value on $C$.
Moreover, we fix a uniformizer $\unif$ of $K$ so that $\Fm_K = (\unif)$.

For fixed $d\in \BN$, we now consider the action of the linear algebraic group $\bG \defeq \GL_{d+1,K}$ on the projective space $\BP_K^d$.
We write $G = \GL_{d+1}(K)$ for the $K$-rational points of $\bG$, and set $G_0 \defeq \GL_{d+1}(\CO_K)$ which is an open, maximal compact subgroup of $G$.

We will use the convention that 
$\BP^d_K = \Proj\, \mathrm{Sym} (K^{d+1})^{\ast}$ is the projective space of lines in $K^{d+1}$ where $\mathrm{Sym}(K^{d+1})^\ast = K[X_0,\ldots,X_d]$, 
with $X_0,\ldots,X_d$ being the dual basis of the standard basis of $K^{d+1}$.

Then the natural left action $\sigma \colon \GL_{d+1,K} \times_K \BP_K^d \ra \BP_K^d $ is given by
\begin{align*}
	\GL_{d+1}(C) \times \BP^d_K(C) \lra \BP^d_K(C) \,,\quad (g,z) \lto gz \defeq [z_0{\,:\,}\ldots{\,:\,}z_d]\cdot g^{-1} ,
\end{align*}
on $C$-valued points.
Note that the compatible left $G$-action on $K[X_0,\ldots,X_d]$ is given by $g\cdot X_j = \sum_{i=0}^d g_{ij} X_i$, for $g=(g_{ij}) \in G$, so that $\Fm_{gz} = g(\Fm_{z})$, where $\Fm_z \subset K[X_0,\ldots,X_d]$ denotes the maximal ideal corresponding to $z \in \BP_K^d(C)$.
We will continue to let $\GL_{d+1,K}$, $\BP_K^d$, and $\sigma$ denote the respective rigid analytifications and the rigid analytic group action when this causes no confusion.

We now recall the definition of the Drinfeld upper half space and its rigid-analytic structure following Schneider and Stuhler \cite[\S 1]{SchneiderStuhler91CohompAdicSymSp}.
Unless stated otherwise, for $z \in \BP^d_K(C)$, we always assume a representative $[z_0{\,:\,}\ldots{\,:\,}z_d]$ of $z$ to be \textit{unimodular}, i.e.\ to satisfy $\abs{z_i} \leq 1$, for all $i=0,\ldots,d$, and $\abs{z_i} = 1$, for some $i$.
Then, for any hyperplane $H \subset \BP^d_K(C)$, we let $\ell_H \in \big(\CO_C^{d+1}\big)^{\ast}$ be some unimodular linear form (i.e.\ $\ell_H (z) = \sum_{i=0}^d \lambda_i z_i$, for some unimodular $\lambda = (\lambda_0,\ldots,\lambda_d) \in \CO_C^{d+1}$) such that
\[H = \left\{z \in \BP^d_K(C) \middle{|} \ell_H(z) = 0 \right\} .\]
This linear form $\ell_H$ is determined up to a unit in $\CO_C$.

Let $\CH$ denote the set of all $K$-rational hyperplanes (i.e.\ there exists some  $\lambda \in \CO_K^{d+1}$ defining $H$) in $\BP^d_K(C)$.
As a set, the \textit{Drinfeld upper half-space} of dimension $d$ over $K$ is defined as 
\[\CX = \BP^d_K(C) \Big\backslash \bigcup_{H \in \CH} H .\]
To describe its structure as a rigid analytic variety, let
\[\CX_n \defeq \left\{ z \in \BP_K^d(C) \middle{|} \forall H \in \CH: \abs{\ell_H(z)} \geq \abs{\unif}^n \right\} ,\]
for $n\in \BN$.
The $\CX_n \subset \BP_K^d$ are open affinoid subvarieties.
Moreover, via the admissible covering $\CX = \bigcup_{n \in \BN} \CX_n$ the Drinfeld upper half space is an admissible open $K$-analytic subvariety of $\BP_K^d$, and in fact a Stein space \cite[\S 1, Prop.\ 4]{SchneiderStuhler91CohompAdicSymSp}.
Recall that this implies that, for any coherent $\CO_\CX$-module $\CE$, the higher cohomology groups vanish \cite[Satz 2.4]{Kiehl67ThmAundBNichtArchFunktTh}: $H^j(\CX,\CE) = 0$, for $j>0$.

The open affinoid subsets $\CX_n \subset \BP_K^d$ are stabilized under the action of $G_0$.
Indeed, let $g \in G_0$ and $z=[z_0{\,:\,}\ldots{\,:\,}z_d] \in \CX_n$.
Note that the representative $[z_0{\,:\,}\ldots{\,:\,}z_d]\cdot g^{-1}$ of $gz$ already is unimodular.
For any $H \in \CH$, with $\ell_H$ corresponding to unimodular $\lambda \in \CO_C^{d+1}$, we have to check that $\abs{\ell_H(gz)} \geq \abs{\unif}^n$.
But we have
\[\abs{\ell_H(gz)} = \abs{[z_0{\,:\,}\ldots{\,:\,}z_d] \cdot g^{-1}\cdot \lambda^t } = \abs{\ell_{g^{-1}(H)}(z)} \geq \abs{\unif}^n ,\]
and $g^{-1}(H)$ is the hyperplane corresponding to $g^{-1}\cdot \lambda^t$.

Now consider the admissible open rigid analytic subgroups
\begin{equation}\label{Eq 2 - nth-level rigid analytic subgroup around G_0}
	H_{n+1} \defeq \left\{g\in \GL_{d+1}(\CO_C) \middle{|} \exists h \in G_0, h' \in \mathrm{M}_{d+1}(\CO_C): g = h + \unif^{n+1} h' \right\} \subset \GL_{d+1}(C), 
\end{equation}
for $n\in \BN$.
Then the action $\sigma$ of $\GL_{d+1,K}$ on $\BP_K^d$ restricts to an action of $H_{n+1}$ on $\CX_{n}$:
Let $z \in \CX_n$ and $g \in H_{n+1}$ with $g = h + \unif^{n+1} h'$, for $h \in G_0$, $h' \in \mathrm{M}_{d+1}(\CO_C)$.
We compute, for $H \in \CH$,
\begin{align*}
	\abs{\ell_H (g^{-1} z)} = \abs{\ell_{h(H)}(z) + \unif^{n+1} \ell_H([z_0{\,:\,}\ldots{\,:\,}z_d] \cdot h') }
	= \abs{\ell_{h(H)}(z)} \geq \abs{\unif}^{n}
\end{align*}
since $\abs{\ell_{h(H)}(z)} >  \abs{\unif^{n+1} \ell_H([z_0{\,:\,}\ldots{\,:\,}z_d] \cdot h')}$.

Consider now a $\bG$-equivariant vector bundle $\CE$ on $\BP_K^d$.
We want to show how the strong dual $H^0 (\CX,\CE)'_b$ of the global sections of $\CE$ on the Drinfeld upper half space $\CX$ is a locally analytic $G$-representation.
The methods are analogous to the ones used by Schneider and Teitelbaum \cite{SchneiderTeitelbaum02pAdicBoundVal} for the canonical line bundle $\CE = \Omega^d_{\BP_K^d}$.
But for the convenience of the reader we include the proofs.

\begin{proposition}[{cf.\ \cite[Cor.\ 3.9]{SchneiderTeitelbaum02pAdicBoundVal}}]\label{Prop 2 - Dual of the sections on DHS is locally analytic representation}
	The representation
	\begin{align}\label{Eq 2 - Representation on dual of global sections}
		G \times H^0(\CX,\CE)'_b \lra H^0(\CX,\CE)'_b \,,\quad (g,\ell) \lto \ell(g^{-1}. \blank) ,
	\end{align}
	is locally analytic.
	Moreover, the canonical map
	\begin{align}\label{Eq 2 - Dual of projective limit of sections}
		\varinjlim_{n\in \BN} H^0(\CX_n,\CE)' \lra  \bigg (\varprojlim_{n\in\BN} H^0(\CX_n,\CE) \bigg)_b' = H^0(\CX,\CE)_b'
	\end{align}
	is a topological isomorphism and $H^0(\CX,\CE)'_b$ is of compact type this way, i.e.\ it is the inductive limit of 
	\begin{equation}\label{Eq 2 - Dual of sections is of compact type}
		\begin{tikzcd}[column sep = small]
			H^0(\CX_1,\CE)' \ar[r, hook] &  \ldots \ar[r, hook] & H^0(\CX_n,\CE)' \ar[r, hook] & H^0(\CX_{n+1},\CE)' \ar[r, hook] & \ldots
		\end{tikzcd}
	\end{equation}
	where the transition maps induced from $\CX_n \subset \CX_{n+1}$ are compact and injective.
\end{proposition}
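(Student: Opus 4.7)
The plan is to prove the proposition in three stages: establish a Fréchet projective-limit description of $H^0(\CX,\CE)$ with compact transition maps, dualize to obtain the compact-type inductive-limit description, and then verify local analyticity of the $G$-action using the rigid-analytic action of the groups $H_{n+1}$ on $\CX_n$.

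First I would use the Stein property together with the topology defined in \Cref{Sect 2 - Topologies on the Coherent Cohomology of Rigid Analytic Spaces} to identify $H^0(\CX,\CE)$ topologically with the projective limit $\varprojlim_n H^0(\CX_n,\CE)$ of $K$-Banach spaces (each $\CX_n$ is affinoid and $\CE$ coherent, and the higher cohomology vanishes). The key analytic input is that each restriction homomorphism $H^0(\CX_{n+1},\CE) \to H^0(\CX_n,\CE)$ is compact. Since $\CE$ is a vector bundle, locally on $\BP_K^d$ it is free, and after covering $\CX_{n+1}$ by the (finitely many) intersections with the standard charts $D_+(X_i)$, the claim reduces to compactness of the restriction between affinoid algebras $\CO(\CX_{n+1}\cap D_+(X_i)) \to \CO(\CX_n \cap D_+(X_i))$. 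These restrictions are compact because $\CX_n$ is cut out inside $\CX_{n+1}$ by a strict inequality of the form $|\ell_H(z)| \geq |\unif|^n$ versus $|\unif|^{n+1}$, so the inclusion is Weierstrass with strict contraction of radii (cf.\ the argument in the proof of \Cref{Prop 1 - Direct limit description of locally analytic functions for compact manifold} (iii), using \cite[\S 16 Claim, p.\ 98]{Schneider02NonArchFunctAna}).

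Second, once the transition maps are compact, $H^0(\CX,\CE)$ is a nuclear $K$-Fréchet space. Standard duality for inverse limits of Banach spaces with compact and injective (by density) transitions then yields that \eqref{Eq 2 - Dual of projective limit of sections} is a topological isomorphism and exhibits $H^0(\CX,\CE)'_b$ as the compact-type inductive limit \eqref{Eq 2 - Dual of sections is of compact type}; compare the argument of \cite[Cor.\ 3.9]{SchneiderTeitelbaum02pAdicBoundVal} for the canonical bundle case. In particular, $H^0(\CX,\CE)'_b$ is barrelled and Hausdorff.

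Third, for the $G$-equivariance, I would leverage the fact that the rigid-analytic subgroup $H_{n+1}$ from \eqref{Eq 2 - nth-level rigid analytic subgroup around G_0} stabilizes $\CX_n$ and that the $\bG$-equivariance isomorphism $\Phi$ restricts to a rigid-analytic action on $\CE|_{\CX_n}$. As in \Cref{Sect 2 - Coherent Cohomology of Equivariant Vector Bundles}, this yields a rigid-analytic group homomorphism $H_{n+1} \to \GL\big(H^0(\CX_n,\CE)\big)$ of rigid-analytic groups. Passing to $K$-valued points with the functor $(\blank)^\la$ of \Cref{Cor 1 - Local analytification functor} and invoking \Cref{Prop 1 - Equivalent characterization for locally analytic representations on Banach spaces}, the open subgroup $P_{n+1} \defeq H_{n+1}(K) \cap G_0$ of $G_0$ acts on the $K$-Banach space $H^0(\CX_n,\CE)$ by a locally analytic representation, and thus also on its strong dual. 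The transition maps in \eqref{Eq 2 - Dual of sections is of compact type} are $P_{n+1}$-equivariant, so the inductive-limit action of $G_0$ on $H^0(\CX,\CE)'_b$ is locally analytic at the identity by \Cref{Prop 1 - Locally analytic representations and open subgroups}; for a general $g\in G$, since $g.\CX_n \subset \CX_m$ for some $m$, translating gives local analyticity of the orbit maps at $g$, and a second application of \Cref{Prop 1 - Locally analytic representations and open subgroups} completes the proof.

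The main obstacle is the compactness of the transition maps on $H^0(\CX_{n+1},\CE) \to H^0(\CX_n,\CE)$ for a general equivariant vector bundle (not just $\CO_{\BP_K^d}$), which forces one to work locally on $\BP_K^d$ with a trivialization and keep track of the resulting local affinoid data; the local analyticity step is then comparatively formal once the compact-type structure is in hand.
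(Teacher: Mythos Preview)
Your overall strategy matches the paper's, but there is a genuine gap in the third step. You assert that \Cref{Sect 2 - Coherent Cohomology of Equivariant Vector Bundles} ``yields a rigid-analytic group homomorphism $H_{n+1} \to \GL\big(H^0(\CX_n,\CE)\big)$ of rigid-analytic groups'' and then propose to pass to $K$-points via the functor $(\blank)^\la$ of \Cref{Cor 1 - Local analytification functor}. Neither move is justified: \Cref{Sect 2 - Coherent Cohomology of Equivariant Vector Bundles} only produces, for each individual $g\in G(K)$, a continuous isomorphism $\varphi_g$ on sections---it says nothing about analyticity of the family $g\mapsto\varphi_g$; and $\GL$ of an infinite-dimensional Banach space is not a rigid analytic $K$-space, so the functor $(\blank)^\la$ does not apply to it. What is actually needed is the explicit construction in the paper's \Cref{Lemma 2 - Sections on X_n are locally analytic representation}: for fixed $g\in G_0$ and $v\in\CE(\CX_n)$, one pulls $v$ back along $\sigma\circ(\iota_g\times\id)\colon D_{n+1}\times_K\CX_n\to\CX_n$, applies the equivariance isomorphism $(\iota_g\times\id)^\ast\Phi$, and identifies $\pr_2^\ast\CE(D_{n+1}\times_K\CX_n)\cong\CE(\CX_n)\langle T_1,\ldots,T_{(d+1)^2}\rangle$ to extract a convergent power series $F_v$ with $gh.v=F_v(h)$ for all $h\in D_{n+1}(K)$. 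Only after this computation can one invoke \Cref{Prop 1 - Equivalent characterization for locally analytic representations on Banach spaces} (for the contragredient, as in \Cref{Cor 2 - Dual of sections on X_n are locally analytic representation}).

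Two smaller points on the first step. Your covering of $\CX_{n+1}$ by intersections with the $D_+(X_i)$ is unnecessary: $\CX_n$ is already affinoid and is contained in every $D_+(X_i)^\rig$, so a single trivialization of $\CE$ suffices; the paper instead uses Kiehl's theorem to get a surjection $\CO(\CX_{n+1})^{\oplus k}\twoheadrightarrow\CE(\CX_{n+1})$ directly. More importantly, the reference you give (\cite[\S 16 Claim, p.~98]{Schneider02NonArchFunctAna}) treats restrictions between polydiscs, whereas $\CX_n\subset\CX_{n+1}$ is a Weierstra{\ss} domain cut out by many linear inequalities; the paper argues instead that the restriction $\CO(\CX_{n+1})\to\CO(\CX_n)$ is \emph{inner} in Berkovich's sense and invokes \cite[\S 1 Lemma~5]{SchneiderTeitelbaum02pAdicBoundVal}. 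You also need the dense image of these transition maps (again from the Weierstra{\ss} property) both to apply \cite[Prop.~16.5]{Schneider02NonArchFunctAna} for the isomorphism \eqref{Eq 2 - Dual of projective limit of sections} and to obtain injectivity of the dual transitions.
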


For this proposition we proceed in several steps.

\begin{lemma}\label{Lemma 2 - Continuity of action on sections of DHS}
	The natural actions of $G$ on $H^0(\CX,\CE)$ and of $H_{n+1}(K)$ on $H^0(\CX_n,\CE)$, for all $n\in \BN$, are given by continuous endomorphisms.
\end{lemma}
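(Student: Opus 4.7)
The plan is to handle the two assertions in turn, with the second reducing to the first via the projective-limit description of $H^0(\CX,\CE)$.

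First I would dispose of the $H_{n+1}(K)$-action on $H^0(\CX_n,\CE)$. The text has just verified that the rigid-analytic subgroup $H_{n+1} \subset \GL_{d+1}$ stabilises the affinoid $\CX_n$, so for every $g \in H_{n+1}(K)$ the automorphism $g \colon \CX_n \xrightarrow{\cong} \CX_n$ is well defined. The discussion of Section 2.2 then applies verbatim to $U = \CX_n$: the composition
\[
\varphi_g \colon \CE(\CX_n) \xrightarrow{g^\sharp \text{-semilinear}} g^\ast\CE(\CX_n) \xrightarrow{\Phi_g(\CX_n)} \CE(\CX_n)
\]
is a $g^\flat$-semilinear map between finite $\CO(\CX_n)$-modules, hence continuous by the uniqueness of the topology on finite modules over an affinoid algebra (cf.\ \cite[3.7.3 Prop.\ 3]{BoschGuentzerRemmert84NonArchAna}), which is precisely the topology carried by $H^0(\CX_n,\CE)$.

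For the $G$-action on $H^0(\CX,\CE)$ I would exploit that $H^0(\CX,\CE) = \varprojlim_n H^0(\CX_n,\CE)$ as a locally convex $K$-vector space, so that by the universal property of the projective limit it suffices to show, for each fixed $g \in G$ and each $n\in \BN$, that the composition $\mathrm{pr}_n \circ \varphi_g \colon H^0(\CX,\CE) \to H^0(\CX_n,\CE)$ is continuous. The key geometric input is that $g(\CX_n) \subset \CX$ is an affinoid (hence quasi-compact) admissible open, so the admissible covering $\CX = \bigcup_m \CX_m$ forces $g(\CX_n) \subset \CX_m$ for some $m = m(g,n)$; equivalently $\CX_n \subset g^{-1}(\CX_m)$. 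Section 2.2 then provides a continuous homomorphism
\[
\psi_{g,n,m} \colon H^0(\CX_m,\CE) \xrightarrow{\varphi_g} H^0\bigl(g^{-1}(\CX_m),\CE\bigr) \longrightarrow H^0(\CX_n,\CE).
\]
Since both $\mathrm{pr}_n \circ \varphi_g$ and $\psi_{g,n,m} \circ \mathrm{pr}_m$ send $s$ to $\Phi_g \circ g^\ast s$ restricted to $\CX_n$ — and this restriction depends only on $s|_{\CX_m}$ because $g(\CX_n)\subset \CX_m$ — the two compositions agree, yielding continuity.

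I do not expect any serious obstacle here: the only point that requires a tiny amount of care is the quasi-compactness argument that produces $m$ from $g$ and $n$, which is standard rigid-analytic bookkeeping using that $\CX_n$ is affinoid and $\CX = \bigcup_m \CX_m$ is an admissible covering of a quasi-separated space. Everything else is a direct application of the continuity statements established in Section 2.2.
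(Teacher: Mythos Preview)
Your argument is correct, and for the $H_{n+1}(K)$-action on $H^0(\CX_n,\CE)$ it coincides with the paper's: both simply invoke the semilinearity-and-finite-module argument from Section~2.2.

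For the $G$-action on $H^0(\CX,\CE)$, your route is valid but more elaborate than necessary. The paper's proof is a single sentence referring to Section~2.2, because that section was already written to cover \emph{non-affinoid} admissible opens: for any admissible open $U$ with a countable admissible affinoid covering, the map $\varphi_g\colon H^k(U,\CE)\to H^k(g^{-1}(U),\CE)$ is shown to be continuous by passing through the \v{C}ech complexes for $\{U_i\}$ and $\{g^{-1}(U_i)\}$ (and the independence of the topology on the covering, from Section~2.1). Applying this with $U=\CX$ and $g^{-1}(\CX)=\CX$ gives continuity directly. Your projective-limit reduction together with the quasi-compactness step (finding $m$ with $g(\CX_n)\subset \CX_m$) essentially re-derives this in the special case of the nested covering $\{\CX_n\}$; it works, but it duplicates what Section~2.2 already did in general. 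One small remark: the projective-limit description you invoke is stated as the next lemma in the paper, but as you implicitly use it only on the level of topologies, it is immediate from the Section~2.1 definition and does not create any circularity.
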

\begin{proof}
	This follows from the considerations in the previous section \Cref{Sect 2 - Coherent Cohomology of Equivariant Vector Bundles}.
\end{proof}

\begin{lemma}\label{Lemma 2 - Description of sections of DHS as projective limit}
	The space of global sections $H^0(\CX,\CE)$ is the projective limit of the $K$-Banach spaces
	\[H^0 (\CX,\CE) \cong \varprojlim_{n\in\BN} H^0(\CX_n,\CE) \]
	with respect to the restriction maps $\CE(\CX_{n+1})\ra \CE(\CX_{n})$.
	These homomorphisms are compact and have dense image.
	Moreover, the above isomorphism is $G_0$-equivariant.
\end{lemma}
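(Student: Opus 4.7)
The plan is to prove the four assertions (topological identification with the inverse limit, compactness, density of image, and $G_0$-equivariance) in order, reducing each to a standard fact about Stein spaces and affinoid algebras.

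First, since $\CX = \bigcup_{n\in\BN} \CX_n$ is an admissible covering by nested open affinoid subdomains and $\CX_m \cap \CX_n = \CX_{\min(m,n)}$, the sheaf condition for $\CE$ reduces to the equalizer of the restriction maps, which is exactly the inverse limit. On the topological side, each $H^0(\CX_n,\CE)$ carries its canonical $K$-Banach space structure as a finite $\CO(\CX_n)$-module (Kiehl), and the Fr\'echet topology on $H^0(\CX,\CE)$ defined in Section~\ref{Sect 2 - Topologies on the Coherent Cohomology of Rigid Analytic Spaces} via the countable admissible covering $(\CX_n)_{n\in\BN}$ coincides, essentially by construction, with the projective limit topology. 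The $G_0$-equivariance comes for free: as observed in the paragraphs preceding the lemma, each $g \in G_0$ stabilizes every $\CX_n$, so the restriction maps are $G_0$-equivariant and the identification $H^0(\CX,\CE) \cong \varprojlim_n H^0(\CX_n,\CE)$ is $G_0$-equivariant.

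The core of the proof is the compactness and density of the restriction maps $\CE(\CX_{n+1}) \to \CE(\CX_n)$. I would first reduce to the structure sheaf: since $\CE$ is a vector bundle, pick a finite affinoid covering of $\BP_K^d$ (for instance the standard charts $D_+(X_i)$) on which $\CE$ is free, and rewrite $\CE(\CX_n)$ via the induced \v{C}ech resolution associated with $\CX_n = \bigcup_i \CX_n \cap D_+(X_i)$. Since this covering is finite and the restriction maps on each piece are $\CO(\CX_{n+1} \cap D_+(X_i))^{\oplus r} \to \CO(\CX_n \cap D_+(X_i))^{\oplus r}$, compactness is reduced (by the stability properties of compact operators under finite direct sums, closed subspaces, and quotients, see Appendix~\ref{Lemma A1 - Generalities on compact maps}) to compactness of $\CO(\CX_{n+1} \cap D_+(X_i)) \to \CO(\CX_n \cap D_+(X_i))$. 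For the latter, the geometric input is that $\CX_n$ lies \emph{strictly inside} $\CX_{n+1}$: each piece is defined by inequalities $\abs{\ell_H(z)} \geq \abs{\unif}^n$ for unimodular linear forms $\ell_H$, and passing to $\CX_{n+1}$ shrinks the radius from $\abs{\unif}^n$ to the strictly smaller $\abs{\unif}^{n+1}$, yielding a Weierstra\ss{}/rational subdomain presentation in which the restriction on affinoid algebras is completely continuous in the sense of Serre (essentially the classical compactness of $T_d(\rho) \to T_d(\rho')$ for $\rho' < \rho$).

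Density of image will follow from the Stein property of $\CX$. Concretely, since $H^j(\CX,\CE) = 0$ for $j>0$ (Kiehl's Theorem~B, applied to the Stein space $\CX$ and the coherent sheaf $\CE|_\CX$), a standard Mittag--Leffler argument using the nested affinoid covering shows that $H^0(\CX,\CE) \to H^0(\CX_n,\CE)$ has dense image for every $n$; since this factors through $H^0(\CX_{n+1},\CE) \to H^0(\CX_n,\CE)$, the latter has dense image as well. Equivalently, one observes that the covering $\CX = \bigcup_n \CX_n$ exhibits $\CX$ as a quasi-Stein space in Kiehl's sense, which by definition already incorporates density of the transition maps. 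The main technical obstacle I expect is keeping the bookkeeping of the Weierstra\ss{} presentations of $\CX_n \cap D_+(X_i)$ clean enough to extract complete continuity of the restriction maps on the structure sheaf; all other steps are soft consequences of the Stein property and functional-analytic stability properties of compact operators.
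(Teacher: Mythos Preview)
Your approach is broadly the same as the paper's---identify the limit via the nested covering, reduce to the structure sheaf, then read off compactness and density from the relative position of $\CX_n$ inside $\CX_{n+1}$---with one simplification you miss and one gap in the density step.

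The simplification: your \v{C}ech detour via the $D_+(X_i)$ is unnecessary (and note these are not affinoid but only quasi-Stein). Since $\CX_n$ avoids all $K$-rational hyperplanes, in particular the coordinate ones, already $\CX_n \subset D_+(X_0)$ and $\CE|_{\CX_n}$ is free. The paper simply invokes Kiehl's theorem to get compatible strict epimorphisms $\CO(\CX_{n})^{\oplus k} \twoheadrightarrow \CE(\CX_{n})$, and then both compactness and density for $\CE$ follow from the structure-sheaf case via the resulting commutative square and \Cref{Lemma A1 - Generalities on compact maps}. For compactness of $\CO(\CX_{n+1}) \to \CO(\CX_n)$ the paper uses that this map is \emph{inner} in Berkovich's sense (from \cite[\S 1, Proof of Prop.~4]{SchneiderStuhler91CohompAdicSymSp}), whence compactness by \cite[\S 1 Lemma~5]{SchneiderTeitelbaum02pAdicBoundVal}; this is precisely your ``strictly inside'' heuristic made rigorous.

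The gap: vanishing of $H^j(\CX,\CE)$ for $j>0$ together with a Mittag--Leffler argument does \emph{not} yield density of $H^0(\CX,\CE) \to H^0(\CX_n,\CE)$; dense image of the transition maps is a hypothesis in such arguments, not a conclusion. Your alternative is the correct route and is exactly what the paper does: $\CX_n$ is a Weierstra\ss\ domain in $\CX_{n+1}$ (again \cite[\S 1, Proof of Prop.~4]{SchneiderStuhler91CohompAdicSymSp}), so $\CO(\CX_{n+1}) \to \CO(\CX_n)$ has dense image by \cite[7.3.4 Prop.~2]{BoschGuentzerRemmert84NonArchAna}, and density for $\CE$ follows from the same $\CO^{\oplus k} \twoheadrightarrow \CE$ square.
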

\begin{proof}
	We apply the discussion of \Cref{Sect 2 - Topologies on the Coherent Cohomology of Rigid Analytic Spaces} to the admissible covering $\CX = \bigcup_{n\in \BN} \CX_{n}$.
	Noting that $\CX_{n} \subset \CX_{n+1}$, the topological isomorphism
	\begin{align*}
		H^0 (\CX,\CE) = \Ker \bigg( \prod_{n\in \BN} \CE(\CX_n) \ra \prod_{l,m \in \BN} \CE(\CX_{l}\cap \CX_{m}) \bigg) \cong \varprojlim_{n\in\BN} H^0(\CX_n,\CE)
	\end{align*}
	follows from \eqref{Eq 2 - Cech complex}.
	This isomorphism is $G_0$-equivariant by construction.
	
	We now argue analogously to \cite[\S 1 Prop.\ 4]{SchneiderTeitelbaum02pAdicBoundVal}. 
	For each $n\in \BN$, $\CX_n$ is a Weierstra{\ss} domain inside $\CX_{n+1}$ \cite[\S 1, Proof of Prop.\ 4]{SchneiderStuhler91CohompAdicSymSp}.
	This implies that the image of $\CO(\CX_{n+1})$ is dense inside $\CO(\CX_{n})$ \cite[7.3.4 Prop.\ 2]{BoschGuentzerRemmert84NonArchAna}.

	Furthermore, by \cite[\S 1, Proof of Prop.\ 4]{SchneiderStuhler91CohompAdicSymSp} the homomorphism $\psi \colon \CO(\CX_{n+1}) \ra \CO(\CX_{n})$ is inner in the sense of \cite[Def.\ 2.5.1]{Berkovich90SpecThAnGeom}, i.e.\ there exists a strict epimorphism
	\begin{equation*}
		\tau \colon K \langle T_1,\ldots, T_m \rangle \longtwoheadrightarrow \CO(\CX_{n+1})
	\end{equation*}
	of affinoid $K$-algebras, for some $m\in \BN$, such that 
	\begin{equation*}
		\sup_{y \in \CX_n} \abs{ \psi(\tau(T_i))(y)} <1,
	\end{equation*}
	for all $i=1,\ldots,m$.
	By \cite[\S 1 Lemma 5]{SchneiderTeitelbaum02pAdicBoundVal} it follows that $\psi$ is compact as a homomorphism of locally convex $K$-vector spaces.
	
	For a general vector bundle $\CE$, we know by Kiehl's theorem \cite[9.4.3 Thm.\ 3]{BoschGuentzerRemmert84NonArchAna} that, for some $k\in \BN_0$, there is a commutative diagram
	\begin{equation*}
		\begin{tikzcd}
			\CO(\CX_{n+1})^{\oplus k} \ar[r, "\psi^{\oplus k}"]\ar[d,two heads] & \CO(\CX_{n})^{\oplus k} \ar[d, two heads] \\
			\CE(\CX_{n+1}) \ar[r] & \CE(\CX_{n})
		\end{tikzcd}
	\end{equation*}
	where the vertical maps are strict epimorphisms.
	Then \Cref{Lemma A1 - Generalities on compact maps} (ii) and (iii) imply that $\CE(\CX_{n+1})\ra \CE(\CX_{n})$ is compact. 
	The above commutative diagram also shows that the image of this restriction homomorphism is dense.
\end{proof}

\begin{lemma}\label{Lemma 2 - Sections on X_n are locally analytic representation}
	For $n\in \BN$, the representation
	\[G_0 \times H^0(\CX_n,\CE) \lra H^0(\CX_n,\CE) \,, \quad (g,v) \lto g.v , \]
	on the $K$-Banach space $H^0(\CX_n,\CE)$ is locally analytic.
\end{lemma}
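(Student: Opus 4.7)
The plan is to exploit the rigid-analytic action of the larger group $H_{n+1}$ on $\CX_n$ (just constructed before the statement) to show that the orbit maps of $G_0$ on the $K$-Banach space $H^0(\CX_n,\CE) = \CE(\CX_n)$ are uniformly given by convergent power series on suitable neighborhoods. Since $\CE(\CX_n)$ is a Banach space, this will give the result via the characterization in \Cref{Prop 1 - Equivalent characterization for locally analytic representations on Banach spaces}.

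First, I would partition $G_0$ into cosets of the open normal subgroup $U_0 \defeq 1 + \unif^{n+1}\mathrm{M}_{d+1}(\CO_K) \subset G_0$. For any $g_0 \in G_0$, the coset $V \defeq g_0 U_0$ is contained in $H_{n+1}(K)$, since $g_0(1+\unif^{n+1}h') = g_0 + \unif^{n+1} g_0 h'$ has the required form. Moreover, $V$ is the set of $K$-points of an affinoid subdomain $\mathbf{V} \subset H_{n+1}$ which is isomorphic (via the matrix entry coordinates) to the $(d+1)^2$-dimensional affinoid polyball of radius $\abs{\unif}^{n+1}$, and the induced chart $V \to B^{(d+1)^2}_{\abs{\unif}^{n+1}}(0)$ centers $g_0$ at the origin.

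Next, restricting the rigid-analytic action $\sigma \colon H_{n+1} \times_K \CX_n \to \CX_n$ to $\mathbf{V} \times_K \CX_n$ and combining with the equivariant isomorphism $\Phi\colon \sigma^\ast \CE \xrightarrow{\cong} \pr_2^\ast \CE$, I obtain a continuous $K$-linear map
\[
\alpha \colon \CE(\CX_n) \lra (\sigma^\ast \CE)(\mathbf{V} \times_K \CX_n) \xrightarrow[\Phi]{\cong} (\pr_2^\ast \CE)(\mathbf{V} \times_K \CX_n) \cong \CO(\mathbf{V}) \cotimes{K} \CE(\CX_n),
\]
where the last isomorphism uses that $\mathbf{V} \times_K \CX_n$ is affinoid and $\CE$ is locally free of finite rank. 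For each $v \in \CE(\CX_n)$, the element $\alpha(v)$ is a convergent power series in the chart coordinates with coefficients in the Banach space $\CE(\CX_n)$, defined on $B^{(d+1)^2}_{\abs{\unif}^{n+1}}(0)$. Evaluation at any $g \in V$ recovers the action $g.v$, as can be checked on $C$-valued points by the construction of the action homomorphism in \Cref{Sect 2 - Coherent Cohomology of Equivariant Vector Bundles}. Hence $\rho_v\res{V}$ is analytic, uniformly in $v$.

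Running this over all cosets of $U_0$ in $G_0$ verifies the hypothesis of \Cref{Prop 1 - Equivalent characterization for locally analytic representations on Banach spaces} (in the analytic-on-a-neighborhood sense). Combined with the continuity of the $G_0$-action from \Cref{Lemma 2 - Continuity of action on sections of DHS} and the fact that a Banach space is Hausdorff and barrelled, this yields the desired locally analytic $G_0$-representation structure on $H^0(\CX_n,\CE)$. The main care point lies in the construction of $\alpha$: one needs to identify $(\pr_2^\ast \CE)(\mathbf{V} \times_K \CX_n)$ with $\CO(\mathbf{V}) \cotimes{K} \CE(\CX_n)$ as topological $K$-vector spaces, which reduces via a local trivialization of $\CE$ to the analogous statement for the structure sheaf of an affinoid product — a consequence of the universal property of the completed tensor product of affinoid algebras.
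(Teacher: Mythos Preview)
Your proof is correct and follows essentially the same approach as the paper: both use the rigid-analytic action of $H_{n+1}$ on $\CX_n$ together with the equivariant isomorphism $\Phi$ to produce, for each coset $g_0(1+\unif^{n+1}M_{d+1}(\CO_K))$, a power series in $\CE(\CX_n)\langle T_1,\ldots,T_{(d+1)^2}\rangle$ whose evaluation recovers the orbit map. The paper spells out the verification $gh.v = F_v(h)$ via an explicit commutative diagram rather than appealing to \Cref{Sect 2 - Coherent Cohomology of Equivariant Vector Bundles}, and concludes directly from the definition of a locally analytic representation instead of routing through \Cref{Prop 1 - Equivalent characterization for locally analytic representations on Banach spaces}, but these are cosmetic differences.
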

\begin{proof}
	We have already seen in \Cref{Lemma 2 - Continuity of action on sections of DHS} that each $g\in G_0 \subset H_{n+1}(K)$ acts by a continuous automorphism on $H^0(\CX_n,\CE)$.
	Hence, it suffices to show that, for every $v\in H^0(\CX_n,\CE)$, the orbit maps $G_0 \ra H^0(\CX_n,\CE)$, $g \, \to\, g.v$, are locally analytic.

	To this end we fix $v \in \CE(\CX_n)$ and $g \in G_0$, and proceed just as in \cite[Prop.\ 2.1']{SchneiderTeitelbaum02pAdicBoundVal}.
	We use the admissible open rigid analytic subgroup \eqref{Eq 2 - nth-level rigid analytic subgroup around G_0} and the rigid analytic chart
	\[\iota_g \colon D_{n+1} \defeq 1+ \unif^{n+1}\mathrm{M}_{d+1}(\CO_C) \lra H_{n+1} \,,\quad h \lto gh .\]
	Note that $D_{n+1}$ is isomorphic to a polydisc $\Sp \, K\langle T_1,\ldots,T_{(d+1)^2} \rangle$ as a rigid analytic variety.
	As $H_{n+1}$ fixes $\CX_n$, we can restrict the group action $\sigma$ and get the following commutative diagram:
	\[\begin{tikzcd}
		D_{n+1} \times_K \CX_n \arrow[r, "\iota_g \times \mathrm{id}"] \arrow[rd, "\pr_2"'] &[+5pt] H_{n+1} \times_K \CX_n \arrow[r, "\sigma"] \arrow[d, "\pr_2"] & \CX_n \\
		&\CX_n&
	\end{tikzcd}.\]
	Let $F_v \in \CE(\CX_n) \langle T_1,\ldots,T_{(d+1)^2} \rangle$ denote the power series to which $v$ is mapped under 
	\begin{equation*}
		\begin{tikzcd}[,/tikz/column 3/.append style={anchor=base west}]
			\CE(\CX_n) \ar[r] & (\iota_g \times \id)^\ast \sigma^\ast \CE(D_{n+1} \times_K \CX_n) \ar[d, "(\iota_g \times \mathrm{id})^{\ast} \Phi (D_{n+1} \times_K \CX_n)"] &[-30pt] \\
			&	(\iota_g \times \id)^\ast \pr_2^\ast \CE(D_{n+1} \times_K \CX_n) &[-30pt] \cong \pr_2^\ast \CE(D_{n+1} \times_K \CX_n) \\[-20pt]
			&&[-30pt] \cong \big( \CO(D_{n+1}) \cotimes{K} \CO(\CX_n) \big) \otimes_{\CO(\CX_n)} \CE(\CX_n) \\[-20pt]
			&&[-30pt] \cong \CE(\CX_n) \langle T_1,\ldots,T_{(d+1)^2} \rangle.
		\end{tikzcd}
	\end{equation*}
	Now consider, for a $K$-valued point $h \in D_{n+1}(K)$,
	\[\begin{tikzcd}
		D_{n+1} \times_K \CX_n \arrow[r, "\iota_g \times \id"] & H_{n+1} \times_K \CX_n \arrow[r, "\sigma"] & \CX_n \\
		\CX_n \arrow[u, "h \times \id"]  \arrow[urr,  "gh"', end anchor = 210] &&
	\end{tikzcd}.\]
	In terms of $K$-affinoid algebras the morphism $h \times \id\colon \CX_n \ra D_{n+1} \times_K \CX_n$ is given by the evaluation of power series
	\begin{align*}
		\mathrm{ev}_h \colon \CO(D_{n+1}) \cotimes{K} \CO(\CX_n) \cong \CO(\CX_n) \langle T_1,\ldots,T_{(d+1)^2} \rangle &\lra \CO(\CX_n) , \\
		F &\lto F(h) .
	\end{align*}
	Hence we arrive at the commutative diagram 
	\[\begin{tikzcd}
		\CE(\CX_n) \arrow[r] \arrow[rd, end anchor = 170] & (\iota_g\times\id)^\ast \sigma^\ast \CE(D_{n+1}\times_K \CX_n) \arrow[d, "(h\times\id)^\ast"] \arrow[r] & \CE(\CX_n)\langle T_1,\ldots,T_{(d+1)^2} \rangle \arrow[d, "\mathrm{ev}_h"]\\
		& (gh)^\ast \CE(\CX_n) \arrow[r, "\Phi_{gh}(\CX_n)"] &\CE(\CX_n) 
	\end{tikzcd}\]
	which shows that $gh.v = F_v(h)$.
	We conclude that the orbit map $G_0 \ra \CE(\CX_n)$ is analytic on the open neighbourhood $\iota_g (D_{n+1})(K)$ of $g$.
\end{proof}

\begin{corollary}\label{Cor 2 - Dual of sections on X_n are locally analytic representation}
	The contragredient representation
	\[G_0 \times H^0(\CX_n,\CE)' \lra H^0(\CX_n,\CE)' \,,\quad (g,\ell) \lto \ell(g^{-1}.\blank) ,\]
	on the $K$-Banach space $H^0(\CX_n,\CE)'$ is locally analytic.
\end{corollary}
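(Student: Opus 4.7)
The plan is to deduce this formally from \Cref{Prop 1 - Equivalent characterization for locally analytic representations on Banach spaces} applied in both directions, together with elementary properties of the transposition map between Banach spaces. Write $E \defeq H^0(\CX_n,\CE)$, a $K$-Banach space. Its strong dual $E'_b$ is again a $K$-Banach space (with respect to the operator norm), so it is in particular barrelled and Hausdorff. Hence to verify that the contragredient action defines a locally analytic $G_0$-representation it suffices, by the reverse direction of \Cref{Prop 1 - Equivalent characterization for locally analytic representations on Banach spaces}, to exhibit the associated homomorphism $\check{\rho} \colon G_0 \ra \GL(E'_b) \subset \CL_b(E'_b,E'_b)$ as locally $K$-analytic.

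First, by \Cref{Lemma 2 - Sections on X_n are locally analytic representation} the $G_0$-representation on $E$ is locally analytic, so by the forward direction of \Cref{Prop 1 - Equivalent characterization for locally analytic representations on Banach spaces} the corresponding map $\rho \colon G_0 \ra \GL(E) \subset \CL_b(E,E)$ is a locally $K$-analytic homomorphism of (Banach) Lie groups, where $\GL(E)$ carries its canonical Banach Lie group structure as the open subset of invertible elements. Next, I would observe that the transposition map
\begin{equation*}
	\tau \colon \CL_b(E,E) \lra \CL_b(E'_b,E'_b) \,,\quad T \lto T^t,
\end{equation*}
is a continuous $K$-linear map between $K$-Banach spaces (it is in fact an isometry with respect to the operator norms), and therefore locally $K$-analytic. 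Since $(T^{-1})^t = (T^t)^{-1}$, $\tau$ restricts to a morphism of Banach Lie groups $\GL(E) \ra \GL(E'_b)$. Combining this with the locally $K$-analytic inversion map $\inv \colon G_0 \ra G_0$, the contragredient representation is encoded by
\begin{equation*}
	\check{\rho} \colon G_0 \xrightarrow{\inv} G_0 \xrightarrow{\rho} \GL(E) \xrightarrow{\tau} \GL(E'_b) \,,\quad g \lto \tau\big(\rho(g^{-1})\big) = \big[\ell \mto \ell \circ \rho(g^{-1})\big] .
\end{equation*}
As a composition of locally $K$-analytic maps between Banach manifolds, $\check{\rho}$ is itself locally $K$-analytic, and applying the converse direction of \Cref{Prop 1 - Equivalent characterization for locally analytic representations on Banach spaces} yields the claim.

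I do not anticipate any serious obstacle: the entire argument is a formal consequence of the Banach-analytic characterization, and the only items to check are that transposition is continuous (immediate from $\norm{T^t}_{\mathrm{op}} = \norm{T}_{\mathrm{op}}$) and compatible with inversion in $\GL$ (the elementary identity $(T^{-1})^t = (T^t)^{-1}$). The mild technical point worth spelling out is that $\GL(E) \subset \CL_b(E,E)$ is an open Banach submanifold, so that the composition $\tau \circ \rho \circ \inv$ genuinely lands in (and defines a locally analytic map into) the Banach Lie group $\GL(E'_b)$.
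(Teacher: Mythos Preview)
Your proof is correct and follows essentially the same approach as the paper: both apply \Cref{Prop 1 - Equivalent characterization for locally analytic representations on Banach spaces} to obtain $\rho$ as a locally analytic Lie group homomorphism from \Cref{Lemma 2 - Sections on X_n are locally analytic representation}, then argue that $g \mto \rho(g^{-1})^t$ is again locally analytic (the paper cites Bourbaki for this, while you spell out that it is a composition of $\inv$, $\rho$, and the continuous linear transposition map), and conclude via the converse direction of the same proposition.
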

\begin{proof}
	We argue analogous to the proof of \cite[Prop 3.8]{SchneiderTeitelbaum02pAdicBoundVal}.
	By \Cref{Prop 1 - Equivalent characterization for locally analytic representations on Banach spaces} the representation of $G_0$ on $H^0(\CX_n,\CE)$ from \Cref{Lemma 2 - Sections on X_n are locally analytic representation} is given by a locally analytic homomorphism of locally $K$-analytic Lie groups $\rho \colon G_0 \ra \GL(H^0(\CX_n,\CE))$.
	Then the contragredient representation
	\[\rho^\ast \colon G_0 \lra \GL\big(H^0(\CX_n,\CE)' \big)\,,\quad g \lto  \rho(g^{-1})^t , \]
	is given by a locally analytic homomorphism of Lie groups as well \cite[III.\ \S 3.11 Cor.\ 2]{Bourbaki89LieGrpLieAlg1to3}, and is locally analytic therefore.
\end{proof}

\begin{proof}[{Proof of \Cref{Prop 2 - Dual of the sections on DHS is locally analytic representation}}]
	For the statements about $H^0(\CX,\CE^\rigbun)'_b$, we argue analogously to the proof of \cite[Prop.\ 1.4]{SchneiderTeitelbaum02pAdicBoundVal}.
	By \Cref{Lemma 2 - Description of sections of DHS as projective limit}, the homomorphisms $H^0(\CX_{n+1},\CE) \ra H^0(\CX_n,\CE)$ have dense image.
	Therefore, the image of the projections $H^0(\CX,\CE) \ra H^0(\CX_n,\CE)$ is dense, too \cite[Ch.\ II.\ \S 3.5 Thm.\ 1]{Bourbaki66GenTop1}.
	We apply \cite[Prop.\ 16.5]{Schneider02NonArchFunctAna} to conclude that \eqref{Eq 2 - Dual of projective limit of sections} is a topological isomorphism.

	As $H^0(\CX_{n+1},\CE) \ra H^0(\CX_n,\CE)$ has dense image, the transpose $H^0(\CX_n,\CE)' \ra H^0(\CX_{n+1},\CE)'$ is injective.
	Moreover, \cite[Lemma 16.4]{Schneider02NonArchFunctAna} implies that they are compact.

	Concerning the $G$-action \eqref{Eq 2 - Representation on dual of global sections}, we have seen in \Cref{Lemma 2 - Continuity of action on sections of DHS} that $G$ acts by continuous endomorphisms on $H^0(\CX, \CE)$.
	Therefore the contragredient $G$-action on $H^0(\CX, \CE)'_b$ is by continuous endomorphisms as well.

	In view of \Cref{Prop 1 - Locally analytic representations and open subgroups} it suffices to show that the orbit maps of \eqref{Eq 2 - Representation on dual of global sections} restricted to $G_0$ are locally analytic.
	For any $\ell \in H^0(\CX,\CE)'_b$, there exists some $n\in \BN$ such that $\ell \in H^0(\CX_n,\CE)'$, and the inclusion $H^0(\CX_n, \CE)' \hookrightarrow H^0(\CX,\CE)'_b$ is $G_0$-equivariant by \Cref{Lemma 2 - Description of sections of DHS as projective limit}.

	But we have seen in \Cref{Cor 2 - Dual of sections on X_n are locally analytic representation} that the orbit map $G_0 \ra H^0(\CX_n,\CE)'$ of $\ell$ is locally analytic.
	As $H^0(\CX,\CE)'_b$ is of compact type with respect to \eqref{Eq 2 - Dual of sections is of compact type}, $H^0(\CX_n,\CE)' \hookrightarrow H^0(\CX,\CE)_b'$ constitutes a BH-subspace.
	Then by definition the orbit map $G_0 \ra H^0(\CX,\CE)'_b$ of $\ell$ is locally analytic as well.
\end{proof}

\subsection{Strictness of certain \v{C}ech Complexes for the Complement of Schubert Varieties}\label{Sect 2 - Strictness of certain Cech complexes}

Our next aim is to see how the strong duals of the local cohomology groups for $\CE$ with respect to Schubert varieties $\BP_K^j \subset (\BP_K^d)^\rig$ become locally analytic representations.
Moreover, we will show that these strong dual spaces are of compact type similarly to \eqref{Eq 2 - Dual of sections is of compact type} by giving an exhaustion by the local cohomology groups with respect to ``tubes'' around the Schubert varieties.

However, the first step in this section is to prove that certain \v{C}ech complexes which compute the cohomology of the complement of these Schubert varieties consist of strict continuous homomorphisms.
The strictness property enables us to pass to the local cohomology with respect to the ``tubes'' around the Schubert varieties in a well-behaved way.\\

First we define certain rigid analytic subvarieties of $\BP_K^d$ which are neighbourhoods of the Schubert varieties
\begin{equation*}
	\BP^\sd_{K} = \left\{ [z_0{\,:\,}\ldots{\,:\,} z_\sd{\,:\,} 0 {\,:\,} \ldots {\,:\,} 0] \in \BP_K^d \right\} \defeq V_+ (X_{\sd+1},\ldots,X_d) \subset \BP_K^d ,
\end{equation*}
for fixed $\sd \in \{0,\ldots,d-1\}$.
For $0<\varepsilon <1$, $\varepsilon \in \abs{\widebar{K}}$, the ``open'' $\varepsilon$-neighbourhood around $\BP_K^\sd$ is defined as
\begin{equation}\label{Eq 2 - Definition of open tube}
	\BP^\sd_K(\varepsilon) \defeq \left\{[z_0{\,:\,}\ldots{\,:\,}z_d]\in \BP^d_K \middle{|} \forall i=\sd+1,\ldots,d: \abs{z_i} \leq \varepsilon \right\},
\end{equation}
and the ``closed'' one as
\[ \BP^\sd_K(\varepsilon)^- \defeq \left\{[z_0{\,:\,}\ldots{\,:\,}z_d]\in \BP^d_K \middle{|} \forall i=\sd+1,\ldots,d: \abs{z_i} < \varepsilon \right\} .\]

We will describe some admissible coverings of the complements of these $\varepsilon$-neighbourhoods.
Let $\lambda\in K\unts$ and $m\in \BN$ such that $\varepsilon = \sqrt[m]{\abs{\lambda}}$.
We have the admissible covering 
\begin{equation}\label{Eq 2 - Covering of complement of the closed tubes}
	\BP^d_K \setminus \BP^\sd_K(\varepsilon)^- = \bigcup_{i=\sd+1}^d U_{i,\varepsilon}
\end{equation}
by the Weierstra{\ss} domains
\begin{align*}
	U_{i,\varepsilon} = D_+(X_i)_{\varepsilon} &\defeq \left\{ [z_0{\,:\,}\ldots{\,:\,}z_d] \in \BP^d_K \middle{|} \forall j=0,\ldots,d: \varepsilon \, \abs{z_j} \leq  \abs{z_i}\right\} \\
	&\phantom{\vcentcolon}= \left\{ [z_0{\,:\,}\ldots{\,:\,}z_d] \in \BP^d_K \middle{|} \forall j=0,\ldots,d: \left\lvert \lambda \frac{z_j^m}{z_i^m} \right\rvert \leq 1 \right\},
\end{align*}
cf.\ \cite[6.1.5 Thm.\ 4]{BoschGuentzerRemmert84NonArchAna}.
We will denote this covering \eqref{Eq 2 - Covering of complement of the closed tubes} by $\CU_{\varepsilon}$ in the following.
Note that $U_{i,\varepsilon}$ is contained in the standard open affine subset $D_+(X_i)$ of the projective space.

Moreover, for $0\leq\varepsilon <1$, $\varepsilon \in \abs{\widebar{K}}$, let
\begin{equation*}
	U_{i,\varepsilon}^- = D_+(X_i)_{\varepsilon}^- \defeq  \left\{ [z_0{\,:\,}\ldots{\,:\,}z_d] \in \BP^d_K \middle{|} \forall j=0,\ldots,d: \varepsilon \, \abs{z_j} < \abs{z_i} \right\} .
\end{equation*}
Then $U_{i,\varepsilon}^-$ becomes an open admissible subdomain of $\BP^d_K$ which is quasi-Stein via the admissible covering 
\begin{equation*}
	U_{i,\varepsilon}^- = \bigcup_{ \abs{\CO_{\widebar{K}} } \ni \varepsilon_m\searrow\varepsilon  } U_{i,\varepsilon_m} ,
\end{equation*}
for any strictly decreasing sequence of $(\varepsilon_m)_{m\in \BN} \subset \abs{\widebar{K}\unts}$, with $\varepsilon_m> \varepsilon$, $\varepsilon_m\ra \varepsilon$.
The condition on the associated homomorphisms of affinoid algebras to have dense image is fulfilled for every inclusion of Weierstra{\ss} domains \cite[7.3.4 Prop.\ 2]{BoschGuentzerRemmert84NonArchAna}.
In the extreme case of $\varepsilon =0$, we have $U_{i,0}^- = U_i^\rig$, cf.\ \cite[9.3.4 Example 2]{BoschGuentzerRemmert84NonArchAna}.
Here $U_i^\rig$ denotes the rigid analytification of the open affine subscheme $U_i \defeq D_+(X_i) \subset \BP_K^d$.

We let $\CU_{\varepsilon}^-$ denote the admissible covering
\begin{equation}\label{Eq 2 - Covering of complement of the open tubes}
	\BP^d_K \setminus \BP^\sd_K(\varepsilon) = \bigcup_{i=\sd+1}^d U_{i,\varepsilon}^-.
\end{equation}
In the extreme case $\varepsilon = 0$, we also write $(\BP^\sd_K)^\rig \subset \BP_K^d$ when we want to signify that the complement of $\BP_K^\sd \subset (\BP_K^d)^\rig$ is an admissible open subset
\begin{equation*}
	\BP^d_K \setminus \big( \BP^\sd_K\big)^\rig = \bigcup_{i=\sd+1}^d U_{i}^\rig = \bigcup_{i=\sd+1}^d \bigcup_{ \abs{\CO_{\widebar{K}} } \ni \varepsilon_m\searrow 0  } U_{i,\varepsilon_m} .
\end{equation*}
Moreover, we let $\CU$ denote the standard covering by open affine subschemes
\begin{equation}\label{Eq 2 - Covering of complement of Schubert varieties}
	\BP_K^d \setminus \BP_K^\sd = \bigcup_{i=\sd+1}^d U_i .
\end{equation}
Finally, we write, for any subset $I\subset \{0,\ldots,d\}$,
\begin{equation}\label{Eq 2 - Intersection of open subsets of the covering of the complement of the closed tubes}
	U_{I,\varepsilon} \defeq \bigcap_{i\in I} U_{i,\varepsilon}  ,
\end{equation}
and similarly $U_{I,\varepsilon}^-$, $U_I$, and $U_I^\rig$.
By convention we set $U_{\emptyset}=U_{\emptyset,\varepsilon} = U_{\emptyset,\varepsilon}^- \defeq \BP_K^d$.

\begin{lemma}\label{Lemma 2 - Density of algebraic sections in analytic sections}
	Let $0<\varepsilon <1$ with $\varepsilon \in \abs{\widebar{K}}$.
	Let $I\subset \{0,\ldots,d\}$ and denote by $\CE^\algbun(U_I)$ the sections of $\CE$ on the algebraic variety $U_I$.
	Then the homomorphism $\CE^\algbun( U_I) \ra \CE^\rigbun( U_{I,\varepsilon})$ which is induced from $U_{I,\varepsilon}$ being an admissible open subdomain of $U_I^\rig$ is injective and its image is dense.
	In particular, $\CE^\rigbun(U_{I,\varepsilon})$ is the completion of $\CE^\algbun(U_I)$ when the latter is considered with the topology coming from the Banach topology of $\CE^\rigbun (U_{I,\varepsilon})$.
\end{lemma}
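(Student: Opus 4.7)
The strategy is to treat the structure sheaf case $\CE=\CO_{\BP_K^d}$ first and then reduce the general case via a finite algebraic presentation of $\CE$. For the structure sheaf: fix $i_0\in I$ and use affine coordinates $x_j\defeq X_j/X_{i_0}$ on $U_{i_0}$, so that
\[ \CO^\algbun(U_I)=K[x_j : j\neq i_0][x_i^{-1} : i\in I\setminus\{i_0\}]. \]
In these coordinates the affinoid $U_{I,\varepsilon}$ is cut out by explicit inequalities of the form $\varepsilon\abs{x_j}\leq 1$, $\varepsilon\abs{x_i^{-1}}\leq 1$, and $\varepsilon\abs{x_jx_i^{-1}}\leq 1$; every function appearing already lies in $\CO^\algbun(U_I)$. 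The affinoid algebra $\CO^\rigbun(U_{I,\varepsilon})$ is therefore the Hausdorff completion of $\CO^\algbun(U_I)$ with respect to the associated sup norm, which gives both injectivity and density of $\CO^\algbun(U_I)\to\CO^\rigbun(U_{I,\varepsilon})$ at once.

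\textbf{Reduction to the structure sheaf.} By Serre's theorem the coherent sheaf $\CE$ admits a finite algebraic presentation
\[ \CO_{\BP_K^d}(n_1)^{\oplus a}\lra \CO_{\BP_K^d}(n_2)^{\oplus b}\lra \CE\lra 0 \]
on $\BP_K^d$. Restriction to the affine scheme $U_I$ yields an exact sequence of $\CO^\algbun(U_I)$-modules, and rigid analytification followed by restriction to the affinoid $U_{I,\varepsilon}$ gives an exact sequence of $\CO^\rigbun(U_{I,\varepsilon})$-modules by Kiehl's theorem \cite[9.4.3 Thm.\ 3]{BoschGuentzerRemmert84NonArchAna}. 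These assemble into a commutative diagram with vertical restriction arrows. Since each twist $\CO_{\BP_K^d}(n)$ is free of rank one on every $U_i$, and hence on $U_I$, the two leftmost vertical arrows are direct sums of copies of the structure-sheaf inclusion and by the first step are injective with dense image. The bottom-right horizontal map is a strict epimorphism of $K$-Fr\'echet spaces by the open mapping theorem \cite[Prop.\ 8.6]{Schneider02NonArchFunctAna}, so density propagates to the right-most vertical arrow by a brief diagram chase (lift an element of $\CE^\rigbun(U_{I,\varepsilon})$ to the middle, approximate from above, push back down).

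\textbf{Injectivity and main obstacle.} For injectivity in the general case, assume $s\in\CE^\algbun(U_I)$ restricts to $0$ in $\CE^\rigbun(U_{I,\varepsilon})$. Pick any $K$-rational point $p\in U_{I,\varepsilon}(K)$ and a Zariski-open neighbourhood $V\subset U_I$ of $p$ over which $\CE$ trivialises. Then $s\vert_V$ is represented by a tuple of regular functions on $V$, each vanishing on an affinoid polydisc around $p$; since a nonzero regular function on an irreducible affine variety cannot vanish on a polydisc of positive radius, $s\vert_V=0$, and hence $s=0$ by the sheaf axiom. The one point that requires care is the explicit description of $\CO^\rigbun(U_{I,\varepsilon})$ as the sup-norm completion of $\CO^\algbun(U_I)$ together with the compatibility and strictness of the diagram coming from the finite presentation of $\CE$; beyond this bookkeeping, no conceptual obstacle is expected.
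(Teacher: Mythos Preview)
Your proof is correct, and the structure-sheaf step matches the paper's. The reduction for general $\CE$ differs: the paper does not use a global finite presentation and diagram chase, but instead invokes the forward reference to \Cref{Lemma 2 - T-equivariant trivialization}, which (using the $\bG$-equivariance via Jantzen) gives an outright trivialisation $\CE^\algbun|_{U_{i_0}}\cong(\CO^\algbun)^{\oplus n}|_{U_{i_0}}$. This yields compatible isomorphisms $\CE^\algbun(U_I)\cong\CO^\algbun(U_I)^{\oplus n}$ and $\CE^\rigbun(U_{I,\varepsilon})\cong\CO^\rigbun(U_{I,\varepsilon})^{\oplus n}$, so both injectivity and density follow in one line from the structure-sheaf case. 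Your route via Serre's presentation is a genuine alternative: it is more general (the density argument works for any coherent sheaf on $\BP_K^d$, not just equivariant bundles) but costs you a separate injectivity argument and a diagram chase. The paper's route is shorter here precisely because it spends the equivariance hypothesis, which you leave untouched. One small remark: in your density step you do not actually need strictness of the bottom surjection---continuity of $\CO^\rigbun(U_{I,\varepsilon})^{\oplus b}\to\CE^\rigbun(U_{I,\varepsilon})$ already pushes the approximating sequence through.
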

\begin{proof}
	We may assume that $I = \{i_0,\ldots,i_m\} \neq \emptyset$.
	Then the isomorphism 
	\begin{align*}
		U_{i_0}=D_+(X_{i_0}) &\overset{\cong}{\lra} \Spec \,K \big[T_{(j,i_0)}\mid j=0,\ldots,d, j\neq i_0 \big] , \\
		\frac{X_j}{X_{i_0}} &\longmapsfrom T_{(j,i_0)} ,
	\end{align*}
	of schemes induces an isomorphism	$U_I \cong \Spec (A/\Fa)$ where
	\begin{align*}
		A &= K \big[T_{(j,i_k)} \,\big\vert\, j=0,\ldots,d, k=0,\ldots,m \text{, with } j \neq i_k \big] ,\\
		\Fa &= \big(T_{(i_j,i_k)} T_{(i_k,i_j)} -1 \,\big\vert\, j,k=0,\ldots,m \text{, with } j\neq k \big).
	\end{align*}
	Recall that $U_I^\rig$ is defined via the admissible covering \cite[9.3.4 Example 2]{BoschGuentzerRemmert84NonArchAna}
	\begin{equation*}
		U_I^\rig  = \bigcup_{n\geq 0} U_{I,\abs{\unif}^n} , 
	\end{equation*}
	and we have isomorphisms $U_{I,\abs{\unif}^n} \cong \Sp ( A_n/\Fa A_n )$, for 
	\begin{align*}
		A_n = K\big\langle \unif^n T_{(j,i_k)} \,\big\vert\, j=0,\ldots,d, k=0,\ldots,m \text{, with } j\neq i_k \big\rangle .
	\end{align*}
	We see that $A/\Fa \hookrightarrow A_n/\Fa A_n$ has dense image as $A$ is dense in $A_n$.
	Furthermore, the Weierstra{\ss} subdomain $U_{I,\varepsilon} \subset U_I^\rig$ is contained in some $U_{I,\abs{\unif}^n}$ so that $A/\Fa \hookrightarrow \CO(U_{I,\varepsilon})$ is dense as well \cite[7.3.4 Prop.\ 2]{BoschGuentzerRemmert84NonArchAna}.
	This settles the case of the structure sheaf $\CE = \CO$.
	
	For a general $\bG$-equivariant vector bundle $\CE$ on $\BP_K^d$ by \Cref{Lemma 2 - T-equivariant trivialization}, we find a trivialization $\CE^\algbun\res{U_{i_0}} \cong (\CO^\algbun)^{\oplus n}\res{U_{i_0}}$, for $n \defeq {\rm rk}(\CE)$.
	Therefore the claim follows from the compatible isomorphisms $\CE^\algbun(U_I) \cong \CO^\alg(U_I)^{\oplus n}$ and $\CE^\rigbun(U_{I,\varepsilon}) \cong \CO^\rigbun (U_{I,\varepsilon})^{\oplus n}$.	
\end{proof}

\begin{remark}\label{Rmk 2 - Concrete description of the norms of the sections on the affinoid subdomains}
	One can give a concrete description of the Gauss norm $\abs{\blank}_\varepsilon$ on the affinoid algebra $\CO (U_{I,\varepsilon})$ (and in turn on $\CO^\algbun (U_{I})$), cf.\ \cite[Proof of Lemma 1.3.1]{Orlik08EquivVBDrinfeldUpHalfSp}:
	\begin{equation}\label{Eq 2 - Norm on open affinoid}
		\bigg\lvert \sum_{ \ul{k}\in \Lambda_I} a_{\ul{k}} \, X_0^{k_0} \cdots X_d^{k_d}  \bigg\rvert_\varepsilon 
		= \sup_{ \ul{k} \in \Lambda_I}   \abs{ a_{\ul{k}}}  \bigg( \frac{1}{\varepsilon} \bigg)^{\abs{\max(0,\ul{k})}} .
	\end{equation}
	Here we use the notation
	\begin{align*}
		\Lambda_I &\defeq \bigg\{ \ul{k} \in \BZ^{d+1} \bigg\vert \sum_{i=0}^d k_i = 0 \text{ , and } \forall i\in \{0,\ldots,d\}\setminus I: k_i \geq 0 \bigg\} ,
	\end{align*}
	and
	\begin{align*}
		\max(0,\ul{k}) &\defeq \big( \max(0,k_0),\ldots , \max(0,k_d) \big) .
	\end{align*}
\end{remark}

We will need the following result about the \v{C}ech complex associated to the covering \eqref{Eq 2 - Covering of complement of Schubert varieties}\footnote{The statement of \Cref{Thm 2 - Differentials of algebraic Cech complex are strict} is found in the proof of \cite[Lemma 1.3.1]{Orlik08EquivVBDrinfeldUpHalfSp}. However the justification given there contains some flaws.}.

\begin{theorem}\label{Thm 2 - Differentials of algebraic Cech complex are strict}
	Let $\CE$ be a $\bG$-equivariant vector bundle on $\BP_K^d$, and $0<\varepsilon <1$ with $\varepsilon \in \abs{\widebar{K}}$.
	For the \v{C}ech complex $C^\bullet (\CU,\CE)$ associated with the covering \eqref{Eq 2 - Covering of complement of Schubert varieties} which computes the coherent cohomology $H^\bullet (\BP_K^d\setminus \BP_K^\sd ,\CE)$, the differentials
	\begin{equation*}
		d^q \colon C^q (\CU,\CE)  \lra C^{q+1} (\CU,\CE)
	\end{equation*}
	are strict continuous homomorphisms when each $\CE(U_I) \,(= \CE^\algbun (U_I) ) \subset \CE^\rigbun(U_{I,\varepsilon})$ is endowed with the topology coming from the Banach topology of $\CE^\rigbun (U_{I,\varepsilon})$, for $I \subset \{\sd +1,\ldots,d\}$.
\end{theorem}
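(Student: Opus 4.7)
\emph{Proof plan.} The natural approach is to exploit the fact that the differentials act monomial-by-monomial with respect to a weight decomposition of the diagonal torus $\bT \subset \bG$, and then to obtain strictness as a consequence of a uniform bound on preimages across the finitely many combinatorial types of monomial subcomplexes. First, I would reduce to the case $\CE = \CO$. Since $\CE$ is $\bG$-equivariant, in particular $\bT$-equivariant, each $\CE(U_I)$ decomposes into $\bT$-weight spaces, and the covering $\CU$ as well as all restriction maps $\CE(U_I) \to \CE(U_{I'})$ (for $I \subset I'$) are $\bT$-equivariant. Using a $\bT$-equivariant trivialization on each $U_i$ (as provided by the lemma on $\bT$-equivariant trivializations referenced in the text), the weight components of $\CE(U_I)$ match up with shifted copies of the weight components of $\CO(U_I)$, so one reduces to the analogous statement for the structure sheaf.

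Second, I would make the weight decomposition explicit. For $\CE = \CO$, the description in \Cref{Rmk 2 - Concrete description of the norms of the sections on the affinoid subdomains} provides a basis of monomials $X^{\ul{k}}$, indexed by $\ul{k} \in \Lambda_I$, and shows that the norm on $\CO(U_{I,\varepsilon})$ is the weighted supremum
\[
\bigg\lvert \sum_{\ul{k} \in \Lambda_I} a_{\ul{k}} X^{\ul{k}} \bigg\rvert_\varepsilon = \sup_{\ul{k} \in \Lambda_I} \abs{a_{\ul{k}}} (1/\varepsilon)^{\abs{\max(0,\ul{k})}},
\]
in which crucially the weight on $X^{\ul{k}}$ depends only on $\ul{k}$, not on $I$. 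Restriction maps $\CO(U_I) \to \CO(U_{I'})$ send $X^{\ul{k}}$ to $X^{\ul{k}}$, so the Čech differentials of $C^\bullet(\CU,\CO)$ preserve monomials. Consequently $C^q(\CU,\CO)$ is the algebraic direct sum, over $\ul{k} \in \BZ^{d+1}$ with $\sum k_i = 0$ and $k_i \geq 0$ for $i \leq \sd$, of the monomial subcomplexes
\[
M_{\ul{k}}^q \defeq \bigoplus_{\substack{I \subset \{\sd+1,\ldots,d\},\, \abs{I}=q+1 \\ I \supset J(\ul{k})}} K \cdot X^{\ul{k}},
\]
where $J(\ul{k}) = \{i \in \{\sd+1,\ldots,d\} : k_i < 0\}$, and the sup norm on $C^q(\CU,\CO)$ is the maximum of the weighted sup norms of the monomial components.

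Third, I would deduce strictness. Each $M_{\ul{k}}^\bullet$ is finite-dimensional over $K$, and up to a scalar rescaling of the basis by $(1/\varepsilon)^{\abs{\max(0,\ul{k})}}$ its differential is the standard alternating-sign simplicial coboundary on the simplex with vertex set $\{\sd+1,\ldots,d\} \setminus J(\ul{k})$. In particular, the shape of the subcomplex $M_{\ul{k}}^\bullet$ depends only on $J(\ul{k}) \subset \{\sd+1,\ldots,d\}$, and there are only $2^{d-\sd}$ such subsets. Since a surjection of finite-dimensional normed $K$-vector spaces is automatically strict, for each $J \subset \{\sd+1,\ldots,d\}$ there is a constant $C_J$ such that every element of $\im(d^q\res{M_{\ul{k}}^\bullet})$ admits a preimage of norm at most $C_J$ times its own norm, and this constant is independent of $\ul{k}$ (it depends only on the combinatorial type $J = J(\ul{k})$) because the rescaling is uniform across the monomial. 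Taking $C \defeq \max_J C_J$ and assembling monomial-wise preimages yields, for any $c = d^q(b) \in C^{q+1}(\CU,\CO)$, some $b' \in C^q(\CU,\CO)$ with $d^q(b') = c$ and $\norm{b'} \leq C \norm{c}$; this is strictness.

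The main technical obstacle is the reduction from general $\CE$ to $\CE = \CO$: one must verify that the monomial/weight decomposition for $\CO(U_I)$ lifts consistently through $\bT$-equivariant trivializations of $\CE$ on the various $U_i$, so that the resulting piecewise decomposition of $C^\bullet(\CU,\CE)$ still diagonalizes the Čech differential, and so that the Banach-norm description \eqref{Eq 2 - Norm on open affinoid} translates into a weight-wise norm on $\CE(U_{I,\varepsilon})$ with weight-weights that are independent of $I$. Once this bookkeeping is in place, the argument is entirely as above, and the only remaining verification is the continuity of the differentials, which is immediate since restriction maps between coherent sheaves over affinoid algebras are always continuous.
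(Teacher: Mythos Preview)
Your argument for $\CE=\CO$ is correct and captures the essential mechanism: the \v{C}ech differential respects the $\bT$-weight decomposition, each monomial subcomplex $M_{\ul{k}}^\bullet$ is one of finitely many combinatorial types (indexed by $J(\ul{k})$), and within each such subcomplex all basis vectors carry the \emph{same} norm weight $(1/\varepsilon)^{|\max(0,\ul{k})|}$, so a uniform strictness constant exists. This is exactly the spirit of the paper's proof in the special case $\CE=\CO$.

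The gap is the reduction from general $\CE$ to $\CO$, and it is more than bookkeeping. The trivialization $\CE(U)\cong\CO(U)\otimes_K\CE(x_i)$ from \Cref{Lemma 2 - T-equivariant trivialization} requires $U\subset U_i$, hence a choice of $i\in I$ for each $U_I$. For a single face map $\CE(U_I)\to\CE(U_{I'})$ with $I\subset I'$ one may pick the same $i\in I\subset I'$ and obtain a diagonal map, but there is no single $i$ belonging to all nonempty $I\subset\{\sd+1,\ldots,d\}$ once $d-\sd\geq 2$. So the \v{C}ech complex for $\CE$ is \emph{not} globally a direct sum of shifted $\CO$-complexes: the restriction maps, expressed in bases coming from different trivialization points, involve the transition functions of the bundle. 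Moreover, within a single weight space $\CE(U_I)_{\ul{\mu}}$ the basis vectors $X^{\ul{\mu}-\ul{\lambda}_l}v_l$ carry \emph{different} norm weights $(1/\varepsilon)^{|\max(0,\ul{\mu}-\ul{\lambda}_l)|}$, so there is no single ``weight-weight'' independent of $I$ and $l$.

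The paper does not reduce to $\CO$. Instead it proves directly, for arbitrary $\CE$, that every weight subcomplex $C^\bullet(\CU,\CE)_{\ul{\mu}}$ is isomorphic---as a complex of normed spaces, up to multiplication by a single scalar $C\in\varepsilon^{\BN_0}$---to some $C^\bullet(\CU,\CE)_{\ul{\nu}}$ with $\ul{\nu}$ lying in a fixed finite box $\boundedregion_N$ (\Cref{Prop 2 - Existence of weight changing isomorphisms}). These weight-shifting isomorphisms are constructed by induction on $\lVert\ul{\mu}\rVert$, moving $\ul{\mu}$ towards the box by root shifts $\ul{\mu}\mapsto\ul{\mu}-\alpha_{u,v}$; the heart of the matter (\Cref{Lemma 2 - Comparison of occuring weights}) is that once $\ul{\mu}$ is outside the box, such a shift does not change which monomials $X^{\ul{\mu}-\ul{\lambda}_l}$ lie in $\CO(U_I)$, for any $I$. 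This is the correct analogue, for general $\CE$, of your ``finitely many $J(\ul{k})$'' observation, but it is carried out at the level of the $\CE$-weight subcomplexes themselves rather than via a passage to $\CO$.
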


Let $\bT \subset \bG$ denote the split maximal torus of diagonal matrices.
For the group of its characters, we have the usual identification $X(\bT) \cong \BZ^{d+1}$ by choosing the characters
\begin{equation*}
	\epsilon_i \colon \bT \lra \BG_m \,,\quad  {\rm diag}(t_0,\ldots,t_d) \lto t_i \quad\text{, for $i=0,\ldots,d$,}
\end{equation*}
as a $\BZ$-basis.
Recall that for any $\bT$-module $V$ in the sense of \cite[I.\ \S 2.7]{Jantzen03RepAlgGrp}, we have a decomposition into weight spaces \cite[I.\ \S 2.11]{Jantzen03RepAlgGrp}
\begin{equation*}
	V \cong \bigoplus_{\ul{\lambda}\in X(\bT)} V_\ul{\lambda} \quad\text{, for $V_\ul{\lambda} \defeq \big\{v\in V \,\big\vert\, \text{$\forall$ $K$-algebras $R$, $\forall t\in \bT(R)$} :t.(v\otimes 1) = v \otimes \ul{\lambda}(t)  \big\} $.}
\end{equation*}
In particular this is a decomposition into simultaneous eigenspaces with respect to the induced action of $T\defeq \bT(K)$ on $V$.
We say that $v\in V$ is \textit{$\bT$-homogeneous} if $v\in V_\ul{\lambda}$, for some $\ul{\lambda}\in X(\bT)$.

Note that the open affine subsets $U_i \subset \BP_K^d$ are stabilized under the action of $\bT$, for all $i=0,\ldots,d$.
Therefore, the $K$-vector space of sections $\CE(U_i)$ obtains the structure of a $\bT$-module which decomposes into weight spaces
\begin{equation}\label{Eq 2 - Weight space decomposition of sections}
	\CE(U_i) = \bigoplus_{\ul{\lambda}\in X(\bT)} \CE(U_i)_\ul{\lambda} .
\end{equation}

For $x\in \BP_K^d$, we let $\CE(x) \defeq \CE_{x} / \Fm_{x} \CE_{x}$ denote the fibre of $\CE$ at $x$ where $\Fm_{x}$ is the maximal ideal of the local ring $\CO_{\BP_K^d,x}$.
The $K$-vector space $\CE(x)$ is canonically isomorphic to the fibre at $x$ of the geometric vector bundle associated with $\CE$.

Note that the $\bT$-action on $\BP_K^d$ has the fixed points $x_i \defeq [0{\,:\,}\ldots{\,:\,}0{\,:\,}1{\,:\,}0{\,:\,}\ldots{\,:\,}0] \in U_i$, for $i=0,\ldots,d$.
We therefore obtain the structure of a $\bT$-module on $\CE_{x_i}$.
When considering the structure sheaf $\CO$ of $\BP_K^d$ as a $\bG$-equivariant sheaf in the usual way, the maximal ideal $\Fm_{x_i} \subset \CO_{x_i}$ is a $\bT$-submodule for the same reason. 
It follows from 
\begin{equation}\label{Eq 2 - Action on sections and multiplication}
	t. (s e) = (t.s) (t.e) \quad\text{, for all $t\in \bT(R)$, $s\in \CO(U_i) \botimes{K} R$, and $ e \in \CE(U_i) \botimes{K} R$,}
\end{equation}
that
\begin{equation*}
	t. (s e) = (t.s) (t.e) \quad\text{, for all $t\in \bT(R)$, $s\in \CO_{\BP_K^d,x_i} \botimes{K} R$, and $ e \in \CE_{x_i} \botimes{K} R$,}
\end{equation*}
for all $K$-algebras $R$.
Hence $\Fm_{x_i} \CE_{x_i} \subset \CE_{x_i}$ is a $\bT$-submodule as well, and we obtain the structure of a $\bT$-module on the quotient $\CE(x_i)$.

\begin{lemma}[{cf.\ \cite[Lemma 4.6]{Kempf78GrothendieckCousinComplexIndRep}}]\label{Lemma 2 - T-equivariant trivialization}
	Let $\CE$ be a $\bG$-equivariant vector bundle on $\BP_K^d$ and $i \in \{0,\ldots,d\}$.	
	For all open subsets $U\subset U_i$, there are $T$-equivariant isomorphisms of $\CO(U)$-modules
	\begin{equation*}
		\CE(U) \cong \CO(U) \botimes{K} \CE(x_i) ,
	\end{equation*}
	which are compatible with respect to the restriction homomorphisms.
\end{lemma}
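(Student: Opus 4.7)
The plan is first to reduce to the case $U = U_i$: given an isomorphism $\varphi_{U_i}\colon \CO(U_i) \botimes{K} \CE(x_i) \xrightarrow{\cong} \CE(U_i)$ of $\bT$-equivariant $\CO(U_i)$-modules, extending scalars to $\CO(U)$ (via quasi-coherence of $\CE|_{U_i}$) produces the sought isomorphism on $U$, and compatibility with restriction to any open $U' \subset U \subset U_i$ is then automatic because all trivializations descend from the single choice on $U_i$.

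For $U = U_i$ itself, I would begin with the observation that the evaluation map $\CE(U_i) \twoheadrightarrow \CE(x_i) = \CE(U_i)/\Fn\CE(U_i)$, where $\Fn \subset \CO(U_i)$ is the maximal ideal of $x_i$, is $\bT$-equivariant and surjective (note that $\Fn = \Fm_{x_i} \cap \CO(U_i)$). Choose a $K$-basis $e_1,\ldots,e_n$ of the finite-dimensional $\bT$-module $\CE(x_i)$ consisting of weight vectors, with weights $\ul{\lambda}_1,\ldots,\ul{\lambda}_n$. Using the weight decomposition \eqref{Eq 2 - Weight space decomposition of sections} and the fact that a $\bT$-equivariant surjection restricts to a surjection on each weight component, lift each $e_l$ to some $\tilde{e}_l \in \CE(U_i)_{\ul{\lambda}_l}$. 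Then define
\begin{equation*}
    \varphi \colon \CO(U_i) \botimes{K} \CE(x_i) \lra \CE(U_i) \,,\quad f \otimes e_l \lto f\, \tilde{e}_l ,
\end{equation*}
which is $T$-equivariant by \eqref{Eq 2 - Action on sections and multiplication} applied to $\bT$-homogeneous $f$ and $\tilde{e}_l$.

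The main step is to show that $\varphi$ is an isomorphism. Its source and target are coherent $\CO_{U_i}$-modules, both locally free of rank $n$ (the target because $\CE$ is a vector bundle, the source because $\CE(x_i)$ is a finite-dimensional $K$-vector space). At the stalk at $x_i$, the images of $e_1,\ldots,e_n$ under $\varphi$ reduce modulo $\Fm_{x_i}$ to a $K$-basis of the fibre $\CE(x_i)$; hence by Nakayama's lemma they form an $\CO_{x_i}$-basis of $\CE_{x_i}$, so $\varphi_{x_i}$ is an isomorphism. Thus $\Coker(\varphi)$ is a coherent $\bT$-equivariant $\CO_{U_i}$-module whose stalk at $x_i$ vanishes. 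Its support is therefore a closed $\bT$-invariant subset of $U_i$ not containing $x_i$.

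The hard part, and the key geometric input, is to show that every non-empty closed $\bT$-invariant subset of $U_i$ must contain $x_i$. This follows from the fact that $U_i \cong \Spec\,K[T_{(j,i)} \mid j \neq i]$, with each coordinate $T_{(j,i)}$ a $\bT$-weight vector for the (non-trivial, pairwise distinct) weight $\epsilon_j - \epsilon_i$; concretely, the cocharacter $s \mapsto \mathrm{diag}(1,\ldots,s,\ldots,1)$ (with $s$ in position $i$) scales every coordinate by $s^{-1}$, so as $s \to \infty$ every point of $U_i$ is contracted to $x_i$. Any closed $\bT$-invariant subset therefore contains $x_i$ in the closure of the orbit of any of its points. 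Consequently $\Coker(\varphi) = 0$, and since $\varphi$ is a surjection between locally free modules of equal rank $n$ over the integral domain $\CO(U_i)$, it is an isomorphism, completing the proof.
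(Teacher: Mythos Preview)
Your proof is correct and in fact follows the cited Kempf reference more directly than the paper does. The paper instead cites Jantzen [I.5.16, II.1.10] to obtain the triviality of $\CE|_{U_i}$ up front, writes down the isomorphism $\varphi_U$ (and its inverse) explicitly in terms of a $K$-basis $(e_j)_{j\in J}$ of $\CE(U_i)$, and then verifies $T$-equivariance by a direct computation using \eqref{Eq 2 - Action on sections and multiplication}. Your route instead builds $T$-equivariance into the construction by lifting weight vectors, and then recovers triviality from Nakayama together with the attracting fixed-point property of $x_i$; this is more self-contained (no external input for triviality) at the cost of the extra geometric step. One small remark: when you pass from ``$\varphi$ is $T$-equivariant'' to ``$\Coker(\varphi)$ is a $\bT$-equivariant coherent sheaf'' you are tacitly using that your lifts $\tilde e_l$ are weight vectors for $\bT$ in the algebraic sense of the paragraph preceding \eqref{Eq 2 - Weight space decomposition of sections}, so that $\varphi$ is actually $\bT$-equivariant and not merely $\bT(K)$-equivariant --- this is precisely what makes the support genuinely $\bT$-invariant, which is what your cocharacter-limit argument requires.
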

\begin{proof}
	By \cite[I.5.16]{Jantzen03RepAlgGrp} and \cite[II.1.10]{Jantzen03RepAlgGrp}, the $\bG$-equivariant vector bundle $\CE$ admits a trivialisation on the open affine subset $U_i$.
	In particular, we can find sections in $\CE(U_i)$ which globally generate $\CE\res{U_i}$.
	We may take a $K$-basis $(e_j)_{j \in J}$, of $\CE(U_i)$  to do so.
	For $U\subset U_i$ open, we then define the homomorphism
	\begin{equation*}
		\varphi_U \colon \CE(U) \lra \CO(U) \botimes{K} \CE(x_i) \,,\quad \sum_{j\in J} s_j e_j\res{U} \lto \sum_{j\in J} s_j \otimes e_j(x_i) \quad\text{, for $s_j\in \CO(U)$,}
	\end{equation*}
	of $\CO(U)$-modules.
	Note that $\varphi_U$ is independent of the choice of the $K$-basis $(e_j)_{j \in J}$.
	An inverse to $\varphi_U$ is given by 
	\begin{equation*}
		\CO(U) \botimes{K} \CE(x_i) \lra \CE(U) \,,\quad s \otimes \sum_{j\in J} a_j e_j(x_i) \lto \sum_{j\in J} a_j s e_j\res{U} \quad\text{, for $s\in \CO(U)$, $a_j\in K$.}
	\end{equation*}
	To show that $\varphi_U$ is $T$-equivariant, let $t \in T$, $k\in J$, and $ t.e_k = \sum_{j \in J} a_j e_j$ in $\CE(U_i)$, for some $a_j\in K$.
	It follows from \eqref{Eq 2 - Action on sections and multiplication} that, for all $s \in \CO(U)$,
	\begin{align*}
		\varphi_U \big( t. (s e_k\res{U})\big)
		&= \varphi_U \bigg( \sum_{j\in J} (t.s) a_j e_j\res{U} \bigg) 
		=  t.s \otimes \sum_{j\in J} a_j e_j(x_i) \\
		&= t. \big( s \otimes e_k(x_i) \big) 
		= t. \varphi_U\big(s e_k\res{U}\big) .
	\end{align*}
	The compatibility of the $\varphi_U$ with the restriction homomorphisms is immediate.
\end{proof}

It will be helpful to fix certain norms on $\CE(U_I)$, for non-empty $I \subset \{0,\ldots,d\}$, which realise its locally convex topology.
Recall that
\begin{equation*}
	\CO(U_{I,\varepsilon}) = \Bigg\{ \sum_{\ul{k}\in \Lambda_I} a_\ul{k} \, X^\ul{k} \Bigg\vert \abs{a_\ul{k}} \bigg(\frac{1}{\varepsilon}\bigg)^{\abs{ \max(0,\ul{k})}} \ra 0 \text{ as $\abs{\ul{k}} \ra \infty$} \Bigg\} ,
\end{equation*}
with norm given by \eqref{Eq 2 - Norm on open affinoid}.
We now choose some $i\in I$, and let $v_1,\ldots,v_n \in \CE(x_i)$ be a $K$-basis of weight vectors of weights $\ul{\lambda}_1,\ldots,\ul{\lambda}_n \in X(\bT)$.
We endow $\CE(x_i)$ with the norm prescribed by this basis, i.e.\ 
\begin{equation*}
	\bigg\lvert \sum_{l=1}^n a_l \, v_l \bigg\rvert \defeq \max_{l=1}^n \abs{a_l} \quad\text{, for $a_1,\ldots,a_n \in K$.}
\end{equation*}
We continue to denote the elements of $\CE(U_i)$ corresponding to $1 \otimes v_l$ under the isomorphism of \Cref{Lemma 2 - T-equivariant trivialization} by $v_l$.
Consequently, we have an isomorphism of $\CO(U_i)$-modules
\begin{equation*}
	\CO(U_i)^{\oplus n} \overset{\cong}{\lra} \CE(U_i)\,,\quad (f_1,\ldots,f_n) \lto \sum_{l=1}^n f_l \, v_l .
\end{equation*}
This induces an isomorphism of $\CO(U_{I,\varepsilon})$-modules $\CE(U_{I,\varepsilon})\cong \CO(U_{I,\varepsilon})^{\oplus n}$ which endows the former with the norm
\begin{equation}\label{Eq 2 - Fixed norm on sections}
	\bigg\lvert \sum_{l=1}^n f_l \, v_l \bigg\lvert_\varepsilon \defeq \sup_{\substack{l=1,\ldots,n\\ \ul{k}\in \Lambda_I}} \abs{a_{l,\ul{k}}} \bigg( \frac{1}{\varepsilon}\bigg)^{\abs{ \max(0,\ul{k})}} \quad\text{, for $a_{l,\ul{k}}\in K$ with $f_l = \sum_{\ul{k}\in \Lambda_I} a_{l,\ul{k}} \, X^\ul{k} \in \CO(U_{I,\varepsilon})$.}
\end{equation}
We fix this norm on $\CE(U_{I,\varepsilon})$ and on its subspace $\CE(U_I) \, (=\CE^\algbun(U_I))$, and omit the $\varepsilon$ from the index.
Note that for a different choice of $i\in I$ or the basis of weight vectors of $\CE(x_i)$, the above construction yields an equivalent norm on $\CE(U_{I,\varepsilon})$ by \cite[3.7.3 Prop.\ 3]{BoschGuentzerRemmert84NonArchAna}.

As the main step towards proving \Cref{Thm 2 - Differentials of algebraic Cech complex are strict}, we want to show that, for ``large enough'' weights $\ul{\lambda}\in X(\bT)$, the weight spaces $\CE(U_I)_\ul{\lambda}$ change in a uniform way when one varies the ``extreme'' entries of $\ul{\lambda}$.
To this end we define, for $m\in \BN_0$,
\begin{equation*}
	\boundedregion_m \defeq \left\{ \ul{\lambda} \in X(\bT) \cong \BZ^{d+1} \middle{|} \forall j=0,\ldots,d: \abs{\lambda_j} \leq m \right\} .
\end{equation*}
Because the $\bT$-modules $\CE(x_i)$ are finite-dimensional, we find some $M \in \BN$ such that the weights of all $\CE(x_0),\ldots,\CE(x_d)$ are concentrated in $\boundedregion_M$, i.e.\ for all $i\in \{0,\ldots,d\}$ and all $\ul{\lambda} \in X(\bT)$ with $\CE(x_i)_\ul{\lambda} \neq \{0\}$, we have $\ul{\lambda} \in \boundedregion_M$.
We moreover set $N \defeq (2d+1)M +d$.

\begin{proposition}\label{Prop 2 - Existence of weight changing isomorphisms}
	For every $\ul{\mu} \in X(\bT)$, there exist $\ul{\nu} \in \boundedregion_N$ and $C\in \varepsilon^{\BN_0}$ such that, for all non-empty $I \subset \{0,\ldots,d\}$, there is an isomorphism 
	\begin{equation*}
		\varphi^I_{\ul{\mu},\ul{\nu}} \colon \CE(U_{I})_\ul{\mu} \overset{\cong}{\lra} \CE(U_{I})_\ul{\nu}
	\end{equation*}
	of $K$-vector spaces satisfying $\abs{\varphi^I_{\ul{\mu},\ul{\nu}} (v)} = C \,\abs{v}$, for all $v\in \CE(U_{I,\varepsilon})_\ul{\mu}$, and such that these isomorphisms are compatible with the restriction homomorphisms, i.e.\ for all $I \subset J \subset \{0,\ldots,d\}$, the following diagram commutes:
	\begin{equation*}
		\begin{tikzcd}
			\CE(U_{I})_\ul{\mu} \ar[r]\ar[d, "\varphi^I_{\ul{\mu},\ul{\nu}}"'] & \CE(U_{J})_\ul{\mu}	\ar[d, "\varphi^J_{\ul{\mu},\ul{\nu}}"] \\
			\CE(U_{I})_\ul{\nu} \ar[r]& \CE(U_{J})_\ul{\nu}	 .
		\end{tikzcd}
	\end{equation*}
\end{proposition}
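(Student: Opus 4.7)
My plan is to define $\varphi^I_{\ul{\mu},\ul{\nu}}$ as the restriction to $\CE(U_I)_\ul{\mu}$ of ``multiplication by the rational monomial $X^{\ul{\nu}-\ul{\mu}}$''. Choosing any $i\in I$ and using the $\bT$-equivariant trivialization from \Cref{Lemma 2 - T-equivariant trivialization} together with a basis of weight vectors $v_1,\ldots,v_n\in \CE(x_i)$ of weights $\ul{\lambda}_1,\ldots,\ul{\lambda}_n$, the $K$-vector space $\CE(U_I)_\ul{\mu}$ has as $K$-basis those $X^{\ul{\mu}-\ul{\lambda}_l}v_l$ with $\ul{\mu}-\ul{\lambda}_l \in \Lambda_I$, and I set
\begin{equation*}
	\varphi^I_{\ul{\mu},\ul{\nu}} ( X^{\ul{\mu}-\ul{\lambda}_l}v_l ) \defeq X^{\ul{\nu}-\ul{\lambda}_l}v_l .
\end{equation*}
Since this is really multiplication by the single rational function $X^{\ul{\nu}-\ul{\mu}}$, the definition depends neither on the choice of $i\in I$ nor on the chosen basis; in particular, the required compatibility with restriction along $\CE(U_I)\to \CE(U_J)$ for $I\subset J$ will be automatic.

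The heart of the proof is the combinatorial construction of $\ul{\nu}$. I partition $\{0,\ldots,d\}$ according to the ``type'' of each coordinate of $\ul{\mu}$,
\begin{equation*}
	P \defeq \{j : \mu_j \geq M\} ,\quad Q \defeq \{j : \mu_j \leq -M-1\} ,\quad R \defeq \{j : -M \leq \mu_j \leq M-1\} ,
\end{equation*}
and set $\nu_j \defeq \mu_j$ for $j\in R$, while choosing $\nu_j \geq M$ for $j\in P$ and $\nu_j \leq -M-1$ for $j\in Q$ subject to $\sum_j \nu_j = \sum_j \mu_j$. Since $\CE(U_I)_\ul{\mu}$ vanishes unless $\sum_j \mu_j = \sum_j \lambda_{l,j}$ for some $l$, one may assume $|\sum_j \mu_j| \leq (d+1)M$, which gives enough room. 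A case distinction on whether $P$ and $Q$ are empty or not (in the ``generic'' case, put $\nu_j = M$ on all but one element of $P$, put $\nu_j = -M-1$ on all but one element of $Q$, and absorb the residue in the two remaining coordinates) shows that $\ul{\nu}$ can be chosen in $\boundedregion_N$ with $N = (2d+1)M + d$. One also arranges $\nu_j \leq \mu_j$ for $j\in P$; by the sum constraint and $\nu_j = \mu_j$ on $R$, this is equivalent to $\nu_j \geq \mu_j$ for $j\in Q$.

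The verification is then routine. The sign-profile condition $\mu_j \geq \lambda_{l,j} \Leftrightarrow \nu_j \geq \lambda_{l,j}$, which holds for every $j$ and every $l$ because the construction of $\ul{\nu}$ preserves the position of each $\mu_j$ relative to the interval $[-M,M]$ containing all $\lambda_{l,j}$, immediately gives $\ul{\mu}-\ul{\lambda}_l \in \Lambda_I \Leftrightarrow \ul{\nu}-\ul{\lambda}_l \in \Lambda_I$, making $\varphi^I_{\ul{\mu},\ul{\nu}}$ a bijection between the prescribed bases. The same condition yields
\begin{equation*}
	|\max(0,\ul{\mu}-\ul{\lambda}_l)| - |\max(0,\ul{\nu}-\ul{\lambda}_l)| = \sum_{j\in P} (\mu_j - \nu_j)
\end{equation*}
independently of $l$ (and of $i$), and by the explicit norm formula \eqref{Eq 2 - Fixed norm on sections} this produces the required scaling $|\varphi^I_{\ul{\mu},\ul{\nu}}(v)|_\varepsilon = C \, |v|_\varepsilon$ with $C \defeq \varepsilon^{\sum_{j\in P}(\mu_j-\nu_j)} \in \varepsilon^{\BN_0}$. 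I expect the main obstacle to be the bookkeeping needed to place $\ul{\nu}$ inside $\boundedregion_N$ with the precise constant $N = (2d+1)M + d$ while simultaneously preserving the sum and the sign profile; once $\ul{\nu}$ is found, everything else reduces to the observation that $\varphi^I_{\ul{\mu},\ul{\nu}}$ is nothing but multiplication by the monomial $X^{\ul{\nu}-\ul{\mu}}$.
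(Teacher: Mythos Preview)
Your approach is correct and conceptually cleaner than the paper's in one respect: you identify from the outset that every $\varphi^I_{\ul{\mu},\ul{\nu}}$ is simply ``multiplication by the rational monomial $X^{\ul{\nu}-\ul{\mu}}$'', so that bijectivity, the norm scaling, and compatibility with restriction all reduce to the single sign-profile observation $\mu_j \geq \lambda_{l,j} \Leftrightarrow \nu_j \geq \lambda_{l,j}$. The paper instead argues by induction on $\lVert\ul{\mu}\rVert = \sum_j |\mu_j|$: at each step it subtracts one root $\alpha_{u,v}$ (with $u$ realising the maximum and $v$ the minimum entry of $\ul{\mu}$), verifies via a separate lemma that this step preserves the membership conditions $X^{\ul{\mu}-\ul{\lambda}_l}\in\CO(U_I)$ and scales the norm by exactly $\varepsilon$, and stops once $\ul{\mu}$ lands in $\boundedregion_N$. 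The two arguments are really the same under the hood---the paper's induction is a constructive procedure for producing your $\ul{\nu}$, and its intermediate lemma is precisely your sign-profile preservation, checked one root at a time. What the paper gains is that $\ul{\nu}\in\boundedregion_N$ comes for free as the stopping condition, whereas you defer this to a ``case distinction'' that you do not carry out. That bookkeeping does go through with the stated $N=(2d+1)M+d$: once you note that in the nontrivial case both $P$ and $Q$ are nonempty (else the bound $|\sum_j\mu_j|\leq(d+1)M$ already forces $\ul{\mu}\in\boundedregion_N$), you may initialise $\nu_j=M$ on $P$ and $\nu_j=-M-1$ on $Q$, and the residual sum defect $\Delta$ satisfies $|\Delta|\leq N-M$, which can be absorbed by raising coordinates on $P$ (if $\Delta>0$) or lowering on $Q$ (if $\Delta<0$) while staying within $[M,N]$, $[-N,-M-1]$, and the bounds $\nu_j\leq\mu_j$ on $P$, $\nu_j\geq\mu_j$ on $Q$. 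You should also say one word about the degenerate case where all $\CE(U_I)_{\ul{\mu}}$ vanish (e.g.\ take $\ul{\nu}=(M{+}1,\ldots,M{+}1)$), though your sign-profile argument in fact already forces $\CE(U_I)_{\ul{\nu}}=0$ whenever the construction of $\ul{\nu}$ is applicable.
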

\begin{proof}
	We proceed by induction on $\lVert \ul{\mu} \rVert \defeq \sum_{j=0}^d \abs{\mu_j}$.
	Note that for $\ul{\mu} \in \boundedregion_N$, we may take $\ul{\nu}\defeq \ul{\mu}$, $C\defeq 1$, and the identity homomorphisms to obtain the assertion of the proposition.
	This also deals with the base case $\lVert \ul{\mu}\rVert = 0$.

	Hence we now suppose that $\ul{\mu} \notin \boundedregion_N$.
	We let $u,v \in \{0,\ldots,d\}$ such that $\mu_u$ is maximal among the entries $\mu_0,\ldots,\mu_d$ and the entry $\mu_v$ is minimal.
	We set $\ul{\mu} - \alpha_{u,v} \defeq  \ul{\mu} - \epsilon_u + \epsilon_v $.

	For fixed $I\subset \{0,\ldots,d\}$ with $i\in I$, let $v_1,\ldots,v_n \in \CE(x_i)$ be a $K$-basis of weight vectors of weights $\ul{\lambda}_1,\ldots,\ul{\lambda}_n$.
	We then have the following $K$-basis for the weight space
	\begin{equation*}
		\CE(U_I)_\ul{\mu} = \bigoplus_{l\in L_{\ul{\mu},I}} K \cdot X^{\ul{\mu} - \ul{\lambda}_l} \, v_l
	\end{equation*}
	where $L_{\ul{\mu},I} \defeq \big\{ l\in \{1,\ldots,n\} \,\big\vert\, X^{\ul{\mu}-\ul{\lambda}_l} \in \CO(U_I) \big\}$.

	\begin{lemma}\label{Lemma 2 - Comparison of occuring weights}
		For $l\in \{1,\ldots,n\}$, we have 
		\begin{equation*}
			X^{\ul{\mu}-\ul{\lambda}_l} \in \CO(U_I) \quad\text{ if and only if }\quad X^{(\ul{\mu}-\alpha_{u,v})-\ul{\lambda}_l} \in \CO(U_I) .
		\end{equation*}
		If this is the case, for some $l \in \{1,\ldots,n\}$, we moreover have $\mu_u > M+1$ and $\mu_v < - (M+1)$ so that in particular $u \neq v$.
	\end{lemma}
	\begin{proof}
		We first note that $X^{\ul{\mu}-\ul{\lambda}_l} \in \CO(U_I)$ if and only if $\sum_{j=0}^d \mu_j -\lambda_{l,j} = 0$ and, for all $j \in \{0,\ldots,d\}\setminus I$, we have $\mu_j - \lambda_{l,j} \geq 0$.
		For $X^{(\ul{\mu}-\alpha_{u,v})-\ul{\lambda}_l}$ the analogous statement holds.
		To show the claimed equivalence it thus suffices to focus on the exponents of $X_u$ and $X_v$.

		Because $\ul{\mu}\notin \boundedregion_N$, there exists $j\in \{0,\ldots,d\}$ such that $\abs{\mu_j} > N$.
		We distinguish the two cases that $\mu_j >0$ and that $\mu_j <0$.
		If $\mu_j>0$, then $\mu_u > N\geq M+1$ by the maximality of $\mu_u$.
		Using $\abs{\lambda_{l,u}} \leq M$ we find that
		\begin{equation*}
			\mu_u -1 - \lambda_{l,u} > N - 1 - M \geq 0
		\end{equation*}
		and also $\mu_u -\lambda_{l,u} \geq 0$, so that the exponent of $X_u$ is not an obstacle in this case.
		Concerning the exponent of $X_v$, since $(\ul{\mu}-\alpha_{u,v})_v - \lambda_{l,v} \geq \mu_v - \lambda_{l,v}$, we find that if $X^{\ul{\mu}-\ul{\lambda}_l} \in \CO(U_I)$, then $X^{(\ul{\mu}-\alpha_{u,v})-\ul{\lambda}_l} \in \CO(U_I)$.
		Conversely suppose that $X^{(\ul{\mu}-\alpha_{u,v})-\ul{\lambda}_l} \in \CO(U_I)$ so that in particular $\sum_{j=0}^d \big( (\ul{\mu}-\alpha_{u,v})_j - \lambda_{l,j} \big) =0$ and $(\ul{\mu}-\alpha_{u,v})_v - \lambda_{l,v} \geq 0$.
		We compute
		\begin{align*}
			\bigg\lvert \sum_{j=0}^d \mu_j \bigg\rvert
			&= \bigg\lvert \sum_{j=0}^d (\mu_j - \lambda_{l,j}) + \sum_{j=0}^d \lambda_{l,j} \bigg\rvert 
			= \bigg\lvert \underset{=0}{\underbrace{\sum_{j=0}^d \big( (\ul{\mu}-\alpha_{u,v})_j - \lambda_{l,j} \big)}} + \sum_{j=0}^d \lambda_{l,j} \bigg\rvert \\
			&= \bigg\lvert \sum_{j=0}^d \lambda_{l,j} \bigg\rvert
			\leq \sum_{j=0}^d \abs{\lambda_{l,j}} 
			\leq (d+1) M .
		\end{align*}
		Then 
		\begin{equation*}
			(d+1) M \geq \bigg\lvert \sum_{j=0}^d \mu_j \bigg\rvert = \bigg\lvert \mu_u + \sum_{\substack{j=0 \\ j\neq u}}^d \mu_j \bigg\rvert
		\end{equation*}
		together with $\mu_u > N = (2d+1)M +d$ implies that $ \sum_{\substack{j=0 \\ j\neq u}}^d \mu_j  < -d (M+1)$.
		Hence there exists $j'\in \{0,\ldots,d\}$, $j'\neq u$, such that $\mu_{j'} < -(M+1)$.
		By the minimality of $\mu_v$, it follows that $\mu_v < -(M+1)$.
		But using $\abs{\lambda_{l,v}} \leq M$ we find that
		\begin{align*}
			(\ul{\mu}-\alpha_{u,v})_v - \lambda_{l,v} 
			&= \mu_v +1 - \lambda_{l,v}
			< -(M+1) +1 +M
			=0 .
		\end{align*}
		Therefore $X^{(\ul{\mu}-\alpha_{u,v})-\ul{\lambda}_l} \in \CO(U_I)$ can only occur if $v\in I$.
		We conclude that $X^{\ul{\mu}-\ul{\lambda}_l} \in \CO(U_I)$ as well which finishes the proof in this case.

		Now we consider the case that $\mu_j < 0$.
		Here we find that $\mu_v < - N \leq -(M+1)$ by the minimality of $\mu_v$.
		Like before, we have
		\begin{equation*}
			(d+1) M \geq \bigg\lvert \sum_{j=0}^d \mu_j \bigg\rvert = \bigg\lvert \mu_v + \sum_{\substack{j=0 \\ j\neq v}}^d \mu_j \bigg\rvert.
		\end{equation*}
		From this and $\mu_v < -N$ we conclude that there exists $j' \in \{0,\ldots,d\}$, $j' \neq v$, such that $\mu_{j'}>M+1$.
		As $\mu_u$ is maximal, it follows that $\mu_u >M+1$ as well.
		Therefore
		\begin{align*}
			(\ul{\mu}-\alpha_{u,v})_u - \lambda_{l,u} 
			&= \mu_u -1 - \lambda_{l,u}
			> (M+1) - 1 - M
			= 0 ,
		\end{align*}
		and moreover $\mu_u - \lambda_{l,u} \geq 0$.
		This shows that the exponent of $X_u$ is not an obstacle in this case.
		For the exponent of $X_v$, we compute
		\begin{align*}
			(\ul{\mu}-\alpha_{u,v})_v - \lambda_{l,v} 
			&= \mu_v +1 - \lambda_{l,v}
			< -N + 1 +M
			< 0 ,
		\end{align*}
		and also $\mu_v - \lambda_{l,v} <0$.
		Therefore either of $X^{\ul{\mu}-\ul{\lambda}_l} \in \CO(U_I)$ or $X^{(\ul{\mu}-\alpha_{u,v})-\ul{\lambda}_l} \in \CO(U_I)$ implies that $v\in I$.
		This shows the assertion in the case $\mu_j<0$.
	\end{proof}

	Using the above lemma, we see that
	\begin{equation*}
		\CE(U_I)_{\ul{\mu} -\alpha_{u,v}} = \bigoplus_{l\in L_{\ul{\mu},I}} K \cdot X^{(\ul{\mu} -\alpha_{u,v}) - \ul{\lambda}_l} \, v_l,
	\end{equation*}
	and we define the following isomorphism of $K$-vector spaces
	\begin{equation*}
		\varphi^I_{\ul{\mu},\ul{\mu}-\alpha_{u,v}} \colon \CE(U_I)_{\ul{\mu}} \lra \CE(U_I)_{\ul{\mu}-\alpha_{u,v}} \,,\quad X^{\ul{\mu} - \ul{\lambda}_l} \, v_l \lto X^{(\ul{\mu} -\alpha_{u,v}) - \ul{\lambda}_l} \, v_l .
	\end{equation*}
	For $l\in L_{\ul{\mu},I}$, it follows from $\mu_u > M+1$, $\mu_v < - (M+1)$, and $\ul{\lambda}_l\in \boundedregion_M$ that
	\begin{align*}
		(\ul{\mu}-\alpha_{u,v})_u - \lambda_{l,u} &\geq 0 , \hspace{-2cm} & \mu_u - \lambda_{l,u}  &\geq 0 , \\
		(\ul{\mu}-\alpha_{u,v})_v - \lambda_{l,v} &\leq 0 , \hspace{-2cm}& \mu_v - \lambda_{l,v}  &\leq 0 .
	\end{align*}
	Therefore
	\begin{alignat*}{3}
		\max\big(0,(\ul{\mu}-\alpha_{u,v})_u - \lambda_{l,u} \big) &= \mu_u -1 - \lambda_{l,u} &&= \max\big(0, \mu_u - \lambda_{l,u}\big) -1, \\
		\max\big(0,(\ul{\mu}-\alpha_{u,v})_v - \lambda_{l,v} \big) &= 0 &&= \max\big(0, \mu_v - \lambda_{l,v}\big) ,
	\end{alignat*}
	and we conclude that
	\begin{align*}
		\big\lvert X^{(\ul{\mu} -\alpha_{u,v}) - \ul{\lambda}_l} \, v_l \big\rvert
		&= \bigg(\frac{1}{\varepsilon}\bigg)^{\sum_{j=0}^d \max\big(0,(\ul{\mu}-\alpha_{u,v})_j - \lambda_{l,j}\big) } \\
		&= \bigg(\frac{1}{\varepsilon}\bigg)^{\big(\sum_{j=0}^d \max(0,\mu_j - \lambda_{l,j}) \big) -1 } 
		= \varepsilon \, \big\lvert X^{\ul{\mu}  - \ul{\lambda}_l} \, v_l \big\rvert .
	\end{align*}
	Hence we have $\abs{\varphi^I_{\ul{\mu},\ul{\mu}-\alpha_{u,v}}(v)} =\varepsilon \,\abs{v}$, for all $v\in \CE(U_{I})_\ul{\mu}$.

	Next we verify that the above family of isomorphisms $\varphi^I \defeq \varphi^I_{\ul{\mu},\ul{\mu}-\alpha_{u,v}}$ is compatible with the restriction maps.
	For this consider $I\subset J \subset \{0,\ldots,d\}$ with $i\in I$ and $j\in J$.
	Let
	\begin{align*}
		\CE(U_I) = \bigoplus_{l=1}^n \CO (U_I) \,v_l \quad\text{and}\quad
		\CE(U_J) = \bigoplus_{k=1}^n \CO (U_J) \,w_k,
	\end{align*}
	for a $K$-basis consisting of weight vectors $v_1,\ldots,v_n$ of $\CE(x_i)$ of weights $\ul{\lambda}_1,\ldots,\ul{\lambda}_n$, and a $K$-basis consisting of weight vectors $w_1,\ldots,w_n$ of $\CE(x_j)$ of weights $\ul{\kappa}_1,\ldots,\ul{\kappa}_n$ which yield the isomorphisms $\varphi^I $ and $\varphi^J$ respectively.
	Let ${\rm res}\colon \CE(U_I) \hookrightarrow \CE(U_J)$ denote the restriction map which is $\CO(U_I)$-linear and injective.
	Furthermore, there are $a_{l,k} \in K$ such that
	\begin{equation*}
		{\rm res} (v_l) = \sum_{k=1}^n a_{l,k} \, X^{\ul{\lambda}_l - \ul{\kappa}_k} \, w_k \quad\text{ , for all $l=1,\ldots,n$,}
	\end{equation*}
	because ${\rm res} (v_l) \in \CE(U_J)$ is of weight $\ul{\lambda}_l$.
	For $l \in \{1,\ldots,n\}$ such that $X^{\ul{\mu}- \ul{\lambda}_l} \in \CO(U_I)$ we now compute
	\begin{align*}
		{\rm res} \Big( \varphi^I \big( X^{\ul{\mu}-\ul{\lambda}_l}\, v_l \big) \Big)
		&= {\rm res} \Big( X^{(\ul{\mu}-\alpha_{u,v})-\ul{\lambda}_l} \,v_l \Big) 
		= X^{(\ul{\mu}-\alpha_{u,v})-\ul{\lambda}_l} \,{\rm res} (v_l) \\
		&= X^{(\ul{\mu}-\alpha_{u,v})-\ul{\lambda}_l} \sum_{k=1}^n a_{l,k} \,X^{\ul{\lambda}_l - \ul{\kappa}_k} \, w_k 
		= \sum_{k=1}^n a_{l,k} \, X^{(\ul{\mu}-\alpha_{u,v}) - \ul{\kappa}_k} \, w_k \\
		&= \varphi^J \bigg( \sum_{k=1}^n a_{l,k} \, X^{\ul{\mu} - \ul{\kappa}_k} \, w_k \bigg)
		= \varphi^J \bigg( X^{\ul{\mu} - \ul{\lambda}_l} \sum_{k=1}^n a_{l,k}\,  X^{\ul{\lambda}_l - \ul{\kappa}_k} \, w_k \bigg) \\
		&= \varphi^J \big( X^{\ul{\mu} - \ul{\lambda}_l} \, {\rm res} ( v_l ) \big) 
		= \varphi^J \Big( {\rm res} \big( X^{\ul{\mu} - \ul{\lambda}_l} \, v_l \big) \Big) 
	\end{align*}
	which shows the compatibility for $I \subset J$.

	Finally, we want to apply the induction hypothesis.
	In the case that, for all $I \subset \{0,\ldots,d\}$ with $I\neq \emptyset$, the set $L_{\ul{\mu},I}$ is empty, we find that $\CE(U_I)_\ul{\mu} = \{0\}$, for all such $I$.
	We may for example take $\ul{\nu}\defeq (M+1,\ldots,M+1) \in \boundedregion_N$, and see that, for all such $I$ and all $l=1,\ldots,n$, we have $\ul{\nu}-\ul{\mu}_l \notin \Lambda_I$.
	It follows that $\CE(U_I)_\ul{\nu} = \{0\}$, and we obtain the sought isomorphisms trivially.

	On the other hand, if there is some $I\subset \{0,\ldots,d\}$ with $I \neq \emptyset$ for which $L_{\ul{\mu},I}$ is non-empty, the second assertion of \Cref{Lemma 2 - Comparison of occuring weights} implies that 
	\begin{align*}
		\lVert \ul{\mu}-\alpha_{u,v} \rVert 
		&= \abs{\mu_u -1} + \abs{\mu_v +1} + \sum_{\substack{j=0 \\ j\neq u,v}}^d \abs{\mu_j}
		= (\mu_u - 1) - (\mu_v +1) + \sum_{\substack{j=0 \\ j\neq u,v}}^d \abs{\mu_j}
		= \lVert \ul{\mu} \rVert - 2 .
	\end{align*}
	Applying the induction hypothesis to $\ul{\mu}- \alpha_{u,v}$, we obtain $\ul{\nu}\in \boundedregion_N$ and the family of isomorphisms $\varphi^I_{\ul{\mu}- \alpha_{u,v}, \ul{\nu}}$, for $I\subset \{0,\ldots,d\}$, as specified.
	Then the statement follows for $\ul{\mu}$ by taking the compositions
	\begin{equation*}
		\varphi^I_{\ul{\mu},\ul{\nu}} \defeq \varphi^I_{\ul{\mu}- \alpha_{u,v}, \ul{\nu}} \circ \varphi^I_{\ul{\mu},\ul{\mu}-\alpha_{u,v}} .
	\end{equation*}
\end{proof}

\begin{proof}[{Proof of \Cref{Thm 2 - Differentials of algebraic Cech complex are strict}}]
	Recall that in \eqref{Eq 2 - Fixed norm on sections}, for every non-empty $I \subset \{0,\ldots,d\}$, we fixed a norm on $\CE(U_I)$.
	This endows the space of $q$-th \v{C}ech cochains, for $q\geq 0$, 
	\begin{equation*}
		C^q (\CU,\CE) = \bigoplus_{(i_0,\ldots,i_q) \in \{\sd+1,\ldots,d\}^{q+1}} \CE\big(U_{\{i_0,\ldots,i_q\}}\big)
	\end{equation*}
	with a norm as well.
	We also have a decomposition of $C^q(\CU,\CE)$ into weight spaces
	\begin{equation*}
		C^q(\CU,\CE)_\ul{\lambda} = \bigoplus_{(i_0,\ldots,i_q) \in \{\sd+1,\ldots,d\}^{q+1}} \CE\big(U_{\{i_0,\ldots,i_q\}}\big)_\ul{\lambda} \quad\text{, for $\ul{\lambda} \in X(\bT)$,}
	\end{equation*}
	under the induced $\bT$-action.
	Note that if we have a weight decomposition of $f\in C^q(\CU,\CE)$, it follows from \eqref{Eq 2 - Fixed norm on sections} that
	\begin{equation*}
		\abs{f} = \sup_{\ul{\lambda}\in X(\bT)} \abs{f_{\lambda}} \quad\text{, for $f = \sum_{\ul{\lambda}\in X(\bT)} f_\ul{\lambda}$, with $f_\ul{\lambda} \in C^q(\CU,\CE)_\ul{\lambda}$.}
	\end{equation*}
	Since the restriction maps ${\rm res}_{I,J}\colon \CE(U_I) \hookrightarrow \CE(U_J)$, for $I\subset J \subset \{\sd+1 ,\ldots,d\}$, are $\bT$-equivariant, so is the differential
	\begin{align*}
		d^q \colon C^q(\CU,\CE) &\lra C^{q+1} (\CU,\CE) ,\\
		\big( f_{(i_0,\ldots,i_q)} \big)_{(i_0,\ldots,i_q)} &\lto \bigg( \sum_{k=0}^{q+1} (-1)^k {\rm res}_{I,J}\big(f_{(j_0,\ldots, \widehat{j_k},\ldots,j_{q+1})} \big) \bigg)_{(j_0,\ldots,j_{q+1})} ,
	\end{align*}
	of the \v{C}ech complex.
	Therefore we may restrict $d^q$ to the individual weight spaces
	\begin{equation*}
		d^q_{\ul{\lambda}} \colon C^q(\CU,\CE)_\ul{\lambda} \lra C^{q+1} (\CU,\CE)_\ul{\lambda} \quad\text{, for $\ul{\lambda} \in X(\bT)$.}
	\end{equation*}
	Now let $N\in \BN$ be defined as before \Cref{Prop 2 - Existence of weight changing isomorphisms}, and consider
	\begin{equation*}
		d^q_N \colon \bigoplus_{\ul{\lambda}\in \boundedregion_N} C^q (\CU,\CE)_\ul{\lambda} \lra \bigoplus_{\ul{\lambda}\in \boundedregion_N} C^{q+1} (\CU,\CE)_\ul{\lambda} \,,\quad \big(f_\ul{\lambda}\big)_{\ul{\lambda}\in \boundedregion_N} \lto \big(d^q_\ul{\lambda}(f_\ul{\lambda})\big)_{\ul{\lambda}\in \boundedregion_N}.
	\end{equation*}
	Since this homomorphism takes values in a finite-dimensional $K$-vector space, $d^q_N$ is strict by \cite[I.2.3. Cor.]{Bourbaki87TopVectSp1to5}.
	Hence there exists $R>0$ such that
	\begin{equation}\label{Eq 2 - Condition for restricted differential to be strict}
		B_R(0) \cap \Im(d^q_N) \subset d_N^q \big( B_1(0)\big)
	\end{equation}
	where $B_{R}(0)$ and $B_1(0)$ denote the ``closed'' balls of radius $R$ and $1$ respectively.

	To see that $d^q$ itself is strict, it suffices to show that
	\begin{equation}\label{Eq 2 - Condition for differential to be strict}
		B_R(0) \cap \Im(d^q) \subset d^q \big( B_1(0) \big) 
	\end{equation}
	by scaling.
	For this, let $g \in \Im(d^q) \cap B_R(0)$, and let $g= \sum_{\ul{\lambda}\in X(\bT)} g_\ul{\lambda}$ be a weight decomposition with $g_\ul{\lambda} \in C^{q+1}(\CU,\CE)_\ul{\lambda}$.
	As $\abs{g} = \sup_{\ul{\lambda}\in X(\bT)} \abs{ g_\ul{\lambda}}$, we have $\abs{g_\ul{\lambda}} \leq R$, for all $\ul{\lambda}\in X(\bT)$.
	We moreover have $g_\ul{\lambda}\in \Im(d^q_\ul{\lambda})$ since $d^q$ is $\bT$-equivariant.
	For $\ul{\lambda}\in \boundedregion_N$, we can therefore conclude by \eqref{Eq 2 - Condition for restricted differential to be strict} that there exists $f_\ul{\lambda} \in C^q(\CU,\CE)_\ul{\lambda}$ such that $\abs{f_\ul{\lambda}} \leq 1$ and $d^q_\ul{\lambda} (f_\ul{\lambda}) = g_\ul{\lambda}$.

	For $\ul{\lambda} \notin \boundedregion_N$ in turn, we apply \Cref{Prop 2 - Existence of weight changing isomorphisms} to find $\ul{\nu} \in \boundedregion_N$, $C \in \varepsilon^{\BN_0}$, and a compatible family of isomorphisms
	\begin{equation*}
		\varphi^I_{\ul{\lambda},\ul{\nu}} \colon \CE(U_I)_\ul{\lambda} \overset{\cong}{\lra} \CE(U_I)_\ul{\nu} \quad\text{, for all non-empty $I\subset \{0,\ldots,d\}$,}
	\end{equation*}
	such that $\lvert \varphi^I_{\ul{\lambda},\ul{\nu}}(v) \rvert = C \, \abs{v}$, for all $v\in \CE(U_I)_\ul{\lambda}$.
	We take the direct sum of these isomorphisms to obtain isomorphisms
	\begin{equation*}
		\varphi^q_{\ul{\lambda},\ul{\nu}} \colon C^q(\CU,\CE)_\ul{\lambda} \overset{\cong}{\lra} C^q(\CU,\CE)_\ul{\nu} \quad\text{, for all $q\geq 0$,}
	\end{equation*}
	such that $\lvert \varphi^q_{\ul{\lambda},\ul{\nu}}(v) \rvert = C \,\abs{v}$, for all $v\in C^q(\CU,\CE)_\ul{\lambda}$.
	Moreover, the compatibility of the $\varphi^I_{\ul{\lambda},\ul{\nu}}$ with the canonical restriction maps implies that the diagram 
	\begin{equation*}
		\begin{tikzcd}
			C^q(\CU,\CE)_\ul{\lambda} \ar[r, "d^q_\ul{\lambda}"] \ar[d, "\varphi^q_{\ul{\lambda},\ul{\nu}}"'] & C^{q+1} (\CU,\CE)_\ul{\lambda} \ar[d, "\varphi^{q+1}_{\ul{\lambda},\ul{\nu}}"] \\
			C^q(\CU,\CE)_\ul{\nu} \ar[r, "d^q_{\ul{\nu}}"] & C^{q+1}(\CU,\CE)_\ul{\nu} 
		\end{tikzcd}
	\end{equation*}
	commutes.
	Since $\varepsilon \in \abs{\widebar{K}}$, we may assume that $\varepsilon = \abs{a}$, for some $a\in K$, after an extension of scalars by a finite extension of $K$.
	Then by scaling \eqref{Eq 2 - Condition for restricted differential to be strict} by an appropriate power of $a$ and restricting to the vectors which are rational with respect to the original $K$, we see that
	\begin{equation*}
		B_{CR} (0) \cap \Im(d^q_N) \subset d^q_N\big( B_C (0) \big) .
	\end{equation*}
	Because we have $\lvert \varphi^{q+1}_{\ul{\lambda},\ul{\nu}} (g_\ul{\lambda}) \rvert = C \, \abs{g_\ul{\lambda}} \leq CR$, we find $h\in C^q(\CU,\CE)_\ul{\nu}$ such that $\abs{h}\leq C$ and $d^q_\ul{\nu} (h) = \varphi^{q+1}_{\ul{\lambda},\ul{\nu}} (g_\ul{\lambda})$.
	Then $f_\ul{\lambda} \defeq \big(\varphi^q_{\ul{\lambda},\ul{\nu}}\big)^{-1} (h)$ satisfies $d^q_\ul{\lambda} (f_\ul{\lambda}) = g_\ul{\lambda}$ and $\abs{f_\ul{\lambda}}  = C^{-1} \abs{h} \leq 1$.

	In total we obtain, for every $\ul{\lambda}\in X(\bT)$, an element $f_\ul{\lambda} \in C^q(\CU,\CE)_\ul{\lambda}$ such that $d^q_\ul{\lambda} (f_\ul{\lambda}) = g_\ul{\lambda}$ and $\abs{f_\ul{\lambda}} \leq 1$.
	Taking the sum 
	\begin{equation*}
		f \defeq \sum_{\ul{\lambda}\in X(\bT)} f_\ul{\lambda}
	\end{equation*}
	we find that $d^q(f) = g$ and that $\abs{f} = \sup_{\ul{\lambda}\in X(\bT)} \abs{f_\ul{\lambda}} \leq 1$.
	This shows \eqref{Eq 2 - Condition for differential to be strict} and therefore the strictness of $d^q$.
\end{proof}

\subsection{Local Cohomology with respect to Tubes around Schubert Varieties}

We begin by fixing some notation.
For $\sd \in \{0,\ldots,d-1\}$ and $0<\varepsilon < 1$ with $\varepsilon \in \abs{\widebar{K}}$, we set
\begin{equation*}
	\widetilde{H}^i_{\BP_K^\sd (\varepsilon)} (\BP_K^d, \CE) 
	\defeq \Ker \Big( H^i_{\BP_K^\sd (\varepsilon)} (\BP_K^d, \CE) \lra H^i (\BP_K^d, \CE) \Big)
\end{equation*}
and endow it with the subspace topology.
We define $\widetilde{H}^i_{(\BP_K^\sd )^\rig} (\BP_K^d, \CE) $, $\widetilde{H}^i_{\BP_K^\sd (\varepsilon)^-} (\BP_K^d, \CE) $, and $\widetilde{H}^i_{\BP_K^\sd } (\BP_K^d, \CE) $ analogously.

\begin{lemma}\label{Lemma 2 - Compactness of transition maps}
	Let $0<\varepsilon<\varepsilon'< 1$ with $\varepsilon,\varepsilon' \in \abs{\widebar{K}}$, and $I \subset \{0,\ldots,d\}$.
	Then the transition map
	\begin{equation*}
		\CE^\rigbun (U_{I,\varepsilon}) \lra \CE^\rigbun(U_{I,\varepsilon'})
	\end{equation*}
	is a compact homomorphism between $K$-Banach spaces with dense image.
\end{lemma}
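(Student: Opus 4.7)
The plan is to mimic the proof of \Cref{Lemma 2 - Description of sections of DHS as projective limit}: first handle the structure-sheaf case $\CE = \CO$ directly using the concrete norm description from \Cref{Rmk 2 - Concrete description of the norms of the sections on the affinoid subdomains}, and then bootstrap to a general $\bG$-equivariant vector bundle via the $T$-equivariant trivialization of \Cref{Lemma 2 - T-equivariant trivialization}. The case $I = \emptyset$ is trivial, since $U_{\emptyset,\varepsilon} = \BP_K^d$ for every $\varepsilon$, so by GAGA $\CE(\BP_K^d)$ is finite-dimensional and the transition map is the identity. Assume now $I \neq \emptyset$, and fix $i_0 \in I$.

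For density, observe that $U_{I,\varepsilon'} = \bigcap_{i \in I} U_{i,\varepsilon'}$ is cut out inside $U_{I,\varepsilon}$ by the finitely many inequalities $\abs{\lambda'(X_j/X_i)^m} \leq 1$, for $j = 0,\ldots,d$ and $i \in I$, where $\lambda' \in K\unts$ and $m \in \BN$ are chosen so that $\abs{\lambda'}^{1/m} = \varepsilon'$. Each function $\lambda'(X_j/X_i)^m$ lies in $\CO(U_{I,\varepsilon})$ since $X_i$ is invertible on $U_{I,\varepsilon} \subset U_i^\rig$. Hence $U_{I,\varepsilon'} \subset U_{I,\varepsilon}$ is a Weierstra{\ss} subdomain, and \cite[7.3.4 Prop.\ 2]{BoschGuentzerRemmert84NonArchAna} yields that $\CO(U_{I,\varepsilon}) \to \CO(U_{I,\varepsilon'})$ has dense image.

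For compactness, by \Cref{Rmk 2 - Concrete description of the norms of the sections on the affinoid subdomains} the unit ball of $\CO(U_{I,\varepsilon})$ consists of Laurent series $\sum_{\ul{k} \in \Lambda_I} a_{\ul{k}} X^{\ul{k}}$ with $\abs{a_{\ul{k}}} \leq \varepsilon^{\abs{\max(0,\ul{k})}}$, and the image of such an element in $\CO(U_{I,\varepsilon'})$ has coefficient weights $\abs{a_{\ul{k}}}(\varepsilon')^{-\abs{\max(0,\ul{k})}} \leq (\varepsilon/\varepsilon')^{\abs{\max(0,\ul{k})}}$. The constraint $\sum_j k_j = 0$ on $\ul{k}\in\Lambda_I$ forces $\abs{\max(0,\ul{k})} = \tfrac{1}{2}\sum_j \abs{k_j}$, so these tail weights tend to $0$ as $\ul{k}$ leaves any finite subset of $\Lambda_I$. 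Combined with the local compactness of $K$, the standard criterion for relative compactness in a non-archimedean Banach space with an orthogonal basis (here $\{X^{\ul{k}}\}_{\ul{k}\in\Lambda_I}$ for $\CO(U_{I,\varepsilon'})$) shows that the image of the unit ball of $\CO(U_{I,\varepsilon})$ is relatively compact in $\CO(U_{I,\varepsilon'})$.

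Finally, I would pass to $\CE$ via \Cref{Lemma 2 - T-equivariant trivialization}: a $T$-equivariant trivialization $\CE^\algbun\res{U_{i_0}} \cong \CO^\algbun\res{U_{i_0}}^{\oplus n}$ with $n \defeq \mathrm{rk}(\CE)$, restricted to the rigid affinoid subdomains $U_{I,\varepsilon}, U_{I,\varepsilon'} \subset U_{i_0}^\rig$, yields compatible topological isomorphisms $\CE(U_{I,\bullet}) \cong \CO(U_{I,\bullet})^{\oplus n}$ of Banach $\CO(U_{I,\bullet})$-modules. The transition map on $\CE$ is thereby the $n$-fold direct sum of the one on $\CO$, so compactness and dense image are preserved by \Cref{Lemma A1 - Generalities on compact maps} and the elementary stability of dense image under finite direct sums. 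The main technical point is the coefficient-level estimate underlying compactness; the remaining steps reduce cleanly to earlier results.
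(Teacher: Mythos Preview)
Your proof is correct, but it takes a more explicit route than the paper at each of the three steps. For compactness in the structure-sheaf case, the paper realizes $U_{I,\varepsilon} = \Sp(A_\varepsilon/\Fa A_\varepsilon)$ as a quotient of a rescaled Tate algebra and verifies that the transition map is an \emph{inner} homomorphism of affinoid algebras in Berkovich's sense, then invokes \cite[\S 1 Lemma 5]{SchneiderTeitelbaum02pAdicBoundVal}; you instead use the concrete norm from \Cref{Rmk 2 - Concrete description of the norms of the sections on the affinoid subdomains} and a direct compactoid criterion for $c_0$-type spaces over a local field. For density, the paper routes through the common dense subspace $\CE^\algbun(U_I)$ from \Cref{Lemma 2 - Density of algebraic sections in analytic sections}, whereas you observe that $U_{I,\varepsilon'} \subset U_{I,\varepsilon}$ is Weierstra{\ss}. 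For general $\CE$, the paper uses Kiehl's theorem to produce compatible strict epimorphisms from free modules and then \Cref{Lemma A1 - Generalities on compact maps} (ii),(iii); you use the compatible trivialization from \Cref{Lemma 2 - T-equivariant trivialization}, which is cleaner here since $\CE$ is not just coherent but a $\bG$-equivariant bundle. Your approach is more hands-on and avoids the Berkovich inner-homomorphism machinery; the paper's is more structural and would transfer to coherent sheaves without equivariance. One small point: the ``standard criterion'' you invoke is essentially the Claim in \cite[\S 16, p.~98]{Schneider02NonArchFunctAna}; a precise reference there would tighten the argument.
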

\begin{proof}
	If $I = \emptyset$, we have $U_{I,\varepsilon} = U_{I,\varepsilon'} = \BP_K^d$ and $\CE^\rigbun(\BP_K^d)$ is a finite dimensional $K$-vector space.
	Therefore, the homomorphism in question is compact by \cite[Lemma 16.4]{Schneider02NonArchFunctAna}.
	
	Now suppose that $I\neq \emptyset$.
	Our method here is similar to the proof of \Cref{Lemma 2 - Description of sections of DHS as projective limit}.
	For $I = \{i_0,\ldots,i_m\}$, we consider the affinoid $K$-algebra
	\begin{align*}
		A_\varepsilon &\defeq K \big\langle \varepsilon T_{(j,i_k)} \,\big\vert\, j=0,\ldots,d, k=0,\ldots,m \text{, with } j\neq i_k \big\rangle \\
		&= \Bigg\{ \sum_{ \ul{\mu} \in \BN^{d(m+1)}_0 } a_{\ul{\mu}} \, \ul{T}^{\ul{\mu}} \in K \llrrbracket{\ul{T}} \,\Bigg\vert\, \abs{a_{\ul{\mu}}} \left( \frac{1}{\varepsilon} \right)^{\abs{\ul{\mu}}} \ra 0 \text{ as $\abs{\ul{\mu}} \ra \infty$} \Bigg\} ,
	\end{align*}
	cf.\ \cite[6.1.5 Thm. 4]{BoschGuentzerRemmert84NonArchAna}.
	Like in the proof of \Cref{Lemma 2 - Density of algebraic sections in analytic sections}, we have $U_{I,\varepsilon} = \Sp ( A_\varepsilon/\Fa A_\varepsilon)$, for the ideal
	\[\Fa = \big( T_{(i_j,i_k)} T_{(i_k,i_j)} -1 \,\big\vert\, j,k=0,\ldots,m \text{, with } j\neq k \big) .\]
	With similar definitions for $U_{I,\varepsilon'}$, we obtain a commutative diagram of affinoid $K$-algebras
	\begin{equation*}
		\begin{tikzcd}
			A_\varepsilon \ar[r,hook, "\psi"] \ar[d,two heads, "\tau"'] & A_{\varepsilon '} \ar[d,two heads,"\tau'"] \\
			A_\varepsilon/\Fa A_\varepsilon \ar[r, "\widebar{\psi}"] & A_{\varepsilon'}/\Fa A_{\varepsilon'}
		\end{tikzcd}
	\end{equation*}
	where the vertical maps are strict epimorphisms, and $\widebar{\psi}$ is the transition homomorphism $\CO^\rigbun(U_{I,\varepsilon}) \ra \CO^\rigbun (U_{I,\varepsilon'})$.
	We claim that $\widebar{\psi}$ is an inner homomorphism of affinoid $K$-algebras in the sense of \cite[Def.\ 2.5.1]{Berkovich90SpecThAnGeom}.
	Indeed we have, for $T =  T_{(j,i_k)} \in A_\varepsilon$,
	\begin{align*}
		\sup_{x\in U_{I,\varepsilon'}} \abs{\widebar{\psi}(\tau(T))(x)} = \sup_{x\in U_{I,\varepsilon'}} \abs{\tau'(\psi(T))(x)} \leq  \abs{\psi(T)}_{\sup} = \abs{\psi(T)}_{A_{\varepsilon'}} = \frac{1}{\varepsilon'} < \frac{1}{\varepsilon}.
	\end{align*}
	It follows from \cite[\S 1 Lemma 5]{SchneiderTeitelbaum02pAdicBoundVal} that $\widebar{\psi}$ is a compact homomorphism of locally convex $K$-vector spaces.

	For a general $\bG$-equivariant vector bundle $\CE$, we again may apply Kiehl's theorem \cite[9.4.3 Thm.\ 3]{BoschGuentzerRemmert84NonArchAna} to find strict epimorphisms $\CO^\rigbun(U_{I,\varepsilon})^{\oplus k} \twoheadrightarrow \CE^\rigbun(U_{I,\varepsilon})$ and ${\CO^\rigbun(U_{I,\varepsilon'})^{\oplus k} \twoheadrightarrow \CE^\rigbun(U_{I,\varepsilon'})}$ which are compatible, for some $k\in \BN_0$.
	Then \Cref{Lemma A1 - Generalities on compact maps} (ii) and (iii) applied to 
	\begin{equation*}
		\begin{tikzcd}
			\CO^\rigbun(U_{I,\varepsilon})^{\oplus k} \ar[r, "\widebar{\psi}^{\oplus k}"] \ar[d, two heads] & \CO^\rigbun(U_{I,\varepsilon'})^{\oplus k} \ar[d, two heads] \\
			\CE^\rigbun(U_{I,\varepsilon}) \ar[r] & \CE^\rigbun(U_{I,\varepsilon'})
		\end{tikzcd}
	\end{equation*}
	show that $ \CE^\rigbun (U_{I,\varepsilon}) \ra \CE^\rigbun(U_{I,\varepsilon'})$ is compact.
	
	Finally it follows from \Cref{Lemma 2 - Density of algebraic sections in analytic sections} that the image of the transition map is dense.
\end{proof}

\begin{proposition}\label{Prop 2 - Projective limit description of local cohomology wrt open tubes}
	Let $0< \varepsilon < 1$ with $\varepsilon \in \abs{\widebar{K}}$.
	Let $(\varepsilon_m)_{m\in \BN}$ be some strictly decreasing sequence with $\varepsilon_m\in \abs{\widebar{K}}$, $\varepsilon < \varepsilon_m < 1$, and $\varepsilon_m \ra \varepsilon$.
	Then the $K$-Fr\'echet space $H^i_{\BP_K^\sd (\varepsilon)} (\BP_K^d, \CE^\rigbun) $ is the projective limit of the $K$-Banach spaces
	\begin{equation}\label{Eq 2 - Projective limit of local cohomology groups}
		H^i_{\BP_K^\sd (\varepsilon)} (\BP_K^d, \CE^\rigbun)  \cong \varprojlim_{\varepsilon_m \searrow \varepsilon} H^i_{\BP_K^\sd (\varepsilon_m)^-} (\BP_K^d, \CE^\rigbun) .
	\end{equation}
	The transition homomorphisms
	\begin{equation*}
		H^i_{\BP_K^\sd (\varepsilon_m)^-} (\BP_K^d, \CE^\rigbun) \lra H^i_{\BP_K^\sd (\varepsilon_n)^-} (\BP_K^d, \CE^\rigbun) \quad\text{, for $\varepsilon_m < \varepsilon_n$,}
	\end{equation*}
	which are induced from the inclusion $\BP^\sd_K(\varepsilon_m)^- \subset \BP^\sd_K(\varepsilon_n)^-$ are compact and have dense image so that $H^i_{\BP_K^\sd (\varepsilon)} (\BP_K^d, \CE^\rigbun)$ is nuclear.
	Moreover, the local cohomology group $H^i_{\BP_K^\sd} (\BP_K^d, \CE)$ of the algebraic variety $\BP_K^d$ constitutes a dense subspace of $H^i_{\BP_K^\sd (\varepsilon)} (\BP_K^d, \CE^\rigbun)$.
	
	The analogous statements for $\widetilde{H}^i_{\BP_K^\sd (\varepsilon)} (\BP_K^d, \CE^\rigbun)$ hold as well.
\end{proposition}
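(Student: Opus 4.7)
The plan is to compute both sides via \v{C}ech complexes associated to the coverings $\CU_{\varepsilon_m}$ and $\CU_\varepsilon^-$ introduced above, and to interchange $\varprojlim_m$ with cohomology using the strictness result from \Cref{Thm 2 - Differentials of algebraic Cech complex are strict}. Concretely, for each $m$ the covering $\CU_{\varepsilon_m}$ of $\BP_K^d \setminus \BP_K^\sd(\varepsilon_m)^-$ is by affinoid subdomains, yielding a \v{C}ech complex $C_m^\bullet$ of $K$-Banach spaces computing $H^\bullet(\BP_K^d \setminus \BP_K^\sd(\varepsilon_m)^-, \CE)$; the covering $\CU_\varepsilon^-$ of $\BP_K^d \setminus \BP_K^\sd(\varepsilon)$ is by quasi-Stein admissible opens, yielding a \v{C}ech complex $C_\infty^\bullet$ of $K$-Fr\'echet spaces computing $H^\bullet(\BP_K^d \setminus \BP_K^\sd(\varepsilon), \CE)$. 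Since each $U_{I,\varepsilon}^- = \bigcup_m U_{I,\varepsilon_m}$ is quasi-Stein and the \v{C}ech products are finite, termwise $C_\infty^\bullet \cong \varprojlim_m C_m^\bullet$ as complexes of topological $K$-vector spaces.

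The next step is to establish strictness of the differentials of each $C_m^\bullet$. \Cref{Thm 2 - Differentials of algebraic Cech complex are strict} provides strictness for the algebraic \v{C}ech complex with terms $\CE^\algbun(U_I)$ endowed with the norm inherited from $\CE^\rigbun(U_{I,\varepsilon_m})$, and \Cref{Lemma 2 - Density of algebraic sections in analytic sections} identifies $\CE^\rigbun(U_{I,\varepsilon_m})$ as the Banach completion of $\CE^\algbun(U_I)$ for this norm; by the open mapping theorem for Banach spaces, strictness then transfers to the completed differentials of $C_m^\bullet$. Combined with the compactness with dense image of the transition maps $\CE^\rigbun(U_{I,\varepsilon_m}) \to \CE^\rigbun(U_{I,\varepsilon_n})$ from \Cref{Lemma 2 - Compactness of transition maps}, I apply a Mittag--Leffler-type argument: the short strictly exact sequences $0 \to Z_m^q \to C_m^q \to B_m^{q+1} \to 0$ of $K$-Banach spaces have compact, dense-image transitions in $m$, forcing $\varprojlim^1_m Z_m^q = 0$ and hence $H^q(C_\infty^\bullet) \cong \varprojlim_m H^q(C_m^\bullet)$. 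Feeding this into the long exact sequences of local cohomology (using \Cref{Rmk 2 - Strictness of edge homomorphism in long exact sequence of local cohomology} for strictness of the connecting maps, together with the finite-dimensionality of $H^i(\BP_K^d, \CE)$) yields the claimed projective limit description; in particular $H^i_{\BP_K^\sd(\varepsilon)}(\BP_K^d, \CE)$ is Hausdorff and Fr\'echet as a countable projective limit of Banach spaces. Compactness with dense image of the resulting transition maps on $H^i_{\BP_K^\sd(\varepsilon_m)^-}$ then follows from compactness at the \v{C}ech level by standard subquotient arguments on compact maps, giving also nuclearity in the projective limit, and the density of the algebraic $H^i_{\BP_K^\sd}(\BP_K^d, \CE)$ is obtained from density of the algebraic \v{C}ech complex (\Cref{Lemma 2 - Density of algebraic sections in analytic sections}) propagated through the strict \v{C}ech subquotients. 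The assertions for $\widetilde H^i$ reduce to the above since $\widetilde H^i$ is the kernel of the map into the finite-dimensional $K$-vector space $H^i(\BP_K^d, \CE)$, and kernels commute with $\varprojlim$ and with the density and compactness properties just established.

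The main obstacle I foresee is the Mittag--Leffler step: interchanging $\varprojlim$ with cohomology for a projective system of $K$-Banach spaces with only compact, rather than surjective, transition maps. The argument rests on vanishing of $\varprojlim^1$ for the cocycle systems with compact dense-image transitions, which in turn crucially uses strictness of the \v{C}ech differentials; a secondary subtle point is transferring \Cref{Thm 2 - Differentials of algebraic Cech complex are strict} cleanly from the algebraic setting to the completed Banach \v{C}ech complex, which one handles via the open mapping theorem once density from \Cref{Lemma 2 - Density of algebraic sections in analytic sections} is in place.
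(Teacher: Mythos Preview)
Your proposal is correct and follows essentially the same approach as the paper: both use the strictness of the algebraic \v{C}ech differentials (\Cref{Thm 2 - Differentials of algebraic Cech complex are strict}) together with density of algebraic sections (\Cref{Lemma 2 - Density of algebraic sections in analytic sections}) to transfer strictness to the completed Banach \v{C}ech complexes, establish compactness and dense image of the transition maps via \Cref{Lemma 2 - Compactness of transition maps}, and then apply a topological Mittag--Leffler argument to interchange $\varprojlim$ with cohomology before feeding into the long exact sequence of local cohomology. The paper organizes things in a slightly different order---it first proves that each $H^i_{\BP_K^\sd(\varepsilon_m)^-}(\BP_K^d,\CE)$ is the Banach completion of the algebraic $H^i_{\BP_K^\sd}(\BP_K^d,\CE)$ (by carefully tracking strict short exact sequences through completion), and only then invokes density to verify the Mittag--Leffler hypothesis---whereas you run the Mittag--Leffler argument first and extract density alongside; but the ingredients and logic are the same.
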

\begin{proof}
	We first focus on the $K$-Banach spaces associated with the ``closed'' neighbourhoods of $\BP_K^\sd$.	
	For now we fix $0 < \varepsilon <1$ with $\varepsilon \in \abs{\widebar{K}}$ and consider the \v{C}ech complex $C^\bullet (\CU_{\varepsilon},\CE)$ associated with the covering \eqref{Eq 2 - Covering of complement of the closed tubes} which computes the cohomology groups $H^i (\BP^d_K \setminus \BP_K^\sd (\varepsilon)^-,\CE)$.
	For this \v{C}ech complex, we let 
	\begin{align*}
		Z^i (\CU_{\varepsilon}, \CE^\rigbun) &\defeq \Ker \big( C^i (\CU_{\varepsilon}, \CE^\rigbun) \overset{d^i_{\varepsilon}}{\lra} C^{i+1} (\CU_{\varepsilon},\CE^\rigbun) \big), \\
		B^i (\CU_{\varepsilon}, \CE^\rigbun) &\defeq \Im \big( C^{i-1} (\CU_{\varepsilon},\CE^\rigbun) \overset{d^{i-1}_{\varepsilon}}{\lra} C^i (\CU_{\varepsilon},\CE^\rigbun) \big) 
	\end{align*}
	denote the space of $i$-th \v{C}ech cocycles respectively the space of $i$-th \v{C}ech coboundaries.
	We use the analogous notation for further \v{C}ech complexes that will occur.

	Recall from \Cref{Lemma 2 - Density of algebraic sections in analytic sections} that the cochains $C^i(\CU_{\varepsilon},\CE)$ have the structure of a $K$-Banach space, and the cochains $C^i (\CU,\CE^\algbun)$ of algebraic sections constitute a dense subspace therein.
	As a first step we want to see that the induced locally convex topology on $H^i (\BP^d_K \setminus \BP_K^\sd (\varepsilon)^-,\CE)$, $\widetilde{H}^i_{\BP_K^\sd (\varepsilon)^-} (\BP^d_K ,\CE)$, and $H^i_{\BP_K^\sd (\varepsilon)^-} (\BP^d_K ,\CE)$ gives those spaces the structure of $K$-Banach spaces as well, and that their ``algebraic'' counterparts $H^i (\BP^d_K \setminus \BP_K^\sd,\CE^\algbun)$, $\widetilde{H}^i_{\BP_K^\sd} (\BP^d_K ,\CE^\algbun)$, and $H^i_{\BP_K^\sd} (\BP^d_K ,\CE^\algbun)$ embed as dense subspaces respectively.

	To this end we consider $C^i (\CU,\CE^\algbun)$ with the subspace topology coming from $C^i(\CU_{\varepsilon},\CE)$ so that $C^i(\CU_{\varepsilon},\CE)$ is the completion of $C^i (\CU,\CE^\algbun)$.
	We will endow all ``algebraic'' terms with the topologies induced from $C^i (\CU,\CE^\algbun)$, and all ``analytic'' terms with the ones induced from $C^i(\CU_{\varepsilon},\CE)$.

	From \Cref{Thm 2 - Differentials of algebraic Cech complex are strict} we know that the differential $d^i \colon C^i (\CU,\CE^\algbun) \ra C^{i+1} (\CU,\CE^\algbun)$ is strict, and the differential $d^i_{\varepsilon} \colon C^i(\CU_{\varepsilon},\CE) \ra C^{i+1}(\CU_{\varepsilon},\CE)$ is the completion of $d^i$.
	Therefore \cite[1.1.9 Prop.\ 5]{BoschGuentzerRemmert84NonArchAna} implies that $Z^i(\CU_{\varepsilon},\CE)$ is the completion of $Z^i (\CU, \CE^\algbun)$, and $B^{i+1} (\CU_{\varepsilon}, \CE)$ is the completion of $B^{i+1}(\CU, \CE^\algbun)$.
	Moreover note that $d^i_{\varepsilon}$ is strict by \cite[1.1.9 Prop.\ 4]{BoschGuentzerRemmert84NonArchAna}, i.e.\ $B^{i+1} (\CU_{\varepsilon}, \CE)$ is a closed subspace of $C^{i+1}(\CU_{\varepsilon},\CE)$ so that $H^i (\BP^d_K \setminus \BP_K^\sd (\varepsilon)^-,\CE)$ is a $K$-Banach space.

	We also have the short strictly exact sequence
	\begin{equation*}
		0 \lra B^i(\CU,\CE^\algbun) \lra Z^i (\CU, \CE^\algbun) \lra H^i (\BP_K^d \setminus \BP_K^\sd , \CE^\algbun) \lra 0 .
	\end{equation*}
	The completion of this sequence yields the short strictly exact sequence \cite[1.1.9 Cor.\ 6]{BoschGuentzerRemmert84NonArchAna}
	\begin{equation*}
		0 \lra B^{i} (\CU_{\varepsilon}, \CE) \lra Z^i(\CU_{\varepsilon},\CE) \lra H^i (\BP_K^d \setminus \BP_K^\sd , \CE^\algbun)^{\cpltn} \lra 0 .
	\end{equation*}
	Therefore the map $H^i (\BP_K^d \setminus \BP_K^\sd , \CE^\algbun)^\cpltn \ra H^i (\BP^d_K \setminus \BP_K^\sd (\varepsilon)^-,\CE)$ obtained by the universal property of the completion is a topological isomorphism, and $ H^i (\BP^d_K \setminus \BP_K^\sd (\varepsilon)^-,\CE)$ is the completion of $H^i (\BP_K^d \setminus \BP_K^\sd , \CE^\algbun)$.

	Now recall the long exact sequence of local cohomology
	\begin{equation}\label{Eq 2 - Long exact sequence of local cohomology for algebraic cohomology}
		\begin{aligned}
			\ldots \lra  H^{i-1} (\BP^d_K,\CE^\algbun) &\xrightarrow{\alpha^{i-1}}  H^{i-1}(\BP_K^d\setminus \BP^\sd_K,\CE^\algbun)\\
			&\xrightarrow{\partial^{i-1}}  H^i_{\BP_K^\sd} (\BP^d_K, \CE^\algbun) \xrightarrow{\beta^i} H^i(\BP^d_K,\CE^\algbun) \lra \ldots .
		\end{aligned}
	\end{equation}
	We endow the local cohomology group $H^i_{\BP_K^\sd} (\BP^d_K, \CE^\algbun)$ with the locally convex final topology with respect to $\partial^{i-1}$.
	As seen in \Cref{Rmk 2 - Strictness of edge homomorphism in long exact sequence of local cohomology} this makes $\partial^{i-1}$ strict and $\beta^i$ continuous.
	Furthermore, the homomorphisms $\alpha^i$ and $\beta^i$ are strict as well since $H^i (\BP_K^d, \CE^\algbun)$ is a finite-dimensional locally convex $K$-vector space \cite[Cor.\ 3.4.25]{PerezGarciaSchikhof10LocConvSpNonArch}.
	The analogous situation arises for $\BP^\sd_K (\varepsilon)^- \subset \BP_K^d$.

	From this long strictly exact sequence, we obtain the following commutative diagram with strictly exact rows
	\begin{equation}\label{Eq 2 - Diagram of short strictly exact sequences 1}
		\begin{tikzcd}
			0 \ar[r]	&\Ker(\alpha^{i-1}) \ar[r]\ar[d]	&H^{i-1} (\BP_K^d, \CE^\algbun) \ar[r, "\alpha^{i-1}" ]\ar[d, equal]	&\Im(\alpha^{i-1}) \ar[r]\ar[d]	&0	\\
			0 \ar[r]	&\Ker(\alpha^{i-1}_{\varepsilon}) \ar[r]	&H^{i-1} (\BP_K^d, \CE^\rigbun) \ar[r, "\alpha^{i-1}_{\varepsilon}"] \ar[r]	&\Im(\alpha^{i-1}_{\varepsilon}) \ar[r]	& 0.
		\end{tikzcd}
	\end{equation}
	Because $\alpha^{i-1}_{\varepsilon}$ is a strict epimorphism, it follows that $\Im(\alpha^{i-1}) \ra \Im(\alpha^{i-1}_{\varepsilon})$ is a strict epimorphism \cite[Prop.\ 1.1.8]{Schneiders98QuasiAbCat}.

	From \eqref{Eq 2 - Long exact sequence of local cohomology for algebraic cohomology} we moreover obtain the commutative diagram
	\begin{equation}\label{Eq 2 - Diagram of short strictly exact sequences 2}
		\begin{tikzcd}
			0 \ar[r]	&\Im(\alpha^{i-1}) \ar[r]\ar[d]	&H^{i-1} (\BP_K^d \setminus \BP_K^\sd ,\CE^\algbun) \ar[r, "\partial^{i-1}"]\ar[d]	&\widetilde{H}^{i}_{\BP_K^\sd} (\BP_K^d, \CE^\algbun) \ar[r]\ar[d]	&0 \\
			0 \ar[r]	&\Im(\alpha^{i-1}_{\varepsilon}) \ar[r]	&H^{i-1} (\BP_K^d \setminus \BP_K^\sd (\varepsilon)^-,\CE) \ar[r, "\partial^{i-1}_{\varepsilon}"]	&\widetilde{H}^{i}_{\BP_K^\sd(\varepsilon)^-} (\BP_K^d, \CE) \ar[r]	&0
		\end{tikzcd}
	\end{equation}
	with strictly exact rows.
	Here we have used the exactness of \eqref{Eq 2 - Long exact sequence of local cohomology for algebraic cohomology} for the identification $\Ker(\partial^{i-1}) = \Im(\alpha^{i-1})$ and $\Im(\partial^{i-1}) = \Ker(\beta^i)\eqdef \widetilde{H}^{i}_{\BP_K^\sd} (\BP_K^d, \CE^\algbun)$, and likewise for the ``analytic'' terms.
	The composition $\Im(\alpha^{i-1}) \ra H^{i-1} (\BP_K^d \setminus \BP_K^\sd ,\CE^\algbun) \ra H^{i-1} (\BP_K^d \setminus \BP_K^\sd (\varepsilon)^-,\CE)$ is a strict monomorphism \cite[Prop.\ 1.1.7]{Schneiders98QuasiAbCat}.
	Therefore $\Im(\alpha^{i-1}) \ra \Im(\alpha^{i-1}_{\varepsilon})$ is a strict monomorphism as well \cite[Prop.\ 1.1.8]{Schneiders98QuasiAbCat}, and we conclude that $\Im(\alpha^{i-1}) \cong \Im(\alpha_{\varepsilon}^{i-1}) $.
	By taking the completion of the first row of \eqref{Eq 2 - Diagram of short strictly exact sequences 2} and reasoning similarly to before, we find that $\widetilde{H}^{i}_{\BP_K^\sd(\varepsilon)^-} (\BP_K^d, \CE)$ is the completion of $\widetilde{H}^{i}_{\BP_K^\sd} (\BP_K^d, \CE^\algbun)$.

	From \eqref{Eq 2 - Diagram of short strictly exact sequences 1} we also can conclude that $\Ker(\alpha^{i-1}) = \Ker(\alpha^{i-1}_{\varepsilon})$ by applying the appropriate version of the snake lemma \ref{Lemma A1 - Snake lemma}.
	Since $\Im(\beta^i) = \Ker(\alpha^i)$ and $\Im(\beta^i_{\varepsilon}) = \Ker(\alpha^i_{\varepsilon})$ by the exactness of \eqref{Eq 2 - Long exact sequence of local cohomology for algebraic cohomology}, we arrive at the short strictly exact sequence
	\begin{equation*}
		0 \lra \widetilde{H}^{i}_{\BP_K^\sd} (\BP_K^d, \CE^\algbun) \lra H^{i}_{\BP_K^\sd} (\BP_K^d, \CE^\algbun) \lra \Ker(\alpha^i) \lra 0 .
	\end{equation*}
	The completion of this sequence yields the following commutative diagram with strictly exact rows \cite[1.1.9 Prop.\ 4, Cor.\ 6]{BoschGuentzerRemmert84NonArchAna}
	\begin{equation}\label{Eq 2 - Diagram of short strictly exact sequences 3}
		\begin{tikzcd}
			0 \ar[r]	&\widetilde{H}^{i}_{\BP_K^\sd} (\BP_K^d, \CE^\algbun)^\cpltn \ar[r]\ar[d,"\cong"]	&H^{i}_{\BP_K^\sd} (\BP_K^d, \CE^\algbun)^\cpltn \ar[r]\ar[d]	&\Ker(\alpha^{i}) \ar[r]\ar[d,"\cong"]	&0 \\
			0 \ar[r]	&\widetilde{H}^{i}_{\BP_K^\sd(\varepsilon)^-} (\BP_K^d, \CE) \ar[r]	&H^{i}_{\BP_K^\sd(\varepsilon)^-} (\BP_K^d, \CE) \ar[r]	&\Ker(\alpha^i_{\varepsilon}) \ar[r]	&0.
		\end{tikzcd}
	\end{equation}
	Via the snake lemma \ref{Lemma A1 - Snake lemma} it follows that the vertical homomorphism in the middle is a topological isomorphism, too.
	Therefore $H^{i}_{\BP_K^\sd(\varepsilon)^-} (\BP_K^d, \CE)$ is the completion of $H^{i}_{\BP_K^\sd} (\BP_K^d, \CE^\algbun)$.

	Fixing $0<\varepsilon <1$ with $\varepsilon \in \abs{\widebar{K}}$ and a strictly decreasing sequence $(\varepsilon_m)_{m\in \BN}$ as specified, we now consider $\varepsilon < \varepsilon_m <\varepsilon_n <1$ with $\varepsilon_m,\varepsilon_n \in \abs{\widebar{K}}$.
	Using the commutativity of 
	\begin{equation*}
		\begin{tikzcd}[column sep=tiny]
			&\widetilde{H}^{i}_{\BP_K^\sd} (\BP_K^d, \CE^\algbun) \ar[ld]\ar[rd] & \\
			\widetilde{H}^{i}_{\BP_K^\sd(\varepsilon_m)^-} (\BP_K^d, \CE) \ar[rr]&&\widetilde{H}^{i}_{\BP_K^\sd(\varepsilon_n)^-} (\BP_K^d, \CE)
		\end{tikzcd}
	\end{equation*}
	it follows from the statements about the density of the ``algebraic'' subspaces which we have just shown that the transition homomorphism $\widetilde{H}^{i}_{\BP_K^\sd(\varepsilon_m)^-} (\BP_K^d, \CE) \ra \widetilde{H}^{i}_{\BP_K^\sd(\varepsilon_n)^-} (\BP_K^d, \CE)$ has dense image.
	Analogously one argues for $H^{i}_{\BP_K^\sd(\varepsilon_m)^-} (\BP_K^d, \CE) \ra H^{i}_{\BP_K^\sd(\varepsilon_n)^-} (\BP_K^d, \CE)$.

	To show that the transition homomorphisms are compact, we start at the commutative diagram
	\begin{equation*}
		\begin{tikzcd}
			0 \ar[r] &Z^i (\CU_{\varepsilon_m}, \CE^\rigbun) \ar[r] \ar[d]	&C^i (\CU_{\varepsilon_m},\CE^\rigbun) \ar[r, "d^i_{\varepsilon_m}"] \ar[d]	&B^{i+1} (\CU_{\varepsilon_m}, \CE^\rigbun) \ar[r] \ar[d]	&0 \\
			0 \ar[r] &Z^i (\CU_{\varepsilon_n}, \CE^\rigbun) \ar[r] 		&C^i (\CU_{\varepsilon_n},\CE^\rigbun) \ar[r, "d^i_{\varepsilon_n}"] 		&B^{i+1} (\CU_{\varepsilon_n}, \CE^\rigbun) \ar[r]	&0
		\end{tikzcd}
	\end{equation*}
	with strictly exact rows.
	We have seen in \Cref{Lemma 2 - Compactness of transition maps} that the vertical homomorphism in the middle is compact.
	Hence \Cref{Lemma A1 - Generalities on compact maps} (i) implies that $Z^i (\CU_{\varepsilon_m}, \CE^\rigbun) \ra Z^i (\CU_{\varepsilon_n}, \CE^\rigbun)$ is compact.
	Using \Cref{Lemma A1 - Generalities on compact maps} (ii) one argues in the analogous way to conclude that $H^{i-1} (\BP_K^d \setminus \BP_K^\sd (\varepsilon_m)^-,\CE) \ra H^{i-1} (\BP_K^d \setminus \BP_K^\sd (\varepsilon_n)^-,\CE)$ and $\widetilde{H}^{i}_{\BP_K^\sd(\varepsilon_m)^-} (\BP_K^d, \CE) \ra \widetilde{H}^{i}_{\BP_K^\sd(\varepsilon_n)^-} (\BP_K^d, \CE)$ are compact.
	For
	\begin{equation*}
		\begin{tikzcd}
			0 \ar[r]	&\widetilde{H}^{i}_{\BP_K^\sd(\varepsilon_m)^-} (\BP_K^d, \CE) \ar[r]\ar[d]	&H^{i}_{\BP_K^\sd(\varepsilon_m)^-} (\BP_K^d, \CE) \ar[r]\ar[d]	&\Ker(\alpha^i_{\varepsilon_m}) \ar[r]\ar[d]	&0 \\
			0 \ar[r]	&\widetilde{H}^{i}_{\BP_K^\sd(\varepsilon_n)^-} (\BP_K^d, \CE) \ar[r]	&H^{i}_{\BP_K^\sd(\varepsilon_n)^-} (\BP_K^d, \CE) \ar[r]	&\Ker(\alpha^i_{\varepsilon_n}) \ar[r]	&0
		\end{tikzcd}
	\end{equation*}
	the short strictly exact sequences of locally convex $K$-vector spaces in both rows split compatibly as $\Ker(\alpha_{\varepsilon_m}^i) \cong \Ker(\alpha^i_{\varepsilon_n})$ is finite-dimensional \cite[Cor.\ 3.4.27]{PerezGarciaSchikhof10LocConvSpNonArch}.
	Therefore the transition homomorphism $H^{i}_{\BP_K^\sd(\varepsilon_m)^-} (\BP_K^d, \CE) \ra H^{i}_{\BP_K^\sd(\varepsilon_n)^-} (\BP_K^d, \CE)$ is compact by \Cref{Lemma A1 - Generalities on compact maps} (iii).

	\begin{remark}
		Having seen that the transition maps $H^{i}_{\BP_K^\sd(\varepsilon_m)^-} (\BP_K^d, \CE) \ra H^{i}_{\BP_K^\sd(\varepsilon_n)^-} (\BP_K^d, \CE)$ have dense image, we can apply \cite[Prop.\ 1.3.3]{Orlik08EquivVBDrinfeldUpHalfSp} to obtain the topological isomorphism \eqref{Eq 2 - Projective limit of local cohomology groups} at this point. 
		However, we will additionally present the alternative way mentioned in \cite[Rmk.\ 1.3.6]{Orlik08EquivVBDrinfeldUpHalfSp} to do so.
		To this end, one needs the following lemma.
	\end{remark}

	\begin{lemma}[{cf.\ \cite[Lemma 1.3.7]{Orlik08EquivVBDrinfeldUpHalfSp}}]\label{Lemma 2 - Exactness of projective limit when topological ML property}
		Consider a projective system of strictly exact sequences
		\begin{equation*}
			0 \lra V'_n \lra V_n \lra V''_n \lra 0 \quad\text{, for $n\in \BN$,}
		\end{equation*}
		of $K$-Fr\'echet spaces.
		If the transition maps $V'_{n+1}\ra V'_n$, for all $n\in \BN$, have dense image, then the sequence
		\begin{equation}\label{Eq 2 - Projective limit of strictly exact sequences}
			0 \lra \varprojlim_{n\in \BN} V'_n \lra \varprojlim_{n\in \BN} V_n \lra \varprojlim_{n\in \BN} V''_n \lra 0
		\end{equation}
		is strictly exact, too.
	\end{lemma}
	\begin{proof}
		We may view $V'_n$ as the kernel of $V_n \ra V''_n$.
		Since taking the (projective) limit commutes with kernels, we see that $\varprojlim_{n\in \BN} V'_n$ is the kernel of $\varprojlim_{n\in \BN} V_n \ra \varprojlim_{n\in \BN} V''_n$.

		Moreover, as the transition homomorphisms $V'_{n+1} \ra V'_n$ have dense image, the topological Mittag-Leffler condition is fulfilled for this inverse system.
		It follows that \eqref{Eq 2 - Projective limit of strictly exact sequences} is a short exact sequence of vector spaces \cite[13.2.4 (i)]{Grothendieck61EGA3.1}.
		Finally, the open mapping theorem \cite[Prop.\ 8.6]{Schneider02NonArchFunctAna} implies that $\varprojlim_{n\in \BN} V_n \ra \varprojlim_{n\in \BN} V''_n$ is strict, too.
	\end{proof}

	The differential $d^i_\varepsilon \colon C^i (\CU_\varepsilon^-,\CE^\rigbun) \ra C^{i+1} (\CU_{\varepsilon}^-,\CE^\rigbun)$ is the projective limit of the differentials $d^i_{\varepsilon_m} \colon C^i(\CU_{\varepsilon_m},\CE^\rigbun) \ra C^{i+1}(\CU_{\varepsilon_m},\CE^\rigbun) $.
	Therefore we have $C^i(\CU_{\varepsilon}^-,\CE^\rigbun) \cong \varprojlim_{\varepsilon_m \searrow \varepsilon} C^i(\CU_{\varepsilon_{m}},\CE^\rigbun)$ and $Z^i(\CU_{\varepsilon}^-,\CE^\rigbun) \cong \varprojlim_{\varepsilon_m \searrow \varepsilon} Z^i(\CU_{\varepsilon_{m}},\CE^\rigbun)$ because the (projective) limit commutes with taking kernels.
	But by \Cref{Lemma 2 - Exactness of projective limit when topological ML property}, there is the short strictly exact sequence
	\begin{equation*}
		0 \lra \varprojlim_{\varepsilon_m \searrow \varepsilon} Z^i(\CU_{\varepsilon_{m}}, \CE^\rigbun) \lra \varprojlim_{\varepsilon_m \searrow \varepsilon} C^i(\CU_{\varepsilon_m}, \CE^\rigbun) \lra \varprojlim_{\varepsilon_m \searrow \varepsilon} B^{i+1}( \CU_{\varepsilon_m},\CE^\rigbun) \lra 0 
	\end{equation*}
	so that we can conclude $B^{i+1} (\CU_{\varepsilon}^- , \CE^\rigbun) \cong \varprojlim_{\varepsilon_m \searrow \varepsilon} B^{i+1} (\CU_{\varepsilon_m} , \CE^\rigbun) $. 
	In a similar way we obtain $H^{i}( \BP_K^d \setminus \BP_K^\sd (\varepsilon) , \CE^\rigbun) \cong \varprojlim_{\varepsilon_m \searrow \varepsilon} H^{i}( \BP_K^d \setminus \BP_K^\sd (\varepsilon_m)^- , \CE^\rigbun)$.

	Likewise arguing for the homomorphisms $\beta^{i}_{\varepsilon} = \varprojlim_{\varepsilon_m \searrow \varepsilon} \beta^{i}_{\varepsilon_m}$ and $\alpha^i_\varepsilon = \varprojlim_{\varepsilon_m \searrow \varepsilon} \alpha^i_{\varepsilon_m}$, we find that $\widetilde{H}^{i}_{\BP_K^\sd(\varepsilon)} (\BP_K^d, \CE) \cong \varprojlim_{\varepsilon_m \searrow \varepsilon} \widetilde{H}^{i}_{\BP_K^\sd(\varepsilon_m)^-} (\BP_K^d, \CE)$ and $\Ker(\alpha_\varepsilon^i) \cong \varprojlim_{\varepsilon_m \searrow \varepsilon} \Ker(\alpha_{\varepsilon_m}^i) $.
	We now take the projective limit over the projective system
	\begin{equation*}
		0 \lra \widetilde{H}^{i}_{\BP_K^\sd(\varepsilon_m)^-} (\BP_K^d, \CE) \lra H^{i}_{\BP_K^\sd(\varepsilon_m)^-} (\BP_K^d, \CE) \lra \Ker(\alpha^i_{\varepsilon_m}) \lra 0
	\end{equation*}
	of short strictly exact sequences to arrive at the following commutative diagram with strictly exact rows
	\begin{equation*}
		\begin{tikzcd}
			0 \ar[r]	&[-3pt]\varprojlim\limits_{\varepsilon_m \searrow \varepsilon} \widetilde{H}^{i}_{\BP_K^\sd(\varepsilon_m)^-} (\BP_K^d, \CE) \ar[r]\ar[d,"\cong"]	&[-3pt]\varprojlim\limits_{\varepsilon_m \searrow \varepsilon} H^{i}_{\BP_K^\sd(\varepsilon_m)^-} (\BP_K^d, \CE) \ar[r]\ar[d]	&[-3pt]\varprojlim\limits_{\varepsilon_m \searrow \varepsilon} \Ker(\alpha^i_{\varepsilon_m}) \ar[r]\ar[d, "\cong"]	&[-3pt]0 \\
			0 \ar[r]	&\widetilde{H}^{i}_{\BP_K^\sd(\varepsilon)} (\BP_K^d, \CE) \ar[r]	&H^{i}_{\BP_K^\sd(\varepsilon)} (\BP_K^d, \CE) \ar[r] 	&\Ker(\alpha^i_\varepsilon) \ar[r]	&0.
		\end{tikzcd}
	\end{equation*}
	Since the outer vertical maps are topological isomorphisms, it follows from the snake lemma \ref{Lemma A1 - Snake lemma} that $H^{i}_{\BP_K^\sd(\varepsilon)} (\BP_K^d, \CE) \cong \varprojlim_{\varepsilon_m \searrow \varepsilon} H^{i}_{\BP_K^\sd(\varepsilon_m)^-} (\BP_K^d, \CE)$.
\end{proof}

The same reasoning shows the following statement in the extreme case $\varepsilon =0$.

\begin{corollary}\label{Cor 2 - Projective limit description by Banach spaces of rigid local cohomology of Schubert varieties}
	For any strictly decreasing sequence $(\varepsilon_m)_{m\in \BN} \subset  \abs{\widebar{K}}$ with $0 <\varepsilon_m < 1$ and $\varepsilon_m \ra 0$, the $K$-Fr\'echet space $H^i_{(\BP_K^\sd)^\rig} (\BP_K^d, \CE^\rigbun) $ is the projective limit of the $K$-Banach spaces
	\begin{equation*}
		H^i_{(\BP_K^\sd)^\rig} (\BP_K^d, \CE^\rigbun)   \cong \varprojlim_{\varepsilon_m \searrow 0} H^i_{\BP_K^\sd (\varepsilon_m)^-} (\BP_K^d, \CE^\rigbun) .
	\end{equation*}
	with compact transition maps which have dense image.
	Moreover, the local cohomology group $H^i_{\BP_K^\sd} (\BP_K^d, \CE) $ of the algebraic variety $\BP_K^d$ constitutes a dense subspace of $H^i_{(\BP_K^\sd)^\rig} (\BP_K^d, \CE^\rigbun) $.
	
	For $\widetilde{H}^i_{(\BP_K^\sd)^\rig} (\BP_K^d, \CE^\rigbun)$ the analogous assertions hold.
\end{corollary}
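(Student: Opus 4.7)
The plan is to carry out essentially the same argument as in the proof of the preceding \Cref{Prop 2 - Projective limit description of local cohomology wrt open tubes}, but now in the extreme case $\varepsilon = 0$. The main new feature is that the \v{C}ech complex $C^\bullet(\CU^\rig, \CE^\rig)$ associated with the admissible covering $\BP_K^d \setminus (\BP_K^\sd)^\rig = \bigcup_{i=\sd+1}^d U_i^\rig$ now consists of $K$-Fr\'echet spaces rather than Banach spaces, realised as $\CE^\rig(U_I^\rig) \cong \varprojlim_{\varepsilon_m \searrow 0} \CE^\rig(U_{I,\varepsilon_m})$. By \Cref{Lemma 2 - Compactness of transition maps} the transition maps in this system are compact with dense image, so the topological Mittag-Leffler condition is satisfied.

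First I would show that the differentials of $C^\bullet(\CU^\rig, \CE^\rig)$ are strict continuous homomorphisms. Since each differential is the projective limit of the strict differentials $d^i_{\varepsilon_m}$ established in the Banach setting (via \Cref{Thm 2 - Differentials of algebraic Cech complex are strict} applied at each $\varepsilon_m$), \Cref{Lemma 2 - Exactness of projective limit when topological ML property} applied to the short strictly exact sequences $0 \to Z^i(\CU_{\varepsilon_m},\CE^\rig) \to C^i(\CU_{\varepsilon_m},\CE^\rig) \to B^{i+1}(\CU_{\varepsilon_m},\CE^\rig) \to 0$ yields strict exactness of the projective limit, together with $Z^i(\CU^\rig,\CE^\rig) \cong \varprojlim_{\varepsilon_m \searrow 0} Z^i(\CU_{\varepsilon_m},\CE^\rig)$ and analogously for the coboundaries. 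Consequently $H^i(\BP_K^d \setminus (\BP_K^\sd)^\rig,\CE^\rig) \cong \varprojlim_{\varepsilon_m \searrow 0} H^i(\BP_K^d \setminus \BP_K^\sd(\varepsilon_m)^-,\CE^\rig)$ as $K$-Fr\'echet spaces.

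Next I would run the long exact sequence of local cohomology and the associated diagrams \eqref{Eq 2 - Diagram of short strictly exact sequences 1}, \eqref{Eq 2 - Diagram of short strictly exact sequences 2}, \eqref{Eq 2 - Diagram of short strictly exact sequences 3} through the same machinery: the homomorphisms $\alpha^i_\varepsilon$, $\beta^i_\varepsilon$, $\partial^{i-1}_\varepsilon$ pass to projective limits via \Cref{Lemma 2 - Exactness of projective limit when topological ML property}, the dense-image property being inherited at each stage from the dense image of the transition maps between the relevant Banach spaces (already established in the proof of \Cref{Prop 2 - Projective limit description of local cohomology wrt open tubes}). An application of the snake lemma (\Cref{Lemma A1 - Snake lemma}) then yields the topological isomorphisms
\begin{equation*}
	\widetilde{H}^i_{(\BP_K^\sd)^\rig}(\BP_K^d,\CE^\rig) \cong \varprojlim_{\varepsilon_m \searrow 0} \widetilde{H}^i_{\BP_K^\sd(\varepsilon_m)^-}(\BP_K^d,\CE^\rig), \quad H^i_{(\BP_K^\sd)^\rig}(\BP_K^d,\CE^\rig) \cong \varprojlim_{\varepsilon_m \searrow 0} H^i_{\BP_K^\sd(\varepsilon_m)^-}(\BP_K^d,\CE^\rig).
\end{equation*}

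Finally, the compactness and dense-image properties of the transition maps $H^i_{\BP_K^\sd(\varepsilon_m)^-}(\BP_K^d,\CE^\rig) \to H^i_{\BP_K^\sd(\varepsilon_n)^-}(\BP_K^d,\CE^\rig)$ for $\varepsilon_m < \varepsilon_n$ (and likewise for $\widetilde{H}^i$) are already established in the proof of \Cref{Prop 2 - Projective limit description of local cohomology wrt open tubes} and may be quoted directly; nuclearity of the resulting Fr\'echet space is a consequence. Density of $H^i_{\BP_K^\sd}(\BP_K^d,\CE)$ as a subspace of $H^i_{(\BP_K^\sd)^\rig}(\BP_K^d,\CE^\rig)$ follows from the fact that each algebraic local cohomology group is dense in every $H^i_{\BP_K^\sd(\varepsilon_m)^-}(\BP_K^d,\CE^\rig)$ (again by \Cref{Prop 2 - Projective limit description of local cohomology wrt open tubes}) combined with the projective limit description just obtained. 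I do not anticipate any genuine obstacle beyond careful bookkeeping, since every non-trivial ingredient (strictness of the \v{C}ech differentials, compactness of transitions, exactness of projective limits under the topological Mittag-Leffler condition) has already been supplied.
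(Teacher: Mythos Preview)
Your proposal is correct and is precisely what the paper intends: it states that ``the same reasoning shows the following statement in the extreme case $\varepsilon = 0$,'' and you have spelled out exactly that reasoning. The ingredients you invoke (\Cref{Lemma 2 - Compactness of transition maps}, \Cref{Lemma 2 - Exactness of projective limit when topological ML property}, the snake lemma, and the density and compactness of transition maps already established in the proof of \Cref{Prop 2 - Projective limit description of local cohomology wrt open tubes}) are the same ones the paper relies on.
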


There also is a result for the projective limit of the local cohomology groups $H^i_{\BP_K^\sd (\varepsilon)} (\BP_K^d, \CE^\rigbun)$ with respect to the ``open'' $\varepsilon$-neighbourhoods.

\begin{corollary}\label{Cor 2 - Projective limit description of rigid local cohomology of Schubert varieties}
	For any strictly decreasing sequence $(\varepsilon_m)_{m\in \BN} \subset  \abs{\widebar{K}}$ with $0 <\varepsilon_m < 1$ and $\varepsilon_m \ra 0$, there is a topological isomorphism
	\begin{equation}\label{Eq 2 - Projective limit for rigid local cohomology groups}
		H^i_{(\BP_K^\sd)^\rig} (\BP_K^d, \CE^\rigbun) \cong \varprojlim_{\varepsilon_m \searrow 0} H^i_{\BP_K^\sd (\varepsilon_m)} (\BP_K^d, \CE^\rigbun)
	\end{equation}
	of $K$-Fr\'echet spaces.
	Moreover, the transition homomorphisms
	\begin{equation*}
		H^i_{\BP_K^\sd (\varepsilon_m)} (\BP_K^d, \CE^\rigbun) \lra H^i_{\BP_K^\sd (\varepsilon_n)} (\BP_K^d, \CE^\rigbun) \quad\text{, for $\varepsilon_m < \varepsilon_n$,}
	\end{equation*}
	are compact and have dense image.
	
	Again, the analogous statements are true for $\widetilde{H}^i_{(\BP_K^\sd)^\rig} (\BP_K^d, \CE^\rigbun)$.
\end{corollary}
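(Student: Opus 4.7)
The plan is to deduce this corollary from \Cref{Prop 2 - Projective limit description of local cohomology wrt open tubes} and \Cref{Cor 2 - Projective limit description by Banach spaces of rigid local cohomology of Schubert varieties} via a cofinality argument that interleaves the open tubes $\BP^\sd_K(\varepsilon_m)$ with the closed tubes $\BP^\sd_K(\varepsilon_m)^-$. The geometric observation driving the whole argument is the elementary chain of inclusions
\begin{equation*}
\BP^\sd_K(\varepsilon') \subset \BP^\sd_K(\varepsilon)^- \subset \BP^\sd_K(\varepsilon) \qquad\text{whenever $\varepsilon' < \varepsilon$,}
\end{equation*}
which produces compatible continuous homomorphisms between the corresponding local cohomology groups in both directions and realises $\bigl(H^i_{\BP^\sd_K(\varepsilon_m)^-}(\BP_K^d, \CE^\rigbun)\bigr)_m$ and $\bigl(H^i_{\BP^\sd_K(\varepsilon_m)}(\BP_K^d, \CE^\rigbun)\bigr)_m$ as cofinal subsystems of a common doubly-indexed inverse system of $K$-Fr\'echet spaces.

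First I will pick auxiliary values $\varepsilon'_m \in \abs{\widebar K}$ with $\varepsilon_{m+1} < \varepsilon'_m < \varepsilon_m$ and use the resulting inclusions $\BP^\sd_K(\varepsilon_{m+1}) \subset \BP^\sd_K(\varepsilon'_m)^- \subset \BP^\sd_K(\varepsilon_m)$ to obtain continuous intertwiners $H^i_{\BP^\sd_K(\varepsilon_{m+1})} \to H^i_{\BP^\sd_K(\varepsilon'_m)^-} \to H^i_{\BP^\sd_K(\varepsilon_m)}$ whose composition is precisely the transition map in the open-tube system. Next I will apply \Cref{Prop 2 - Projective limit description of local cohomology wrt open tubes} to each $\varepsilon_m > 0$ with a suitable auxiliary sequence decreasing to $\varepsilon_m$, exhibiting $H^i_{\BP^\sd_K(\varepsilon_m)}(\BP_K^d, \CE^\rigbun)$ itself as a projective limit of closed-tube Banach spaces. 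Feeding this into the cofinality set-up above, the inverse limits $\varprojlim_m H^i_{\BP^\sd_K(\varepsilon_m)}(\BP_K^d, \CE^\rigbun)$ and $\varprojlim_m H^i_{\BP^\sd_K(\varepsilon_m)^-}(\BP_K^d, \CE^\rigbun)$ will be canonically topologically isomorphic as $K$-Fr\'echet spaces, and \Cref{Cor 2 - Projective limit description by Banach spaces of rigid local cohomology of Schubert varieties} identifies the latter with $H^i_{(\BP^\sd_K)^\rig}(\BP_K^d, \CE^\rigbun)$.

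For the compactness and density of image of the transition $H^i_{\BP^\sd_K(\varepsilon_m)} \to H^i_{\BP^\sd_K(\varepsilon_n)}$ with $\varepsilon_m < \varepsilon_n$, I will pick $\delta_1,\delta_2 \in \abs{\widebar K}$ satisfying $\varepsilon_m < \delta_1 < \delta_2 < \varepsilon_n$ and factor the transition as
\begin{equation*}
H^i_{\BP^\sd_K(\varepsilon_m)} \lra H^i_{\BP^\sd_K(\delta_1)^-} \lra H^i_{\BP^\sd_K(\delta_2)^-} \lra H^i_{\BP^\sd_K(\varepsilon_n)} ;
\end{equation*}
compactness of the middle arrow is part of \Cref{Prop 2 - Projective limit description of local cohomology wrt open tubes}, and together with \Cref{Lemma A1 - Generalities on compact maps}~(ii) this forces compactness of the full composition. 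Density of image will follow from the fact that the algebraic local cohomology $H^i_{\BP_K^\sd}(\BP_K^d, \CE)$ is dense in every closed-tube Banach piece (again \Cref{Prop 2 - Projective limit description of local cohomology wrt open tubes}), combined with the observation that the Fr\'echet topology on $H^i_{\BP^\sd_K(\varepsilon_n)}(\BP_K^d, \CE^\rigbun)$ coming from its projective limit description has a neighbourhood base of preimages of open sets in those Banach pieces, so density transfers. The $\widetilde H^i$-version is settled verbatim by invoking the analogous assertions stated in \Cref{Prop 2 - Projective limit description of local cohomology wrt open tubes} and \Cref{Cor 2 - Projective limit description by Banach spaces of rigid local cohomology of Schubert varieties}. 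The step requiring the most care is the cofinality bookkeeping in the second paragraph, since it must produce genuinely compatible transition maps between the two interleaved systems rather than just an abstract isomorphism of their limits.
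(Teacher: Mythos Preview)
Your proposal is correct and takes a somewhat different route from the paper's proof, so a short comparison is in order.

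For the topological isomorphism, the paper does not use your cofinality/interleaving argument. Instead it says: argue exactly as in the ``last part'' of the proof of \Cref{Prop 2 - Projective limit description of local cohomology wrt open tubes}, i.e.\ rerun the Mittag--Leffler machinery of \Cref{Lemma 2 - Exactness of projective limit when topological ML property} for the open-tube system $\bigl(H^i_{\BP^\sd_K(\varepsilon_m)}\bigr)_m$ directly, using the density of the algebraic local cohomology $H^i_{\BP_K^\sd}(\BP_K^d,\CE)$ inside each $H^i_{\BP^\sd_K(\varepsilon_m)}(\BP_K^d,\CE^\rigbun)$ (which is part of \Cref{Prop 2 - Projective limit description of local cohomology wrt open tubes}) to verify the topological Mittag--Leffler condition. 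Your interleaving argument via the chain $\BP^\sd_K(\varepsilon_{m+1}) \subset \BP^\sd_K(\varepsilon_m)^- \subset \BP^\sd_K(\varepsilon_m)$ is cleaner and more conceptual: it avoids re-entering the \v{C}ech-level bookkeeping and reduces the statement to \Cref{Cor 2 - Projective limit description by Banach spaces of rigid local cohomology of Schubert varieties} by pure cofinality. You do not actually need the step ``apply \Cref{Prop 2 - Projective limit description of local cohomology wrt open tubes} to each $\varepsilon_m$'' for the cofinality itself; the interleaving maps already suffice.

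For compactness of the transition maps, the paper uses a single intermediate step rather than your two: it factors
\[
H^i_{\BP^\sd_K(\varepsilon_m)}(\BP_K^d,\CE^\rigbun) \lra H^i_{\BP^\sd_K(\varepsilon_n)^-}(\BP_K^d,\CE^\rigbun) \lra H^i_{\BP^\sd_K(\varepsilon_n)}(\BP_K^d,\CE^\rigbun)
\]
and observes that the first arrow is a continuous map from a \emph{nuclear} space to a Banach space, hence compact by \cite[Prop.~19.5]{Schneider02NonArchFunctAna}. Your route through two closed-tube Banach pieces and the compact Banach-to-Banach transition is equally valid and has the advantage of not invoking nuclearity; however, your citation of \Cref{Lemma A1 - Generalities on compact maps}~(ii) is off --- that lemma concerns a commutative square with a strict epimorphism, whereas what you actually need is the elementary fact that pre- and post-composing a compact map with continuous maps yields a compact map (this is \cite[Rmk.~16.7~(i)]{Schneider02NonArchFunctAna}, used in the paper as well).

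For density of image, both proofs ultimately rest on the same fact: the algebraic $H^i_{\BP_K^\sd}(\BP_K^d,\CE)$ maps into each $H^i_{\BP^\sd_K(\varepsilon_m)}$ compatibly and is dense in the target $H^i_{\BP^\sd_K(\varepsilon_n)}$. Your detour through the closed-tube Banach pieces and the neighbourhood-base observation is correct but unnecessary, since \Cref{Prop 2 - Projective limit description of local cohomology wrt open tubes} already asserts density directly in the Fr\'echet space $H^i_{\BP^\sd_K(\varepsilon_n)}$.
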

\begin{proof}
	For the topological isomorphism \eqref{Eq 2 - Projective limit for rigid local cohomology groups} one argues analogously to the last part of the proof of the preceeding proposition.
	To show that the topological Mittag-Leffler condition for \Cref{Lemma 2 - Exactness of projective limit when topological ML property} is fulfilled one uses the statement of \Cref{Prop 2 - Projective limit description of local cohomology wrt open tubes} about the density of $H^i_{\BP_K^\sd} (\BP_K^d, \CE) \subset H^i_{\BP_K^\sd (\varepsilon_m)} (\BP_K^d, \CE^\rigbun)$.

	To see that the transition homomorphisms are compact, note that, for $\varepsilon_m < \varepsilon_n$, the transition map factors as
	\begin{equation}\label{Eq 2 - Factorization of transition homomorphism}
		H^i_{\BP_K^\sd (\varepsilon_m)} (\BP_K^d, \CE^\rigbun) \lra H^i_{\BP_K^\sd (\varepsilon_n)^-} (\BP_K^d, \CE^\rigbun) \lra  H^i_{\BP_K^\sd (\varepsilon_n)} (\BP_K^d, \CE^\rigbun) .
	\end{equation}
	The first homomorphism is a continuous linear map from a nuclear locally convex $K$-vector space to a $K$-Banach space and therefore is compact by \cite[Prop.\ 19.5]{Schneider02NonArchFunctAna}.
	It follows from \cite[Rmk.\ 16.7 (i)]{Schneider02NonArchFunctAna} that the composition \eqref{Eq 2 - Factorization of transition homomorphism} is compact as well.

	For $\widetilde{H}^i_{(\BP_K^\sd)^\rig} (\BP_K^d, \CE^\rigbun)$ one argues analogously.
\end{proof}

\begin{proposition}\label{Prop 2 - Vanshing behaviour of local cohomology with respect to Schubert varieties}
	We have the following description of the local cohomology groups
	\begin{equation*}
		H^i_{\BP_K^\sd} (\BP_K^d, \CE) = \begin{cases}
			0												& \text{ , for $i<d-\sd$ or $i>d$,} \\
			H^{d-\sd}_{\BP_K^\sd} (\BP_K^d, \CE)	& \text{ , for $i= d-\sd$,} \\
			H^i (\BP_K^d, \CE)						& \text{ , for $i> d-\sd$.}
		\end{cases}
	\end{equation*}
	For $0 < \varepsilon < 1 $ with $\varepsilon \in \abs{\widebar{K}}$, we have
	\begin{equation*}
		H^i_{\BP_K^\sd(\varepsilon)} (\BP_K^d, \CE^\rigbun) = \begin{cases}
			0															& \text{ , for $i<d-\sd$ or $i>d$,} \\
			H^{d-\sd}_{\BP_K^\sd(\varepsilon)} (\BP_K^d, \CE^\rigbun)	& \text{ , for $i= d-\sd$,} \\
			H^i (\BP_K^d, \CE^\rigbun)									& \text{ , for $i> d-\sd$,}
		\end{cases}
	\end{equation*}
	and similarly for $H^i_{\BP_K^\sd(\varepsilon)^-} (\BP_K^d, \CE^\rigbun)$ and $H^i_{(\BP_K^\sd)^\rig} (\BP_K^d, \CE^\rigbun)$.
\end{proposition}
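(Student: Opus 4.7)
The plan is to establish the algebraic vanishings using the long exact sequence \eqref{Eq 2 - Long exact sequence of cohomology} together with a \v{C}ech/depth argument, then transfer the rigid analytic versions via the same long exact sequence machinery combined with the completion and projective limit descriptions of the preceding two propositions.

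For the algebraic statement, the starting point is that the covering \eqref{Eq 2 - Covering of complement of Schubert varieties} of $\BP_K^d \setminus \BP_K^\sd$ consists of precisely $d-\sd$ affine opens $U_{\sd+1}, \ldots, U_d$ with all intersections $U_I = D_+\!\left(\prod_{i\in I} X_i\right)$ again affine. Hence the associated \v{C}ech complex has length $d-\sd$ and computes $H^\bullet(\BP_K^d \setminus \BP_K^\sd, \CE)$, yielding $H^i(\BP_K^d \setminus \BP_K^\sd, \CE) = 0$ for $i \geq d-\sd$. Combined with the standard vanishing $H^i(\BP_K^d, \CE) = 0$ for $i > d$, the long exact sequence \eqref{Eq 2 - Long exact sequence of cohomology} immediately produces $H^i_{\BP_K^\sd}(\BP_K^d, \CE) = 0$ for $i > d$ and the isomorphism $H^i_{\BP_K^\sd}(\BP_K^d, \CE) \cong H^i(\BP_K^d, \CE)$ for $i > d-\sd$. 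For the remaining vanishing in degrees $i < d-\sd$, I would invoke the depth-based vanishing theorem for local cohomology (SGA~2, Exp.~III, Prop.~3.3): since $\CE$ is locally free (hence Cohen--Macaulay) and $\BP_K^\sd \hookrightarrow \BP_K^d$ is regularly embedded of codimension $d-\sd$, being cut out on each $U_i$ with $i \leq \sd$ by the regular sequence $X_{\sd+1}/X_i, \ldots, X_d/X_i$, the depth of $\CE$ along $\BP_K^\sd$ equals $d-\sd$, forcing $H^i_{\BP_K^\sd}(\BP_K^d, \CE) = 0$ for $i < d-\sd$.

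The rigid analytic statements for the closed tubes $\BP_K^\sd(\varepsilon)^-$ proceed by the same pattern: the covering \eqref{Eq 2 - Covering of complement of the closed tubes} again consists of $d-\sd$ affinoid subdomains with affinoid intersections, so $H^i(\BP_K^d \setminus \BP_K^\sd(\varepsilon)^-, \CE^\rigbun) = 0$ for $i \geq d-\sd$, while $H^i(\BP_K^d, \CE^\rigbun) = 0$ for $i > d$. The long exact sequence then handles the ranges $i > d$ and $i > d-\sd$ directly. For $i < d-\sd$ the depth argument need not be repeated: from the topological identification in the proof of \Cref{Prop 2 - Projective limit description of local cohomology wrt open tubes}, $H^i_{\BP_K^\sd(\varepsilon)^-}(\BP_K^d, \CE^\rigbun)$ is the completion of the algebraic $H^i_{\BP_K^\sd}(\BP_K^d, \CE)$, which has just been shown to vanish. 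The statements for $H^i_{\BP_K^\sd(\varepsilon)}$ and $H^i_{(\BP_K^\sd)^\rig}$ then follow from the topological isomorphisms with the projective limits $\varprojlim_m H^i_{\BP_K^\sd(\varepsilon_m)^-}$ furnished by \Cref{Prop 2 - Projective limit description of local cohomology wrt open tubes} and \Cref{Cor 2 - Projective limit description of rigid local cohomology of Schubert varieties}, since limits of zero systems vanish and limits of isomorphic systems remain isomorphic.

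The statements for the subspaces $\widetilde{H}^i$ are immediate from the definition $\widetilde{H}^i = \Ker\!\left(H^i_{\BP_K^\sd} \to H^i(\BP_K^d, \CE)\right)$ together with the description of the map on the right already obtained. The main obstacle is the vanishing in degrees $i < d-\sd$, which is outside the reach of a direct \v{C}ech argument and requires appealing to depth theory; everything else amounts to bookkeeping in the long exact sequence and invoking the completion/limit results of the preceding section.
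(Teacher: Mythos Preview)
Your proposal is correct and follows essentially the same route as the paper. The paper simply cites \cite[pp.~595--597]{Orlik08EquivVBDrinfeldUpHalfSp} for the algebraic statement (the argument there is the \v{C}ech plus depth argument you describe) and then deduces all rigid-analytic variants from the fact, established in the proof of \Cref{Prop 2 - Projective limit description of local cohomology wrt open tubes}, that the algebraic local cohomology groups sit densely inside the rigid-analytic ones (so that the latter are their Hausdorff completions). Your treatment of the closed-tube case for $i>d-\sd$ redoes the \v{C}ech vanishing on $\BP_K^d\setminus\BP_K^\sd(\varepsilon)^-$ directly rather than invoking density from the algebraic side, which is a perfectly valid and slightly more self-contained alternative; the paper's density argument has the virtue of handling all three rigid-analytic variants uniformly in one stroke.
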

\begin{proof}
	The statement for $H^i_{\BP_K^\sd} (\BP_K^d, \CE)$ is shown in \cite[pp.\ 595--597]{Orlik08EquivVBDrinfeldUpHalfSp} (and the reasoning there is independent of the field $K$).
	For $H^i_{\BP_K^\sd(\varepsilon)} (\BP_K^d, \CE^\rigbun)$, $H^i_{\BP_K^\sd(\varepsilon)^-} (\BP_K^d, \CE^\rigbun)$, and $H^i_{(\BP_K^\sd)^\rig} (\BP_K^d, \CE^\rigbun)$ the assertion then follows from the density of the ``algebraic'' local cohomology groups.	
\end{proof}

\subsection{Local Cohomology Groups with respect to Schubert Varieties as Locally Analytic Representations}

We let $\bB \subset \bG = \GL_{d+1,K}$ denote the Borel subgroup of lower triangular matrices, and $\bT \subset \bG$ the maximal torus of diagonal matrices.
For $i=0,\ldots,d$, let $\epsilon_i \colon \bT \ra \BG_m$ be the character defined via $\epsilon_i ({\rm diag}(t_0,\ldots,t_d)) = t_i$, and set $\alpha_{i,j} \defeq \epsilon_i - \epsilon_j$, for $i\neq j$, and $\alpha_i \defeq \alpha_{i+1,i}$, for $i=0,\ldots,d-1$.
Then the roots of $\bG$ with respect to $\bT$ are
\[\Phi = \{\alpha_{i,j} \mid 0\leq i \neq j \leq d \} \]
and its simple roots with respect to $\bT \subset \bB$ are
\[\Delta = \{ \alpha_0,\ldots, \alpha_{d-1} \} .\]

Moreover, for $I \subset \Delta$, we let $\bP_I \subset \bG$ denote the (lower) standard parabolic subgroup associated with $I$, i.e.\ the subgroup generated by $\bB$ and the root subgroups $\bU_{-\alpha}$, for $\alpha \in I$.
For example, we have $\bP_{\emptyset} = \bB$ and $\bP_{\Delta} = \bG$.
We write $P_I \defeq \bP_I(K)$ which is a locally $K$-analytic subgroup of $G$.
We set $P_{I,0} \defeq \bP_{I,\BZ}(\CO_K)$ which is a compact open subgroup of $P_I$.
Here $\bP_{I,\BZ}$ denotes the respective standard parabolic subgroup of $\GL_{d+1,\BZ}$.
Additionally, consider the canonical reduction homomorphism
\begin{equation*}
	p_n \colon G_0 \lra \GL_{d+1}(\CO_K/\unif^n),  
\end{equation*}
for $n\in \BN$.
We define
\begin{equation*}
	P^n_{I} \defeq p_n^{-1} \big( \bP_{I,\BZ}(\CO_K/\unif^n) \big) 
\end{equation*}
which is an open compact subgroup of $G_0$ containing $P_{I,0}$.

We now fix $\sd \in \{0,\ldots,d-1\}$.
Then the maximal parabolic subgroup $\bP_{\Delta \setminus \{\alpha_\sd\}}$ stabilizes the subvariety $\BP_K^\sd$ of $\BP_K^d$.
Consequently $P_{\Delta \setminus \{\alpha_\sd\}}$ stabilizes $(\BP_K^\sd)^\rig$ under the group action of $G$ on $(\BP_K^d)^\rig$.

We claim that, for any $n\in \BN$ and $\varepsilon \in \abs{\widebar{K}}$ with $\abs{\unif}^n \leq \varepsilon <1$, the subgroup $P^n_{\Delta\setminus \{\alpha_{\sd}\}} \subset G_0$ stabilizes $\BP^\sd_K(\varepsilon)$.
Indeed, for $[z_0{\,:\,}\ldots{\,:\,}z_d] \in \BP^\sd_K(\varepsilon)$ and $g=(g_{ij}) \in P^n_{\Delta\setminus \{\alpha_{\sd}\}}$, let us write $g^{-1}z \eqdef [w_0{\,:\,}\ldots{\,:\,}w_d]$.
We thus have, for $j=\sd+1,\ldots,d$,
\begin{align*}
	\abs{w_j} = \bigg\lvert \sum_{i=0}^d z_i g_{ij} \bigg\rvert \leq \max \Big(\max_{i=0}^\sd \abs{z_i g_{ij}} , \max_{i=\sd+1}^d \abs{z_i g_{ij}} \Big) 
	\leq \max \Big( \max_{i=0}^\sd  1\cdot \abs{\unif^n}, \max_{i=\sd+1}^d \varepsilon \cdot 1 \Big) \leq \varepsilon .
\end{align*}
Analogously one computes that $\BP^\sd_K(\varepsilon)^-$ is stabilized by $P^n_{\Delta\setminus \{\alpha_{\sd}\}}$, for any $\varepsilon \in \abs{\widebar{K}}$ and $n\in \BN$ with $\abs{\unif}^n < \varepsilon <1$.

\begin{proposition}[{cf.\ \cite[Cor.\ 1.3.9]{Orlik08EquivVBDrinfeldUpHalfSp}}]\label{Prop 2 - Dual of local cohomology wrt open tube is locally analytic representation}
	\begin{altenumerate}
		\item
		Let $\varepsilon \in \abs{\widebar{K}}$ and $n\in \BN$ with $\abs{\unif}^n < \varepsilon < 1$.
		Then the representation
		\begin{equation*}
			P^n_{\Delta\setminus \{\alpha_{\sd}\}} \times \widetilde{H}^i_{\BP^\sd_K(\varepsilon)} (\BP^d_K,\CE^\rigbun)'_b  \lra \widetilde{H}^i_{\BP^\sd_K(\varepsilon)} (\BP^d_K,\CE^\rigbun)'_b  \,,\quad (g,\ell) \lto \ell(g^{-1}\blank),
		\end{equation*}
		is locally analytic.
		Moreover, for any strictly decreasing sequence $(\varepsilon_m)_{m\in \BN} \subset \abs{\widebar{K}}$ with $\varepsilon_m \ra \varepsilon$ and $\varepsilon < \varepsilon_m < 1$, the canonical map
		\begin{equation*}
			\varinjlim_{\varepsilon_m\searrow \varepsilon} \widetilde{H}^i_{\BP^\sd_K(\varepsilon_m)^-} (\BP^d_K,\CE^\rigbun)' \lra \Big( \varprojlim_{\varepsilon_m\searrow \varepsilon} 	\widetilde{H}^i_{\BP^\sd_K(\varepsilon_m)^-} 	(\BP^d_K,\CE^\rigbun) \Big)'_b = \widetilde{H}^i_{\BP^\sd_K(\varepsilon)} (\BP^d_K,\CE^\rigbun)'_b 
		\end{equation*}
		is a topological isomorphism, and $\widetilde{H}^i_{\BP^\sd_K(\varepsilon)} (\BP^d_K,\CE^\rigbun)'_b$ is of compact type this way, i.e.\ it is the inductive limit of the $K$-Banach spaces
		\begin{equation*}
			\begin{tikzcd}[column sep=small]
				\widetilde{H}^i_{\BP^\sd_K(\varepsilon_1)^-}(\BP^d_K,\CE^\rigbun)'  \ar[r, hook] & \ldots \ar[r,hook] & \widetilde{H}^i_{\BP^\sd_K(\varepsilon_m)^-}(\BP^d_K,\CE^\rigbun)' \ar[r, hook] & 	\widetilde{H}^i_{\BP^\sd_K(\varepsilon_{m+1})^-} (\BP^d_K,\CE^\rigbun)' \ar[r,hook] & \ldots
			\end{tikzcd}
		\end{equation*}	
		with compact, injective transition homomorphisms.
		\item
		For the extreme case $\varepsilon = 0$, the representation 
		\begin{equation*}
			P_{\Delta\setminus \{\alpha_{\sd}\}} \times \widetilde{H}^i_{(\BP^\sd_K)^\rig} (\BP^d_K,\CE^\rigbun)'_b  \lra \widetilde{H}^i_{(\BP^\sd_K)^\rig} (\BP^d_K,\CE^\rigbun)'_b  \,,\quad (g,\ell) \lto \ell(g^{-1}\blank),
		\end{equation*}
		is locally analytic, and the underlying locally convex $K$-vector space $\widetilde{H}^i_{(\BP^\sd_K)^\rig} (\BP^d_K,\CE^\rigbun)'_b$ is of compact type analogously to (i).
	\end{altenumerate}
\end{proposition}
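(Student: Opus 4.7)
The plan is to mirror the proof of the analogous Proposition for $H^0(\CX,\CE)'_b$ given earlier. For part (i), fix $\varepsilon \in \abs{\widebar{K}}$ and $n \in \BN$ with $\abs{\unif}^n < \varepsilon < 1$, and choose a strictly decreasing sequence $(\varepsilon_m)_{m\in\BN} \subset \abs{\widebar{K}}$ with $\varepsilon < \varepsilon_m < 1$ and $\varepsilon_m \to \varepsilon$; we may in addition require $\abs{\unif}^n < \varepsilon_m$ for every $m$. By \Cref{Prop 2 - Projective limit description of local cohomology wrt open tubes} we have the topological identification
\begin{equation*}
\widetilde{H}^i_{\BP^\sd_K(\varepsilon)} (\BP^d_K,\CE^\rigbun) \cong \varprojlim_{\varepsilon_m \searrow \varepsilon} \widetilde{H}^i_{\BP^\sd_K(\varepsilon_m)^-} (\BP^d_K,\CE^\rigbun)
\end{equation*}
as a $K$-Fréchet space, with compact transition maps of dense image between the Banach spaces on the right. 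Dualizing and applying \cite[Prop.\ 16.5]{Schneider02NonArchFunctAna} and \cite[Lemma 16.4]{Schneider02NonArchFunctAna} as in the proof of \Cref{Prop 2 - Dual of the sections on DHS is locally analytic representation} will show that the canonical map from the inductive limit to the strong dual is a topological isomorphism and endows $\widetilde{H}^i_{\BP^\sd_K(\varepsilon)} (\BP^d_K,\CE^\rigbun)'_b$ with the structure of a locally convex $K$-vector space of compact type, with compact and injective transition maps in \eqref{Eq 2 - Dual of sections is of compact type}-style. Because the discussion preceding the proposition shows $P^n_{\Delta\setminus\{\alpha_\sd\}}$ stabilizes each $\BP^\sd_K(\varepsilon_m)^-$, the transition homomorphisms are $P^n_{\Delta\setminus\{\alpha_\sd\}}$-equivariant, and by \Cref{Sect 2 - Coherent Cohomology of Equivariant Vector Bundles} the action is by continuous endomorphisms on every term.

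Next, for each $m$ I would show that $\widetilde{H}^i_{\BP^\sd_K(\varepsilon_m)^-}(\BP^d_K,\CE^\rigbun)$ is a locally analytic $P^n_{\Delta\setminus\{\alpha_\sd\}}$-representation on a $K$-Banach space. The model is \Cref{Lemma 2 - Sections on X_n are locally analytic representation}: for a fixed $g \in P^n_{\Delta\setminus\{\alpha_\sd\}}$ one constructs an admissible open rigid-analytic subgroup $H^{(\sd)}_{n+1,g} \subset \GL_{d+1}(C)$ (the evident analogue of \eqref{Eq 2 - nth-level rigid analytic subgroup around G_0}, chosen to have the block structure respecting the parabolic $\bP_{\Delta\setminus\{\alpha_\sd\}}$) that stabilizes $\BP^\sd_K(\varepsilon_m)^-$ and hence restricts to an admissible covering $\CU_{\varepsilon_m}^-$ of its complement in a rigid-analytic manner. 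Arguing exactly as in \Cref{Lemma 2 - Sections on X_n are locally analytic representation}, the group action on each affinoid piece $\CE(U_{I,\varepsilon_m}^-)$ appearing in the Čech complex $C^\bullet(\CU_{\varepsilon_m}^-, \CE^\rigbun)$ is given, in a neighbourhood of $g$, by a convergent power series via $\Phi_{gh}$. Thus the orbit maps into each cochain space $C^q(\CU_{\varepsilon_m}^-, \CE^\rigbun)$ are locally analytic. Since the Čech differentials and the connecting homomorphism $\partial^{i-1}_{\varepsilon_m}$ of the local cohomology long exact sequence are $P^n_{\Delta\setminus\{\alpha_\sd\}}$-equivariant and continuous, and the kernel $\widetilde{H}^i_{\BP^\sd_K(\varepsilon_m)^-}$ is a closed (hence barrelled) $P^n_{\Delta\setminus\{\alpha_\sd\}}$-invariant subspace of the Banach space $H^i_{\BP^\sd_K(\varepsilon_m)^-}(\BP^d_K,\CE^\rigbun)$, \Cref{Prop 1 - Subrepresentations and quotientrepresentations of locally analytic representations} will yield that $\widetilde{H}^i_{\BP^\sd_K(\varepsilon_m)^-}$ is a locally analytic $P^n_{\Delta\setminus\{\alpha_\sd\}}$-representation.

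Having this, \Cref{Cor 2 - Dual of sections on X_n are locally analytic representation} (essentially the dualization argument via \Cref{Prop 1 - Equivalent characterization for locally analytic representations on Banach spaces} and \cite[III.\ \S 3.11 Cor.\ 2]{Bourbaki89LieGrpLieAlg1to3}) transfers local analyticity to each $K$-Banach dual $\widetilde{H}^i_{\BP^\sd_K(\varepsilon_m)^-}(\BP^d_K,\CE^\rigbun)'$. Because $\widetilde{H}^i_{\BP^\sd_K(\varepsilon)}(\BP^d_K,\CE^\rigbun)'_b$ is of compact type with respect to this inductive system and the inclusions of these Banach duals constitute BH-subspaces, every orbit map lifts to the locally analytic orbit map in a fixed BH-subspace and is therefore locally analytic in the sense of \Cref{Def 1 - Definition locally analytic function}. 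This proves (i). For (ii), the argument is identical upon replacing \Cref{Prop 2 - Projective limit description of local cohomology wrt open tubes} with \Cref{Cor 2 - Projective limit description of rigid local cohomology of Schubert varieties}, working with a sequence $\varepsilon_m \searrow 0$; one obtains locally analytic actions of each $P^{n_m}_{\Delta\setminus\{\alpha_\sd\}}$ on the corresponding Banach space in the compact-type presentation, and since every $g \in P_{\Delta\setminus\{\alpha_\sd\}}$ lies in some such open subgroup of $G_0$ (after decomposing $P_{\Delta\setminus\{\alpha_\sd\}}$ into cosets of $G_0 \cap P_{\Delta\setminus\{\alpha_\sd\}}$), \Cref{Prop 1 - Locally analytic representations and open subgroups} promotes this to local analyticity of the full $P_{\Delta\setminus\{\alpha_\sd\}}$-representation.

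The main technical obstacle will be step two: writing down the precise admissible rigid-analytic neighbourhood of $g$ inside $P^n_{\Delta\setminus\{\alpha_\sd\}}$ that stabilizes the open tube $\BP^\sd_K(\varepsilon_m)^-$, and verifying that the resulting action on each $\CE(U_{I,\varepsilon_m}^-)$ is given by a convergent power series of the correct radius—one must track the interplay between the block-parabolic structure of $P^n_{\Delta\setminus\{\alpha_\sd\}}$ and the condition $\abs{\unif}^n < \varepsilon_m$ carefully, much as in the stabilization computation preceding the proposition. Once this is in place, the remainder is a direct transcription of the arguments establishing \Cref{Prop 2 - Dual of the sections on DHS is locally analytic representation}.
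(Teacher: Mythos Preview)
Your approach matches the paper's, which also reduces to the analogue of \Cref{Prop 2 - Dual of the sections on DHS is locally analytic representation} via \Cref{Prop 2 - Projective limit description of local cohomology wrt open tubes}, the Banach-space lemma (stated separately as \Cref{Lemma 2 - Local cohomology wrt closed tube is locally analytic representation}), and the dualization of \Cref{Cor 2 - Dual of sections on X_n are locally analytic representation}. Two corrections, however. First, the covering computing $H^i\big(\BP_K^d\setminus\BP_K^\sd(\varepsilon_m)^-,\CE\big)$ is $\CU_{\varepsilon_m}$ with \emph{affinoid} pieces $U_{I,\varepsilon_m}$, not $\CU_{\varepsilon_m}^-$; the latter covers the complement of the \emph{open} tube. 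Second, your ``main technical obstacle'' is simpler than you anticipate: no block-parabolic neighbourhood is needed. The paper takes the full ball $D_n=1+\unif^n M_{d+1}(\CO_C)$ and checks directly that for $g\in P^n_{\Delta\setminus\{\alpha_\sd\}}$ and any $h\in gD_n$ one has $h(U_{I,\varepsilon_m})=g(U_{I,\varepsilon_m})$, using only $\abs{\unif}^n<\varepsilon_m$; the power-series argument on $\CE(g(U_{I,\varepsilon_m}))$ then proceeds exactly as in \Cref{Lemma 2 - Sections on X_n are locally analytic representation}. Finally, in (ii) your sentence ``every $g\in P_{\Delta\setminus\{\alpha_\sd\}}$ lies in some such open subgroup of $G_0$'' is false as written; what you need is that $P_{\Delta\setminus\{\alpha_\sd\}}$ acts by continuous endomorphisms (\Cref{Lemma 2 - Continuity of action on local cohomology wrt tubes}(ii)) and that the restriction to the open subgroup $P_{\Delta\setminus\{\alpha_\sd\},0}\subset P^n_{\Delta\setminus\{\alpha_\sd\}}$ is locally analytic, whence \Cref{Prop 1 - Locally analytic representations and open subgroups} applies.
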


Like in \Cref{Sect 2 - Coherent Cohomology of the Drinfeld Upper Half Space}, we proceed step by step.

\begin{lemma}\label{Lemma 2 - Continuity of action on local cohomology wrt tubes}
	\begin{altenumerate}
		\item
		Let $0<\varepsilon <1$ with $\varepsilon \in \abs{\widebar{K}}$.
		For $n\in \BN$ with $\abs{\unif}^n \leq \varepsilon$ (respectively, $\abs{\unif}^n < \varepsilon$), the group $P^n_{\Delta\setminus \{\alpha_{\sd}\}}$ acts on $H^i_{\BP^\sd_K (\varepsilon)} (\BP^d_K,\CE^\rigbun)$ (respectively, on $H^i_{\BP^\sd_K (\varepsilon)^-} (\BP^d_K,\CE^\rigbun)$) by continuous endomorphisms, and the topological isomorphism \eqref{Eq 2 - Projective limit of local cohomology groups} is $P^n_{\Delta\setminus \{\alpha_{\sd}\}}$-equivariant.
		Moreover, the analogous assertions are true for $\widetilde{H}^i_{\BP^\sd_K (\varepsilon)} (\BP^d_K,\CE^\rigbun)$ and $\widetilde{H}^i_{\BP^\sd_K (\varepsilon)^-} (\BP^d_K,\CE^\rigbun)$.
		\item
		In the extreme case $\varepsilon = 0$, $P_{\Delta\setminus \{\alpha_{\sd}\}}$ acts on $H^i_{(\BP^\sd_K)^\rig} (\BP^d_K,\CE^\rigbun)$ and $\widetilde{H}^i_{(\BP^\sd_K)^\rig} (\BP^d_K,\CE^\rigbun)$ by continuous endomorphisms, and the topological isomorphism \eqref{Eq 2 - Projective limit of local cohomology groups} is $P_{\Delta\setminus \{\alpha_{\sd}\},0}$-equivariant.
	\end{altenumerate}
\end{lemma}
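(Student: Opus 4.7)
The plan is to deduce both parts by combining the stabilizer computation carried out just before the lemma with the general functoriality framework for equivariant coherent and local cohomology developed in \Cref{Sect 2 - Coherent Cohomology of Equivariant Vector Bundles}.

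For (i), I would fix $\varepsilon\in\abs{\widebar{K}}$ with $0<\varepsilon<1$ and $n\in\BN$ with $\abs{\unif}^n\leq\varepsilon$. The computation preceding the lemma yields $g^{-1}\bigl(\BP^\sd_K(\varepsilon)\bigr)=\BP^\sd_K(\varepsilon)$ for every $g\in P^n_{\Delta\setminus\{\alpha_\sd\}}$, so setting $Z=W=\BP^\sd_K(\varepsilon)$ in \eqref{Eq 2 - Group action homomorphism for local cohomology} yields a continuous automorphism $\varphi_g$ of $H^i_{\BP^\sd_K(\varepsilon)}(\BP^d_K,\CE^\rigbun)$ for each such $g$. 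The cocycle condition on the equivariance datum $\Phi$ of $\CE$ translates via the construction of \eqref{Eq 2 - Group action isomorphism on cohomology} into $\varphi_{gh}=\varphi_g\circ\varphi_h$ and $\varphi_e=\id$, so these automorphisms assemble into a $P^n_{\Delta\setminus\{\alpha_\sd\}}$-representation by continuous endomorphisms. The case of $\BP^\sd_K(\varepsilon)^-$ under the hypothesis $\abs{\unif}^n<\varepsilon$ proceeds verbatim.

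For the $P^n_{\Delta\setminus\{\alpha_\sd\}}$-equivariance of the projective limit isomorphism from \Cref{Prop 2 - Projective limit description of local cohomology wrt open tubes}, I would observe that for any strictly decreasing sequence $(\varepsilon_m)_{m\in\BN}$ with $\varepsilon<\varepsilon_m<1$ and $\varepsilon_m\searrow\varepsilon$, one has $\abs{\unif}^n\leq\varepsilon<\varepsilon_m$, so $P^n_{\Delta\setminus\{\alpha_\sd\}}$ stabilizes every tube $\BP^\sd_K(\varepsilon_m)^-$ uniformly. The inclusions $\BP^\sd_K(\varepsilon_m)^-\subset\BP^\sd_K(\varepsilon_{m'})^-$ are therefore equivariant, so naturality of the construction in \eqref{Eq 2 - Group action homomorphism for local cohomology} makes the transition maps $H^i_{\BP^\sd_K(\varepsilon_m)^-}(\BP^d_K,\CE^\rigbun)\to H^i_{\BP^\sd_K(\varepsilon_{m'})^-}(\BP^d_K,\CE^\rigbun)$ equivariant, and passage to the projective limit preserves this. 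Passage to $\widetilde{H}^i$ is then automatic: the canonical map $H^i_{\BP^\sd_K(\varepsilon)}(\BP^d_K,\CE^\rigbun)\to H^i(\BP^d_K,\CE^\rigbun)$ is $P^n_{\Delta\setminus\{\alpha_\sd\}}$-equivariant because the entire projective space is stabilized, so its kernel $\widetilde{H}^i_{\BP^\sd_K(\varepsilon)}(\BP^d_K,\CE^\rigbun)$ is a closed invariant subspace inheriting continuous endomorphisms, and the transition maps in the projective system preserve these kernels.

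For (ii), I would argue in the same fashion with $P_{\Delta\setminus\{\alpha_\sd\}}$ acting on $(\BP^\sd_K)^\rig$. Applying \eqref{Eq 2 - Group action homomorphism for local cohomology} with $Z=W=(\BP^\sd_K)^\rig$ gives continuity of the action on both $H^i_{(\BP^\sd_K)^\rig}(\BP^d_K,\CE^\rigbun)$ and its closed invariant subspace $\widetilde{H}^i_{(\BP^\sd_K)^\rig}(\BP^d_K,\CE^\rigbun)$. For the equivariance of the limit isomorphism of \Cref{Cor 2 - Projective limit description by Banach spaces of rigid local cohomology of Schubert varieties}, I would exploit the inclusion $P_{\Delta\setminus\{\alpha_\sd\},0}\subset P^n_{\Delta\setminus\{\alpha_\sd\}}$ valid for every $n\in\BN$, so that $P_{\Delta\setminus\{\alpha_\sd\},0}$ stabilizes every tube $\BP^\sd_K(\varepsilon_m)^-$ in the defining sequence, and the naturality argument from (i) applies. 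No genuine obstacle arises in this proof; the only thing demanding care is to thread the compatibility conditions $\abs{\unif}^n\leq\varepsilon$ (respectively $\abs{\unif}^n<\varepsilon_m$) uniformly through the tower, which works precisely because $\varepsilon<\varepsilon_m$ for all $m$.
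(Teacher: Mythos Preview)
Your proposal is correct and follows essentially the same approach as the paper: invoke the general functoriality of \Cref{Sect 2 - Coherent Cohomology of Equivariant Vector Bundles} together with the stabilizer computations preceding the lemma to obtain continuous actions on the local cohomology groups, and then pass to the kernels $\widetilde{H}^i$ via the equivariance of the long exact sequence of local cohomology. The paper's proof is simply a terser version of what you wrote, without spelling out the projective-limit compatibility or the uniform stabilization of the tower of tubes.
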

\begin{proof}
	For the local cohomology groups this follows from the discussion in \Cref{Sect 2 - Coherent Cohomology of Equivariant Vector Bundles}.
	The fact that the long exact sequence of local cohomology is equivariant for the respective group actions, implies the assertion for the kernels $\widetilde{H}^i_{\BP^\sd_K (\varepsilon)} (\BP^d_K,\CE^\rigbun)$, $\widetilde{H}^i_{\BP^\sd_K (\varepsilon)^-} (\BP^d_K,\CE^\rigbun)$, and $\widetilde{H}^i_{(\BP^\sd_K)^\rig} (\BP^d_K,\CE^\rigbun)$.
\end{proof}

\begin{lemma}\label{Lemma 2 - Local cohomology wrt closed tube is locally analytic representation}
	For $n\in \BN$ and $\varepsilon \in \abs{\widebar{K}}$ with $\abs{\unif}^n < \varepsilon < 1$, the representation 
	\begin{equation*}
		P^n_{\Delta\setminus \{\alpha_{\sd}\}} \times \widetilde{H}^i_{\BP^\sd_K(\varepsilon)^-} (\BP^d_K,\CE^\rigbun) \lra  \widetilde{H}^i_{\BP^\sd_K(\varepsilon)^-} (\BP^d_K,\CE^\rigbun) \,,\quad (g,v) \lto g.v,
	\end{equation*}
	on the $K$-Banach space $\widetilde{H}^i_{\BP^\sd_K(\varepsilon)^-} (\BP^d_K,\CE^\rigbun)$ is locally analytic.
\end{lemma}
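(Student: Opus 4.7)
The plan is to adapt the strategy of the proof of \Cref{Lemma 2 - Sections on X_n are locally analytic representation}, replacing the role of the rigid analytic subgroup $H_{n+1}$ and the affinoid $\CX_n$ by an appropriate rigid analytic polydisc neighbourhood of an arbitrary element of $P^n_{\Delta\setminus \{\alpha_{\sd}\}}$ that preserves the closed tube $\BP^\sd_K(\varepsilon)^-$. By \Cref{Lemma 2 - Continuity of action on local cohomology wrt tubes} the group acts by continuous endomorphisms, so it suffices to check local analyticity of each orbit map at every $g \in P^n_{\Delta\setminus \{\alpha_{\sd}\}}$.

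Given $g \in P^n_{\Delta\setminus \{\alpha_{\sd}\}}$, I would choose $m\in\BN$ with $\abs{\unif}^m < \varepsilon$ and consider the rigid analytic polydisc
\[ D_m \defeq 1 + \unif^m \mathrm{M}_{d+1}(\CO_C) \subset \GL_{d+1}(C), \]
together with the chart $\iota_g \colon D_m \to \GL_{d+1}^\rig$, $h \mapsto gh$. A direct computation, analogous to the one used to verify that $P^n_{\Delta\setminus \{\alpha_{\sd}\}}$ stabilises $\BP^\sd_K(\varepsilon)^-$, shows that $(gh)^{-1}\BP^\sd_K(\varepsilon)^- \subset \BP^\sd_K(\varepsilon)^-$ for every $h\in D_m$: writing $h = 1 + \unif^m h'$, the last $d-\sd$ homogeneous coordinates of $h^{-1}(g^{-1}z)$ are bounded by $\max(\abs{w_i},\abs{\unif}^m) < \varepsilon$ since both terms are strictly less than $\varepsilon$. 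Hence the group action restricts to a morphism of rigid analytic varieties $\sigma_g \colon D_m \times_K \bigl(\BP_K^d\setminus \BP_K^\sd(\varepsilon)^-\bigr) \to \BP_K^d\setminus \BP_K^\sd(\varepsilon)^-$.

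Using the $\bG$-equivariance of $\CE$, the isomorphism $(\iota_g\times \id)^\ast\Phi$ then produces, for each non-empty $I\subset \{\sd+1,\ldots,d\}$, a compatible system of continuous maps
\[ \CE(U_{I,\varepsilon}) \longrightarrow \CE(U_{I,\varepsilon'}) \cotimes{K} \CO(D_m) \cong \CE(U_{I,\varepsilon'})\langle T_1,\ldots,T_{(d+1)^2}\rangle \]
for any $\varepsilon'$ slightly larger than $\varepsilon$ with $\abs{\unif}^m < \varepsilon' \leq \varepsilon$ (taking a slightly larger $\varepsilon'$ accounts for the fact that $(gh)^{-1}$ need not preserve each individual $U_{i,\varepsilon}$ on the nose, but sends it into some $U_{i,\varepsilon'}$ after refinement). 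Evaluating at $h\in D_m(K)$ recovers the section $(gh).v$, so that each cochain gives rise to a convergent power series expansion of its orbit map into $\CE(U_{I,\varepsilon'})$. Taking the direct sum over $I$ yields a morphism of \v{C}ech complexes that induces, in cohomology, a power series expansion of the orbit map $D_m(K) \to H^{i-1}(\BP^d_K\setminus\BP^\sd_K(\varepsilon)^-,\CE^\rigbun)$, and hence into $\widetilde{H}^i_{\BP^\sd_K(\varepsilon)^-}(\BP^d_K,\CE^\rigbun)$ via the boundary map in the long exact sequence of local cohomology.

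The main obstacle will be the refinement argument, i.e.\ the fact that the perturbed element $gh$ does not preserve each $U_{i,\varepsilon}$ individually, only the union $\BP^d_K\setminus \BP^\sd_K(\varepsilon)^-$; one resolves this by working with slightly enlarged affinoids $U_{i,\varepsilon'}$ and invoking the density and compactness of the restriction maps from \Cref{Lemma 2 - Compactness of transition maps} and \Cref{Prop 2 - Projective limit description of local cohomology wrt open tubes} to deduce that the resulting power series, originally valued in the larger Banach space, already lies in the target. Once this is handled, local analyticity of the orbit map at $g$ follows, and because $g$ was arbitrary, we conclude that the representation is locally analytic.
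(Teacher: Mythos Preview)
Your overall strategy matches the paper's: reduce to local analyticity of orbit maps, work on the level of the \v{C}ech complex for the covering $\CU_\varepsilon$, and use the equivariant structure of $\CE$ over a polydisc $gD_m$ to produce a power-series expansion, then pass to $\widetilde{H}^i$ via the long exact sequence and \Cref{Prop 1 - Subrepresentations and quotientrepresentations of locally analytic representations}. So the skeleton is right.

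However, the ``main obstacle'' you identify is not there. The paper takes $m=n$ (so $\abs{\unif}^n < \varepsilon$) and proves by a direct computation that for every $h\in gD_n$ and every non-empty $I\subset\{\sd+1,\ldots,d\}$ one has $h(U_{I,\varepsilon}) = g(U_{I,\varepsilon})$ exactly: if $z\in U_{I,\varepsilon}$ and $(1+h')\in D_n$, then for $i\in I$ one checks $\big\lvert \sum_j z_j h'_{ji}\big\rvert \leq \abs{\unif}^n \max_j\abs{z_j} < \varepsilon\max_j\abs{z_j}\leq \abs{z_i}$, hence $\abs{w_i}=\abs{z_i}$; and then $\varepsilon\abs{w_j}\leq \max(\abs{z_i},\varepsilon\abs{\unif}^n\max_k\abs{z_k})\leq\abs{z_i}=\abs{w_i}$. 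So each individual affinoid in the covering is preserved by the translated polydisc, and the argument proceeds exactly as in \Cref{Lemma 2 - Sections on X_n are locally analytic representation} with $g(U_{I,\varepsilon})$ playing the role of the source and $U_{I,\varepsilon}$ the target.

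Your proposed workaround via slightly enlarged affinoids $U_{I,\varepsilon'}$ is therefore unnecessary, and as written it is also not clearly correct: the notation ``$\varepsilon'$ slightly larger than $\varepsilon$ with $\varepsilon'\leq\varepsilon$'' is inconsistent, and the assertion that the power series ``originally valued in the larger Banach space already lies in the target'' by appeal to density and compactness is not justified---density of a subspace does not force a given analytic function into it. Drop the refinement step entirely and instead verify the preservation of $U_{I,\varepsilon}$ directly as above; the rest of your argument then goes through cleanly.
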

\begin{proof}
	Like in \Cref{Lemma 2 - Sections on X_n are locally analytic representation} the only assertion left to show is that the orbit maps 
	\[P^n_{\Delta\setminus \{\alpha_{\sd}\}} \lra \widetilde{H}^i_{\BP^\sd_K(\varepsilon)^-} (\BP^d_K,\CE^\rigbun)\,,\quad g\lto g.v , \]
	are locally analytic, for every $v\in \widetilde{H}^i_{\BP^\sd_K(\varepsilon)^-} (\BP^d_K,\CE^\rigbun)$.
	To do so, we first show that the orbit maps for the $P^n_{\Delta\setminus \{\alpha_{\sd}\}}$-action on $H^i(\BP^d_K\setminus \BP^\sd_K(\varepsilon)^-,\CE^\rigbun)$ are locally analytic.
	
	Fix $g\in P^n_{\Delta\setminus \{\alpha_{\sd}\}}$ and consider the affinoid subdomain $g D_n \subset \GL_{d+1}(C)$, where we set $D_n \defeq 1+\unif^n M_{d+1}(\CO_C)$, with the rigid analytic chart
	\begin{equation*}
		\iota_g \colon D_{n}  \lra gD_{n} \,,\quad h \lto gh.
	\end{equation*}
	Moreover, we claim that, for every fixed $v\in \widetilde{H}^i_{\BP^\sd_K(\varepsilon)^-} (\BP^d_K,\CE^\rigbun)$, the orbit map 
	\begin{equation*}
		P^n_{\Delta\setminus \{\alpha_{\sd}\}} \lra H^i(\BP^d_K\setminus \BP^\sd_K(\varepsilon)^-,\CE^\rigbun)\,,\quad h \lto h.v,
	\end{equation*}
	restricted to $gD_n(K)$ is given by a convergent power series.

	To this end, we consider the admissible covering $\CU_{\varepsilon}$ of $\BP_K^d \setminus \BP^\sd_K (\varepsilon)^-$ whose \v{C}ech complex $C^i(\CU_\varepsilon,\CE^\rigbun)$ computes $H^i(\BP^d_K\setminus \BP^\sd_K(\varepsilon)^-,\CE^\rigbun)$.
	We fix a non-empty subset $I\subset \{\sd+1,\ldots,d\}$ and write $U \defeq U_{I,\varepsilon}$ with the notation from \eqref{Eq 2 - Intersection of open subsets of the covering of the complement of the closed tubes}.
	We then have $h(U) = g(U)$, for all $h\in gD_n$. 
	Indeed, let $z=[z_0{\,:\,}\ldots{\,:\,}z_d] \in U$ and $(1+h') \in D_{n}$ so that 
	\[ [z_0{\,:\,}\ldots{\,:\,}z_d] \cdot (1+h') \eqdef [w_0{\,:\,}\ldots{\,:\,}w_d] \quad\text{, with $w_i = z_i + \sum_{j=0}^d z_j h'_{ji}$.} \]
	For $i\in I$, we have 
	\begin{align*}
		\Big\lvert \sum_{j=0}^d z_j h'_{ji} \Big\rvert \leq \max_{j=0}^d \abs{z_j h'_{ji}} \leq \max_{j=0}^d \abs{z_j} \, \abs{\unif}^n < \max_{j=0}^d \,\varepsilon \, \abs{z_j} \leq \abs{z_i}
	\end{align*}
	where the last inequality holds because $z \in U \subset U_{i,\varepsilon}$.
	This implies $\abs{w_i} =\abs{z_i}$.
	Now we compute, for $i\in I$, $ j\in \{0,\ldots,d \}$:
	\begin{align*}
		\varepsilon \,\abs{w_j} &\leq \max \Big( \varepsilon \,\abs{z_j} , \max_{k=0}^{d} \,\varepsilon \,\abs{z_k h'_{kj}} \Big) 
		\leq \max \Big( \abs{z_i} , \max_{k=0}^d \,\varepsilon \,\abs{\unif}^n\, \abs{z_k} \Big) 
		\leq \abs{z_i} = \abs{w_i}
	\end{align*}
	which implies that $(1+h')^{-1}.z \in U$.

	Since the above non-empty subset $I\subset \{\sd +1,\ldots, d\}$ was arbitrary, we obtain a map
	\begin{equation*}
		gD_n(K) \times C^i(g(\CU_\varepsilon),\CE^\rigbun) \lra C^i(\CU_\varepsilon,\CE^\rigbun)
	\end{equation*}
	that affords the $P^n_{\Delta\setminus \{\alpha_{\sd}\}}$-action on $H^i(\BP^d_K\setminus \BP^\sd_K(\varepsilon)^-,\CE^\rigbun)$ restricted to $gD_n(K)$, cf.\ \eqref{Eq 2 - Group action homomorphism of Cech complexes}.
	Here $g(\CU_{\varepsilon})$ denotes the translated covering $\BP_K^d \setminus \BP^\sd_K (\varepsilon)^- = \bigcup_{i=\sd+1}^d g(U_{i,\varepsilon})$.
	Consequently it suffices to show that, for every $v\in \CE(g(U))$ with $U \defeq U_{I,\varepsilon}$, for non-empty $I\subset \{\sd+1,\ldots,d\}$, the map $gD_n(K) \ra \CE(U)$, $h \mto h.v$, is given by a convergent power series.

	For this we proceed similarly to the proof or \Cref{Lemma 2 - Sections on X_n are locally analytic representation}.
	Using that $h(U)=g(U)$, for all $h\in gD_{n}$, the group action $\sigma\colon \GL_{d+1,K} \times_K \BP_K^d \ra \BP_K^d$ induces the following commutative diagram:
	\begin{equation*}
		\begin{tikzcd}
			D_{n} \times_K U \arrow[r, "\iota_g \times \mathrm{id}"] \arrow[rd, "\pr_2"'] & g D_n \times_K U \arrow[r, "\sigma"] \arrow[d, "\pr_2"] & g(U) \\
			&U&.
		\end{tikzcd}
	\end{equation*}
	Let $F_v \in \CE(U)\langle T_1,\ldots, T_{(d+1)^2} \rangle$ be the power series to which $v \in \CE(g(U))$ is mapped under
	\begin{equation*}
		\begin{tikzcd}[,/tikz/column 3/.append style={anchor=base west}]
			\CE(g(U)) \ar[r] & (\iota_g \times \id)^\ast \sigma^\ast \CE(D_n \times_K U) \ar[d, "(\iota_g \times \mathrm{id})^{\ast} \Phi (D_n \times_K U)"] &[-30pt]\\
			& (\iota_g \times \id)^\ast \pr_2^\ast \CE(D_n \times_K U) &[-30pt] \cong \pr_2^\ast \CE(D_n \times_K U) \\[-20pt]
			&&[-30pt]	\cong \big( \CO(D_n) \cotimes{K} \CO(U) \big) \otimes_{\CO(U)} \CE(U) \\[-20pt]
			&&[-30pt] \cong \CE(U) \langle T_1,\ldots,T_{(d+1)^2} \rangle  .
		\end{tikzcd}
	\end{equation*}
	Now consider, for $h \in D_n(K)$,
	\begin{equation*}
		\begin{tikzcd}
			D_n \times_K U \arrow[r, "\iota_g \times \id"] & g D_n \times_K U \arrow[r, "\sigma"] & g(U) \\
			U \arrow[u, "h \times \id"]  \arrow[urr,  "gh"', end anchor = 210] && .
		\end{tikzcd}
	\end{equation*}
	In terms of $K$-affinoid algebras the morphism $h \times \id\colon U \ra D_n \times_K U$ is given by the evaluation homomorphism of power series
	\begin{align*}
		\mathrm{ev}_h \colon \CO(D_n) \cotimes{K} \CO(U) \cong \CO(U) \langle T_1,\ldots,T_{(d+1)^2} \rangle &\lra \CO(U) , \\
		F &\lto F(h) .
	\end{align*}
	Hence we arrive at the commutative diagram 
	\begin{equation*}
		\begin{tikzcd}
			\CE(g(U)) \arrow[r] \arrow[rd, end anchor = 170] & (\iota_g\times\id)^\ast \sigma^\ast \CE(D_n\times_K U) \arrow[d, "(h\times\id)^\ast"] \arrow[r] & \CE(U)\langle T_1,\ldots,T_{(d+1)^2} \rangle \arrow[d, "\mathrm{ev}_h"]\\
			& (gh)^\ast \CE(U) \arrow[r, "\Phi_{gh}(U)"] &\CE(U) 
		\end{tikzcd}
	\end{equation*}
	which shows that $gh.v = F_v(h)$.

	Having seen that the $P^n_{\Delta\setminus\{\alpha_r\}}$-representation $H^i(\BP^d_K\setminus \BP^\sd_K(\varepsilon)^-,\CE^\rigbun)$ is locally analytic, we now consider the long exact sequence of local cohomology 
	\begin{equation*}
		\ldots \lra H^{i-1}(\BP^d_K\setminus \BP^\sd_K(\varepsilon)^-,\CE^\rigbun) \xrightarrow{\partial^{i-1}_\varepsilon} H^i_{\BP_K^\sd (\varepsilon)^-} (\BP_K^d, \CE^\rigbun) \lra H^i (\BP_K^d, \CE^\rigbun) \lra \ldots .
	\end{equation*}
	Since this sequence is $P^n_{\Delta\setminus\{\alpha_r\}}$-equivariant, \Cref{Prop 1 - Subrepresentations and quotientrepresentations of locally analytic representations} (ii) implies that the kernel $\widetilde{H}^i_{\BP_K^\sd (\varepsilon)^-} (\BP_K^d, \CE^\rigbun)$ is a locally analytic representation, too.
\end{proof}

\begin{proof}[{Proof of \Cref{Prop 2 - Dual of local cohomology wrt open tube is locally analytic representation}}]
	We argue similarly to the proof of \Cref{Prop 2 - Dual of the sections on DHS is locally analytic representation}.
	We have seen in \Cref{Prop 2 - Projective limit description of local cohomology wrt open tubes} that the transition maps $\widetilde{H}^i_{\BP^\sd_K(\varepsilon_{m+1})^-}(\BP^d_K,\CE^\rigbun) \ra	\widetilde{H}^i_{\BP^\sd_K(\varepsilon_{m})^-} (\BP^d_K,\CE^\rigbun)$ are compact and have dense image.
	
	Like in the proof of \Cref{Cor 2 - Dual of sections on X_n are locally analytic representation} one deduces from \Cref{Lemma 2 - Local cohomology wrt closed tube is locally analytic representation} that the contragredient representation on $\widetilde{H}^i_{\BP^\sd_K(\varepsilon_{m})^-}(\BP^d_K,\CE^\rigbun)'$ is locally analytic, too.
	The proposition then follows analogously. 
\end{proof}

\begin{remark}
	The cohomology groups $H^i (\BP_K^d, \CE)$ are finite-dimensional algebraic $\bG$-re\-pre\-sen\-tations.
	It follows from \Cref{Cor 1 - Local analytification functor} that induced homomorphism $G \ra \GL(H^i (\BP_K^d,\CE) ) $ on $K$-valued points is a homomorphism of locally $K$-analytic Lie groups.
	Therefore the $H^i (\BP_K^d, \CE)$ are locally analytic $G$-representations by \Cref{Prop 1 - Equivalent characterization for locally analytic representations on Banach spaces}.
\end{remark}

\vspace{3ex}

\renewcommand{\sd}{{r}}
\renewcommand{\rigbun}{{}}
\renewcommand{\algbun}{{\mathrm{alg}}}

\newcommand{\sdd}{{r}}
\newcommand{\adbun}{{}}

\section{The $\GL_{d+1}(K)$-Representation $H^0(\CX,\CE)$}

Let $K$ be a non-archimedean local field, and $\CE$ a $\bG$-equivariant vector bundle on $\BP_K^d$.
Here we write $\bG = \GL_{d+1,K}$, and $G = \GL_{d+1}(K)$ for its associated locally $K$-analytic Lie group.

\subsection{Orlik's Fundamental Complex and the Associated Spectral Sequence}\label{Sect 3 - The Fundamental Complex}

In this section we want to recapitulate Orlik's method \cite{Orlik08EquivVBDrinfeldUpHalfSp} of using the geometric structure of the divisor at infinity $\CY \defeq \BP^d_K \setminus \CX$ to obtain a filtration by locally analytic $G$-subrepresentations of $H^0(\CX,\CE)'_b$, and to express the respective subquotients as extensions of certain locally analytic $G$-representations. 
Since the reasoning introduced there for a $p$-adic field $K$ carries over to the case of a general non-archimedean local field verbatim, we only present an overview.
At times we give some additional details but at others we refer to \cite{Orlik08EquivVBDrinfeldUpHalfSp} for the full proofs.

The space of global sections $H^0(\CX,\CE)$ that we are interested in relates to the complement $\CY$ via the long exact sequence of local cohomology
\begin{equation}\label{Eq 3 - Long exact sequence of local cohomology for complement of DHS}
	0	\lra H^0(\BP_K^d,\CE) \lra H^0 (\CX,\CE) \lra H^1_\CY (\BP^d_K,\CE)  
	\lra H^1 (\BP_K^d,\CE) \lra 0 .
\end{equation}
Here the higher cohomology groups $H^i(\CX,\CE)$, for $i>0$, vanish as $\CX$ is quasi-Stein.
Because the $H^i(\BP_K^d,\CE)$, for $i=0,1$, are finite-dimensional algebraic $G$-representations, the main difficulty lies in understanding $H^1_\CY (\BP^d_K, \CE)$.

In this regard the strategy of \cite{Orlik08EquivVBDrinfeldUpHalfSp} unfolds as follows.
Let $(\BP_K^d)^\ad$ and $\CX^\ad$ denote the adic spaces attached to $\BP_K^d$ and $\CX$ respectively.
Then one considers the complement
\begin{equation*}
	\CY^\ad \defeq (\BP_K^d)^\ad \setminus \CX^\ad
\end{equation*}
which is a closed pseudo-adic subspace of $(\BP_K^d)^\ad$ by \cite[Lemma 3.2]{Orlik05CohomPerDomRedGrpLocF}, cf.\ \cite[Ch.\ 1.10]{Huber96EtCohomAdSp}.
Since the Zariski topoi of $\CX$ and $\CX^\ad $, and the ones of $\BP_K^d = (\BP_K^d)^\rig$ and $(\BP_K^d)^\ad$ are equivalent (see \cite[Prop.\ 4.5 (i)]{Huber94GenFormalSchRigAnVar}),
it follows that $H^i_{\CY^\ad}\big((\BP_K^d)^\ad, \CE^\adbun \big) = H^i_{\CY}(\BP_K^d,\CE^\rigbun)$, for all $i\geq 0$.

Recall that, for a subset $I$ of the set of simple roots $\Delta = \{\alpha_0,\ldots,\alpha_{d-1} \}$ of $\bG \defeq \GL_{d+1,K}$, we denote the associated (lower) standard parabolic subgroup by $\bP_I$.
We write $P_I \defeq \bP_I(K)$, $P_{I,0} \defeq \bP_{I,\BZ} (\CO_K)$, and $P^n_I \defeq p^{-1}_n (\bP_{I,\BZ}(\CO_K/\unif^n))$ where
\begin{equation*}
	p_n \colon G_0 \defeq \GL_{d+1}(\CO_K) \lra \GL_{d+1} (\CO_K/\unif^n)
\end{equation*}
is the canonical reduction homomorphism.
Here $\bP_{I,\BZ}$ denotes the respective standard parabolic subgroup of $\GL_{d+1,\BZ}$.

For a subset $I \subsetneq \Delta$ with $\Delta\setminus I = \{\alpha_{i_1}, \ldots, \alpha_{i_s} \}$, $i_1{\,<\,}\ldots {\,<\,} i_s$, we define the closed subvariety
\begin{equation*}
	Y_I \defeq \BP_K \bigg( \bigoplus_{j=0}^{i_1} K \cdot e_j \bigg) = \BP_K^{i_1} = V_+ (X_{i_1 +1},\ldots, X_{d}) \subset \BP_K^d
\end{equation*}
so that
\begin{equation*}
	\CY^\ad = \bigcup_{I \subsetneq \Delta} \bigcup_{g \in G/P_I} g Y_I^\ad .
\end{equation*}
Moreover, for a compact open subset $W \subset G/P_I$, we consider
\begin{equation*}
	Z_I^W \defeq \bigcup_{g\in W} g Y_I^\ad
\end{equation*}
which is a closed pseudo-adic subspace of $(\BP_K^d)^\ad$, see \cite[Lemma 3.2]{Orlik05CohomPerDomRedGrpLocF}. 
In particular, we have $\CY^\ad = Z^{G/P_{\Delta\setminus\{\alpha_{d-1}\}}}_{\Delta\setminus\{\alpha_{d-1}\}}$.

Next one defines on $\CY^\ad$ certain \'etale sheaves of locally constant sections supported on $Z_I^{G/P_I}$.
To this end, let
\begin{alignat*}{3}
	\Phi_{I,g} &\colon g Y_I^\ad &&\lra \CY^\ad \\
	\Psi_{I,W} &\colon Z_I^W &&\lra \CY^\ad
\end{alignat*}
be the embeddings of closed pseudo-adic spaces and consider the \'etale sheaves
\begin{alignat*}{3}
	\BZ_{g,I} &\defeq (\Phi_{I,g})_\ast (\Phi_{I,g})^\ast \BZ_{\CY^\ad} \\
	\BZ_{Z^W_I} &\defeq (\Psi_{I,W})_\ast (\Psi_{I,W})^\ast \BZ_{\CY^\ad}
\end{alignat*}
where $\BZ_{\CY^\ad}$ denotes the constant \'etale sheaf on $\CY^\ad$ with stalks equal to $\BZ$.

Furthermore, we let $\CC_{G/P_I}$ denote the category of disjoint coverings of $G/P_I$ by compact open subsets with morphisms given by refinement.
For a covering $c\in \CC_{G/P_I}$ of the form $G/P_I = \bigcup_{j\in A} W_j$, let $\BZ_c$ denote the image of the sheaf homomorphism
\begin{equation*}
	\bigoplus\limits_{j\in A} \BZ_{Z^{W_j}_I} \lhook\joinrel\longrightarrow \prod\limits_{g \in G/P_I} \BZ_{g,I}
\end{equation*}
which is induced by the homomorphisms $\BZ_{Z_I^{W_j}} \ra \BZ_{g,I}$, for $g \in W_j$, cf.\ \cite[p.\ 621]{Orlik08EquivVBDrinfeldUpHalfSp}.
Taking the inductive limit over all coverings of $\CC_{G/P_I}$ one arrives at the aforementioned sheaf
\begin{equation*}
	\varinjlim_{c \in \CC_{G/P_I}} \BZ_c
\end{equation*}
of locally constant sections supported on $Z_{I}^{G/P_I}$ with values in $\BZ$.

With the appropriate sheaf homomorphisms given by restriction, these sheaves fit together to yield a complex of sheaves on $\CY^\ad$
\begin{equation}\label{Eq 3 - Fundamental complex}
	\begin{aligned}
		0 \lra \BZ_{\CY^\ad} \lra \bigoplus_{\substack{I\subset \Delta \\ \abs{\Delta\setminus I}=1 }} \varinjlim_{c\in \CC_{G/P_I}} \BZ_c &\lra \ldots \lra \bigoplus_{\substack{I\subset \Delta \\ \abs{\Delta\setminus I}=i}} \varinjlim_{c\in \CC_{G/P_I}} \BZ_c \lra \ldots \\[0.5cm]
		\ldots &\lra \bigoplus_{\substack{I\subset \Delta \\ \abs{\Delta\setminus I}=d-1}} \varinjlim_{c\in \CC_{G/P_I}} \BZ_c \lra \varinjlim_{c\in \CC_{G/P_\emptyset}} \BZ_c \lra 0 .	
	\end{aligned}	
\end{equation}

\begin{theorem}[{\cite[Thm.\ 2.1.1]{Orlik08EquivVBDrinfeldUpHalfSp}}]
	The complex \eqref{Eq 3 - Fundamental complex} is acyclic.
\end{theorem}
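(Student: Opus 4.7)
The plan is to verify acyclicity locally, i.e., on stalks at each point $y \in \CY^\ad$. First I would compute these stalks explicitly: for $y \in \CY^\ad$ and $I \subsetneq \Delta$, define
\[
	\CF_I(y) \defeq \big\{ gP_I \in G/P_I \,\big\vert\, y \in g Y_I^\ad \big\},
\]
endowed with the subspace topology from $G/P_I$ (it is a compact totally disconnected space). Unwinding the colimit over refinements of disjoint coverings, I expect the stalk at $y$ of $\varinjlim_{c\in \CC_{G/P_I}} \BZ_c$ to identify with the $\BZ$-module of locally constant functions $\CF_I(y) \to \BZ$: for a covering $c\colon G/P_I = \coprod_{j\in A} W_j$, the stalk $(\BZ_c)_y$ consists of those functions $\CF_I(y) \to \BZ$ that are constant on each $W_j \cap \CF_I(y)$, and passing to the limit over refinements exhausts all locally constant functions since $G/P_I$ (hence $\CF_I(y)$) is profinite.

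With this identification the stalk complex becomes a purely combinatorial object: the differentials are alternating sums of pullbacks along the natural projections $\CF_I(y) \to \CF_{I'}(y)$ induced by inclusions $I \subset I'$ of simple root subsets (which in turn come from the projections $G/P_I \twoheadrightarrow G/P_{I'}$ of flag varieties). Interpreting the family $(\CF_I(y))_{I \subsetneq \Delta}$ as the simplices of a semi-simplicial profinite object $\Sigma_y$ — whose $i$-simplices are rational flags of linear subvarieties through $y$ of type $I$ with $|\Delta\setminus I| = i+1$ — the acyclicity to be proved is exactly the vanishing of the reduced $\BZ$-homology of $\Sigma_y$.

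Next I would establish contractibility of $\Sigma_y$. The crucial geometric input is that, for $y \in \CY^\ad$, the collection $R(y)$ of $K$-rational proper linear subvarieties of $\BP_K^d$ containing $y$ is nonempty (by definition of $\CY$) and closed under intersection, so it admits a unique minimal element $M(y)$. Taking $M(y)$ as a cone point — every flag through $y$ can be augmented by $M(y)$ at its bottom — furnishes an explicit contracting chain homotopy on the stalk complex. Concretely, for a locally constant function $f$ on $\CF_I(y)$ one uses the map $\CF_I(y) \to \CF_{I\cup\{\alpha_{i_1-1}\}}(y)$ given by adjoining $M(y)$ to build a degree $-1$ map whose commutator with the differential is (up to sign) the identity in positive degree and the augmentation $\BZ \to \mathrm{LocConst}(\CF_\Delta(y),\BZ)$ in degree zero.

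The main obstacle will be making the profinite/topological bookkeeping fully rigorous — in particular, treating the inductive limits $\varinjlim_{c} \BZ_c$ so that refinement indeed recovers all locally constant functions at the stalk level, and verifying that the cone construction via $M(y)$ extends continuously in the profinite variable to produce a genuine chain homotopy of locally constant functions rather than merely a set-theoretic splitting. Once this is in place, acyclicity at every stalk follows, and acyclicity of the fundamental complex \eqref{Eq 3 - Fundamental complex} of \'etale sheaves on $\CY^\ad$ is immediate.
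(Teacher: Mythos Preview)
The paper does not prove this theorem; it is stated with a citation to \cite[Thm.\ 2.1.1]{Orlik08EquivVBDrinfeldUpHalfSp} and no proof is given here, as announced at the beginning of \S\ref{Sect 3 - The Fundamental Complex}. Your proposal is essentially the argument Orlik gives in that reference (and, in more detail for general period domains, in \cite{Orlik05CohomPerDomRedGrpLocF}): reduce to stalks, identify the stalk of $\varinjlim_{c}\BZ_c$ at $y$ with locally constant $\BZ$-valued functions on the compact set of cosets $gP_I$ with $y\in gY_I^\ad$, and produce a contracting homotopy built from a distinguished element attached to $y$.

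One point to tighten: your proposed homotopy ``adjoin $M(y)$ at the bottom'' via a map $\CF_I(y)\to\CF_{I\cup\{\alpha_{i_1-1}\}}(y)$ is not well-typed as written. The dimension of the minimal $K$-rational linear subvariety $M(y)$ through $y$ depends on $y$ and need not be $i_1-1$, so inserting it does not uniformly shift $I$ by a single simple root. In Orlik's argument the contracting homotopy is instead organised around a fixed $K$-rational hyperplane $H_y$ containing $y$ (equivalently, a fixed vertex of the Tits building over $y$): one maps a flag $gP_I$ to the flag obtained by replacing each member by its intersection with $H_y$ (or by adjoining $H_y$ at the top), which does move the indexing set $I$ in a controlled way along the face maps of the complex. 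With this correction your outline becomes exactly the cited proof; the profinite bookkeeping you flag as the main obstacle is handled there by passing to a fixed covering $c$ and checking compatibility under refinement.
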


Now, let $0\ra \CE \ra \CI^0 \ra \CI^1 \ra \ldots$ be an injective resolution of the $\CO_{\BP_K^d}$-module $\CE$.
Let $\iota \colon \CY^\ad \hookrightarrow (\BP_K^d)^\ad$ denote the closed embedding.
We want to consider the double complex obtained by applying $\Hom(\iota_\ast(\blank),\CI^q)$ to the acyclic resolution \eqref{Eq 3 - Fundamental complex} of $\BZ_{\CY^\ad}$, i.e.\
\begin{equation}\label{Eq 3 - Double complex for spectral sequence}
	E_0^{p,q} \defeq \begin{cases} \displaystyle
		\Hom \Bigg( \iota_\ast \bigg( \bigoplus\limits_{\substack{I\subset \Delta \\ \abs{\Delta\setminus I}=-p+1}} \varinjlim\limits_{c\in \CC_{G/P_I}} \BZ_c \bigg), \CI^q \Bigg) & \text{, if $-(d-1)\leq p\leq 0$, $q \geq 0$,} \\
		0 & \text{, else.}
	\end{cases}
\end{equation}
This double complex is concentrated in the upper left quadrant.

There is a natural action of $G$ on $E_0^{\bullet,\bullet}$ as follows:
For fixed $g\in G$ and $I \subsetneq \Delta$, we have a homomorphism by functoriality of taking the inverse image under the automorphism $g$
\begin{equation}\label{Eq 3 - Action of G on Hom-sets 1}
	\Hom \bigg( \iota_\ast \Big(\varinjlim\limits_{c\in \CC_{G/P_I}} \BZ_c\Big) , \CI^q \bigg) \lra \Hom \bigg( g^{-1} \iota_\ast \Big( \varinjlim\limits_{c\in \CC_{G/P_I}} \BZ_c \Big) , g^{-1} \CI^q \bigg) .
\end{equation}
Moreover, for the restriction $\widebar{g} \colon \CY^\ad \ra \CY^\ad$ of $g$ we have $\widebar{g}^{-1} \BZ_{c} = \BZ_{g^{-1}c}$ where $g^{-1} c$ denotes the translated covering $G/P_I = \bigcup_{j\in A} g^{-1}W_j$, for $c = \{W_j\mid j\in A\}$.
This yields
\begin{equation*}
	g^{-1} \iota_\ast \Big( \varinjlim\limits_{c\in \CC_{G/P_I}} \BZ_{c} \Big) \cong \iota_\ast \widebar{g}^{-1}\Big( \varinjlim\limits_{c\in \CC_{G/P_I}} \BZ_{c} \Big) \cong \iota_\ast \Big( \varinjlim\limits_{c\in \CC_{G/P_I}} \BZ_{g^{-1}c} \Big).
\end{equation*}
Together with the homomorphism $g^{-1} \CI^q \ra g^\ast \CI^q \ra \CI^q$ induced from $\Phi_g\colon g^\ast \CE \ra \CE$ in the second component we obtain a homomorphism
\begin{equation}\label{Eq 3 - Action of G on Hom-sets 2}
	\Hom \bigg( g^{-1} \iota_\ast \Big( \varinjlim\limits_{c\in \CC_{G/P_I}} \BZ_c \Big) , g^{-1} \CI^q \bigg) 
	\lra \Hom \bigg( \iota_\ast \Big( \varinjlim\limits_{c\in \CC_{G/P_I}} \BZ_{g^{-1}c} \Big) , \CI^q \bigg).
\end{equation}
Then $g$ acts via the endomorphism that arises as the composition of \eqref{Eq 3 - Action of G on Hom-sets 1} and \eqref{Eq 3 - Action of G on Hom-sets 2}.

Associated with the double complex \eqref{Eq 3 - Double complex for spectral sequence} we have two spectral sequences $^{\rm h}\!E^{p,q}_r$ and $^{\rm v}\!E^{p,q}_r$.
Since, for every $n\in \BZ$, there are only finitely many pairs $(p,q)\in \BZ^2$ with $p+q=n$ and $E^{p,q}_0 \neq 0$, both spectral sequences converge to the total cohomology of the double complex \cite[\href{https://stacks.math.columbia.edu/tag/0132}{Tag 0132}]{StacksProject}.
As all rows of $^{\rm h}\!E^{p,q}_0$ are exact apart from the entry at $p=0$, we compute
\begin{equation*}
	^{\rm h}\! E_1^{p,q} = \begin{cases}
		\Hom ( \iota_\ast \BZ_{\CY^\ad} , \CI^q ) & \text{, if $p=0$, $q\geq 0$,} \\
		0 & \text{, else.}
	\end{cases}
\end{equation*}
Therefore $^{\rm h}\! E_2^{p,q}$ reads as follows
\begin{equation*}
	^{\rm h}\! E_2^{p,q} = \begin{cases}
		\Ext^q( \iota_\ast \BZ_{\CY^\ad}, \CE ) & \text{, if $p=0$, $q\geq 0$,} \\
		0& \text{, else,}
	\end{cases}
\end{equation*}
and the spectral sequence $^{\rm h}\! E_r^{p,q}$ collapses at the second page.
Moreover, we have the $G$-equivariant isomorphism
\begin{equation*}
	^{\rm h}\! E_2^{0,q} = \Ext^q (\iota_\ast \BZ_{\CY^\ad}, \CE ) = H^q_{\CY^\ad} \big((\BP^d_K)^\ad, \CE \big) = H^q_\CY (\BP^d_K, \CE) ,
\end{equation*}
for all $q \geq 0$, by \cite[Prop.\ 2.3 bis.]{Grothendieck68SGA2}.

We now turn to the spectral sequence $^{\rm v}\! E_r^{p,q}$ and compute that
\begin{equation*}
	^{\rm v}\! E^{p,q}_1 = \begin{cases} \displaystyle
		\Ext^q \Bigg( \iota_\ast \bigg( \bigoplus\limits_{\substack{I\subset \Delta \\ \abs{\Delta\setminus I}=-p+1}} \varinjlim\limits_{c\in \CC_{G/P_I}} \BZ_c \bigg) , \CE \Bigg) & \text{, if $-(d-1)\leq p\leq 0$, $q \geq 0$,} \\
		0 & \text{, else.}
	\end{cases}
\end{equation*}
Furthermore, for all $I\subsetneq \Delta$, it is shown in \cite[Prop.\ 2.2.1]{Orlik08EquivVBDrinfeldUpHalfSp} that there is an isomorphism
\begin{equation}\label{Eq 3 - Isomorphism between Ext groups and local cohomology}
	\Ext^q \bigg( \iota_\ast \Big( \varinjlim_{c\in \CC_{G/P_I}} \BZ_c \Big) , \CE \bigg) = \varprojlim_{n\in \BN} \bigoplus_{g \in G_0/P_I^n} H^q_{g Y_I(\varepsilon_n)} (\BP_K^d, \CE) ,
\end{equation}
for all $q\geq 0$, with the definition of the ``open'' $\varepsilon_n$-neighbourhood of $Y_I$ from \eqref{Eq 2 - Definition of open tube}.
Here and in the following, we abbreviate $\varepsilon_n \defeq \abs{\unif}^n$, for $n\in \BN$.

\begin{remark}\label{Rmk 3 - G-action on terms of spectral sequence}
	We want to explain how the $G$-action on the $\Ext$-groups on the left hand side of \eqref{Eq 3 - Isomorphism between Ext groups and local cohomology} transfers to the right hand side.
	While doing so, we will introduce some useful notation.
	
	First note that $G/P_I \cong G_0/P_{I,0}$ by the Iwasawa decomposition (see \cite[\S 3.5]{Cartier79ReppAdicGrpsSurvey}).
	The proof of the isomorphism \eqref{Eq 3 - Isomorphism between Ext groups and local cohomology} uses that the family of coverings
	\begin{equation*}
		G/P_I  = \bigcup_{g \in G_0/P^n_I} gP_I^n/P_I \quad\text{, with $ gP_I^n/P_I \defeq  \big\{ gpP_I \in G/P_I \mid p \in P^n_I \big\}$, }
	\end{equation*}
	for $n\in \BN$, is cofinal in $\CC_{G/P_I}$.
	This shows that 
	\begin{equation*}
		\varinjlim_{c\in \CC_{G/P_I}} \BZ_c  = \varinjlim_{n\in \BN} \bigoplus_{g \in G_0/P_I^n} \BZ_{Z_I^{gP^n_I}} .
	\end{equation*}
	One proceeds by applying \cite[Prop.\ 2.3 bis.]{Grothendieck68SGA2} and arguing that certain higher derived inverse limits vanish.

	Then the isomorphism \eqref{Eq 3 - Isomorphism between Ext groups and local cohomology} is $G$-equivariant when the right hand side is equipped with the following $G$-action by continuous endomorphisms:
	For fixed $g\in G$, to give an endomorphism by which $g$ acts it suffices to define compatible homomorphisms
	\begin{equation}\label{Eq 3 - Homomorphism of group action for inverse limit of cohomology groups}
		\varprojlim_{n\in \BN} \bigoplus_{g \in G_0/P_I^n} H^q_{g Y_I(\varepsilon_n)} (\BP_K^d, \CE) \lra \bigoplus_{i=1}^{s_m} H^q_{g_i Y_I(\varepsilon_m)} (\BP_K^d, \CE) ,
	\end{equation}
	for all $m \in \BN$.
	Here the $g_1,\ldots,g_{s_m}$ are some coset representatives of $G_0/P_I^m$ so that we have $G/P_I = \bigcup_{i=1}^{s_m} g_i P_{I}^m/P_I$.

	We choose $n=n(g,m) \geq m$ to be large enough such that the covering $G/P_I = \bigcup_{j=1}^{s_n} h_j P^n_I/P_I$, for coset representatives $h_1,\ldots,h_{s_n}$ of $G_0/P^n_I$, is a refinement of the translated covering $G/P_I = \bigcup_{i=1}^{s_m} g g_i P^m_I/P_I$.
	In this way we obtain a surjection
	\begin{equation*}
		\sigma_g \colon \{1,\ldots, s_{n}\} \lra \{ 1, \ldots, s_m \}
	\end{equation*}
	where $\sigma_g(j)$ is defined via $h_j P^{n}_I/P_I \subset g g_{\sigma_g(j)} P^m_I /P_I$.

	One computes that
	\begin{equation*}
		h_j Y_I (\varepsilon_{n}) = h_j P^{n}_I . Y_I 
		\subset g g_{\sigma_g(j)} P^m_I . ( P_I . Y_I )
		= g g_{\sigma_g(j)} Y_I(\varepsilon_m)
	\end{equation*}
	Then the homomorphisms
	\begin{equation*}
		\varphi_g \colon H^q_{h_j Y_I(\varepsilon_{n})} (\BP_K^d, \CE) \lra H^q_{g_{\sigma_g(j)} Y_I(\varepsilon_m)} (\BP_K^d, \CE)
	\end{equation*}
	from \eqref{Eq 2 - Group action homomorphism for local cohomology} give
	\begin{align*}
		\bigoplus_{j=1}^{s_{n}} H^q_{h_j Y_I(\varepsilon_{n})} (\BP_K^d, \CE) &\lra \bigoplus_{i=1}^{s_m} H^q_{g_i Y_I(\varepsilon_m)} (\BP_K^d, \CE) , \\
		(v_1,\ldots,v_{s_n}) &\lto \Big(\sum_{j\in \sigma_g^{-1}(\{1\})} \varphi_g(v_{j}) ,\ldots, \sum_{j\in \sigma_g^{-1}(\{s_m\})} \varphi_g(v_{j}) \Big) .
	\end{align*}
	Combining this with the projection 
	\begin{equation*}
		\varprojlim_{n\in \BN} \bigoplus_{g \in G_0/P_I^n} H^q_{g Y_I(\varepsilon_n)} (\BP_K^d, \CE) \lra \bigoplus_{j=1}^{s_{n}} H^q_{h_j Y_I(\varepsilon_{n(g,m)})} (\BP_K^d, \CE)
	\end{equation*}
	yields the sought homomorphism \eqref{Eq 3 - Homomorphism of group action for inverse limit of cohomology groups}.
	One checks that these homomorphisms are compatible and do not depend on the choice of $n$.
	\qed
\end{remark}

Note that when $g\in G_0$, the $n$ for the action of $g$ can always be chosen to be $n=m$.
In this case, $g$ even acts on each constituent of projective limit in \eqref{Eq 3 - Isomorphism between Ext groups and local cohomology} individually.

Using the isomorphisms 
\begin{equation*}
	\varphi_{g_i} \colon H^q_{g_i Y_I(\varepsilon_m)} (\BP_K^d, \CE) \lra H^q_{ Y_I(\varepsilon_m)} (\BP_K^d, \CE), 
\end{equation*}
for coset representatives $g_1,\ldots, g_{s_m}$ of $G_0/P_I^m$, we obtain a $G_0$-equivariant isomorphism
\begin{equation*}
	\bigoplus_{i=1}^{s_m} H^q_{g_i Y_I(\varepsilon_m)} (\BP_K^d, \CE) \overset{\cong}{\lra} \Ind_{P_{I}^m}^{G_0} \Big( H^q_{Y_I(\varepsilon_m)} (\BP_K^d, \CE) \Big) \,,\quad
	(v_1,\ldots,v_{s_m}) \lto \sum_{i=1}^{s_m} g_i \bullet \varphi_{g_i} (v_i) .
\end{equation*}
Consequently the spectral sequence reads as follows
\begin{equation*}
	^{\rm v}\! E_1^{p,q} = \bigoplus\limits_{\substack{I\subset \Delta \\ \abs{\Delta\setminus I}=-p+1}} \varprojlim_{n\in \BN} \,\Ind^{G_0}_{P_I^n} \Big( H^q_{Y_I(\varepsilon_n)} (\BP_K^d, \CE) \Big)
	\Rightarrow {}^{\rm v}\!E_\infty^{p+q} = {}^{\rm h}\!E_\infty^{p+q} = H^{p+q}_\CY (\BP_K^d, \CE) .
\end{equation*}

One continues analysing this spectral sequence by considering complexes $K^\bullet_{q,n}$ defined as
\begin{equation*}
	K^p_{q,n} \defeq \bigoplus\limits_{\substack{I\subset \Delta \\ \abs{\Delta\setminus I}=-p+1}} \Ind^{G_0}_{P_I^n} \Big( H^q_{Y_I(\varepsilon_n)} (\BP_K^d, \CE) \Big) ,
\end{equation*}
for $-(d-1)\leq p\leq 0$, $q \geq 0$, so that $^{\rm v}\! E_1^{\bullet,q} = \varprojlim_{n\in \BN} K^\bullet_{q,n}$.
Applying \Cref{Prop 2 - Vanshing behaviour of local cohomology with respect to Schubert varieties} to the local cohomology groups with respect to $Y_I(\varepsilon_n) = \BP_K^{i_1}$, for $I \subsetneq \Delta$ with $\Delta\setminus I = \{\alpha_{i_1},\ldots,\alpha_{i_s} \}$, $i_1{\,<\,}\ldots{\,<\,}i_s$, and $q \geq 0$, we find that
\begin{equation*}
	H^q_{ Y_I(\varepsilon_n)} (\BP_K^d, \CE) = \begin{cases}
		0 & \text{, if $\{\alpha_0,\ldots,\alpha_{d-q-1}\} \nsubset I$,} \\
		H^q_{ \BP_K^{d-q}(\varepsilon_n)} (\BP_K^d, \CE) & \text{, if $\{\alpha_0,\ldots,\alpha_{d-q-1}\}\subset I$ and $\alpha_{d-q}\notin I$,} \\
		H^q (\BP_K^d, \CE) & \text{, if $\{\alpha_0,\ldots,\alpha_{d-q} \} \subset I$.} 
	\end{cases}
\end{equation*}
In particular the complex $K^\bullet_{q,n}$ is concentrated in the degrees $p=-q+1,\ldots, 0$.

The next step is to write $K^\bullet_{q,n}$ as an extension of complexes $K'^{\bullet}_{q,n}$ and $K''^{\bullet}_{q,n}$ via the $G$-equivariant, short strictly exact sequence
\begin{equation*}
	\begin{tikzcd}[column sep= 0pt]
		0 \ar[d] &[-5pt] & 0 \ar[d] \\
		K'^p_{q,n} \ar[d] &[-5pt]\defeq\joinrel=\joinrel= & \hspace{-0.6cm}\displaystyle\bigoplus\limits_{\substack{I\subset \Delta \\ \abs{\Delta\setminus I}=-p+1 \\ \alpha_0,\ldots,\alpha_{d-q-1} \in I \\ \alpha_{d-q} \notin I}} \hspace{-0.6cm} \Ind^{G_0}_{P_I^n} \Big( \widetilde{H}^q_{\BP_K^{d-q}(\varepsilon_n)} (\BP_K^d, \CE) \Big) \ar[d, start anchor ={[yshift=4ex]}] \\[-10pt]
		K_{q,n}^{p} \ar[d]&[-5pt] =\joinrel=\joinrel=  & \hspace{-0.6cm}\displaystyle\bigoplus\limits_{\substack{I\subset \Delta \\ \abs{\Delta\setminus I}=-p+1 \\ \alpha_0,\ldots,\alpha_{d-q-1} \in I }} \hspace{-0.6cm} \Ind^{G_0}_{P_I^n} \Big( H^q_{Y_I(\varepsilon_n)} (\BP_K^d, \CE) \Big) \ar[d, start anchor ={[yshift=3ex]}] \\[-10pt]
		K''^p_{q,n} \ar[d] &[-5pt] \defeq\joinrel=\joinrel= & \hspace{-0.6cm}\displaystyle\bigoplus\limits_{\substack{I\subset \Delta \\ \abs{\Delta\setminus I}=-p+1 \\ \alpha_0,\ldots,\alpha_{d-q-1} \in I }} \hspace{-0.6cm} \Ind^{G_0}_{P_I^n} \big( H^q (\BP_K^d, \CE) \big) \ar[d, start anchor ={[yshift=3ex]}] \\[-10pt]
		0 &[-5pt] & 0
	\end{tikzcd}
\end{equation*}
of $K$-Fr\'echet spaces.
This sequence is induced by the short strictly exact sequences
\begin{equation*}
	0 \lra 0 \lra H^q  (\BP_K^d, \CE) \lra H^q  (\BP_K^d, \CE) \lra 0 ,
\end{equation*}
for $I\subsetneq \Delta$ with $\alpha_0,\ldots, \alpha_{d-q} \in I$, and by
\begin{equation*}
	0 \lra \widetilde{H}^q_{\BP_K^{d-q}(\varepsilon_n)} (\BP_K^d, \CE) \lra H^q_{\BP_K^{d-q}(\varepsilon_n)} (\BP_K^d, \CE) \lra H^q  (\BP_K^d, \CE) \lra 0,
\end{equation*}
for $I\subsetneq \Delta$ with $\alpha_0,\ldots,\alpha_{d-q-1} \in I$, $\alpha_{d-q} \notin I$.
Here the second homomorphism in the latter sequence is surjective as $H^q (\BP_K^d\setminus \BP_K^{d-q}(\varepsilon_n),\CE) =0$.

By \Cref{Prop 2 - Projective limit description of local cohomology wrt open tubes} the projective systems $(K'^{p}_{q,n})_{n\in \BN}$ satisfy the topological Mittag-Leffler condition of \Cref{Lemma 2 - Exactness of projective limit when topological ML property}.
Therefore we obtain the $G_0$-equivariant, short strictly exact sequence 
\begin{equation}\label{Eq 3 - Extension of complexes in projective limit}
	0 \lra \varprojlim_{n\in \BN} K'^{\bullet}_{q,n} \lra {}^{\rm v}\! E_1^{\bullet,q} \lra \varprojlim_{n\in \BN} K''^{\bullet}_{q,n} \lra 0
\end{equation}
of complexes of $K$-Fr\'echet spaces.

One finds that the complexes $K'^{\bullet}_{q,n}$ and $K''^{\bullet}_{q,n}$, for all $q\geq 0$, $n \in \BN$, are acyclic aside from the very left and right position \cite[Lemma 2.2.5]{Orlik08EquivVBDrinfeldUpHalfSp}.
After checking the (topological) Mittag-Leffler conditions one then concludes that the complexes $\varprojlim_{n\in \BN} K'^{\bullet}_{q,n}$ and $\varprojlim_{n\in \BN} K''^{\bullet}_{q,n}$ are acyclic apart from the very left and right position as well \cite[Rmk.\ 2.2.6]{Orlik08EquivVBDrinfeldUpHalfSp}.
To compute $^{\rm v}\! E_2^{p,q}$ we can consider the long exact sequence of the cohomology of the complexes \eqref{Eq 3 - Extension of complexes in projective limit}.
It follows that the only non-vanishing terms of $^{\rm v}\! E_2^{p,q} = H^p \big(\varprojlim_{n\in\BN} K^{\bullet}_{p,n}\big)$ are the ones for $p=-q+1$, $q=1,\ldots,d$, and the ones for $p=0$, $q \geq 2$.
For these terms we obtain a short strictly exact sequence
\begin{equation}\label{Eq 3 - Extension for terms of E_2-page}
	\begin{aligned}
		0 \lra \Ker\Big( \varprojlim_{n\in \BN}  &{\hspace{1.5pt}} K'^{ -q+1}_{q,n} \ra \varprojlim_{n\in \BN} K'^{-q+2}_{q,n} \Big) \lra {}^{\rm v}\! E_2^{-q+1,q} \\
		&\lra \Ker\Big( \varprojlim_{n\in \BN} K''^{ -q+1}_{q,n} \ra \varprojlim_{n\in \BN} K''^{-q+2}_{q,n} \Big) \lra 0 ,
	\end{aligned}
\end{equation}
for $q=1,\ldots,d$, and one shows that
\begin{equation*}
	{}^{\rm v}\! E_2^{0,q} = H^q(\BP_K^d, \CE) ,
\end{equation*}
for $q \geq 2 $.
Moreover, from this one concludes that the spectral sequence $^{\rm v}\! E_r^{p,q} $ degenerates at the $E_2$-page \cite[p.\ 633]{Orlik08EquivVBDrinfeldUpHalfSp}.

We want to investigate the strong dual of \eqref{Eq 3 - Extension for terms of E_2-page}.
We first look at $\varprojlim_{n\in \BN} K''^{p}_{q,n}$.
Since each $K''^p_{q,n}$ is finite-dimensional, the transition homomorphisms of $\big(K''^p_{q,n}\big)_{n\in \BN}$ are compact \cite[Lemma 16.4]{Schneider02NonArchFunctAna} so that $\varprojlim_{n\in \BN} K''^{p}_{q,n}$ is a nuclear $K$-Fr\'echet space \cite[Prop.\ 19.9]{Schneider02NonArchFunctAna}.
Then $\Ker\big( \varprojlim_{n\in \BN} K''^{ -q+1}_{q,n} \ra \varprojlim_{n\in \BN} K''^{-q+2}_{q,n} \big)$ is a nuclear $K$-Fr\'echet space, too \cite[Prop.\ 19.4 (i)]{Schneider02NonArchFunctAna}.
It follows that 
\begin{align*}
	\Ker \Big( \varprojlim_{n\in \BN} K''^{-q+1}_{q,n} \ra \varprojlim_{n\in \BN} K''^{-q+2}_{q,n} \Big)'_b 
	&\cong \Big(\varprojlim_{n\in \BN} \Ker \big( K''^{-q+1}_{q,n} \ra \varprojlim_{n\in \BN} K''^{-q+2}_{q,n} \big) \Big)'_b \\
	&\cong \varinjlim_{n\in \BN} \Ker \big( K''^{-q+1}_{q,n} \ra  K''^{-q+2}_{q,n} \big)'_b
\end{align*}
by \cite[Prop.\ 16.5]{Schneider02NonArchFunctAna}.
As $K''^\bullet_{q,n}$ is exact at $K''^{-q+2}_{q,n}$, the image of $K''^{-q+1}_{q,n} \ra  K''^{-q+2}_{q,n}$ is closed and this homomorphism is strict \cite[IV.\ \S 4.2 Thm.\ 1]{Bourbaki87TopVectSp1to5}.
Hence \cite[IV.\ \S 4.1 Prop.\ 2]{Bourbaki87TopVectSp1to5} implies that there is topological isomorphism
\begin{equation*}
	\Ker \big( K''^{-q+1}_{q,n} \ra  K''^{-q+2}_{q,n} \big)'_b
	\cong \Coker\Big( \big(K''^{ -q+2}_{q,n}\big)'_b \ra \big(K''^{-q+1}_{q,n}\big)'_b \Big) .
\end{equation*}

To simplify the notation we write $\bQ_{d-q} \defeq \bP_{I_{d-q}}$ for the standard parabolic subgroup corresponding to the subset $I_{d-q} = \{ \alpha_0, \ldots, \alpha_{d-q-1} \} \subset \Delta$.
In Remark \ref{Rmk 1 - Dual of finite induction} (i) we have seen that taking the strong dual and finite induction commute with each other.
Therefore, we obtain a $G$-equivariant, topological isomorphism
\begin{align*}
	\Coker &\Big( \big(K''^{-q+2}_{q,n}\big)'_b \ra \big(K''^{-q+1}_{q,n}\big)'_b \Big) \\
	&\cong \Ind^{G_0}_{Q^n_{d-q}} \big( H^q (\BP_K^d, \CE)' \big) \bigg/ \sum_{i=d-q}^{d-1} \Ind^{G_0}_{P^n_{I_{d-q} \cup \{\alpha_i\}}} \big( H^q (\BP_K^d, \CE)' \big) \\
	&\cong \Big( H^q (\BP_K^d, \CE)' \otimes_K \Ind^{G_0}_{Q^n_{d-q}} (K)  \Big)  \bigg/ \sum_{i=d-q}^{d-1} H^q (\BP_K^d, \CE)' \otimes_K \Ind^{G_0}_{P^n_{I_{d-q} \cup \{\alpha_i\}}} (K) \\
	&\cong H^q (\BP_K^d, \CE)' \otimes_K  v^{G_0}_{Q^n_{d-q}} 
\end{align*}
where
\begin{equation}\label{Eq 3 - Definition of generalized Steinberg representation for finite level}
	v^{G_0}_{Q^n_{d-q}} \defeq \Ind^{G_0}_{Q^n_{d-q}} (K) \bigg/ \sum_{i=d-q}^{d-1} \Ind^{G_0}_{P^n_{I_{d-q} \cup \{\alpha_i\}}} (K) .
\end{equation}
We have used the push-pull formula from Remark \ref{Rmk 1 - Dual of finite induction} (ii), since $H^q (\BP_K^d, \CE)'$ already is a $G$-representation, and that taking the projective tensor product with $H^q (\BP_K^d, \CE)'$ is exact \cite[Lemma 2.1 (ii)]{BreuilHerzig18TowardsFinSlopePartGLn}.

Furthermore, \cite[Prop.\ 1.1.32 (i)]{Emerton17LocAnVect} yields
\begin{align*}
	\varinjlim_{n\in \BN} \Big( H^q (\BP_K^d, \CE)' \otimes_K  v^{G_0}_{Q^n_{d-q}} \Big)
	\cong H^q (\BP_K^d, \CE)' \cotimes{K} \varinjlim_{n\in \BN} \Big( v^{G_0}_{Q^n_{d-q}}  \Big) .
\end{align*}
The second factor of this tensor product can be expressed as a \textit{smooth generalized Steinberg representation} (cf.\ \cite{Casselman74pAdicVanishingThmGarland})
\begin{equation*}
	v^G_{P_I} \defeq \Ind^{\sm,G}_{P_I} (K) \bigg/ \sum_{I\subsetneq J \subset \Delta} \Ind^{\sm,G}_{P_J} (K) ,
\end{equation*}
for $I \subset \Delta$.
Here $\Ind^{\sm,G}_{P_I}(K)$ denotes the smooth induction of the trivial $P_I$-representation, i.e.\ the space locally constant functions invariant under $P_I$ endowed with left regular $G$-action:
\begin{equation*}
	\Ind^{\sm,G}_{P_I}(K) \defeq \big\{ f\in C^\sm (G,K) \,\big\vert\, \forall g\in G , p \in P_I : f(gp) = f(g) \big\} .
\end{equation*}

\begin{lemma}\label{Lemma 3 - Generalized Steinberg representation as inductive limit}
	For $I\subset \Delta$, there is a $G$-equivariant, topological isomorphism
	\begin{equation*}
		\varinjlim_{n \in \BN} \bigg( \Ind^{G_0}_{P^n_{I}} (K) \bigg/ \sum_{I \subsetneq J \subset \Delta} \Ind^{G_0}_{P_{J}^n} (K) \bigg) \cong v^{G}_{P_{I}}
	\end{equation*}
	where $v^G_{P_I}$ denotes the generalized smooth Steinberg representation endowed with the finest locally convex topology.	
\end{lemma}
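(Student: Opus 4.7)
The plan is to identify both sides of the claim with the quotient $\Ind^{\sm,G}_{P_I}(K)/\sum_{I\subsetneq J\subset\Delta} \Ind^{\sm,G}_{P_J}(K)$ by combining the Iwasawa decomposition with a cofinality argument for the subgroups $\{P^n_J\}_{n\in\BN}$. First I would establish a cofinality lemma: for every $J\subset\Delta$, the family $\{P^n_J\}_{n\in\BN}$ is cofinal in the directed set of open subgroups of $G_0$ containing $P_{J,0}$. The key input is the decomposition $P^n_J = P_{J,0}\cdot P^n_\emptyset$, which follows from the smoothness of $\bP_{J,\BZ}$: the reduction map $\bP_{J,\BZ}(\CO_K)\twoheadrightarrow \bP_{J,\BZ}(\CO_K/\unif^n)$ is surjective, so any $g\in P^n_J$ lifts to $g_0 \in P_{J,0}$ with $p_n(g_0)=p_n(g)$, and then $g_0^{-1}g\in P^n_\emptyset$. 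Since the principal congruence subgroups $P^n_\emptyset$ form a neighborhood basis of $1\in G_0$, any open $U\supset P_{J,0}$ contains $P^n_\emptyset$ (hence $P^n_J$) for $n$ sufficiently large.

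Next, I would show the $G_0$-equivariant identification $\bigcup_{n\in\BN} \Ind^{G_0}_{P^n_J}(K) = \Ind^{\sm,G_0}_{P_{J,0}}(K)$ as abstract $K$-vector spaces. The inclusion $\subset$ is clear since $P_{J,0}\subset P^n_J$. For the reverse, a smooth $P_{J,0}$-invariant function on $G_0$ is right-invariant under some open compact subgroup $U\subset G_0$, hence under the open compact subgroup $H\defeq \langle U, P_{J,0}\rangle \subset G_0$, which contains $P_{J,0}$; by the cofinality lemma, $H\supset P^n_J$ for large $n$. The Iwasawa decomposition $G = G_0\cdot P_J$ makes restriction $f\mapsto f\res{G_0}$ a $G_0$-equivariant bijection $\Ind^{\sm,G}_{P_J}(K)\xrightarrow{\cong} \Ind^{\sm,G_0}_{P_{J,0}}(K)$, yielding a $G_0$-equivariant isomorphism $\Ind^{\sm,G}_{P_J}(K)\cong \varinjlim_n \Ind^{G_0}_{P^n_J}(K)$, valid for every $J\subset\Delta$.

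Using that filtered colimits in $K$-vector spaces are exact and commute with finite sums, the preceding identifications combine into
\begin{equation*}
\varinjlim_n \bigg(\Ind^{G_0}_{P^n_I}(K)\Big/\sum_{I\subsetneq J\subset\Delta}\Ind^{G_0}_{P^n_J}(K)\bigg) \cong \Ind^{\sm,G}_{P_I}(K)\Big/\sum_{I\subsetneq J\subset\Delta}\Ind^{\sm,G}_{P_J}(K) = v^G_{P_I}.
\end{equation*}
For the topology: each $\Ind^{G_0}_{P^n_I}(K)$ is finite-dimensional (since $G_0/P^n_I$ is finite) and therefore carries the finest locally convex topology, which coincides with its unique Hausdorff locally convex topology. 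This property is preserved under taking quotients by closed subspaces and under filtered inductive limits, so the left-hand side inherits the finest locally convex topology, matching the topology on $v^G_{P_I}$.

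The main subtlety will be verifying full $G$-equivariance rather than mere $G_0$-equivariance. The finite-level quotients $\Ind^{G_0}_{P^n_I}(K)/\sum \Ind^{G_0}_{P^n_J}(K)$ are only $G_0$-representations; the $G$-action on the direct limit emerges a posteriori, through the identification with $\Ind^{\sm,G}_{P_I}(K)$, where $G$ acts by left translation. Concretely, for $g\in G$ and $h\in G_0$, one uses the Iwasawa decomposition $g^{-1}h = kp$ with $k\in G_0$, $p\in P_J$ to express the $g$-action on $f\in\Ind^{G_0}_{P^n_J}(K)$ (viewed in the limit) as $(g.f)(h) = f(k)$, and one must check this is well-defined on the quotient and induces the natural $G$-action on $v^G_{P_I}$. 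Once this compatibility is unraveled, the isomorphism of the lemma follows.
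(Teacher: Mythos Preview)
Your proposal is correct and follows essentially the same route as the paper: identify $\varinjlim_n \Ind^{G_0}_{P^n_J}(K) \cong \Ind^{\sm,G}_{P_J}(K)$ for each $J$, then pass to the quotient. The only minor presentational differences are that the paper embeds $\Ind^{G_0}_{P^n_J}(K)$ directly into $\Ind^{\sm,G}_{P_J}(K)$ (rather than first into $\Ind^{\sm,G_0}_{P_{J,0}}(K)$) and invokes the snake lemma for quasi-abelian categories to handle the passage to quotients, whereas you appeal to exactness of filtered colimits together with the observation that everything carries the finest locally convex topology; also, the paper already has a $G$-action on the inductive limit defined from the spectral sequence machinery (its Remark on the $G$-action), so $G$-equivariance there is a compatibility check rather than something emerging a posteriori.
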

\begin{proof}
	For all $J\subset \Delta$ and $n\in \BN$, we have a well-defined, $G_0$-equivariant inclusion
	\begin{equation}\label{Eq 3 - Inclusion of smooth inductions}
		\Ind^{G_0}_{P_J^n} (K) \lra \Ind_{P_{J}}^{\sm, G} (K) \,,\quad \sum_{i=1}^{s_n} g_i \bullet \lambda_i \lto \big[ g \mto \lambda_i \text{ , if $gP_J\in g_i P_J^n/P_J$} \big] .
	\end{equation} 
	Here $g_1,\ldots,g_{s_n}$ are coset representatives of $G_0/P^n_J$.
	Since $\Ind^{G_0}_{P_J^n} (K)$ is finite dimensional, this is a continuous homomorphism when the right hand side carries the finest locally convex topology.

	Moreover, given a locally constant function $f\colon G \ra K$ in $\Ind^{\sm, G}_{P_{J}} (K) $, there exists $n\in \BN$ such that for the covering $G/P_J = \bigcup_{i=1}^{s_n} g_i P^n_J/P_J $ the function $f$ is constant on each open subset $g_i P^n_J \cdot P_J$.
	Thus $f$ is contained in the image of \eqref{Eq 3 - Inclusion of smooth inductions}.
	With the choice of the finest locally convex topology on $\Ind^{\sm, G}_{P_{J}} (K)$ it follows that $\Ind^{\sm,G}_{P_{J}} (K) \cong \varinjlim_{n\in \BN} \Ind^{G_0}_{P^n_J} (K)$ is a topological isomorphism.

	With $v^{G_0}_{P^n_I}$ defined in the obvious way, we take the inductive limit over the diagrams
	\begin{equation*}
		\begin{tikzcd}
			\displaystyle\bigoplus_{I \subsetneq J \subset \Delta} \Ind^{G_0}_{P_{J}^n} (K) \ar[r] \ar[d] & \Ind^{G_0}_{P^n_I} (K) \ar[r] \ar[d] & v^{G_0}_{P^n_I}  \ar[r] \ar[d] & 0 \\
			\displaystyle\bigoplus_{I \subsetneq J \subset \Delta} \Ind^{\sm, G}_{P_{J}} (K) \ar[r]  & \Ind^{\sm, G}_{P_{I}} (K) \ar[r]  & v^{G}_{P_{I}} \ar[r] & 0 
		\end{tikzcd}
	\end{equation*}
	which have strictly exact rows.
	The snake lemma \ref{Lemma A1 - Snake lemma} then shows that the induced $G_0$-equivariant homomorphism $\varinjlim_{n\in \BN} v^{G_0}_{P^n_I} \ra v^{G}_{P_{I}}$ is a topological isomorphism.
	Moreover, this isomorphism is $G$-equivariant when $\varinjlim_{n\in \BN} v^{G_0}_{P^n_I}$ carries the $G$-action induced from \Cref{Rmk 3 - G-action on terms of spectral sequence}.
\end{proof}

We now turn towards $ \varprojlim_{n\in \BN} K'^{p}_{q,n}$.
Here the transition homomorphisms of $\big( K'^{p}_{q,n} \big)_{n\in \BN}$ are compact by \Cref{Cor 2 - Projective limit description of rigid local cohomology of Schubert varieties} so that $\varprojlim_{n\in \BN} K'^{p}_{q,n}$ is a nuclear $K$-Fr\'echet space, too. 
Similarly to before one finds that
\begin{equation*}
	\Ker \Big( \varprojlim_{n\in \BN} K'^{ -q+1}_{q,n} \ra \varprojlim_{n\in \BN} K'^{-q+2}_{q,n} \Big)'_b 
	\cong \varinjlim_{n\in \BN} \Coker\Big( \big(K'^{-q+1}_{q,n}\big)'_b \ra \big(K'^{-q+2}_{q,n}\big)'_b \Big)
\end{equation*}
with injective, compact transition homomorphisms in the inductive limit of the right hand side.

We write $\bP_{d-q} \defeq \bP_{J_{d-q}}$ for the standard parabolic subgroup corresponding to the subset $J_{d-q} \defeq \Delta \setminus \{\alpha_{d-q}\} \subset \Delta$.
We recall from \Cref{Prop 2 - Dual of local cohomology wrt open tube is locally analytic representation} (ii) that $W_n \defeq \widetilde{H}^{q}_{\BP_K^{d-q}(\varepsilon_n) } (\BP_K^d, \CE)'_b$ is a locally analytic $P^n_{d-q}$-representation.
Using the exactness of the ``finite'' induction $\Ind^{G_0}_{P^n_{d-q}}(\blank)$, the push-pull formula from Remark \ref{Rmk 1 - Dual of finite induction} (ii), and exactness of tensoring with $W_n$ \cite[Lemma 2.1 (ii)]{BreuilHerzig18TowardsFinSlopePartGLn}, we compute that
\begin{align*}
	\Coker&\Big( \big(K'^{-q+2}_{q,n}\big)'_b \ra \big(K'^{-q+1}_{q,n}\big)'_b \Big) \\
	&\cong \Ind^{G_0}_{Q^n_{d-q}} (W_n) \Bigg/ \sum_{i=d-q+1}^{d-1} \Ind^{G_0}_{P^n_{I_{d-q} \cup \{ \alpha_i \} }} (W_n) \\
	&\cong \Ind^{G_0}_{P^n_{d-q}} \Bigg( \Ind^{P^n_{d-q}}_{Q^n_{d-q}} (W_n) \Bigg/ \sum_{i=d-q+1}^{d-1} \Ind^{P^n_{d-q}}_{P^n_{I_{d-q} \cup \{\alpha_i \}}} (W_n) \Bigg) \\
	&\cong \Ind^{G_0}_{P^n_{d-q}} \Bigg( \Big( W_n \otimes_K \Ind^{P^n_{d-q}}_{Q^n_{d-q}} (K) \Big) \Bigg/ \sum_{i=d-q+1}^{d-1} W_n \otimes_K \Ind^{P^n_{d-q}}_{P^n_{I_{d-q} \cup \{\alpha_i \}}} (K) \Bigg) \\
	&\cong \Ind^{G_0}_{P^n_{d-q}} \Big( W_n \otimes_K v^{P^n_{d-q}}_{Q^n_{d-q}} \Big)
\end{align*}
with the finite-dimensional representations
\begin{equation*}
	v^{P^n_{d-q}}_{Q^n_{d-q}} \defeq  \Ind^{P^n_{d-q}}_{Q^n_{d-q}} (K) \Bigg/ \sum_{i= d-q+1}^{d-1} \Ind^{P^n_{d-q}}_{P^n_{I_{d-q} \cup \{\alpha_i \}}} (K) .
\end{equation*}

\begin{remark}\label{Rmk 3 - G-action on inductive limit of inductions}
	We want to describe the induced $G$-action on 
	\begin{equation*}
		\Ker \Big( \varprojlim_{n\in \BN} K'^{ -q+1}_{q,n} \ra \varprojlim_{n\in \BN} K'^{-q+2}_{q,n} \Big)'_b 
		\cong \varinjlim_{n\in\BN}\, \Ind^{G_0}_{P^n_{d-q}} \Big( \widetilde{H}^{q}_{\BP_K^{d-q}(\varepsilon_n)} (\BP_K^d, \CE)'_b \otimes_K v^{P^n_{d-q}}_{Q^n_{d-q}} \Big) .
	\end{equation*}
	To ease the notation, we write $\bP= \bP_{d-q}$, and $\bQ = \bQ_{d-q}$ here.
	Fix $g\in G$ and consider an element $v$ of the right hand side term. 
	Let $m \in \BN$ such that
	\begin{equation*}
		v = \sum_{i=1}^{s_m} g_i \bullet v_i \in \Ind^{G_0}_{P^m} \Big( \widetilde{H}^{q}_{\BP_K^{d-q}(\varepsilon_m)} (\BP_K^d, \CE)'_b \otimes_K v^{P^m}_{Q^m} \Big) ,
	\end{equation*}
	where $g_1,\ldots,g_{s_m}$ are coset representatives of $G_0/P^m$.

	Similarly to \Cref{Rmk 3 - G-action on terms of spectral sequence}, let $n\geq m$ such that $G/Q = \bigcup_{j=1}^{s_n} h_j Q^n/ Q$ is a refinement of the translated covering $G/Q = \bigcup_{i=1}^{s_m} g g_i Q^m/Q$, for coset representatives $h_1,\ldots, h_{s_n}$ of $G_0/Q^n$.
	We can consider the induced coverings of $G/P$ under the surjection $G/Q \ra G/P$.
	After enlarging $n$ we may assume that the induced covering $G/P = \bigcup_{i=1}^{s_n} h_j P^n/P$ is a refinement of the induced covering $G/P = \bigcup_{i=1}^{s_m} g g_i P^m/P$.

	For $j=1,\ldots,s_n$, we have $h_j Q^n/Q \subset g g_{\sigma_g(j)} Q^m/Q$ with the notation of \Cref{Rmk 3 - G-action on terms of spectral sequence}.
	We claim that this implies $h_j P^n/Q \subset g g_{\sigma_g(j)} P^m/Q$.
	Indeed, using
	\begin{align*}
		h_j P^n/Q = \bigcup_{h_{j'} P^n = h_j P^n} h_{j'} Q^n/Q
		\subset \bigcup_{h_{j'} P^n = h_j P^n} g g_{\sigma_g(j')} Q^m/Q
		\subset \bigcup_{h_{j'} P^n = h_j P^n} g g_{\sigma_g(j')} P^m/Q
	\end{align*}	
	it suffices to show that $g_{\sigma_g(j)} P^m = g_{\sigma_g(j')} P^m$ if $h_j P^n = h_{j'} P^n$.
	For this, we compute that $h_j Q^n/P \subset h_j P^n/P \cap g g_{\sigma_g(j)} P^m/P$.
	By the assumption on $n$ with respect to the covering of $G/P$ this shows that $h_j P^n/P \subset g g_{\sigma_g(j)} P^m/P$ \footnote{Recall that the function $\sigma_g$ is defined with regard to the coverings of $G/Q$.}.
	The, if $h_j P^n = h_{j'} P^n$, the sets $ g g_{\sigma_g(j)} P^m/P $ and $g g_{\sigma_g(j')} P^m/P $ have non-empty intersection which implies $g_{\sigma_g(j)} P^m = g_{\sigma_g(j')} P^m$.

	We now set $p_{g,j} \defeq h^{-1}_j g g_{\sigma_g(j)}$ so that we have $ P^n/Q \subset p_{g,j} P^m/Q$.
	Then $g$ induces continuous homomorphisms
	\begin{equation*}
		\varphi_{p_{g,j}}^t \colon \widetilde{H}^{q}_{\BP_K^{d-q}(\varepsilon_m)} (\BP_K^d, \CE)'_b  \lra  \widetilde{H}^{q}_{\BP_K^{d-q}(\varepsilon_n)} (\BP_K^d, \CE)'_b .
	\end{equation*}
	Furthermore, we have $p_{g,j}^{-1} P^n \subset P^m \cdot Q$.
	Written in terms of locally constant functions $p_{g,j}$ gives a continuous homomorphism
	\begin{align*}
		\Ind^{P^m}_{Q^m} (K) &\lra \Ind^{P^n}_{Q^n} (K) , \\
		f &\lto f(p_{g,j}^{-1} \blank ) = \big[ p \mto f(p') \text{ , if $p_{g,j}^{-1} p = p' q$, for $p'\in P^m$, $q\in Q$}\big] .
	\end{align*}
	This in turn yields a continuous homomorphism $\psi_{p_{g,j}} \colon v^{P^m}_{Q^m} \ra v^{P^n}_{Q^n}$.
	We let $\tau_{p_{g,j}}$ denote the tensor product
	\begin{equation*}
		\tau_{p_{g,j}} \defeq \varphi^t_{p_{g,j}} \otimes \psi_{p_{g,j}} \colon \widetilde{H}^{q}_{\BP_K^{d-q}(\varepsilon_m)} (\BP_K^d, \CE)'_b \botimes{K} v^{P^m}_{Q^m} \lra  \widetilde{H}^{q}_{\BP_K^{d-q}(\varepsilon_n)} (\BP_K^d, \CE)'_b \botimes{K} v^{P^n}_{Q^n} .
	\end{equation*}
	In total the homomorphism by which $g$ acts is given by
	\begin{equation*}
		g .\bigg( \sum_{i=1}^{s_m} g_i \bullet v_i \bigg) = \sum_{j=1}^{s_n} h_j \bullet \tau_{p_{g,j}} (v_{\sigma_g(j)})
		\in \Ind^{G_0}_{P^n} \Big( \widetilde{H}^{q}_{\BP_K^{d-q}(\varepsilon_n)} (\BP_K^d, \CE)'_b \otimes_K v^{P^n}_{Q^n} \Big) .
	\end{equation*}
	\qed
\end{remark}

Since the spectral sequence ${}^{\rm v}\! E_r^{p,q}$ degenerates at the $E_2$-page, we obtain a filtration of $H^1_\CY (\BP_K^d, \CE)$ by $G$-invariant subspaces whose successive subquotients are isomorphic to ${}^{\rm v}\! E_2^{-q+1,q}$.
More precisely and taking into account the long exact sequence \eqref{Eq 3 - Long exact sequence of local cohomology for complement of DHS} of local cohomology with respect to $\CY \subset \BP_K^d$, we arrive at the following theorem.

\begin{theorem}[{cf.\ \cite[Thm.\ 2.2.8]{Orlik08EquivVBDrinfeldUpHalfSp}}]\label{Thm 3 - Orliks theorem on global sections of an equivariant vector bundle on the DHS}
	Let $\CE$ be a $\bG$-equivariant vector bundle on $\BP_K^d$.
	Then there exist a filtration by closed $D(G)$-submodules
	\begin{equation*}
		H^0 (\CX,\CE) = V^d \supset V^{d-1} \supset \ldots \supset V^{1} \supset V^{0} = H^0(\BP_K^d, \CE) ,
	\end{equation*}
	and, for $q = 1,\ldots, d$, short strictly exact sequences of locally analytic $G$-representations
	\begin{equation}\label{Eq 3 - Extension from spectral sequence}
		\begin{aligned}
			0 \lra &{\hspace{2pt}} H^q (\BP_K^d, \CE)' \botimes{K} v^G_{Q_{d-q}} \lra \big(V^q/V^{q-1} \big)'_b \\
			&\lra \varinjlim_{n\in \BN} \Ind^{G_0}_{P^n_{d-q}} \Big( \widetilde{H}^q_{\BP_K^{d-q} (\varepsilon_n)} (\BP_K^d, \CE^\rigbun)'_b \botimes{K} v^{P^n_{d-q}}_{Q^n_{d-q}} \Big) \lra 0 .
		\end{aligned}
	\end{equation}
\end{theorem}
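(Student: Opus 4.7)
The plan is to assemble the theorem from the spectral sequence machinery developed in this section. As a starting point, the long exact sequence \eqref{Eq 3 - Long exact sequence of local cohomology for complement of DHS} shows $H^0(\CX,\CE)$ sits between the finite-dimensional algebraic $G$-representations $H^0(\BP_K^d,\CE)$ and $H^1_\CY(\BP_K^d,\CE)$, so the task reduces to producing a $G$-equivariant filtration of $H^1_\CY(\BP_K^d,\CE)$ whose graded pieces match the right-hand side of \eqref{Eq 3 - Extension from spectral sequence}. I would set $V^0 \defeq H^0(\BP_K^d,\CE)$ and pull the filtration back from $H^1_\CY(\BP_K^d,\CE)$ along the connecting homomorphism, using $G$-equivariance and strictness of \eqref{Eq 3 - Long exact sequence of local cohomology for complement of DHS} to see that the $V^q$ are closed.

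Next I would invoke the degeneration of $^{\rm v}\! E_r^{p,q}$ at the $E_2$-page. The $E_1$-page has already been identified via the isomorphism \eqref{Eq 3 - Isomorphism between Ext groups and local cohomology}; the decomposition into $K'^\bullet_{q,n}$ and $K''^\bullet_{q,n}$ (which is strictly exact by the Mittag--Leffler argument of \Cref{Lemma 2 - Exactness of projective limit when topological ML property}, applicable thanks to \Cref{Prop 2 - Projective limit description of local cohomology wrt open tubes}) together with the vanishing/acyclicity computations of \cite[Lemma 2.2.5]{Orlik08EquivVBDrinfeldUpHalfSp} collapses $^{\rm v}\! E_2^{p,q}$ onto the entries at $p=-q+1$ ($q=1,\ldots,d$) and at $p=0$ ($q\geq 2$). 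Comparing with the horizontal spectral sequence $^{\rm h}\! E_\infty^n = H^n_\CY(\BP_K^d,\CE)$, this yields a decreasing $G$-stable filtration on $H^1_\CY(\BP_K^d,\CE)$ with successive quotients $^{\rm v}\! E_2^{-q+1,q}$, giving rise to the $V^q$.

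To identify the subquotients I would pass to strong duals in the short strictly exact sequence \eqref{Eq 3 - Extension for terms of E_2-page}. Since the transition homomorphisms in $(K'^{\bullet}_{q,n})_n$ and $(K''^{\bullet}_{q,n})_n$ are compact (by \Cref{Cor 2 - Projective limit description of rigid local cohomology of Schubert varieties}), both projective limits are nuclear $K$-Fr\'echet spaces, so \cite[Prop.\ 16.5]{Schneider02NonArchFunctAna} converts $\varprojlim$ into $\varinjlim$ on dualizing and yields spaces of compact type. The $K''$-side is then exactly $H^q(\BP_K^d,\CE)'\botimes{K} v^G_{Q_{d-q}}$ via the push--pull formula of Remark \ref{Rmk 1 - Dual of finite induction}(ii) and \Cref{Lemma 3 - Generalized Steinberg representation as inductive limit}, while the $K'$-side yields the inductive limit in \eqref{Eq 3 - Extension from spectral sequence} by the same finite-induction/dualization identities combined with \Cref{Prop 2 - Dual of local cohomology wrt open tube is locally analytic representation}(i). \Cref{Prop 1 - Equivalences for categories of locally analytic representations} then upgrades these identifications from $D(G)$-modules to locally analytic $G$-representations.

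The main obstacle is the topological bookkeeping: one must ensure strictness of each differential appearing in the spectral sequence so that kernels and cokernels commute with completion and duality. This propagates from the strictness theorem \Cref{Thm 2 - Differentials of algebraic Cech complex are strict} through the \v Cech complexes for the $\CU_\varepsilon$-coverings, combined with the topological Mittag--Leffler condition supplied by the density statements in \Cref{Prop 2 - Projective limit description of local cohomology wrt open tubes}. A second subtlety is the $G$-equivariance on the inductive limit: the $G$-action is only defined level-by-level after passing to a sufficiently fine refinement, as spelled out in \Cref{Rmk 3 - G-action on inductive limit of inductions}, and compatibility of these level-raising maps with the action on $^{\rm v}\! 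E_2^{-q+1,q}$ inherited from the spectral sequence has to be checked by tracing through the identifications above.
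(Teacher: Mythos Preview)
Your proposal is correct and follows essentially the same approach as the paper: the theorem is stated as the culmination of the spectral sequence analysis in \Cref{Sect 3 - The Fundamental Complex}, and you have accurately summarized the key steps---reduction via the long exact sequence \eqref{Eq 3 - Long exact sequence of local cohomology for complement of DHS}, degeneration of ${}^{\rm v}\! E_r^{p,q}$ at $E_2$ via the $K'^\bullet/K''^\bullet$ decomposition and \cite[Lemma 2.2.5]{Orlik08EquivVBDrinfeldUpHalfSp}, and dualization of \eqref{Eq 3 - Extension for terms of E_2-page} using the nuclear Fr\'echet machinery together with \Cref{Lemma 3 - Generalized Steinberg representation as inductive limit} and the push--pull formula. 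Your identification of the two technical subtleties (strictness propagated from \Cref{Thm 2 - Differentials of algebraic Cech complex are strict} and Mittag--Leffler, and the $G$-action on the inductive limit via \Cref{Rmk 3 - G-action on inductive limit of inductions}) matches precisely what the paper emphasizes.
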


When $K$ is a $p$-adic field, Orlik analyses the right hand side term of \eqref{Eq 3 - Extension from spectral sequence} further.
In \cite[p.\ 634]{Orlik08EquivVBDrinfeldUpHalfSp} he shows that there is an isomorphism of locally analytic $G$-representations
\begin{equation}\label{Eq 3 - Isomorphism of locally analytic representations from Orlik}
	\varinjlim_{n\in \BN} \Ind^{G_0}_{P^n_{d-q}} \Big( \widetilde{H}^q_{\BP_K^{d-q} (\varepsilon_n)} (\BP_K^d, \CE^\rigbun)'_b \botimes{K} v^{P^n_{d-q}}_{Q^n_{d-q}} \Big)
	\cong \Ind^{\la, G}_{P_{d-q}} \Big( N'_{d-q} \otimes v^{P_{d-q}}_{Q_{d-q}} \Big)^{\Fd_{d-q}} .
\end{equation}
We explain the notation used here.
Let $U(\Fg)$ and $U(\Fp_{d-q})$ denote the universal enveloping algebras of the Lie algebras of $\bG$ and $\bP_{d-q}$ respectively.
One shows that $\widetilde{H}^q_{\BP_K^{d-q}} (\BP_K^d, \CE)$ is a quotient of a generalized Verma module for $U(\Fg)$.
More precisely, there exists a finite-dimensional $P_{d-q}$-subrepresentation $N_{d-q} \subset \widetilde{H}^q_{\BP_K^{d-q}} (\BP_K^d, \CE)$ which generates it as a $U(\Fg)$-module \cite[Lemma 1.2.1]{Orlik08EquivVBDrinfeldUpHalfSp}, i.e.\ there exists an epimorphism of $U(\Fg)$-modules
\begin{equation*}
	U(\Fg) \otimes_{U(\Fp_{d-q})} N_{d-q} \relbar\joinrel\twoheadrightarrow \widetilde{H}^q_{\BP_K^{d-q}} (\BP_K^d, \CE) .
\end{equation*}
Let $\Fd_{d-q}$ denote the kernel of this epimorphism.
Then $\Ind^{\la, G}_{P_{d-q}} \big( N'_{d-q} \otimes v^{P_{d-q}}_{Q_{d-q}} \big)^{\Fd_{d-q}}$ indicates the subspace of those functions in the locally analytic induction that are annihilated by $\Fd_{d-q}$, cf.\ \cite[p.\ 607]{Orlik08EquivVBDrinfeldUpHalfSp}.
In particular, the right hand side of \eqref{Eq 3 - Isomorphism of locally analytic representations from Orlik} does not depend on the choice of $N_{d-q}$.

However, when $K$ is of positive characteristic $\widetilde{H}^q_{\BP_K^{d-q}} (\BP_K^d, \CE)$ in general is no longer finitely generated, even if we replace $U(\Fg)$ by the algebraic distribution algebra ${\rm Dist}(\bG) \cong \hy(G)$ of $\bG$, see \cite[Ch.\ 2.3]{Kuschkowitz16EquivVBRigidCohomDrinfeldUpHalfSpFF}.
We tackle this problem by adapting a different description of $\Ind^{\la, G}_{P_{d-q}} \big( N'_{d-q} \otimes v^{P_{d-q}}_{Q_{d-q}} \big)^{\Fd_{d-q}}$ via the functors $\CF_P^G$ defined by Orlik and Strauch in \cite{OrlikStrauch15JordanHoelderSerLocAnRep}.
In \Cref{Sect - The Functors of Orlik--Strauch} we then compare Orlik's and our description for the case of a $p$-adic field.

\subsection{The Subquotients of $H^0(\CX,\CE)'_b$ as Locally Analytic $\GL_{d+1}(\CO_K)$-Re\-pre\-sen\-ta\-tions}

We now want to analyse the $G$-representations that occur as a quotient of $\big( {}^{\rm v}\! E_2^{-q+1,q}\big)'_b$ in \eqref{Eq 3 - Extension from spectral sequence}.
We change the notation in as much as we fix $\sdd \defeq d-q \in \{0,\ldots,d-1\}$, and write $\bP \defeq \bP_{d-q}$ as well as $\bQ \defeq \bQ_{d-q}$.
For $n\in \BN$, we have seen in the proof of \Cref{Prop 2 - Dual of local cohomology wrt open tube is locally analytic representation} that 
\begin{equation}\label{Eq 3 - Definition of V_n}
	V_n \defeq \widetilde{H}^{d-\sdd}_{\BP_K^\sdd(\varepsilon_n)^-} (\BP_K^d, \CE^\rigbun)' \otimes_K v^{P^n}_{Q^n}(K)
\end{equation}
is a locally analytic $P^n$-representation whose underlying locally convex vector space is a $K$-Banach space.

\begin{lemma}\label{Lemma 3 - Limit of the local cohomology as a locally analytic representation}
	The transition homomorphisms $V_{n}\ra V_{n+1}$ are injective and compact, i.e.\
	\[ V \defeq \varinjlim_{n\in \BN} V_n \]
	is of compact type this way.
	In particular via the $P^n$-actions on the $V_n$, $V$ becomes a locally analytic $(\hy(G),P_0)$-module in the sense of \Cref{Def 1 - Compatible hyperalgebra modules}.
\end{lemma}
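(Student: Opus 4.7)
The plan is to proceed in three stages: establishing the compact injective nature of the transition maps, then building the locally analytic $P_0$-structure on the inductive limit, and finally assembling the $(\hy(G),P_0)$-module structure.

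First, I would show the transition maps $V_n \to V_{n+1}$ are injective and compact. These are tensor products of transitions on each factor of \eqref{Eq 3 - Definition of V_n}. On the first factor, \Cref{Prop 2 - Dual of local cohomology wrt open tube is locally analytic representation} (i) provides injective compact transitions between the $K$-Banach spaces $\widetilde{H}^{d-\sdd}_{\BP_K^\sdd(\varepsilon_n)^-}(\BP_K^d,\CE)'$. On the second factor, $v^{P^n}_{Q^n}(K) \to v^{P^{n+1}}_{Q^{n+1}}(K)$ is a linear map between finite-dimensional $K$-Banach spaces, hence compact by \Cref{Lemma A1 - Generalities on compact maps}, and is injective by construction (via pullback along the surjection $P^{n+1}/Q^{n+1}\to P^n/Q^n$, compatible with the Steinberg quotient). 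The tensor product of a compact and a continuous map between Banach spaces is compact by \Cref{Lemma A1 - Generalities on compact maps} (iii), so each $V_n \to V_{n+1}$ is injective and compact, making $V$ a space of compact type.

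Second, I would establish that $V$ is a locally analytic $P_0$-representation. Each $V_n$ is a locally analytic $P^n$-representation on a $K$-Banach space: the first factor by \Cref{Prop 2 - Dual of local cohomology wrt open tube is locally analytic representation} (i), tensored with the finite-dimensional smooth $P^n$-representation $v^{P^n}_{Q^n}(K)$. Since $P_0 \subset P^n$ for all $n$, restriction gives each $V_n$ the structure of a locally analytic $P_0$-representation. The transition maps are even $P^{n+1}$-equivariant (both factors being natural with respect to inclusions of parabolic level-$n$ subgroups and shrinking tubes), hence $P_0$-equivariant. The $P_0$-action on $V$ is thus well-defined by passing to the inductive limit, and any orbit map $P_0\to V$, $g\mapsto g.v$, factors through some $V_n$ where it is locally analytic, with the inclusion $V_n \hookrightarrow V$ continuous.

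Third, I would assemble the $(\hy(G),P_0)$-module structure. For each $n$, since $V_n$ is a locally analytic $P^n$-representation, \Cref{Rmk 1 - Locally analytic representation induces compatible hyperalgebra module structure} gives it the structure of a locally analytic $(\hy(P^n),P^n)$-module, with separately continuous $\hy(P^n)$-action via \Cref{Prop 1 - Module structures over the distribution algebra}. The key observation is that $\hy(P^n) = \hy(G)$ as subalgebras of $D(G)$, since $P^n$ is an open subgroup and the hyperalgebra depends only on germs at the identity. Thus each $V_n$ is canonically a $\hy(G)$-module. The transitions $V_n \to V_{n+1}$ are $P^{n+1}$-equivariant and hence, via the equivalence of \Cref{Prop 1 - Equivalences for categories of locally analytic representations} (i), also $D(P^{n+1})$-equivariant, in particular $\hy(G)$-equivariant. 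This gives $V$ a well-defined, separately continuous $\hy(G)$-action. Finally, the two compatibility conditions of \Cref{Def 1 - Compatible hyperalgebra modules} transfer from finite level: condition (1) holds because on each $V_n$ the $\hy(P_0)$-action induced from $\hy(P_0)\subset \hy(G) = \hy(P^n)$ and from $\hy(P_0)\subset D(P_0)$ both originate from the single $P^n$-action via \Cref{Rmk 1 - Locally analytic representation induces compatible hyperalgebra module structure}; condition (2) holds on each $V_n$ by \Cref{Cor 1 - Locally analytic representation satisfies compatibility condition for adjoint representation}, and then passes to the inductive limit.

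The main technical point is the compatibility of the $\hy(G)$-actions with the transition maps; this is not a deep issue but requires observing that $\hy(G) = \hy(P^{n+1})$ so the $P^{n+1}$-equivariance of the transitions automatically upgrades to $\hy(G)$-equivariance. Everything else reduces to tracking definitions and invoking the general framework of \Cref{Prop 1 - Equivalences for categories of locally analytic representations} and \Cref{Cor 1 - Equivalence between compatible hyperalgebra modules and modules over product of hyperalgebra and distribution algebra of subgroup}.
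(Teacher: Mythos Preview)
Your proposal is correct and follows essentially the same approach as the paper: establish injectivity and compactness of each tensor factor separately, combine them, and then invoke the open-subgroup identity $\hy(P^n)=\hy(G)$ together with \Cref{Rmk 1 - Locally analytic representation induces compatible hyperalgebra module structure} to pass to the limit.

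One correction: you invoke \Cref{Lemma A1 - Generalities on compact maps} (iii) to conclude that the tensor product $V_n\to V_{n+1}$ is compact, but that lemma concerns finite \emph{direct products} of compact maps, not tensor products. The paper instead cites \cite[Lemma 18.12]{Schneider02NonArchFunctAna}, which is the statement that the completed tensor product of two compact maps between Banach spaces is compact; similarly, injectivity of the tensor product is handled via \cite[Cor.\ 1.1.27]{Emerton17LocAnVect}. Also, the paper sources the compactness and dense image of the first-factor transitions directly from \Cref{Prop 2 - Projective limit description of local cohomology wrt open tubes} (then dualises via \cite[Lemma 16.4]{Schneider02NonArchFunctAna}) rather than from \Cref{Prop 2 - Dual of local cohomology wrt open tube is locally analytic representation}, though the content is the same. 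With these references swapped in, your argument matches the paper's.
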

\begin{proof}
	We have seen in \Cref{Prop 2 - Projective limit description of local cohomology wrt open tubes} that the transition maps
	\[\widetilde{H}^{d-\sdd}_{\BP_K^\sdd(\varepsilon_{n+1})^-} (\BP_K^d, \CE^\rigbun) \lra \widetilde{H}^{d-\sdd}_{\BP_K^\sdd(\varepsilon_{n})^-} (\BP_K^d, \CE^\rigbun) \]
	are compact and have dense image.
	Therefore their transposes are injective and compact by \cite[Lemma 16.4]{Schneider02NonArchFunctAna}.
	Moreover the homomorphisms $v^{P^n}_{Q^n} \ra v^{P^{n+1}}_{Q^{n+1}}$ which are induced by the restriction map $\Ind^{P^n}_{Q^n} (K) \ra \Ind^{P^{n+1}}_{Q^{n+1}} (K),\, f \mto f\res{P^{n+1}}$, are injective as well.
	Hence it follows from \cite[Cor.\ 1.1.27]{Emerton17LocAnVect} that the tensor product $V_n \ra V_{n+1}$ of these maps is injective.
	The homomorphism $v^{P^n}_{Q^n} \ra v^{P^{n+1}}_{Q^{n+1}}$ is compact as a homomorphism between finite-di\-men\-sion\-al $K$-vector spaces by \cite[Lemma 16.4]{Schneider02NonArchFunctAna}.
	Therefore \cite[Lemma 18.12]{Schneider02NonArchFunctAna} implies that $V_n\ra V_{n+1}$ is compact as well.

	Finally, we have $\hy(P^n) = \hy(G)$ because $P^n\subset G$ is an open subgroup.	
	Hence each $V_n$ is a locally analytic $(\hy(G),P^n)$-module via \Cref{Rmk 1 - Locally analytic representation induces compatible hyperalgebra module structure}, and therefore a locally analytic $(\hy(G),P_0)$-module in particular.
	It follows that $V$ is a locally analytic $(\hy(G),P_0)$-module as well.	
\end{proof}

\begin{remark}\label{Rmk 3 - P-action on inductive limit}
	In fact, $V$ is even a locally analytic $P$-representation.
	For fixed $p\in P$, we find $n\geq m$ large enough such that $h_j P^n/Q \subset p g_{\sigma_g(j)} P^m/Q$, for all $j=1,\ldots, s_n$, like in \Cref{Rmk 3 - G-action on inductive limit of inductions}.
	Considering the images under $G/Q \ra G/P$, it follows from $1 \in p^{-1} P^n/P $ that $P^n/P \subset p P^m/P$.
	This shows that $\BP_K^\sdd (\varepsilon_n)^- \subset p \BP_K^\sdd(\varepsilon_m)^-$ and we obtain a homomorphism
	\begin{equation*}
		\varphi_p^t \colon \widetilde{H}^{d-\sdd}_{\BP_K^\sdd(\varepsilon_m)^-} (\BP_K^d, \CE^\rigbun)' \lra \widetilde{H}^{d-\sdd}_{\BP_K^\sdd(\varepsilon_n)^-} (\BP_K^d, \CE^\rigbun)' .
	\end{equation*}
	Moreover, we have the continuous homomorphism $\psi_p\colon v^{P^m}_{Q^m} \ra v^{P^n}_{Q^n}$ induced by
	\begin{equation*}
		\Ind^{P^m}_{Q^m} (K) \lra \Ind^{P^n}_{Q^n} (K) \,,\quad f \lto f(p^{-1} \blank ) .
	\end{equation*}
	The tensor product of these yields the continuous homomorphism $\tau_p = (\varphi_p^t \otimes \psi_p) \colon V_m \ra V_n$.
	Like before the collection of these $\tau_p$, for all $m \in \BN$, gives the action of $p$ on $V$.
	This $P$-action extends the one of $P_0$ and $V$ is a locally analytic $(\hy(G),P)$-module this way.
	\qed
\end{remark}

\begin{lemma}\label{Lemma 3 - Dual of limit of local cohomology of Schubert varieties}
	There is a canonical topological isomorphism of locally analytic $(\hy(G),P)$-modules
	\begin{equation*}
		V \cong \widetilde{H}^{d-\sdd}_{(\BP_K^{\sdd})^\rig} (\BP_K^d, \CE)'_b \cotimes{K} v^{\GL_{d-\sdd}(K)}_{B_{d-\sdd}} .
	\end{equation*}
	Here $\GL_{d-\sdd,K}$ is viewed as a subgroup of the standard Levi factor $\bL \cong \GL_{\sdd+1,K}\times_K \GL_{d-\sdd,K}$ of $\bP=\bP_{\Delta\setminus\{\alpha_r\}}$, and $\bB_{d-\sdd}$ denotes the standard (lower) Borel subgroup of $\GL_{d-\sdd,K}$.
	On $v^{\GL_{d-\sdd}(K)}_{B_{d-\sdd}}$ the group $P$ acts via inflation, $\hy(G)$ acts trivially, and it carries the finest locally convex topology.
\end{lemma}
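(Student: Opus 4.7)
The plan is to identify both sides of the claimed isomorphism as locally convex inductive limits of the same sequence of $K$-Banach spaces and to check that the resulting identifications respect the $(\hy(G),P)$-module structures.

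First, I would invoke \Cref{Cor 2 - Projective limit description by Banach spaces of rigid local cohomology of Schubert varieties} applied with the sequence $\varepsilon_n = \abs{\unif}^n$ to write $\widetilde{H}^{d-\sdd}_{(\BP_K^\sdd)^\rig}(\BP_K^d,\CE)$ as the projective limit of the $K$-Banach spaces $\widetilde{H}^{d-\sdd}_{\BP_K^\sdd(\varepsilon_n)^-}(\BP_K^d,\CE)$ with compact transition homomorphisms having dense image. Dualizing via \cite[Prop.~16.5]{Schneider02NonArchFunctAna} and \cite[Lemma~16.4]{Schneider02NonArchFunctAna} (this is essentially the argument already used to prove \Cref{Prop 2 - Dual of local cohomology wrt open tube is locally analytic representation}(ii)) yields
\[
\widetilde{H}^{d-\sdd}_{(\BP_K^\sdd)^\rig}(\BP_K^d,\CE)'_b \;\cong\; \varinjlim_{n\in\BN} \widetilde{H}^{d-\sdd}_{\BP_K^\sdd(\varepsilon_n)^-}(\BP_K^d,\CE)'
\]
as locally convex $K$-vector spaces of compact type, with $P$-equivariance provided by the discussion in \Cref{Sect 2 - Coherent Cohomology of Equivariant Vector Bundles}.

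Second, I would identify $\varinjlim_{n\in\BN} v^{P^n}_{Q^n} \cong v^{\GL_{d-\sdd}(K)}_{B_{d-\sdd}}$ as a $P$-representation (inflated from the second Levi factor). Here the crucial point is that, under the Levi decomposition $\bP = \bL \ltimes \mathbf{U}_\bP$ with $\bL \cong \GL_{\sdd+1,K} \times_K \GL_{d-\sdd,K}$, the parabolic $\bQ$ and the parabolics $\bP_{I_\sdd\cup\{\alpha_i\}}$ for $i=\sdd+1,\ldots,d-1$ contain $\mathbf{U}_\bP$ and intersect $\bL$ in the standard Borel $\bB_{d-\sdd}$ respectively in the proper standard parabolics of $\GL_{d-\sdd,K}$ containing it (and trivially in the $\GL_{\sdd+1,K}$-factor). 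The corresponding Iwahori-type decomposition of $P^n$ shows that the $P^n$-action on $\Ind^{P^n}_{Q^n}(K)$ and on each $\Ind^{P^n}_{P^n_{I_\sdd\cup\{\alpha_i\}}}(K)$ factors through the projection onto the level subgroup of $\GL_{d-\sdd}(\CO_K)$, yielding a $P$-equivariant identification $v^{P^n}_{Q^n} \cong v^{\GL_{d-\sdd}(\CO_K)}_{B^n_{d-\sdd}}$ (inflated). Applying \Cref{Lemma 3 - Generalized Steinberg representation as inductive limit} to $\GL_{d-\sdd}(K)$ then gives the desired identification.

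Third, since both inductive systems are of compact type and each $v^{P^n}_{Q^n}$ is finite-dimensional, the tensor product commutes with the inductive limit in a controlled way: using \cite[Prop.~1.1.32(i)]{Emerton17LocAnVect} together with \Cref{Lemma A1 - Generalities on compact maps} we obtain
\begin{align*}
V \;=\; \varinjlim_{n\in\BN} V_n
&\;\cong\; \Big(\varinjlim_{n\in\BN} \widetilde{H}^{d-\sdd}_{\BP_K^\sdd(\varepsilon_n)^-}(\BP_K^d,\CE)'\Big) \cotimes{K} \Big(\varinjlim_{n\in\BN} v^{P^n}_{Q^n}\Big) \\
&\;\cong\; \widetilde{H}^{d-\sdd}_{(\BP_K^\sdd)^\rig}(\BP_K^d,\CE)'_b \cotimes{K} v^{\GL_{d-\sdd}(K)}_{B_{d-\sdd}} .
\end{align*}
Finally, I would verify that this topological isomorphism is compatible with the $(\hy(G),P)$-module structures: the explicit description of the $P$-action on $V$ from \Cref{Rmk 3 - P-action on inductive limit} as the tensor product $\varphi_p^t \otimes \psi_p$ matches termwise the tensor product of the $P$-action on the rigid local cohomology (via \Cref{Prop 2 - Dual of local cohomology wrt open tube is locally analytic representation}(ii)) and the inflated action on the Steinberg; the $\hy(G)$-action acts only on the first factor and the second factor is a smooth (hence $\hy(G)$-trivial) $P$-representation.

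The main obstacle will be Step 2: carefully carrying out the combinatorial identification of the parabolics $\bQ$ and $\bP_{I_\sdd \cup \{\alpha_i\}}$ inside $\bP$ with the corresponding standard parabolics in $\GL_{d-\sdd,K}$, and verifying that the resulting isomorphism $v^{P^n}_{Q^n} \cong v^{\GL_{d-\sdd}(\CO_K)}_{B^n_{d-\sdd}}$ is $P$-equivariant (with $P$ acting through the projection to $\GL_{d-\sdd}(K)$) in a way compatible with the shifts $\psi_p$ from \Cref{Rmk 3 - G-action on inductive limit of inductions}. All remaining steps are either direct invocations of previously established results or straightforward compatibility checks.
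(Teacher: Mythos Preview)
Your proposal is correct and follows essentially the same route as the paper's proof: identify $\varinjlim_n v^{P^n}_{Q^n}\cong v^{\GL_{d-\sdd}(K)}_{B_{d-\sdd}}$ via the coset bijection $P^n/Q^n\cong \GL_{d-\sdd}(\CO_K)/B^n_{d-\sdd}$ and \Cref{Lemma 3 - Generalized Steinberg representation as inductive limit}, recall the compact-type description of $\widetilde{H}^{d-\sdd}_{(\BP_K^{\sdd})^\rig}(\BP_K^d,\CE)'_b$ from \Cref{Prop 2 - Dual of local cohomology wrt open tube is locally analytic representation}(ii), combine them through \cite[Prop.\ 1.1.32(i)]{Emerton17LocAnVect}, and then check $P$- and $\hy(G)$-equivariance termwise. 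The paper's argument for Step~2 is in fact slightly lighter than the Iwahori-type decomposition you sketch (it uses only that $P^n/Q^n\cong \GL_{d-\sdd}(\CO_K)/B^n_{d-\sdd}$), but the content is the same.
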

\begin{proof}
	First note that we have an isomorphism
	\begin{equation*}
		\Ind^{P^n}_{Q^n} (K) \overset{\cong}{\lra} \Ind^{\GL_{d-\sdd}(\CO_K)}_{B_{d-\sdd}^n} (K) \,,\quad f \lto f\res{\GL_{d-\sdd}(\CO_K)} ,
	\end{equation*}
	since $P^n/Q^n \cong \GL_{d-\sdd}(\CO_K)/B_{d-\sdd}^n$.
	Here we view $\GL_{d-\sdd}(\CO_K) \subset P_0$ as a subgroup.
	This yields isomorphisms $v^{P^n}_{Q^n} \cong v^{\GL_{d-\sdd}(\CO_K)}_{B_{d-\sdd}^n}$, for all $n\in \BN$.
	Taking the inductive limit over these isomorphisms and applying \Cref{Lemma 3 - Generalized Steinberg representation as inductive limit} to the case of $B_{d-\sdd} \subset \GL_{d-\sdd}(K)$, we obtain an isomorphism $\varinjlim_{n \in \BN} v^{P^n}_{Q^n} \cong v^{\GL_{d-\sdd}(K)}_{B_{d-\sdd}}$ of locally convex $K$-vector spaces.
	Moreover, one computes that this isomorphism is $P$-equivariant when $\varinjlim_{n \in \BN} v^{P^n}_{Q^n}$ carries the $P$-action from \Cref{Rmk 3 - P-action on inductive limit}, and $v^{\GL_{d-\sdd}(K)}_{B_{d-\sdd}}$ the one by inflation.

	We recall from \Cref{Prop 2 - Dual of local cohomology wrt open tube is locally analytic representation} (ii) that $\widetilde{H}^{d-\sdd}_{(\BP_K^\sdd )^\rig} (\BP_K^d , \CE)'_b \cong \varinjlim_{n \in \BN} \widetilde{H}^{d-\sdd}_{\BP_K^\sdd(\varepsilon_n)^-} (\BP_K^d , \CE)'$.
	By \cite[Prop.\ 1.1.32]{Emerton17LocAnVect} we then obtain a $P$-equivariant topological isomorphism
	\begin{equation}\label{Eq 3 - Comparison isomorphism for dual of limit of local cohomology of Schubert varieties}
		V = \varinjlim_{n \in \BN} \Big( \widetilde{H}^{d-\sdd}_{\BP_K^\sdd(\varepsilon_n)^-} (\BP_K^d , \CE)' \botimes{K} v^{P^n}_{Q^n} \Big) 
		\cong \widetilde{H}^{d-\sdd}_{(\BP_K^\sdd )^\rig} (\BP_K^d , \CE)'_b \cotimes{K} v^{\GL_{d-\sdd}(K)}_{B_{d-\sdd}} .
	\end{equation}
	Finally note that $\hy(G)$ acts trivially on $v^{P^n}_{Q^n}$, for each $n\in \BN$, as $Q^n\subset P^n$ is an open subgroup.
	On $\widetilde{H}^{d-\sdd}_{(\BP_K^\sdd )^\rig} (\BP_K^d , \CE)'_b$ the $\hy(G)$-action is induced by the actions on the $\widetilde{H}^{d-\sdd}_{\BP_K^\sdd(\varepsilon_n)^-} (\BP_K^d , \CE)'$.
	Therefore \eqref{Eq 3 - Comparison isomorphism for dual of limit of local cohomology of Schubert varieties} is $\hy(G)$-equivariant.
\end{proof}

We come back to the locally analytic $G$-representation $\varinjlim_{n \in \BN} \Ind_{P^n}^{G_0} (W_n)$ from \eqref{Eq 3 - Extension from spectral sequence}, for $W_n \defeq \widetilde{H}^{d-\sdd}_{\BP_K^\sdd(\varepsilon_n)} (\BP_K^d, \CE^\rigbun)'_b \otimes_K v^{P^n}_{Q^n}(K)$.
The $K$-Banach spaces $V_n$ allow us to exhibit this as a locally convex $K$-vector space of compact type.

\begin{lemma}\label{Lemma 3 - Limit of inductions is of compact type}
	There is a canonical topological isomorphism of locally analytic $G$-re\-pre\-sen\-ta\-tions
	\begin{equation}\label{Eq 3 - Limit of inductions isomorphic to limit over Banach spaces}
		\varinjlim_{n\in \BN}\, \Ind^{G_0}_{P^n}  (W_n) \cong \varinjlim_{n\in \BN}\, \Ind^{G_0}_{P^n}  (V_n).
	\end{equation}
	Here the $G$-action on $\varinjlim_{n\in \BN}\, \Ind^{G_0}_{P^n}  (V_n)$ is given in the way analogous to \Cref{Rmk 3 - G-action on inductive limit of inductions}.
	Moreover, the transition maps $\Ind^{G_0}_{P^n} (V_n) \ra \Ind^{G_0}_{P^{n+1}} (V_{n+1})$ of the right hand side are compact and injective so that the underlying locally convex $K$-vector space of the above $G$-representation is of compact type.
\end{lemma}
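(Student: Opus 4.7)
The plan is to interlock the inductive systems defining both sides of \eqref{Eq 3 - Limit of inductions isomorphic to limit over Banach spaces} via natural intermediate maps, and then identify the inductive limits by a cofinality argument. I would begin by constructing, for each $n \in \BN$, continuous maps
\[ \alpha_n \colon W_n \lra V_n \qquad \text{and} \qquad \beta_n \colon V_n \lra W_{n+1} . \]
Here $\alpha_n \defeq \tau_n^t \otimes \id_{v^{P^n}_{Q^n}}$, where $\tau_n$ is the map $\widetilde{H}^{d-\sdd}_{\BP_K^\sdd(\varepsilon_n)^-}(\BP_K^d, \CE) \to \widetilde{H}^{d-\sdd}_{\BP_K^\sdd(\varepsilon_n)}(\BP_K^d, \CE)$ induced by functoriality of local cohomology from the inclusion of supports $\BP_K^\sdd(\varepsilon_n)^- \subset \BP_K^\sdd(\varepsilon_n)$; and $\beta_n \defeq \sigma_n^t \otimes \rho_n$, where $\sigma_n$ is similarly induced by the strict containment $\BP_K^\sdd(\varepsilon_{n+1}) \subset \BP_K^\sdd(\varepsilon_n)^-$ (valid since $\varepsilon_{n+1} < \varepsilon_n$), and $\rho_n \colon v^{P^n}_{Q^n} \to v^{P^{n+1}}_{Q^{n+1}}$ is the natural transition map from the proof of \Cref{Lemma 3 - Generalized Steinberg representation as inductive limit}. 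The map $\alpha_n$ is $P^n$-equivariant and $\beta_n$ is $P^{n+1}$-equivariant. By functoriality of local cohomology, the compositions $\beta_n \circ \alpha_n$ and $\alpha_{n+1} \circ \beta_n$ coincide with the canonical transition maps of $\{W_n\}_n$ and $\{V_n\}_n$, respectively.

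Next I would invoke the transitivity $\Ind^{G_0}_{P^{n+1}} \cong \Ind^{G_0}_{P^n} \circ \Ind^{P^n}_{P^{n+1}}$ together with the canonical embedding $V \hookrightarrow \Ind^{P^n}_{P^{n+1}}(V)$ for a $P^n$-representation $V$ viewed as a $P^{n+1}$-representation by restriction, so that applying the induction functors to $\alpha_n$ and $\beta_n$ produces continuous maps that interlock the inductive systems $\{\Ind^{G_0}_{P^n}(W_n)\}_n$ and $\{\Ind^{G_0}_{P^n}(V_n)\}_n$ and whose compositions recover the canonical transition maps on either side. A cofinality argument then yields the desired topological isomorphism \eqref{Eq 3 - Limit of inductions isomorphic to limit over Banach spaces}; $G$-equivariance is a direct verification using the explicit form of the $G$-action from \Cref{Rmk 3 - G-action on inductive limit of inductions} applied to both sides.

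To obtain the compact-type property of the right-hand side, I would factor the transition map $\Ind^{G_0}_{P^n}(V_n) \to \Ind^{G_0}_{P^{n+1}}(V_{n+1})$ through $\Ind^{G_0}_{P^n}(V_{n+1})$. The first factor is a finite direct sum of copies of the injective compact homomorphism $V_n \to V_{n+1}$ from \Cref{Lemma 3 - Limit of the local cohomology as a locally analytic representation} and hence itself injective and compact by \Cref{Lemma A1 - Generalities on compact maps}. The second factor, induced by the surjection $G_0/P^{n+1} \twoheadrightarrow G_0/P^n$, sends a function constant on $P^n$-cosets to the same function viewed as constant on $P^{n+1}$-cosets and is clearly injective; the overall composition decomposes as a finite sum of copies of the compact map $V_n \to V_{n+1}$ and is therefore itself compact and injective. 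The right-hand side of \eqref{Eq 3 - Limit of inductions isomorphic to limit over Banach spaces} is thus an inductive limit of $K$-Banach spaces with injective compact transition maps, which is precisely the defining property of being of compact type.

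The hard part I anticipate is verifying the $G$-equivariance of the interlocking maps after passage to the induced representations: the two $G$-actions on either side of \eqref{Eq 3 - Limit of inductions isomorphic to limit over Banach spaces} are defined via the refinement procedure of \Cref{Rmk 3 - G-action on inductive limit of inductions}, which for a given $g \in G$ and fixed level $m$ produces an auxiliary level $n = n(g,m)$ controlling the translate, and one must check that this choice of $n$ is compatible with the passage between the terms $W_n$ and $V_n$ of the two interlocked systems in a coherent way.
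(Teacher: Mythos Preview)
Your proposal is correct and follows essentially the same approach as the paper: both arguments interlock the two inductive systems via the factorization $\widetilde{H}^{d-\sdd}_{\BP^\sdd_K(\varepsilon_{n})}(\BP_K^d,\CE)'_b \to \widetilde{H}^{d-\sdd}_{\BP^\sdd_K(\varepsilon_{n})^-}(\BP_K^d,\CE)' \to \widetilde{H}^{d-\sdd}_{\BP^\sdd_K(\varepsilon_{n+1})}(\BP_K^d,\CE)'_b$ (tensored with the Steinberg transition and lifted to the induced representations), then conclude by cofinality; and both establish compactness by writing the transition map as a finite direct sum of copies of the compact injection $V_n \hookrightarrow V_{n+1}$. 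Your treatment is slightly more explicit in naming the intermediate maps $\alpha_n,\beta_n$ and in invoking transitivity of induction, whereas the paper packages the same content into a single commutative square; the paper also does not dwell on $G$-equivariance, treating it as implicit once the diagram is set up.
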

\begin{proof}
	In view of the inductive limit description of \Cref{Prop 2 - Dual of local cohomology wrt open tube is locally analytic representation}, for $n\in \BN$, the homomorphism
	\begin{equation*}
		\widetilde{H}^{d-\sdd}_{\BP^\sdd_K(\varepsilon_{n})} (\BP_K^d,\CE^\rigbun)'_b \lra \widetilde{H}^{d-\sdd}_{\BP^\sdd_K(\varepsilon_{n+1})} (\BP_K^d,\CE^\rigbun)'_b
	\end{equation*}
	factors over $\widetilde{H}^{d-\sdd}_{\BP^\sdd_K(\varepsilon_{n})^-} (\BP_K^d,\CE^\rigbun)'$.
	We therefore obtain a commutative diagram of locally analytic $P^{n+1}$-re\-pre\-sen\-tations
	\begin{equation*}
		\begin{tikzcd}
			\widetilde{H}^{d-\sdd}_{\BP^\sdd_K(\varepsilon_{n-1})^-} (\BP_K^d,\CE^\rigbun)' \ar[r]\ar[d] &  \widetilde{H}^{d-\sdd}_{\BP^\sdd_K(\varepsilon_{n})^-} (\BP_K^d,\CE^\rigbun)' \ar[d] \\
			\widetilde{H}^{d-\sdd}_{\BP^\sdd_K(\varepsilon_{n})} (\BP_K^d,\CE^\rigbun)'_b \ar[r]\ar[ur] &	\widetilde{H}^{d-\sdd}_{\BP^\sdd_K(\varepsilon_{n+1})} (\BP_K^d,\CE^\rigbun)'_b .
		\end{tikzcd}
	\end{equation*}
	Combined with the canonical homomorphisms $v^{P^n}_{Q^n}\ra v^{P^{n+1}}_{Q^{n+1}}$, this gives factorizations of the transition maps of both inductive limits in \eqref{Eq 3 - Limit of inductions isomorphic to limit over Banach spaces}
	\begin{equation*}
		\begin{tikzcd}
			\Ind^{G_0}_{P^{n-1}} \Big( \widetilde{H}^{d-\sdd}_{\BP^\sdd_K(\varepsilon_{n-1})^-} (\BP_K^d,\CE^\rigbun)' \otimes_K v^{P^{n-1}}_{Q^{n-1}} \Big) \ar[r]\ar[d] & \Ind^{G_0}_{P^{n}} \Big( \widetilde{H}^{d-\sdd}_{\BP^\sdd_K(\varepsilon_{n})^-} (\BP_K^d,\CE^\rigbun)' \otimes_K v^{P^{n}}_{Q^{n}} \Big) \ar[d] \\
			\Ind^{G_0}_{P^{n}} \Big( \widetilde{H}^{d-\sdd}_{\BP^\sdd_K(\varepsilon_{n})} (\BP_K^d,\CE^\rigbun)'_b \otimes_K v^{P^{n}}_{Q^{n}} \Big) \ar[r]\ar[ru] & \Ind^{G_0}_{P^{n+1}} \Big( \widetilde{H}^{d-\sdd}_{\BP^\sdd_K(\varepsilon_{n+1})} (\BP_K^d,\CE^\rigbun)'_b \otimes_K v^{P^{n+1}}_{Q^{n+1}} \Big)  .
		\end{tikzcd}
	\end{equation*}
	We conclude that both inductive limits are topologically isomorphic to each other.
	
	Now consider, for $n\in \BN$, coset representatives $h_1,\ldots,h_{s_{n+1}}$ of $G_0/P^{n+1}$ and $g_1,\ldots,g_{s_n}$ of $G_0/P^n$ such that $h_j$ gets mapped to $g_{i(j)}$ under $G_0/P^{n+1}\ra G_0/P^n$.
	Then the transition map $\Ind^{G_0}_{P^n} (V_n) \ra \Ind^{G_0}_{P^{n+1}} (V_{n+1})$ is given by
	\begin{equation*}
		\bigoplus_{i=1}^{s_n} g_i \bullet V_n \lra \bigoplus_{j=1}^{s_{n+1}} h_j \bullet V_{n+1} \,,\quad  \sum_{i=1}^{s_n} g_i \bullet v_i \lto \sum_{j=1}^{s_{n+1}} h_j \bullet v_{i(j)} .
	\end{equation*}
	Therefore it is injective and compact as the finite direct sum of compact homomorphisms, by \Cref{Lemma A1 - Generalities on compact maps} (iii).
\end{proof}

Our next goal is to interpret $\varinjlim_{n\in \BN} \Ind^{G_0}_{P^n} (V_n)$ as a subspace of $C^\la (G_0,V)$.
We know from \Cref{Prop 1 - Isomorphism between locally analytic and finite induction} that $\Ind^{G_0}_{P^n} (V_n) \cong \Ind^{\la,G_0}_{P^n} (V_n)$.
As each $V_n$ is a BH-subspace of $V$, we consequently obtain injective continuous homomorphisms, for all $n\in\BN$:
\begin{align*}
	\iota_n \colon	\Ind^{G_0}_{P^n} (V_n) &\lra C^\la (G_0,V_n) \lra C^\la (G_0,V) , \\
	\sum_{i=1}^{s_n} g_i \bullet v_i &\lto \big[ g \mto p^{-1}.v_i \text{ , for $g = g_i p$ with $p\in P^n$} \big] ,
\end{align*}
Given $f\in C^\la(G_0,V)$, $g\in G_0$, and $p\in P_0$, we write $f(g\blank p)$ for the locally analytic function
\begin{equation*}
	f(g\blank p) \colon G_0 \lra V \,,\quad h \lto f(g h p) .
\end{equation*}
We let $\mu (f)$ denote $\mu \in \hy(G)$ applied to a function $f\in C^\la (G_0,V)$ via the pairing \eqref{Eq 1 - Pairing for stalk of locally analytic functions}.
The $\hy(G)$-module action on $V$ is denoted by $\mu \ast v$, for $v\in V$.
Also recall that $\dot{\mu}$ signifies the involution from \Cref{Lemma 1 - Involution of hyperalgebra}.

\begin{lemma}\label{Lemma 3 - Embedding of direct limit of inductions}
	The homomorphism
	\begin{equation*}
		\iota\colon \varinjlim_{n\in \BN} \Ind^{G_0}_{P^n} (V_n) \lra C^\la (G_0,V)
	\end{equation*}
	induced by the $\iota_n$ is a closed embedding with
	\begin{equation*}
		\Im(\iota) = \left\{ f\in C^\la(G_0,V)\middle{|} \forall g \in G_0, p\in P_0, \mu \in \hy(G): \mu \big(f(g\blank p) \big) = p^{-1}. \dot{\mu} \ast f(g)\right\} .
	\end{equation*}
\end{lemma}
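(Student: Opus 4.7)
My plan is to prove the lemma in three steps: (a) the inclusion $\Im(\iota) \subset \CS$ where $\CS$ denotes the right-hand side of the asserted identity; (b) the reverse inclusion $\CS \subset \Im(\iota)$; and (c) the fact that $\iota$ is a closed topological embedding. Step (a) reduces to a direct computation. For $f = \iota_n(\sum_i g_i \bullet v_i)$ and $g = g_i p_i \in G_0$ with $p_i \in P^n$, the fact that $V_n$ is a $P^n$-representation gives $f(ghp) = (p_i h p)^{-1}.v_i = p^{-1}.(h^{-1}.f(g))$ for $h$ close to $e$ and $p \in P_0 \subset P^n$. Thus the germ $f(g\blank p) \in C^\la_e(G_0, V)$ equals $p^{-1}.(\rho_{f(g)} \circ \inv)$; applying $\mu \in \hy(G)$ and using naturality of the pairing in the continuous endomorphism $p^{-1}.(\blank)$, the definition $\dot\mu(F) = \mu(F \circ \inv)$, and the identity $\dot\mu(\rho_w) = \dot\mu \ast w$ from \Cref{Prop 1 - Module structures over the distribution algebra} yields the required equation.

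Step (b) is the main technical point. Given $f \in \CS$, compactness of $G_0$ combined with the compact-type structure of $V = \varinjlim_n V_n$ (\Cref{Lemma 3 - Limit of the local cohomology as a locally analytic representation}) forces $f$ to take values in some $V_n$. Taking $\mu = \delta_e \in \hy(G)_0$ (for which $\dot{\delta_e} = \delta_e$) yields the right $P_0$-equivariance $f(gp) = p^{-1}.f(g)$. By compactness of $G_0$, uniform local analyticity of the Banach-space $P^n$-representation $V_n$ (\Cref{Prop 1 - Equivalent characterization for locally analytic representations on Banach spaces}), and local analyticity of $f$, I would choose $N \geq n$ large enough so that for every $g \in G_0$ the function $F_g(u) \defeq f(gu) - u^{-1}.f(g)$ is analytic on the congruence chart $P^N \cap D_N$ around $e$, where $D_N = 1 + \unif^N M_{d+1}(\CO_K)$ and $P^N \cap D_N$ is a polydisc isomorphic to $\unif^N \Fp(\CO_K)$ (with $\Fp$ the Lie algebra of $\bP$). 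The condition at $p = 1$ gives $\mu(F_g) = \mu(f(g\blank)) - \dot\mu \ast f(g) = 0$ for all $\mu \in \hy(G)$, so the germ of $F_g$ at $e$ vanishes by non-degeneracy of the pairing (\Cref{Cor 1 - Pairing for hyperalgebra and germs of locally analytic functions}), and the identity theorem for power series (\Cref{Prop 1 - Identity theorem for power series}) then forces $F_g \equiv 0$ on the whole connected polydisc $P^N \cap D_N$. Using the decomposition $P^N = P_0 \cdot (P^N \cap D_N)$ (a consequence of the smoothness of $\bP$) together with the $P_0$-equivariance, a short computation shows $F_g(p_1 u) = F_{g p_1}(u) = 0$ for $p_1 \in P_0, u \in P^N \cap D_N$, so $F_g \equiv 0$ on all of $P^N$. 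Hence $f$ is right $P^N$-equivariant and lies in $\iota_N(\Ind^{G_0}_{P^N}(V_N))$.

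For step (c), the subspace $\CS$ is closed in $C^\la(G_0, V)$ because for each fixed triple $(g, p, \mu)$, the linear map $f \mapsto \mu(f(g\blank p)) - p^{-1}.(\dot\mu \ast f(g))$ is continuous: the first term factors through the continuous map $f \mapsto f(g\blank p) \in C^\la_e(G_0, V)$ (obtained from \Cref{Prop 1 - Functorialities for germs of locally analytic functions} applied to $h \mapsto ghp$) followed by the separately continuous pairing with $\mu$ (\Cref{Prop 1 - Pairing for germs of locally analytic functions}), while the second combines $\ev_g$ (\Cref{Prop 1 - Evaluation maps are continuous}) with the continuous endomorphism $p^{-1}.(\dot\mu \ast \blank)$ of $V$. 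Since $\iota$ is a continuous bijection from the compact-type space $\varinjlim_n \Ind^{G_0}_{P^n}(V_n)$ (\Cref{Lemma 3 - Limit of inductions is of compact type}) onto the closed subspace $\CS$ of $C^\la(G_0, V)$, the open mapping theorem for LF-spaces (in the form of \cite[Thm.\ 1.1.17]{Emerton17LocAnVect}) implies that $\iota$ is a topological isomorphism onto its image, hence a closed embedding.

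The main difficulty I expect lies in step (b), namely in choosing $N$ uniformly so that $F_g$ is analytic on the common chart $P^N \cap D_N$ for \emph{every} $g \in G_0$; this requires combining local analyticity of $f$ (bounded on compact $G_0$ by a single $V$-index) with the uniform local analyticity of the Banach representation $V_n$, and only becomes tractable because $G_0$ is compact.
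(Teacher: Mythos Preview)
Your proof is correct and follows essentially the same route as the paper's: both establish the two inclusions via the non-degenerate pairing between $\hy(G)$ and germs in $C^\la_e(G_0,V)$ together with the identity theorem for power series, and then invoke the open mapping theorem to upgrade the continuous bijection onto the closed subspace to a topological embedding. One small slip worth correcting: since every element of $D_N = 1+\unif^N M_{d+1}(\CO_K)$ reduces to the identity modulo $\unif^N$, one has $D_N \subset P^N$ and hence $P^N \cap D_N = D_N$ is the full $\dim\Fg$-dimensional polydisc (not a $\dim\Fp$-dimensional one), which is exactly what you need for the germ argument against $\hy(G)$ and the subsequent decomposition $P^N = P_0 \cdot D_N$.
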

\begin{proof}
	It suffices to show the statement about $\Im(\iota)$.
	Indeed, then $\Im(\iota)$ is the intersection of the kernels of the continuous homomorphisms
	\begin{equation}\label{Eq 3 - Family of homomorphisms for limit of inductions}
		C^\la (G_0,V) \lra V \,,\quad f \lto \mu \big( f(g \blank p) \big) - p^{-1} . \dot{\mu} \ast f(g) ,
	\end{equation}
	for $g \in G_0, p\in P_0, \mu \in \hy(G)$.
	As $V$ is Hausdorff, these kernels are closed subspaces, and so is $\Im(\iota)$.
	Since $C^\la(G_0,V)$ is of compact type by \Cref{Prop 1 - Direct limit description of locally analytic functions for compact manifold} (iii), $\Im(\iota)$ is of compact type as well using \Cref{Lemma A1 - Closed subspaces and quotients of spaces of compact type}.
	Moreover, the induced homomorphism $\iota$ is a continuous bijection onto its image.
	Because $\varinjlim_{n\in \BN} \Ind^{G_0}_{P^n} (V_n)$ is the inductive limit of $K$-Banach spaces, we can apply a version of the open mapping theorem \cite[II.\ \S 4.6 Cor.]{Bourbaki87TopVectSp1to5} to conclude that $\iota$ is strict.

	It remains to show the statement about $\Im(\iota)$.
	Let $\rho_n \colon P^n \ra \GL(V_n)$ denote the representation \eqref{Eq 3 - Definition of V_n} and $\rho_{n,v} \colon P^n \ra V_n$, for $v\in V_n$, its locally analytic orbit maps.
	First, consider $f \in \Im(\iota)$.
	Then there exists $n\in \BN$ such that $f\in \Im(\iota_n)$, i.e.\ $f \in C^\la (G_0,V_n)$ and $f(gp)= p^{-1}.f(g)$, for all $g\in G_0$, $p\in P^n$.
	Let $D_n \defeq 1 + \unif^n M_{d+1}(\CO_K)$, and note that $P^n = D_n \cdot P_0$.
	It follows that
	\begin{equation}\label{Eq 3 - Identity on D_n}
		f(gxp)= (xp)^{-1}.f(g) = (\rho_{n,f(g)} \circ \inv)(xp) ,
	\end{equation}
	for all $g\in G_0$, $x\in D_n$, $p\in P_0$.
	We fix $g\in G_0$ and $p\in P_0$ for the moment and consider \eqref{Eq 3 - Identity on D_n} as an identity of locally analytic functions in $x$ on $D_n$.
	The homomorphism
	\begin{align*}
		V_n \lra C^{\rm la}(P^n,V_n) \,,\quad v \lto (\rho_{n,v} \circ \inv )( \blank p) ,
	\end{align*}
	is $P^n$-equivariant with respect to the left regular representation on $C^{\rm la}(P^n,V_n)$.
	Therefore, it is equivariant for the $\hy(G)$-action as well, and for $v = f(g)$, we obtain
	\begin{equation}\label{Eq 3 - Equivariance of hyperalgebra-action on map that sends to orbit map}
		(\rho_{n, \mu \ast f(g)}\circ \inv)(\blank p)= \mu \ast \big( (\rho_{n,f(g)}\circ \inv)( \blank p) \big),
	\end{equation}
	for all $\mu \in \hy(G)$.
	Finally, we apply $\mu \in \hy(G)$ to \eqref{Eq 3 - Identity on D_n} and compute for the functions restricted to $D_n$:
	\begin{equation}\label{Eq 3 - Computation for f and hyperalgebra}
		\begin{aligned}
			\mu \big(f(g\blank p) \big) &= \mu \big( (\rho_{n,f(g)}\circ \inv)(\blank p)\big) && \\
			&= \big( \dot{\mu} \ast \big( (\rho_{n,f(g)}\circ \inv)(\blank p)\big)\big) (1) &&\quad \text{, by \Cref{Prop 1 - Compatibility of convolution and pairing for left regular representation} } \\
			&= \big( (\rho_{n, \dot{\mu} \ast f(g)}\circ \inv)(\blank p) \big)(1) &&\quad \text{, by \eqref{Eq 3 - Equivariance of hyperalgebra-action on map that sends to orbit map}} \\
			&= p^{-1}.\dot{\mu} \ast f(g) . &&
		\end{aligned}
	\end{equation}

	On the other hand, let $f \in C^\la(G_0,V)$ such that $\mu \big( f(g \blank p) \big) = p^{-1} . \dot{\mu} \ast f(g)$, for all $g \in G_0$, $p\in P_0$, and $\mu \in \hy(G)$.
	As $G_0$ is compact, there exists some $n \in \BN$ such that $f$ factors over the BH-space $V_n$ of $V$.
	Moreover, we find $m\geq n$ such that $f$ is locally analytic with respect to the finite covering 
	\begin{equation}\label{Eq 3 - Covering for local analyticity}
		G_0 = \bigcup_{u\in G_0/P^m } \bigcup_{p\in P^m/D_m} u p D_m ,
	\end{equation}
	i.e.\ $f\res{u p D_m}$ is analytic, for all cosets $u p D_m$.
	By \Cref{Prop 1 - Equivalent characterization for locally analytic representations on Banach spaces}, $\rho_n$ is a locally $K$-analytic map of locally $K$-analytic manifolds.
	After increasing $m$, we therefore may assume that $\rho_n \circ \inv$ is analytic on each coset of \eqref{Eq 3 - Covering for local analyticity} as well.
	In particular, $(\rho_{n,v} \circ \inv)\res{up D_m}$ is analytic, for all cosets $up D_m$ and all $v \in V_n$.

	To show that $f \in \Im(\iota_m)$, we fix $g\in G_0$ and $p \in P^m$ and write $p=xp_0$, for $x \in D_m$, $p_0 \in P_0$.
	Then $f(g \blank p_0)\res{D_m}$ and $(\rho_{n,v}\circ \inv)(\blank p_0)\res{D_m}$ are analytic, for all $v\in V_n$.
	Indeed, let $u'p'D_m$ by the coset of \eqref{Eq 3 - Covering for local analyticity} containing $gp_0$.
	Because $f$ is analytic on $u'p'D_m$, $f(g \blank p_0)$ is analytic on $g^{-1} u'p'D_m p_0^{-1}$.
	But using that $D_m$ is normal in $G_0$ being the kernel of the reduction homomorphism, we see that this last set is equal to $D_m$.
	Similarly $(\rho_{n,v} \circ \inv)$ is analytic on the coset $g^{-1} u'p'D_m$ so that $(\rho_{n,v}\circ \inv)(\blank p_0)$ is analytic on $D_m$.
	If we apply $\mu \in \hy(G)$ to the analytic function $(\rho_{n,v}\circ \inv)(\blank p_0)\res{D_m}$, we compute analogously to \eqref{Eq 3 - Computation for f and hyperalgebra}
	\begin{equation*} 
		p_0^{-1}. \dot{\mu} \ast f(g) = \mu \big( (\rho_{n,f(g)}\circ \inv)(\blank p_0) \big) . 
	\end{equation*}
	Combining this with the assumption $\mu \big( f(g \blank p) \big) = p^{-1}.\dot{\mu} \ast f(g)$, we see that the functions on $D_m$ satisfy
	\begin{equation*}
		\begin{aligned}
			\mu \big( f(g\blank p_0) \big) = \mu \big( (\rho_{n,f(g)}\circ \inv)(\blank p_0) \big),
		\end{aligned}
	\end{equation*}
	for all $\mu \in \hy(G)$.
	By \Cref{Prop 1 - Pairing for germs of locally analytic functions}, this implies that the locally analytic germs at $1$ of $f(g\blank p_0)$ and $(\rho_{n,f(g)}\circ\inv)(\blank p_0)$ agree.
	As both functions are analytic on all of $D_m$, we conclude that they agree there, and we have, for all $g\in G_0$, $p=xp_0 \in D_m \cdot P_0 = P^m$:
	\begin{equation*}
		f(gp)= f(gxp_0) = (\rho_{n,f(g)} \circ \inv)(xp_0) = (xp_0)^{-1}.f(g) = p^{-1}.f(g) .
	\end{equation*}
\end{proof}

Because $V$ is of compact type, by \Cref{Prop 1 - Direct limit description of locally analytic functions for compact manifold} (iii) we have the topological isomorphism $C^{\rm la}(G_0,K) \cotimes{K} V \cong C^{\rm la}(G_0,V)$ induced by $f\otimes v \mto f(\blank) \, v$.
Under this identification, the homomorphisms \eqref{Eq 3 - Family of homomorphisms for limit of inductions} are given by 
\[C^\la (G_0,K) \cotimes{K} V \lra V \,,\quad f\otimes v \lto \mu \big( f(g\blank p) \big) \, v - f(g) \, p^{-1}.\dot{\mu} \ast v .\]
Therefore, $\iota$ fits into the sequence of continuous homomorphism
\begin{align}\label{Eq 3 - Exact sequence for limit of inductions}
	0 \lra \varinjlim_{n\in \BN} \Ind_{P^n}^{G_0} (V_n) \overset{\iota}{\lra} C^\la (G_0,K) \cotimes{K} V &\overset{\psi}{\lra} \prod_{g\in G_0, p\in P_0 , \mu \in \hy(G)} V\\
	f\otimes v &\lto \Big(\mu \big(f(g \blank p) \big)\, v - f(g) \, p^{-1}.\dot{\mu} \ast v \Big)_{g,p,\mu} \nonumber
\end{align}
which is algebraically exact, and $\iota$ is strict.

We want to consider the strong dual of this sequence \eqref{Eq 3 - Exact sequence for limit of inductions}.
Note that by \cite[Rmk.\ 16.1 (ii)]{Schneider02NonArchFunctAna}, the homomorphisms of this dualized sequence are continuous again.
By \cite[Prop.\ 9.11]{Schneider02NonArchFunctAna} there is a topological isomorphism
\begin{equation*}
	\bigoplus_{\substack{g\in G_0\\ p\in P_0 \\\mu \in \hy(G)}} V'_b \overset{\cong}{\lra} \Bigg( \prod_{\substack{g\in G_0\\ p\in P_0 \\\mu \in \hy(G)}} \!\!V\,\, \Bigg)'_b \,,\quad
	\sum \ell_{g,p,\mu} \lto \left[ (v_{g,p,\mu}) \mto \sum \ell_{g,p,\mu}(v_{g,p,\mu}) \right] .
\end{equation*}
Under this isomorphism, the transpose of $\psi$ is given by 
\begin{align*}
	\psi^t \colon \bigoplus_{g\in G_0, p\in P_0 , \mu \in \hy(G)} V'_b &\lra \big( C^{\rm la}(G_0,K) \cotimes{K} V \big)'_b \\
	\sum \ell_{g,p,\mu} &\lto \left[ f\otimes v \mto \sum \mu \big( f(g \blank p) \big) \, \ell_{g,p,\mu}(v) - f(g) \, \ell_{g,p,\mu} (p^{-1}.\dot{\mu}\ast v) \right] .
\end{align*}
By applying \Cref{Cor 1 - Fubini theorem} at various points, we have
\begin{align*}
	\mu \big( f(g\blank p) \big) &= \mu \big[  h_1 \mto f(gh_1 p)  \big]\\
	&= \mu \big[ h_1 \mto \delta_g  \big[ h_2\mto \delta_p \big[ h_3 \mto f(h_2 h_1 h_3) \big] \big] \big] \\
	&= \delta_g \big[ h_2 \mto \mu \big[ h_1 \mto \delta_p \big[ h_3 \mto f(h_2 h_1 h_3) \big] \big] \big] \\
	&= (\delta_g \ast \mu \ast \delta_p)(f) .
\end{align*}
Furthermore, by the definition of the contragredient action of $P_0$ and $\hy(G)$ on $V'_b$:
\[ \ell_{g,p,\mu}(p^{-1}.\dot{\mu}\ast v) = (p.\ell_{g,p,\mu})(\dot{\mu} \ast v) = (\mu\ast p.\ell_{g,p,\mu})(v) . \]

Moreover, both $V$ and $C^\la (G_0,K)$ are reflexive with their strong duals being reflexive Fr\'echet spaces.
Hence by \cite[Prop.\ 20.13]{Schneider02NonArchFunctAna} and \cite[Cor.\ 20.14]{Schneider02NonArchFunctAna}, we have a topological isomorphism
\begin{equation*}
	D(G_0) \cotimes{K} V'_b \overset{\cong}{\lra} \big( C^{\rm la}(G_0,K) \cotimes{K} V \big)'_b \,,\quad \delta\otimes\ell \lto \big[ f\otimes v \mto \delta(f) \, \ell(v)\big] .
\end{equation*}
All in all, we see that the strong dual of \eqref{Eq 3 - Exact sequence for limit of inductions} is the complex
\begin{equation}
	\begin{aligned}
		\bigoplus_{g\in G_0, p\in P_0 ,\mu \in \hy(G)} V'_b &\overset{\psi^t}{\lra} D(G_0) \cotimes{K} V'_b \overset{\iota^t}{\lra} \Big( \varinjlim_{n\in \BN} \Ind_{P^n}^{G_0} (V_n) \Big)'_b \lra 0 \\
		\sum \ell_{g,p,\mu} &\lto \sum \delta_g \ast \mu\ast \delta_p \otimes \ell_{g,p,\mu} - \delta_g \otimes \mu\ast p.\ell_{g,p,\mu} .
	\end{aligned}
\end{equation}

As $\iota$ is a closed embedding, the Hahn--Banach Theorem \cite[Cor.\ 9.4]{Schneider02NonArchFunctAna} implies that $\iota^t$ is surjective.
It follows from the open mapping theorem \cite[Prop.\ 8.6]{Schneider02NonArchFunctAna} that $\iota^t$ is strict.
Moreover, by \cite[IV. \S 4.1 Prop.\ 2]{Bourbaki87TopVectSp1to5} we have $\Ker(\iota^t) = \Im(\iota)^\perp$ where
\begin{equation*}
	\Im(\iota)^\perp \defeq \big\{ \ell \in D(G_0) \cotimes{K} V'_b \,\big\vert\, \forall v \in \Im(\iota): \ell(v) = 0 \big\} .
\end{equation*}
Since $\Im(\iota)^\perp = \Ker(\psi)^\perp$ by the algebraic exactness of \eqref{Eq 3 - Exact sequence for limit of inductions}, \Cref{Lemma A1 - Annihilator of kernel is weak closure of image of the transpose} implies that $\Ker(\iota^t) = \Ker(\psi)^\perp \subset \widebar{\Im(\psi^t)}$.
As $\Im(\psi^t) \subset \Ker(\iota^t)$ and $\Ker(\iota^t)$ is closed, we conclude that $\Ker(\iota^t) = \widebar{\Im(\psi^t)}$.

Under the equivalence of \Cref{Prop 1 - Equivalences for categories of locally analytic representations} (ii), the homomorphism $\iota^t$ becomes a homomorphism of $D(G_0)$-modules when $D(G_0) \cotimes{K} V'_b$ carries the $D(G_0)$-module structure via multiplication on the left in the first factor. 
We therefore obtain a topological isomorphism of $D(G_0)$-modules
\begin{equation*}
	\Big( \varinjlim_{n\in \BN} \Ind_{P^n}^{G_0} (V_n) \Big)'_b \cong  \big( D(G_0) \cotimes{K} V'_b \big) \big/ \Ker(\iota^t) \cong \big( D(G_0) \cotimes{K} V'_b \big) \big/ \widebar{\Im(\psi^t)} .
\end{equation*}
The submodule $\Im(\psi^t)$ in turn is generated by the elements
\begin{equation*}
	\delta_g \ast \mu \ast \delta_p \otimes \ell - \delta_g \otimes \mu\ast p.\ell \quad\text{, for $g\in G_0$, $\mu \in \hy(G)$, $p \in P_0$, $\ell \in V'_b$.}
\end{equation*}
Recall from Proposition \ref{Prop 1 - Description of product of hyperalgebra and distribution algebra of subgroup} that $D(\Fg,P_0)$ is generated by the elements of the form $\mu \ast \lambda$, for $\mu \in \hy(G)$, $\lambda \in D(P_0)$.
Together with the density of the Dirac distributions
, it follows that $\widebar{\Im(\psi^t)}$ is equal to the closure of the $D(G_0)$-submodule generated by the vectors
\begin{equation*}
	\delta \ast \nu \otimes \ell - \delta \otimes \nu \ast \ell \quad \text{, for $\delta \in D(G_0)$, $\nu \in D(\Fg,P_0)$, $\ell \in V'_b$,}
\end{equation*}
where $V'_b$ is a separately continuous $D(\Fg,P_0)$-module via \Cref{Cor 1 - Equivalence between compatible hyperalgebra modules and modules over product of hyperalgebra and distribution algebra of subgroup}.
Using \Cref{Rmk 1 - Completion of projective tensor product over algebras} we conclude the following:

\begin{proposition}\label{Prop 3 - Isomorphism as distribution algebra modules}
	There is a canonical topological isomorphism of $D(G_0)$-modules
	\begin{equation*}
		\bigg( \varinjlim_{n\in \BN} \Ind_{P^n}^{G_0} (V_n) \bigg)'_b
		\cong D(G_0) \cotimes{D(\Fg,P_0)} V'_b .
	\end{equation*}
\end{proposition}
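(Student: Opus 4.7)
My plan is to realise the left-hand side as a quotient of $D(G_0) \cotimes{K} V'_b$ by dualising the embedding $\iota$ from Lemma \ref{Lemma 3 - Embedding of direct limit of inductions}, and then identify the relations that generate the closure of the kernel with precisely the $D(\mathfrak{g},P_0)$-tensor relation.

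First, I would invoke Lemma \ref{Lemma 3 - Embedding of direct limit of inductions} together with the topological isomorphism $C^\la(G_0,V) \cong C^\la(G_0,K) \cotimes{K} V$ (from Proposition \ref{Prop 1 - Direct limit description of locally analytic functions for compact manifold} (iii), using that $V$ is of compact type by Lemma \ref{Lemma 3 - Limit of the local cohomology as a locally analytic representation}) to obtain an algebraically exact sequence
\begin{equation*}
0 \lra \varinjlim_{n\in \BN} \Ind_{P^n}^{G_0}(V_n) \overset{\iota}{\lra} C^\la(G_0,K) \cotimes{K} V \overset{\psi}{\lra} \prod_{g,p,\mu} V
\end{equation*}
with $\iota$ a strict (closed) embedding. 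Then I would take the strong dual. Here the key technical inputs are: (a) $\iota^t$ is surjective and strict by the Hahn--Banach theorem \cite[Cor.\ 9.4]{Schneider02NonArchFunctAna} and the open mapping theorem \cite[Prop.\ 8.6]{Schneider02NonArchFunctAna}; (b) $\bigl(\prod_{g,p,\mu} V\bigr)'_b \cong \bigoplus_{g,p,\mu} V'_b$ by \cite[Prop.\ 9.11]{Schneider02NonArchFunctAna}; and (c) the reflexivity of $V$ and $C^\la(G_0,K)$ gives $\bigl( C^\la(G_0,K) \cotimes{K} V\bigr)'_b \cong D(G_0) \cotimes{K} V'_b$ via \cite[Prop.\ 20.13]{Schneider02NonArchFunctAna} and \cite[Cor.\ 20.14]{Schneider02NonArchFunctAna}.

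Next, using Corollary \ref{Cor 1 - Fubini theorem} I would rewrite $\mu(f(g \blank p)) = (\delta_g \ast \mu \ast \delta_p)(f)$ and $\ell(p^{-1}.\dot{\mu}\ast v) = (\mu \ast p.\ell)(v)$, so that $\psi^t$ becomes
\begin{equation*}
\psi^t\Bigl( \textstyle\sum \ell_{g,p,\mu} \Bigr) = \sum \bigl( \delta_g \ast \mu \ast \delta_p \otimes \ell_{g,p,\mu} - \delta_g \otimes \mu \ast p.\ell_{g,p,\mu} \bigr).
\end{equation*}
Since $\iota^t$ is a strict surjection, $\Ker(\iota^t) = \Im(\iota)^\perp = \Ker(\psi)^\perp$ by \cite[IV.\ \S 4.1 Prop.\ 2]{Bourbaki87TopVectSp1to5} combined with the algebraic exactness of the first sequence, and Lemma \ref{Lemma A1 - Annihilator of kernel is weak closure of image of the transpose} forces $\Ker(\iota^t) = \widebar{\Im(\psi^t)}$.

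Finally, I would identify $\widebar{\Im(\psi^t)}$ with the closure of the $D(G_0)$-submodule of $D(G_0) \cotimes{K} V'_b$ generated by the elements $\delta \ast \nu \otimes \ell - \delta \otimes \nu \ast \ell$ with $\delta\in D(G_0)$, $\nu\in D(\mathfrak{g},P_0)$, $\ell\in V'_b$. This is where the two decisive inputs come in: Proposition \ref{Prop 1 - Description of product of hyperalgebra and distribution algebra of subgroup}, saying that every element of $D(\mathfrak{g},P_0)$ is a finite sum of products $\mu \ast \delta$ with $\mu \in \hy(G)$, $\delta \in D(P_0)$; and Corollary \ref{Cor 1 - Pairing for hyperalgebra and germs of locally analytic functions} / Proposition \ref{Prop 1 - Properties of the distribution algebra}(iv), supplying the density of Dirac distributions that lets me pass from the explicit generators of $\Im(\psi^t)$ (indexed by $g,p,\mu$) to arbitrary $\delta \in D(G_0)$ and $\nu \in D(\mathfrak{g},P_0)$ after taking closures. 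Together with Remark \ref{Rmk 1 - Completion of projective tensor product over algebras}, which identifies such a quotient with $D(G_0) \cotimes{D(\mathfrak{g},P_0)} V'_b$, this yields the desired topological isomorphism of $D(G_0)$-modules, the $D(G_0)$-linearity being evident from the fact that $\iota^t$ is $D(G_0)$-equivariant when $D(G_0) \cotimes{K} V'_b$ is given the module structure by left multiplication on the first factor.

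I expect the main obstacle to be the last identification step: one must carefully match the abstract closure $\widebar{\Im(\psi^t)}$ with the closed $D(G_0)$-submodule cut out by the $D(\mathfrak{g},P_0)$-relation. The issue is that only the Dirac-distribution generators appear naively in $\Im(\psi^t)$, so passing to all of $D(\mathfrak{g},P_0)$ requires using separate continuity of the convolution together with the density statements, and then checking that completing the Hausdorff quotient of $D(G_0) \cotimes{K} V'_b$ by this submodule is admissible (which is where the hereditary completeness hypothesis of Remark \ref{Rmk 1 - Completion of projective tensor product over algebras} applies, since both factors are of the requisite type).
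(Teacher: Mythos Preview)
Your proposal is correct and follows essentially the same route as the paper: dualise the exact sequence from Lemma~\ref{Lemma 3 - Embedding of direct limit of inductions}, compute $\psi^t$ via the Fubini identity to get generators $\delta_g \ast \mu \ast \delta_p \otimes \ell - \delta_g \otimes \mu\ast p.\ell$, identify $\Ker(\iota^t) = \widebar{\Im(\psi^t)}$ using Hahn--Banach, the open mapping theorem and Lemma~\ref{Lemma A1 - Annihilator of kernel is weak closure of image of the transpose}, and then use Proposition~\ref{Prop 1 - Description of product of hyperalgebra and distribution algebra of subgroup} together with the density of Dirac distributions to pass to the $D(\Fg,P_0)$-relations and conclude via Remark~\ref{Rmk 1 - Completion of projective tensor product over algebras}. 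The paper also invokes Corollary~\ref{Cor 1 - Equivalence between compatible hyperalgebra modules and modules over product of hyperalgebra and distribution algebra of subgroup} to justify that $V'_b$ is a separately continuous $D(\Fg,P_0)$-module, which you should cite explicitly when passing to the tensor relation.
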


\subsection{The Subquotients of $H^0(\CX,\CE)'_b$ as Locally Analytic $\GL_{d+1}(K)$-Re\-pre\-sen\-ta\-tions}

On the level of locally analytic $G_0$-representations, the above \Cref{Prop 3 - Isomorphism as distribution algebra modules} already is a description of the term
\begin{equation*}
	\varinjlim_{n\in\BN}\, \Ind^{G_0}_{P^n} \left( \widetilde{H}^{q}_{\BP_K^{d-q}(\varepsilon_n)} (\BP_K^d, \CE)'_b \otimes_K v^{P^n}_{Q^n} \right)
\end{equation*}
occuring in \Cref{Thm 3 - Orliks theorem on global sections of an equivariant vector bundle on the DHS}.
However, we want to extend this to a description as locally analytic $G$-re\-pre\-sen\-tations or equivalently as $D(G)$-modules.

\begin{lemma}\label{Lemma 3 - Tensor products over distribution algebras}
	\begin{altenumerate}
		\item
		There is a topological isomorphism of $D(G_0)$-$D(P)$-bi-modules
		\begin{equation*}
			D(G_0) \botimes{D(P_0),\iota} D(P) \overset{\cong}{\lra} D(G) \,,\quad \mu \otimes \nu \lto \mu \ast \nu ,
		\end{equation*}
		(cf.\ \cite[Lemma 6.1 (i)]{SchneiderTeitelbaum05DualAdmLocAnRep} for the statement on the algebraic level).
		\item
		For a separately continuous $D(\Fg,P)$-module $M$, natural inclusion $D(G_0) \hookrightarrow D(G)$ induces a topological isomorphism
		\begin{equation}\label{Eq 3 - Isomorphism of tensor products with distribution algebras}
			D(G_0) \cotimes{D(\Fg,P_0),\iota} M \overset{\cong}{\lra} D(G) \cotimes{D(\Fg,P),\iota} M \,,\quad \delta \otimes \ell \mto \delta \otimes \ell ,
		\end{equation}
		of $D(G_0)$-modules.
	\end{altenumerate}
\end{lemma}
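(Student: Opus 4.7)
The plan for (i) is to combine the Iwasawa-type decomposition $G = G_0 \cdot P$ (with $G_0 \cap P = P_0$) with the direct-sum description of distribution algebras from \Cref{Prop 1 - Properties of the distribution algebra} (iii). Choosing representatives $\{p_j\}_{j \in J}$ of $P_0 \backslash P$, the assignment $P_0 p_j \mapsto G_0 p_j$ is a bijection from $P_0 \backslash P$ onto $G_0 \backslash G$, so $G = \bigsqcup_{j \in J} G_0 \cdot p_j$ is a disjoint covering by compact open subsets. This produces topological direct sum decompositions $D(G) \cong \bigoplus_j D(G_0) \ast \delta_{p_j}$ and $D(P) \cong \bigoplus_j D(P_0) \ast \delta_{p_j}$. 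Using that the inductive tensor product commutes with locally convex direct sums (which follows from matching universal properties), together with $D(G_0) \botimes{D(P_0),\iota} D(P_0) \cong D(G_0)$ from \Cref{Lemma 1 - Tensor identities for modules over locally convex algebras} (i), I would obtain
\begin{equation*}
D(G_0) \botimes{D(P_0),\iota} D(P) \cong \bigoplus_{j \in J} \big( D(G_0) \botimes{D(P_0),\iota} D(P_0) \ast \delta_{p_j} \big) \cong \bigoplus_{j \in J} D(G_0) \ast \delta_{p_j} \cong D(G),
\end{equation*}
and tracing through the identifications shows this composite is precisely the convolution map.

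For (ii), the difficulty preventing a direct application of associativity (\Cref{Lemma 1 - Tensor identities for modules over locally convex algebras} (ii)) is that $D(P)$ is \emph{not} stable under the right action of $D(\Fg,P)$, so $D(P) \cotimes{D(\Fg,P)} M$ does not make sense. Instead, I would construct mutually inverse maps directly. The forward map $\Phi \colon D(G_0) \cotimes{D(\Fg,P_0),\iota} M \to D(G) \cotimes{D(\Fg,P),\iota} M$ is induced by the continuous inclusion $D(G_0) \hookrightarrow D(G)$ together with the compatibility $D(\Fg,P_0) \subset D(\Fg,P)$. For the backward direction, I use (i) to write every $\delta \in D(G)$ uniquely as a finite sum $\delta = \sum_j \delta_j \ast \delta_{p_j}$ with $\delta_j \in D(G_0)$, and define $\Psi(\delta \otimes m) \defeq \sum_j \delta_j \otimes (\delta_{p_j} \ast m)$ with values in $D(G_0) \cotimes{D(\Fg,P_0),\iota} M$, where the action of $\delta_{p_j}$ on $M$ is read off from the given $D(\Fg,P)$-module structure.

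The key verification is that $\Psi$ descends to the tensor product over $D(\Fg,P)$. By \Cref{Prop 1 - Description of product of hyperalgebra and distribution algebra of subgroup} and density of the Dirac distributions in $D(P)$, it suffices to check the balancing relation for generators $\delta_{p_k}$ (coset Diracs) and $\mu \in \hy(G)$. For the Dirac case, writing $p_j p_k = q_{j,k} p_{\sigma(j,k)}$ with $q_{j,k} \in P_0$, both $\Psi(\delta \ast \delta_{p_k} \otimes m)$ and $\Psi(\delta \otimes \delta_{p_k} \ast m)$ evaluate to $\sum_j \delta_j \ast \delta_{q_{j,k}} \otimes \delta_{p_{\sigma(j,k)}} \ast m$ using $\delta_{q_{j,k}} \in D(\Fg,P_0)$. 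For $\mu \in \hy(G)$, the identity $\delta_{p_j} \ast \mu = \Ad(p_j)(\mu) \ast \delta_{p_j}$ in $D(G)$, combined with $\Ad(p_j)(\mu) \in \hy(G) \subset D(\Fg,P_0)$, makes both sides match.

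The main obstacle I expect to encounter is the continuity of $\Psi$. The plan is to verify separate continuity of the bilinear map $D(G) \times M \to D(G_0) \cotimes{D(\Fg,P_0),\iota} M$ — in the first variable via the topological direct sum decomposition of $D(G)$, in the second via separate continuity of the $D(\Fg,P_0)$-action on $M$ — and then pass first to a continuous map on the inductive tensor product $D(G) \indotimes M$, then to the quotient by the $D(\Fg,P)$-relations, and finally to the Hausdorff completion via its universal property. Since $\Phi$ and $\Psi$ are mutually inverse on the dense uncompleted tensor products, they give the sought topological isomorphism of $D(G_0)$-modules between the completions.
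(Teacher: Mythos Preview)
Your proof of (i) matches the paper's exactly.

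For (ii) your approach is correct, but your concern that associativity cannot be applied is unfounded. The paper does use \Cref{Lemma 1 - Tensor identities for modules over locally convex algebras} (ii), just with a different regrouping than the one you ruled out: via (i) it rewrites $D(G) \indotimes M \cong \big(D(G_0) \botimes{D(P_0),\iota} D(P)\big) \indotimes M \cong D(G_0) \botimes{D(P_0),\iota} \big(D(P) \indotimes M\big)$, tensoring over $D(P_0)$ on the left and over $K$ on the right, which is perfectly legitimate. The inverse map then arises from the continuous collapse $D(P) \indotimes M \to M$, $\lambda \otimes \ell \mapsto \lambda.\ell$ (using the $D(P)$-action coming from the $D(\Fg,P)$-structure), giving $\delta \otimes \lambda \otimes \ell \mapsto \delta \otimes \lambda.\ell$ with values in $D(G_0) \botimes{D(\Fg,P_0),\iota} M$. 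This is precisely your map $\Psi$, but continuity and the $D(P_0)$-balancing are built into the construction rather than checked afterwards. The paper still needs---and leaves implicit---the verification that this map factors through the $D(\Fg,P)$-relations, which is exactly what you spell out for Diracs and elements of $\hy(G)$; so the two routes converge, with the paper's being shorter only in the setup of the inverse.
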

\begin{proof}
	For (i), essentially the proof from \cite[Lemma 6.1 (i)]{SchneiderTeitelbaum05DualAdmLocAnRep} for the algebraic statement applies:
	The Iwasawa decomposition $G = G_0 P$ with $G_0 \cap P = P_0$ (see \cite[\S 3.5]{Cartier79ReppAdicGrpsSurvey}) gives a disjoint covering $G = \bigcup_{p \in P_0 \backslash P} G_0 p$ by compact open subsets.
	In view of \Cref{Prop 1 - Properties of the distribution algebra} (iii) this yields a topological isomorphism
	\begin{align*}
		D(G) \cong \bigoplus_{p \in P_0 \backslash P} D(G_0) \ast \delta_p
	\end{align*}
	of $D(G_0)$-$D(P)$-bi-modules, and one of $D(P_0)$-$D(P)$-bi-modules
	\begin{align*}
		D(P) \cong \bigoplus_{p \in P_0 \backslash P} D(P_0) \ast \delta_p .
	\end{align*}
	Moreover, there is a topological isomorphism \cite[Lemma 1.2.13]{Kohlhaase05InvDistpAdicAnGrp}
	\begin{equation*}
		D(G_0) \indotimes \bigg( \bigoplus_{p \in P_0 \backslash P} D(P_0) \ast \delta_p \bigg)  \cong \bigoplus_{p \in P_0 \backslash P} D(G_0) \indotimes D(P_0) \ast \delta_p 
	\end{equation*}
	of $D(G_0)$-$D(P)$-bi-modules.
	Passing to the quotients we obtain the topological isomorphism
	\begin{align*}
		D(G_0) \botimes{D(P_0),\iota} D(P)
		&\cong D(G_0) \botimes{D(P_0),\iota} \bigg( \bigoplus_{p \in P_0 \backslash P} D(P_0) \ast \delta_p \bigg) \\
		&\cong \bigoplus_{p \in P_0 \backslash P} D(G_0) \botimes{D(P_0),\iota} D(P_0) \ast \delta_p \\
		&\cong \bigoplus_{p \in P_0 \backslash P} D(G_0) \ast \delta_p \cong D(G).
	\end{align*}
	of $D(G_0)$-$D(P)$-bimodules.

	For (ii), clearly the continuous homomorphism $D(G_0) \botimes{K,\iota} M  \ra D(G) \indotimes M$ induces the continuous homomorphism \eqref{Eq 3 - Isomorphism of tensor products with distribution algebras} by passing to the homomorphism between the quotients
	\begin{equation*}
		D(G_0) \botimes{D(\Fg, P_0),\iota} M \lra D(G) \botimes{D(\Fg, P),\iota} M
	\end{equation*}
	and completing.
	Moreover, using the statement of (i) and \Cref{Lemma 1 - Tensor identities for modules over locally convex algebras} (ii) we have a topological isomorphism of $D(G_0)$-modules
	\begin{equation*}
		D(G) \botimes{K,\iota} M  \cong \big( D(G_0) \botimes{D(P_0),\iota} D(P) \big) \indotimes M
		\cong  D(G_0) \botimes{D(P_0),\iota} \big( D(P)  \indotimes M \big) .
	\end{equation*}	
	Together with the well-defined continuous homomorphism of $D(G_0)$-modules
	\begin{align*}
		D(G_0) \otimes_{D(P_0),\iota} \big( D(P) \indotimes M \big) &\lra D(G_0) \botimes{D(\Fg,P_0),\iota} M,  \\
		\delta \otimes \lambda \otimes \ell   &\lto \delta \otimes  \lambda.\ell .
	\end{align*}	
	we obtain $D(G) \indotimes M \ra D(G_0) \botimes{D(\Fg,P_0),\iota} M$.
	This homomorphism factors over the quotient as
	\begin{equation*}
		D(G) \botimes{D(\Fg,P),\iota} M \lra D(G_0) \botimes{D(\Fg,P_0),\iota} M ,
	\end{equation*}
	and one verifies that the completion of the latter homomorphism is an inverse to \eqref{Eq 3 - Isomorphism of tensor products with distribution algebras}.	
\end{proof}

\begin{theorem}\label{Thm 3 - Main theorem}
	Let $\CE$ be a $\bG$-equivariant vector bundle on $\BP_K^d$. 
	For the terms that occur on the right hand side of the description of the subquotients in \Cref{Thm 3 - Orliks theorem on global sections of an equivariant vector bundle on the DHS}, there are topological isomorphisms of $D(G)$-modules
	\begin{align*}
		&\bigg( \varinjlim_{n\in\BN}\, \Ind^{G_0}_{P^n_{d-q}} \Big( \widetilde{H}^{q}_{\BP_K^{d-q}(\varepsilon_n)} (\BP_K^d, \CE)'_b \otimes_K v^{P^n_{d-q}}_{Q^n_{d-q}} \Big) \bigg)'_b \\
		&\qquad\cong D(G) \cotimes{D(\Fg,P_{d-q}),\iota} \bigg( \widetilde{H}^{q}_{(\BP_K^{d-q})^\rig} (\BP_K^d, \CE) \cotimes{K} \Big( v^{\GL_{q}(K)}_{B_{q}} \Big)'_b \bigg) ,
	\end{align*}
	for $q=1,\ldots,d$.
	Here $P_{d-q}$ acts via inflation from the subgroup $\GL_q(K)$ of its standard Levi factor $L_{d-q}$ on $v^{\GL_{q}(K)}_{B_{q}(K)}$, and $\hy(G)$ acts trivially there\footnote{Since $v^{\GL_{q}(K)}_{B_{q}}$ carries the finest locally convex topology, $\Big( v^{\GL_{q}(K)}_{B_{q}} \Big)'$ equals the algebraic dual. The smooth dual of $v^{\GL_{q}(K)}_{B_{q}}$ is a $K$-subspace thereof, but this inclusion is not an equality in general.}.
\end{theorem}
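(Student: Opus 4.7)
The strategy is to assemble the theorem from three already-proved ingredients in sequence.

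The first step is to transfer from the inductive limit involving the spaces $W_n = \widetilde{H}^{q}_{\BP_K^{d-q}(\varepsilon_n)}(\BP_K^d,\CE)'_b \otimes_K v^{P^n_{d-q}}_{Q^n_{d-q}}$ to the one involving the Banach space versions
\begin{equation*}
V_n \defeq \widetilde{H}^{q}_{\BP_K^{d-q}(\varepsilon_n)^-}(\BP_K^d,\CE)' \otimes_K v^{P^n_{d-q}}_{Q^n_{d-q}}.
\end{equation*}
This is the content of the earlier lemma comparing the two inductive systems (Lemma \textit{Limit of inductions is of compact type}), so dualizing reduces the left-hand side of the theorem to $\bigl(\varinjlim_{n\in\BN} \Ind^{G_0}_{P^n_{d-q}}(V_n)\bigr)'_b$.

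The second step is to apply the earlier proposition computing this dual as a completed tensor product (Proposition \textit{Isomorphism as distribution algebra modules}), yielding a canonical topological isomorphism of $D(G_0)$-modules
\begin{equation*}
\Bigl( \varinjlim_{n\in\BN} \Ind^{G_0}_{P^n_{d-q}}(V_n) \Bigr)'_b \;\cong\; D(G_0) \cotimes{D(\Fg,P_{d-q}\cap G_0),\iota} V'_b,
\end{equation*}
where $V \defeq \varinjlim_{n\in\BN} V_n$. The lemma identifying $V$ with the strong dual of the Schubert local cohomology then gives
\begin{equation*}
V \;\cong\; \widetilde{H}^{q}_{(\BP_K^{d-q})^\rig}(\BP_K^d,\CE)'_b \cotimes{K} v^{\GL_q(K)}_{B_q}
\end{equation*}
as locally analytic $(\hy(G), P_{d-q})$-modules, which I would dualize. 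Since the first factor is of compact type while the second carries the finest locally convex topology, the strong dual of the completed tensor product splits as
\begin{equation*}
V'_b \;\cong\; \widetilde{H}^{q}_{(\BP_K^{d-q})^\rig}(\BP_K^d,\CE) \cotimes{K} \bigl(v^{\GL_q(K)}_{B_q}\bigr)'_b
\end{equation*}
via the reflexivity between spaces of compact type and nuclear Fr\'echet spaces.

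The third step, finally, is to pass from $D(G_0)$-modules to $D(G)$-modules by invoking part (ii) of the earlier lemma on tensor products over distribution algebras, which provides the topological isomorphism
\begin{equation*}
D(G_0) \cotimes{D(\Fg,P_{d-q}\cap G_0),\iota} V'_b \;\cong\; D(G) \cotimes{D(\Fg,P_{d-q}),\iota} V'_b.
\end{equation*}
Concatenating all the isomorphisms above then proves the theorem.

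The main technical hurdle I anticipate lies in the second step, specifically in verifying that dualization commutes with the completed tensor product in the expected way and that the resulting identification is compatible with the $(\hy(G), P_{d-q})$-module structures on both sides. On the side of $\bigl(v^{\GL_q(K)}_{B_q}\bigr)'_b$ this is subtle because $v^{\GL_q(K)}_{B_q}$ carries the finest locally convex topology (its strong dual being a topological product of copies of $K$), and one has to confirm that the inflation action of $P_{d-q}$ together with the trivial $\hy(G)$-action survives dualization unchanged. The remaining bookkeeping between $\hy(G) \subset D_e(G)$ and the separately continuous $D(\Fg, P_{d-q})$-module structures is then handled by the equivalence of categories between locally analytic $(\hy(G), H)$-modules and separately continuous $D(\Fg, H)$-modules established earlier.
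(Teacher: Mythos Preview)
Your three-step outline correctly assembles the ingredients to produce a topological isomorphism of $D(G_0)$-modules, and in fact this is exactly what the paper does in the first paragraph of its proof: combine Proposition~\ref{Prop 3 - Isomorphism as distribution algebra modules} with Lemma~\ref{Lemma 3 - Tensor products over distribution algebras}(ii) to obtain
\[
\omega\colon \Big(\varinjlim_{n\in\BN}\Ind^{G_0}_{P^n}(V_n)\Big)'_b \overset{\cong}{\lra} D(G)\cotimes{D(\Fg,P),\iota} V'_b.
\]
However, there is a genuine gap: you conclude by saying ``concatenating all the isomorphisms above then proves the theorem,'' but the theorem asserts an isomorphism of $D(G)$-modules, whereas what you have assembled is only a $D(G_0)$-module isomorphism. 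Lemma~\ref{Lemma 3 - Tensor products over distribution algebras}(ii) is explicitly stated only at the level of $D(G_0)$-modules, and Proposition~\ref{Prop 3 - Isomorphism as distribution algebra modules} was obtained by dualizing the embedding $\iota$ into $C^\la(G_0,V)$, which carries only a $G_0$-action. The $G$-action on the left-hand side is the rather intricate one described in Remark~\ref{Rmk 3 - G-action on inductive limit of inductions}, and there is no a~priori reason why $\omega$ should intertwine it with left multiplication on $D(G)$.

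This is not a bookkeeping issue but the main content of the paper's proof. The paper constructs by hand a $G$-equivariant lift $\tilde\iota\colon \varinjlim_n\Ind^{G_0}_{P^n}(V_n)\to C^\la(G,V)$ compatible with $\iota$ under restriction to $G_0$; the verification that $\tilde\iota$ is $G$-equivariant is an explicit computation with the Iwasawa decomposition and the formulas from Remark~\ref{Rmk 3 - G-action on inductive limit of inductions}. One then dualizes, identifies $\big(C^\la(G,K)\projcotimes V\big)'_b$ with $D(G)\indcotimes V'_b$, and fits everything into a commutative diagram in which $\omega$ appears. Since $\tilde\iota^t$ is surjective and $G$-equivariant, the $G$-equivariance of $\omega$ follows, and density of Dirac distributions upgrades this to $D(G)$-linearity. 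The technical hurdle you flagged (dualizing the tensor product for $V$) is comparatively minor and is already handled by Lemma~\ref{Lemma 3 - Dual of limit of local cohomology of Schubert varieties}; the step you are missing is the passage from $G_0$- to $G$-equivariance.
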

\begin{proof}
	We keep the simplified notation with $\sdd = d-q \in \{0,\ldots,d-1\}$ fixed and $\bP \defeq \bP_{{d-q}}$, $\bQ \defeq \bQ_{{d-q}}$.
	Combining the topological isomorphism 
	\begin{equation*}
		\bigg(\varinjlim_{n\in\BN}\, \Ind^{G_0}_{P^n} (V_n)\bigg)'_b \cong D(G_0) \cotimes{D(\Fg,P_{0})} V'_b
	\end{equation*}
	from \Cref{Prop 3 - Isomorphism as distribution algebra modules} which was obtained via the topological embedding 
	\begin{equation*} 
		\iota \colon \varinjlim_{n\in\BN}\, \Ind^{G_0}_{P^n} (V_n) \lhook\joinrel\longrightarrow C^\la (G_0,K) \cotimes{K} V
	\end{equation*}
	with the statement of \Cref{Lemma 3 - Tensor products over distribution algebras} (ii), already gives a topological isomorphism of $D(G_0)$-modules
	\begin{equation*}
		\omega \colon \Big(\varinjlim_{n\in\BN}\, \Ind^{G_0}_{P^n} (V_n)\Big)'_b \overset{\cong}{\lra} D(G) \cotimes{D(\Fg,P),\iota} V'_b .
	\end{equation*}
	It remains to show that $\omega$ is $D(G)$-linear.

	To do so we first construct a $G$-equivariant homomorphism 
	\begin{equation*}
		\tilde{\iota} \colon \varinjlim_{n\in\BN}\, \Ind^{G_0}_{P^n} (V_n) \lra C^\la(G,V)
	\end{equation*}
	which is compatible with $\iota$ and the restriction map 
	\begin{equation*}
		(\blank)\res{G_0}\colon C^\la(G,V) \lra C^\la(G_0,V) \,,\quad f \lto f\res{G_0},
	\end{equation*}
	in the sense that $(\blank)\res{G_0} \circ \tilde{\iota}= \iota$.
	Let $f \in \Ind^{G_0}_{P^n} (V_n)$ correspond to the locally analytic function $f\colon G_0 \ra V_n$ so that $f(gp)= p^{-1}. f(g)$, for all $g\in G_0$, $p\in P^n$.
	We define an associated function $\tilde{f} \in C^\la(G,V)$ as follows.
	For $g\in G$ with Iwasawa decomposition $g= g_0 p$, with $g_0 \in G_0$, $p\in P$, we set
	\begin{equation*}
		\tilde{f}(g) = p^{-1}. f(g_0) \in V
	\end{equation*}
	where $p^{-1}$ acts on $f(g_0) \in V$ as explained in \Cref{Rmk 3 - P-action on inductive limit}.
	This gives a well-defined function $\tilde{f}\colon G \ra V$, because, for a different decomposition $g=g'_0 p'$ with $g'_0 =  g_0 p^{-1}_0$, $p'= p_0 p$, for some $p_0 \in P_0$, we have
	\begin{equation*}
		\tilde{f}(g'_0 p') = (p')^{-1} . f(g'_0) = p^{-1} .p_0^{-1}. f( g_0 p^{-1}_0) = p^{-1} . f(g_0) = \tilde{f}(g) 
	\end{equation*}
	as $p_0 \in P^n$.

	The function $\tilde{f}$ is locally analytic:
	For fixed $g=g_0 p$ with $g\in G_0$, $p\in P$, we consider the open neighbourhood $G_0 p$ of $g$.
	There $\tilde{f}\res{G_0  p} \colon h  p \mto \tilde{f}(hp) = p^{-1}.f(h)$ is locally analytic by \Cref{Prop 1 - Functorialities for the space of locally analytic functions} (i) since $f$ is locally analytic.
	In total we obtain the sought homomorphism
	\begin{equation*}
		\tilde{\iota} \colon \varinjlim_{n\in\BN}\, \Ind^{G_0}_{P^n} (V_n) \lra C^\la(G,V) \,,\quad f \lto \tilde{f} ,
	\end{equation*}
	with $(\blank)\res{G_0} \circ \tilde{\iota} = \iota$.

	Next we want to show that $\tilde{\iota}$ is $G$-equivariant with respect to the left-regular $G$-action\footnote{As $G$ is not compact, the left-regular $G$-representation is not locally analytic but it is continuous nevertheless.} on $C^\la(G,V)$.
	To this end, let $f \in \Ind^{G_0}_{P^m}(V_m)$ be given by $\sum_{i=1}^{s_m} g_i \bullet v_i$ so that $f(g_i p ) = p^{-1}. v_i$, for $p\in P^m$.
	As usual $g_1,\ldots,g_{s_m}$ denote coset representatives of $G_0/P^m$.

	We fix $g\in G$, and want to show that $\tilde{\iota}(g.f) = g. \tilde{\iota}(f)$.
	Let $n\geq m$ like in \Cref{Rmk 3 - G-action on inductive limit of inductions} so that $P^n/Q \subset p_{g,j} P^m/Q$ with $p_{g,j} \defeq h_j^{-1} g g_{\sigma_g(j)}$.
	Then we have seen that 
	\begin{equation*}
		g.f = g.\bigg( \sum_{i=1}^{s_m}  g_i \bullet v_i \bigg) = \sum_{j=1}^{s_n}  h_j \bullet \tau_{p_{g,j}} (v_{\sigma_g(j)}) \in  \Ind^{G_0}_{P^n} (V_n).
	\end{equation*}
	Now consider $h\in G$, and let $j\in \{1,\ldots, s_n\}$ such that $h \in h_j P^n/P$, i.e.\ $h = h_j p_{(n)} p$, for some $p_{(n)} \in P^n$, $p\in P$.
	We compute that
	\begin{align*}
		\big( \tilde{\iota}(g.f)\big) (h) &= \big( \tilde{\iota}(g.f)\big) (h_j p_{(n)} p)
		= p^{-1}. \big( (g.f)(h_j p_{(n)}) \big) \\
		&= p^{-1}. p_{(n)}^{-1}. \tau_{p_{g,j}} (v_{\sigma_g(j)})
		= (\tau_{p^{-1}} \circ \tau_{p_{(n)}^{-1}} \circ \tau_{p_{g,j}})(v_{\sigma_g(j)}) .
	\end{align*}
	On the other hand, we have
	\begin{equation*}
		g^{-1} h = (g_{\sigma_g(j)} p_{g,j}^{-1} h_j^{-1} ) ( h_j p_{(n)} p) = g_{\sigma_g(j)} p^{-1}_{g,j} p_{(n)} p .
	\end{equation*}
	Hence
	\begin{align*}
		\big(g.\tilde{\iota}(f)\big)(h) &= \tilde{\iota}(f)(g^{-1}h) 
		= \tilde{\iota}(f)(g_{\sigma_g(j)} p^{-1}_{g,j} p_{(n)} p )\\
		&= \big(p_{g,j}^{-1} p_{(n)} p \big)^{-1} . f( g_{\sigma_g(j)}) 
		= (\tau_{p^{-1}} \circ \tau_{p_{(n)}^{-1}} \circ \tau_{p_{g,j}} ) (v_{\sigma_g(j)})
	\end{align*}
	This shows that indeed $\tilde{\iota}$ is $G$-equivariant.

	We now use the injective continuous integration homomorphism from \Cref{Prop 1 - Integration map} (i) together with \cite[Cor.\ 18.8]{Schneider02NonArchFunctAna} to obtain
	\begin{equation*}
		C^\la (G,V) \lra \CL_b (D(G),V) \cong D(G)'_b \projcotimes V .
	\end{equation*}
	As $C^\la(G,V)$ is reflexive by \Cref{Cor 1 - Space of K-valued locally analytic functions is reflexive and barrelled} this yields an injective continuous homomorphism 
	\begin{equation*}
		\varinjlim_{n\in\BN}\, \Ind^{G_0}_{P^n} (V_n) \overset{\tilde{\iota}}{\lra} C^\la (G,V) \lra C^\la (G,K) \projcotimes V
	\end{equation*}
	that we continue to call $\tilde{\iota}$.

	Moreover, let $G = \bigcup_{i\in I} g_i G_0$ be a disjoint covering, for coset representatives $g_i$ of $G/G_0$, so that $C^\la(G,K) \cong \prod_{i\in I} C^\la(g_i G_0,K)$ and $D(G) \cong \bigoplus_{i\in I} D(g_i G_0,K)$.
	Then there are topological isomorphisms
	\begin{equation*}
		\begin{aligned}
			\big( C^\la(G,K) \projcotimes V \big)'_b &\cong \bigg( \prod_{i\in I} C^\la (g_i G_0, K) \cotimes{K} V \bigg)'_b &&\quad\text{, by \cite[Lemma 2.1 (iii)]{BreuilHerzig18TowardsFinSlopePartGLn}} \\
			&\cong \bigoplus_{i\in I} \big( C^\la (g_i G_0, K) \cotimes{K} V \big)'_b &&\quad\text{, by \cite[Prop.\ 9.11]{Schneider02NonArchFunctAna}} \\
			&\cong \bigoplus_{i\in I} D(g_i G_0, K) \cotimes{K} V'_b &&\quad\text{, by \cite[Prop.\ 1.1.32 (ii)]{Emerton17LocAnVect}} \\
			&\cong D(G) \indcotimes V'_b . &&\quad\text{, by \cite[Cor.\ 1.2.14]{Kohlhaase05InvDistpAdicAnGrp}}
		\end{aligned}
	\end{equation*}
	The resulting $G$-equivariant isomorphism fits into the commutative square 
	\begin{equation*}
		\begin{tikzcd}
			\big( C^\la(G_0,K) \cotimes{K} V \big)'_b \ar[r] \ar[d, "\cong"'] & \big( C^\la(G,K) \projcotimes V \big)'_b \ar[d, "\cong"] \\
			D(G_0) \cotimes{K} V'_b \ar[r] & D(G) \indcotimes V'_b
		\end{tikzcd}
	\end{equation*}
	where the horizontal homomorphism on the top is the transpose of $(\blank)\res{G_0}$, and the bottom one is induced by the embedding $D(G_0) \hookrightarrow D(G)$.
	We finally arrive at the commutative diagram
	\begin{equation*}
		\begin{tikzcd}[column sep = tiny]
			D(G_0) \cotimes{K} V'_b \ar[rrr, two heads] \ar[dddd] &[-25pt] &[+17pt] &[-25pt] D(G_0) \cotimes{D(\Fg,P_0)} V'_b \ar[dddd, "\cong"] \\
			& \big( C^\la(G_0,K) \cotimes{K} V \big)'_b \ar[lu, "\cong"'] \ar[dd] \ar[rd, two heads, "\iota^t"] & &\\[-25pt]
			& &\Big( \varinjlim_{n\in \BN} \Ind_{P^n}^{G_0} (V_n) \Big)'_b \ar[ruu, "\cong"] \ar[rdd, "\omega"'] & \\[-25pt]
			& \big( C^\la(G,K) \projcotimes V \big)'_b \ar[ld, "\cong"] \ar[ru, "\tilde{\iota}^t"'] & &\\
			D(G) \indcotimes V'_b \ar[rrr, two heads] & & &D(G) \cotimes{D(\Fg,P),\iota} V'_b .
		\end{tikzcd}
	\end{equation*}
	Since $\iota^t$ is surjective, so is $\tilde{\iota}^t$. 
	It follows that the induced topological isomorphism $\omega$ is $G$-equivariant because all the other homomorphisms in the lower ``square'' are.
	We conclude that $\omega$ is $D(G)$-linear by using the density of the Dirac distributions in $D(G)$.
\end{proof}

\subsection{The Functors $\CF_P^G$ of Orlik--Strauch}\label{Sect - The Functors of Orlik--Strauch}

In this section we want to relate our description from \Cref{Thm 3 - Main theorem} to the functors $\CF_P^G$ introduced by Orlik and Strauch in \cite{OrlikStrauch15JordanHoelderSerLocAnRep}.
To this end we suppose that the non-archimedean local field $K$ is of mixed characteristic, i.e.\ a finite extension of $\BQ_p$.
We begin by recapitulating the definition of the functors $\CF_P^G$, but for simplicity only under the assumption that the field of definition $L$ agrees with the field of coefficients $K$.
We normalize the absolute value of $K$ such that $\abs{p} = p^{-1}$.

Let $\bG$ be a connected split reductive group over $K$.
We fix a split maximal torus and a Borel subgroup $\bT \subset \bB \subset \bG$, as well as a standard parabolic subgroup $\bP \supset \bB$ with Levi decomposition $\bP = \bL_\bP \bU_\bP$ with $\bT \subset \bL_\bP$.
We assume that $\bG$ and the above subgroups already are defined over $\CO_K$.
Let $G=\bG(K)$, $P=\bP(K)$, etc.\ denote the associated locally $K$-analytic Lie groups and write $G_0 = \bG(\CO_K)$, $P_0 = \bP(\CO_K)$, etc.\ by abuse of notation.
Furthermore let $\Fg= {\rm Lie}(\bG)$, $\Fp = {\rm Lie}(\bP)$, etc.\ denote the corresponding Lie algebras.
We consider the following subcategories of modules for the universal enveloping algebra $U(\Fg)$.

\begin{definition}[{\cite[\S 2.5]{OrlikStrauch15JordanHoelderSerLocAnRep}}]
	\begin{altenumerate}
		\item
		Let $\CO^\Fp$ be the full subcategory of $U(\Fg)$-modules $M$ satisfying
		\begin{altenumeratelevel2}
			\item 
			$M$ is finitely generated as a $U(\Fg)$-module,
			\item
			viewed as an $\Fl_\bP$-module, $M$ is the direct sum of finite-dimensional simple modules,
			\item
			the action of $\Fu_\bP$ on $M$ is locally finite, i.e.\ for every $m\in M$, the $K$-vector subspace $U(\Fu_\bP) m \subset M$ is finite-dimensional.
		\end{altenumeratelevel2}
		\item
		Let ${\rm Irr}(\Fl_\bP)^{\rm fd}$ be the set of isomorphism classes of finite-dimensional irreducible $\Fl_\bP$-re\-pre\-sen\-tations.
		We define $\CO_\alg^\Fp$ to be the full subcategory of $\CO^\Fp$ of $U(\Fg)$-modules $M$ such that for a decomposition 
		\[ M = \bigoplus_{\Fa \in {\rm Irr}(\Fl_\bP)^{\rm fd}} M_\Fa \]
		into the $\Fa$-isotypic components as in (2), we have: If $M_\Fa \neq (0)$ then $\Fa$ is the Lie algebra representation induced by a finite-dimensional algebraic $\bL_\bP$-representation.
	\end{altenumerate}
\end{definition}

Note that for $\Fp = \Fb$, $\CO \defeq \CO^\Fb$ is the adaptation of the classical category $\CO$ introduced by Bernstein, Gelfand and Gelfand for semi-simple Lie algebras over the complex numbers.

The functor $\CF_P^G$ from \cite[\S 3,4]{OrlikStrauch15JordanHoelderSerLocAnRep} is now defined as follows:
Let $M \in \CO^\Fp_\alg$ and let $V$ be a smooth admissible representation of the Levi subgroup $L_\bP \subset P$ on a $K$-vector space.
We regard $V$ as a smooth $P$-representation via inflation and endow it with the finest locally convex topology so that it becomes a locally analytic $P$-representation of compact type, see \cite[\S 2]{SchneiderTeitelbaumPrasad01UgFinLocAnRep}.
By the conditions on $M \in \CO^\Fp$, there exists a finite-dimensional $U(\Fp)$-submodule $W\subset M$ which generates $M$ as a $U(\Fg)$-module, i.e.\ there is a short exact sequence of $U(\Fg)$-modules
\begin{equation*}
	0 \lra \Fd \lra U(\Fg) \botimes{U(\Fp)} W \lra M \lra 0  .
\end{equation*}
Then the $\Fp$-representation $W$ uniquely lifts to the structure of an algebraic $\bP$-representation on $W$ \cite[Lemma 3.2]{OrlikStrauch15JordanHoelderSerLocAnRep}.
There is a pairing (cf.\ \cite[(3.2.2)]{OrlikStrauch15JordanHoelderSerLocAnRep})
\begin{align*}
	\langle \blank , \!\blank \rangle_{C^\la(G,V)} \colon D(G) \botimes{D(P)} W  \times \Ind^G_P ( W'\botimes{K} V ) &\lra C^\la(G,V) , \\
	(\delta \otimes w , f) &\lto \big[ g \mto \big( \delta \ast_r (\ev_w \circ f) \big) (g) \big]	.
\end{align*}
Here we use the identification $W' \botimes{K} V \cong \CL_b (W,V)$ from \cite[Cor.\ 18.8]{Schneider02NonArchFunctAna} and denote the evaluation homomorphism by $\ev_w \colon \CL_b(W,V) \ra V$, $h \mto h(w)$.
Moreover, ``$\ast_r$'' signifies the $D(G)$-module action on $C^\la(G,V)$ induced from the right regular action of $G$ (see \Cref{Expl 1 - Examples of locally analytic representations} (ii)).
Via the injective map
\begin{equation*}
	U(\Fg) \botimes{U(\Fp)} W \longhookrightarrow D(G) \botimes{D(P)} W
\end{equation*}
one may consider the subspace of $\Ind^G_P ( W'\botimes{K} V)$ annihilated by $\Fd$ and define \cite[(4.4.1)]{OrlikStrauch15JordanHoelderSerLocAnRep}
\begin{equation*}
	\CF^G_P (M, V) \defeq \Ind^G_P (W'\botimes{K} V)^\Fd = \big\{ f\in \Ind^G_P ( W'\botimes{K} V) \,\big\vert\, \forall \Fz \in \Fd : \langle \Fz, f \rangle_{C^\la(G,V)} = 0 \big\} .
\end{equation*}
The resulting $\CF^G_P(M,V)$ is an admissible\footnote{In the sense of \cite[\S 6]{SchneiderTeitelbaum03AlgpAdicDistAdmRep}.} locally analytic $G$-representation which even is strongly admissible\footnote{Meaning that as a representation of any (equivalently, of one) compact open subgroup $H \subset G$, it is strongly admissible in the sense of \cite[\S 3]{SchneiderTeitelbaum02LocAnDistApplToGL2}, i.e.\ its strong dual is finitely generated as a $D(H)$-module.} if $V$ is of finite length \cite[Prop.\  4.8]{OrlikStrauch15JordanHoelderSerLocAnRep}.
This construction yields an exact bi-functor 
\begin{equation*}
	\CF^G_P \colon \CO^\Fp_\alg \times {\rm Rep}_K^{\sm, {\rm adm}}(L_\bP) \lra {\rm Rep}_K^{\la,{\rm adm}}(G) \,,\quad (M,V) \lto \CF_P^G(M,V) ,
\end{equation*}
which is contravariant in $M$ and covariant in $V$, see \cite[Prop.\ 4.7]{OrlikStrauch15JordanHoelderSerLocAnRep}.

In the case that $V = K$ is the trivial representation, there is another description of $\CF^G_P(M) = \CF^G_P(M,K)$, for $M \in \CO_\alg^\Fp$.
Since $M$ is the union of finite-dimensional $U(\Fp)$-submodules, via lifting each of those to an algebraic $\bP$-representation one obtains a $D(P)$-module structure on $M$, cf.\ \cite[\S 3.4]{OrlikStrauch15JordanHoelderSerLocAnRep}.
This yields a unique $D(\Fg,P)$-module structure on $M$ such that the two actions of $U(\Fp)$ agree and the Dirac distributions $\delta_p \in D(P)$ act like the group elements $p\in P$ on $M$ (\cite[Cor.\ 3.6]{OrlikStrauch15JordanHoelderSerLocAnRep}).
Then there are isomorphisms of $D(G)$- resp.\ $D(G_0)$-modules \cite[Prop.\ 3.7]{OrlikStrauch15JordanHoelderSerLocAnRep}
\begin{equation}\label{Eq 3 - Alternative description for Orlik-Strauch functors}
	\CF^G_P (M)' \cong D(G) \botimes{D(\Fg,P)} M \cong D(G_0) \botimes{D(\Fg,P_0)} M .
\end{equation}
By \cite[Thm.\ 5.1]{SchneiderTeitelbaum03AlgpAdicDistAdmRep} the locally analytic distribution algebra $D(G_0)$ is a Fr\'echet--Stein algebra since $G_0$ is compact.
It holds that $D(G) \botimes{D(\Fg,P)} M$ is a coadmissible $D(G)$-module \cite[Prop.\ 3.7]{OrlikStrauch15JordanHoelderSerLocAnRep}.
Recall that any coadmissible module over a Fr\'echet--Stein algebra can be endowed with a canonical Fr\'echet topology \cite[\S 3]{SchneiderTeitelbaum03AlgpAdicDistAdmRep}.
We note that with this topology \eqref{Eq 3 - Alternative description for Orlik-Strauch functors} is a topological isomorphism.

\begin{remark}
	It is expected that a description similar to \eqref{Eq 3 - Alternative description for Orlik-Strauch functors} holds for the case of non-trivial $V$ as well.
	In fact, in \cite{AgrawalStrauch22FromCatOLocAnRep} Agrawal and Strauch consider functors defined by 
	\begin{equation*}
		\check{\CF}_P^G (M,V) \defeq D(G) \botimes{D(\Fg,P)} \big({\rm Lift}(M,\log) \botimes{K} V' \big)
	\end{equation*}
	to generalize $\CF_P^G$ to the case of $M$ being an element of the extension closure $\CO_\alg^{\Fp,\infty}$ of $\CO_\alg^{\Fp}$.
	They show that the resulting $D(G)$-modules $\check{\CF}_P^G (M,V)$ are coadmissible, see \cite[Thm.\ 4.2.3]{AgrawalStrauch22FromCatOLocAnRep}.
\end{remark}

We now come back to the concrete situation of $\bG = \GL_{d+1,K}$.
As mentioned towards the end of \Cref{Sect 3 - The Fundamental Complex}, Orlik shows in \cite[Lemma 1.2.1]{Orlik08EquivVBDrinfeldUpHalfSp} that the $U(\Fg)$-module $\widetilde{H}^{q}_{\BP_K^{d-q}} (\BP_K^d, \CE)$ is contained in $\CO_\alg^{\Fp_{d-q}}$.
Moreover, let $\bB_q = \bB \cap \GL_{q,K}$ denote the induced Borel subgroup of $\GL_{q,K} \hookrightarrow \bL_{\bP_{d-q}}$.
Then the Steinberg representation $v^{\GL_{q}(K)}_{B_{q}}$ is an irreducible smooth representation of $\GL_{q}(K)$ (\cite[Thm.\ 2 (a)]{Casselman74pAdicVanishingThmGarland}), and hence of $P_{d-q}$.
Orlik then obtains a description of the locally analytic $G$-representation \eqref{Eq 3 - Isomorphism of locally analytic representations from Orlik} as being isomorphic to 
\begin{equation}\label{Eq 3 - Concrete description via Orlik-Strauch functors}
	\CF^{G}_{P_{d-q}} \Big( \widetilde{H}^{q}_{\BP_K^{d-q}} (\BP_K^d, \CE) , v^{\GL_{q}(K)}_{B_{q}} \Big) .
\end{equation}
The other term $H^q (\BP_K^d, \CE)' \botimes{K} v^G_{Q_{d-q}}$ of the extension \eqref{Eq 3 - Extension from spectral sequence} is a strongly admissible locally analytic $G$-representation as well, cf.\ the proof of \cite[Lemma 2.4]{OrlikStrauch15JordanHoelderSerLocAnRep}.
It follows that the extension $(V^q/V^{q-1})'_b$ of the two terms is strongly admissible.
Since the homomorphisms between these (strongly) admissible representations in Orlik's description are necessarily strict (see \cite[Prop.\ 6.4 (ii)]{SchneiderTeitelbaum03AlgpAdicDistAdmRep}), we can conclude that there is a topological isomorphism between \eqref{Eq 3 - Concrete description via Orlik-Strauch functors} and the strong dual of
\begin{align}\label{Eq 3 - Our description}
	D(G) \cotimes{D(\Fg,P_{d-q}),\iota} \bigg( \widetilde{H}^{q}_{(\BP_K^{d-q})^\rig} (\BP_K^d, \CE) \cotimes{K} \Big( v^{\GL_{q}(K)}_{B_{q}} \Big)'_b \bigg) 
\end{align}
from our description in \Cref{Thm 3 - Main theorem} by the uniqueness of the quotient.

We recall from \Cref{Cor 2 - Projective limit description by Banach spaces of rigid local cohomology of Schubert varieties} that $\widetilde{H}^{q}_{(\BP_K^{d-q})^\rig} (\BP_K^d, \CE)$ is the completion of its subspace $\widetilde{H}^{q}_{\BP_K^{d-q}} (\BP_K^d, \CE)$.
Hence \eqref{Eq 3 - Our description} is nothing but the Hausdorff completion of
\begin{equation}\label{Eq 3 - Concrete tensored up module}
	D(G) \botimes{D(\Fg,P_{d-q}),\iota} \bigg( \widetilde{H}^{q}_{\BP_K^{d-q}} (\BP_K^d, \CE) \botimes{K,\pi} \Big( v^{\GL_{q}(K)}_{B_{q}} \Big)'_b \bigg) .
\end{equation}
Therefore it is natural to ask how this Hausdorff completion relates to the coadmissible abstract $D(G)$-module underlying \eqref{Eq 3 - Concrete tensored up module} when one considers the latter with its canonical Fr\'echet topology.
We answer this question in \Cref{Cor 3 - The two topologies on the local cohomology groups agree} by showing that they agree, i.e.\ that \eqref{Eq 3 - Concrete tensored up module} already is complete and its locally convex topology is the same as the canonical Fr\'echet topology.
\\

For the first ingredient used to this end, we return to the general setup of connected split reductive $\bG$ considered at the beginning of this section.
We fix on $U(\Fg) \subset D(G)$ the subspace topology, and likewise on all subalgebras of $U(\Fg)$. 
As $U(\Fg)$ already is contained in $D(G_0)$ and the latter is a $K$-Fr\'echet algebra, $U(\Fg)$ and its subalgebras become (jointly continuous) locally convex $K$-algebras this way.

We want to give a concrete description of this topology of $U(\Fg)$.
Let $\Fx_1,\ldots,\Fx_s$ be a $K$-basis of $\Fg$, and $\varepsilon >0$.
Via the associated PBW basis for $U(\Fg)$ consisting of $\Fx_1^{k_1} \cdots \Fx_s^{k_s}$, for $\ul{k} \in \BN_0^s$, we define the norm
\begin{equation}\label{Eq 3 - Norms on universal enveloping algebra}
	\bigg\lvert \sum_{\ul{k}\in \BN_0^{s}} a_{\ul{k}} \, \Fx_1^{k_1} \cdots \Fx_s^{k_s} \bigg\rvert_\varepsilon \defeq \sup_{\ul{k}\in \BN_0^{s}} \abs{a_{\ul{k}}} \bigg( \frac{1}{\varepsilon} \bigg)^{\abs{\ul{k}}}
\end{equation}
on $U(\Fg)$ where $\abs{\ul{k}} \defeq k_1 + \ldots +k_s$.

\begin{lemma}\label{Lemma 3 - Subspace topology of universal enveloping algebra is given by family of norms}
	The topology on $U(\Fg)$ prescribed by the family of norms $\abs{\blank}_\varepsilon$, for $0<\varepsilon<1$, equals the topology defined by regarding $U(\Fg)$ as a subspace of $D(G)$.
\end{lemma}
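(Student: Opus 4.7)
The plan is to get an explicit handle on the subspace topology by introducing a chart of the second kind around the identity. Since $\mathrm{char}(K)=0$, Corollary~\ref{Cor 1 - Hyperalgebra agrees with algebraic distribution algebra} identifies $U(\Fg)$ with $\hy(G) \subset D_e(G)$, and the strict closed embeddings $D_e(G) \hookrightarrow D(G_0) \hookrightarrow D(G)$ (coming from the proof of Definition--Proposition~\ref{Def 1 - Hyperalgebra} and the decomposition in Proposition~\ref{Prop 1 - Properties of the distribution algebra}(iii)) reduce the problem to showing that the family $(|\cdot|_\varepsilon)_{0<\varepsilon<1}$ defines the subspace topology on $U(\Fg) \subset D_e(G)$.

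To do this, I would fix a $K$-basis $\Fx_1,\ldots,\Fx_s$ of $\Fg$ and use canonical coordinates of the second kind,
\[
\varphi \colon B^s_{r_0}(0) \lra G \,,\quad (x_1,\ldots,x_s) \lto \exp(x_1 \Fx_1) \cdots \exp(x_s \Fx_s) ,
\]
for small $r_0 > 0$. This analytic chart induces, via Proposition~\ref{Prop 1 - Properties of the germs of locally analytic functions}(iv), a topological isomorphism $C^\la_e(G,K) \cong \CA(K^s, K)$, and hence $D_e(G) \cong \varprojlim_{\varepsilon>0} \CA_\varepsilon(K^s, K)'_b$ as a projective limit of Banach duals. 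The next step is to compute the pairing of the PBW monomials $\Fx^{\ul{k}} = \Fx_1^{k_1}\cdots \Fx_s^{k_s}$ against power series. Using $\Fx(f) = \tfrac{d}{dt}|_{t=0} f(\exp(t\Fx))$ together with the Fubini-type convolution formula (Corollary~\ref{Cor 1 - Fubini theorem}), an induction shows that $\Fx^\ul{k}$ acts as the iterated partial derivative $\partial_{t_1}^{k_1}\cdots\partial_{t_s}^{k_s}|_0$ in the chart coordinates, so that $\Fx^\ul{k}(X^\ul{l}) = \ul{k}!\,\delta_{\ul{k},\ul{l}}$. From this, for any finite sum $\mu = \sum_\ul{k} a_\ul{k}\Fx^\ul{k}$ the non-archimedean triangle inequality yields
\[
\lVert \mu \rVert_{\CA_\varepsilon(K^s,K)'} = \sup_{\ul{k}} |a_\ul{k}|\cdot|\ul{k}!|\cdot \varepsilon^{-|\ul{k}|} .
\]

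To close the loop I would appeal to the standard estimate $|\ul{k}!| \geq c^{|\ul{k}|}$ with $c \defeq |p|^{1/(p-1)} \in (0,1)$ (together with $|\ul{k}!| \leq 1$), which gives the sandwich
\[
|\mu|_{\varepsilon/c} \leq \lVert \mu \rVert_{\CA_\varepsilon(K^s,K)'} \leq |\mu|_\varepsilon
\]
for all sufficiently small $\varepsilon$. Since both the family $(|\cdot|_{\varepsilon'})_{0<\varepsilon'<1}$ and the family of subspace norms $(\lVert \cdot \rVert_{\CA_\varepsilon(K^s,K)'})_\varepsilon$ cover arbitrarily small scales, this sandwich forces the two topologies to coincide. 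The main technical point is the pairing computation $\Fx^\ul{k}(X^\ul{l}) = \ul{k}!\,\delta_{\ul{k},\ul{l}}$: one must verify that the chart of the second kind, with its nested exponentials in a fixed order, is compatible with the convolution product in $D(G)$ so that iterated left-invariant differentiation at $e$ reproduces precisely the PBW monomial used to define $|\cdot|_\varepsilon$, and not some symmetrization of it.
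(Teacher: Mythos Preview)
Your proof is correct and follows essentially the same route as the paper. The paper simply cites \cite[Lemma 2.4]{SchneiderTeitelbaum02LocAnDistApplToGL2} for the fact that the subspace topology on $U(\Fg)\subset D_e(G)$ is given by the norms $\lVert \sum a_{\ul{k}}\Fx^{\ul{k}}\rVert_\varepsilon = \sup_{\ul{k}} |a_{\ul{k}}\,\ul{k}!|\,\varepsilon^{-|\ul{k}|}$, whereas you re-derive this formula via a chart of the second kind and the pairing computation $\Fx^{\ul{k}}(X^{\ul{l}}) = \ul{k}!\,\delta_{\ul{k},\ul{l}}$; from there both arguments use the identical Legendre estimate $|\ul{k}!|\geq p^{-|\ul{k}|/(p-1)}$ to sandwich the two families of norms.
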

\begin{proof}
	By \cite[Lemma 2.4]{SchneiderTeitelbaum02LocAnDistApplToGL2}, the subspace topology of $U(\Fg) \subset D_e(G)\subset D(G)$ is defined via the family of norms
	\begin{equation*}
		\bigg\lVert \sum_{\ul{k}\in \BN_0^{s}} a_{\ul{k}} \, \Fx_1^{k_1} \cdots \Fx_s^{k_s}  \bigg\rVert_\varepsilon \defeq \sup_{\ul{k}\in \BN_0^{s}} \abs{a_{\ul{k}} \, \ul{k}!} \bigg( \frac{1}{\varepsilon} \bigg)^{\abs{\ul{k}}} ,
	\end{equation*}
	for $\varepsilon >0$, where $\ul{k}! \defeq (k_1 !) \cdots (k_s !)$. 
	Here $\lVert \blank \rVert_\varepsilon$ yields a topology finer than the one of $\lVert \blank \rVert_{\varepsilon'}$, for $\varepsilon \leq \varepsilon'$.
	We immediately find that $\lVert \blank \rVert_\varepsilon \leq \abs{\blank}_\varepsilon$ since $\abs{\ul{k}!} \leq 1$.

	On the other hand, we obtain by Legendre's formula for the $p$-adic valuation of $n!$ that $v_p(n!) \leq \frac{n}{p-1}$, for $n\in \BN$.
	Hence 
	\begin{equation*}
		\abs{\ul{k}!} = p^{- v_p(\ul{k}!)} \geq \Big( p^{\frac{1}{p-1}}\Big)^{-\abs{\ul{k}}} .
	\end{equation*}
	It follows that $\lVert \blank \rVert_\varepsilon \geq \lvert\blank \rvert_{p^\frac{1}{p-1} \varepsilon}$, and we conclude that the topology defined by the family $(\lvert\blank\rvert_{\varepsilon})_{0<\varepsilon<1}$ is equal to the topology defined by the family $(\lVert\blank\rVert_{\varepsilon})_{0<\varepsilon}$.
\end{proof}

\begin{remark}\label{Rmk 3 - Isomorphism between distribution algebra of stalk and Arens-Michael envelope}
	In \cite[Thm.\ 2.1]{Schmidt10StabFlatNonArchHyperenvAlg} Schmidt shows that there is a natural isomorphism of topological $K$-algebras between the completion of $U(\Fg)$ with respect to the family of norms $\abs{\blank}_\varepsilon$, for $0<\varepsilon<1$, and the Arens--Michael envelope $\widehat{U}(\Fg)$ of $U(\Fg)$.
	The latter is defined as the Hausdorff completion of $U(\Fg)$ with respect to all submultiplicative seminorms on $U(\Fg)$.
	Using that the completion of $U(\Fg) \subset D(G)$ with respect to the subspace topology is its closure $D_e(G)$ in $D(G)$ (see \Cref{Cor 1 - Pairing for hyperalgebra and germs of locally analytic functions}), it follows that there is a natural isomorphism of topological $K$-algebras $\widehat{U}(\Fg) \cong D_e(G)$ as well.
\end{remark}

Recalling that we fix the subspace topology on $U(\Fg)\subset D(G)$, we now consider a finitely generated $U(\Fg)$-module $M$ (e.g.\ $M \in \CO_\alg^\Fp$).
By assumption we then find some $n\in \BN$ and an epimorphism of $U(\Fg)$-modules
\begin{equation}\label{Eq 3 - Epimorphism onto finitely generated Lie algebra module}
	U(\Fg)^{\oplus n} \longtwoheadrightarrow M .
\end{equation}

\begin{lemma}\label{Lemma 3 - Topologies on Lie algebra modules}
	\begin{altenumerate}
		\item
		When $M$ is endowed with the quotient topology via \eqref{Eq 3 - Epimorphism onto finitely generated Lie algebra module}, it becomes a locally convex $U(\Fg)$-module.
		Its Hausdorff completion is a nuclear $K$-Fr\'echet space.
		
		\item
		Any homomorphism $f \colon M \ra M'$ between finitely generated $U(\Fg)$-modules is continuous and strict when $M$ and $M'$ carry the quotient topology induced by some epimorphism as above.
		In particular, this topology on $M$ does not depend on the choice of the epimorphism \eqref{Eq 3 - Epimorphism onto finitely generated Lie algebra module}.
	\end{altenumerate}
\end{lemma}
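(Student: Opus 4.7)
The plan is to exploit the explicit description of the subspace topology on $U(\Fg)\subset D(G)$ via the norms $\abs{\blank}_\varepsilon$ from \Cref{Lemma 3 - Subspace topology of universal enveloping algebra is given by family of norms}, the identification of its Hausdorff completion with $D_e(G)$ noted in \Cref{Rmk 3 - Isomorphism between distribution algebra of stalk and Arens-Michael envelope}, and the nuclear Fr\'echet property of $D_e(G)$.

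For (i), since $U(\Fg)$ is a subalgebra of the $K$-Fr\'echet algebra $D(G_0)$, its multiplication is jointly continuous, making it a locally convex $K$-algebra in the sense of \Cref{Def 1 - Topological algebras and modules}, and $U(\Fg)^{\oplus n}$ with the product topology is a locally convex $U(\Fg)$-module. The surjection $q\colon U(\Fg)^{\oplus n}\twoheadrightarrow M$ is a strict epimorphism by construction, hence open, so $\id_{U(\Fg)}\times q$ is open and thus a topological quotient map. Factoring the jointly continuous scalar multiplication of $U(\Fg)^{\oplus n}$ through it endows $M$ with jointly continuous scalar multiplication. For the completion statement, $C^\la_e(G,K)$ is of compact type by \Cref{Prop 1 - Properties of the germs of locally analytic functions}~(iii) applied to $V=K$, so $D_e(G)$ is a nuclear $K$-Fr\'echet space, and by \Cref{Rmk 3 - Isomorphism between distribution algebra of stalk and Arens-Michael envelope} it is the Hausdorff completion of $U(\Fg)$. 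Consequently, $U(\Fg)^{\oplus n}$ completes to $D_e(G)^{\oplus n}$, and $\widehat{M}$ appears as its Hausdorff quotient by the closure of $\ker(q)$, hence is nuclear Fr\'echet.

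For continuity in (ii), given $f\colon M\to M'$ and presentations $q\colon U(\Fg)^{\oplus n}\twoheadrightarrow M$, $q'\colon U(\Fg)^{\oplus m}\twoheadrightarrow M'$, I would lift each $f(q(e_i))$ to an element $\tilde{m}_i\in U(\Fg)^{\oplus m}$ under $q'$, and define the $U(\Fg)$-linear map $\tilde{f}\colon U(\Fg)^{\oplus n}\to U(\Fg)^{\oplus m}$ by $e_i\mapsto\tilde{m}_i$, so that $\tilde{f}(u_1,\ldots,u_n)=\sum_{i=1}^n u_i\cdot\tilde{m}_i$. This is continuous as a finite sum of right-multiplications by fixed elements of $U(\Fg)^{\oplus m}$, and the commuting square $q'\circ\tilde{f}=f\circ q$ combined with the universal property of the quotient $M$ forces $f$ to be continuous. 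Applying this to $\id_M$ with two different presentations yields independence of the quotient topology.

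The main obstacle is the strictness claim. I would factor $f$ as $\pi\colon M\twoheadrightarrow\Im(f)$ followed by $\iota\colon\Im(f)\hookrightarrow M'$; since $U(\Fg)$ is left Noetherian, $\Im(f)$ is finitely generated over $U(\Fg)$, and strictness of $\pi$ reduces to applying the independence statement to the composition $U(\Fg)^{\oplus n}\twoheadrightarrow M\twoheadrightarrow\Im(f)$. The hard step will be strictness of $\iota$, equivalently that for any $U(\Fg)$-submodule $N$ of a finitely generated $U(\Fg)$-module $M'$ the subspace topology from $M'$ coincides with the canonical quotient topology on $N$. My plan is to pass to Hausdorff completions and use the flatness of $\widehat{U}(\Fg)\cong D_e(G)$ over $U(\Fg)$ due to Schmidt~\cite{Schmidt10StabFlatNonArchHyperenvAlg} to exhibit $\widehat{N}$ as a closed subspace of $\widehat{M'}$; both candidate topologies on $N$ are then recovered as the subspace topology induced from the common nuclear Fr\'echet completion $\widehat{M'}$, forcing them to agree.
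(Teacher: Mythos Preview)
Your arguments for (i) and for continuity and independence in (ii) are the same as the paper's.

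For strictness you take a different and more elaborate route than the paper. The paper's argument is a single sentence in the spirit of \cite[3.7.3 Prop.~2]{BoschGuentzerRemmert84NonArchAna}: since $M/\Ker(f)$ and $\Im(f)$ are isomorphic finitely generated $U(\Fg)$-modules, both $\bar f$ and $\bar f^{-1}$ are continuous (by the continuity statement just proved), so $\bar f$ is a homeomorphism and $f$ is strict. You instead isolate the substantive point --- that for a submodule $N\subset M'$ the subspace topology agrees with the canonical one --- and prove it by passing to completions and invoking Schmidt's flatness of $D_e(G)\cong\widehat{U}(\Fg)$ over $U(\Fg)$. Your worry is justified: the continuity result as proved applies only when the \emph{source} carries a quotient-from-presentation topology, so using it for $\bar f^{-1}\colon(\Im(f),\tau_{\mathrm{sub}})\to M/\Ker(f)$ tacitly presupposes $\tau_{\mathrm{sub}}=\tau_{\mathrm{can}}$, which is what one wants to show. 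Your approach makes this explicit at the cost of heavier machinery. Two points to nail down when you write it out: the canonical topology on any finitely generated $U(\Fg)$-module must be Hausdorff (so $N$ and $M'$ genuinely sit inside their completions), and flatness only yields algebraic injectivity of $\widehat N\to\widehat{M'}$ --- for a topological closed embedding you want that $D_e(G)$ is Fr\'echet--Stein, so that finitely generated $D_e(G)$-modules are coadmissible and maps between them are strict (cf.\ \cite{Schmidt13VermModArensMichaelEnv}, \cite{SchneiderTeitelbaum03AlgpAdicDistAdmRep}).
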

\begin{proof}
	For (i), since the quotient topology on $M$ is locally convex, we only have to show that the multiplication $U(\Fg) \times M \ra M$ is continuous.
	To do so, we consider the commutative diagram
	\begin{equation*}
		\begin{tikzcd}
			U(\Fg) \times U(\Fg)^{\oplus n} \ar[r] \ar[d, two heads] & U(\Fg)^{\oplus n} \ar[d, two heads] \\
			U(\Fg) \times M \ar[r] & M
		\end{tikzcd}
	\end{equation*}
	where the vertical maps are open by our choice of topology on $M$.
	But the multiplication map for the finite free $U(\Fg)$-module $U(\Fg)^{\oplus n}$ is continuous which implies that the multiplication map for $M$ is as well.

	The Hausdorff completion of $U(\Fg)$ is its closure $D_e(G)$ in $D(G)$ which is a nuclear $K$-Fr\'echet space.
	In particular, $D_e(G)$ is hereditarily complete, see the discussion after \cite[Def.\ 1.1.39]{Emerton17LocAnVect}.
	Hence the strict epimorphism \eqref{Eq 3 - Epimorphism onto finitely generated Lie algebra module} induces a strict epimorphism
	\[ D_e(G)^{\oplus n} \longtwoheadrightarrow \widehat{M} \]
	onto the Hausdorff completion of $M$ by \cite[Cor.\ 2.2]{BreuilHerzig18TowardsFinSlopePartGLn}.
	As a quotient of a nuclear $K$-Fr\'echet space $\widehat{M}$ then is one itself, see \cite[Prop.\ 8.3]{Schneider02NonArchFunctAna} and \cite[Prop.\ 19.4 (ii)]{Schneider02NonArchFunctAna}.

	For (ii), we argue similarly to \cite[3.7.3 Prop.\ 2]{BoschGuentzerRemmert84NonArchAna}.
	Consider an epimorphism $\varphi \colon U(\Fg)^{\oplus n} \twoheadrightarrow M$ of $U(\Fg)$-modules which endows $M$ with its topology.
	Then the homomorphism $\varphi' \defeq f \circ \varphi$ is continuous because the addition and multiplication maps of the locally convex $U(\Fg)$-module $M'$ are.
	As $\varphi$ is open by definition, it follows that $f$ is continuous.
	Furthermore, $M/\Ker(f)$ and $\Im(f)$ are isomorphic as abstract $U(\Fg)$-modules.
	The homomorphisms between these finitely generated $U(\Fg)$-modules that arrange this isomorphism are continuous.
	Therefore $M/\Ker(f)$ and $\Im(f)$ are topologically isomorphic, i.e.\ $f$ is strict.	
\end{proof}

\begin{lemma}\label{Lemma 3 - Description of tensor product of Lie algebra and distribution algebra of subgroup}
	The multiplication map $U(\Fg) \times D(P_0) \ra D(G_0)$, $(\mu,\delta)\mto \mu\ast\delta$ induces a topological isomorphism
	\begin{equation*}
		U(\Fg) \botimes{U(\Fp),\pi} D(P_0) \overset{\cong}{\lra} D(\Fg,P_0)
	\end{equation*} 
	of $U(\Fg)$-$D(P_0)$-bi-modules.
\end{lemma}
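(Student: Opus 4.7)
The plan is to establish that convolution in $D(G_0)$ factors to a continuous $U(\Fg)$-$D(P_0)$-bilinear bijection $\Phi \colon U(\Fg) \botimes{U(\Fp),\pi} D(P_0) \to D(\Fg,P_0)$ and to then upgrade this to a topological isomorphism onto $D(\Fg,P_0)$ with the subspace topology from $D(G_0)$. Joint continuity of convolution on the $K$-Fr\'echet algebra $D(G_0)$ (\Cref{Prop 1 - Convolution product}) yields continuity of the bilinear pairing $U(\Fg) \times D(P_0) \to D(G_0)$, and its factorization through the relative tensor product is immediate from associativity of convolution together with the inclusion $U(\Fp) \subset D(P_0)$. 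Surjectivity of $\Phi$ follows from \Cref{Prop 1 - Description of product of hyperalgebra and distribution algebra of subgroup}, which says that every element of $D(\Fg,P_0)$ is a finite sum of elements $\mu \ast \delta$ with $\mu \in U(\Fg)$, $\delta \in D(P_0)$. This part of the argument is routine.

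For bijectivity I would invoke the Poincar\'e--Birkhoff--Witt theorem with respect to the Lie algebra decomposition $\Fg = \Fu_\bP^- \oplus \Fp$ (where $\Fu_\bP^-$ denotes the Lie algebra of the opposite unipotent radical). Algebraically, PBW yields a right $U(\Fp)$-linear isomorphism $U(\Fg) \cong U(\Fu_\bP^-) \botimes{K} U(\Fp)$, and hence an abstract $K$-vector space isomorphism $U(\Fg) \botimes{U(\Fp)} D(P_0) \cong U(\Fu_\bP^-) \botimes{K} D(P_0)$. To identify this with $D(\Fg,P_0) \subset D(G_0)$ and verify injectivity, I would use the local analytic product decomposition of $G_0$ at the identity induced by $\Fg = \Fu_\bP^- \oplus \Fp$ (non-archimedean inverse function theorem). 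By a germ-level variant of \Cref{Cor 1 - Locally analytic functions on product of manifolds} this gives a topological isomorphism $D_e(G_0) \cong D_e\big(U_\bP^-(\CO_K)\big) \cotimes{K} D_e(P_0)$, which combined with \Cref{Rmk 3 - Isomorphism between distribution algebra of stalk and Arens-Michael envelope} becomes $D_e(G_0) \cong \widehat{U}(\Fu_\bP^-) \cotimes{K} \widehat{U}(\Fp)$. Injectivity of $\Phi$ then follows from the composition of the canonical injection $U(\Fu_\bP^-) \botimes{K} D(P_0) \hookrightarrow \widehat{U}(\Fu_\bP^-) \cotimes{K} D(P_0)$ with multiplication into $D(G_0)$.

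The main obstacle will be the topological part: showing that $\Phi$ is open onto $D(\Fg,P_0)$. The open mapping theorem is not directly available since $U(\Fg)$ is only a dense, non-complete subspace of $D_e(G_0)$. My strategy is to exploit the explicit seminorms $\lvert\blank\rvert_\varepsilon$ from \Cref{Lemma 3 - Subspace topology of universal enveloping algebra is given by family of norms} with a PBW basis of $U(\Fg)$ adapted to $\Fg = \Fu_\bP^- \oplus \Fp$: such a norm factors multiplicatively across the PBW decomposition, so that $U(\Fg) \cong U(\Fu_\bP^-) \botimes{K,\pi} U(\Fp)$ is a topological isomorphism for the projective tensor product topology. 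Combined with the associativity of relative tensor products (\Cref{Lemma 1 - Tensor identities for modules over locally convex algebras}(ii)) this yields a topological isomorphism $U(\Fg) \botimes{U(\Fp),\pi} D(P_0) \cong U(\Fu_\bP^-) \botimes{K,\pi} D(P_0)$. A direct comparison of the defining seminorms on this projective tensor product with the subspace seminorms on the image $U(\Fu_\bP^-) \ast D(P_0) \subset D(G_0)$, obtained from the product chart at the identity together with $D(P_0)$-bilinearity of $\Phi$ to spread the comparison across $G_0$, then identifies $\Phi$ as a topological embedding. Controlling both families of seminorms simultaneously is the delicate technical heart of the argument.
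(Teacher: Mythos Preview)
Your overall strategy is the same as the paper's: both arguments hinge on the PBW decomposition $\Fg = \Fu_\bP^- \oplus \Fp$ and the induced identification $U(\Fg) \botimes{U(\Fp)} D(P_0) \cong U(\Fu_\bP^-) \botimes{K} D(P_0)$. Your proposal to verify directly that the seminorms $\lvert\blank\rvert_\varepsilon$ factor across this PBW splitting, giving a topological isomorphism $U(\Fg) \cong U(\Fu_\bP^-) \botimes{K,\pi} U(\Fp)$, is correct and arguably more elementary than the paper's route via Arens--Michael envelopes and \cite{Schmidt13VermModArensMichaelEnv}.

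The gap lies in your use of only the \emph{local} product chart at $e$. The germ-level isomorphism $D_e(G_0) \cong \widehat{U}(\Fu_\bP^-) \cotimes{K} \widehat{U}(\Fp)$ controls $\widehat{U}(\Fu_\bP^-) \cotimes{K} D_e(P_0)$, but $D(P_0)$ is strictly larger than $D_e(P_0)$, so this does not yield injectivity of $\widehat{U}(\Fu_\bP^-) \cotimes{K} D(P_0) \to D(G_0)$; your sentence deducing injectivity from the germ decomposition does not go through as stated. Likewise your ``direct comparison of seminorms \ldots\ spread across $G_0$ via $D(P_0)$-bilinearity'' is left vague precisely where it matters: one needs to know that multiplication embeds $U_0^- \times P_0$ into $G_0$ as an open submanifold \emph{globally}, not just near $e$.

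The paper supplies exactly this missing global input: multiplication gives an identification of $U_0^- \times P_0$ with (an open piece of) $G_0$ as locally analytic manifolds, hence a topological embedding $D(U_0^-) \projcotimes D(P_0) \hookrightarrow D(G_0)$ via \Cref{Cor 1 - Locally analytic functions on product of manifolds}. One then has a commutative square with strict monomorphisms on the vertical sides, and the strictness of $U(\Fg) \botimes{U(\Fp),\pi} D(P_0) \to D(\Fg,P_0)$ follows from the quasi-abelian strictness lemma of Kopylov--Kuzminov, with no explicit seminorm estimates needed. Once you replace your local chart by this global big-cell immersion, your argument and the paper's coincide.
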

\begin{proof}
	Since the convolution product is jointly continuous here, it induces a continuous homomorphism $U(\Fg) \botimes{U(\Fp),\pi} D(P_0) \ra D(G_0)$ of $U(\Fg)$-$D(P_0)$-bi-modules.
	By \cite[Lemma 4.1]{SchmidtStrauch16DimLocAnRep} this homomorphism is injective with image being precisely $D(\Fg,P_0)$.

	On the other hand, let $\Fu^-$ denote the Lie algebra of the opposite unipotent radical $\bU^-$ of $\bP$.
	Then the direct sum decomposition $\Fg = \Fu^- \oplus \Fp$ yields an isomorphism $U(\Fg) \cong U(\Fu^-) \botimes{K} U(\Fp)$.
	As mentioned in \Cref{Rmk 3 - Isomorphism between distribution algebra of stalk and Arens-Michael envelope} the completion of $U(\Fg)$ with respect to the subspace topology $U(\Fg)\subset D(G)$ is topologically isomorphic to the Arens--Michael envelope $\widehat{U}(\Fg)$ of $U(\Fg)$ considered in \cite[\S 3.2]{Schmidt13VermModArensMichaelEnv}.
	Hence we obtain a commutative diagram
	\begin{equation*}
		\begin{tikzcd}
			\widehat{U}(\Fu^-) \projcotimes \widehat{U}(\Fp) \ar[r, "\cong"] & \widehat{U}(\Fg) \\
			U(\Fu^-) \projotimes U(\Fp) \ar[r] \ar[u, hookrightarrow] & U(\Fg) \ar[u, hookrightarrow]
		\end{tikzcd}
	\end{equation*}
	where the vertical maps are the canonical embeddings, and the upper map is a topological isomorphism by \cite[Lemma 3.2.4]{Schmidt13VermModArensMichaelEnv}.
	It follows from \cite[Lemma 2]{KopylovKuzminov00KerCokerSeqSemiAbCat} that the bijective continuous homomorphism $U(\Fu^-) \projotimes U(\Fp) \ra U(\Fg)$ is strict and therefore a topological isomorphism.

	Via this topological isomorphism, \Cref{Lemma 1 - Tensor identities for modules over locally convex algebras} (ii), and the exactness of the projective tensor product (see \cite[Lemma 2.1 (ii)]{BreuilHerzig18TowardsFinSlopePartGLn}) we obtain the topological embedding
	\begin{equation*}
		U(\Fg) \botimes{U(\Fp),\pi} D(P_0) \cong U(\Fu^-) \botimes{K,\pi} D(P_0) \longhookrightarrow D(U^-_0) \projotimes D(P_0) \longhookrightarrow D(U^-_0) \projcotimes D(P_0) .
	\end{equation*}
	The group multiplication induces an isomorphism $U^-_0 \times P_0 \cong G_0$ which yields a topological isomorphism
	\begin{equation*}
		D(U^-_0) \projcotimes D(P_0) \cong	D(U^-_0 \times P_0) \cong D(G_0) .
	\end{equation*}
	This isomorphism is given by the multiplication in $D(G_0)$	like in the definition of the convolution product in the proof of \Cref{Prop 1 - Convolution product}.
	Hence we obtain the commutative square
	\begin{equation*}
		\begin{tikzcd}
			D(U^-_0) \projcotimes D(P_0) \ar[r, "\cong"] & D(G_0) \\
			U(\Fg) \botimes{U(\Fp),\pi} D(P_0) \ar[r] \ar[u, hookrightarrow] & D(\Fg,P_0) \ar[u, hookrightarrow]
		\end{tikzcd}
	\end{equation*}
	where the vertical maps are strict monomorphisms.
	Again, \cite[Lemma 2]{KopylovKuzminov00KerCokerSeqSemiAbCat} implies that $U(\Fg) \botimes{U(\Fp),\pi} D(P_0) \ra D(\Fg,P_0)$ is strict as well, and thus a topological isomorphism as claimed.	
\end{proof}

\begin{proposition}\label{Lemma 3 - Comparision between Orlik-Strauch functors and topological tensor product}
	Let $M \in \CO^\Fp_\alg$ be endowed with the topology coming from an epimorphism \eqref{Eq 3 - Epimorphism onto finitely generated Lie algebra module}.
	Moreover, let $V$ be a strongly admissible smooth $P$-representation endowed with the finest locally convex topology and considered as a locally analytic representation of $P$.
	Then there is a topological isomorphism of separately continuous $D(G)$-modules
	\begin{equation}\label{Eq 3 - Comparison between topologies of tensoring up functor}
		D(G) \cotimes{D(\Fg,P),\iota} \big(M \projcotimes V'_b \big) \cong D(G) \botimes{D(\Fg,P)} \big(M \botimes{K} V' \big)
	\end{equation}
	in the sense that $D(G) \botimes{D(\Fg,P),\iota} \big( M \projotimes V'_b \big)$ defined according to \Cref{Def 1 - Completed projective tensor product over algebra} is complete, and its topology agrees with the canonical Fr\'echet topology induced from its underlying (abstract) $D(G)$-module being coadmissible.
\end{proposition}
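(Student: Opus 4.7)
The plan is to first reduce the statement to the compact subgroup $G_0$ of $G$. On the completed side, \Cref{Lemma 3 - Tensor products over distribution algebras}\,(ii) gives a topological isomorphism
\[
D(G_0) \cotimes{D(\Fg,P_0),\iota} \bigl(M \projcotimes V'_b\bigr) \cong D(G) \cotimes{D(\Fg,P),\iota} \bigl(M \projcotimes V'_b\bigr)
\]
of $D(G_0)$-modules. The algebraic analogue $D(G) \botimes{D(\Fg,P)} (M \otimes_K V') \cong D(G_0) \botimes{D(\Fg,P_0)} (M \otimes_K V')$ follows from the same Iwasawa-decomposition argument as in the proof of \cite[Prop.~3.7]{OrlikStrauch15JordanHoelderSerLocAnRep}, now applied to the module $M \otimes_K V'$ rather than to $M$ alone. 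Consequently, it suffices to establish the topological isomorphism after replacing $G$ by $G_0$, where $D(G_0)$ is a Fréchet--Stein algebra.

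Next I would exploit the decomposition $U(\Fg) \botimes{U(\Fp),\pi} D(P_0) \cong D(\Fg,P_0)$ from \Cref{Lemma 3 - Description of tensor product of Lie algebra and distribution algebra of subgroup} together with tensor-associativity from \Cref{Lemma 1 - Tensor identities for modules over locally convex algebras}\,(ii) to rewrite the left-hand side. Pick a strict epimorphism $\pi \colon U(\Fg)^{\oplus n} \twoheadrightarrow M$ giving $M$ its topology as in \Cref{Lemma 3 - Topologies on Lie algebra modules}. Since $V$ with the finest locally convex topology is of compact type, $V'_b$ is a nuclear Fréchet space. The completed projective tensor product with such a space is exact on strict short exact sequences of Fréchet spaces, so completing yields a strict epimorphism $D_e(G)^{\oplus n} \projcotimes V'_b \twoheadrightarrow \widehat{M} \projcotimes V'_b$ onto the Hausdorff completion of $M \projotimes V'_b$. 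Further tensoring with $D(G_0)$ over $D(\Fg,P_0)$ then exhibits $D(G_0) \cotimes{D(\Fg,P_0),\iota} \bigl(M \projcotimes V'_b\bigr)$ as the Hausdorff quotient of $D(G_0)^{\oplus n} \projcotimes V'_b$ by the closure of the $D(G_0)$-submodule generated by the relations coming from $\Ker(\pi)$ and the diagonal $D(P_0)$-action on $M \otimes V'$.

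On the algebraic side, the abstract module $D(G_0) \botimes{D(\Fg,P_0)} (M \otimes_K V')$ is coadmissible by the Agrawal--Strauch construction, and its canonical Fréchet topology arises as the inverse limit of the Banach quotients $D(G_0)_r \botimes{D(\Fg,P_0)_r} (M \otimes_K V')_r$ along the Fréchet--Stein tower. Both sides are then obtained from the same object $D(G_0)^{\oplus n} \projcotimes V'_b$ by quotienting out the same set of $D(\Fg,P_0)$-relations and completing; the uniqueness of the universal Hausdorff-quotient topology identifies them topologically as separately continuous $D(G_0)$-modules, and the $D(G)$-linearity of the resulting isomorphism follows by the naturality of every step together with the reduction of the first paragraph.

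The main technical obstacle, as I see it, is verifying that these two presentations yield the same closed submodule of $D(G_0)^{\oplus n} \projcotimes V'_b$; equivalently, that completion, the projective tensor product with $V'_b$, and passage to the inverse limit along the Fréchet--Stein tower commute in the relevant order. I would address this by invoking the exactness of $\projcotimes V'_b$ on strict short exact sequences of Fréchet spaces (cf.\ \Cref{Rmk 1 - Completion of projective tensor product over algebras}) together with the compatibility between the norms defining the Fréchet--Stein tower on $D(G_0)$ and the Lie-algebra norms on $U(\Fg)$ from \Cref{Lemma 3 - Subspace topology of universal enveloping algebra is given by family of norms}, which reduces the equality of the two closed submodules to a level-by-level Banach-space check that ultimately rests on the flatness of the Fréchet--Stein transition maps. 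Completeness of the left-hand side is then automatic, being topologically isomorphic to a Fréchet space.
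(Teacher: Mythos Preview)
Your reduction to $G_0$ is fine and matches the paper. The genuine gap is in the middle step, where you claim that tensoring the epimorphism $D_e(G)^{\oplus n}\projcotimes V'_b \twoheadrightarrow \widehat{M}\projcotimes V'_b$ with $D(G_0)$ over $D(\Fg,P_0)$ exhibits the target as a quotient of $D(G_0)^{\oplus n}\projcotimes V'_b$. Two problems:

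\textbf{(a)} The epimorphism $\pi\colon U(\Fg)^{\oplus n}\twoheadrightarrow M$ is only $U(\Fg)$-linear. There is no natural $D(\Fg,P_0)$-module structure on $U(\Fg)^{\oplus n}$ (or on $D_e(G)^{\oplus n}\projcotimes V'_b$) making $\pi\otimes\id_{V'_b}$ a $D(\Fg,P_0)$-module map, so ``tensoring over $D(\Fg,P_0)$'' is not meaningful at this stage. The $D(P_0)$-action on $M$ does not lift along $\pi$ in any evident way.

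\textbf{(b)} Even if you could arrange such a map, $D(G_0)^{\oplus n}\projcotimes V'_b$ is not a finitely generated $D(G_0)$-module (unless $V'_b$ is finite-dimensional), so there is no ``uniqueness of the quotient topology'' argument available to compare with the canonical Fr\'echet topology on a coadmissible module. Your proposed level-by-level Banach check does not resolve this; it presupposes exactly the finite-generation you have not established.

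The paper's proof fills this gap by exploiting two hypotheses you never use: that $M\in\CO^\Fp_\alg$ admits a finite-dimensional $U(\Fp)$-submodule $W$ generating it, and that $V$ is \emph{strongly} admissible. One first replaces $\pi$ by the $D(\Fg,P_0)$-linear epimorphism $D(\Fg,P_0)\botimes{D(P_0)}W \cong U(\Fg)\botimes{U(\Fp)}W \twoheadrightarrow M$ (via \Cref{Lemma 3 - Description of tensor product of Lie algebra and distribution algebra of subgroup}), then uses strong admissibility to show $W\botimes{K}V'_b$ is finitely generated over $D(P_0)$ (cf.\ \cite[Prop.~4.1.5, 6.4.1]{AgrawalStrauch22FromCatOLocAnRep}). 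This produces a strict epimorphism $D(\Fg,P_0)^{\oplus n}\twoheadrightarrow M\botimes{K,\pi}V'_b$, which tensors up to a strict epimorphism $D(G_0)^{\oplus n}\twoheadrightarrow D(G_0)\botimes{D(\Fg,P_0),\pi}(M\projotimes V'_b)$. Now the source \emph{is} finitely generated, the same abstract epimorphism is strict for the canonical Fr\'echet topology (coadmissibility via \cite[Thm.~4.2.3]{AgrawalStrauch22FromCatOLocAnRep}), and uniqueness of the quotient identifies the two topologies.
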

\begin{proof}
	By the assumptions on $M \in \CO^\Fp_\alg$, we may find a finite-dimensional $U(\Fp)$-module $W$ and an epimorphism $\varphi \colon U(\Fg) \botimes{U(\Fp)} W \twoheadrightarrow M$ of $U(\Fg)$-modules.
	Since $U(\Fg) \botimes{U(\Fp),\pi} W \cong U(\Fg)^{\oplus \dim_K(W)}$ is a quotient of $U(\Fg) \botimes{K,\pi} W$, the composition
	\begin{equation*}
		U(\Fg) \botimes{K,\pi} W \longtwoheadrightarrow U(\Fg) \botimes{U(\Fp),\pi} W \overset{\varphi}{\longtwoheadrightarrow} M
	\end{equation*}
	is a strict epimorphism by \Cref{Lemma 3 - Topologies on Lie algebra modules} (i).
	Hence it follows from \cite[Lemma 2 (3)]{KopylovKuzminov00KerCokerSeqSemiAbCat} that $\varphi$ is a strict epimorphism, too.

	Using \Cref{Lemma 3 - Description of tensor product of Lie algebra and distribution algebra of subgroup} and \Cref{Lemma 1 - Tensor identities for modules over locally convex algebras} there is a  topological isomorphism
	\begin{align*}
		D(\Fg,P_0) \botimes{D(P_0),\pi} W 
		&\cong \big( U(\Fg) \botimes{U(\Fp),\pi} D(P_0) \big) \botimes{D(P_0),\pi} W  \\
		&\cong  U(\Fg) \botimes{U(\Fp),\pi} \big(D(P_0)  \botimes{D(P_0),\pi} W \big)	\\
		&\cong U(\Fg) \botimes{U(\Fp),\pi} W
	\end{align*}
	which maps $\mu \ast \delta \otimes w$ to $\mu \otimes \delta \ast w$, for $\mu \in U(\Fg)$, $\delta\in D(P_0)$, $w\in W$.
	Therefore $\varphi$ induces a strict epimorphism
	\begin{equation*}
		D(\Fg,P_0) \botimes{D(P_0),\pi} W  \longtwoheadrightarrow M
	\end{equation*}
	which one checks to be $D(\Fg,P_0)$-linear via the method of the proof of Proposition \ref{Prop 1 - Description of product of hyperalgebra and distribution algebra of subgroup}.

	As taking the projective tensor product is exact (see \cite[Lemma 2.1 (ii)]{BreuilHerzig18TowardsFinSlopePartGLn}), we obtain a $D(\Fg,P_0)$-linear strict epimorphism
	\begin{equation*}
		D(\Fg,P_0) \botimes{D(P_0),\pi} \big(  W \botimes{K,\pi} V'_b\big)  \cong \big( D(\Fg,P_0) \botimes{D(P_0),\pi} W \big) \botimes{K,\pi} V'_b \longtwoheadrightarrow M \botimes{K,\pi} V'_b 
	\end{equation*}
	using \Cref{Lemma 1 - Tensor identities for modules over locally convex algebras} (ii) once again.
	Here we extend the trivial $U(\Fp)$-action on $V'_b$ (recall that $V$ is a smooth representation) to the trivial $U(\Fg)$-action.
	We note that this extended action together with the given $P$-action on $V$ satisfies the condition (2) of \Cref{Def 1 - Compatible hyperalgebra modules}.

	Moreover, the locally convex $D(P_0)$-module $W \botimes{K,\pi} V'_b$ is finitely generated as an abstract $D(P_0)$-module, cf.\ the proof of \cite[Prop.\ 4.1.5]{AgrawalStrauch22FromCatOLocAnRep} and \cite[Prop.\ 6.4.1]{AgrawalStrauch22FromCatOLocAnRep}.
	Such an epimorphism $D(P_0)^{\oplus n} \twoheadrightarrow W \botimes{K,\pi} V'_b$, for some $n\in \BN$, is necessarily strict by the open mapping theorem \cite[Prop.\ 8.6]{Schneider02NonArchFunctAna} since $W\botimes{K,\pi} V'_b$ is a $K$-Fr\'echet space.
	Therefore we obtain a commutative diagram 
	\begin{equation*}
		\begin{tikzcd}
			D(\Fg,P_0) \botimes{K,\pi} D(P_0)^{\oplus n} \ar[r, two heads] \ar[d, two heads] & D(\Fg,P_0) \botimes{K,\pi} \big(W \botimes{K,\pi} V'_b \big) \ar[d, two heads] \\
			D(\Fg,P_0)^{\oplus n} \ar[r] & D(\Fg,P_0) \botimes{D(P_0),\pi} \big(W\botimes{K,\pi} V'_b \big)
		\end{tikzcd}
	\end{equation*}
	where the top homomorphism is a strict epimorphism by \cite[Lemma 2.1 (ii)]{BreuilHerzig18TowardsFinSlopePartGLn}.
	It follows from \cite[Lemma 2]{KopylovKuzminov00KerCokerSeqSemiAbCat} that the bottom epimorphism is strict as well.

	In total we thus arrive at a strict epimorphism (cf.\ \cite[Prop.\ 4.1.5]{AgrawalStrauch22FromCatOLocAnRep} for the statement that the abstract module is finitely presented)
	\begin{equation*}
		\psi \colon D(\Fg,P_0)^{\oplus n} \longtwoheadrightarrow M \botimes{K,\pi} V'_b .
	\end{equation*}
	Using the exactness of the projective tensor product again, we obtain the commutative diagram
	\begin{equation*}
		\begin{tikzcd}
			D(G_0) \projotimes D(\Fg,P_0)^{\oplus n} \ar[d, two heads] \ar[r, two heads] & D(G_0) \projotimes \big(M \projotimes V'_b \big) \ar[d, two heads]  \\
			D(G_0)^{\oplus n} \ar[r, "\widebar{\psi}"] & D(G_0) \botimes{D(\Fg,P_0),\pi} \big(M \projotimes V'_b \big) 
		\end{tikzcd}
	\end{equation*}
	where all maps except a priori the bottom one are strict epimorphisms.
	Similarly to before one argues that the epimorphism $\widebar{\psi}$ is strict.
	On the other hand, the (abstract) $D(G_0)$-module $D(G_0) \botimes{D(\Fg,P_0)} \big(M \botimes{K} V'_b \big)$ is coadmissible by \cite[Thm.\ 4.2.3]{AgrawalStrauch22FromCatOLocAnRep}.
	Therefore $\widebar{\psi}$ also is strict when $D(G_0) \botimes{D(\Fg,P_0)} \big(M \botimes{K} V'_b \big)$ carries its canonical Fr\'echet topology (see \cite[\S 3]{SchneiderTeitelbaum03AlgpAdicDistAdmRep}).
	By the uniqueness of the quotient we conclude that this Fr\'echet topology agrees with the topology on $D(G_0) \botimes{D(\Fg,P_0),\pi} (M \projotimes V'_b) $ from \Cref{Def 1 - Completed projective tensor product over algebra}. 
	In particular, the latter already is complete so that taking the Hausdorff completion is redundant:
	\begin{equation*}
		D(G_0) \cotimes{D(\Fg,P_0)} \big(M \projcotimes V'_b \big) = D(G_0) \botimes{D(\Fg,P_0),\pi} \big(M \projotimes V'_b \big) .
	\end{equation*}
	Note here that since $\widehat{M}$ is a $K$-Fr\'echet space by \Cref{Lemma 3 - Topologies on Lie algebra modules} (i), $M \projcotimes V'_b \cong \widehat{M} \projcotimes V'_b$ is so as well by \cite[Lemma 19.10 (i)]{Schneider02NonArchFunctAna} and the discussion after \cite[Prop.\ 17.6]{Schneider02NonArchFunctAna}.
	Therefore the projective and inductive tensor product of $D(G_0)$ and $M \projcotimes V'_b$ over $D(\Fg,P_0)$ agree.
	Finally, the Iwasawa decomposition yields the isomorphism
	\begin{equation*}
		D(G) \botimes{D(\Fg,P)} \big(M \botimes{K} V' \big) \cong D(G_0) \botimes{D(\Fg,P_0)} \big(M \botimes{K} V' \big)
	\end{equation*}
	of $D(G_0)$-modules (cf.\ \cite[\S 4.2.2]{AgrawalStrauch22FromCatOLocAnRep}), and \Cref{Lemma 3 - Tensor products over distribution algebras} (ii) yields the topological isomorphism
	\begin{equation*}
		D(G) \cotimes{D(\Fg,P),\iota} \big(M \cotimes{K,\pi} V'_b \big) \cong D(G_0) \cotimes{D(\Fg,P_0)} \big(M \cotimes{K,\pi} V'_b \big)
	\end{equation*}
	of separately continuous $D(G_0)$-modules.
	Together they give the topological isomorphism \eqref{Eq 3 - Comparison between topologies of tensoring up functor} of separately continuous $D(G)$-modules.
\end{proof}

We return to the concrete setting of $\bG = \GL_{d+1,K}$ and the parabolic subgroup $\bP = \bP_\sdd$, for $\sdd \in \{ 0,\ldots, d-1\}$, from \Cref{Thm 3 - Main theorem}.
We want to apply \Cref{Lemma 3 - Comparision between Orlik-Strauch functors and topological tensor product} to show that \eqref{Eq 3 - Concrete tensored up module} is complete and its locally convex topology agrees with the canonical Fr\'echet topology.

Let $I\subset \{0,\ldots,d\}$ be a non-empty subset, and let $U_I \subset \BP_K^d$ denote the intersection of the corresponding principal open subsets as considered in \Cref{Sect 2 - Strictness of certain Cech complexes}.
Via the countable admissible covering
\begin{equation*}
	U_I^\rig  = \bigcup_{\substack{ 0<\varepsilon<1 \\ \varepsilon \in \abs{\widebar{K}}}} U_{I,\varepsilon} ,
\end{equation*}
and \Cref{Lemma 2 - Density of algebraic sections in analytic sections} we regard $\CE(U_I)$ as a subspace of $\CE(U_I^\rig) = \varprojlim_{\varepsilon \searrow 0} \CE(U_{I,\varepsilon})$.
Then the topology of $\CE(U_I)$ is induced by the family of norms $\abs{\blank}_\varepsilon$, for $0<\varepsilon<1$, defined in \eqref{Eq 2 - Fixed norm on sections}.

\begin{proposition}\label{Prop 3 - Sections on principal open subset are finite module over the universal enveloping algebra}
	For every non-empty subset $I \subset \{ 0,\ldots,d \}$, there exist $m\in \BN$ and a surjective homomorphism of $U(\Fg)$-modules
	\begin{equation*}
		\varphi \colon U(\Fg)^{\oplus m} \longtwoheadrightarrow \CE(U_I)
	\end{equation*}
	such that $\varphi$ is continuous and strict when $U(\Fg) \subset D(G)$ and $\CE(U_I) \subset \CE(U_I^\rig)$ carry their respective subspace topologies.
\end{proposition}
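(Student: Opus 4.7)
The plan is to combine an explicit construction of generators with a density plus open-mapping argument on Hausdorff completions. First, by \Cref{Lemma 2 - T-equivariant trivialization}, fixing $i_0 \in I$ and choosing a $\bT$-weight basis $v_1, \ldots, v_n$ of $\CE(x_{i_0})$ yields a topological $\CO(U_I)$-linear identification $\CE(U_I) \cong \CO(U_I)^{\oplus n}$ matching the norm \eqref{Eq 2 - Fixed norm on sections}. It therefore suffices to produce finitely many $f_1, \ldots, f_m \in \CE(U_I)$ generating $\CE(U_I)$ as a $U(\Fg)$-module, and then show that the associated surjection $\varphi \colon U(\Fg)^{\oplus m} \twoheadrightarrow \CE(U_I)$ is continuous and strict.

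Recall $\CO(U_I) = \bigoplus_{\ul k \in \Lambda_I} K \cdot X^{\ul k}$; the root vector $E_{ij} \in \Fg$ acts as the derivation $X_i \partial_{X_j}$ on the coordinate-ring factor (plus a Leibniz-type correction in the bundle direction coming from the $\bG$-equivariant structure on $\CE$), sending $X^{\ul k} v_l \mapsto k_j X^{\ul k + \epsilon_i - \epsilon_j} v_l + (\text{correction})$. I would take as generators the finite set $\{(X_j/X_{i_0}) v_l : j \neq i_0\} \cup \{(X_{i_0}/X_i) v_l : i \in I \setminus \{i_0\}\}$ for $l = 1, \ldots, n$: a combinatorial path argument on $\Lambda_I$ shows that from each such starting weight one can reach every $\ul k \in \Lambda_I$ by a sequence of translations along roots $\alpha_{ij}$ whose relevant coefficient $k_j$ remains nonzero at each step. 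For continuity of the resulting $\varphi$, recall from \Cref{Lemma 3 - Subspace topology of universal enveloping algebra is given by family of norms} that the subspace topology on $U(\Fg) \subset D(G)$ is described by the family of norms \eqref{Eq 3 - Norms on universal enveloping algebra}, while the subspace topology on $\CE(U_I) \subset \CE(U_I^\rig)$ is given by \eqref{Eq 2 - Fixed norm on sections}. A direct monomial-wise estimate bounds the operator norm of $E_{ij}$ on $(\CE(U_{I,\delta'}), \abs{\blank}_{\delta'})$ against $\abs{\blank}_\delta$ for any $\delta < \delta'$ sufficiently close, and iterating over products of root vectors yields continuity of $\mu \mapsto \mu \ast f_l$, hence of $\varphi$.

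Strictness of $\varphi$ is the main obstacle. My approach is to pass to Hausdorff completions: by \Cref{Rmk 3 - Isomorphism between distribution algebra of stalk and Arens-Michael envelope} the completion of $U(\Fg)$ is the nuclear $K$-Fr\'echet algebra $D_e(G) \cong \widehat{U}(\Fg)$, and by \Cref{Lemma 2 - Density of algebraic sections in analytic sections} the completion of $\CE(U_I)$ is $\CE(U_I^\rig)$, so $\varphi$ extends to a continuous $D_e(G)$-linear map $\widehat{\varphi} \colon D_e(G)^{\oplus m} \to \CE(U_I^\rig)$ between $K$-Fr\'echet spaces. Granted $\widehat{\varphi}$ is surjective, the open mapping theorem \cite[Prop.\ 8.6]{Schneider02NonArchFunctAna} makes it strict; since $\ker\varphi = \ker\widehat{\varphi} \cap U(\Fg)^{\oplus m}$, with $\ker\varphi$ dense in $\ker\widehat\varphi$ (by strictness of $\widehat\varphi$ combined with the density of $U(\Fg)^{\oplus m}$ in $D_e(G)^{\oplus m}$), the restriction along $U(\Fg)^{\oplus m}/\ker\varphi \hookrightarrow D_e(G)^{\oplus m}/\ker\widehat{\varphi} \cong \CE(U_I^\rig)$ identifies the quotient topology on $U(\Fg)^{\oplus m}/\ker\varphi$ with the subspace topology on $\CE(U_I) \subset \CE(U_I^\rig)$, giving strictness of $\varphi$.

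Surjectivity of $\widehat{\varphi}$ is the crucial remaining step, and it can be established level-by-level using Schmidt's projective-limit description $D_e(G) \cong \varprojlim_\varepsilon \widehat{U}(\Fg)_\varepsilon$ as a projective limit of noetherian Banach algebras (see \cite{Schmidt10StabFlatNonArchHyperenvAlg}) together with the analogous $\CE(U_I^\rig) \cong \varprojlim_\varepsilon \CE(U_{I,\varepsilon})$. The continuity bounds from the previous paragraph upgrade, for suitable $\varepsilon' > \varepsilon$, to bounded $\widehat{U}(\Fg)_{\varepsilon'}$-linear maps $\widehat{U}(\Fg)_{\varepsilon'}^{\oplus m} \to \CE(U_{I,\varepsilon})$; each such map has dense image (by \Cref{Lemma 2 - Density of algebraic sections in analytic sections}) and closed image (by a noetherianity/coadmissibility argument ensuring that finitely generated submodules over the noetherian Banach algebra $\widehat{U}(\Fg)_{\varepsilon'}$ are closed in Banach $\widehat{U}(\Fg)_{\varepsilon'}$-modules), and is therefore surjective. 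Passing to the inverse limit yields the desired surjectivity of $\widehat{\varphi}$.
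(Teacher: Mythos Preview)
Your strategy of passing to completions and invoking the open mapping theorem is natural, but it has two gaps that prevent it from closing.

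\textbf{The descent from $\widehat\varphi$ to $\varphi$ is not formal.} Suppose you grant that $\widehat\varphi \colon D_e(G)^{\oplus m} \to \CE(U_I^\rig)$ is a strict surjection. You then assert that the quotient topology on $U(\Fg)^{\oplus m}/\ker\varphi$ coincides with the subspace topology on $\CE(U_I) \subset \CE(U_I^\rig)$. But this is precisely the statement that $\varphi$ is strict --- it is what you are trying to prove, not a consequence of strictness of $\widehat\varphi$. Concretely: openness of $\varphi$ asks that for an open lattice $L \subset U(\Fg)^{\oplus m}$ one has $\varphi(L)$ open in $\CE(U_I)$. Writing $L = \tilde L \cap U(\Fg)^{\oplus m}$ for an open lattice $\tilde L \subset D_e(G)^{\oplus m}$, strictness of $\widehat\varphi$ gives that $\widehat\varphi(\tilde L) \cap \CE(U_I)$ is open, but one only has $\varphi(L) \subset \widehat\varphi(\tilde L) \cap \CE(U_I)$, and the reverse inclusion fails whenever there is $x \in \tilde L \setminus U(\Fg)^{\oplus m}$ with $\widehat\varphi(x) \in \CE(U_I)$. (The same circularity undermines your density claim for $\ker\varphi \subset \ker\widehat\varphi$: approximating $z \in \ker\widehat\varphi$ by $x_n \in U(\Fg)^{\oplus m}$ gives $\varphi(x_n) \to 0$, but correcting $x_n$ into $\ker\varphi$ while keeping it close to $z$ again requires strictness of $\varphi$.)

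\textbf{The level-wise surjectivity argument is also circular.} To conclude that the image of $\widehat U(\Fg)_{\varepsilon'}^{\oplus m} \to \CE(U_{I,\varepsilon})$ is closed via ``finitely generated submodules over a noetherian Banach algebra are closed'', you need the ambient module $\CE(U_{I,\varepsilon})$ to already be a finitely generated $\widehat U(\Fg)_{\varepsilon'}$-module --- but that is essentially what surjectivity would give you.

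The paper avoids both problems by proving openness of $\varphi$ directly. For each weight $\ul\mu \in \Lambda_I$ and each generator index $k$ it constructs, by induction on $\lVert \ul\mu \rVert$, an explicit preimage $\FY_{\ul\mu,k} \in U(\Fg)^{(I)}$ of $X^{\ul\mu} \, v_k/(X_{i_1}\cdots X_{i_r})^{l_k}$ together with the norm bound $\lvert \FY_{\ul\mu,k}\rvert_\varepsilon \leq A(\ul\mu)(1/\varepsilon)^{\lvert \max(0,\ul\mu)\rvert}$, where $A(\ul\mu)$ is an auxiliary factorial-type constant tracking the $p$-adic denominators produced by repeatedly applying root vectors; a separate estimate shows $A(\ul\mu) \leq C\, a^{\lvert\max(0,\ul\mu)\rvert}$. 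This yields, for each $\varepsilon$, a $\delta = a^{-1}\varepsilon$ and constant $C$ such that every $y \in \CE(U_I)$ has a preimage $\FY$ with $\lvert \FY\rvert_\varepsilon \leq C\,\lVert y\rVert_\delta$, which is exactly openness. The essential content you are missing is this quantitative control of preimages inside $U(\Fg)^{\oplus m}$ itself, not merely in its completion.
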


\begin{corollary}\label{Cor 3 - The two topologies on the local cohomology groups agree}
	The $U(\Fg)$-module $\widetilde{H}^{d-\sdd}_{\BP_K^\sdd} (\BP_K^d, \CE)$ is finitely generated, and its locally convex topology induced by an epimorphism from a finite free $U(\Fg)$-module via \eqref{Eq 3 - Epimorphism onto finitely generated Lie algebra module} agrees with the subspace topology $\widetilde{H}^{d-\sdd}_{\BP_K^\sdd} (\BP_K^d, \CE) \subset \widetilde{H}^{d-\sdd}_{(\BP_K^{\sdd})^\rig} (\BP_K^d, \CE)$ from \Cref{Cor 2 - Projective limit description by Banach spaces of rigid local cohomology of Schubert varieties}.
	Consequently, the separately continuous $D(G)$-module \eqref{Eq 3 - Concrete tensored up module} already is complete and its topology agrees with the canonical Fr\'echet topology.
\end{corollary}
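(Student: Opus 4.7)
The strategy is to identify the subspace topology on $\widetilde{H}^{d-\sdd}_{\BP_K^\sdd}(\BP_K^d, \CE)$ inside $\widetilde{H}^{d-\sdd}_{(\BP_K^\sdd)^\rig}(\BP_K^d, \CE)$ with the canonical $U(\Fg)$-module quotient topology, and then invoke \Cref{Lemma 3 - Comparision between Orlik-Strauch functors and topological tensor product}. For finite generation, I would apply \Cref{Prop 3 - Sections on principal open subset are finite module over the universal enveloping algebra} to every non-empty $I \subset \{\sdd+1,\ldots,d\}$: each $\CE(U_I)$ is a finitely generated $U(\Fg)$-module, hence each term of the algebraic \v{C}ech complex $C^\bullet(\CU,\CE^\alg)$ computing $H^\bullet(\BP_K^d\setminus\BP_K^\sdd, \CE^\alg)$ is finitely generated. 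Since $K$ has characteristic zero and $\Fg$ is finite-dimensional, $U(\Fg)$ is left Noetherian, so the cohomology $H^{d-\sdd}(\BP_K^d\setminus\BP_K^\sdd, \CE^\alg)$ is finitely generated; through the long exact sequence of local cohomology and the finite-dimensionality of $H^{d-\sdd}(\BP_K^d,\CE)$, the kernel $\widetilde{H}^{d-\sdd}_{\BP_K^\sdd}(\BP_K^d, \CE)$ inherits finite generation over $U(\Fg)$.

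For the comparison of topologies, I would first note that by \Cref{Prop 3 - Sections on principal open subset are finite module over the universal enveloping algebra} the canonical $U(\Fg)$-module quotient topology on each $\CE(U_I)$ coincides with its subspace topology inside $\CE(U_{I,\varepsilon})$ for any fixed $0<\varepsilon<1$. Taking direct sums, the $U(\Fg)$-quotient topology on $C^p(\CU, \CE^\alg)$ coincides with the subspace topology inside the Banach space $C^p(\CU_\varepsilon,\CE^\rig)$. By \Cref{Thm 2 - Differentials of algebraic Cech complex are strict}, the \v{C}ech differentials are then strict continuous $U(\Fg)$-linear maps, so $H^{d-\sdd}(\BP_K^d\setminus\BP_K^\sdd, \CE^\alg)$ acquires a well-defined Hausdorff subquotient topology. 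By the uniqueness clause of \Cref{Lemma 3 - Topologies on Lie algebra modules}~(ii), this subquotient topology equals the canonical $U(\Fg)$-module quotient topology. On the other hand, by the density arguments and strictness worked out in the proof of \Cref{Prop 2 - Projective limit description of local cohomology wrt open tubes}, this subquotient topology also coincides with the subspace topology inside $H^{d-\sdd}(\BP_K^d\setminus\BP_K^\sdd(\varepsilon)^-, \CE^\rig)$.

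This identification then propagates through the long local cohomology sequence. Recalling that $H^{d-\sdd}_{\BP_K^\sdd}(\BP_K^d, \CE)$ carries the final topology with respect to the boundary map $\partial^{d-\sdd-1}$, which is strict, and that $H^{d-\sdd}(\BP_K^d,\CE)$ is finite-dimensional so the short strictly exact sequence
\begin{equation*}
0 \lra \widetilde{H}^{d-\sdd}_{\BP_K^\sdd}(\BP_K^d, \CE) \lra H^{d-\sdd}_{\BP_K^\sdd}(\BP_K^d, \CE) \lra \Ker\bigl(\alpha^{d-\sdd}\bigr) \lra 0
\end{equation*}
splits topologically, the subspace topology on $\widetilde{H}^{d-\sdd}_{\BP_K^\sdd}(\BP_K^d, \CE)$ inside $\widetilde{H}^{d-\sdd}_{(\BP_K^\sdd)^\rig}(\BP_K^d, \CE)$ coincides with the $U(\Fg)$-module quotient topology. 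Finally, $v^{\GL_q(K)}_{B_q}$ is an irreducible smooth, hence strongly admissible, representation of $\GL_q(K)$ which we inflate to $P_{d-q}$; applying \Cref{Lemma 3 - Comparision between Orlik-Strauch functors and topological tensor product} with $M = \widetilde{H}^{d-\sdd}_{\BP_K^\sdd}(\BP_K^d, \CE) \in \CO^{\Fp_{d-q}}_\alg$ and $V = v^{\GL_q(K)}_{B_q}$ yields that \eqref{Eq 3 - Concrete tensored up module} is already complete and carries the canonical Fr\'echet topology induced by its coadmissible $D(G)$-module structure.

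The main obstacle I anticipate lies in the middle step: carefully tracing the three a priori different topologies on $\widetilde{H}^{d-\sdd}_{\BP_K^\sdd}(\BP_K^d, \CE)$ -- the canonical $U(\Fg)$-quotient topology, the subquotient topology from the algebraic \v{C}ech complex, and the subspace topology inside $\widetilde{H}^{d-\sdd}_{(\BP_K^\sdd)^\rig}(\BP_K^d, \CE)$ -- and verifying that all three agree. The identification relies crucially on the strictness of the algebraic \v{C}ech differentials from \Cref{Thm 2 - Differentials of algebraic Cech complex are strict} together with the density and strictness statements of \Cref{Prop 2 - Projective limit description of local cohomology wrt open tubes} and \Cref{Cor 2 - Projective limit description by Banach spaces of rigid local cohomology of Schubert varieties}, and must be combined with care in the long exact local cohomology sequence so as not to lose strictness when passing to the subspace $\widetilde{H}^{d-\sdd}_{\BP_K^\sdd}$ of $H^{d-\sdd}_{\BP_K^\sdd}$.
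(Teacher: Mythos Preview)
Your approach is essentially the same as the paper's: use \Cref{Prop 3 - Sections on principal open subset are finite module over the universal enveloping algebra} to identify the $U(\Fg)$-quotient topology on each $\CE(U_I)$ with its subspace topology, pass to the \v{C}ech complex, use noetherianity of $U(\Fg)$ together with \Cref{Lemma 3 - Topologies on Lie algebra modules}~(ii) to propagate to the cocycles, cohomology, and then to $\widetilde{H}^{d-\sdd}_{\BP_K^\sdd}$, and finish with \Cref{Lemma 3 - Comparision between Orlik-Strauch functors and topological tensor product}. One small correction: \Cref{Prop 3 - Sections on principal open subset are finite module over the universal enveloping algebra} gives strictness of $U(\Fg)^{\oplus m}\twoheadrightarrow \CE(U_I)$ with respect to the subspace topology of $\CE(U_I)\subset \CE(U_I^\rig)$ (the Fr\'echet topology from the whole family $\abs{\blank}_\varepsilon$), not inside a single Banach space $\CE(U_{I,\varepsilon})$; the $U(\Fg)$-quotient topology is genuinely Fr\'echet-type, so your phrasing ``coincides with its subspace topology inside $\CE(U_{I,\varepsilon})$ for any fixed $0<\varepsilon<1$'' and ``inside the Banach space $C^p(\CU_\varepsilon,\CE^\rig)$'' should be replaced by the corresponding $\rig$-versions, after which everything goes through as you describe.
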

\begin{proof}[{Proof of \Cref{Cor 3 - The two topologies on the local cohomology groups agree}}]
	Recall that we defined the \v{C}ech complex $C^\bullet(\CU, \CE)$ for the covering $\CU$ given by
	\begin{equation*}
		\BP_K^d \setminus \BP_K^\sdd = \bigcup_{i= \sdd+1}^d U_i .
	\end{equation*}
	We fix on $C^q (\CU,\CE)$, for $q\geq 0$, the locally convex topology given via the subspace topologies of $\CE(U_I) \subset \CE(U_I^\rig)$, for all non-empty $I \subset \{\sdd+1,\ldots,d\}$.
	Then we have seen in \Cref{Cor 2 - Projective limit description by Banach spaces of rigid local cohomology of Schubert varieties} (or rather in the proof of \Cref{Prop 2 - Projective limit description of local cohomology wrt open tubes}) that the thereby induced topology on $\widetilde{H}^{d-\sdd}_{\BP_K^\sdd} (\BP_K^d, \CE) $ agrees with the topology of the latter as a subspace of $\widetilde{H}^{d-\sdd}_{(\BP_K^{\sdd})^\rig} (\BP_K^d, \CE)$.

	On the other hand, the epimorphisms of $U(\Fg)$-modules onto $\CE(U_I)$ from \Cref{Prop 3 - Sections on principal open subset are finite module over the universal enveloping algebra} yield $m_q \in \BN$ and epimorphisms 
	\begin{equation}\label{Eq 3 - Epimorphism onto Cech cochains}
		\varphi_q \colon U(\Fg)^{\oplus m_q} \longtwoheadrightarrow C^q(\CU,\CE) \quad\text{, for all $q\geq 0$,}
	\end{equation}
	which are strict with regard to the subspace topologies of $U(\Fg) \subset D(G)$ and $\CE(U_I) \subset \CE(U_I^\rig)$.
	Therefore the topology on $C^q(\CU,\CE)$ we fixed above agrees with the one induced via \eqref{Eq 3 - Epimorphism onto Cech cochains} by the uniqueness of the quotient.

	Since $U(\Fg)$ is noetherian and the differentials of $C^\bullet(\CU,\CE)$ are $U(\Fg)$-linear, the subspace $Z^q(\CU,\CE) \subset C^q(\CU,\CE)$ of \v{C}ech cocycles is a finitely generated $U(\Fg)$-module, too.
	\Cref{Lemma 3 - Topologies on Lie algebra modules} (ii) implies that its topology induced by some epimorphism from a finite free $U(\Fg)$-module agrees with its topology as a subspace of $C^q(\CU,\CE)$.
	We conclude that $H^q(\BP_K^d\setminus \BP_K^\sdd, \CE)$ is finitely generated as a $U(\Fg)$-module, and its topology via an epimorphism from a finite free $U(\Fg)$-module agrees with its topology as a subquotient of $C^q(\CU,\CE)$.
	Arguing similarly for $\widetilde{H}^{d-\sdd}_{\BP_K^\sdd} (\BP_K^d, \CE) $, we find that its topology induced via some epimorphism from a finite free $U(\Fg)$-module agrees with the topology as a subspace of $\widetilde{H}^{d-\sdd}_{(\BP_K^{\sdd})^\rig} (\BP_K^d, \CE)$.

	\Cref{Lemma 3 - Comparision between Orlik-Strauch functors and topological tensor product} applied to $M = \widetilde{H}^{d-\sdd}_{\BP_K^\sdd} (\BP_K^d, \CE) $ and $V = v^{\GL_{d-\sdd}(K)}_{B_{d-\sdd}}$ then shows the last statement.
\end{proof}

\begin{proof}[{Proof of \Cref{Prop 3 - Sections on principal open subset are finite module over the universal enveloping algebra}}]

We begin by fixing a suitable $K$-basis of $\Fg$.
For $\alpha_{u,v}\in \Phi$ with $(u,v) \in \{0,\ldots,d\}^2$, $u\neq v$, let $L_{(u,v)}$ be the standard generator of the root space $\Fg_{\alpha_{u,v}}$.
Moreover, let $L_0,\ldots,L_d$ be the standard basis of $\Ft$.
Then the $L_{(u,v)}$ together with the $L_i$ constitute a $K$-basis of $\Fg$.
The action of $U(\Fg)$ on $\CO(U_I)$ is given by \cite[(1.5)]{Orlik08EquivVBDrinfeldUpHalfSp}
\begin{align*}
	L_{(u,v)}^k . X^\ul{\mu} &= \frac{\mu_v!}{(\mu_v - k)!} \, X^{\ul{\mu} +k \alpha_{u,v}} ,\\
	L^k_j . X^\ul{\mu} &= \mu_j^k \, X^\ul{\mu} ,
\end{align*}
for $k\in \BN_0$, where we use the convention that $\frac{\mu_v!}{(\mu_v-k)!} = \mu_v\, (\mu_v -1) \cdots (\mu_v - k+1)$.

Let the fixed subset $I \subset \{0,\ldots,d\}$ be $I = \{i_1,\ldots,i_r\}$ with pairwise distinct $i_j$.
Let $E$ be a finitely generated graded $K[X_0,\ldots, X_d]$-module such that $\CE$ is associated with $E$, e.g.\
\begin{equation*}
	E = \bigoplus_{k \in \BN_0} \Gamma \big( \BP_K^d , \CE(k) \big) ,
\end{equation*}
cf.\ \cite[\href{https://stacks.math.columbia.edu/tag/0BXD}{Tag 0BXD}]{StacksProject}.
As $\CE(U_I)$ is the homogeneous localization $(E_{X_{i_1}\cdots X_{i_r}})^0$ which is a finitely generated $\CO(U_I)$-module, we find homogeneous elements $v_1,\ldots,v_n\in E$ and $l_1,\ldots,l_n \in \BN_0$ with $\deg(v_k) = r l_k$ such that $\CE(U_I)$ is generated by 
\begin{equation*}
	\frac{v_k}{(X_{i_1}\cdots X_{i_r})^{l_k}} \quad\text{, for $k=1,\ldots,n$,}
\end{equation*}
as an $\CO(U_I)$-module.
For $N\defeq \max_{k=1}^n l_k$, we set $E' \defeq \bigoplus_{l=0}^N E_{rl}$ and may assume that $v_1,\ldots,v_n$ is a $K$-basis of $E'$.
Since $\bG$ acts on $E$ by homogeneous endomorphisms, see \cite[\S 2.3.1]{Kuschkowitz16EquivVBRigidCohomDrinfeldUpHalfSpFF}, so does $\Fg$ and $E'$ is a $U(\Fg)$-submodule of $E$.
Moreover, for $\varepsilon>0$ small enough, we may suppose that the estimate
\begin{equation}\label{Eq 3 - Smallness condition on epsilon}
	\big\lVert L_{(u,v)}. v_k \big\rVert_{E'} \leq \frac{1}{\varepsilon} \quad\text{, for all $(u,v)\in \{0,\ldots,d\}^2$ with $u\neq v$, and $k=1,\ldots,n$,}
\end{equation}
for the maximum norm on $E'$ with respect to the basis $v_1,\ldots,v_n$ holds.

We recall the notation
\begin{align*}
	\Lambda_I &\defeq \bigg\{ \ul{\mu} \in \BZ^{d+1} \bigg\vert \sum_{j=0}^d \mu_j =0 , \forall j\in \{0,\ldots,d\} \setminus I : \mu_j \geq 0 \bigg\} , \\
	\Delta_d &\defeq \left\{ \ul{\mu} \in \BZ^{d+1} \middle{|} \forall j=0,\ldots,d: \abs{ \mu_j} \leq d \right\} .
\end{align*}
As a candidate for the sought $U(\Fg)$-linear epimorphism $\varphi$ we consider
\begin{align*}
	\varphi \colon U(\Fg)^{(I)} \defeq \bigoplus_{\substack{\ul{\nu} \in \Lambda_I \cap \Delta_d \\ k=1,\ldots,n}} U(\Fg) \, e_{\ul{\nu},k} \lra \CE(U_I) \,,\quad
			\FX_{\ul{\nu},k} \, e_{\ul{\nu},k} \lto \FX_{\ul{\nu},k}. \bigg(X^\ul{\nu} \, \frac{v_k}{(X_{i_1}\cdots X_{i_r})^{l_k}} \bigg) .
\end{align*}
Let $\abs{\blank}_\varepsilon$ denote the norm on $U(\Fg)$ defined in \eqref{Eq 3 - Norms on universal enveloping algebra} in terms of the $K$-basis of $\Fg$ chosen in the beginning of the proof.
Then it follows from \Cref{Lemma 3 - Subspace topology of universal enveloping algebra is given by family of norms} that the topology on the locally convex direct sum $U(\Fg)^{(I)}$ is induced by the family of norms
\begin{equation*}
	\bigg\lvert \sum_{\substack{\ul{\nu} \in \Lambda_I \cap \Delta_d \\ k=1,\ldots,n}} \FX_{\ul{\nu},k} \, e_{\ul{\nu},k} \bigg\rvert_\varepsilon
		\defeq \max_{\substack{\ul{\nu} \in \Lambda_I \cap \Delta_d \\ k=1,\ldots,n}} \lvert \FX_{\ul{\nu},k} \rvert_\varepsilon 
		\quad \text{, for $0<\varepsilon < 1$.}
\end{equation*}

\begin{lemma}
	The map $\varphi$ is a continuous homomorphism of locally convex $K$-vector spaces.
\end{lemma}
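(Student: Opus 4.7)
The plan is to exploit the universal property of the locally convex direct sum to reduce continuity of $\varphi$ to continuity of each component map $\varphi_{\ul{\nu},k} \colon U(\Fg) \to \CE(U_I)$, $\FX \mapsto \FX.w_{\ul{\nu},k}$, where $w_{\ul{\nu},k} \defeq X^{\ul{\nu}}\, v_k/(X_{i_1}\cdots X_{i_r})^{l_k}$. Fix such a pair $(\ul{\nu},k)$ and write $w \defeq w_{\ul{\nu},k}$. I will show that for every $0<\varepsilon'<1$ there exists $0<\varepsilon''<1$ such that $|\FX.w|_{\varepsilon'} \leq |w|_{\varepsilon''}\cdot |\FX|_{\varepsilon''}$ for every $\FX\in U(\Fg)$, which immediately yields continuity with constant $C \defeq |w|_{\varepsilon''}$.

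The core of the argument is a one-step estimate. First I choose $\varepsilon''\in (0,1)$ with $\varepsilon''\leq\min(\varepsilon',\varepsilon)$, where $\varepsilon$ is the constant from \eqref{Eq 3 - Smallness condition on epsilon}. The claim is that for each basis vector $L$ of $\Fg$ among the root generators $L_{(u,v)}$ and the Cartan elements $L_j$, and every $u\in\CE(U_I)$,
\begin{equation*}
	|L.u|_{\varepsilon''} \leq (\varepsilon'')^{-1}\, |u|_{\varepsilon''}.
\end{equation*}
Applied to a pure tensor $X^{\ul{\mu}}\, v_k$, the Leibniz rule gives $L_{(u,v)}.(X^{\ul{\mu}} v_k) = \mu_v X^{\ul{\mu}+\alpha_{u,v}} v_k + X^{\ul{\mu}}(L_{(u,v)}.v_k)$. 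Since $\mu_v\in\BZ$ satisfies $|\mu_v|\leq 1$, and the shift $\alpha_{u,v} = \epsilon_u - \epsilon_v$ raises $|\max(0,\ul{\mu})|$ by at most $1$, the first summand has $|\cdot|_{\varepsilon''}$-norm bounded by $(\varepsilon'')^{-1}|X^{\ul{\mu}} v_k|_{\varepsilon''}$. For the second summand, \eqref{Eq 3 - Smallness condition on epsilon} gives $\|L_{(u,v)}.v_k\|_{E'} \leq \varepsilon^{-1} \leq (\varepsilon'')^{-1}$, yielding the same bound. An analogous (and slightly simpler) computation handles each Cartan generator $L_j$, using $|\mu_j|\leq 1$ together with the observation that the action of $L_j$ on the finite-dimensional space $E'$ is a fixed linear endomorphism, whose operator norm may be absorbed into $\varepsilon$ by shrinking it at the outset if necessary.

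Iterating the one-step estimate along any PBW monomial $\FX_{\ul{k}}$ of total degree $|\ul{k}|$ yields $|\FX_{\ul{k}}.w|_{\varepsilon''} \leq (\varepsilon'')^{-|\ul{k}|}|w|_{\varepsilon''}$. Writing an arbitrary $\FX = \sum_{\ul{k}} c_{\ul{k}} \FX_{\ul{k}} \in U(\Fg)$ in the chosen PBW basis and applying non-archimedean monotonicity then gives
\begin{equation*}
	|\FX.w|_{\varepsilon'} \leq |\FX.w|_{\varepsilon''} \leq |w|_{\varepsilon''}\cdot \sup_{\ul{k}} |c_{\ul{k}}|\, (\varepsilon'')^{-|\ul{k}|} = |w|_{\varepsilon''} \cdot |\FX|_{\varepsilon''},
\end{equation*}
the desired bound. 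The main technical care lies in tracking the shift behaviour of $|\max(0,\cdot)|$ under the root shifts $\alpha_{u,v}$ and in ensuring that the auxiliary estimates on the action of $\Fg$ on the finite-dimensional space $E'$ are uniform (so that all absorptions into $\varepsilon$ can be made once and for all); beyond this bookkeeping, the argument is routine.
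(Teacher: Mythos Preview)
Your overall strategy---reduce to the component maps $\FX\mapsto\FX.w$, prove a one-step Leibniz estimate, and iterate over PBW monomials---is precisely the paper's approach. The gap is in your proof of the one-step estimate $|L.u|_{\varepsilon''}\leq(\varepsilon'')^{-1}|u|_{\varepsilon''}$ for all $u\in\CE(U_I)$. You verify it on expressions $X^{\ul{\mu}}v_k$ and invoke \eqref{Eq 3 - Smallness condition on epsilon} to handle $L_{(u,v)}.v_k$. But the norm $|\cdot|_{\varepsilon''}$ on $\CE(U_I)$ is defined in \eqref{Eq 2 - Fixed norm on sections} via an $\CO(U_I)$-basis $w_1,\dots,w_m$ coming from a weight-vector basis of the fibre $\CE(x_i)$---\emph{not} via the generators $v_k/(X_{i_1}\cdots X_{i_r})^{l_k}$ from the graded module $E'$. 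The bound \eqref{Eq 3 - Smallness condition on epsilon} lives in the max norm on the finite-dimensional space $E'$ and has no direct relation to $|\cdot|_{\varepsilon''}$ on $\CE(U_I)$. Moreover, the $v_k$ only \emph{generate} $\CE(U_I)$ over $\CO(U_I)$; they need not form a basis, so even if the bound held for each $X^{\ul{\mu}}v_k$ it would not follow for general $u$, and you do need the general statement to iterate (since $L.w$ is no longer of that special form).

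The fix, as in the paper, is to work with the norm-defining basis $w_l$ directly: given $0<\varepsilon<1$, choose $0<\delta\leq\varepsilon$ with $|\Fx_j.w_l|_\varepsilon\leq1/\delta$ for all $j,l$ (finitely many conditions). The Leibniz rule together with the estimate $|\Fx.f|_\varepsilon\leq\varepsilon^{-1}|f|_\varepsilon$ for $f\in\CO(U_I)$ then gives $|\Fx.u|_\varepsilon\leq\delta^{-1}|u|_\varepsilon$ for all $u$, and induction on PBW degree yields $|\FX.v|_\varepsilon\leq|\FX|_\delta|v|_\varepsilon$. Note the two parameters $\varepsilon$ and $\delta$ genuinely differ: the action of $\Fg$ on the finite-dimensional piece is not controlled by $\varepsilon$ alone. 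In particular, condition \eqref{Eq 3 - Smallness condition on epsilon} plays no role in this lemma (it is used only later, in the construction of preimages for strictness).
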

\begin{proof}
	It suffices to show that, for all $v\in \CE(U_I)$, the map $U(\Fg) \ra \CE(U_I)$, $\FX \mto \FX.v$, is continuous.
	This in turn follows once we see that, for all $v\in \CE(U_I)$ and $0< \varepsilon < 1$, there exists $0< \delta < 1$ such that
	\begin{equation}\label{Eq 3 - Estimate for multiplication map}
		\lvert \FX.v \rvert_\varepsilon \leq \lvert \FX \rvert_\delta \lvert v \rvert_\varepsilon \quad\text{, for all $\FX \in U(\Fg)$.}
	\end{equation}
	We denote the $K$-basis elements of $\Fg$ fixed earlier by
	\begin{equation*}
		\big\{\Fx_1,\ldots,\Fx_s \big\} \defeq \big\{ L_{(u,v)} \,\big\vert\, \alpha_{u,v} \in \Phi \big\} \cup \big\{L_0,\ldots,L_d \big\} .
	\end{equation*}
	Moreover, let $w_1,\ldots,w_m$ be an $\CO(U_I)$-basis of $\CE(U_I)$ which defines the family of norms $\abs{\blank}_\varepsilon$ as in \eqref{Eq 2 - Fixed norm on sections}.
	For a given $0< \varepsilon < 1 $, we choose $\delta > 0 $ small enough such that $\delta \leq \varepsilon $ and
	\begin{equation*}
		\lvert \Fx_j w_l \rvert_\varepsilon \leq \frac{1}{\delta} \quad\text{, for all $j=1,\ldots,s$ and $l=1,\ldots,m$.}
	\end{equation*}

	We first want to show that
	\begin{equation}\label{Eq 3 - Norm estimate for Lie algebra basis element}
		\lvert \Fx . f \rvert_\varepsilon \leq \frac{1}{\varepsilon} \, \lvert f \rvert_\varepsilon \quad \text{, for all $\Fx\in \{\Fx_1,\ldots,\Fx_s\}$ and $f \in \CO(U_I)$.}
	\end{equation}
	To this end it suffices to consider monomials $f = X^\ul{\mu}$, for $\ul{\mu} \in \Lambda_I$.
	For $\Fx = L_j$, we have
	\begin{equation*}
		\lvert L_j . X^\ul{\mu} \rvert_\varepsilon = \lvert \mu_j \, X^\ul{\mu} \rvert_\varepsilon = \abs{\mu_j} \lvert X^\ul{\mu} \rvert_\varepsilon \leq \lvert X^\ul{\mu} \rvert_\varepsilon \leq  \frac{1}{\varepsilon}\, \lvert X^\ul{\mu} \rvert_\varepsilon .
	\end{equation*}
	Moreover, for $\Fx = L_{(u,v)}$, we compute
	\begin{align*}
		\lvert L_{(u,v)} . X^\ul{\mu} \rvert_\varepsilon
		= \lvert \mu_v \, X^{\ul{\mu} + \alpha_{u,v}} \rvert_\varepsilon
		= \abs{\mu_v} \bigg( \frac{1}{\varepsilon} \bigg)^{\abs{\max(0,\ul{\mu}+\alpha_{u,v})}}
		\leq \bigg( \frac{1}{\varepsilon} \bigg)^{\abs{\max(0,\ul{\mu}+\alpha_{u,v})}}.
	\end{align*}
	The sought inequality $\lvert L_{(u,v)} . X^\ul{\mu} \rvert_\varepsilon \leq  \big( \frac{1}{\varepsilon} \big)^{1+ \abs{\max(0,\ul{\mu})}} = \frac{1}{\varepsilon} \lvert X^\ul{\mu} \rvert_\varepsilon $ then follows from
	\begin{equation*}
		\max(0,\mu_u+1) + \max(0,\mu_v-1) \leq 1 + \max(0,\mu_u) + \max(0,\mu_v) .
	\end{equation*}

	Now, we turn towards $\CE(U_I) \cong \CO(U_I) \botimes{K} \CE(x_i)$.
	To prove \eqref{Eq 3 - Estimate for multiplication map} it suffices to show the following inequalities on the PBW basis of $U(\Fg)$:
	\begin{equation*}
		\bigg\lvert (\Fx_1^{k_1} \cdots \Fx_s^{k_s}) . \bigg(\sum_{l=1}^m f_l \,w_l \bigg) \bigg\rvert_\varepsilon
		\leq \bigg(\frac{1}{\delta} \bigg)^{\abs{\ul{k}}} \max_{l=1}^m \, \lvert f_l \rvert_\varepsilon
		= \lvert \Fx_1^{k_1} \cdots \Fx_s^{k_s} \rvert_\delta \, \bigg\lvert \sum_{l=1}^m f_l \,w_l \bigg\rvert_\varepsilon ,
	\end{equation*}
	for all $\ul{k} \in \BN_0^s$ and $ \sum_{l=1}^m f_l \,w_l\in \CE(U_I)$.
	We do so via induction on $\abs{\ul{k}}$.
	With the base case being clear, let $\ul{k} \in \BN^s_0$ with $\abs{\ul{k}}>0$, and $\sum_{l=1}^m f_l\, w_l \in \CE(U_I)$.
	Let $j\in \{1,\ldots,s\}$ be maximal such that $k_j > 0$ and $k_{j'} =0$, for all $j' > j$.
	By the Leibniz product rule we have 
	\begin{align*}
		\bigg\lvert \Fx_j . \bigg( \sum_{l=1}^m f_l \, w_l \bigg) \bigg\rvert_\varepsilon
		&= \bigg\lvert \sum_{l=1}^m (\Fx_j. f_l)\, w_l + f_l \,(\Fx_j .w_l) \bigg\rvert_\varepsilon 
		\leq \max_{l=1}^m \, \max \big( \, \lvert \Fx_j .f_l \rvert_\varepsilon \,,\, \lvert f_l \rvert_\varepsilon \, \lvert \Fx_j. w_l \rvert_\varepsilon \big) \\
		&\leq \max_{l=1}^m \, \max \bigg( \frac{1}{\varepsilon} \, \lvert f_l \rvert_\varepsilon \,,\, \lvert f_l \rvert_\varepsilon  \,\frac{1}{\delta} \bigg)
		\leq \frac{1}{\delta}  \max_{l=1}^m \, \lvert f_l \rvert_\varepsilon .
	\end{align*}
	Here we have additionally used \eqref{Eq 3 - Norm estimate for Lie algebra basis element}, $\delta \leq \varepsilon $, and that $\lvert \Fx_j. w_l \rvert_\varepsilon \leq \frac{1}{\delta}$, for all $l=1,\ldots,m$.
	Together with the induction hypothesis we then conclude that
	\begin{align*}
		\bigg\lvert (\Fx_1^{k_1}\cdots \Fx_s^{k_s}) . \bigg( \sum_{l=1}^m f_l \, w_l  \bigg) \bigg\rvert_\varepsilon
		&= \Bigg\lvert (\Fx_1^{k_1}\cdots \Fx_j^{k_j-1 }) . \Bigg( \Fx_j . \bigg( \sum_{l=1}^m f_l \, w_l \bigg) \Bigg) \Bigg\rvert_\varepsilon \\
		&\leq \lvert \Fx_1^{k_1}\cdots \Fx_j^{k_j-1 } \rvert_\delta \, \bigg\lvert \Fx_j . \bigg( \sum_{l=1}^m f_l \, w_l  \bigg) \bigg\rvert_\varepsilon
		\leq \bigg(\frac{1}{\delta}\bigg)^{\abs{\ul{k}}} \max_{l=1}^m  \, \lvert f_l \rvert_\varepsilon .
	\end{align*}
\end{proof}

We also introduce, for $\ul{\mu} \in \Lambda_I$, the auxiliary constants
\begin{equation*}
	A(\ul{\mu}) \defeq \prod_{k=0}^N  \Bigg( \prod_{\substack{j=0 \\ j\in I}}^d \frac{1}{\abs{\min(0, \mu_j +1 -k)!}} \cdot \prod_{\substack{j=0 \\ j\notin I}}^d \frac{1}{\abs{\min(0, \mu_j +1)!}} \Bigg) .
\end{equation*}
Here and later we use the conventions
\begin{align*}
	(-n)! 				&\defeq (-n)(-n+1)\cdots(-1) = (-1)^n n! \quad\text{, for $n\in \BN_0$, and} \\
	\max(0,\ul{\mu})	&\defeq \big(\max(0,\mu_0),\ldots, \max(0,\mu_d) \big) .
\end{align*}

\begin{lemma}\label{Lemma 3 - Estimates for auxiliary function}
	\begin{altenumerate}
		\item
			For all $l = 0,\ldots,N$, $\ul{\mu} \in \Lambda_I$, and $u,v \in \{0,\ldots,d\}$ with $u \neq v$, $\mu_u >1$, $\mu_v <-1$, it holds that
			\begin{equation*}
				\frac{1}{\abs{\mu_v +1 - l}} \, A(\ul{\mu} - \alpha_{u,v}) \leq A(\ul{\mu}) .
			\end{equation*}
		\item
			There are constants $C>0$, $a\geq 1$ such that $A(\ul{\mu}) \leq C a^\abs{\max(0,\ul{\mu})}$, for all $\ul{\mu}\in \Lambda_I$.
	\end{altenumerate}
\end{lemma}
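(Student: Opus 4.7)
The plan is to establish both parts by an explicit book-keeping of the factorials appearing in the definition of $A(\ul{\mu})$.

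For part (i), the first step is to observe that the hypothesis $\mu_v<-1$ combined with $\ul{\mu}\in\Lambda_I$ forces $v\in I$, so the $v$-factor in $A(\ul{\mu})$ is governed by the $k$-dependent expression $\prod_{k=0}^N \frac{1}{|\min(0,\mu_v+1-k)!|}$. I would then write the ratio $A(\ul{\mu}-\alpha_{u,v})/A(\ul{\mu})$ as a product over $k$ of the $u$- and $v$-contributions, since all other components of $\ul{\mu}$ are unchanged and their factors cancel. The decisive calculation is that, in the generic range $\mu_v+1-k<0$, the quotient of factorials at the single index $k=l$ in the $v$-contribution equals exactly $|\mu_v+1-l|$, which is precisely the prefactor appearing in the claimed inequality. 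The remaining factors at $k\neq l$ in the $v$-component together with the entire $u$-component need to be bounded by $1$; here one distinguishes the cases $u\in I$ and $u\notin I$ (in the latter, $\mu_u\geq 0$ automatically and the contribution trivializes) and uses $\mu_u>1$ to control the boundary terms where $\min(0,\mu_u+1-k)=0$.

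For part (ii), I would proceed by induction on the quantity $\sum_{j\in I}|\min(0,\mu_j)|$, i.e.\ the magnitude of the negative entries of $\ul{\mu}$. In the base case all $\mu_j\geq -1$, so $\ul{\mu}$ lies in a set on which $A$ is uniformly bounded. For the inductive step, whenever some $\mu_v<-1$, the identity $\sum_j\mu_j=0$ forces the existence of some $u$ with $\mu_u>1$, so part (i) applies and yields $A(\ul{\mu}-\alpha_{u,v})\leq|\mu_v+1-l|\cdot A(\ul{\mu})$. Choosing $l$ appropriately (e.g.\ $l=0$) one obtains a bound that is linear in $|\mu_v|$, hence in the entries of $\ul{\mu}$. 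Iterating the descent $\ul{\mu}\mapsto\ul{\mu}-\alpha_{u,v}$ reduces to the base case, and the accumulated product of linear bounds is controlled by $C\,a^{|\max(0,\ul{\mu})|}$ for suitable constants $C>0$, $a\geq 1$, using once more $\sum_j\mu_j=0$ to relate the total number of descent steps to $|\max(0,\ul{\mu})|$.

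The main obstacle is expected to be the careful factorial manipulations of part (i). Boundary cases (where $\min(0,\mu_j+1-k)=0$ for some $j\in\{u,v\}$ and some $k$) must be tracked with precision to ensure that all contributions outside the distinguished factor at $k=l$ really are bounded by $1$. Once part (i) is established with the claimed constant, the induction in part (ii) is essentially routine, the correct exponent $|\max(0,\ul{\mu})|$ emerging naturally from the length of the descent.
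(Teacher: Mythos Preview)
For part (i) your approach mirrors the paper's: the paper recasts the inequality as the statement that a certain rational number has $p$-adic absolute value at most $1$, and checks this by showing it is in fact the integer $\prod_{k\ne l}(\mu_v+1-k)$. Your plan to isolate the factor $|\mu_v+1-l|$ from the $v$-contribution and bound the remainder by $1$ is the same computation unpacked. One caution on the $u\in I$ case: the terms you call ``boundary'' are those with $\min(0,\mu_u+1-k)=0$, but for $k>\mu_u+1$ the $u$-ratio at index $k$ equals $1/|k-\mu_u|$, which can strictly exceed $1$ in the $p$-adic metric and so cannot be bounded separately---it would have to be absorbed by the $v$-side.

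For part (ii) there is a genuine gap: the inequality from (i) points the wrong way for your induction. Part (i) gives an upper bound on $A(\ul{\mu}-\alpha_{u,v})$ in terms of $A(\ul{\mu})$, but your descent $\ul{\mu}\mapsto\ul{\mu}-\alpha_{u,v}$ \emph{decreases} the induction quantity, so to propagate the induction hypothesis upward you would need the opposite bound $A(\ul{\mu})\le(\text{const})\cdot A(\ul{\mu}-\alpha_{u,v})$, which (i) does not provide. (The phrase ``linear in $|\mu_v|$'' also suggests an archimedean reading; the absolute value here is $p$-adic, so $|\mu_v+1-l|\le 1$ always, and no growth accumulates in the direction you want.) The paper's argument for (ii) is completely different and does not use (i): Legendre's bound $v_p(n!)\le n/(p-1)$ is applied directly to every factorial in the definition of $A(\ul{\mu})$, and after using $\sum_j\mu_j=0$ to rewrite $-\sum_j\min(0,\mu_j)$ as $|\max(0,\ul{\mu})|$ one reads off the explicit constants $C=p^{rN(N+1)/(2(p-1))}$ and $a=p^{(N+1)/(p-1)}$.
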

\begin{proof}
	Writing $\ul{\mu}' \defeq \ul{\mu}-\alpha_{u,v}$ in the setting of (i), it suffices to show that the rational number
	\begin{align*}
		\frac{1}{\mu_v +1 - l} \,\prod_{k=0}^N  \Bigg( \prod_{\substack{j=0 \\ j\in I}}^d \frac{\min(0, \mu_j +1 -k)!}{\min(0, \mu'_j +1 -k)!} \cdot \prod_{\substack{j=0 \\ j\notin I}}^d \frac{\min(0, \mu_j +1)!}{\min(0, \mu'_j +1)!} \Bigg)
	\end{align*}
	in fact is an integer.
	We may focus on the terms in the product for $j=u$ and $j=v$.
	But using $\mu_u >1$ and $\mu_v <-1$, we indeed find that
	\begin{align*}
		&\frac{1}{\mu_v +1 - l} \,\prod_{k=0}^N   \frac{\min(0, \mu_v +1 -k)!}{\min(0, \mu'_v +1 -k)!} \cdot \frac{\min(0, \mu_u +1)!}{\min(0, \mu'_u +1)!} \\
			&\qquad = \frac{1}{\mu_v +1 - l} \,\prod_{k=0}^N   \frac{(\mu_v +1 -k)!}{(\mu_v +2 -k)!} \cdot \frac{0!}{0!} 
					= \prod_{\substack{k=0 \\ k\neq l}}^N (\mu_v +1 - k) \in \BZ .
	\end{align*}

	For (ii), we again use the estimate $v_p(n!) \leq \frac{n}{p-1}$ of the $p$-adic valuation with $n\in \BN_0$.
	This yields, for $j\in I$,
	\begin{align*}
		v_p \big( \min(0,\mu_j +1 - k)! \big)
			\leq \frac{- \min (0,\mu_j +1 -k)}{p-1}
			\leq \frac{  - \min(0,\mu_j) + k }{p-1}.
	\end{align*}
	It follows from similar estimates for $j \notin I$ and from $\sum_{j=0}^d \mu_j =0$ that
	\begin{align*}
		&v_p \Bigg( \prod_{k=0}^N  \Bigg( \prod_{\substack{j=0 \\ j\in I}}^d \min(0, \mu_j +1 -k)! \cdot \prod_{\substack{j=0 \\ j\notin I}}^d \min(0, \mu_j +1)! \Bigg) \Bigg) \\
			&\qquad\leq \frac{1}{p-1} \sum_{k=0}^N \bigg( r k - \sum_{j=0}^d \min(0,\mu_j) \bigg)
			= \frac{1}{p-1} \bigg( r \, \frac{N(N+1)}{2} + (N+1) \abs{\max(0,\ul{\mu})} \bigg) .
	\end{align*}
	Hence we conclude that
	\begin{align*}
		A(\ul{\mu}) \leq p^{\frac{1}{p-1} \big( r \, \frac{N(N+1)}{2} + (N+1) \abs{\max(0,\ul{\mu})} \big)}
	\end{align*}
	which shows the claim with the constants $C = p^{\frac{r N (N+1)}{2(p-1)}}$ and $a = p^{\frac{N+1}{p-1}}$.
\end{proof}

\begin{lemma}\label{Lemma 3 - Existence of good preimages}
	Let $0<\varepsilon <1$ be small enough such that it satisfies the condition of \eqref{Eq 3 - Smallness condition on epsilon}.
	Then, for all $\ul{\mu} \in \Lambda_I$ and $k\in \{1,\ldots,n\}$, there exists $\FY_{\ul{\mu},k} \in U(\Fg)^{(I)}$ such that
	\begin{altenumeratelevel2}
		\item
			$ \displaystyle \varphi \big(\FY_{\ul{\mu},k} \big) = X^\ul{\mu} \, \frac{v_k}{(X_{i_1}\cdots X_{i_r})^{l_k}}$,
		\item
			$ \displaystyle \big\lvert \FY_{\ul{\mu},k} \big\rvert_\varepsilon \leq A(\ul{\mu}) \bigg( \frac{1}{\varepsilon}\bigg)^\abs{\max(0,\ul{\mu})}$.
	\end{altenumeratelevel2}
\end{lemma}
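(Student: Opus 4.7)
The plan is to construct the elements $\FY_{\ul{\mu},k}$ by induction on $\lVert\ul{\mu}\rVert \defeq \sum_{j=0}^{d} \abs{\mu_j}$, proving the two conditions simultaneously for all $k = 1,\ldots,n$. For the base case $\ul{\mu} \in \Lambda_I \cap \Delta_d$, we set $\FY_{\ul{\mu},k} \defeq e_{\ul{\mu},k}$; then (1) is immediate from the definition of $\varphi$, while (2) follows from $\abs{\FY_{\ul{\mu},k}}_\varepsilon = 1 \leq A(\ul{\mu}) \, (1/\varepsilon)^{\abs{\max(0,\ul{\mu})}}$, since both $A(\ul{\mu})$ and the power of $1/\varepsilon$ are $\geq 1$.

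For the inductive step we take $\ul{\mu} \in \Lambda_I \setminus \Delta_d$. Arguing exactly as in \Cref{Lemma 2 - Comparison of occuring weights} and the proof of \Cref{Prop 2 - Existence of weight changing isomorphisms}, we find $u,v \in \{0,\ldots,d\}$ with $u \neq v$, $\mu_u > 1$ and $\mu_v < -1$; the condition $\mu_v < 0$ forces $v \in I$. Setting $\ul{\mu}' \defeq \ul{\mu} - \alpha_{u,v}$ we have $\ul{\mu}' \in \Lambda_I$ and $\lVert \ul{\mu}' \rVert = \lVert\ul{\mu}\rVert - 2$, so the induction hypothesis applies to $\ul{\mu}'$ for all $k' = 1,\ldots,n$. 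Writing $f_{\ul{\rho}}^{(k')} \defeq X^{\ul{\rho}} \, v_{k'}/(X_{i_1}\cdots X_{i_r})^{l_{k'}}$ and applying the Leibniz rule for the action of $L_{(u,v)}$ on the homogeneous localization $\CE(U_I)$, one obtains the identity
\begin{equation*}
L_{(u,v)}.f_{\ul{\mu}'}^{(k)} = (\mu_v + 1 - l_k) \, f_{\ul{\mu}}^{(k)} + \sum_{k'} c_{k,k'}^{(u,v)}\, f_{\ul{\mu}'}^{(k')},
\end{equation*}
where the extra $-l_k f_\ul{\mu}^{(k)}$ arises from $L_{(u,v)}$ acting on the denominator (using $v = i_{j_0} \in I$ and $L_{(u,v)}.X_{i_j} = \delta_{v,i_j} X_u$), and the coefficients come from the expansion $L_{(u,v)}.v_k = \sum_{k'} c_{k,k'}^{(u,v)} v_{k'}$. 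Critically, only indices $k'$ with $l_{k'} = l_k$ appear, since $L_{(u,v)}$ preserves the grading on $E'$, so every $f_{\ul{\mu}'}^{(k')}$ on the right is in the image of a generator $e_{\ul{\rho},k'}$. Since $\mu_v + 1 - l_k < 0$, we can solve and define
\begin{equation*}
\FY_{\ul{\mu},k} \defeq \frac{1}{\mu_v + 1 - l_k}\bigg( L_{(u,v)}\, \FY_{\ul{\mu}',k} - \sum_{k'} c_{k,k'}^{(u,v)} \, \FY_{\ul{\mu}',k'} \bigg) \in U(\Fg)^{(I)},
\end{equation*}
which satisfies (1) by the induction hypothesis.

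For condition (2) we use that the norm $\abs{\blank}_\varepsilon$ is submultiplicative on $U(\Fg)$ (for $\varepsilon$ small enough, as it extends to a Banach algebra norm on a piece of the Arens--Michael envelope, cf.\ \Cref{Rmk 3 - Isomorphism between distribution algebra of stalk and Arens-Michael envelope}), together with $\abs{L_{(u,v)}}_\varepsilon = 1/\varepsilon$ and the bound $\abs{c_{k,k'}^{(u,v)}} \leq \lVert L_{(u,v)}.v_k\rVert_{E'} \leq 1/\varepsilon$ provided by \eqref{Eq 3 - Smallness condition on epsilon}. Combining these with the induction hypothesis and \Cref{Lemma 3 - Estimates for auxiliary function}(i), and observing that $\abs{\max(0,\ul{\mu}')} = \abs{\max(0,\ul{\mu})} - 1$ (since $\mu_u > 1$ decreases to $\mu_u - 1 > 0$, while both $\max(0,\mu_v)$ and $\max(0,\mu_v + 1)$ are $0$), gives
\begin{equation*}
\abs{\FY_{\ul{\mu},k}}_\varepsilon \leq \frac{1}{\abs{\mu_v + 1 - l_k}} \cdot \frac{1}{\varepsilon} \cdot A(\ul{\mu}')\,(1/\varepsilon)^{\abs{\max(0,\ul{\mu}')}} \leq A(\ul{\mu})\,(1/\varepsilon)^{\abs{\max(0,\ul{\mu})}},
\end{equation*}
as claimed. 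The main obstacle is executing the Leibniz-rule computation carefully in the homogeneous localization and verifying the homogeneity constraint $l_{k'} = l_k$ in the expansion of $L_{(u,v)}.v_k$; this is precisely what ensures that the right-hand side of the recursion lies in $U(\Fg)^{(I)}$ so that the induction closes, and that the telescoping factors of $\frac{1}{\mu_v + 1 - l_k}$ accumulated over successive reductions match (via \Cref{Lemma 3 - Estimates for auxiliary function}(i)) the combinatorial factor $A(\ul{\mu})$ dictated by the target estimate.
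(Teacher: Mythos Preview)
Your proposal is correct and follows essentially the same approach as the paper: induction on $\lVert\ul{\mu}\rVert$, base case via the generators $e_{\ul{\mu},k}$, and inductive step by applying $L_{(u,v)}$ to $\FY_{\ul{\mu}',k}$ and correcting by the terms coming from $L_{(u,v)}.v_k$, with the same Leibniz computation and the same norm estimate via \Cref{Lemma 3 - Estimates for auxiliary function}(i). The only cosmetic difference is that the paper gives the elementary pigeonhole argument for finding $u,v$ directly (using $\sum_j \mu_j = 0$ and $\ul{\mu}\notin\Delta_d$) rather than referencing the earlier \Cref{Lemma 2 - Comparison of occuring weights}, and it writes the bound $|L_{(u,v)}\FY_{\ul{\mu}',k}|_\varepsilon \leq (1/\varepsilon)\,|\FY_{\ul{\mu}',k}|_\varepsilon$ without explicitly naming submultiplicativity.
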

\begin{proof}
	We use induction on $\lVert \ul{\mu} \rVert \defeq \sum_{j=0}^d \abs{\mu_j}$.
	For $\ul{\mu} \in \Delta_d$, we can take $\FY_{\ul{\mu},k} \defeq 1 \, e_{\ul{\mu},k}$ and the claim is trivial.
	Therefore we may assume that there is $u \in \{0,\ldots,d\}$ such that $\abs{ \mu_u} > d$.
	If $\mu_u > 0$, it follows from 
	\[0 = \sum_{j=0}^d \mu_j = \mu_u + \sum_{\substack{j=0 \\ j\neq u}}^d \mu_j \]
	that $\sum_{j=0, j\neq u}^d \mu_j < -d$, and hence that there exists $v \in I$, $v \neq u$ such that $\mu_v < -1$.
	Arguing similarly for the case $\mu_u < 0$, we thus may assume that there are $u \in \{0,\ldots,d\}$, $v\in I$, $u \neq v$, such that $\mu_u >1$, $\mu_v < -1$.

	We now set $\ul{\mu}' \defeq \ul{\mu} - \alpha_{u,v} = \ul{\mu} -\epsilon_u + \epsilon_v$ so that $\lVert \ul{\mu}'\rVert = \lVert \ul{\mu} \rVert -2$.
	For $k \in \{1,\ldots,n\}$, we express $L_{(u,v)}.v_k$ in the $K$-basis $v_1,\ldots,v_n$ of $E'$:
	\[ L_{(u,v)}.v_k = \sum_{k'=1}^n a_{k'} v_{k'} \quad\text{, for $a_1,\ldots,a_n \in K$,} \]
	so that $\max_{k'=1}^n \abs{a_{k'}} = \lVert L_{(u,v)}.v_k \rVert_{E'} \leq \frac{1}{\varepsilon}$ using \eqref{Eq 3 - Smallness condition on epsilon}.
	Note that for the degrees of these homogeneous elements we have $l_{k'}=l_k$, for all $k'\in \{1,\ldots,n\}$ with $ a_{k'}\neq 0$, since $\Fg$ acts on $E'$ by homogeneous endomorphisms.
	Moreover, by the induction hypothesis for $\ul{\mu}'$, for all $k'=1,\ldots,n$, there exist $\FY_{\ul{\mu}',k'} \in U(\Fg)^{(I)}$ as specified in the statement of the lemma.
	We set
	\begin{equation*}
		\FY_{\ul{\mu},k} \defeq \frac{1}{\mu_v +1 - l_k} \bigg( L_{(u,v)} \FY_{\ul{\mu}',k} - \sum_{k'=1}^n a_{k'} \FY_{\ul{\mu}',k'} \bigg) .
	\end{equation*}
	Applying the Leibniz product rule we then find that
	\begin{align*}
		L_{(u,v)}.\bigg( X^{\ul{\mu}'} \frac{v_k}{(X_{i_1}\cdots X_{i_r})^{l_k}} \bigg)
			&= \bigg( L_{(u,v)}. \frac{X^{\ul{\mu}'}}{(X_{i_1}\cdots X_{i_r})^{l_k} } \bigg) v_k  + \frac{X^{\ul{\mu}'}}{(X_{i_1}\cdots X_{i_r})^{l_k} } \big(L_{(u,v)}. v_k \big) \\
			&= (\mu_v +1 - l_k) \frac{X^{\ul{\mu}}}{(X_{i_1}\cdots X_{i_r})^{l_k} } v_k + \sum_{k'=1}^n a_{k'} X^{\ul{\mu}'} \frac{v_{k'}}{(X_{i_1}\cdots X_{i_r})^{l_{k'}}} .
	\end{align*}
	From this it follows that
	\begin{equation*}
		\varphi \big( \FY_{\ul{\mu},k} \big) = \frac{X^{\ul{\mu}}}{(X_{i_1}\cdots X_{i_r})^{l_k} } v_k .
	\end{equation*}
	Moreover, the estimates from the induction hypothesis for the $\FY_{\ul{\mu}',k'}$ yield
	\begin{align*}
		\abs{\FY_{\ul{\mu},k}}_\varepsilon 
			&\leq \frac{1}{\abs{\mu_v +1 - l_k}} \max \Big( \abs{L_{(u,v)} \FY_{\ul{\mu}',k} }_\varepsilon \,,\, \max_{k'=1}^n \abs{a_{k'}} \abs{ \FY_{\ul{\mu}',k'}}_\varepsilon \Big) \\
			&\leq \frac{1}{\abs{\mu_v +1 - l_k}} \max \Bigg( A(\ul{\mu}') \bigg(\frac{1}{\varepsilon}\bigg)^{\abs{\max(0,\ul{\mu}')}+1} , \Big( \max_{k'=1}^n \abs{a_{k'}} \Big) A(\ul{\mu}') \bigg(\frac{1}{\varepsilon}\bigg)^{\abs{\max(0,\ul{\mu}')}} \Bigg) \\
			&\leq \frac{1}{\abs{\mu_v +1 - l_k}} A(\ul{\mu}') \bigg(\frac{1}{\varepsilon}\bigg)^{\abs{\max(0,\ul{\mu}')}+1} \\
			&\leq A(\ul{\mu}) \bigg(\frac{1}{\varepsilon}\bigg)^{\abs{\max(0,\ul{\mu})}} .
	\end{align*}
	In the last step we also have used \Cref{Lemma 3 - Estimates for auxiliary function} (i) and that $\abs{\max(0,\ul{\mu}')} = \abs{\max(0,\ul{\mu})} - 1$.	
\end{proof}

	After these preparations we now show that the epimorphism $\varphi$ is strict.
	It suffices to prove that $\varphi$ is an open map.
	Hence, let $U \subset U(\Fg)^{(I)}$ be an open neighbourhood of $0$ and we want to show that $\varphi(U)$ is an open neighbourhood of $0$.
	By \cite[Thm.\ 3.3.6]{PerezGarciaSchikhof10LocConvSpNonArch} there exist $0<\varepsilon_1 ,\ldots, \varepsilon_t < 1$, for some $t\in \BN$, and $R>0$ such that
	\begin{equation*}
		\big\{ \FX\in U(\Fg)^{(I)} \big\vert \max_{i=1}^t \, \lvert \FX \rvert_{\varepsilon_i} < R \big\} \subset U .
	\end{equation*}
	We may suppose that $\varepsilon \defeq \varepsilon_1 \leq \ldots \leq \varepsilon_t$ so that the former set is equal to 
	\begin{equation*}
		B^{(\abs{\blank}_\varepsilon),-}_R (0) \defeq \big\{ \FX \in U(\Fg)^{(I)} \,\big\vert\, \lvert \FX \rvert_{\varepsilon} < R \big\}
	\end{equation*}
	as $\lvert \blank \rvert_{\varepsilon'} \leq \lvert\blank\rvert_{\varepsilon}$, for $\varepsilon \leq \varepsilon'$.
	It follows that $\varphi\big( B^{(\abs{\blank}_\varepsilon),-}_R (0) \big) \subset \varphi ( U ) $.
	We moreover may assume that $\varepsilon>0$ is sufficiently small as in \eqref{Eq 3 - Smallness condition on epsilon}.

	Then let $\delta \defeq a^{-1} \varepsilon$ where $a \geq 1$ is the constant from \Cref{Lemma 3 - Estimates for auxiliary function} (ii).
	Let $\lVert\blank\rVert_\delta$ denote the quotient norm on $\CE(U_I)$ induced via 
	\begin{equation*}\label{Eq 3 - Epimorphism with homogeneous generators of sections}
		\bigoplus_{k=1}^n \CO(U_I) \, e_k \longtwoheadrightarrow \CE(U_I)\,,\quad X^\ul{\mu} \, e_k \lto X^\ul{\mu} \frac{v_k}{(X_{i_1}\cdots X_{i_r})^{l_k}} ,
	\end{equation*}
	with $\CO(U_I)\subset \CO(U_{I,\delta})$ carrying the Gauss norm $\abs{\blank}_\delta$, see \Cref{Rmk 2 - Concrete description of the norms of the sections on the affinoid subdomains}.
	Note that the norm $\lvert\blank\rvert_\delta$ on $\CE(U_I)$ (i.e.\ the one given by a fixed $\CO(U_I)$-basis as in \eqref{Eq 2 - Fixed norm on sections}) is equivalent to $\lVert\blank\rVert_\delta$ by \cite[3.7.3 Prop.\ 3]{BoschGuentzerRemmert84NonArchAna}.	
	To prove the proposition it therefore suffices to show that
	\begin{equation}\label{Eq 3 - Inclusion of balls}
		B_{C^{-1} R}^{(\lVert\blank\rVert_\delta), - } (0) \subset \varphi \big( B_{R}^{(\abs{\blank}_\varepsilon), - } (0) \big)
	\end{equation}
	where $C>0$ is the constant from \Cref{Lemma 3 - Estimates for auxiliary function} (ii).
	Indeed, then $B_{C^{-1} R}^{(\lVert\blank\rVert_\delta), - } (0)  \subset \varphi(U)$ is an open neighbourhood of $0$ with respect to $\lvert\blank\rvert_\delta$ on $\CE(U_I)$, and hence with respect to the subspace topology of $\CE(U_I)\subset \CE(U_I^\rig)$ as well.

	To show \eqref{Eq 3 - Inclusion of balls}, let $v\in \CE(U_I)$ with $\lVert v \rVert_\delta < \frac{R}{C}$.
	By the definition of the quotient norm $\lVert\blank\rVert_\delta$, there exist $a_{\ul{\mu},k} \in K$ such that
	\begin{align*}
		v = \sum_{\substack{\ul{\mu} \in \Lambda_I \\ k=1,\ldots,n}} a_{\ul{\mu},k} \, X^\ul{\mu} \frac{v_k}{(X_{i_1}\cdots X_{i_r})^{l_k}}
		\qquad \text{and} \qquad
		\sup_{\substack{\ul{\mu} \in \Lambda_I \\ k=1,\ldots,n}} \abs{a_{\ul{\mu},k}} \bigg( \frac{1}{\delta} \bigg)^{\abs{\max(0,\ul{\mu})}} < \frac{R}{C} .
	\end{align*}
	For all $\ul{\mu} \in \Lambda_I$ and $k=1,\ldots,n$, we find $\FY_{\ul{\mu},k} \in U(\Fg)^{(I)}$ as specified in \Cref{Lemma 3 - Existence of good preimages}.
	Then
	\begin{equation*}
		\FY \defeq \sum_{\substack{\ul{\mu} \in \Lambda_I\\ k=1,\ldots,n }} a_{\ul{\mu},k} \, \FY_{\ul{\mu},k} \in U(\Fg)^{(I)} 
	\end{equation*}
	satisfies $\varphi(\FY) = v$.
	Moreover, it follows from \Cref{Lemma 3 - Estimates for auxiliary function} (ii) that
	\begin{align*}
		\lvert \FY \rvert_\varepsilon
			&\leq \sup_{\substack{\ul{\mu} \in \Lambda_I\\ k=1,\ldots,n }} \abs{a_{\ul{\mu},k}} \abs{\FY_{\ul{\mu},k}}_\varepsilon \\
			&\leq \sup_{\substack{\ul{\mu} \in \Lambda_I\\ k=1,\ldots,n }} \abs{a_{\ul{\mu},k}} A(\ul{\mu}) \bigg(\frac{1}{\varepsilon}\bigg)^{\abs{\max(0,\ul{\mu})}}
			\leq C \sup_{\substack{\ul{\mu} \in \Lambda_I\\ k=1,\ldots,n }} \abs{a_{\ul{\mu},k}} \bigg(\frac{1}{\delta}\bigg)^{\abs{\max(0,\ul{\mu})}}
			< R
	\end{align*}
	showing that $v \in \varphi\big( B_{R}^{(\abs{\blank}_\varepsilon), - } (0) \big)$.
\end{proof}

Finally, we allow $K$ to be a finite extension of a non-archimedean local field $L$ of arbitrary characteristic, and return to the setting of a connected split reductive group $\bG$ over $L$ as considered in the beginning of this section.
Let $\bP$ a standard parabolic subgroup of $\bG$.
The preceding considerations as well as the generalization of the functors $\CF^G_P$ in the $p$-adic situation due to Agrawal and Strauch \cite{AgrawalStrauch22FromCatOLocAnRep} motivate the following definition.

\begin{definition}[{cf.\ \cite[Def.\ 4.2.1]{AgrawalStrauch22FromCatOLocAnRep}}]
	For a separately continuous $D(\Fg,P,K)$-module $M$ and an admissible smooth $L_\bP$-representations $V$ on a $K$-vector space considered with the finest locally convex topology, we define the functor
	\begin{equation*}
		\widetilde{\CF}^G_P(M,V) \defeq  D(G,K) \cotimes{D(\Fg,P,K), \iota} \big(M \projcotimes V'_b \big) 
	\end{equation*}
	which takes values in the category of separately continuous $D(G, K)$-modules.
\end{definition}

Like before $V'_b$ is a nuclear $K$-Fr\'echet space, for such an admissible smooth $L_\bP$-re\-pre\-sen\-tation $V$.
If the Hausdorff completion $\widehat{M}$ of $M$ is a nuclear $K$-Fr\'echet space as well, it follows from the discussion after \cite[Prop.\ 17.6]{Schneider02NonArchFunctAna} and \cite[Prop.\ 19.11, Prop.\ 20.4]{Schneider02NonArchFunctAna} that $M \projcotimes V'_b = \widehat{M} \projcotimes V'_b $ is a nuclear $K$-Fr\'echet space, too.
Since \Cref{Lemma 3 - Tensor products over distribution algebras} (ii) yields a topological isomorphism
\begin{equation*}
	D(G) \cotimes{D(\Fg,P), \iota} \big(M \projcotimes V'_b \big) \cong D(G_0) \cotimes{D(\Fg,P_0)} \big(M \projcotimes V'_b \big) ,
\end{equation*}
it follows from \cite[Prop.\ 19.4 (ii)]{Schneider02NonArchFunctAna} that $\widetilde{\CF}^G_P(M,V)$ is a nuclear $K$-Fr\'echet space.
Hence the strong dual $\widetilde{\CF}^G_P(M,V)'_b$ is a locally analytic $G$-representation of compact type in this case, by \Cref{Prop 1 - Equivalences for categories of locally analytic representations}.

\begin{remark}
	Let $M$ be an abstract module for the (algebraic) distribution algebra ${\rm Dist}(\bG) \botimes{L} K = \hy(G)$.
	An obvious question is under which algebraic conditions on the module $M$ (such as $M \in \CO_\alg^\Fp$ in the $p$-adic case) this lifts to a separately continuous $D(\Fg,P)$-module structure on $M$.
	Here analogues of the BGG category $\CO$ in the setting of ${\rm char}(K)>0$ should play a role.
	For recent developments in regard to these analogues, see \cite{Andersen22BGGCatPrimeChar,  Andersen22CharFormCatOp, Fiebig21PerSubquotModCatO, Orlik21EqVBDrinfeldFF}.
\end{remark}

\vspace{3ex}

\appendix

\numberwithin{theorem}{section}

\section{Non-Archimedean Functional Analysis}

Here we collect some results of non-archimedean functional analysis. 
They are all (slight generalizations of) statements that can be found in the literature or adaptions from the archimedean setting.
Let $K$ be a spherically complete non-archimedean field with ring of integers $\CO_K= \{x \in K\mid \abs{x}\leq 1 \}$.
\\

We recall the definition of a compact continuous homomorphism between locally convex $K$-vector spaces \cite[Ch.\ 8.8 p.\ 334]{PerezGarciaSchikhof10LocConvSpNonArch}:

\begin{definition}
	\begin{altenumerate}
		\item
		A continuous homomorphism $f\colon V \ra W$ between Hausdorff locally convex $K$-vector spaces is \textit{compact} if there exists a complete compactoid subset $X \subset W$ (cf.\ \cite[Def.\ 3.8.1]{PerezGarciaSchikhof10LocConvSpNonArch}) such that $f^{-1}(X)\subset V$ is a neighbourhood of $0$ in $V$.
		\item
		A continuous homomorphism $f\colon V \ra W$ between Hausdorff locally convex $K$-vector spaces is \textit{semicompact} if there exists a compactoid Banach disk $B \subset W$ (cf.\ \cite[p. 414]{PerezGarciaSchikhof10LocConvSpNonArch}) such that $f^{-1}(B)\subset V$ is a neighbourhood of $0$ in $V$.
	\end{altenumerate}
	
\end{definition}

\begin{remarks}\label{Rmk A1 - Remarks about compact and semicompact homomorphisms}
	\begin{altenumerate}
		\item
		If $f$ is compact, then $f$ is semicompact.
		If $W$ is quasi-complete, then the converse holds as well \cite[Ch.\ 8.8 p.\ 334]{PerezGarciaSchikhof10LocConvSpNonArch}.
		\item
		This definition is equivalent to the one in \cite[\S 16]{Schneider02NonArchFunctAna}.
		There $f$ is defined to be compact if there exists an open lattice $L\subset V$ such that the closure $\widebar{f(L)}\subset W$ is bounded and c-compact.
	\end{altenumerate}
\end{remarks}
\begin{proof}[Proof of (ii)]
	For an open lattice $L\subset V$, by \cite[Prop.\ 12.7]{Schneider02NonArchFunctAna} $\widebar{f(L)}$ is bounded and c-compact if and only if it is compactoid and complete.
	Hence $X \defeq \widebar{f(L)}$ yields a complete compactoid subset such that $f^{-1}(X) \supset L$ is a neighbourhood of $0$ in $V$.

	On the other hand, given a complete compactoid $X\subset W$ such that $f^{-1}(X) $ is a neighbourhood of $0$ in $V$, i.e.\ $f^{-1}(X)$ contains an open lattice $L\subset V$, it follows that $\widebar{f(L)} \subset \widebar{X}$.
	But this implies that $\widebar{f(L)}$ itself is compactoid by \cite[Thm.\ 3.8.4]{PerezGarciaSchikhof10LocConvSpNonArch}, and complete by \cite[Rmk.\ 7.1 (iv), (v)]{Schneider02NonArchFunctAna}.
\end{proof}

\begin{lemma}\label{Lemma A1 - Generalities on compact maps}
	\begin{altenumerate}
		\item
		Consider the following commutative square of Hausdorff locally convex $K$-vector spaces:
		\begin{equation}\label{Eq A1 - Commutative diagram for compactness lemma}
			\begin{tikzcd}
				V \ar[r,"f"]\ar[d,"g"] & W \ar[d,"g'"] \\
				V'\ar[r,"f'"] & W' .
			\end{tikzcd}
		\end{equation}
		If $g'$ is compact and $f'$ a strict monomorphism, then $g$ is compact.
		\item
		In the commutative square \eqref{Eq A1 - Commutative diagram for compactness lemma}, if $g$ is compact and $f$ a strict epimorphism, then $g'$ is compact.
		\item
		Finite products of compact homomorphisms are compact.	
\end{altenumerate}
\end{lemma}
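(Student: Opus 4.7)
The plan is to prove each of the three parts by working directly from the definition of a compact homomorphism as one that sends some neighbourhood of $0$ into a complete compactoid subset, together with the equivalent characterization via open lattices recorded in Remark A1 (ii). Statements (i) and (ii) are dual to each other: both follow the strategy of using commutativity of the square to transport a witness of compactness across the diagram, but the technical obstruction lies in different directions.

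For part (i), the first step is to observe that $g' \circ f = f' \circ g$ is compact, since compactness is preserved under precomposition by a continuous map. Explicitly, if $U' \subset W$ is a neighbourhood of $0$ with $g'(U')$ contained in some complete compactoid $X \subset W'$, then $U \defeq f^{-1}(U')$ is a neighbourhood of $0$ in $V$ satisfying $f'(g(U)) = g'(f(U)) \subset X$, so $g(U) \subset (f')^{-1}(X)$. The crux is then to establish that $(f')^{-1}(X)$ is itself a complete compactoid in $V'$. Here I exploit that $f'$, being a strict monomorphism, is a topological embedding: a complete subset of a Hausdorff space is closed, so $X$ is closed in $W'$ and $(f')^{-1}(X)$ is closed in $V'$; under the homeomorphism $V' \cong f'(V')$, the set $(f')^{-1}(X)$ is identified with $X \cap f'(V')$, a closed subset of the complete, bounded, c-compact set $X$. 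Boundedness and c-compactness are stable under passing to closed subsets (see \cite[\S 12]{Schneider02NonArchFunctAna}) and transfer from $W'$ through the subspace $f'(V')$ to $V'$; this exhibits $(f')^{-1}(X)$ as the required complete compactoid. I expect this last transfer of c-compactness through the embedding to be the main technical point.

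Part (ii) proceeds more directly. Given $g$ compact, choose a neighbourhood $U$ of $0$ in $V$ with $g(U)$ contained in a complete compactoid $Y \subset V'$. Because $f$ is a strict epimorphism, and hence open, $f(U) \subset W$ is a neighbourhood of $0$; commutativity gives $g'(f(U)) = f'(g(U)) \subset f'(Y)$. The continuous linear image $f'(Y)$ is compactoid by the standard stability of compactoidness under continuous linear maps, bounded since continuous linear images of bounded sets are bounded, and c-compact since continuous images of c-compact sets are c-compact; thus $f'(Y)$ is a complete compactoid in $W'$, so $g'$ is compact with witness $f(U)$.

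For part (iii), the argument is routine: if each $f_i \colon V_i \to W_i$, $i = 1,\ldots,n$, is compact with open lattice $L_i \subset V_i$ and $\widebar{f_i(L_i)} \subset W_i$ bounded and c-compact, then $L_1 \times \cdots \times L_n$ is an open lattice in the product space and its image under $f_1 \times \cdots \times f_n$ has closure contained in $\widebar{f_1(L_1)} \times \cdots \times \widebar{f_n(L_n)}$, a finite product of bounded, c-compact sets and therefore itself bounded and c-compact.
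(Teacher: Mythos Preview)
Your proposal is correct and follows essentially the same route as the paper: for (i) and (ii) you reduce to the compactness of the composite $g' \circ f = f' \circ g$ and then transfer this through the embedding or quotient, which is precisely what the paper does by citing \cite[Rmk.\ 16.7]{Schneider02NonArchFunctAna} directly rather than unpacking the argument. For (ii) the paper's version is marginally more economical---it observes that $\overline{g'(f(L))} = \overline{(g'\circ f)(L)}$ is already the required bounded c-compact witness once $f(L)$ is seen to be an open lattice, bypassing any separate analysis of $f'(Y)$---and part (iii) is handled identically in both.
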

\begin{proof}
For (i), it follows from \cite[Rmk.\ 16.7 (i)]{Schneider02NonArchFunctAna} that $f' \circ g = g' \circ f$ is compact if $g'$ is. 
Since $\Im(f') \cong V'$, \cite[Rmk.\ 16.7 (ii)]{Schneider02NonArchFunctAna} implies that $g$ is compact.

In the situation of (ii), we again know from \cite[Rem.\ 16.7 (i)]{Schneider02NonArchFunctAna} that $g'\circ f = f'\circ g$ is compact.
Then by definition there exists an open lattice $L \subset V$ such that $\widebar{(g'\circ f)(L)}$ is bounded and c-compact.
But as $f$ is a strict epimorphism, $f(L)$ is an open lattice in $W$ which shows that $g'$ is compact.

For (iii), we consider compact homomorphisms $f\colon V \ra W$ and $f'\colon V'\ra W'$ with open lattices $L\subset V$, $L'\subset V'$ such that $\widebar{f(L)}$ is bounded and c-compact in $W$ and the same holds for $\widebar{f'(L')}$ in $W'$.
Then $L \times L'$ is an open lattice in $V \times V'$ and $\widebar{(f\times f') (L \times L')} = \widebar{f(L)} \times \widebar{f'(L')}$ is bounded in $W\times W'$.
Moreover, $\widebar{f(L)} \times \widebar{f'(L')}$ is c-compact by \cite[Prop.\ 12.2]{Schneider02NonArchFunctAna}.
\end{proof}

\begin{proposition}[{\cite[Prop.\ 1.2 (i)]{SchneiderTeitelbaum02LocAnDistApplToGL2}}]\label{Lemma A1 - Closed subspaces and quotients of spaces of compact type}
Let $V$ be a locally convex $K$-vector space of compact type.
If $U \subset V$ is a closed subspace then $U$ and $V/U$ are of compact type again.
\end{proposition}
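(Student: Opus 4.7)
The plan is to build compact type presentations for both $U$ and $V/U$ from a given presentation $V = \varinjlim_{n \in \BN} V_n$ of $V$ as an inductive limit of $K$-Banach spaces with injective compact transition maps $V_n \hookrightarrow V_{n+1}$, and then to identify these presentations with the subspace, respectively quotient, topologies coming from $V$.

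For the closed subspace $U \subset V$, I would set $U_n \defeq U \cap V_n$, viewing $V_n$ as a BH-subspace of $V$ via its continuous injection into $V$. Since $U$ is closed in $V$, the preimage $U_n \subset V_n$ is closed, and hence is a $K$-Banach space. Applying \Cref{Lemma A1 - Generalities on compact maps}~(i) to the commutative square whose horizontal arrows are the closed embeddings $U_n \hookrightarrow V_n$ and $U_{n+1} \hookrightarrow V_{n+1}$ (strict monomorphisms) and whose vertical arrows are the transitions, I conclude that $U_n \hookrightarrow U_{n+1}$ is compact and injective. Thus $\varinjlim_n U_n$ is of compact type, and the inclusions $U_n \hookrightarrow U$ yield a continuous bijection $\varinjlim_n U_n \to U$; bijectivity follows from $V = \bigcup_n V_n$ at the level of underlying vector spaces.

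For the quotient $V/U$, I would set $W_n \defeq V_n / U_n$, which is a $K$-Banach space as a Hausdorff quotient of a Banach space by a closed subspace. The induced maps $W_n \to W_{n+1}$ are injective (if $v \in V_n$ lands in $U_{n+1} = U \cap V_{n+1}$ through $V_n \hookrightarrow V_{n+1}$, then $v \in V_n \cap U = U_n$) and compact by \Cref{Lemma A1 - Generalities on compact maps}~(ii), applied to the obvious square with the strict epimorphisms $V_n \twoheadrightarrow W_n$ and $V_{n+1} \twoheadrightarrow W_{n+1}$ as horizontal arrows. So $\varinjlim_n W_n$ is of compact type, and the quotient maps $V_n \to V/U$ factor through $W_n$ to give a continuous bijection $\varinjlim_n W_n \to V/U$.

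The main obstacle is to upgrade these continuous bijections to topological isomorphisms, so that the subspace topology on $U$ and the quotient topology on $V/U$ genuinely coincide with the inductive limit topologies just built. For this one can exploit the regularity property of inductive limits of compact type, namely that every bounded subset of $V$ is already contained and bounded in some $V_n$ (cf.\ \cite[Lemma 16.8]{Schneider02NonArchFunctAna}). This transfers to both $U$ (bounded subsets of $U$ are bounded in $V$, hence in some $V_n$, hence in $U_n$) and $V/U$, and since LB-spaces are bornological, equality of bornologies forces equality of topologies on both sides, completing the proof.
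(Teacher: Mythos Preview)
Your construction of the inductive systems $(U_n)_{n\in\BN}$ and $(W_n)_{n\in\BN}$ and the verification that their transition maps are injective and compact matches the paper's setup exactly. The problem is your final paragraph.

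For the subspace $U$, the bornological argument has a genuine gap. You correctly show, via the regularity of $V$, that the bounded subsets of $U$ (subspace topology) coincide with those of $\varinjlim_n U_n$. But ``equality of bornologies forces equality of topologies'' requires the \emph{source} of the map you are trying to make continuous to be bornological: to conclude that the inverse $U \to \varinjlim_n U_n$ is continuous from it being bounded, you need $U$ to be bornological, not $\varinjlim_n U_n$. You have not established that $U$ with the subspace topology is bornological, and closed subspaces of bornological spaces need not be so a priori. All your argument yields is that $U$'s topology is coarser than the inductive limit topology, which the continuous bijection already gave you. The paper handles this step by invoking \cite[Thm.\ 3.1.16]{DeGrandeDeKimpeKakolPerezGarciaSchikofS97pAdicLocConvIndLim} directly.

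For $V/U$ you are on firmer ground, since quotients of bornological spaces \emph{are} bornological; but then you must show that every bounded subset of $V/U$ already lies in some $W_n$, and your ``this transfers to \ldots\ $V/U$'' does not explain how --- it would require lifting bounded subsets along the quotient $V \to V/U$, which is not automatic for LB-spaces. The paper sidesteps this entirely: it observes that the induced map $V \to \varinjlim_n W_n$ is a continuous surjection between LB-spaces and applies the open mapping theorem \cite[II.\ \S 4.6 Cor.]{Bourbaki87TopVectSp1to5} to conclude it is strict, whence $\varinjlim_n W_n \cong V/U$.
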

\begin{proof}
Let $(V_n)_{n\in \BN}$ be an inductive sequence of $K$-Banach spaces with injective and compact transition maps such that $V \cong \varinjlim_{n \in \BN} V_n$.
By \cite[Thm.\ 3.1.16]{DeGrandeDeKimpeKakolPerezGarciaSchikofS97pAdicLocConvIndLim}, $U$ with its subspace topology is topologically isomorphic to the inductive limit of the sequence $(U_n)_{n\in \BN}$ where $U_n \defeq U\cap V_n$.
Then the transition maps of $(U_n)_{n\in \BN}$ are compact again \cite[Rmk.\ 16.7]{Schneider02NonArchFunctAna}.
Therefore $U$ is of compact type.

Furthermore, by \Cref{Lemma A1 - Generalities on compact maps} (ii), the induced maps $V_n/U_n \ra V_{n+1}/U_{n+1}$ are injective and compact.
Hence $\varinjlim_{n \in \BN} V_n/ U_n$ is of compact type.
Moreover, taking the inductive limit over the short strictly exact sequences $0\ra U_n \ra V_n \ra V_n/U_n \ra 0$ we arrive at the sequence of continuous homomorphisms
\begin{equation*}
	0 \lra U \lra V \lra \varinjlim_{n\in \BN} V_n/U_n \lra 0
\end{equation*}
which is exact as a sequence of $K$-vector spaces.
It follows from the open mapping theorem \cite[II.\ \S 4.6 Cor.]{Bourbaki87TopVectSp1to5} that the continuous surjection $V \ra \varinjlim_{n \in \BN} V_n/U_n$ is strict. 
Therefore $\varinjlim_{n \in \BN} V_n/U_n \ra V/U$ is a topological isomorphism.
\end{proof}

\begin{proposition}[{\cite[Thm.\ 1.3]{SchneiderTeitelbaum02LocAnDistApplToGL2}}]\label{Prop A1 - Duality of spaces of compact type and nuclear Frechet spaces}
The strong dual of a locally convex $K$-vector space of compact type is a nuclear Fr\'echet space, and the strong dual of a nuclear $K$-Fr\'echet space is of compact type.
\end{proposition}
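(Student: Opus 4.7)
The plan is to establish both directions by realizing the given space as a suitable limit of Banach spaces and dualizing carefully.

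For the first direction, suppose $V$ is of compact type, so $V \cong \varinjlim_{n\in\BN} V_n$ for an inductive sequence of $K$-Banach spaces $V_n$ with injective compact transition maps $\iota_n \colon V_n \hookrightarrow V_{n+1}$. The first step is to identify $V'_b$ with $\varprojlim_{n\in\BN} (V_n)'_b$ as locally convex $K$-vector spaces. Since each $\iota_n$ is compact it is in particular a strict monomorphism with bounded compactoid image, so passing to transposes produces continuous homomorphisms $(V_{n+1})'_b \to (V_n)'_b$ of $K$-Banach spaces, and these are compact by \cite[Lemma 16.4]{Schneider02NonArchFunctAna}. For the identification of $V'_b$ with the projective limit one uses that the $V_n \hookrightarrow V$ form an exhaustion by BH-subspaces, so a continuous linear functional on $V$ is precisely a coherent system of continuous linear functionals on the $V_n$, and the strong topology on $V'_b$ agrees with the projective limit topology because bounded subsets of $V$ are contained (up to closure) in some $V_n$, using the compactness of the transition maps. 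Having realized $V'_b$ as the countable projective limit of $K$-Banach spaces with compact transition maps, nuclearity together with the Fr\'echet property follow from \cite[Prop.\ 19.9]{Schneider02NonArchFunctAna}.

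For the second direction, suppose $V$ is a nuclear $K$-Fr\'echet space. By definition of nuclearity one can find a defining sequence of seminorms $(q_n)_{n\in\BN}$ on $V$ such that the canonical maps between the associated Banach space completions $V_n$ (the Hausdorff completions of $V$ with respect to $q_n$) are compact; this exhibits $V \cong \varprojlim_{n\in\BN} V_n$ with compact transition maps $V_{n+1} \to V_n$. Replacing $V_n$ by the closure of the image of $V$ in $V_n$ if necessary, we may arrange that these transition maps have dense image (and remain compact by \Cref{Lemma A1 - Generalities on compact maps}(ii) or by restriction). Dually, one then obtains $V'_b \cong \varinjlim_{n\in\BN} (V_n)'_b$, with transitions obtained as transposes of maps with dense image (hence injective) and compact \cite[Lemma 16.4]{Schneider02NonArchFunctAna}. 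The identification of the strong dual with the inductive limit follows from \cite[Prop.\ 16.5]{Schneider02NonArchFunctAna}. This exhibits $V'_b$ as the inductive limit of a sequence of $K$-Banach spaces with injective compact transition homomorphisms, i.e.\ as a locally convex $K$-vector space of compact type.

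The main obstacle is the careful identification of the topologies in the two dualities: one needs to verify that the natural algebraic isomorphisms $V'_b \cong \varprojlim (V_n)'_b$ and $V'_b \cong \varinjlim (V_n)'_b$ are in fact topological, not merely linear bijections. In the first (compact type) case this uses that the compactness of the transitions forces bounded sets in $V$ to be equicontinuously controlled by some single $V_n$, so the topology of bounded convergence on $V'$ coincides with the projective limit topology. In the second (nuclear Fr\'echet) case one needs to invoke the standard duality between projective limits with dense-image transitions and inductive limits \cite[Prop.\ 16.5]{Schneider02NonArchFunctAna}, together with the fact that the strong dual of a Fr\'echet space with a fundamental sequence of compactoid neighbourhoods yields bounded subsets of $V'_b$ that are contained in the unit ball of some $(V_n)'_b$.
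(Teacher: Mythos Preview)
Your approach is correct and conceptually parallel to the paper's, though the technical routes differ somewhat. For the first direction, the paper simply cites \cite[Thm.\ 3.1.7 (vii),(viii)]{DeGrandeDeKimpeKakolPerezGarciaSchikofS97pAdicLocConvIndLim} rather than unpacking the dualization of the inductive limit as you do. For the second direction, the paper uses the polar construction $V'_{L_n^\circ}$ associated with a neighbourhood basis of lattices $(L_n)$ (which amounts to your $(V_n)'_b$), invokes \cite[Prop.\ 3.1.13]{DeGrandeDeKimpeKakolPerezGarciaSchikofS97pAdicLocConvIndLim} for compactness of the transitions, and then matches the inductive limit topology with the strong topology via reflexivity of $V$, barrelledness of $V'_b$, and \cite[Cor.\ 2.5.9]{DeGrandeDeKimpeKakolPerezGarciaSchikofS97pAdicLocConvIndLim}, rather than your direct appeal to \cite[Prop.\ 16.5]{Schneider02NonArchFunctAna}. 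Your route through Schneider's book is more self-contained; the paper's route leans on the De Grande-De Kimpe et al.\ reference.

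Two small points. In your second direction, when you restrict the compact transitions to the closures of the images, the relevant part of \Cref{Lemma A1 - Generalities on compact maps} is (i), not (ii): you are restricting along strict monomorphisms, not pushing through strict epimorphisms. More substantively, in your first direction the topological identification $V'_b \cong \varprojlim (V_n)'_b$ rests on the regularity of the inductive limit (every bounded subset of $V$ lies in some $V_n$). You assert this ``using the compactness of the transition maps,'' which is correct but deserves a reference; this is precisely what the compact-type hypothesis buys you, and is established for instance in the course of \cite[Prop.\ 16.10]{Schneider02NonArchFunctAna} or via \cite[Thm.\ 3.1.16]{DeGrandeDeKimpeKakolPerezGarciaSchikofS97pAdicLocConvIndLim}.
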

\begin{proof}
If $V$ is a locally convex $K$-vector space of compact type, it follows from \cite[Thm.\ 3.1.7 (vii),(viii)]{DeGrandeDeKimpeKakolPerezGarciaSchikofS97pAdicLocConvIndLim} that $V'_b$ is a nuclear $K$-Fr\'echet space.

Conversely, let $V$ be a nuclear $K$-Fr\'echet space.
For a decreasing neighbourhood base of $0$ consisting of lattices $(L_n)_{n\in \BN}$, one obtains an inductive sequence $(V'_{L_n^{\circ}})_{n\in \BN}$ of certain $K$-Banach spaces $V'_{L_n^{\circ}}$ with $V'= \bigcup_{n\in \BN} V'_{L_n^{\circ}}$, see \cite[Def.\ 2.5.2]{DeGrandeDeKimpeKakolPerezGarciaSchikofS97pAdicLocConvIndLim}.
Then \cite[Prop.\ 3.1.13]{DeGrandeDeKimpeKakolPerezGarciaSchikofS97pAdicLocConvIndLim} says that this sequence is semicompact; even compact by Remark \ref{Rmk A1 - Remarks about compact and semicompact homomorphisms} (ii).
Note that if $K$ is spherically complete every locally convex $K$-vector space is polar \cite[Thm.\ 4.4.3 (i)]{PerezGarciaSchikhof10LocConvSpNonArch}.

Moreover, by \cite[Cor.\ 8.5.3]{PerezGarciaSchikhof10LocConvSpNonArch}, $V$ in particular is reflexive, and therefore $V'_b$ is barrelled by \cite[Thm.\ 7.4.11 (i)]{PerezGarciaSchikhof10LocConvSpNonArch}.
(If $K$ is spherically complete, barrelled and polarly barrelled are equivalent \cite[Thm.\ 7.1.9 (ii)]{PerezGarciaSchikhof10LocConvSpNonArch}.)
Now \cite[Cor.\ 2.5.9]{DeGrandeDeKimpeKakolPerezGarciaSchikofS97pAdicLocConvIndLim} implies that the inductive limit topology on $\bigcup_{n\in \BN} V'_{L_n^{\circ}}$ agrees with the strong topology of $V'$.
Hence $V'_b$ is of compact type.
\end{proof}

\begin{lemma}\label{Lemma A1 - Homomorphisms from projective limit into normed space}
Let $(V_n)_{n\in \BN}$ be a projective sequence of locally convex $K$-vector spaces and $W$ a normed $K$-vector space.
Then the canonical continuous homomorphism
\begin{equation}\label{Eq A1 - Homomorphisms from projective limit}
	\varinjlim_{n\in \BN} \CL_b(V_n,W) \lra \CL_b \bigg(\varprojlim_{n\in \BN} V_n, W \bigg)
\end{equation}
is surjective.
If all projections $\pr_n \colon \varprojlim_{n\in \BN} V_n \ra V_n$ have dense image, or if all $V_n$ are Hausdorff and the transition homomorphisms $V_{n+1} \ra V_n$ are compact, then \eqref{Eq A1 - Homomorphisms from projective limit} even is bijective.	
\end{lemma}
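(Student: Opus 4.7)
The strategy is to exploit the fact that $W$, being normed, has a bounded $0$-neighborhood, which will force any continuous $\varphi\colon V\to W$ out of the projective limit $V=\varprojlim_n V_n$ to ``see'' only a single finite stage $V_n$. Concretely, denoting by $B_W$ the closed unit ball of $W$, the preimage $\varphi^{-1}(B_W)$ is a $0$-neighborhood in $V$. Since the system $(V_n)_{n\in\BN}$ is indexed by $\BN$, a $0$-neighborhood basis of the projective limit topology is given by the sets $\pr_n^{-1}(U_n)$ with $n\in\BN$ and $U_n\subset V_n$ a $0$-neighborhood, so I can choose $n$ and $U_n$ with $\pr_n^{-1}(U_n)\subset\varphi^{-1}(B_W)$. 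A standard scaling argument (using that $|K^\times|$ is unbounded) then forces $\varphi$ to vanish on $\Ker(\pr_n)$; hence $\varphi$ factors as $\varphi=\tilde\psi\circ\pr_n$ through a well-defined continuous linear map $\tilde\psi\colon\Im(\pr_n)\to W$, where $\Im(\pr_n)$ carries the subspace topology from $V_n$ (continuity of $\tilde\psi$ follows from $\tilde\psi(\Im(\pr_n)\cap U_n)\subset B_W$).

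To obtain surjectivity, I need to extend $\tilde\psi$ to a continuous linear $\psi_n\colon V_n\to W$ so that the class of $\psi_n$ in $\varinjlim_n\CL_b(V_n,W)$ is sent to $\psi_n\circ\pr_n=\varphi$. This extension step is the main analytic obstacle: I would invoke the non-archimedean Hahn--Banach theorem (Ingleton), applicable precisely because the standing assumption that $K$ is spherically complete holds throughout this appendix, to extend $\tilde\psi$ from the subspace $\Im(\pr_n)\subset V_n$ across all of $V_n$ while preserving continuity.

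For injectivity, two classes in $\varinjlim_n\CL_b(V_n,W)$ coincide iff they agree after composition with some transition map, so given $\psi_n\in\CL_b(V_n,W)$ with $\psi_n\circ\pr_n=0$ one must exhibit $m\geq n$ with $\psi_n\circ f_{m,n}=0$, where $f_{m,n}\colon V_m\to V_n$ denotes the transition homomorphism. In the dense-image case the argument is clean: $\psi_n$ vanishes on the dense subspace $\Im(\pr_n)\subset V_n$ and is continuous into the Hausdorff space $W$, hence vanishes on all of $V_n$, so $\psi_n$ already represents zero at level $n$.

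In the compact-transition case (with the $V_n$ Hausdorff) the argument is more delicate and is where I expect the real work to lie. The plan is to show, by a topological Mittag--Leffler–type argument in the spirit of \Cref{Lemma 2 - Exactness of projective limit when topological ML property}, that compactness of each $f_{n+1,n}$ implies $\Im(\pr_n)$ is dense in the descending intersection of closures $\bigcap_{m\geq n}\overline{f_{m,n}(V_m)}\subset V_n$, each term of which is compactoid in $V_n$. Combined with the continuity of $\psi_n$ and Hausdorffness of $W$, one first concludes $\psi_n$ vanishes on this intersection, and then upgrades this to $\psi_n\circ f_{m,n}=0$ for some finite $m$ by exploiting the shrinking compactoid structure of the images together with the fact that $\Ker(\psi_n)\subset V_n$ is a closed linear subspace. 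Transferring pointwise vanishing on the limit image to the full image of a single finite-stage transition is the key difficulty of the lemma.
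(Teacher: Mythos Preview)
Your surjectivity argument mirrors the paper's through the factorization $\varphi=\tilde\psi\circ\pr_n$ with $\tilde\psi$ continuous on $\Im(\pr_n)$; you are in fact more explicit than the paper (which simply writes ``$f$ factors over $V_m$ via $\pr_m$'' and declares surjectivity) in flagging that $\tilde\psi$ must then be extended to all of $V_n$. However, your proposed tool is wrong: Ingleton's theorem is a Hahn--Banach statement for \emph{scalar-valued} linear forms $V_0\to K$, and there is no general extension principle for continuous linear maps into an arbitrary normed space $W$. Your dense-image injectivity argument, on the other hand, is correct and identical to the paper's.

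The main divergence---and the second gap---is in the compact-transition case. You propose a direct Mittag--Leffler attack and correctly isolate the hard step: upgrading from $\psi_n$ vanishing on $\bigcap_m\overline{f_{m,n}(V_m)}$ to $\psi_n\circ f_{m,n}=0$ at some \emph{finite} stage $m$. You do not resolve it, and it is not clear that a ``shrinking compactoid'' argument alone can force this. The paper avoids the difficulty entirely by a reduction: citing a construction in Schneider, \textit{Nonarchimedean Functional Analysis}, p.~93, one replaces $(V_n)$ by an auxiliary projective system $(U_n)$ with $\varprojlim U_n\cong\varprojlim V_n$ topologically, with projections of dense image, and with $\varinjlim\CL_b(V_n,W)\cong\varinjlim\CL_b(U_n,W)$ by functoriality. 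The compact case then reduces to the dense-image case already handled, and the finite-stage lifting problem never arises.
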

\begin{proof}	
First consider $f \in \CL \big( \varprojlim_{n\in \BN} V_n, W \big)$, and let $B_W \defeq \left\{ w\in W \middle{|} \lVert w \rVert_W \leq 1 \right\}$ denote the unit ball of $W$ so that $f^{-1}(B_W) \subset \varprojlim_{n\in \BN} V_n$ is open.
By the definition of the initial topology of $\varprojlim_{n\in \BN} V_n$, there exist integers $n_1,\ldots,n_r \in \BN$ and open lattices $L_{n_i}\subset V_{n_i}$, $i=1,\ldots,r$, such that
\begin{equation*}
	\pr_{n_1}^{-1}(L_{n_1}) \cap \ldots \cap \pr_{n_r}^{-1}(L_{n_r}) \subset f^{-1}(B_W).
\end{equation*}
Let $m\geq n_1,\ldots,n_r$, and note that $\Ker(\pr_m) \subset \Ker(\pr_{n_i})\subset \pr_{n_i}^{-1}(L_{n_i})$, for $i=1,\ldots,r$.
Hence we find that $\Ker(\pr_m) \subset f^{-1}(B_W)$.
As $\Ker(\pr_m)$ is a $K$-subvector space, it follows that $\Ker(\pr_m) \subset \Ker(f)$.
Therefore $f$ factors over $V_m$ via $\pr_m$.
This shows that \eqref{Eq A1 - Homomorphisms from projective limit} is surjective.

If all projections $\pr_n$ have dense image, then the homomorphisms
\begin{equation*}
	\CL (V_n, W) \lra \CL\bigg(\varprojlim_{n\in \BN} V_n, W\bigg) \,,\quad f \lto f \circ \pr_n ,
\end{equation*}
are injective because $W$ is Hausdorff.
As \eqref{Eq A1 - Homomorphisms from projective limit} is the direct limit of these homomorphisms, its injectivity follows.

If all $V_n$ are Hausdorff and the transition maps are compact, then there exists a projective system $(U_n)_{n\in\BN}$ such that $\varprojlim_{n\in \BN} V_n$ and $\varprojlim_{n\in \BN} U_n$ are topologically isomorphic, and the canonical projections of the latter have dense image \cite[p.\ 93]{Schneider02NonArchFunctAna}.
Moreover, we have $\varinjlim_{n\in \BN} \CL_b(V_n,W) \cong \varinjlim_{n \in \BN} \CL_b(U_n,W)$ by functoriality, and can conclude using the previous case.
\end{proof}

\begin{proposition}[{\cite[Prop.\ 1.5]{SchneiderTeitelbaum02LocAnDistApplToGL2}}]\label{Prop A1 - Continuous bijection for tensor product of space of compact type and Banach space}
Let $V$ be a locally convex $K$-vector space of compact type, expressed as $V = \varinjlim_{n \in \BN} V_n$, for a sequence of $K$-Banach spaces $V_n$ with compact and injective transition maps.
Moreover, let $W$ be a $K$-Banach space.
Then the canonical continuous homomorphism
\begin{equation*}
	\varinjlim_{n\in \BN} \big( V_n \cotimes{K} W \big) \lra V \cotimes{K} W
\end{equation*}
is bijective.
\end{proposition}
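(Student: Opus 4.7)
My plan is to establish that the canonical continuous homomorphism
\[ \iota\colon \varinjlim_{n\in \BN} \big( V_n \cotimes{K} W \big) \lra V \cotimes{K} W \]
is surjective and injective in turn.

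For surjectivity, I would first represent every element $x \in V \cotimes{K} W$ as a null-convergent series $x = \sum_{k \geq 0} v_k \otimes w_k$, with $v_k \to 0$ in $V$ and $(w_k)_{k\in \BN}$ bounded in the Banach space $W$; this is an extension of \cite[Lemma 17.1]{Schneider02NonArchFunctAna} valid because $W$ is Banach and $V$ has a neighbourhood base of the origin consisting of open lattices. Next, the compact subset $\{v_k\}_{k\in\BN} \cup \{0\} \subset V$ is compactoid, and since $V = \varinjlim_n V_n$ is of compact type (an LB-space with compact injective transition maps), every compactoid subset is absorbed into a bounded subset of some single step $V_{n_0}$, with the sequence $(v_k)$ then being null in the Banach space $V_{n_0}$ itself (cf.\ arguments in the spirit of \cite[I.\ \S 3.3 Prop.\ 1]{Bourbaki87TopVectSp1to5}). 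Consequently the series converges already in $V_{n_0} \cotimes{K} W$ to an element whose image under $\iota$ equals $x$.

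For injectivity, I would argue via strong duality. Both source and target of $\iota$ are of compact type: the target by tensoring compact injections $V_n \hookrightarrow V_{n+1}$ with $\id_W$ (using \cite[Lemma 18.12]{Schneider02NonArchFunctAna} for compactness, and orthogonalizability of Banach spaces over spherically complete $K$ to preserve injectivity) and the source by construction. By \Cref{Prop A1 - Duality of spaces of compact type and nuclear Frechet spaces} both sides are therefore reflexive, and it suffices to show that the transpose $\iota^t$ is a linear bijection. Using \cite[Cor.\ 18.8]{Schneider02NonArchFunctAna} combined with \cite[Prop.\ 9.11]{Schneider02NonArchFunctAna} for the strong dual of a locally convex inductive limit, we identify
\[ \big(V \cotimes{K} W\big)'_b \cong \CL_b(V, W'_b) \qquad\text{and}\qquad \Big(\varinjlim_{n\in\BN} \big(V_n \cotimes{K} W\big)\Big)'_b \cong \varprojlim_{n\in\BN} \CL_b(V_n, W'_b), \]
and $\iota^t$ becomes the restriction homomorphism $f \mapsto (f\res{V_n})_{n\in\BN}$. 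This is bijective precisely by the universal property of the inductive limit: continuous linear maps out of $V = \varinjlim_n V_n$ correspond bijectively to compatible systems of continuous linear maps out of the $V_n$. Translating bijectivity of $\iota^t$ back to $\iota$ by reflexivity concludes the proof.

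The main obstacle I anticipate is the duality bookkeeping in the injectivity step: carefully identifying the strong dual of the completed tensor product with $\CL_b(\blank, W'_b)$, verifying that both sides of $\iota$ are indeed of compact type (which in particular requires the transition maps $V_n \cotimes{K} W \to V_{n+1} \cotimes{K} W$ to remain injective, best seen via the orthogonalizability of Banach spaces over the spherically complete field $K$), and ensuring that reflexivity applies cleanly to promote bijectivity of $\iota^t$ back to $\iota$.
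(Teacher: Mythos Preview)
Your approach differs substantially from the paper's and has a genuine gap in the surjectivity step. The paper does not split into surjectivity and injectivity at all. Instead it identifies the \emph{target itself} (not its dual) with a space of operators: reflexivity of $V$ together with \cite[Cor.~18.8]{Schneider02NonArchFunctAna} give $V \cotimes{K} W \cong V'' \cotimes{K} W \cong \CL_b(V', W)$. One then builds a commutative square whose bottom edge $\varinjlim_n \CL_b(V_n', W) \to \CL_b(V', W)$ is bijective by \Cref{Lemma A1 - Homomorphisms from projective limit into normed space} (here $W$ is normed and the projective system $(V_n')_n$ has compact transition maps), and whose left edge is handled by a compact-operator argument: the transition maps $V_n'' \to V_{n+1}''$ factor through $V_{n+1}$, and the transition maps of $\CL_b(V_n',W)$ factor through the subspace $\CC(V_{n+1}',W)$ of compact operators, which is the image of $V_{n+1}'' \cotimes{K} W$. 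The top edge $\iota$ is then bijective by diagram chase.

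Your injectivity argument is essentially fine and in fact simpler than you make it: surjectivity of $\iota^t$ (which does follow from the universal property once you identify the relevant duals with $\CL(V_n, W'_b)$) already forces $\iota$ injective by Hahn--Banach, with no reflexivity of the target required. But your assertion that the target $V \cotimes{K} W$ is of compact type is circular: the description ``tensoring compact injections $V_n \hookrightarrow V_{n+1}$ with $\id_W$'' yields the \emph{source} $\varinjlim_n(V_n \cotimes{K} W)$, not the target, and showing the two agree is precisely the proposition. So you cannot appeal to reflexivity of the target to upgrade bijectivity of $\iota^t$ to bijectivity of $\iota$.

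That leaves surjectivity resting entirely on your series argument, and this is where the real gap is. The representation $x = \sum_k v_k \otimes w_k$ with $(v_k)$ null in $V$ is standard when $V$ is Banach or Fr\'echet, but here $V$ is an LB-space and $V \cotimes{K} W$ is not metrizable, so neither \cite[\S17]{Schneider02NonArchFunctAna} nor the Grothendieck-type results apply directly. Obtaining such a representation seems to require knowing already that $V \cotimes{K} W$ is exhausted by the $V_n \cotimes{K} W$, which is what you want to prove. The paper sidesteps this entirely by never manipulating elements of $V \cotimes{K} W$ and instead passing through $\CL_b(V',W)$.
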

\begin{proof}
To ease the notation, we will simply denote the strong dual of a locally convex $K$-vector space $U$ by $U'$.
Using \cite[Cor.\ 18.8]{Schneider02NonArchFunctAna} and the fact that $V$ is reflexive, we have a topological isomorphism
\begin{equation*}
	V \cotimes{K} W \overset{\cong}{\lra} V'' \cotimes{K} W \overset{\cong}{\lra} \CL_b (V', W) \,,\quad v\otimes w \lto \big[\ell \mto \ell(v)\, w\big] .
\end{equation*}
For each $V_n$, the duality map and \cite[Lemma 18.1]{Schneider02NonArchFunctAna} at least give a continuous homomorphism
\begin{equation*}
	V_n \cotimes{K} W \lra V_n'' \cotimes{K} W \lra \CL_b (V_n', W) \,,\quad v \otimes w \lto \big[\ell \mto \ell(v)\, w\big] ,
\end{equation*}
so that we arrive at a commutative diagram of continuous homomorphism
\begin{equation}\label{Eq A1 - Commutative diagram for tensor product of space of compact type and Banach space}
	\begin{tikzcd}
		\varinjlim_{n \in \BN} (V_n \cotimes{K} W) \ar[r] \ar[d] &V \cotimes{K} W  \ar[d, "\cong"] \\
		\varinjlim_{n \in \BN} \CL_b (V_n', W) \ar[r] &\CL_b(V', W).
	\end{tikzcd}		
\end{equation}
By \cite[Lemma 16.4 (ii)]{Schneider02NonArchFunctAna} the projective system $(V_n')_{n\in \BN}$ is compact, and therefore the lower map in \eqref{Eq A1 - Commutative diagram for tensor product of space of compact type and Banach space} is a bijection by \Cref{Lemma A1 -  Homomorphisms from projective limit into normed space}.

Hence the claim follows if we show that the left homomorphism is an isomorphism.
But by \cite[Lemma 16.4 (iii)]{Schneider02NonArchFunctAna}, the transition maps $V_n'' \ra V_{n+1}''$ factor over $V_{n+1}\subset V_{n+1}''$ which gives 
\begin{equation*}
	\varinjlim_{n \in \BN} (V_n \cotimes{K} W) \overset{\cong}{\lra} \varinjlim_{n\in\BN} \big( V_n'' \cotimes{K} W \big) .
\end{equation*}
Moreover, the image of $V_n'' \cotimes{K} W$ in $\CL_b(V_n', W)$ precisely is the subspace of compact homomorphisms $\CC(V_n', W)$ by \cite[Prop.\ 18.11]{Schneider02NonArchFunctAna}.
It follows from \cite[Rmk.\ 16.7 (i)]{Schneider02NonArchFunctAna} that the transition maps $\CL_b( V_n', W) \ra \CL_b(V_{n+1}',W)$ factor over $\CC(V_{n+1}',W) \subset \CL_b(V_{n+1}', W)$ because they are given by precomposition with the compact maps $V_{n+1}' \ra V_n'$.
Hence
\begin{equation*}
	\varinjlim_{n\in\BN} \big( V_n'' \cotimes{K} W \big)  \cong \varinjlim_{n \in \BN} \CC(V_{n}',W)  \overset{\cong}{\lra} \varinjlim_{n \in \BN} \CL_b(V'_n, W) 
\end{equation*}
is a topological isomorphism, too.
\end{proof}

\begin{lemma}[{cf.\ \cite[Lemma 1.3]{SchneiderTeitelbaum02pAdicBoundVal}}]\label{Lemma A1 - Separately continuous is jointly continuous under maps by locally compact space on barrelled vector space}
Let $X$ be a locally compact topological space.
Let $V$ and $W$ be locally convex $K$-vector spaces, and assume that $V$ is barrelled.
If $\beta \colon X \times V \ra W$ is a separately continuous map and $\beta(x,\blank) \colon V \ra W$ is $K$-linear, for every $x\in X$, then $\beta$ is jointly continuous.
\end{lemma}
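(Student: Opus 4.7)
The plan is to prove joint continuity at an arbitrary fixed point $(x_0, v_0) \in X \times V$ via the decomposition
\begin{equation*}
\beta(x,v) - \beta(x_0,v_0) = \beta(x, v-v_0) + \big(\beta(x,v_0) - \beta(x_0, v_0)\big).
\end{equation*}
The second summand tends to $0$ as $x \to x_0$ by separate continuity of $\beta$ in the first variable (with $v_0$ fixed). The key point is therefore to control the first summand uniformly in $x$ as $x$ ranges over a suitable neighbourhood of $x_0$ and $v \to v_0$.

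First I would exploit local compactness of $X$ to choose a compact neighbourhood $K_0 \subset X$ of $x_0$, and consider the family of continuous $K$-linear maps
\begin{equation*}
\CF \defeq \big\{\beta(x,\blank) \colon V \lra W \,\big\vert\, x \in K_0\big\}.
\end{equation*}
For each fixed $v \in V$ the map $\beta(\blank, v) \colon X \to W$ is continuous by hypothesis, so its restriction to the compact set $K_0$ has bounded image in $W$ (continuous image of a compact set is compact, and compact sets in a locally convex space are bounded, cf.\ \cite[Lemma 12.1 (iv)]{Schneider02NonArchFunctAna}). Thus $\CF$ is pointwise bounded on $V$.

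Next I would invoke the non-archimedean Banach--Steinhaus theorem (see \cite[Prop.\ 6.15]{Schneider02NonArchFunctAna}): since $V$ is barrelled and $\CF$ is pointwise bounded, $\CF$ is equicontinuous. Concretely, for any open lattice $L_W \subset W$ there exists an open lattice $L_V \subset V$ such that $\beta(x, L_V) \subset L_W$ for all $x \in K_0$. In particular, for $v - v_0 \in L_V$ and $x \in K_0$ we get $\beta(x, v-v_0) \in L_W$ uniformly in $x$.

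Combining the two estimates: given an open lattice $L_W \subset W$, choose the corresponding $L_V$ from equicontinuity, and then use separate continuity in $x$ at $x_0$ to find an open neighbourhood $U \subset K_0$ of $x_0$ with $\beta(x, v_0) - \beta(x_0, v_0) \in L_W$ for all $x \in U$. Then for $(x,v) \in U \times (v_0 + L_V)$ both summands in the decomposition lie in $L_W$, hence so does their sum (using that $L_W$ is an $\CO_K$-submodule). This proves joint continuity at $(x_0,v_0)$. No step looks genuinely hard; the only subtlety is ensuring that the Banach--Steinhaus principle is applied in the correct non-archimedean form, which requires the barrelledness hypothesis on $V$ that is given.
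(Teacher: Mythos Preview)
Your proof is correct and follows essentially the same approach as the paper: both arguments choose a compact neighbourhood of $x_0$, verify that the family $\{\beta(x,\blank)\}_{x\in K_0}$ is pointwise bounded via compactness, and then invoke the Banach--Steinhaus theorem \cite[Prop.\ 6.15]{Schneider02NonArchFunctAna} to obtain equicontinuity. The only cosmetic difference is that the paper first reduces to continuity at points of the form $(x_0,0)$ using the linearity of $\beta(x,\blank)$, whereas you carry out the decomposition at a general $(x_0,v_0)$ directly; these amount to the same computation.
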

\begin{proof}
By the linearity of the $\beta(x,\blank)$ it suffices to show that $\beta$ is continuous at $(x,0)$, for every $x\in X$.
Fix $x_0\in X$, and let $U\subset X$ be a compact neighbourhood of $x_0$.
We claim that $H \defeq \left\{\beta(x,\blank) \middle{|} x \in U \right\} \subset \CL_s(V,W)$ is bounded.
Assuming this claim for the moment, it follows from the Banach-Steinhaus theorem \cite[Prop.\ 6.15]{Schneider02NonArchFunctAna} that $H$ is equicontinuous.
Hence, for any open lattice $M\subset W$, there exists an open lattice $L\subset V$ such that $\beta(U,L) \subset M$.
This shows that $\beta$ is continuous at $(x_0,0)$.

To show the claim, recall that the seminorms of $\CL_s(V,W)$ are $q_{v,p}$, for $v\in V$ and $p$ a continuous seminorm of $W$, and defined by $q_{v,p}(f) \defeq p(f(v))$, for $f\in \CL_s(V,W)$ \cite[Example 1 after 6.6]{Schneider02NonArchFunctAna}.
For such a seminorm, we have
\begin{equation*}
	\sup_{f \in H} q_{v,p}(f) = \sup_{x\in U} p(\beta(x,v))  < \infty
\end{equation*}
because the image of the compact subset $U$ under the continuous map $p\circ \beta(\blank,v)$ is bounded.
It follows that $H\subset \CL_s(V,W)$ is bounded.
\end{proof}

\begin{lemma}\label{Lemma A1 - Isomorphism for space of linear maps on direct sum with strong topology}
Let $(V_i)_{i\in I}$ and $W$ be locally convex $K$-vector spaces.
Then there exists a natural topological isomorphism
\begin{equation*}
	\CL_b \bigg(\bigoplus_{i\in I} V_i, W \bigg) \overset{\cong}{\lra} \prod_{i\in I}\CL_b (V_i,W) \,,\quad f \lto (f\res{V_i})_{i\in I} .
\end{equation*}
(For the case of $W=K$, see \cite[Prop.\ 9.10]{Schneider02NonArchFunctAna})
\end{lemma}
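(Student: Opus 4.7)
The plan is to establish this by separating the algebraic statement from the topological one. Algebraically, the map $\Phi \colon f \mto (f\res{V_i})_{i\in I}$ is a bijection: by the universal property of the locally convex direct sum, every family $(f_i)_{i\in I}$ with $f_i \in \CL(V_i,W)$ determines a unique continuous homomorphism $f \colon \bigoplus_{i\in I} V_i \ra W$ with $f\res{V_i} = f_i$ (continuous because the direct sum carries the final locally convex topology with respect to the inclusions). Thus the $K$-linear map $\Phi$ has the obvious two-sided inverse $\Psi \colon (f_i)_{i\in I} \mto \bigoplus_{i\in I} f_i$.

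The topological content of the lemma comes from the following characterization of bounded subsets of a locally convex direct sum, cf.\ \cite[Lemma 5.6]{Schneider02NonArchFunctAna}: a subset $B \subset \bigoplus_{i\in I} V_i$ is bounded if and only if there exists a finite subset $J \subset I$ such that $B$ is contained in $\bigoplus_{i\in J} V_i$ and each projection $\pr_i(B) \subset V_i$, for $i \in J$, is bounded. Granting this, recall that the strong topology on $\CL(U,W)$ is defined by the seminorms $p_{B,q}(f) \defeq \sup_{v\in B} q(f(v))$, for $B \subset U$ bounded and $q$ a continuous seminorm on $W$.

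First I would check that $\Phi$ is continuous. For a bounded $B_i \subset V_i$ and a continuous seminorm $q$ on $W$, the subset $B_i$ is also bounded in $\bigoplus_{i\in I} V_i$ via the inclusion. The corresponding seminorm on $\prod_{i\in I} \CL_b(V_i,W)$, namely $(g_j)_j \mto \sup_{v\in B_i} q(g_i(v))$, then pulls back under $\Phi$ to $f \mto \sup_{v \in B_i} q(f(v)) = p_{B_i,q}(f)$, which is continuous on $\CL_b(\bigoplus_{i\in I} V_i, W)$.

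Second, I would check that $\Psi$ is continuous. Given a bounded $B \subset \bigoplus_{i\in I} V_i$ and a continuous seminorm $q$ on $W$, the characterization above produces a finite $J \subset I$ with $B \subset \bigoplus_{i\in J} V_i$ and each $\pr_i(B)$ bounded. For $(f_i)_{i\in I}$ and $v = \sum_{i\in J} \pr_i(v) \in B$ we have
\[
q\big(\Psi((f_i))(v)\big) = q\Big(\sum_{i\in J} f_i(\pr_i(v))\Big) \leq \max_{i\in J} q\big(f_i(\pr_i(v))\big) \leq \max_{i\in J} p_{\pr_i(B),q}(f_i).
\]
Taking the supremum over $v \in B$ yields $p_{B,q}(\Psi((f_i))) \leq \max_{i\in J} p_{\pr_i(B),q}(f_i)$, which is a continuous seminorm on $\prod_{i\in I} \CL_b(V_i,W)$ as $J$ is finite. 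This establishes the continuity of $\Psi$, completing the proof. No step here is really an obstacle; the only subtlety is invoking the correct description of bounded sets in the direct sum, which is where the finiteness that makes the argument work enters.
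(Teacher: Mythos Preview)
Your argument is correct and follows essentially the same strategy as the paper: both rest on the description of bounded subsets of a locally convex direct sum. The paper invokes a Bourbaki exercise to identify $\prod_i \CL_b(V_i,W)$ with $\CL_\CB(V,W)$ for $\CB=\{\iota_i(B_i)\}$ and then shows the $\CB$-topology equals the strong topology, whereas you verify the two seminorm estimates directly; these are equivalent routes.

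One small imprecision: the characterization you invoke, that a bounded $B$ is literally contained in $\bigoplus_{i\in J} V_i$ for finite $J$, holds only when the $V_i$ are Hausdorff. In general one gets only $\pr_i(B)\subset\widebar{\{0\}}$ for $i\notin J$ (the paper cites \cite[Thm.\ 3.6.4 (ii)]{PerezGarciaSchikhof10LocConvSpNonArch} and works with this weaker statement). Your estimate survives unchanged, since for $i\notin J$ the element $f_i(\pr_i(v))$ lies in $\widebar{\{0\}}\subset W$ by continuity and hence is killed by every continuous seminorm $q$; but the characterization as stated, and likely the reference to \cite[Lemma 5.6]{Schneider02NonArchFunctAna}, should be adjusted.
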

\begin{proof}
Let $\iota_i \colon V_i \ra V \defeq \bigoplus_{i\in I} V_i$ denote the canonical embeddings, and $\pr_i \colon V \ra V_i$ the canonical projections.
By \cite[III.\ \S 3 Ex.\ 5 on p.\ III.41]{Bourbaki87TopVectSp1to5} there is a natural topological isomorphism
\begin{equation*}
	\CL_\CB (V, W) \overset{\cong}{\lra} \prod_{i\in I} \CL_b (V_i,W) \,,\quad f \lto (f\res{V_i})_{i \in I},
\end{equation*}
where $\CB$ is the family $\left\{\iota_i(B_i)\middle{|} i\in I, B_i \subset V_i \text{ bounded}\right\}$ of bounded sets of $V$.
We want to show that the $\CB$-topology coincides with the strong topology on $\CL(V,W)$.
In view of \cite[Lemma 6.5]{Schneider02NonArchFunctAna}, it suffices to show that for a given bounded subset $B \subset V$, there exist $\iota_{i_1} (B_{1}),\ldots, \iota_{i_m}(B_{m}) \in \CB$ such that $B$ is contained in the closure of the $\CO_K$-module generated by $\iota_{i_1}(B_{1}) \cup \ldots \cup \iota_{i_m}(B_{m})$.

To do so, we proceed similarly to the proof of \cite[Prop.\ 9.10]{Schneider02NonArchFunctAna}, and fix a bounded subset $B \subset V$.
By \cite[Thm.\ 3.6.4 (ii)]{PerezGarciaSchikhof10LocConvSpNonArch}, all $\pr_i (B)\subset V_i$ are bounded and there exists a finite subset $J\subset I$ such that $\pr_i(B) \subset \widebar{\{0\}}$, for all $i \in I\setminus J$.
We define the bounded subsets $B_j \defeq \pr_j(B)$, for $j\in J$.
For given $v\in B$, we write $v = \sum_{i\in I} \iota_i(v_i)$ where $v_i\in  V_i$.
We then have $\sum_{j\in J} \iota_j(v_j) \in \sum_{j\in J} \iota_j (B_j)$.

To deal with $\sum_{i\in I\setminus J} \iota_i (v_i)$, let $q$ be a continuous seminorm of $V$.
By \cite[Lemma 5.1 (ii)]{Schneider02NonArchFunctAna}, $q \circ \iota_{i}$ is a continuous seminorm of $V_{i}$.
Hence
\begin{equation*}
	q( \iota_{i}(v_i)) = (q \circ \iota_{i})(\pr_{i}(v))  = 0, 
\end{equation*}
for all $i \in I\setminus J$, as $\pr_i(B) \subset \widebar{\{0\}}$ for those $i$.
It follows that, for $i\in I\setminus J$,
\begin{equation*}
	\iota_{i} (v_i) \in \bigcap_{q} \Ker(q) = \widebar{\{0\}}
\end{equation*}
where we take the intersection over all continuous seminorms $q$ of $V$.
This shows that $v$ is contained in the closure of $\sum_{j\in J} \iota_j (B_j)$ in $V$.
\end{proof}

\begin{proposition}[{cf.\ \cite[Prop.\ 1]{Browder62AnaPartDiffEq1}}]\label{Prop A1 - Strong and weak analytic functions on a Banach space}
Let $K$ be a complete field with absolute value $\abs{\blank}$.
Let $U \subset K^n$ be open and let $f \colon U \lra E$ be a function into a $K$-Banach space $E$.
Let $V$ and $W$ be $K$-Banach spaces and let 
\begin{equation}\label{Eq A1 - Bilinear pairing for weak and strong analytic functions on a Banach space}
	E \times V \lra W \, ,\quad  (u,v) \lto \langle u, v \rangle,
\end{equation}
be a continuous $K$-bilinear map which induces an isometric embedding of $E$ into the $K$-Banach space $\CL_b(V,W)$, i.e.\ such that, for all $u\in E$,
\[ \abs{u} = \left\| \langle u, \_\, \rangle \right\|_{\CL_b(V,W)} \defeq \sup_{v \in V \setminus \{0\}} \frac{\abs{\langle u,v \rangle}}{\abs{v}} \, .\]
Then, for $z_0 \in U$, the following are equivalent:
\begin{altenumeratelevel2}
	\item
	The function $f$ is analytic in some open neighbourhood of $z_0$.
	\item
	For all $v \in V$, there exists an open neighbourhood of $z_0$ such that the function
	\[f_v \colon U \lra W \, ,\quad  z \lto \langle f(z) , v\rangle ,\]
	is analytic there.
\end{altenumeratelevel2}
\end{proposition}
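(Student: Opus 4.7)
The forward direction $(1)\Rightarrow(2)$ is immediate: if $f(z)=\sum_{\ul{i}} u_{\ul{i}}(z-z_0)^{\ul{i}}$ converges strictly on some polydisc around $z_0$ with $\lVert u_{\ul{i}}\rVert_E\,\ul{r}^{\ul{i}}\to 0$, applying the bilinear pairing termwise yields $f_v(z)=\sum_{\ul{i}} \langle u_{\ul{i}},v\rangle(z-z_0)^{\ul{i}}$, and the isometry assumption provides the estimate $\lVert\langle u_{\ul{i}},v\rangle\rVert_W\leq\lVert u_{\ul{i}}\rVert_E\lVert v\rVert_V$, which guarantees strict convergence on the same polydisc.

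For the converse $(2)\Rightarrow(1)$, I would first extract linear coefficient maps and establish uniform control. By the hypothesis together with \Cref{Prop 1 - Identity theorem for power series}, each $f_v$ has uniquely determined coefficients $w_{\ul{i},v}\in W$; uniqueness of coefficients also forces $v\mapsto w_{\ul{i},v}$ to be $K$-linear by applying it to $f_{v_1+v_2}$ and $f_{\lambda v}$. The crucial step is to show the resulting $K$-linear maps $T_{\ul{i}}\colon V\to W$ are continuous and satisfy a uniform bound $\lVert T_{\ul{i}}\rVert_{\CL_b(V,W)}\leq C\,\ul{r}^{-\ul{i}}$ on a common polydisc $B^n_{\ul{r}}(z_0)$. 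I would obtain this by a Baire argument in the Banach space $V$: fix $\varepsilon\in\abs{K^\times}$ with $0<\varepsilon<1$, put $\ul{r}_m=(\varepsilon^m,\ldots,\varepsilon^m)$, and decompose $V=\bigcup_{m,N\in\BN}V_{m,N}$, where $V_{m,N}$ is the set of those $v$ for which $f_v$ extends to a power series strictly convergent on $B^n_{\ul{r}_m}(z_0)$ of Gauss norm at most $N$. Each $V_{m,N}$ is an $\CO_K$-submodule; their closedness is to be deduced from the identification, for strictly convergent series, of Gauss norm with supremum norm over the closed polydisc, combined with continuity of the bilinear pairing in the $V$-variable. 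Baire category then forces some $V_{m,N}$ to contain a neighbourhood of $0$, which yields the claimed uniform estimate on $T_{\ul{i}}$.

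It remains to show that each $T_{\ul{i}}$ actually lies in the image of $E\hookrightarrow\CL_b(V,W)$; this image is closed because the embedding is isometric. I would realise $T_{\ul{i}}$ as a norm limit in $\CL_b(V,W)$ of multivariate divided differences of $f$ itself: thus $T_{\ul{0}}$ is the image of $f(z_0)\in E$, while $T_{e_k}$ is the $\CL_b(V,W)$-limit of $h^{-1}\bigl(f(z_0+he_k)-f(z_0)\bigr)$ as $h\to 0$, and analogously for higher-order finite differences. The uniform bounds from the previous step are precisely what is needed to upgrade the pointwise-in-$v$ convergence to convergence in operator norm, since the tail of the relevant power series is uniformly controlled on the unit ball of $V$. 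Each approximant lies in (the image of) $E$, so by closedness so does $T_{\ul{i}}=\langle u_{\ul{i}},\blank\rangle$ for a unique $u_{\ul{i}}\in E$. The series $\sum_{\ul{i}}u_{\ul{i}}(z-z_0)^{\ul{i}}$ then converges strictly on $B^n_{\ul{r}}(z_0)$, and coincides with $f$ there via the isometric embedding and the identity $\langle f(z),v\rangle=f_v(z)$ valid for every $v\in V$.

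The main obstacle will be the Baire step: one has to verify closedness of $V_{m,N}$ without circularly invoking the very continuity of $v\mapsto w_{\ul{i},v}$ that is being proved, and this forces an intrinsic description of $V_{m,N}$ in terms of sup-norm bounds on $f_v$ rather than coefficient bounds, leaning again on the Gauss–sup norm identification and on the isometric embedding $E\hookrightarrow\CL_b(V,W)$.
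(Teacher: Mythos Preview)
The forward implication and the overall architecture for the converse are correct and match the paper. The genuine gap --- which you yourself flag as the ``main obstacle'' --- is in the Baire step, and your proposed fix does not work.

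You want $V_{m,N}=\{v\in V : f_v$ is analytic on $B^n_{\ul{r}_m}(z_0)$ with Gauss norm $\leq N\}$ to be closed, and you propose to describe this set via sup-norm bounds, using that Gauss norm equals supremum norm. But this only shows the \emph{norm condition} is closed; it says nothing about the \emph{analyticity condition}. If $v_k\to v$ with each $f_{v_k}$ analytic on $B^n_{\ul{r}_m}(z_0)$ and bounded there by $N$, then pointwise convergence gives $\sup_z\lvert f_v(z)\rvert\leq N$, but there is no reason $f_v$ should be analytic on all of $B^n_{\ul{r}_m}(z_0)$: the hypothesis only provides analyticity of $f_v$ on some possibly much smaller polydisc. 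A locally analytic function bounded on a larger ball need not be globally analytic there --- think of the characteristic function of a sub-ball. So $V_{m,N}$ is not visibly closed, and Baire cannot be applied as stated.

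The paper resolves this circularity by \emph{reversing the order} of the two ingredients. It first proves continuity of each coefficient map $b_{\ul{j}}\colon v\mapsto a_{\ul{j}}^{(v)}$ by induction on $\ul{j}$ (lexicographically), using Banach--Steinhaus. The inductive step, granting continuity and uniform bounds for $b_{\ul{i}}$ with $\ul{i}<\ul{j}$, forms the \emph{continuous} operators
\[
T_h(v)=\frac{1}{h^{\ul{j}}}\Bigl(f_v(z_0+h)-\sum_{\ul{i}<\ul{j}}a_{\ul{i}}^{(v)}h^{\ul{i}}\Bigr)
\]
and exhibits $b_{\ul{j}}$ as their iterated pointwise limit as the components of $h$ tend to $0$; continuity of $b_{\ul{j}}$ then follows from Banach--Steinhaus. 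Only \emph{afterwards} does the paper run Baire, on sets $B_{\ul{j}}^{k,l}=\{v:\lvert a_{\ul{i}}^{(v)}\rvert\leq k\,l^{\lvert\ul{i}\rvert}\text{ for all }\ul{i}\leq\ul{j}\}$, whose closedness is now immediate from the continuity just established. Your divided-difference idea for the final step (showing the coefficients lie in the closed image of $E$) is essentially the paper's argument; it is only the continuity-before-Baire order that you have inverted.
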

\begin{proof}
First assume that there exists $r\in \BR_{>0}$ such that $f$ is given by the power series
\begin{equation*}
	f(z) = \sum_{\ul{i}\in \BN^n_0} a_{\ul{i}} \, (z-z_0)^\ul{i} \quad\text{, for $x\in B_r^{n}(z_0)$,}
\end{equation*}
for certain $a_{\ul{i}} \in E$.
For $v\in V$, it follows from the continuity of \eqref{Eq A1 - Bilinear pairing for weak and strong analytic functions on a Banach space} that
\begin{equation*}
	f_v(z) = \langle f(z),v\rangle  = \sum_{\ul{i}\in \BN^n_0} \langle a_{\ul{i}} , v\rangle \,  (z-z_0)^\ul{i} \quad\text{, for all $z \in B_r^{n}(z_0)$.}
\end{equation*}
This shows that (i) implies (ii).

Conversely, for all $v \in V$, let $f_v$ be analytic in a neighbourhood of $z_0$.
This means that there exists a radius $r^{(v)} \in \BR_{>0}$ such that $f_v$ is given by the convergent power series
\[f_v (z) = \sum_{\ul{i} \in \BN^n_0} a_\ul{i}^{(v)} (z-z_0)^\ul{i} \quad \text{, for all $z \in B^{n}_{r^{(v)}}(z_0)$,}\]
for certain $a_\ul{i}^{(v)} \in W$.
Hence there is a constant $C^{(v)} >0$ such that
\begin{align}\label{Eq A1 - pointwise bound}
	\abs{a_\ul{i}^{(v)}} \leq  C^{(v)} \left( \frac{1}{r^{(v)}} \right)^{\abs{\ul{i}}} \, ,
\end{align}
for all $\ul{i} \in \BN^n_0$.
As an intermediate step, we want to show:\\

\noindent{\bf Claim: }\textit{There are $r>0$ and $C>0$ such that, for all $\ul{j}\in \BN^n_0$, the maps}
\[ b_\ul{j} \colon  V \lra W \, , \quad  v \lto a_\ul{j}^{(v)} , \]
\textit{are $K$-linear and continuous with}
\begin{align}\label{Eq A1 - uniform bound}
	\norm{ b_\ul{j} }_{\CL_b(V,W)} \leq C \left(\frac{1}{r}\right)^{\abs{\ul{j}}} \, .
\end{align}

\begin{proof}[Proof of the Claim]
	The bilinearity of $\langle \blank,\blank \rangle$ implies that 
	\[ V \lra \mathrm{Map}(U,W) \, , \quad v \lto f_v , \]
	is $K$-linear. 
	The linearity of $b_\ul{j}$ then follows from the identity theorem for the coefficients of convergent power series \cite[3.2.1 resp.\ 4.2.1]{Bourbaki07VarDiffAnFasciDeResult}.
	
	To proof the continuity and the bound \eqref{Eq A1 - uniform bound}, we endow $\BN^n_0$ with the lexicographical order, i.e.\ $\ul{i}<\ul{j}$ if and only if $i_k < j_k$ for the smallest $k \in \{1,\ldots,n\}$ where $i_k \neq j_k$.
	We proceed by induction on $\ul{j} \in \BN^n_0$ with respect to this order.
	
	Fix $\ul{j} \in \BN^n_0$ and assume that the claim holds for all $\ul{i}<\ul{j}$ (a vacuous assumption if $\ul{j}=(0,\ldots,0)$).
	Hence there are $r_\ul{j} >0$ and $C_\ul{j} >0$ such that
	\[\abs{a_\ul{i}^{(v)}} \leq C_\ul{i} \left(\frac{1}{r_\ul{j}}\right)^{\abs{\ul{i}}} \abs{v} \quad \text{, for all $v \in V$ and all $\ul{i}<\ul{j}$.} \]
	Therefore the power series $\sum_{\ul{i}<\ul{j}} a_\ul{i}^{(v)} h_1^{i_1}\cdots h_n^{i_n} $	converges, for all $(h_1,\ldots,h_n) \in B^{n}_{r_\ul{j}}(0)$ and all $v\in V$.
	Moreover, for any $z\in U$, the linear map $V \ra W$, $ v \mto f_v(z)=\langle f(z), v \rangle$, is continuous.	
	Hence, for any $h \in K^n$ with $z_0+h \in B^{n}_{r_\ul{j}}(z_0) \cap U$, the operator
	\begin{equation*}
		T_h \colon V \lra W \,,\quad  v \lto \frac{1}{h_1^{j_1}\cdots h_n^{j_n}} \bigg( f_v(z_0 + h) - \sum_{\ul{i}< \ul{j}} a_\ul{i}^{(v)} h^\ul{i} \bigg) ,
	\end{equation*}
	is $K$-linear and continuous, too.
	For fixed $v\in V$, we may assume $h \in B_{r^{(v)}}^{n}(0)$, and we compute:
	\begin{align*}
		\lim_{h_n\rightarrow 0}\ldots \lim_{h_1\rightarrow 0} T_h(v) =& \lim_{h_n\rightarrow 0}\ldots \lim_{h_1\rightarrow 0} \,\,\, \frac{1}{h_1^{j_1}\cdots h_n^{j_n}} \bigg( \sum_{\ul{i} \in \BN^n_0} a_\ul{i}^{(v)} h^\ul{i} \, - \sum_{\ul{i}< \ul{j}} a_\ul{i}^{(v)} h^\ul{i} \bigg) \\
		=& \lim_{h_n\rightarrow 0}\ldots \lim_{h_1\rightarrow 0} \,\,\, \sum_{\ul{i}\geq \ul{j}} a_\ul{i}^{(v)} h_1^{i_1-j_1}\cdots h_n^{i_n - j_n} \\
		=& \lim_{h_n\rightarrow 0}\ldots \lim_{h_2\rightarrow 0} \,\, \sum_{\substack{\ul{i}\geq \ul{j} \\ i_1 = j_1} } a_\ul{i}^{(v)} h_2^{i_2 - j_2}\cdots h_n^{i_n - j_n} \\[-18pt]
		\vdots& \\
		=& \,\,\, a_\ul{j}^{(v)} = b_{\ul{j}}(v)  .
	\end{align*}
	By the Banach--Steinhaus theorem \cite[III.\ \S 4.2 Cor.\ 2]{Bourbaki87TopVectSp1to5} we conclude that $b_\ul{j}$ is continuous as the pointwise limit of the operators $T_h$.
	
	It remains to show the bound \eqref{Eq A1 - uniform bound} for all $\ul{i}\leq \ul{j}$.
	Define, for $k,l \in \BN$,
	\[B^{k,l}_\ul{j} = \big\{ v\in V \,\big\vert\, \forall \ul{i} \leq \ul{j} \colon \abs{a_\ul{i}^{(v)}} \leq k\, l^{\abs{\ul{i}}} \big\} .\]
	Then each $B_\ul{j}^{k,l}$ is closed in $V$ by the continuity of $b_\ul{i}$, for $\ul{i}\leq \ul{j}$.
	By the pointwise bounds \eqref{Eq A1 - pointwise bound}, we have $ V  =\bigcup_{k,l \in \BN} B_\ul{j}^{k,l}$.
	Hence it follows from Baire's theorem \cite[Ch.\ IX.\ \S 5.3 Thm.\ 1]{Bourbaki66GenTop2} that some $B_\ul{j}^{k,l}$ contains an open ball $B_\varepsilon (v_0)$ of radius $\varepsilon >0$ centred at some $v_0 \in B_\ul{j}^{k,l}$.
	Hence, for all $v \in V$ with $\abs{v}< \varepsilon$ and all $\ul{i}\leq \ul{j}$, we have
	\[\abs{a_\ul{i}^{(v)}} = \abs{a_\ul{i}^{(v+v_0)} - a_\ul{i}^{(v_0)}} \leq \abs{a_\ul{i}^{(v+v_0)}} + \abs{a_\ul{i}^{(v_0)}} \leq 2 k \, l^{\abs{\ul{i}}} \, . \]
	We fix some $\varpi \in K$ with $0 < \abs{\varpi} < 1$.
	For any fixed $v \in V \setminus \{0\}$, let $m \in \BZ$ such that $ \abs{\varpi}^m < \frac{\varepsilon}{\abs{v}} \leq \abs{\varpi}^{m-1}$.
	Then
	\begin{align*}
		\abs{a_\ul{i}^{(v)}} = \frac{\abs{a_\ul{i}^{(\varpi^m v)}}}{\abs{\varpi^m}} \leq \frac{2 k}{\varepsilon \,\abs{\varpi}} l^{\abs{\ul{i}}} \, \abs{v}
	\end{align*}
	which implies
	\[\left\| b_\ul{i} \right\|_{\CL_b(V,W)}= \sup_{v \in V \setminus \{0\}} \frac{\abs{a_\ul{i}^{(v)}}}{\abs{v}} \leq \frac{2 k}{\varepsilon \, \abs{\varpi}} l^{\abs{\ul{i}}} \,,\]
	for all $\ul{i}\leq \ul{j}$.
\end{proof}

It follows from the claim that the power series $\sum_{\ul{j} \in \BN^n_0} b_\ul{j} \, (z-z_0)^\ul{j}$ defines an analytic function with values in $\CL_b(V,W)$, on some open neighbourhood $U' \subset U$ of $z_0$.
On $U'$, this power series agrees with $f$ viewed as a map
\[f \colon U' \lra \CL_b(V,W) \, , \quad  z \lto \langle f(z), \blank \rangle  .\]
Indeed, for $z\in U'$ and $v \in V$:
\begin{align*}
	\bigg( \sum_{\ul{j} \in \BN^n_0} b_\ul{j}\,  (z-z_0)^\ul{j} \bigg)(v) = \sum_{\ul{j} \in \BN^n_0} b_\ul{j}(v) \, (z-z_0)^{\ul{j}} = \sum_{\ul{j} \in \BN^n_0} a_\ul{j}^{(v)} (z-z_0)^\ul{j} = f_v(z) = \langle f(z), v \rangle  .
\end{align*}
Hence $f(z) = \sum_{\ul{j} \in \BN^n_0} b_\ul{j} \, (z-z_0)^\ul{j}$ is analytic on $U'$ as a map into $\CL_b(V,W)$.

Finally, $E$ can be identified with a closed subspace of $\CL_b(V,W)$ by assumption.
By the same reasoning as in the proof of the claim, the coefficients $b_\ul{j}$ of $f$ can be computed as the limits of sequences in $E$.
Therefore $b_\ul{j} \in E$, for all $\ul{j}\in \BN^n_0$, and $f \colon U' \ra E$ is given by a convergent power series.
\end{proof}

\begin{lemma}\label{Lemma A1 - Annihilator of kernel is weak closure of image of the transpose}
Let $f \colon V \ra W$ be a continuous homomorphism between locally convex $K$-vector spaces, and assume that $V$ is Hausdorff.
Then in $V'$ we have the equality
\[ \widebar{\Im(f^t)}^s = \Ker(f)^\perp \defeq \big\{ \ell \in V' \,\big\vert\, \forall v \in \Ker(f) : \ell(v) = 0 \big\} \]
where $\widebar{\Im(f^t)}^s$ denotes the closure of the image of the transpose $f^t \colon W' \ra V'$ in $V'_s$.

Moreover, if in addition $V$ is semi-reflexive, then $\Ker(f)^\perp \subset \widebar{\Im(f^t)}$ in $V'_b$.
\end{lemma}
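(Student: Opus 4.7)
My plan is to establish the equality $\widebar{\Im(f^t)}^s = \Ker(f)^\perp$ via the non-archimedean bipolar theorem, which is at our disposal because $K$ is spherically complete and the Hahn--Banach theorem \cite[Cor.\ 9.3]{Schneider02NonArchFunctAna} is unrestricted in this setting.

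First I would dispatch the easy inclusion $\widebar{\Im(f^t)}^s \subset \Ker(f)^\perp$: for any $m \in W'$ and any $v \in \Ker(f)$, one has $(f^t(m))(v) = m(f(v)) = 0$, so $\Im(f^t) \subset \Ker(f)^\perp$; and the subspace $\Ker(f)^\perp = \bigcap_{v \in \Ker(f)} \Ker(\ev_v)$ is closed in $V'_s$ since each evaluation $\ev_v \colon V'_s \to K$ is continuous by the very definition of the weak topology.

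For the reverse inclusion I would compute the polar of $\Im(f^t)$ with respect to the canonical pairing $V \times V' \to K$. A vector $v \in V$ lies in $\Im(f^t)^\circ$ iff $m(f(v)) = 0$ for every $m \in W'$; since $W'$ separates points of $W$ (passing to the maximal Hausdorff quotient of $W$ if necessary, which does not alter $W'$ or $f^t$), this is equivalent to $v \in \Ker(f)$. Hence $\Im(f^t)^\circ = \Ker(f)$, and therefore $\Im(f^t)^{\circ\circ} = \Ker(f)^\perp$. The non-archimedean bipolar theorem in the spherically complete setting identifies the bipolar of a $K$-linear subspace with its closure in the weak topology, giving $\widebar{\Im(f^t)}^s = \Ker(f)^\perp$.

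For the moreover statement I would invoke semi-reflexivity: by definition, the canonical evaluation map induces a linear bijection $V \cong (V'_b)'$, so both $V'_s$ and $V'_b$ yield the same continuous dual on $V'$, namely $V$ itself. By the non-archimedean analogue of the Mackey--Arens principle (all locally convex topologies on $V'$ compatible with the dual pairing $(V,V')$ share the same closed $\CO_K$-submodules, and hence the same closed $K$-subspaces), the $V'_b$-closure of $\Im(f^t)$ coincides with its $V'_s$-closure, which by the first part equals $\Ker(f)^\perp$. This gives the stated inclusion (in fact equality). The main obstacle I anticipate is not the argumentative structure but locating or verifying the precise references in the polar/spherically complete setting for the bipolar and Mackey--Arens ingredients, together with the harmless bookkeeping needed when $W$ is not a priori Hausdorff.
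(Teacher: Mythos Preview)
Your proposal is correct and follows essentially the same approach as the paper: the paper simply cites \cite[II.~\S 6.4 Cor.~2]{Bourbaki87TopVectSp1to5} for the first equality, which is precisely the bipolar computation you spell out, and for the second part uses Hahn--Banach \cite[Thm.~5.2.1]{PerezGarciaSchikhof10LocConvSpNonArch} to see that the strong closure is weakly closed together with the identification $(V'_b)_s = V'_s$ coming from semi-reflexivity, which is exactly the mechanism behind your Mackey--Arens appeal. One small caveat: passing to the Hausdorff quotient of $W$ leaves $W'$ and $f^t$ unchanged but can enlarge $\Ker(f)$, so the bookkeeping you flag is not entirely harmless unless $W$ is Hausdorff---the paper's citation of Bourbaki implicitly relies on the same separatedness assumption, and in every application of this lemma within the paper $W$ is indeed Hausdorff.
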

\begin{proof}
Taking the transpose twice
\begin{equation*}
	\CL (V,W) \lra \CL(W'_s, V'_s) \lra \CL \big( (V'_s)'_s, (W'_s)'_s \big) ,
\end{equation*}
we see that $f$ still defines a continuous homomorphism when $V$ and $W$ carry the respective weak topologies $V_s = (V'_s)'_s$ and $W_s = (W'_s)'_s$.
Then the statement $\Ker(f)^\perp = \widebar{\Im(f^t)}^s$ is part of \cite[II.\ \S 6.4 Cor.\ 2]{Bourbaki87TopVectSp1to5}.


It is a consequence of the Hahn--Banach theorem that the closed subspace $\widebar{\Im(f^t)} \subset V'_b$ is weakly closed as well \cite[Thm.\ 5.2.1]{PerezGarciaSchikhof10LocConvSpNonArch}.
If we assume that $V$ is semi-reflexive, i.e.\ the duality homomorphism $V \ra (V'_b)'$ is bijective, then the topology on the weak dual of $V$ agrees with the weak topology of the strong dual: $V'_s = (V'_b)_s$, see \cite[Thm.\ 7.4.9]{DeGrandeDeKimpeKakolPerezGarciaSchikofS97pAdicLocConvIndLim}.
Since $\widebar{\Im(f^t)}$ now is a closed subset of $V'_s$ which contains $\Im(f^t)$, it follows that $\widebar{\Im(f^t)}^s \subset \widebar{\Im(f^t)}$.
\end{proof}

\begin{lemma}\label{Lemma A1 - Final topology implies strict homomorphism}
Let $V$ be a locally convex $K$-vector space and $f\colon V \ra W$ a homomorphism of $K$-vector spaces.
If $W$ is given the locally convex final topology with respect to $f$, then $f$ is strict.
\end{lemma}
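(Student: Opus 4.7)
The plan is to verify directly that the canonical bijection $\bar f\colon V/\Ker(f)\to \Im(f)$ (with $V/\Ker(f)$ carrying the quotient topology and $\Im(f)$ the subspace topology from $W$) is a topological isomorphism. Continuity of $\bar f$ is immediate: by definition of the final topology $f$ is continuous, hence the composition $V\twoheadrightarrow V/\Ker(f)\overset{\bar f}{\to}\Im(f)\hookrightarrow W$ factors $f$, and the universal property of the quotient topology forces $\bar f$ to be continuous. So the whole content of the lemma is that $\bar f$ is \emph{open}.

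The key tool is the concrete description of the final locally convex topology: a lattice $L\subset W$ is a $0$-neighbourhood if and only if $f^{-1}(L)\subset V$ is open, since the finest locally convex topology on $W$ rendering $f$ continuous is precisely the one whose basis of $0$-neighbourhoods consists of all $\CO_K$-submodules $L\subset W$ with $f^{-1}(L)$ open in $V$. I will use this to reduce openness of $\bar f$ to producing, for each open lattice $M\subset V/\Ker(f)$, an open lattice $L\subset W$ whose trace on $\Im(f)$ is exactly $\bar f(M)$.

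Pick a $K$-linear complement $W_0\subset W$ of $\Im(f)$ (using a Hamel basis), so that $W=\Im(f)\oplus W_0$ as $K$-vector spaces. Given an open lattice $M$ in $V/\Ker(f)$, put
\[
L\defeq \bar f(M)\oplus W_0\subset W.
\]
This is an $\CO_K$-submodule of $W$, and it is absorbent because $\bar f(M)$ absorbs every element of $\Im(f)$ (as $M$ absorbs in $V/\Ker(f)$) while $W_0\subset L$. Writing $\pi\colon V\to V/\Ker(f)$ for the projection, the direct-sum decomposition $W=\Im(f)\oplus W_0$ yields $f^{-1}(L)=f^{-1}(\bar f(M))=\pi^{-1}(M)$, which is open in $V$ by definition of the quotient topology. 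Therefore $L$ is an open lattice of $W$ in the final topology, and $L\cap \Im(f)=\bar f(M)$, so $\bar f(M)$ is open in $\Im(f)$. Hence $\bar f$ is a topological isomorphism and $f$ is strict.

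Honestly there is no real obstacle here; the only subtlety is recognising that the final locally convex topology admits the above neighbourhood basis and that a vector-space complement to $\Im(f)$ is enough to promote an open lattice of $V/\Ker(f)$ to an open lattice of $W$ that cuts out the correct set on $\Im(f)$.
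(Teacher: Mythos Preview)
Your proof is correct and follows essentially the same approach as the paper's: both show that $\bar f$ is open by extending $\bar f(M)$ to a lattice in $W$ whose preimage under $f$ is open, then invoking the defining property of the final locally convex topology. The paper argues by contradiction and merely says ``by extending we obtain a lattice $M\subset W$'' without specifying how, whereas you give a direct argument and make the extension explicit via a $K$-linear complement $W_0$; these are cosmetic differences rather than a genuinely different route.
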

\begin{proof}
Note that $f$ as a homomorphism between locally convex $K$-vector spaces is continuous if and only if the induced algebraic isomorphism $\widebar{f}\colon V/\Ker(f) \ra \Im(f)$ is continuous with respect to the quotient respectively subspace topology.

We assume by the way of contradiction that $\widebar{f}$ is not a homeomorphism if $W$ carries the locally convex final topology with respect to $f$.
In this case we find an open lattice $L \subset V$ such that $\widebar{f}(L+\Ker(f))$ is a lattice in $\Im(f)$ which is not open.
By extending we obtain a lattice $M \subset W$ such that $M\cap \Im(f) = \widebar{f}(L+\Ker(f))$. 
But $f^{-1}(M) = L + \Ker(f)$ is an open lattice of $V$. 
Therefore $M$ must be an open lattice by the definition of the topology on $W$.
This yields a contradiction.
\end{proof}

\begin{lemma}[{Snake lemma for quasi-abelian categories \cite{KopylovKuzminov00KerCokerSeqSemiAbCat}\footnote{Note that the notion of ``semiabelian'' categories used in \cite{KopylovKuzminov00KerCokerSeqSemiAbCat} agrees with the one of ``quasi-abelian'' categories from \cite{Schneiders98QuasiAbCat}, cf.\ \cite[p.\ 511]{KopylovKuzminov00KerCokerSeqSemiAbCat}.}
}]\label{Lemma A1 - Snake lemma}	
Consider the commutative diagram
\begin{equation*}
	\begin{tikzcd}
		0 \ar[r] &V' \ar[r] \ar[d, "\alpha"] &V \ar[r] \ar[d, "\beta"] &V'' \ar[r] \ar[d,"\gamma"] & 0 \\
		0 \ar[r] &W' \ar[r]					&W \ar[r] 					&W'' \ar[r] 			& 0
	\end{tikzcd}
\end{equation*}
of continuous homomorphisms between locally convex $K$-vector spaces with strictly exact rows.
Then the induced $\Ker$-$\Coker$-sequence
\begin{equation}\label{Eq A1 - Ker-Coker-sequence}
	0 \lra \Ker(\alpha) \overset{\varepsilon}{\lra} \Ker(\beta) \overset{\zeta}{\lra} \Ker(\gamma) \overset{\delta}{\lra} \Coker(\alpha) \overset{\tau}{\lra} \Coker(\beta) \overset{\theta}{\lra} \Coker(\gamma) \lra 0 
\end{equation}
of continuous homomorphisms is exact in $\Ker(\alpha)$, $\Ker(\beta)$, $\Coker(\beta)$, and $\Coker(\gamma)$.
Furthermore, $\varepsilon$ and $\theta$ are strict.
We moreover have:
\begin{altenumeratelevel2}
	\item
	If $\beta$ is strict, then \eqref{Eq A1 - Ker-Coker-sequence} is exact in $\Ker(\gamma)$ and $\Coker(\alpha)$, and $\delta$ is strict.
	\item
	If $\alpha$ is strict, then \eqref{Eq A1 - Ker-Coker-sequence} is exact in $\Ker(\beta)$ and $\Ker(\gamma)$, and $\zeta$ is strict.
	\item
	If $\gamma$ is strict, then \eqref{Eq A1 - Ker-Coker-sequence} is exact in $\Coker(\alpha)$ and $\Coker(\beta)$, and $\tau$ is strict.
\end{altenumeratelevel2}
In particular, if all three $\alpha$, $\beta$, and $\gamma$ are strict, then \eqref{Eq A1 - Ker-Coker-sequence} is a strictly exact sequence.
\end{lemma}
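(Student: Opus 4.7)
The plan is to proceed by a topologically refined diagram chase, exploiting the fact that the category of locally convex $K$-vector spaces with continuous homomorphisms is quasi-abelian in the sense of Schneiders, as noted in the Notation and Conventions. At the level of underlying abstract $K$-vector spaces, the entire six-term $\Ker$-$\Coker$-sequence \eqref{Eq A1 - Ker-Coker-sequence} is known to be exact by the classical snake lemma, so the substance of the statement is purely topological: we must produce a continuous connecting morphism $\delta$, verify strictness of various maps, and recover strict exactness at the specified positions.

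First I would construct $\delta$: given $v'' \in \Ker(\gamma)$, lift to some $v \in V$, observe that the image of $\beta(v)$ in $W''$ is $\gamma(v'')=0$, pull back $\beta(v)$ along $W' \hookrightarrow W$ to obtain a unique $w' \in W'$, and set $\delta(v'') = w' + \Im(\alpha) \in \Coker(\alpha)$. Continuity of $\delta$ relies on the strict exactness of both rows: the top row gives that $V \twoheadrightarrow V''$ is a strict epimorphism, so continuous local liftings on lattices of $V''$ exist; the bottom row makes $W' \hookrightarrow W$ a strict monomorphism, so the pullback step is continuous on its natural domain. The composition with the quotient map onto $\Coker(\alpha)$ is then continuous, and well-definedness follows from the standard verification.

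Next I would verify the unconditional strictness of $\varepsilon$ and $\theta$. The map $\varepsilon \colon \Ker(\alpha) \to \Ker(\beta)$ is the restriction of the strict monomorphism $V' \hookrightarrow V$ to closed subspaces carrying the induced subspace topologies, which is automatically a strict monomorphism; dually, $\theta$ is induced on cokernels by the strict epimorphism $W \twoheadrightarrow W''$ and is itself a strict epimorphism by the universal property of the quotient topology.

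The main obstacle will be disentangling the conditional assertions (i), (ii), (iii), since the strict exactness at each of $\Ker(\gamma)$, $\Coker(\alpha)$, etc.\ and the strictness of $\zeta$, $\delta$, $\tau$ depend on which of $\alpha$, $\beta$, $\gamma$ is strict. For (i), strictness of $\beta$ gives a topological isomorphism $V/\Ker(\beta) \cong \Im(\beta)$, and tracing this through the construction of $\delta$ yields strictness of $\delta$ together with strict exactness at $\Ker(\gamma)$ and $\Coker(\alpha)$; assertions (ii) and (iii) follow by dual patterns using strictness of $\alpha$ and $\gamma$ respectively. A more uniform alternative would be to embed the diagram into the abelian left heart $\mathcal{LH}(\mathcal{E})$ of the quasi-abelian category $\mathcal{E}$ of locally convex $K$-vector spaces, apply the standard abelian snake lemma there, and translate the result back using the criterion that a morphism of $\mathcal{E}$ is strict precisely when its image in $\mathcal{LH}(\mathcal{E})$ has kernel and cokernel lying in the essential image of $\mathcal{E}$. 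The final claim that \eqref{Eq A1 - Ker-Coker-sequence} is strictly exact when $\alpha$, $\beta$, $\gamma$ are all strict is then the combined conclusion of (i), (ii), and (iii).
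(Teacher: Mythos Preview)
The paper does not supply a proof of this lemma at all: it is stated with attribution to \cite{KopylovKuzminov00KerCokerSeqSemiAbCat} and used as a black box. So there is no ``paper's own proof'' to compare against, and your proposal is an attempt to supply what the paper deliberately outsources.

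Your overall strategy is sound and matches the standard treatment in the cited reference, but one step is not justified as written. In arguing continuity of the connecting morphism $\delta$, you write that because $V \twoheadrightarrow V''$ is a strict epimorphism, ``continuous local liftings on lattices of $V''$ exist''. A strict epimorphism is open, but openness does not produce continuous (even local) sections, and in general none exist for locally convex quotients. The correct way to obtain continuity of $\delta$ is purely categorical: form the pullback $P = V \times_{V''} \Ker(\gamma)$; in a quasi-abelian category strict epimorphisms are stable under pullback, so $P \to \Ker(\gamma)$ is again a strict epimorphism. The composite $P \hookrightarrow V \xrightarrow{\beta} W$ lands in the image of the strict monomorphism $W' \hookrightarrow W$, hence yields a continuous $P \to W' \to \Coker(\alpha)$ which kills $V' = \Ker(P \to \Ker(\gamma))$ and therefore descends to a continuous $\delta \colon \Ker(\gamma) \to \Coker(\alpha)$. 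No sections are needed. Your alternative route via the left heart $\mathcal{LH}(\mathcal{E})$ is also correct and is arguably the cleanest way to handle the conditional strictness assertions (i)--(iii) uniformly; this is in the spirit of how such results are typically packaged in the quasi-abelian literature.
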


\vspace{3ex}

\numberwithin{theorem}{subsection}

\newcommand{\upindex}[1]{{(#1)}}
\newcommand{\downindex}[1]{{(#1)}}
\newcommand{\ndivp}[1]{{p \hspace{1pt}\nmid\hspace{1pt} #1}}
\newcommand{\Nwithoutp}{{\BN_{p'}}}

\section{Continuous and Locally Analytic Characters}\label{Sect - Continuous and Locally Analytic Characters}

In this appendix we consider continuous and locally analytic characters, i.e.\ one-di\-men\-sion\-al representations, of the multiplicative group of a non-archimedean local field of positive characteristic.

However, let us first recall the situation for a $p$-adic field, i.e.\ a finite extension $K$ of $\BQ_p$.
Let $\BC_p$ be the completion of an algebraic closure of $\BQ_p$.
Fixing a uniformizer $\unif$ in the ring of integers $\CO_K$, there is an isomorphism of topological groups \cite[II.\ Satz 5.3]{Neukirch92AlgZahlenTh}
\begin{equation}\label{Eq A2 - Decomposition of units of local field}
	K\unts \cong \unif^\BZ \times \mu_{q-1} \times U_K^{(1)} .
\end{equation}
Here $\mu_{q-1}$ denotes the group of $(q-1)$-st roots of unity of $K$, where $q$ is the number of elements of the residue field $\CO_K/(\unif)$, and
\begin{equation*} 
	U_K^{(n)} \defeq \big\{ x \in \CO_K \,\big\vert\, x \equiv 1 \mod (\unif^n) \big\} \subset \CO_K\unts \quad\text{, for $n\in \BN$.}
\end{equation*}
Because $\mu_{q-1}$ and $\unif^\BZ$ both are discrete groups, it suffices to focus on $U_K^{(1)}$ when considering continuous characters of $K\unts$ with values in $\BC_p\unts$.
For $n > \frac{e}{p-1}$ with $e$ being the index of ramification of $K/\BQ_p$, the logarithm and exponential functions afford an isomorphism of topological groups between $U_K^{(n)}$ and the additive subgroup $(\unif^n)$ of $\CO_K$ \cite[II.\ Satz 5.5]{Neukirch92AlgZahlenTh}.
Moreover, every $U^{(n)}_K$ is of finite index in $U^{(1)}_K$.

It follows from Mahler's theorem \cite[Thm.\ 13.1]{SchneiderTeitelbaum04ContLocAnRepThLectHangzhou} that every continuous additive character $\psi \colon (\unif) \ra \BC_p\unts$ is of the form $\psi(a) = z_1^{a_1} \cdots z_d^{a_d}$, for $z_i \in \BC_p$ with $\abs{z_i-1}<1$. 
Here we write $d\defeq[K:\BQ_p]$ and $a=a_1 e_1 +\ldots + a_d e_d$ in some $\BZ_p$-basis $e_1,\ldots,e_d$ of $\CO_K$ \cite[Example 16.2]{SchneiderTeitelbaum04ContLocAnRepThLectHangzhou}.
By Amice's theorem \cite[Thm.\ 13.2]{SchneiderTeitelbaum04ContLocAnRepThLectHangzhou} such a character $\psi$ is locally $\BQ_p$-analytic, i.e.\ it is a locally analytic function when its source is considered as a locally $\BQ_p$-analytic Lie group.
Moreover, it is locally $K$-analytic if and only if its differential $d_0\psi \colon K \ra \BC_p$ is not only $\BQ_p$-linear but even $K$-linear \cite[Prop.\ 16.3]{SchneiderTeitelbaum04ContLocAnRepThLectHangzhou}.

As the logarithm and exponential functions are locally $K$-analytic, it follows that every continuous character $\chi \colon K\unts \ra \BC_p\unts$ is locally $\BQ_p$-analytic, and even locally $K$-analytic if the differential $d_1\chi$ is $K$-linear.
Furthermore, in the latter case there exists $c\in \BC_p$ such that $\chi(z) = z^c \defeq \exp \big(c \log(z) \big)$ on $U^{(n)}_K$, for $n > \frac{e}{p-1}$.

\subsection{Continuous Characters}

We now turn towards the situation of a local non-archime\-de\-an field $K$ of positive characteristic $p$, say $K= \BF_q \llrrparen{t}$, for $q = p^r$.
Of course, the only additive character of $K$ or subgroups thereof with values in a field of equal characteristic $p$ is the trivial one.
For continuous multiplicative characters of $K\unts$, the same decomposition \eqref{Eq A2 - Decomposition of units of local field} holds and again reduces the task to studying the continuous characters of $U^{(1)}_K$.

To this end, consider more generally abelian metric groups $G$ and $H$ with $G$ locally compact and second countable.
Then the set $\Hom_\cont (G,H)$ of continuous group homomorphisms can be endowed with the compact-open topology whose open sets are given by
\begin{equation*}
	\CU(C,U) \defeq \big\{ f \in \Hom_\cont (G,H) \,\big\vert\, f(C) \subset U \big\} \quad\text{, for $C\subset G$ compact and $U\subset H$ open.}
\end{equation*}
With the pointwise group operation $(f+f')(g)\defeq f(g)+f'(g)$, $\Hom_\cont (G,H)$ becomes a metrizable topological group itself \cite[Prop.\ 6.5.2]{Markley10TopGrpsIntro}.
If moreover $G$ is compact, this topology coincides with the one of uniform convergence and a metric is given by \cite[p.\ 238]{Markley10TopGrpsIntro}
\begin{equation*}
	d(f,f') \defeq \sup_{g \in G} d_H \big(f(g),f'(g) \big) \quad\text{, for $f,f'\in \Hom_\cont (G,H)$.}
\end{equation*}
In the following, we will always view $\Hom_\cont (G,H)$ as a metric group this way.

Let $K\subset L$ be a finite field extension with absolute value $\abs{\blank}$ on $L$ which extends the one of $K$.
Let $f$ and $e$ be the inertia degree respectively the ramification index of $L$ over $K$, i.e.\ $L \cong \BF_{q^f} \mkern-3mu \big(\mkern-3mu \big( t^\frac{1}{e} \big) \mkern-3mu \big) $.
Then the metric on $\Hom_\cont \big(U_K^{(1)},U_L^{(1)} \big)$ is given by
\begin{equation*}
	d(\chi,\chi') \defeq \sup_{z \in U_K^{(1)}} \abs{\chi(z) - \chi'(z)} \quad\text{, for $\chi,\chi'\in \Hom_\cont \big(U_K^{(1)}, U_L^{(1)} \big)$.}
\end{equation*}

We need the following description \cite[II.\ Satz 5.7 (ii)]{Neukirch92AlgZahlenTh} (which in turn reproduces \cite[Prop.\ 2.8]{Iwasawa86LocClassFieldTh}):
Let $\omega_1,\ldots,\omega_r$ be an $\BF_p$-basis of $\BF_q$ and define
\begin{equation*}
	E_K \defeq \prod_\ndivp{m} \prod_{i=1}^r \BZ_p
\end{equation*}
where the first product is taken over all positive integers $m$ which are not divisible by $p$.
Then the map
\begin{equation}\label{Eq A2 - Isomorphism for one-units}
	E_K \overset{\cong}{\lra} U_K^{(1)} \,, \quad \big(a_\downindex{{m,i}} \big)_\downindex{{m,i}} \lto \prod_\ndivp{m} \prod_{i=1}^r \left( 1 + \omega_i t^m \right)^{a_\downindex{{m,i}}} ,
\end{equation}
is a well-defined isomorphism of topological groups.

Moreover, consider an $\BF_p$-basis $\omega_{1,1},\ldots,\omega_{r,f}$ of $\BF_{q^f}$ such that $\omega_{i,1} = \omega_{i}$, for all $i=1,\ldots,r$.
Analogously, we have the isomorphism
\begin{align*}
	E_L \defeq \prod_\ndivp{n} \prod_{j,k=1}^{r,f} \BZ_p \overset{\cong}{\lra} U_L^{(1)} \,, \quad \big( b_\downindex{{n,j,k}} \big) \lto \prod_\ndivp{n}\prod_{j,k=1}^{r,f} \left( 1 + \omega_{j,k} t^\frac{n}{e} \right)^{b_\downindex{{n,j,k}}}
\end{align*}
of topological groups.
Let $s,e'\in \BN$ such that $e=p^s e'$ and $p \nmid e'$.
Under the above isomorphisms, the canonical inclusion $U_K^{(1)} \hookrightarrow U_L^{(1)}$ corresponds to the embedding
\begin{equation*}
	E_K \lhook\joinrel\longrightarrow E_L \,,\quad \big( a_\downindex{{m,i}} \big) \lto \big( b_\downindex{{n,j,k}} \big) \quad\text{, where $b_\downindex{{n,j,k}}= \begin{cases}	p^s a_\downindex{m,j} &, \text{if $n=e'm$ and $k=1$,} \\
			0	&, \text{else.}					\end{cases}$}							
\end{equation*}

\begin{proposition}\label{Prop A2 - Continuous characters in equal characteristic}
	There is an isomorphism of topological groups
	\begin{equation}\label{Eq A2 - Description of continuous characters in equal characteristic}
		\Hom_\cont \big(U_K^{(1)},U_L^{(1)} \big) \overset{\cong}{\lra} \prod_\ndivp{n} \prod_{j,k=1}^{r,f}  c_0(\Nwithoutp,\BZ_p^r) \,,\quad
		\chi \lto \Big( \ul{a}_\downindex{{n,j,k}} = \big( a_\downindex{{n,j,k}}^\upindex{{m,i}} \big) \Big) ,
	\end{equation}
	given by
	\[ \chi(1+\omega_i t^m) = \prod_\ndivp{n}\prod_{j,k=1}^{r,f} (1+\omega_{j,k} t^{\frac{n}{e}} )^{a_\downindex{{n,j,k}}^\upindex{m,i}} . \]
	Here
	\begin{equation*}
		c_0(\Nwithoutp,\BZ_p^r) \defeq \left\{ \ul{a} = \big( a^\upindex{{m,1}},\ldots,a^\upindex{{m,r}} \big)_\ndivp{m} \subset \BZ_p^r \middle{|} \max_{i=1}^r \big\lvert a^\upindex{{m,i}} \big\rvert \ra 0 \text{ as $m\ra \infty$} \right\}.
	\end{equation*}
	carries the structure of a metric group by addition of sequences and the supremum-norm.
\end{proposition}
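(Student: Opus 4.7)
The overall plan is to use the topological group isomorphism $E_K \cong U_K^{(1)}$ from \eqref{Eq A2 - Isomorphism for one-units} (and its analogue for $L$) to translate the problem into describing continuous group homomorphisms $E_K \to E_L$, and then exploit the product structures on both sides to reduce to $c_0$.

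First I would reduce to a one-variable target: since $E_L = \prod_\ndivp{n} \prod_{j,k=1}^{r,f} \BZ_p$ carries the product topology and $E_K$ is compact, the compact-open topology on $\Hom_\cont(E_K, E_L)$ coincides with that of uniform convergence relative to any translation-invariant metric defining the product topology on $E_L$, which in turn is just coordinate-wise uniform convergence. Projecting to each factor of $E_L$ thus yields a topological group isomorphism
\begin{equation*}
	\Hom_\cont \big(E_K, E_L\big) \;\cong\; \prod_\ndivp{n} \prod_{j,k=1}^{r,f} \Hom_\cont \big(E_K, \BZ_p \big),
\end{equation*}
the right side being endowed with the product topology and each factor with the topology of uniform convergence.

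Next I would identify each factor $\Hom_\cont(E_K, \BZ_p)$ with $c_0(\Nwithoutp, \BZ_p^r)$. Writing $e_\downindex{m,i}$ for the standard generators of $E_K$, a continuous homomorphism $\varphi \colon E_K \to \BZ_p$ determines a family $c_\downindex{m,i} \defeq \varphi(e_\downindex{m,i})$. Since every basic open neighbourhood of $0$ in $E_K$ restricts only finitely many coordinates, continuity of $\varphi$ at $0$ forces $c_\downindex{m,i} \to 0$ as $m \to \infty$, so $(c_\downindex{m,i}) \in c_0(\Nwithoutp, \BZ_p^r)$. Conversely, any such $\ul{c}$ defines a continuous homomorphism via $\varphi(\ul{a}) \defeq \sum_{m,i} c_\downindex{m,i}\, a_\downindex{m,i}$, the sum converging in $\BZ_p$ because $c_\downindex{m,i} \to 0$ and $\abs{a_\downindex{m,i}} \leq 1$. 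The ultrametric estimate
\begin{equation*}
	\sup_{\ul{a} \in E_K} \abs{\varphi^{(s)}(\ul{a}) - \varphi(\ul{a})} \;=\; \lVert \ul{c}^{(s)} - \ul{c} \rVert_\infty,
\end{equation*}
with the supremum attained at suitable basis vectors $e_\downindex{m,i}$, identifies the uniform convergence topology on $\Hom_\cont(E_K, \BZ_p)$ with the sup-norm topology on $c_0$. Unwinding all the isomorphisms sends $\chi$ to the tuple $\big(a_\downindex{n,j,k}^\upindex{m,i}\big)$ prescribed in the statement of the proposition.

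I expect the main point to watch is the initial reduction: one must verify that the metric topology on $E_L$ pulled back from $U_L^{(1)}$ via \eqref{Eq A2 - Isomorphism for one-units} really is the product topology, so that convergence in $E_L$ is coordinate-wise. This follows from Neukirch's theorem combined with the fact that continuous bijections between compact Hausdorff topological groups are homeomorphisms; once this is in place, everything above is essentially formal, and the identification of $\Hom_\cont(E_K, \BZ_p) \cong c_0$ together with its topology is a short ultrametric computation.
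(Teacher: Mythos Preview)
Your proposal is correct and follows essentially the same route as the paper: first split $\Hom_\cont(E_K,E_L)$ into a product over the coordinates of $E_L$, then identify each factor $\Hom_\cont(E_K,\BZ_p)$ with $c_0(\Nwithoutp,\BZ_p^r)$ via evaluation on the standard generators, checking that the uniform-convergence topology matches the sup-norm via the ultrametric estimate. The paper packages the first step as a general lemma about $\Hom_\cont(G,\prod_i H_i)\cong\prod_i\Hom_\cont(G,H_i)$ proved directly from the compact-open definition, whereas you argue via coordinatewise uniform convergence in a product metric; both arguments are equivalent here.
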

\begin{proof}
	First we use the description of $U_L^{(1)}$ as a countable product of copies of $\BZ_p$ to reduce to determining $\Hom_\cont \big(U_K^{(1)}, \BZ_p \big)$.
	This is done by the following probably well-known lemma for which we did not find a reference in the literature.

	\begin{lemma}\label{Lemma A2 - Hom to product}
		Let $G$ be a locally compact, second countable, abelian metric group, and let $H_i$, $i\in I$, be abelian metric groups, for a countable index set $I$.
		Then the map
		\begin{align*}
			\alpha \colon \Hom_\cont \bigg( G, \prod_{i\in I} H_i \bigg) &\lra \prod_{i\in I} \Hom_\cont (G, H_i) \\
			\chi &\lto (\pr_i \circ \chi)_{i\in I}
		\end{align*}
		is an isomorphism of topological groups.
	\end{lemma}
	\begin{proof}
		Consider the homomorphisms
		\[\alpha_j \colon \Hom_\cont \bigg( G, \prod_{i\in I} H_i \bigg) \lra \Hom_\cont (G, H_j) \,,\quad \chi \lto \pr_j \circ \chi ,\]
		for $j \in I$.
		For $C \subset G$ compact and $U_j\subset H_j$ open, we have 
		\begin{equation*} 
			\alpha_j^{-1} \big( \CU(C,U_j) \big) = \CU\bigg(C, U_j \times \prod_{i \in I\setminus\{j\}}H_i \bigg) . 
		\end{equation*}
		Therefore the $\alpha_j$ are continuous.
		As $\alpha$ is induced from the $\alpha_j$ by the universal property of the product, $\alpha$ is a continuous group homomorphism.

		An inverse $\beta$ to $\alpha$ on the level of group homomorphisms is given by mapping a collection $(\chi_i \colon G \ra H_i)_{i\in I}$ of homomorphisms to the homomorphism $\prod_{i\in I} \chi_i \colon G \ra \prod_{i\in I} H_i $ induced by the universal property of the product.
		Moreover, this inverse is continuous:
		Let $C \subset G$ be compact and let $U_i \subset H_i$ be open with $U_i = H_i$, for almost all $i \in I$.
		Then we have
		\begin{align*}
			\beta^{-1} \Bigg( \CU\bigg( C, \prod_{i\in I} U_i \bigg) \Bigg) = \prod_{i\in I} \CU(C, U_i) .
		\end{align*}
	\end{proof}

	For $m\in \BN$ with $\ndivp{m}$, and $i\in \{1,\ldots,r\}$, let $\mathbbm{1}_\downindex{m,i} \in E_K$ denote the element with
	\begin{equation*}
		\pr_\downindex{n,j} \big(\mathbbm{1}_\downindex{{m,i}} \big) = \begin{cases}	1&, \text{if $m=n$ and $i=j$,} \\
			0	&, \text{else.}					\end{cases}
	\end{equation*}
	Via the topological isomorphism \eqref{Eq A2 - Isomorphism for one-units}, the following description of $ \Hom_\cont (E_K, \BZ_p)$ finishes the proof of \Cref{Prop A2 - Continuous characters in equal characteristic}.

	\begin{lemma}
		There is an isomorphism of topological groups
		\begin{align}\label{Eq A2 - Description of characters to p-adic integers}
			\Hom_\cont (E_K, \BZ_p) &\overset{\cong}{\lra} c_0(\Nwithoutp, \BZ_p^r) \\
			\chi &\lto \Big( \chi\big(\mathbbm{1}_\downindex{m,1}\big), \ldots ,\chi\big(\mathbbm{1}_\downindex{m,r}\big) \Big)_\ndivp{m} \nonumber \\
			\bigg[ a= \big(a_\downindex{m,i} \big) \mto \sum_\ndivp{m}\sum_{i=1}^r a_\downindex{{m,i}} \lambda^\upindex{{m,i}} \bigg] &\longmapsfrom \Big(  \lambda^\upindex{{m,1}},\ldots, \lambda^\upindex{{m,r}}  \Big)_\ndivp{m} \nonumber ,
		\end{align}
		(cf.\ \cite[Prop.\ 3.5 and 3.6]{DavisWan14LFunctOfpAdicChar} where the above map is shown to be a bijection).
	\end{lemma}
	\begin{proof}
		For $N \geq 0$, let
		\begin{equation*}
			E_{K,N} \defeq \prod_{\substack{\ndivp{m} \\ m< N}}  p^N \BZ_p^r \times \prod_{\substack{\ndivp{m} \\ m\geq N}}  \BZ_p^r.
		\end{equation*}
		The subsets $E_{K,N}$ form a system of fundamental open neighbourhoods of $0 \in E_K$, and satisfy $\mathbbm{1}_\downindex{m} \defeq (\mathbbm{1}_\downindex{m,1},\ldots,\mathbbm{1}_\downindex{m,r}) \in E_{K,N}$, for all $m\geq N$.
		Therefore, for $\chi \in \Hom_\cont (E_K, \BZ_p)$, the continuity of $\chi$ implies that the sequence $\big( \chi(\mathbbm{1}_\downindex{m})\big)_\ndivp{m} = \big( \chi(\mathbbm{1}_\downindex{{m,1}}), \ldots ,\chi(\mathbbm{1}_\downindex{m,r}) \big)_\ndivp{m}$ tends to $0$ in $\BZ_p^r$ when $m \ra \infty$.

		On the other hand, for a zero sequence $\big( \lambda^{(m)} \big)_\ndivp{m} \defeq \big( \lambda^\upindex{{m,1}},\ldots, \lambda^\upindex{{m,r}} \big)_\ndivp{m}$ of the right hand side, clearly $a_\downindex{{m,i}}\lambda^\upindex{{m,i}}$, for $a_\downindex{{m,i}} \in \BZ_p$, is summable.
		It follows that \eqref{Eq A2 - Description of characters to p-adic integers} is an isomorphism of (abstract) groups.

		It remains to show that \eqref{Eq A2 - Description of characters to p-adic integers} is a homeomorphism.
		To do so, we consider a sequence $(\chi_k)_{k\in \BN}$ in $\Hom_\cont (E_K,\BZ_p)$.
		Then convergence of $(\chi_k)_{k\in \BN}$ to $0$ is equivalent to 
		\begin{equation*}
			\sup_{a \in E_K} \abs{\chi_k(a) } \lra 0 \quad\text{as $k \ra \infty$.}
		\end{equation*}
		When this is the case, it clearly follows that
		\begin{equation*}
			\sup_{\ndivp{m}} \big\lvert \chi_k \big(\mathbbm{1}_\downindex{m}\big)  \big\rvert \lra 0 \quad\text{as $k\ra \infty$,}
		\end{equation*}
		i.e.\ that $\big( \big(\chi_k (\mathbbm{1}_\downindex{m}) \big)_{\ndivp{m}} \big)_{k\in \BN}$ converges to $0$ in $c_0 (\Nwithoutp , \BZ_p^r)$.

		Conversely, let $\big( \lambda_k^\upindex{m} \big)_\ndivp{m}$ be a zero sequence of elements of the right hand side corresponding to $\chi_k$, for $k\in \BN$, and assume that $\big( \lambda_k^\upindex{m} \big)_\ndivp{m}$ converges to $0$ in $c_0(\Nwithoutp,\BZ_p)$.
		Then
		\begin{align*}
			\sup_{a \in E_K} \lvert \chi_k(a)  \rvert 
			&= \sup_{a\in E_K} \bigg\lvert \sum_\ndivp{m} \sum_{i=1}^r a_\downindex{m,i}  \lambda_k^\upindex{m,i}  \bigg\rvert \\			
			&\leq \sup_{a \in E_K} \sup_{\substack{\ndivp{m} \\ i=1,\ldots,r}} \lvert a_\downindex{m,i} \rvert \big\lvert \lambda_k^\upindex{m,i} \big\rvert 
			\leq \sup_{\ndivp{m}} \big\lvert \lambda_k^\upindex{m} \big\rvert \lra 0 \quad\text{as $k \ra \infty$.}
		\end{align*}
		Hence $(\chi_k)_{k\in \BN}$ converges to $0$ in $\Hom_\cont (E_K, \BZ_p)$.
	\end{proof}

\end{proof}

\subsection{Locally Analytic Characters}

In contrast to the case of a $p$-adic field, for a local field of positive characteristic, there are significantly less locally analytic characters than continuous characters when compared in a reasonable way.
Furthermore these locally analytic characters behave more rigidly than their $p$-adic counterparts.
The reason for this is the presence of the Frobenius endomorphism $z \mto z^p$ on $K$. 

Let $A$ be a commutative unital $K$-Banach algebra which is an integral domain.
We now consider locally $K$-analytic characters of $U_K^{(1)}$ with values in $A\unts$, i.e.\ group homomorphisms $\chi \colon U_K^{(1)} \ra A\unts$ such that $U_K^{(1)} \xrightarrow{\chi} A\unts \hookrightarrow A$ is locally $K$-analytic in the sense of \Cref{Def 1 - Locally analytic maps between locally analytic manifolds} (i).
We recall the following (see \cite{Jeong11CalcPosChar} for example):

\begin{lemma}[Lucas's theorem]\label{Lemma A2 - Lucas's Theorem}
	Let $m$ and $n$ be non-negative integers and $p$ a prime.
	Then
	\[ \binom{m}{n} \equiv \prod_{i=0}^{k} \binom{m_i}{n_i}  \mod (p) , \]
	where $m = \sum_{i=0}^k m_i p^i$ and $n = \sum_{i=0}^k n_i p^i$ are the $p$-adic expansions, and we use the convention that $\binom{a}{b} = 0$ if $a < b$, for $a,b \in \BN_0$.
\end{lemma}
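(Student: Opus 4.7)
The plan is to prove Lucas's theorem by the classical polynomial method, exploiting the Frobenius identity $(1+x)^p \equiv 1+x^p \pmod{p}$ in $\mathbb{F}_p[x]$, and then extracting coefficients via the uniqueness of base-$p$ expansions. I would work entirely inside the polynomial ring $\mathbb{F}_p[x]$, where the binomial coefficients $\binom{m}{n}$ appear as the coefficients of $(1+x)^m$.

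First, using the $p$-adic expansion $m = \sum_{i=0}^{k} m_i p^i$, I would write
\begin{equation*}
	(1+x)^m = \prod_{i=0}^{k} (1+x)^{m_i p^i} = \prod_{i=0}^{k} \bigl((1+x)^{p^i}\bigr)^{m_i}
\end{equation*}
as an identity in $\mathbb{Z}[x]$. Reducing modulo $p$ and iterating the Frobenius identity $(1+x)^p \equiv 1+x^p \pmod{p}$, one obtains $(1+x)^{p^i} \equiv 1 + x^{p^i} \pmod{p}$, and hence
\begin{equation*}
	(1+x)^m \equiv \prod_{i=0}^{k} \bigl(1+x^{p^i}\bigr)^{m_i} \pmod{p} .
\end{equation*}

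Next, I would compare the coefficient of $x^n$ on both sides, where $n = \sum_{i=0}^{k} n_i p^i$ is the $p$-adic expansion of $n$ (padding with zeros if $n$ has fewer digits than $m$). On the left-hand side the coefficient is $\binom{m}{n}$. On the right, expanding each factor $(1+x^{p^i})^{m_i} = \sum_{j=0}^{m_i} \binom{m_i}{j} x^{j p^i}$ and multiplying, the coefficient of $x^n$ is
\begin{equation*}
	\sum_{(j_0,\ldots,j_k)} \prod_{i=0}^{k} \binom{m_i}{j_i} ,
\end{equation*}
where the sum runs over tuples $(j_0,\ldots,j_k)$ with $0 \le j_i \le m_i$ satisfying $\sum_i j_i p^i = n$.

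The key step---and the only one requiring a genuine argument---is showing that this sum reduces to the single term with $j_i = n_i$. Here I would invoke the uniqueness of the $p$-adic expansion: since each $j_i$ with nonzero contribution $\binom{m_i}{j_i}$ satisfies $j_i \le m_i \le p-1$, the expansion $\sum_i j_i p^i$ is itself a valid base-$p$ representation, which must therefore equal the base-$p$ representation of $n$. This forces $j_i = n_i$ for all $i$, yielding $\binom{m}{n} \equiv \prod_{i=0}^{k} \binom{m_i}{n_i} \pmod{p}$ and establishing the lemma. The convention $\binom{m_i}{n_i}=0$ when $m_i < n_i$ accounts for the case where some digit $n_i$ exceeds $m_i$, in which case both sides of the congruence vanish modulo $p$.
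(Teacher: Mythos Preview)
Your proof is correct and is the standard generating-function argument for Lucas's theorem. The paper does not actually give its own proof of this lemma; it simply recalls the statement and refers the reader to \cite{Jeong11CalcPosChar}, so there is nothing to compare against.
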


In particular, we can canonically extend the definition of the binomial coefficient $\binom{c}{n}$ modulo $(p)$ to $c\in \BZ_p$ and $n \in \BZ$ via 
\[\binom{c}{n} \defeq  \binom{c_0}{n_0} \cdots \binom{c_k}{n_k} \mod (p) \]
where $c= \sum_{i=0}^{\infty} c_i p^i$ and $n = \sum_{i=0}^k n_i p^i $ are the $p$-adic expansions.

Furthermore, we will use hyperderivatives which were originally introduced by Hasse and Teichmüller, and whose properties are recollected in \cite[\S 2]{Jeong11CalcPosChar}:
Let $f(z)=\sum_{n=0}^{\infty} a_n (z-z_0)^n$ be a formal power series with values in $A$ and centred at some $z_0 \in K$.
Then the \textit{$k$-th hyperderivative of $f$} is defined as the formal power series
\[D^{(k)} f (z) = \sum_{n=k}^{\infty} \binom{n}{k} \, a_n \,(z-z_0)^{n-k}  .\]
If $f$ is strictly convergent on $B_r^1(z_0) \subset K$, for some $r>0$, then $D^{(k)}f$ is strictly convergent with the same radius of convergence around $z_0$.
Taking $k$-th hyperderivatives is $K$-linear.
We will also use a special instance of the chain rule \cite[\S 2]{Jeong11CalcPosChar} (or \cite{Hasse36TheoDiffFunktKoerpBelChar}) for power series with values in $K$:
\[D^{(1)}(f\circ g) = \big( ( D^{(1)}f ) \circ g \big)\cdot D^{(1)} g  . \]

\begin{theorem}\label{Thm A2 - Locally analytic characters in equal characteristic}
	Let $\chi \colon U_K^{(1)} \ra A\unts$ be a locally $K$-analytic character.
	Then $\chi$ factors over the inclusion $U_K^{(1)} \hookrightarrow A\unts$, and there exists $c\in \BZ_p$ such that $\chi = \chi_c$ where
	\begin{equation}\label{Eq A2 - Form of multiplicative locally analytic characters}
		\chi_c (z) = z^c \defeq \sum_{n=0}^\infty \binom{c}{n} (z-1)^n \quad\text{, for all $z\in U_K^{(1)}$.}
	\end{equation}
	Moreover, the values of all $p^i$-th hyperderivatives of $\chi$ at $1$ are in fact contained in $\BF_p \subset A$ and $c$ is uniquely determined by $c_i \equiv D^{(p^i)} \chi(1) \mod (p)$, for the $p$-adic expansion $c=\sum_{i= 0}^\infty c_i p^i$.
\end{theorem}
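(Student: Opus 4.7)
The plan is to proceed in three steps: first, establish the strong constraint that the coefficients in the local power-series expansion of $\chi$ lie in $\BF_p$; second, leverage this together with the pro-$p$ structure of $U_K^{(1)}$ to extend the expansion globally and deduce the factorisation through $U_K^{(1)} \subset A\unts$; third, identify the resulting character with some $\chi_c$ by exploiting multiplicativity via hyperderivatives and Lucas's theorem.

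Since $U_K^{(1)} = 1 + \Fm_K$ is a one-dimensional locally $K$-analytic Lie group via the chart $1+u \leftrightarrow u$, there exist $n_0 \in \BN$ and coefficients $a_k \in A$ (with $a_k \to 0$ in the appropriate sense) so that $\chi(1+u) = \sum_{k\geq 0} a_k u^k$ on $U_K^{(n_0)}$. The key Frobenius identity $(1+u)^p = 1+u^p$ (valid in characteristic $p$) together with multiplicativity yields $\chi(1+u^p) = \chi(1+u)^p$. In the $K$-Banach algebra $A$ of characteristic $p$, raising to the $p$-th power distributes over convergent sums, so $\sum a_k^p u^{pk} = \sum a_k u^{pk}$. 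The identity theorem for power series (\Cref{Prop 1 - Identity theorem for power series}) forces $a_k^p = a_k$ for all $k$; since $X^p - X$ splits over the integral domain $A$ as $\prod_{\lambda \in \BF_p}(X - \lambda)$, we conclude $a_k \in \BF_p$.

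For the factorisation, fix $z \in U_K^{(1)}$, set $u \defeq z-1$, and pick $m$ large enough that $u^{p^m} \in \Fm_K^{n_0}$. Then $z^{p^m} = 1+u^{p^m} \in U_K^{(n_0)}$, and using $a_k \in \BF_p$ one computes
\begin{equation*}
\chi(z)^{p^m} \;=\; \chi(z^{p^m}) \;=\; \sum_{k\geq 0} a_k\, u^{p^m k} \;=\; \bigg(\sum_{k\geq 0} a_k\, u^k\bigg)^{\!p^m}.
\end{equation*}
The series $\beta \defeq \sum_{k} a_k u^k$ converges in $K$ (as $|a_k| \leq 1$ and $|u| < 1$) and satisfies $|\beta - 1| \leq |u| < 1$, whence $\beta \in U_K^{(1)}$. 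Since $A$ is an integral domain of characteristic $p$, the unique $p^m$-th root of $\beta^{p^m}$ in $A$ is $\beta$ itself, forcing $\chi(z) = \beta$. This simultaneously proves the factorisation and the global validity of the power-series formula for $\chi$ on all of $U_K^{(1)}$.

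It remains to match $\chi$ with some $\chi_c$. Writing $f(u) \defeq \chi(1+u)$, multiplicativity becomes $f(u+v+uv) = f(u)f(v)$. Because $u \mapsto u+v+uv$ is affine in $u$ with $\partial_u(u+v+uv) = 1+v$, the chain rule for hyperderivatives gives $D_u^{(k)}f(u+v+uv) = (1+v)^k D^{(k)}f(u+v+uv)$; evaluating at $u=0$ produces the family of identities
\begin{equation*}
a_k\, f(v) \;=\; (1+v)^k\, D^{(k)}f(v) \qquad (k \geq 0).
\end{equation*}
Specialising to $k = p^i$, using $(1+v)^{p^i} = 1+v^{p^i}$ in characteristic $p$, and comparing coefficients of $v^n$ via Lucas's theorem ($\binom{n}{p^i} \equiv n_i \pmod p$, with $n_i$ the $i$-th $p$-adic digit of $n$), yields the recursion
\begin{equation*}
\binom{n+p^i}{p^i}\, a_{n+p^i} \;\equiv\; (c_i - n_i)\, a_n \pmod p,
\end{equation*}
where $c_i \defeq a_{p^i} \in \BF_p$. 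Setting $c \defeq \sum_i c_i p^i \in \BZ_p$, I will prove by strong induction on $n$ that $a_n \equiv \binom{c}{n} \pmod p$: the base case is $a_0 = 1 = \binom{c}{0}$, and for $n > 0$ one picks any index $j$ with $n_j \neq 0$, reads the recursion downward as $n_j\, a_n \equiv (c_j - n_j + 1)\, a_{n-p^j} \pmod p$ (no $p$-adic borrowing occurs since $n_j \geq 1$), and matches with $\binom{c}{n} / \binom{c}{n-p^j} = (c_j - n_j + 1)/n_j$ using Lucas. Uniqueness of $c$ is immediate from $c_i = a_{p^i} = D^{(p^i)}\chi(1)$.

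The main obstacle will lie in the third step: one must check that the recursions coming only from the indices $k = p^i$ really pin down every coefficient $a_n$, and that the induction handles the $p$-adic carry behaviour uniformly, particularly the degenerate cases $n_i = 0$ or $n_i = p-1$ where either the recursion becomes vacuous or Lucas's theorem forces certain binomial factors to vanish.
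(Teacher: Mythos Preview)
Your proof is correct. Steps one and two (the Frobenius argument forcing $a_k\in\BF_p$, and the global extension via $p^m$-th roots) coincide almost verbatim with the paper's argument. Your third step, however, is organised differently and is worth contrasting.

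The paper only invokes the chain rule for the \emph{first} hyperderivative, obtaining the single relation $a_1 a_n=(n{+}1)a_{n+1}+n a_n$. From this it proves $a_{lp+j}=\binom{a_1}{j}a_{lp}$ and then factors $\chi(z)=z^{a_1}\chi^{(1)}(z^p)$, where $\chi^{(1)}$ is again a locally analytic character with coefficients $a_n^{(1)}=a_{np}$. Iterating this peeling-off procedure yields $a_{lp^{i+1}+jp^i}=\binom{a_{p^i}}{j}a_{lp^{i+1}}$ for every $i$, and the product formula $a_n=\prod_i\binom{a_{p^i}}{n_i}$ drops out by expanding $n$ in base $p$. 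You instead use the chain rule for $D^{(k)}$ with $k=p^i$ directly (valid for affine substitutions, since all higher hyperderivatives of an affine map vanish), obtaining the family of relations $n_j a_n=(c_j-n_j+1)a_{n-p^j}$ in one shot, and close with a strong induction on $n$.

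Your worry about the degenerate cases is unfounded: for $n>0$ you can always choose $j$ with $1\le n_j\le p-1$, so $n_j$ is invertible in $\BF_p$ and no borrowing occurs in $n-p^j$; the underlying integer identity $n_j\binom{c_j}{n_j}=(c_j-n_j+1)\binom{c_j}{n_j-1}$ holds for all $c_j,n_j\ge 1$ (including when both sides vanish), so after multiplying through by $\prod_{i\ne j}\binom{c_i}{n_i}$ and dividing by $n_j$ the induction step goes through uniformly. Your route is slightly more direct but relies on the higher-order affine chain rule, which the paper avoids by its recursive factorisation trick; the paper's version in turn makes the structural decomposition $\chi(z)=\prod_i z^{c_ip^i}$ more visible.
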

\begin{proof}
	First note that every function $\chi_c$ as in \eqref{Eq A2 - Form of multiplicative locally analytic characters} defines a (locally) $K$-analytic character from $U_K^{(1)}$ to itself.
	Indeed, for $c\in \BN$, the equation $\chi_c(zw)=\chi_c(z)\chi_c(w)$ follows from the identity of formal power series in $\BZ\llrrbracket{z,w}$.
	Approximation of $c\in \BZ_p$ then yields 
	\begin{equation*} 
		\chi_c(zw) \equiv \chi_c(z) \chi_c(w) \mod (t^n) \quad\text{, for all $n \in \BN$, $z,w \in U_K^{(1)}$,}
	\end{equation*}
	where $t$ is a uniformizer of $\CO_K = \BF_q \llrrbracket{t}$.
	This shows the sought functional equation.

	Now, let $\chi \colon U_K^{(1)} \ra A\unts$ be a locally $K$-analytic character.
	Let $N \geq 1$ such that on $U_K^{(N)} = 1 + t^N \CO_K$ the character $\chi$ is given by the strictly convergent power series
	\[\chi(z)= \sum_{n=0}^{\infty} a_n (z-1)^n \quad\text{, with $a_n \in A$.} \]
	It follows from $\chi(z^p) = \chi(z)^p$ that
	\[\sum_{n=0}^{\infty} a_n (z^p-1)^n = \sum_{n=0}^{\infty} a_n^p (z^p -1)^n  \quad\text{, for all $z\in U_K^{(N)}$.} \]
	Hence by the identity theorem \ref{Prop 1 - Identity theorem for power series}, we have $a_n = a_n^p$.
	Since $A$ is an integral domain by assumption, this implies that $a_n$ already is contained in $\BF_p \subset A$.
	As $\abs{a_n}=1$, for all $n\geq 0$, the power series $\sum_{n=0}^{\infty} a_n (z-1)^n$ strictly converges on the whole of $U_K^{(1)}$.
	For $z\in U_K^{(1)}$, let $j\geq 0$ such that $z^{p^j} \in U_K^{(N)}$.
	Then 
	\[\chi(z)^{p^j} = \chi(z^{p^j}) = \sum_{n=0}^{\infty} a_n (z^{p^j} - 1)^n = \bigg( \sum_{n=0}^{\infty} a_n (z-1)^n \bigg)^{p^j} . \]
	Hence it follows that $\chi(z) = \sum_{n=0}^{\infty} a_n (z-1)^n$, for all $z \in U_K^{(1)}$, using that $A$ is an integral domain of characteristic $p$.

	Now consider, for all $w\in U_K^{(1)}$, the chain rule applied to the hyperderivative with respect to $z$: 
	\begin{align*}
		\chi(w) \, D^{(1)} \chi (z) = D^{(1)} \big( \chi(w)\chi(z) \big) 	
		=D^{(1)} \chi(wz)
		=D^{(1)}\big( \chi \circ (w\blank)\big) (z) 
		=D^{(1)}\chi(wz) \cdot w			 .
	\end{align*}
	Setting $z=1$ and noting that $D^{(1)} \chi (1)=a_1$, we conclude that
	\[\chi(w)\, a_1 = w \, D^{(1)} \chi (w) = (w-1)D^{(1)}\chi(w)+ D^{(1)}\chi(w) \, .\]
	By the identity theorem for power series \ref{Prop 1 - Identity theorem for power series}, as $w\in U_K^{(1)}$ was arbitrary, this is an identity between power series.
	It follows that, for all $n\geq 1$,
	\begin{align}\label{Eq A2 - Coefficients of character 1}
		a_1 a_n = \binom{n+1}{1} a_{n+1} + \binom{n}{1} a_n  .
	\end{align}
	For all $l \geq 0$, we want to show by induction on $j$ that
	\begin{align}\label{Eq A2 - Coefficients of character 2}
		a_{lp +j} = \binom{a_1}{j} a_{lp}  \quad\text{, for $j=0,\ldots,p-1$,} 
	\end{align}
	where we identify $a_1\in \BF_p$ with its representative in $\{0,\ldots,p-1\} \subset \BZ$.
	For $j=0$, \eqref{Eq A2 - Coefficients of character 2} holds trivially as $\binom{a_1}{0}=1$.
	Now assume \eqref{Eq A2 - Coefficients of character 2} holds for $j\in \{0,\ldots,p-2\}$.
	Then \eqref{Eq A2 - Coefficients of character 1}, for $n=lp + j$, together with the induction hypothesis gives
	\[(j+1)a_{lp+j+1} = \binom{lp +j+1}{1} a_{lp + j +1} = \left(a_1 -\binom{lp+j}{1}\right) a_{lp+j} = (a_1 - j) \binom{a_1}{j} a_{lp} \, . \]
	Hence 
	\[a_{lp+j+1} = \frac{a_1-j}{j+1} \binom{a_1}{j} a_{lp} = \binom{a_1}{j+1} a_{lp} \]
	showing \eqref{Eq A2 - Coefficients of character 2} for $j+1$.

	Observe that
	\[z^{a_1} = \sum_{j=0}^{p-1} \binom{a_1}{j} (z-1)^j  ,\]
	and therefore
	\begin{align*}
		\chi(z) &= \sum_{l=0}^{\infty} \bigg( \sum_{j=0}^{p-1} a_{lp+j} (z-1)^j \bigg) (z-1)^{lp} \\
		&= \sum_{l=0}^{\infty} \bigg( \sum_{j=0}^{p-1} \binom{a_1}{j} (z-1)^{j} \bigg) a_{lp} (z-1)^{lp} 
		= z^{a_1} \sum_{l=0}^{\infty} a_{lp} (z^p - 1)^l  .
	\end{align*}
	We define, for $z\in U_K^{(1)}$ and $i\geq 0$,
	\[\chi^{(i)} (z) \defeq \sum_{n=0}^{\infty} a^{(i)}_{n} (z-1)^n \quad\text{, where $a^{(i)}_n \defeq a_{n p^i}$.} \]
	Then $\chi(z) = z^{a_1} \chi^{(1)} (z^p)$. 
	Using the injectivity of the Frobenius endomorphism this implies that $\chi^{(1)}$ is a locally $K$-analytic character on $U_K^{(1)}$, too.
	Recursively it follows that $\chi^{(i)}$ is a locally $K$-analytic character on $U_K^{(1)}$, for all $i\geq 0$.
	Moreover, \eqref{Eq A2 - Coefficients of character 2} applied to $\chi^{(i)}$ gives, for all $l\geq 0$,
	\begin{align}\label{Eq A2 - Coefficients of character 3}
		a_{l p^{i+1} + j p^i} = a^{(i)}_{lp+j} = \binom{a_{1}^{(i)}}{j} a_{lp}^{(i)} = \binom{a_{p^i}}{j} a_{lp^{i+1}} \quad\text{, for $j=0,\ldots,p-1$.}
	\end{align}
	
	To finish the proof we show that, for all $n\geq 0$, $a_n = \prod_{i=0}^k \binom{a_{p^i}}{n_i} $ where $n=\sum_{i=0}^{k} n_i p^i$ is the $p$-adic expansion of $n$.
	Indeed, fix $n\in \BN$ with such a $p$-adic expansion.
	Then \eqref{Eq A2 - Coefficients of character 3}, for $i=0,\ldots,k$, gives
	\begin{align*}
		a_n &= a_{\left( \sum_{i=1}^k n_i p^{i-1}\right) p + n_0} = \binom{a_1}{n_0}\,a_{\sum_{i=1}^k n_i p^i } \\
		&= \binom{a_1}{n_0} \, a_{\left( \sum_{i=2}^k n_i p^{i-2}\right)p^2 + n_1 p} = \binom{a_1}{n_0} \binom{a_p}{n_1} \, a_{ \sum_{i=2}^k n_i p^{i} } \\
		&= \ldots = \binom{a_1}{n_0}\cdots\binom{a_{p^k}}{n_k} .
	\end{align*}
	Lastly note that $ a_{p^i}=D^{(p^i)}\chi(1) $, for all $i \geq 0$.
\end{proof}

\begin{corollary}\label{Cor A2 - Ring of locally analytic characters in equal characteristic}
	The locally analytic characters $\End_\mathrm{la} \big(U_K^{(1)} \big) \subset \End_\cont \big(U_K^{(1)} \big)$ constitute a closed subring with multiplication given by composition. 
	Moreover, with the induced subspace topology, the description in \eqref{Eq A2 - Form of multiplicative locally analytic characters} yields an isomorphism
	\begin{equation*}
		\BZ_p \overset{\cong}{\lra} \End_\mathrm{la}\big(U_K^{(1)} \big) \,,\quad c \lto \chi_c ,
	\end{equation*}
	of topological rings, and the above embedding $\End_\mathrm{la} \big(U_K^{(1)} \big) \subset \End_\cont \big(U_K^{(1)} \big)$ corresponds to 
	\begin{align}\label{Eq A2 - Embedding for locally analytic characters}
		\mathrm{diag}\colon \BZ_p \lra \prod_\ndivp{n} \prod_{j=1}^{r}  c_0(\Nwithoutp,\BZ_p^r) \,,\quad c &\lto \mathrm{diag}(c) \defeq \Big(\ul{c}_\downindex{{n,j}}= \big(c_\downindex{{n,j}}^\upindex{{m,i}} \big) \Big) , \\ 
		&\quad\text{where $c_\downindex{{n,j}}^\upindex{{m,i}} \defeq \begin{cases}	c&, \text{if $m=n$ and $i=j$,} \\
				0	&, \text{else,}					\end{cases}$} \nonumber
	\end{align}
	under this isomorphism and the identification \eqref{Eq A2 - Description of continuous characters in equal characteristic}.
\end{corollary}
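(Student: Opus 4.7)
The plan is to combine \Cref{Thm A2 - Locally analytic characters in equal characteristic} (applied with the $K$-Banach algebra $A=K$, so that all values in fact land in $U_K^{(1)}\subset K\unts$) with the explicit description of $\End_\cont\big(U_K^{(1)}\big)$ from \Cref{Prop A2 - Continuous characters in equal characteristic}. By \Cref{Thm A2 - Locally analytic characters in equal characteristic} the assignment $c \mapsto \chi_c$ is a well-defined bijection $\BZ_p \to \End_\la\big(U_K^{(1)}\big)$, with uniqueness giving injectivity and the explicit formula $\chi_c(z)=z^c$ providing surjectivity onto the locally analytic endomorphisms.

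I will then verify that this bijection is a ring homomorphism. For the additive structure on $\End_\cont$, which is pointwise multiplication in $U_K^{(1)}$, the identity of formal power series $z^{c}z^{c'}=z^{c+c'}$ (valid for $c,c'\in\BZ$ and extending to $\BZ_p$ by continuity, since both sides are strictly convergent on $U_K^{(1)}$ and the coefficients depend continuously on $(c,c')$) gives $\chi_c\cdot\chi_{c'}=\chi_{c+c'}$. For composition, the same density argument applied to the identity $(z^{c'})^c=z^{cc'}$ yields $\chi_c\circ\chi_{c'}=\chi_{cc'}$.

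Next I identify the image of $c \mapsto \chi_c$ inside the product description \eqref{Eq A2 - Description of continuous characters in equal characteristic}. Under the isomorphism \eqref{Eq A2 - Isomorphism for one-units} the element $1+\omega_i t^m$ corresponds to the standard generator $\mathbbm{1}_{(m,i)}\in E_K$; therefore $\chi_c(1+\omega_i t^m)=(1+\omega_i t^m)^c$ corresponds to $c\cdot \mathbbm{1}_{(m,i)}\in E_K$, which has $(n,j)$-th coordinate equal to $c$ if $(n,j)=(m,i)$ and $0$ otherwise. Reading off the coefficients $a^{(m,i)}_{(n,j)}$ in \eqref{Eq A2 - Description of continuous characters in equal characteristic} shows that $\chi_c$ is sent exactly to the element $\mathrm{diag}(c)$ of \eqref{Eq A2 - Embedding for locally analytic characters}, so the image of $\End_\la\big(U_K^{(1)}\big)$ in $\End_\cont\big(U_K^{(1)}\big)$ is precisely the image of $\mathrm{diag}$.

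Finally, the topological statements fall out of this identification. The image of $\mathrm{diag}$ is cut out by the closed conditions that each sequence $\ul{c}_{(n,j)}\in c_0(\Nwithoutp,\BZ_p^r)$ is supported only at index $(m,i)=(n,j)$ and that all of the entries $c^{(n,j)}_{(n,j)}$ agree; hence $\End_\la\big(U_K^{(1)}\big)$ is closed in $\End_\cont\big(U_K^{(1)}\big)$, and in particular a closed subring. The map $\mathrm{diag}\colon\BZ_p\to$ (image) is clearly continuous (each component is linear in $c$), bijective by construction, and goes from a compact space to a Hausdorff space, so it is a homeomorphism and therefore an isomorphism of topological rings. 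No step here presents a serious obstacle; the main point requiring care is simply the bookkeeping that translates $\chi_c(1+\omega_i t^m)=(1+\omega_i t^m)^c$ through the isomorphism \eqref{Eq A2 - Isomorphism for one-units} to recover $\mathrm{diag}(c)$.
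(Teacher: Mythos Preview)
Your proof is correct and follows essentially the same route as the paper: invoke \Cref{Thm A2 - Locally analytic characters in equal characteristic} for the bijection, verify the ring structure by approximation from $\BN$, and identify the image with $\mathrm{diag}(\BZ_p)$ via the description \eqref{Eq A2 - Description of continuous characters in equal characteristic}. The only difference is cosmetic: the paper checks that $c\mapsto\mathrm{diag}(c)$ is a topological embedding by a direct sequence argument (computing $\lvert(\ul{c_k})_{(n,j)}\rvert_\infty=\lvert c_k\rvert$), whereas you use the compact--Hausdorff shortcut, which is arguably cleaner.
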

\begin{proof}
	One readily computes that the image $\mathrm{diag}(\BZ_p)$ is a closed subgroup.
	To show that the injective homomorphism \eqref{Eq A2 - Embedding for locally analytic characters} of additive groups is a topological embedding, we show that a sequence $(c_k)_{k\in \BN} \subset \BZ_p$ converges to $0$ if and only if $\big( \mathrm{diag}(c_k) \big)_{k\in \BN}$ converges to $0$.
	Indeed, the latter is equivalent to convergence of $\big( (\ul{c_k})_\downindex{n,j} \big)_{k\in \BN}$ to $0$ in $c_0 (\Nwithoutp, \BZ_p)$, for all $n \in \BN$ with $\ndivp{n}$ and $j=1,\ldots,r$.
	But for the supremum-norm $\abs{\blank}_\infty$ of $c_0 (\Nwithoutp, \BZ_p)$ we compute
	\[ \lvert (\ul{c_k})_\downindex{n,j} \rvert_\infty \defeq \sup_{\ndivp{m}} \big\lvert (\ul{c_k})_\downindex{n,j}^\upindex{m,i} \big\rvert = \abs{c_k} . \]
	
	The only assertion left to verify is that the isomorphism $\End_\mathrm{la} \big(U_K^{(1)} \big) \cong \BZ_p$ is compatible with the multiplication, i.e.\ that $\chi_c \circ \chi_d = \chi_{cd}$, for $c,d \in \BZ_p$.
	For $c,d \in \BN$, this holds because $\chi_c(z)$ is the usual exponentiation $z^c$ in this case.
	The general case then follows by approximation and continuity.	
\end{proof}

\sloppy
\AtNextBibliography{\small}
\printbibliography

\end{document}